\newcommand{\Ring}{\mathsf{R}}
\newcommand{\Sring}{\mathcal{R}}
\newcommand{\F}{\mathbb{F}}
\newcommand{\kf}{\mathsf{k}}
\newcommand{\p}{\mathfrak{p}}
\newcommand{\q}{\mathfrak{q}}
\newcommand{\T}{\mathcal{T}}
\newcommand{\Cat}{\mathcal{C}}
\newcommand{\OCat}{\mathsf{O}}
\newcommand{\Str}{\mathcal{O}}
\newcommand{\End}{\operatorname{End}}
\newcommand{\Hom}{\operatorname{Hom}}
\newcommand{\Ext}{\operatorname{Ext}}
\newcommand{\Loc}{\operatorname{\mathsf{L}}}
\newcommand{\h}{\mathfrak{h}}
\newcommand{\SBim}{\operatorname{SBim}}
\newcommand{\C}{\mathbb{C}}
\newcommand{\g}{\mathfrak{g}}
\newcommand{\Vfun}{\mathbb{V}}
\newcommand{\Hecke}{\mathcal{H}}
\newcommand{\Coh}{\operatorname{Coh}}
\newcommand{\U}{\mathcal{U}}
\newcommand{\Nilp}{\mathcal{N}}
\newcommand{\St}{\mathsf{St}}
\newcommand{\Acal}{\mathcal{A}}
\newcommand{\Tilt}{\mathcal{E}}
\newcommand{\Z}{\mathbb{Z}}
\newcommand{\Br}{\mathsf{Br}}
\newcommand{\UU}{\mathbf{U}}
\newcommand{\Wh}{\mathsf{Wh}}
\newcommand{\im}{\operatorname{im}}
\newcommand{\Zcal}{\mathcal{Z}}
\newtheorem{Thm}{Theorem}[subsection]
\newtheorem{Prop}[Thm]{Proposition}
\newtheorem{Cor}[Thm]{Corollary}
\newtheorem{Lem}[Thm]{Lemma}
\theoremstyle{definition}
\newtheorem{Ex}[Thm]{Example}
\newtheorem{defi}[Thm]{Definition}
\newtheorem{Rem}[Thm]{Remark}
\numberwithin{equation}{section}
\title{Quantum category O vs affine Hecke category}
\author{Ivan Losev}
\begin{document}
\begin{abstract}
The goal of this paper is to relate the quantum category $\mathcal{O}$ (known also as the category of modules over the mixed quantum group)
at an odd root of unity to the affine Hecke category. Namely, we prove equivalences of highest weight categories between integral blocks 
of the affine category $\mathcal{O}$ and the heart of the so called ``new'' t-structure on the affine Hecke category. In order to 
do this we deform our categories over the formal neighborhood of $0$ in the dual affine Cartan and show that the categories 
of standardly filtered objects in the deformations are equivalent. For this, we construct functors from the deformed categories 
to the category of bimodules over the formal power series on the affine Cartan. Then we use what we call the  Rouquier-Soergel 
theory, also developed in this paper, to show that on the categories of standardly filtered objects, these functors are full embeddings with the same image. 
\end{abstract}
\maketitle
\tableofcontents

\section{Introduction}
\subsection{Representations of quantum groups at roots of unity}
This paper describes certain categories of representations of quantum groups at roots of $1$. Let $\g$ be a semisimple Lie algebra. 
Fix an indeterminate $v$ and consider the Drinfeld-Jimbo quantum group $\UU_v$ for $\g$. One can define various forms of $\UU_v$
over the ring $\C[v^{\pm 1}]$ (we can replace $\C$ with $\Z$ but we will not need this in the present paper). In \cite[Section 4]{Lusztig_quantum_deformation} Lusztig introduced the form with divided powers commonly referred to as the {\it Lusztig form}.
In \cite{DCK}, De Concini and Kac considered the ``default'' form of $\UU_v$ known as the De Concini-Kac form. Finally, 
in \cite{Gaitsgory}, Gaitsgory introduced yet another form called ``mixed'' (or ``hybrid''), which is intermediate 
between the Lusztig and De Concini-Kac forms. This is the form we are mostly concerned with in this paper. 

With all these three forms, we can set $v$ to a nonzero number $q$ getting algebras over $\C$. The most interesting case 
is when $q$ is a root of unity, $\epsilon$, in which case the representations of the three algebras behave differently.
The representations of the Lusztig form at a root of unity are understood best (we'll briefly recall the known results
below), while there are still several basic open questions concerning the irreducible representations of the De Concini-Kac
form. The goal of this paper is to completely describe a reasonable category of representation of the mixed form,
which we refer to as the quantum category $\OCat$. 

Now we explain some reasons to be interested in the representation theory of quantum groups at roots of $1$. 

First, there is a connection to the representations of affine Lie algebras pioneered in \cite{KL}: there is a monoidal equivalence 
between the category of finite dimensional representations of the Lusztig form for $\g$ and the Kazhdan-Lusztig category of representations
of the affine Lie algebra $\hat{\g}$ at the negative level (the value of $q$ is read of the level). Gaitsgory, \cite{Gaitsgory}, conjectured an extension of the Kazhdan-Lusztig equivalence 
to a derived equivalence between the quantum category $\OCat$ and the full affine category $\OCat$.      

Second, there is a connection/ an analogy between the representations of (in fact, all three) forms of $\UU_v$ specialized to $\epsilon$ and the representations of various forms of the universal enveloping algebra of $\g$ over $\Z$ specialized to a field $\F$ of characteristic $p>0$. In order to have a formal connection (rather than an analogy) one needs to take $\epsilon$ to be a root of $1$ of order $p^k$ for some $k\in \Z_{>0}$.
This connection has been first observed by Lusztig, \cite{Lustzig_modular_quantum}, in the context of the Lusztig form 
(the corresponding object on the classical side is the hyperalgebra of $\g_\F$) and since then was explored extensively
in the same context. The analogy between representations of the De Concini-Kac form and $U(\g_\F)$ was observed 
in \cite{DCK}, but a formal connection (existing when the order of $\epsilon$ is $p^k$) was established only relatively recently, see, e.g., \cite{Tanisaki_abelian} and references there. The analogy/ connection for the mixed form will be discussed below.  

Third, categories of modules over various forms of quantum groups at roots of $1$ appear in various context of Geometric representation
theory. For the Lusztig form (at odd roots of $1$) this is a subject on \cite{ABG}. 

\subsection{Quantum category $\OCat$}
Let $\g$ be a semisimple Lie algebra and $\epsilon$ be a root of $1$ of odd degree $d$ (coprime to $3$ if $\g$ has summands of type $G_2$).
Let $I$ be the indexing set for the simple roots of $\g$ and let $\Lambda$ be the weight lattice of $\g$. Let $\h$
and $W$ be the Cartan subalgebra and the Weyl group of $\g$. On the space $\h^*$ we consider the $W$-invariant 
symmetric bilinear form $(\cdot,\cdot)$ on $\h^*$ normalized in such a way that the square of a short root for any 
of the simple summands of $\g$ is $2$. 

First, we explain what one means by the mixed form $U_v^{mix}$. This is the $\C[v^{\pm 1}]$-form with generators $K_\nu, \nu\in \Lambda,$
$F_i$ and the divided powers $E_i^{(\ell)}$ (which is why this form is ``mixed''). 

Now we introduce the integral part of the what we call the quantum category $\OCat$ (one can give a more general definition by 
removing ``integral'' and also consider a version over more general rings, this will be done in Section \ref{SS_O_quant}).
Namely, notice that the specialized $\C$-algebra $U_\epsilon^{mix}$ is naturally graded by the root lattice $\Lambda_0\subset \Lambda$. 
The category $\OCat_\epsilon$, by definition, consists of finitely generated $\Lambda_0$-graded $U^{mix}_\epsilon$-modules
$M=\bigoplus_{\lambda\in \Lambda_0}M_\lambda$ where $K_\nu$ acts on  $M_\lambda$ by $\epsilon^{(\nu,\lambda)}$ and the set
$\{\lambda\in \Lambda_0| M_\lambda\neq 0\}$ is bounded from the above with respect to the usual dominance order on 
$\Lambda_0$.

The category $\OCat_\epsilon$ was suggested by Gaitsgory in \cite{Gaitsgory} and studied in detail by 
Situ in \cite{Situ1,Situ2,Situ3}. We will review some results of Situ's papers below. For now, note that 
$\OCat_\epsilon$ is highest weight in a suitable sense (this is basically because $U^{mix}_\epsilon$
has triangular decomposition), see \cite{Situ1} for details. The standard objects in 
$\OCat_\epsilon$ are Verma modules $\Delta_\epsilon(\lambda),\lambda\in \Lambda_0,$ defined in the usual way. 
This structure is very important for our approach to describing $\OCat_\epsilon$. 
Also note that $\OCat_\epsilon$ decomposes into the sum of its infinitesimal blocks. 
Let $W^a:=W\ltimes \Lambda_0$ be the affine Weyl group. It acts on $\Lambda_0$ as follows:
$w\cdot \lambda=w(\lambda+\rho)-\rho, t_\mu\cdot \lambda=\lambda+d\mu$ for $w\in W, \lambda,\mu\in \Lambda_0$.
For a $W^a$-orbit $\Xi$ in $\Lambda_0$ we write $\OCat_\epsilon^\Xi$ for the Serre span of 
$\Delta_\epsilon(\lambda)$ with $\lambda\in \Xi$. Then we have 
$$\OCat_\epsilon=\bigoplus \OCat_\epsilon^\Xi,$$
where the sum is taken over all $W^a$-orbits in $\Lambda_0$, see, e.g., \cite[Proposition 3.7]{Situ1}.

Finally, we note that the modular analog of $\OCat_\epsilon$ is the category studied in 
\cite[Section 6]{ModHCO}. This is a suitable full subcategory in the category of $(\g_\F,B_\F)$-modules
(here $\F$ is an algebraically closed field of characteristic $p$ and $B_\F$ is a Borel subgroup
in the adjoint semisimple algebraic group $G_\F$ with Lie algebra $\g_\F$).

\subsection{Main result}\label{SS:main_result}
Our description of the category $\mathcal{O}_\epsilon$ will be in terms of a t-structure on the affine Hecke category. 
Let us explain the details.  Note that $W^a$ is a Coxeter group, let $I^a$
denote the set of simple reflections. Pick a proper subset $J\subset I^a$. Associated to $J$ we have the additive category 
of singular Soergel modules $\,_J \operatorname{SMod}(W^a)$. Consider its homotopy category $\,_J\Hecke_0:=K^b(\,_J \operatorname{SMod}(W^a))$.
This is what we mean here by the affine Hecke category. This category has a number of other realizations. For example, one can realize it 
as the Whittaker derived category of constructible sheaves on the affine flag variety for the Langlands dual group $G^\vee$, see
\cite[Section 4]{BY}.  In particular, the category  $\,_J\Hecke_0$ comes with a distinguished t-structure called the {\it perverse t-structure}
(see also \cite[Section 6]{EL}).
Its heart is a highest weight category with standard objects labeled by the elements of $W_J\backslash W^a$, for $x\in W^a,$
we write $\Delta^-(W_Jx)$ for the corresponding standard object. We also note that  $\,_J\Hecke_0$ is a right module category
over the affine braid group $\Br^a$. For $\lambda\in \Lambda_0$, we write $J_\lambda$ for the corresponding element of $\Br^a$. 
The corresponding endo-functor of $\,_J\Hecke_0$ will be denoted by $?*J_\lambda$.

We will need another t-structure on $\,_J\Hecke_0$ that can be called {\it Frenkel-Gaitsgory} (it was originally introduced in a related
but different context in \cite{FG}), {\it new} (as it was called in \cite{BLin}, where a special case was studied) or {\it stabilized}
t-structure, which is how we call it in the present paper. This t-structure is introduced and studied in detail in Section 
\ref{S_stabilized_Hecke}. For now, we will need the following properties of its heart that is denoted by $\,_J\OCat^{st}$:

\begin{itemize}
\item[(S1)] $\,_J\OCat^{st}$ is a highest weight category. Its standard objects $\Delta^{st}(W_J x)$ are labelled by the elements 
$W_J\backslash W^a$ as follows: pick $x=wt_\mu$ and $\lambda$ with $w\in W, \lambda,\mu\in \Lambda_0$ such that 
$\mu-\lambda$ is anti-dominant. Then $\Delta^{st}(W_Jx):=\Delta^-(xt_{-\lambda})*J_\lambda$.
\item[(S2)] We have $D^b(\,_J\OCat^{st})\xrightarrow{\sim} \,_J\Hecke_0$. 
\end{itemize}

Our claim is that each block $\OCat^\Xi_\epsilon$ is equivalent to one of the categories $_J\,\OCat^{st}$. Let $\alpha^\vee_i,i\in I,$
denote the simple coroots, and $\alpha_0^\vee$ denote the maximal  coroot. The locus 
$$\{\lambda\in \Lambda_0| \langle\alpha^\vee_i,\lambda+\rho\rangle\leqslant 0, \langle\alpha_0^\vee,\lambda+\rho\rangle\geqslant -d\}$$
is a fundamental domain for the action of $W^a$ on $\Lambda_0$. Let $\lambda^\circ$ be the unique representative of $\Xi$
in the fundamental domain. The stabilizer of $\lambda^\circ$ in $W^a$ is a standard parabolic subgroup, for $J$ we take 
its set of simple reflections. With this preparation, we can state the main result of the paper. 

\begin{Thm}\label{Thm:main_basic}
There is an equivalence $\OCat_\epsilon^\Xi\xrightarrow{\sim} \,_J\OCat^{st}$ sending $\Delta_\epsilon(x^{-1}\cdot \lambda^\circ)$
to $\Delta^{st}(W_Jx)$ for each $x\in W^a$.
\end{Thm}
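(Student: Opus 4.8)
The plan is to prove the theorem by deforming both sides over $\Ring$, the ring of formal power series on the affine Cartan (equivalently, functions on the formal neighborhood of $0$ in its dual), comparing the deformations, and then specializing along $\Ring\twoheadrightarrow\C$. So I would first introduce the deformed quantum category $\OCat^\Xi_{\epsilon,\Ring}$ --- modules over a version of the mixed quantum group base changed to $\Ring$ with its Cartan part twisted by the universal deformation parameter --- and verify that it is a highest weight category over $\Ring$ with standard objects $\Delta_{\epsilon,\Ring}(\lambda)$ deforming the quantum Vermas, such that $-\otimes_\Ring\C$ recovers $\OCat^\Xi_\epsilon$ and sends $\Delta_{\epsilon,\Ring}(\lambda)$ to $\Delta_\epsilon(\lambda)$. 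Dually, I would build the $\Ring$-linear deformation ${}_J\Hecke_{0,\Ring}$ of the affine Hecke category from deformed singular Soergel bimodules, transport the stabilized t-structure together with properties (S1) and (S2) to this setting, and obtain a deformed heart ${}_J\OCat^{st}_\Ring$ that is highest weight over $\Ring$ with standards $\Delta^{st}_\Ring(W_Jx)$, again compatible with specialization.

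The core of the argument is a pair of functors
$$\Vfun^{qu}\colon \OCat^\Xi_{\epsilon,\Ring}\longrightarrow \B,\qquad \Vfun^{He}\colon {}_J\OCat^{st}_\Ring\longrightarrow \B,$$
where $\B$ is a suitable category of finitely generated $\Ring$-bimodules. Here $\Vfun^{He}$ is, up to the stabilization twist built into (S1), the tautological functor remembering the underlying bimodule of a (deformed, singular) Soergel bimodule, while $\Vfun^{qu}$ is an affine, quantum version of Soergel's functor $\Vfun=\Hom(P,-)$ to modules over $\End$ of a big projective: one $\Ring$-action comes from the deformation parameter, the other from the completed center of the mixed quantum group acting on $\OCat^\Xi_{\epsilon,\Ring}$. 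The heart of the paper --- the \emph{Rouquier-Soergel theory} --- is then the assertion that each of $\Vfun^{qu}$ and $\Vfun^{He}$, restricted to the exact subcategory of standardly filtered objects, is a full embedding, and that the two essential images coincide. Concretely, one identifies both images with the same explicitly described class of bimodules, namely those admitting a filtration by ``standard'' bimodules $\Ring\otimes_{\Ring^{W_J}}\Ring$ twisted by elements of $W^a$, and one pins down the bijection on standards by checking $\Vfun^{qu}\bigl(\Delta_{\epsilon,\Ring}(x^{-1}\cdot\lambda^\circ)\bigr)\cong\Vfun^{He}\bigl(\Delta^{st}_\Ring(W_Jx)\bigr)$ for every $x\in W^a$. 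Using the description $\Delta^{st}(W_Jx)=\Delta^-(xt_{-\lambda})*J_\lambda$ from (S1) and the $\Br^a$-module structures on both categories, this last point reduces to the compatibility of $\Vfun^{qu}$ with the operators $?*J_\lambda$ on one side and the corresponding translation (wall-crossing) functors on the quantum side --- a compatibility that it suffices to check on the regular antidominant standards and then propagate by functoriality.

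Combining these statements gives an exact equivalence ${}_J\OCat^{st,\Delta}_\Ring\simeq\OCat^{\Xi,\Delta}_{\epsilon,\Ring}$ of the exact categories of standardly filtered objects, matching standards. Since a highest weight category over a complete local Noetherian ring is recovered from its exact category of standardly filtered objects --- it is equivalent to the category of modules over $\End$ of a projective generator, which is itself standardly filtered --- this upgrades to an equivalence $\OCat^\Xi_{\epsilon,\Ring}\xrightarrow{\sim}{}_J\OCat^{st}_\Ring$ of highest weight categories over $\Ring$ sending $\Delta_{\epsilon,\Ring}(x^{-1}\cdot\lambda^\circ)$ to $\Delta^{st}_\Ring(W_Jx)$, and applying $-\otimes_\Ring\C$ then yields the asserted equivalence $\OCat^\Xi_\epsilon\xrightarrow{\sim}{}_J\OCat^{st}$ with the stated behavior on standard objects. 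I expect the main obstacle to be precisely the two claims about $\Vfun^{qu}$ on standardly filtered objects --- full faithfulness together with the exact identification of the image --- since these require developing the deformed (``dynamical'') Soergel-type theory from the ground up, controlling $\Ext$-groups between deformed quantum standards, and handling the singular case $J\neq\varnothing$, where the behavior of $\Vfun^{qu}$ at the affine walls is delicate; by comparison the Hecke-side functor $\Vfun^{He}$ and the final specialization should be routine.
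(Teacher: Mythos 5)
Your top-level architecture is the same as the paper's: deform both categories over $\Ring$, compare via functors to a category of bimodules, and specialize back to $\C$. However, there are two genuine gaps in the middle of the argument.

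First, your quantum-side functor is wrong. You propose to take $\Vfun^{qu}=\Hom(P,-)$ for a ``big projective'' $P$, i.e.\ a direct affine analogue of Soergel's functor out of the antidominant projective. But the poset of $\OCat^\Xi_{\epsilon,\Ring}$ is $\Lambda_0$ (or $W_J\backslash W^a$), which is only \emph{interval} finite, not coideal or ideal finite; it has no maximal or minimal element, so the antidominant projective does not exist as a genuine object of the category. The paper instead uses Whittaker coinvariants: it introduces Sevostyanov's nondegenerate character $\psi$ of the modified negative part $U^-_v$ (Section \ref{SS_Sevostyanov}) and sets $\Wh(M)=\C[v^{\pm 1}]\otimes_{U^-_v,\psi}M$, with the second $\Ring$-action coming from the Harish--Chandra center via Lemma \ref{Lem:center_action}. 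This is a coinvariants functor, not $\Hom$ from a projective, and it is crucial that it works without any pro-object machinery. You could in principle try to replace $P$ by a pro-projective (Remark \ref{Rem:projective_image} notes these exist), but then you are signing up to describe its image as a stabilized Soergel bimodule, which the paper explicitly declines to do and which is not currently available.

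Second, and related, you propose to prove the image equality by ``explicitly describing'' the common image of standardly filtered objects as bimodules filtered by graph bimodules. That is a Soergel-style argument and is considerably stronger than what the paper proves or needs. The paper's ``Rouquier'' input is precisely to avoid this: it introduces the notion of an RS functor (Definition \ref{defi:RS_functor}) and proves an abstract equivalence criterion (Theorem \ref{Thm:hw_equiv}) whose image-matching hypothesis (b) only has to be checked after localization at each height-$1$ prime $\p\subset\Ring$. There the categories become direct sums of pieces with at most two standards, and the coincidence of images reduces to comparing $\Ext^1$'s between standards, computed explicitly on both sides (Lemma \ref{Lem:Ext1_stand_Ost_sing} and Proposition \ref{Prop:quantum_stand_extension}). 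The paper even warns that its functors are not quotient functors and do not induce isomorphisms on $\Ext^1$, so the direct-image-identification route you sketch would fail or at least require an entirely different, and much harder, argument. You would need to replace your ``explicit identification of the image'' step by the localization-at-height-$1$-primes / $\Ext^1$-computation strategy for the proof to go through.
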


\subsection{Description of approach}\label{SS_approach_descr}
The first crucial idea is that Theorem \ref{Thm:main_basic} follows from an equivalence of deformed categories: instead of $\C$-linear 
categories we consider $\Ring$-linear categories, where $\Ring$ is the completion at $0$ of the symmetric algebra of the affine Cartan. 
To define a deformation $\Hecke$ of $\Hecke_0$, we replace the Soergel modules with a suitable version of Soergel bimodules. 
To deform $\OCat_\epsilon$  we follow an old idea of Jantzen: the deformation comes, roughly, from perturbing the actions
of the elements $K_\nu$ and $v$, it is the deformation of $v$ that gives the imaginary direction in the affine Cartan.
The resulting deformation of $\OCat_\epsilon$ will be denoted by $\OCat_\Ring$. 

Now we address the issue of how we define the stabilized t-structure on $\,_J\Hecke$ and how we establish a highest weight structure
on the heart (and on $\,_J\OCat^{st}$). The question relatively easily reduces to the case when $J=\varnothing$. Here a ``coherent realization''
of the stabilized t-structure on $\Hecke_0$ is known thanks to \cite{BLin}: the category $\OCat^{st}$ is equivalent to the category 
of $G$-equivariant coherent sheaves over a certain sheaf of algebras (denoted by $\pi^*\Acal$ in the main body of the paper) on the Springer resolution. Here $G$ is the adjoint group with Lie algebra $\g$. The sheaf of algebras in question is defined over a finite localization of $\Z$ so we can base change it to $\F$, an algebraically closed field of large enough positive characteristic. It turns out that the category $\operatorname{Coh}^{G_\F}(\pi^*\Acal_\F)$ is highest weight: by \cite[Section 6]{ModHCO} it is equivalent to the principal block of the modular category $\OCat$. One can describe the images of the Verma modules in 
$\operatorname{Coh}^{G_\F}(\pi^*\Acal_\F)$, this is also done in \cite[Section 6]{ModHCO}. The description makes sense over 
a finite localization of $\Z$ and hence over $\C$, this gives a highest weight structure on $\OCat^{st}$. The standard objects for this highest weight structure are in fact in $\OCat^-$, the heart of the perverse t-structure. They uniquely deform to $\Ring$-flat objects in 
the heart of the perverse t-structure in $\Hecke$. 
Using this, and some formal nonsense regarding highest weight structures, one can 
then define a full subcategory $\OCat^{st}_\Ring$ of $\Hecke$ that is highest weight, and prove that it is the heart of 
a t-structure (and also an analog of (S2)). The overall procedure is indirect and quite technical, so one should wonder whether 
one define $\OCat^{st}_\Ring$ together with its highest weight structure directly using the Soergel or constructible realization
of $\Hecke$.   

So, now we have two highest weight categories over $\Ring$: $\OCat_\Ring^\Xi$ and $\,_J\OCat^{st}_\Ring$. We want to show that they
are equivalent as highest weight categories. For this we use what we call the {\it Rouquier-Soergel} theory, it is developed in 
Section \ref{S_RS}. The ``Soergel'' part is 
that we construct functors to the category of $\Ring^{W_J}$-$\Ring$-bimodules (here $W_J$ is the standard parabolic subgroup
of $W^a$ with simple reflections $J$). The functor from $\,_J\OCat^{st}_\Ring$ comes from the forgetful functor 
$\,_J\Hecke\rightarrow D^b(\Ring^{W_J}\operatorname{-}\Ring\operatorname{-bimod})$, while the functor from 
$\OCat^\Xi_\Ring$ is given by taking the Whittaker coinvariants. The ``Rouquier'' part is that instead of looking at the image of 
the subcategory of projective/tilting objects (which would be a ``Soergel'' thing to do) we show that our functors are fully faithful on the standardly filtered objects (which is why we need to deform to $\Ring$) and the images in $\Ring^{W_J}\otimes \Ring\operatorname{-mod}$ of the subcategories of standardly filtered objects of $\OCat_\Ring^\Xi$ and $\,_J\OCat^{st}_\Ring$  coincide. A related approach to proving equivalences of highest weight categories  was pioneered in \cite{rouqqsch} and its variants were used in a number of subsequent papers. Our situation is quite different from those papers, for example, the functors we consider are not quotient functors and they do not induce isomorphisms between $\operatorname{Ext}^1$'s.     

\subsection{Related work}
An extended version of the Kazhdan-Lusztig conjecture of Gaitsgory from \cite{Gaitsgory} 
was established by Chen and Fu in \cite{CF}. We note that Gaitsgory has also made a prediction about the behavior of the t-structures 
under the equivalences, \cite[Remark 0.1.5]{Gaitsgory}, and \cite{CF} verifies this prediction. This description agrees with Theorem \ref{Thm:main_basic}\footnote{Recently, Dhillon informed us that he combined results of \cite{CF} with the affine localization
theorem to  get a geometric version of  Theorem \ref{Thm:main_basic}, see \cite[Section 1.3]{DL}}. Overall, 
techniques used in the present paper are very different from \cite{CF} and results are different as well (\cite{CF} does not care about the 
parity of $d$, on the other hand \cite{CF} imposes inequalities on $d$ that do not appear in our approach). Another related 
development (where the techniques are also very different from ours) 
is  \cite{Yang}, where Yang establishes an equivalence between the quantum category $\mathcal{O}$
and the category of twisted Whittaker D-modules on the affine flag variety.  

A more closely related series of works are \cite{Situ2,Situ3}. In \cite{Situ2}, Situ proves an equivalence between 
the block of $\OCat_\epsilon$ corresponding to the orbit of $-\rho$ and the category $\operatorname{Coh}^G(\tilde{\Nilp})$.
Using results of \cite{BLin}, it is easy to see that the latter category is equivalent to $\,_I\OCat^{st}$
(where $I$ is still the set of simple roots for $\g$ and hence a subset of $I^a$). In \cite{Situ3}, published when the present paper was in preparation, Situ 
proves an equivalence $\OCat^\Xi_\epsilon\xrightarrow{\sim}\Coh^G(\pi^*\Acal)$, where $\Xi$ is the orbit of 
$0$, under several assumptions on $d$, most notably that $d>h$ (the Coxeter number; and so the source category is a regular block, which does not exist when $d\leqslant h$) and $d$
is a prime power. 
The method of proof has some similarities and some differences with ours:
\begin{itemize}
\item Situ uses a Soergel type functor to relate the regular and the singular block and then shows that the functor is fully faithful
on the projectives after a suitable deformation. The target categories for the Soergel functors coincide thanks to \cite{Situ2},
and the images of projectives of the deformations of $\OCat^\Xi_\epsilon, \Coh^G(\pi^*\Acal)$ coincide. For our approach, 
the target categories are the same on the nose, but instead of the projectives we deal with the standardly filtered objects. 
Also our base of deformation is larger than Situ's.    
\item To show that the images of projectives coincide, Situ uses Tanisaki's localization theorem, \cite{Tanisaki_abelian}.
This imposes the additional restriction that $d$ is a prime power that does not arise in our approach\footnote{As Situ points out in 
\cite[Remark 5.14]{Situ3} one may be able to remove this restriction.}. 
\item Overall, the coherent realization of the affine Hecke category seems to be of crucial importance for Situ's approach.
For us, it plays a purely technical role: it allows to define $\OCat^{st}_\Ring$ and its highest weight structure. 
Relying on the coherent realization brings some difficulties, at least with our approach: it's harder to deal 
with deformations over $\Ring$ (see Section \ref{SS_coherent_full_deformation} for a brief discussion), and one only easily sees blocks of $\,_J\Hecke_0$ with $J\subset I$. 
\end{itemize} 

\subsection{Variations and future developments}
\subsubsection{More general settings}
One can consider the following modifications of $\OCat_\epsilon$:
\begin{itemize}
\item[(a)] Non-integral parts $\OCat^\zeta_\epsilon$, as defined in Section \ref{SS_O_quant}: we relax the condition on the action of $K_\nu$'s
on $M_\lambda$'s.
\item[(b)] Analogs of $\OCat_\epsilon$, where $\epsilon$ is an even root of $1$ (we ignore the issue of $G_2$ here).
\item[(c)] Analogs of $\OCat_\epsilon$, where we replace $\C$ with an algebraically closed characteristic $p$ field. 
\end{itemize}

Analogs of Theorem \ref{Thm:main_basic} should hold in all these settings. For (a), we expect that one just replaces the affine Weyl
group of $G$ with the affine Weyl group of a suitable pseudo-Levi (one also needs to replace the direct analog of $\OCat_\epsilon^\Xi$
with an actual block). For (b) one likely needs to replace the mixed quantum group with its suitably defined ``even part''
and then, in the integral case, one also needs to replace the affine Weyl group for $G$ with that for the adjoint form of 
the Langlands dual $G^\vee$. In (c) one likely needs to replace the usual Soergel bimodules with 
their modification due to Abe. In all these settings we expect that after relatively minor modifications,
our approach will still work.

\subsubsection{Quantum Harish-Chandra bimodules}
In the remainder of this section we outline a program aimed at understanding other aspects of the representation theory 
of quantum groups at roots of $1$, where Theorem \ref{Thm:main_basic} serves as the first step.

Now we would like to sketch a conjectural analog of Theorem \ref{Thm:main_basic} for a close relative of the category 
$\OCat_\epsilon$, the category of quantum Harish-Chandra bimodules. Below we give a sketch of the definition. 
For simplicity, we assume that $d>h$ and $d$ is odd (and coprime to $3$ if $\g$ has summands of type $G_2$).
We will concentrate on the principal block.  

Let $U^{DK}_\epsilon, 
\dot{U}_\epsilon$ denote the De Concini-Kac and Lusztig forms of the quantum group. Inside $U^{DK}_\epsilon$ we can consider 
a certain ``even'' subalgebra $U_\epsilon^{ev}$ (informally, $U^{DK}_\epsilon$ is an unramified $2^{|I|}$-fold cover of 
$U^{DK}_\epsilon$). One can show that $\dot{U}_\epsilon$ acts on $U_\epsilon^{ev}$ by the adjoint action. Let $U^{lf}_\epsilon$
denote the sum of all finite dimensional $\dot{U}_\epsilon$-submodules in $U_\epsilon^{ev}$, this is a subalgebra. 
The subalgebra of invariants of $\dot{U}_\epsilon$ in $U^{ev}_\epsilon$ coincides with the Harish-Chandra center and is a central
subalgebra of $U^{lf}_{\epsilon}$. Let $U^{lf}_{\epsilon,0}$ denote the quotient of $U^{lf}_{\epsilon}$ by the 
annihilator of the trivial representation of $U^{ev}_\epsilon$ in the Harish-Chandra center. 

We can consider the category  $U^{lf,opp}_{\epsilon,0}\operatorname{-mod}^{\dot{U}_\epsilon}$ of (weakly) $\dot{U}_\epsilon$-equivariant 
right $U^{lf}_{\epsilon,0}$-modules. One can show that modules in this category come with a left $U^{lf}_\epsilon$-action 
that commutes with the right $U^{lf}_{\epsilon,0}$-action. The action of the Harish-Chandra center decomposes 
$U^{lf,opp}_{\epsilon,0}\operatorname{-mod}^{\dot{U}_\epsilon}$ into the sum of infinitesimal blocks, still parameterized 
by the $W^a$-orbits in $\Lambda_0$. Denote the block corresponding to $\Xi$ by $\operatorname{HC}_{\epsilon,\Xi}$. 
One can recover $J\subset I^a$ from $\Xi$ as in Section \ref{SS:main_result}.

We expect the following statement to hold, see  \cite[Theorem 5.9]{ModHCO} for the analogous statement for 
modular Harish-Chandra bimodules:

\begin{itemize}
\item[(*)] We have an equivalence $D^b(\operatorname{HC}_{\epsilon,\Xi})\xrightarrow{\sim} \,_J\Hecke_0$. This equivalence is t-exact 
with respect to the perverse t-structures. 
\end{itemize}

We also expect to have singular analogs of (*) (with singularities on both sides). The general strategy should be similar to the approach
from \cite{ModHCO} as well as a functor from the deformed version of $\operatorname{HC}_{\epsilon,\Xi}$ to the category 
of $\Ring^{W_J}$-$\Ring$-bimodules analogous to the Whittaker coinvariants. Some things are expected to be more complicated than in the 
modular case: for a example, the localization theorem was used in \cite{ModHCO} to control the $K_0$ of the HC category, 
such a theorem is not available in the quantum case in the form/ generality we need (in fact, one could try to prove it using (*)
and Theorem \ref{Thm:main_basic} as explained in the next part).   

\subsubsection{Towards quantum derived localization}
Combining (*) and Theorem \ref{Thm:main_basic} one should get the following derived equivalence:
\begin{equation}\label{eq:quantum_HC_O_equiv}
\bullet\otimes^L_{U^{lf}_\epsilon}\Delta_\epsilon(-2\rho): D^b(\operatorname{HC}_{\epsilon,\Xi})\xrightarrow{\sim}
D^b(\OCat^\Xi_\epsilon).
\end{equation}
A similar equivalence for the modular categories was interpreted in \cite[Section 6]{ModHCO} as the equivariant 
version of the derived localization theorem from \cite{BMR}. One could wonder if one can then interpret 
(\ref{eq:quantum_HC_O_equiv}) in a similar fashion, and then ``de-equivariantize'' to get the actual derived localization
theorem.  We note that under some additional restrictions on $d$ (essentially, that it is a prime power)
the derived localization theorem in the quantum setting was proved by Tanisaki, \cite{Tanisaki_derived1,Tanisaki_derived2}.

\subsubsection{Equivariant irreducibles}
An important problem in the representation theory of the De Concini-Kac forms is to describe the $K_0$ classes of their (finite dimensional)
irreducible representations. This setting is analogous to the representation theory of $U(\g_\F)$, where $\F$ is a characteristic $p$
field. There are two approaches to the problem in the latter setting. 
\begin{enumerate}
\item To work with ordinary irreducible representations, \cite{BM}. The disadvantages here is the absence of ``standard objects''
as well as the explicit combinatorial parametrization of the irreducibles. So one does not expect any explicit (Kazhdan-Lusztig
 type) formulas for the $K_0$-classes.   
\item To work with a suitable category of equivariant modules and study irreducibles there. This is an approach 
taken in \cite{BL}. One now has standard objects, and reasonably explicit parameterizations of simples as well
as formulas for the $K_0$-classes. The price to pay is that it is hard to relate the equivariantly simple objects to the 
actual simples. 
\end{enumerate}

Theorem \ref{Thm:main_basic} (and its deformed version) should allow one to establish the analog of (2) for quantum 
groups -- which we expect to be different from \cite{BL}. 


\smallskip

{\bf Acknowledgements}: I would like to thank Pramod Achar, Roman Bezrukavnikov, Gurbir Dhillon, Pavel Etingof, Yuchen Fu, Quan Situ, and James Tao
for stimulating discussions and Peng Shan for bringing \cite{Situ1}-\cite{Situ3} to my attention. My work was supported by the NSF under grant DMS-2001139. 

\section{Highest weight categories}
\subsection{Finite posets}\label{SS_hw_finite}
\subsubsection{Definition}\label{SSS_hw_finite_def}
In this section we recall the classical definition of a highest weight category over a ring following
\cite{rouqqsch}.

Let $\Ring$ be a Noetherian ring. For a prime ideal $\p\subset \Ring$ let $\kf(\p)$ denote the fraction field of $\Ring/\p$.

By {\it finite $\Ring$-algebra} we mean an associative unital $\Ring$-algebra $A_\Ring$ that is a finitely generated $\Ring$-module. We say that $A_{\Ring}$ is {\it finite projective} if, in addition, it is projective as an $\Ring$-module.
Let $A_\Ring\operatorname{-mod}$ denote the category of finitely generated $A_\Ring$-modules.
If $\Ring'$ is an $\Ring$-algebra, we write $A_{\Ring'}$ for $\Ring'\otimes_{\Ring}A_\Ring$.

Following \cite[Definition 4.11]{rouqqsch} we can define the notion of a highest weight category over $\Ring$. It is an $\Ring$-linear abelian category $\Cat_R$ equivalent to $A_\Ring\operatorname{-mod}$ for some finite projective $\Ring$-algebra $A_\Ring$ with additional structures that are supposed to satisfy certain axioms. The structures are as follows:
\begin{enumerate}
\item  a finite poset $\T$
\item and a family of objects $\Delta_\Ring(\tau)\in \Cat_\Ring$ to be called {\it standard}.
\end{enumerate}

To state the axioms we need some notation. For a subset $\T'\subset \T$, by
a $\mathcal{T}'$-standardly filtered object we mean an object $M_\Ring\subset \Cat_\Ring$
that admits a filtration with successive quotients of the form $\Delta_\Ring(\tau)\otimes_\Ring P$,
where $P$ is a finitely generated projective $\Ring$-module. Let $\Cat_\Ring^{\Delta,\T'}$
denote the full subcategory of $\Cat_\Ring$ consisting of all such objects. For
$\tau\in \T$ define $\T(>\tau)$ as $\{\tau'\in \T| \tau'> \tau\}$. We define
$\T(\geqslant \tau), \T(\leqslant \tau),\T(<\tau)$ similarly. For $\tau\leqslant \tau'$,
set $\T([\tau,\tau']):=\T(\leqslant \tau')\cap \T(\geqslant \tau)$. We write
$\Cat_\Ring^\Delta$ for $\Cat_\Ring^{\Delta,\T}$ and $\Cat_\Ring^{\Delta,>\tau}$
for $\Cat_\Ring^{\Delta,\T(>\tau)}$.

The axioms of a highest weight category over $\Ring$ are as follows.

\begin{itemize}
\item[(i)] The objects $\Delta_\Ring(\tau)$ are flat over $\Ring$.
\item[(ii)] $\Ring\xrightarrow{\sim}\End_{\Cat_\Ring}(\Delta_\Ring)$ for all
$\tau\in \T$.
\item[(iii)] If $\Hom_{\Cat_{\Ring}}(\Delta_\Ring(\tau_1),\Delta_\Ring(\tau_2))\neq 0$,
then $\tau_1\leqslant \tau_2$.
\item[(iv)] If $M_\Ring$ is a nonzero object in $\Cat_\Ring$, then there is $\tau\in \T$
such that $\Hom_{\Cat_\Ring}(\Delta_\Ring(\tau),M_\Ring)\neq 0$.
\item[(v)] For each $\tau\in \T$ there is a projective object in $\Cat_\Ring$ that admits
an epimorphism onto $\Delta_\Ring(\tau)$ with kernel in $\Cat_\Ring^{\Delta,>\tau}$.
\end{itemize}


\subsubsection{Standardly filtered objects}
Here we study some properties of standardly filtered objects. The following is a direct consequence of
\cite[Lemma 4.21]{rouqqsch}.

\begin{Lem}\label{Lem:stand_exact_prop}
The following claims hold:
\begin{enumerate}
\item $\Cat_\Ring^\Delta$ is closed under taking direct summands.
\item The kernel of an epimorphism of standardly filtered objects is standardly filtered.
\end{enumerate}
\end{Lem}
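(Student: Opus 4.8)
The plan is to deduce both statements from the cited \cite[Lemma 4.21]{rouqqsch}, which presumably characterizes $\Cat_\Ring^\Delta$ intrinsically inside $\Cat_\Ring$—for instance, by vanishing of $\Ext^1$ (or higher $\Ext$) against a suitable family of ``costandard''-type objects, together with an appropriate flatness condition over $\Ring$. Once $\Cat_\Ring^\Delta$ is described as the full subcategory of objects $M_\Ring$ that are flat over $\Ring$ and satisfy a list of $\Ext$-vanishing conditions (each of which is additive and half-exact in $M_\Ring$), both claims become formal.

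For claim (1): suppose $M_\Ring = M_\Ring' \oplus M_\Ring''$ with $M_\Ring \in \Cat_\Ring^\Delta$. Flatness over $\Ring$ passes to direct summands. Each $\Ext$-group $\Ext^i_{\Cat_\Ring}(M_\Ring, -)$ decomposes as a direct sum $\Ext^i(M_\Ring',-)\oplus\Ext^i(M_\Ring'',-)$, so vanishing of the total group forces vanishing of each summand. Hence $M_\Ring'$ and $M_\Ring''$ satisfy the same characterizing conditions and lie in $\Cat_\Ring^\Delta$.

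For claim (2): let $0 \to K_\Ring \to M_\Ring \to N_\Ring \to 0$ be exact with $M_\Ring, N_\Ring \in \Cat_\Ring^\Delta$. First, $N_\Ring$ being flat over $\Ring$ means the sequence stays exact after any base change $\Ring \to \Ring/\p$, and then the long exact sequence for $\operatorname{Tor}_*^\Ring(-,\kf(\p))$ together with $\operatorname{Tor}_1^\Ring(N_\Ring,-)=0$ shows $K_\Ring$ is flat over $\Ring$ as well (one checks $\operatorname{Tor}_1^\Ring(K_\Ring,\kf(\p))=0$ for all $\p$, using that $M_\Ring$ is flat). Next, apply the long exact sequence of $\Ext^*_{\Cat_\Ring}(-, E)$ to the short exact sequence, where $E$ runs over the test objects from \cite[Lemma 4.21]{rouqqsch}: the relevant piece reads $\Ext^i(M_\Ring, E) \to \Ext^i(K_\Ring, E) \to \Ext^{i+1}(N_\Ring, E)$, and since the outer terms vanish (both $M_\Ring$ and $N_\Ring$ are standardly filtered, and the $\Ext$-vanishing characterization of $\Cat_\Ring^\Delta$ holds in all degrees $\geqslant 1$), the middle term vanishes too. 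Thus $K_\Ring$ satisfies the characterizing conditions and is standardly filtered.

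The only genuine obstacle is that I am treating \cite[Lemma 4.21]{rouqqsch} as a black box; to make the argument airtight one must confirm that it indeed provides an $\Ext$-vanishing (homological) characterization of $\Cat_\Ring^\Delta$ valid in \emph{all} positive degrees—not merely $\Ext^1$—since claim (2) about kernels genuinely uses the degree-shift in the long exact sequence. If instead the cited lemma is phrased combinatorially (in terms of filtrations), one would prove (2) directly: build a filtration on $K_\Ring$ by intersecting a standard filtration of $M_\Ring$ with $K_\Ring$ and arguing, subquotient by subquotient, that each graded piece is a standardly-filtered subobject of some $\Delta_\Ring(\tau)\otimes_\Ring P$, hence itself of the form $\Delta_\Ring(\tau)\otimes_\Ring P'$ by projectivity of modules over (a localization of) $\Ring$ and the flatness/idempotent-lifting considerations above. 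Either route is routine once the precise content of \cite[Lemma 4.21]{rouqqsch} is pinned down.
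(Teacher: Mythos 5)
Your reconstruction is correct and follows the route the paper implicitly delegates to: the paper gives no argument beyond the citation, and \cite[Lemma 4.21]{rouqqsch} is exactly the homological criterion you anticipate — membership in $\Cat_\Ring^\Delta$ is equivalent to $\Ring$-projectivity together with vanishing of $\Ext^1_{\Cat_\Ring}(-,\nabla_\Ring(\tau))$ for all $\tau$. Your residual worry, that the criterion might need to be stated in all positive degrees for claim (2) to close, resolves itself: once an object is known to be standardly filtered, $\Ext^i_{\Cat_\Ring}(-,\nabla_\Ring(\tau))=0$ for all $i>0$ follows automatically by induction on the length of a standard filtration from the relation (\ref{eq:Ext_stand_costand}) (which is \cite[Proposition 4.19]{rouqqsch}). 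So the term $\Ext^2_{\Cat_\Ring}(N_\Ring,\nabla_\Ring(\tau))$ in your long exact sequence vanishes for free, and the degree-one criterion alone suffices to conclude $K_\Ring\in\Cat_\Ring^\Delta$. Similarly, for claim (1) the degree-one criterion plus the fact that $\Ring$-projectivity passes to summands (over a Noetherian ring, finitely generated flat means projective) is all that is needed, exactly as you say.
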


\begin{Rem}\label{Rem:exact_cat_stand}
It follows from (2) of Lemma \ref{Lem:stand_exact_prop} that $\Cat_\Ring^\Delta$ is an exact category in the sense of Quillen.
\end{Rem}

Further, we claim that every $M\in \Cat_\Ring^{\Delta}$ admits a natural filtration
indexed by poset coideals in $\T$.


\begin{Lem}\label{Lem:stand_filtration}
The following claims are true:
\begin{enumerate}
\item For each poset coideal $\T^0$ there is a unique subobject $M_{\T^0}\subset M$ with
the following property: $M_{\T^0}\in \Cat_\Ring^{\Delta,\T^0}$ and $M/M_{\T^0}\in \Cat_\Ring^{\Delta,\T\setminus \T^0}$.  
\item If $M,N\in \Cat_\Ring^\Delta$ and $\varphi:M\rightarrow N$ is a morphism, then
$\varphi(M_{\T^0})\subset N_{\T^0}$.
\item If $\tau$ is an epimorphism, then $$\varphi(M_{\T^0})= N_{\T^0}.$$
\end{enumerate}
\end{Lem}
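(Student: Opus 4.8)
The plan is to prove the three claims of Lemma \ref{Lem:stand_filtration} in order, bootstrapping from the single-step case where $\T^0$ is obtained by removing a maximal element of $\T$, and then iterating along a refinement of the partial order to a total order.

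First I would treat the case of a one-element difference: suppose $\tau\in\T$ is maximal in $\T$ and $\T^0=\T\setminus\{\tau\}$, which is a coideal. Given $M\in\Cat_\Ring^\Delta$, I want a subobject $M_{\T^0}$ with $M_{\T^0}\in\Cat_\Ring^{\Delta,\T^0}$ and $M/M_{\T^0}$ a direct sum of copies of $\Delta_\Ring(\tau)\otimes_\Ring P$. For existence, one starts with any standard filtration of $M$; using axiom (iii) (morphisms between standard objects go up in the order) together with $\tau$ maximal, one can reorder/reshuffle the filtration so that all the $\Delta_\Ring(\tau)$-subquotients appear on top, i.e.\ $M$ has a subobject $N$ in $\Cat_\Ring^{\Delta,\T^0}$ with $M/N$ a sum of $\Delta_\Ring(\tau)\otimes_\Ring P$'s — this is exactly the content extracted from \cite[Lemma 4.21]{rouqqsch}, which also underlies Lemma \ref{Lem:stand_exact_prop}. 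For uniqueness: if $N,N'$ are two such subobjects, the composite $N'\hookrightarrow M\twoheadrightarrow M/N$ lands in a sum of $\Delta_\Ring(\tau)$'s, but $N'$ is filtered by $\Delta_\Ring(\sigma)$ with $\sigma\in\T^0$, hence $\sigma<\tau$ or $\sigma$ incomparable to $\tau$, and by (iii) plus flatness there are no nonzero maps $\Delta_\Ring(\sigma)\to\Delta_\Ring(\tau)\otimes_\Ring P$ unless $\sigma\le\tau$ with $\sigma\neq\tau$ excluded; so the composite is zero and $N'\subset N$, symmetrically $N\subset N'$.

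Next, for a general coideal $\T^0$, I would choose a chain of coideals $\T=\T_0\supset\T_1\supset\cdots\supset\T_k=\T^0$ where each $\T_{i+1}=\T_i\setminus\{\tau_i\}$ with $\tau_i$ maximal in $\T_i$ (possible since any finite poset difference along coideals can be exhausted by removing maximal elements one at a time). Applying the one-step construction successively — noting that $M_{\T_i}$ lies in $\Cat_\Ring^{\Delta,\T_i}$, so the construction applies inside it — gives a descending sequence of subobjects ending in a candidate $M_{\T^0}$. One checks $M/M_{\T^0}\in\Cat_\Ring^{\Delta,\T\setminus\T^0}$ by assembling the successive quotients $M_{\T_i}/M_{\T_{i+1}}$, each a sum of $\Delta_\Ring(\tau_i)\otimes_\Ring P$'s with $\tau_i\in\T\setminus\T^0$; here part (2) of Lemma \ref{Lem:stand_exact_prop} (kernels of epis between standardly filtered objects are standardly filtered) guarantees the intermediate objects stay in $\Cat_\Ring^\Delta$. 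Independence of the chosen chain follows from uniqueness, which I would prove directly in the general case by the same Hom-vanishing argument: any $N\subset M$ with $N\in\Cat_\Ring^{\Delta,\T^0}$ and $M/N\in\Cat_\Ring^{\Delta,\T\setminus\T^0}$ must satisfy $\Hom(\Delta_\Ring(\sigma),M/N)$-type obstructions forcing $N=M_{\T^0}$, because a standard subquotient $\Delta_\Ring(\sigma)$ of $N$ with $\sigma\in\T^0$ cannot map nontrivially into the $\Delta_\Ring(\tau)$'s ($\tau\in\T\setminus\T^0$) composing $M/N$: if it did, (iii) would give $\sigma\le\tau$, and since $\T^0$ is a coideal this would force $\tau\in\T^0$, a contradiction.

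For (2), given $\varphi:M\to N$ and the functoriality of $M_{\T^0}$: the composite $M_{\T^0}\xrightarrow{\varphi} N\twoheadrightarrow N/N_{\T^0}$ has source filtered by $\Delta_\Ring(\sigma)$, $\sigma\in\T^0$, and target filtered by $\Delta_\Ring(\tau)$, $\tau\in\T\setminus\T^0$; by the coideal property and (iii) every such Hom group vanishes (a standard dévissage over the filtrations reduces to $\Hom(\Delta_\Ring(\sigma),\Delta_\Ring(\tau)\otimes_\Ring P)$, which is $P\otimes_\Ring\Hom(\Delta_\Ring(\sigma),\Delta_\Ring(\tau))=0$ using (i) for flatness and (iii)), so $\varphi(M_{\T^0})\subset N_{\T^0}$. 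For (3), when $\varphi$ is an epimorphism: $\varphi$ induces $\bar\varphi:M/M_{\T^0}\to N/N_{\T^0}$ which is also an epimorphism, and both sides lie in $\Cat_\Ring^{\Delta,\T\setminus\T^0}$; its kernel is standardly filtered by Lemma \ref{Lem:stand_exact_prop}(2), and I would argue this kernel must be in $\Cat_\Ring^{\Delta,\T\setminus\T^0}$ as well (it is a subobject of $M/M_{\T^0}$ inheriting the filtration type via the uniqueness part), while simultaneously $\varphi(M_{\T^0})=:N'$ sits inside $N_{\T^0}$ with $N_{\T^0}/N'$ filtered by $\Delta_\Ring(\tau)$, $\tau\in\T\setminus\T^0$ — but $N_{\T^0}\in\Cat_\Ring^{\Delta,\T^0}$ forces $N_{\T^0}/N'=0$, hence $\varphi(M_{\T^0})=N_{\T^0}$. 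I expect the main obstacle to be the bookkeeping in the reordering step of the one-element case — making rigorous the claim that the $\Delta_\Ring(\tau)$-subquotients can be pushed to the top of a standard filtration — since this is where one genuinely invokes \cite[Lemma 4.21]{rouqqsch} and must be careful that the reshuffling produces honest subobjects and not merely subquotients; everything downstream is formal Hom-vanishing driven by axiom (iii) and the coideal condition. (I note the statement has a typo: part (3) should read ``if $\varphi$ is an epimorphism'' rather than ``if $\tau$ is an epimorphism''.)
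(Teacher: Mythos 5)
Your treatment of (1) and (2) is correct and follows essentially the same Hom/Ext-vanishing route as the paper: the paper cites \cite[Proposition 4.13]{rouqqsch} (which gives $\operatorname{Ext}^i(\Delta_\Ring(\tau),\Delta_\Ring(\tau'))\neq 0\Rightarrow\tau<\tau'$) for existence and axiom (iii) for uniqueness, while you build the filtration by stripping maximal labels one at a time, but the mechanism --- reordering a standard filtration using the absence of backward Exts, then killing Homs across the coideal boundary --- is the same. The typo observation in (3) is correct.

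There is, however, a genuine gap in your argument for (3). You write that $N_{\T^0}/N'$ (where $N'=\varphi(M_{\T^0})$) is ``filtered by $\Delta_\Ring(\tau)$, $\tau\in\T\setminus\T^0$'' because it arises from the kernel of $\bar\varphi:M/M_{\T^0}\to N/N_{\T^0}$. What the snake lemma actually gives is a surjection $\ker\bar\varphi\twoheadrightarrow N_{\T^0}/N'$, and a cokernel of a map between standardly filtered objects need not be standardly filtered --- this is precisely why $\Cat_\Ring^\Delta$ is only an exact, not abelian, subcategory. So the assertion that $N_{\T^0}/N'$ is $(\T\setminus\T^0)$-standardly filtered is unjustified, and the concluding Hom-vanishing against $N_{\T^0}\in\Cat_\Ring^{\Delta,\T^0}$ does not go through. (Your aside about $\ker\bar\varphi$ itself lying in $\Cat_\Ring^{\Delta,\T\setminus\T^0}$ is fine: it \emph{is} standardly filtered by Lemma \ref{Lem:stand_exact_prop}(2), so one can apply part (1) to it and kill the $\T^0$-piece by a Hom-vanishing against $M/M_{\T^0}$; the problem is only in passing from $\ker\bar\varphi$ to its quotient.)

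The paper avoids this entirely. For (3) it first treats the coideal $\T^0=\T(\geqslant\tau)$ for $\tau$ maximal among the labels appearing in $M$: then $M_{\T^0}=P_{M,\tau}\otimes_\Ring\Delta_\Ring(\tau)$ and $N_{\T^0}=P_{N,\tau}\otimes_\Ring\Delta_\Ring(\tau)$ are isotypic, and taking the projective $P\twoheadrightarrow\Delta_\Ring(\tau)$ from axiom (v) one gets $P_{M,\tau}=\Hom(P,M)\twoheadrightarrow\Hom(P,N)=P_{N,\tau}$ by projectivity, hence $M_{\T^0}\twoheadrightarrow N_{\T^0}$; one then passes to $M/M_{\T^0}\twoheadrightarrow N/N_{\T^0}$ and inducts on the number of labels in $M$. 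This replaces your cokernel argument with an exactness-of-$\Hom(P,-)$ argument, which is the tool that genuinely exploits axiom (v) and sidesteps the issue that images and cokernels in $\Cat_\Ring$ do not stay inside $\Cat_\Ring^\Delta$.
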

\begin{proof}
By definition, $M$ admits a filtration $\{0\}=M_0\subsetneq M_1\subsetneq\ldots\subsetneq
M_k=M$ such that $M_i/M_{i-1}\cong P_i\otimes_\Ring \Delta_\Ring(\tau_i)$ for some $\tau_1,\ldots,\tau_k\in \T$.
By \cite[Proposition 4.13]{rouqqsch}, we have $\Ext^i_{\Cat_\Ring}(\Delta_\Ring(\tau),\Delta_\Ring(\tau'))\neq 0\Rightarrow
\tau<\tau'$.  The existence of $M_{\T^0}$ with the required properties follows. The uniqueness follows from axiom 
(iii). 

(2) is also an immediate consequence of axiom (iii). Now we proceed to (3).  Let $\tau$ be a maximal element in $\T$
with $M_{\T^0}\neq \{0\}$, where $\T^0:=\T(\geqslant \tau)$. Once we know that $\varphi(M_{\T^0})=N_{\T^0}$ we can replace $M$ and $N$
with $M/M_{\T^0}$ and $N/N_{\T^0}$ and argue by induction on the cardinality of $\{\tau\in \T| M_{\T(\geqslant \tau)}\neq 0\}$.

Take a projective object $P$ with $P\twoheadrightarrow \Delta_\Ring(\tau)$ as in axiom (v).
We have $M_{\T^0}=P_{M,\tau}\otimes_{\Ring}\Delta_\Ring(\tau), N_{\T^0}=P_{N,\tau}\otimes_{\Ring}\Delta_\Ring(\tau)$
for projective $\Ring$-modules $P_{M,\tau},P_{N,\tau}$.
We have $\Hom_{\Cat_\Ring}(P,M)=P_{M,\tau}\twoheadrightarrow P_{N,\tau}=\Hom_{\Cat_\Ring}(P,N)$,
that implies $M_{\T_0}\twoheadrightarrow N_{\T^0}$.
\end{proof}

\begin{Rem}\label{Rem:coarsest_hw_order}
Suppose $\Delta_\Ring(\tau),\tau\in \T,$ are the standard objects for some highest weight structure.
The coarsest order making $\Cat_\Ring$ into a highest weight category with these standard objects
is given by as the transitive closure of the following relation: $\tau\preceq \tau'$ if
there is $i$ with $\operatorname{Ext}^i_{\Cat_\Ring}(\Delta_\Ring(\tau),\Delta_\Ring(\tau'))\neq 0$.
\end{Rem}

\subsubsection{Projective objects}\label{SSS_proj_hw_finite}
Now we discuss the structure of projective objects in a highest weight category $\Cat_\Ring$.
We write $\Cat_\Ring\operatorname{-proj}$ for the full subcategory of projective
objects in $\Cat_\Ring$, this is an additive category. Thanks to
\cite[Proposition 4.13]{rouqqsch}, we have $\Cat_\Ring\operatorname{-proj}\subset \Cat_\Ring^\Delta$.

Next, we have the following result, \cite[Lemma 4.22]{rouqqsch}.

\begin{Lem}\label{Lem:proj_charact}
Suppose $M\in \Cat_\Ring^{\Delta}$. Then $M\in\Cat_\Ring\operatorname{-proj}$ if and only if
$\operatorname{Ext}^1_{\Cat_\Ring}(M,\Delta_\Ring(\tau))=0$ for all $\tau\in \T$.
\end{Lem}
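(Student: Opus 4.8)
The forward implication is immediate: if $M\in\Cat_\Ring\operatorname{-proj}$ then $\operatorname{Ext}^1_{\Cat_\Ring}(M,N)=0$ for every object $N$, in particular for $N=\Delta_\Ring(\tau)$.

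For the converse, the plan is to exhibit $M$ as a direct summand of a free module. Fix an equivalence $\Cat_\Ring\simeq A_\Ring\operatorname{-mod}$ with $A_\Ring$ a finite projective $\Ring$-algebra. Since $M$ is finitely generated, choose an epimorphism $p\colon P\twoheadrightarrow M$ with $P=A_\Ring^{\oplus n}$ free, and set $K=\ker p$. The module $P$ is projective, hence lies in $\Cat_\Ring^\Delta$ by \cite[Proposition 4.13]{rouqqsch}; by hypothesis $M\in\Cat_\Ring^\Delta$ as well, so Lemma \ref{Lem:stand_exact_prop}(2) gives $K\in\Cat_\Ring^\Delta$. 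It therefore suffices to show that the short exact sequence $0\to K\to P\xrightarrow{p} M\to 0$ splits, i.e.\ that $\operatorname{Ext}^1_{\Cat_\Ring}(M,K)=0$; then $M$ is a direct summand of $P$ and hence projective.

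To prove this vanishing I would run a dévissage along a standard filtration of $K$. Pick a filtration $0=K_0\subset K_1\subset\dots\subset K_r=K$ with $K_i/K_{i-1}\cong\Delta_\Ring(\tau_i)\otimes_\Ring P_i$ for finitely generated projective $\Ring$-modules $P_i$, which exists by the very definition of $\Cat_\Ring^\Delta$. Since $P_i$ is a direct summand of some $\Ring^{\oplus m}$, one has $\operatorname{Ext}^1_{\Cat_\Ring}(M,\Delta_\Ring(\tau_i)\otimes_\Ring P_i)\cong\operatorname{Ext}^1_{\Cat_\Ring}(M,\Delta_\Ring(\tau_i))\otimes_\Ring P_i=0$ by the hypothesis on $M$. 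Now induct on $i$: the long exact sequence attached to $0\to K_{i-1}\to K_i\to\Delta_\Ring(\tau_i)\otimes_\Ring P_i\to 0$ yields the exact piece $\operatorname{Ext}^1_{\Cat_\Ring}(M,K_{i-1})\to\operatorname{Ext}^1_{\Cat_\Ring}(M,K_i)\to\operatorname{Ext}^1_{\Cat_\Ring}(M,\Delta_\Ring(\tau_i)\otimes_\Ring P_i)$, whose outer terms vanish — the left one by induction (with trivial base case $K_0=0$), the right one as just noted. Hence $\operatorname{Ext}^1_{\Cat_\Ring}(M,K)=\operatorname{Ext}^1_{\Cat_\Ring}(M,K_r)=0$, the sequence splits, and $M$ is projective.

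The only genuinely technical point is the interchange of $\operatorname{Ext}^1_{\Cat_\Ring}(M,-)$ with tensoring by a finitely generated projective $\Ring$-module $P_i$. Here one uses that the objects involved are finitely generated $A_\Ring$-modules and that $P_i$ is a direct summand of a finite free $\Ring$-module, so the statement reduces to the additivity of $\operatorname{Ext}$ in its second argument together with naturality of the resulting splitting. Everything else is formal once Lemma \ref{Lem:stand_exact_prop}(2) and the inclusion $\Cat_\Ring\operatorname{-proj}\subset\Cat_\Ring^\Delta$ are in hand; note that, somewhat pleasantly, no induction on the poset $\T$ is needed.
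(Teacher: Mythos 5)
Your proof is correct. The paper cites \cite[Lemma 4.22]{rouqqsch} without reproducing the argument; the approach you take — reduce to splitting the universal extension $0\to K\to P\to M\to 0$ with $P$ free, observe $K\in\Cat_\Ring^\Delta$ by Lemma \ref{Lem:stand_exact_prop}(2), and kill $\operatorname{Ext}^1(M,K)$ by a dévissage along a standard filtration of $K$ (the vanishing on each graded piece following from the hypothesis plus the fact that $\Delta_\Ring(\tau_i)\otimes_\Ring P_i$ is a direct summand of a finite direct sum of copies of $\Delta_\Ring(\tau_i)$) — is exactly the standard argument, and the one Rouquier's lemma rests on.
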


\begin{Rem}\label{Rem:stand_filt_recover}
Thanks to Lemma \ref{Lem:proj_charact} we can recover $\Cat_\Ring$ from the exact category
$\Cat_\Ring^\Delta$. Namely, Lemma \ref{Lem:proj_charact} recovers the additive category
$\Cat_\Ring\operatorname{-proj}$. Then one recovers $\Cat_\Ring$ from $\Cat_\Ring\operatorname{-proj}$
in a standard way. Namely, for each $\tau\in \T$ choose a projective object $P_{\Ring,\tau}$ as
in axiom (v). Then the functor $\Hom_{\Cat_\Ring}(\bigoplus_\tau P_{\Ring,\tau},\bullet)$
is an equivalence of $\Cat_\Ring$ and $A_\Ring\operatorname{-mod}$, where $A_\Ring$
is the opposite endomorphism algebra of $\bigoplus_\tau P_{\Ring,\tau}$.

In particular, if $\Cat^1_\Ring,\Cat^2_\Ring$ are two highest weight
categories, and $\varphi:\Cat^{1,\Delta}_\Ring\xrightarrow{\sim} \Cat^{2,\Delta}_\Ring$
is an equivalence of exact categories, then $\varphi$ uniquely extends to an equivalence
of highest weight categories $\Cat^1_\Ring\xrightarrow{\sim} \Cat^2_\Ring$.
\end{Rem}

\subsubsection{Costandard and tilting objects}
Let $\Cat_\Ring$ be a highest weight category over $\Ring$ with poset $\mathcal{T}$.
According to \cite[Proposition 4.19]{rouqqsch}, for all $\tau\in \T$, there is a unique (up to isomorphism) object $\nabla_\Ring(\tau)$ that is flat over $\Ring$ and satisfies
\begin{equation}\label{eq:Ext_stand_costand}
\operatorname{Ext}^i_{\Cat_\Ring}(\Delta_\Ring(\tau'),\nabla_\Ring(\tau))\cong \Ring^{\oplus \delta_{i,0}\delta_{\tau,\tau'}},
\end{equation}
where $\delta_{?,\bullet}$ is the Kronecker symbol.
These objects are called {\it costandard}.

\begin{Rem}\label{Rem:hw_field}
Suppose $\Ring$ is a field $\kf$. Then there is a bijection between the set of isomorphism classes of irreducible objects in $\Cat_\kf$ and $\T$: to $\tau\in \T$ we assign the unique irreducible quotient
of $\Delta_\kf(\tau)$, denote it by $L_\kf(\tau)$. The kernel of $\Delta_\kf(\tau)\rightarrow L_\kf(\tau)$
is filtered by $L_\kf(\tau')$ with $\tau'<\tau$, this follows from axiom (v). Also note that
$L_\kf(\tau)\hookrightarrow \nabla_\kf(\tau)$ and the cokernel is filtered by $L_\kf(\tau')$
with $\tau'<\tau$.
\end{Rem}

The category $\Cat_\Ring^{opp}$ is highest weight with respect to the poset $\T^{opp}$
(the same set as $\T$ but with the opposite order) and the standard objects $\nabla_\Ring(\tau)$
for $\tau\in \T$, see \cite[Proposition 4.19]{rouqqsch}. By $\Cat_\Ring^\nabla$ we denote
the full subcategory of costandardly filtered objects in $\Cat_\Ring$.

%

By a {\it tilting object} in $\Cat_\Ring$ we mean an object in the full subcategory
$\Cat_\Ring\operatorname{-tilt}:=\Cat_{\Ring}^\Delta\cap \Cat_\Ring^\nabla$. According
to \cite[Proposition 4.26]{rouqqsch}, every $M\in \Cat_\Ring^\Delta$ admits a monomorphism
into a tilting object whose cokernel is standardly filtered. Similarly, every costandardly
filtered object admits an epimorphism whose kernel is costandardly filtered. Moreover,
according to \cite[Proposition 4.26]{rouqqsch}, for each $\tau\in \T$, we can find
a tilting object $T_\Ring(\tau)$ that admits
\begin{itemize}
\item a monomorphism from $\Delta_\Ring(\tau)$ with cokernel in $\Cat_\Ring^{\Delta,<\tau}$,
\item and an epimorphism onto $\nabla_\Ring(\tau)$ with kernel in $\Cat_\Ring^{\nabla,<\tau}$.
\end{itemize}

\subsubsection{Highest weight subcategories}\label{SSS_hw_sub}
Let $\T_0\subset \T$ be a poset ideal. Let $\Cat_{\T_0,\Ring}$ denote the Serre span of
$\Delta_\Ring(\tau), \tau\in \T_0$. Then $\Cat_{\T_0,\Ring}$ is a highest weight category
with poset $\T_0$ and the standard objects $\Delta_\Ring(\tau), \tau\in \T_0$, see
\cite[Proposition 4.13]{rouqqsch}. A subcategory of the form $\Cat_{\T_0,\Ring}$ will be called a {\it highest weight subcategory} of
$\Cat_\Ring$. Note that the costandard objects in $\Cat_{\T_0,\Ring}$
are $\nabla_{\Ring}(\tau),\tau\in \T_0$. 

\begin{Lem}\label{Lem:full_derived_embedding}
The inclusion functor $D^b(\Cat_{\T_0,\Ring})\rightarrow D^b(\Cat_\Ring)$ is full.
\end{Lem}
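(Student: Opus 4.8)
The plan is to show that for any $M, N \in \Cat_{\T_0,\Ring}$ and any $i \geqslant 0$, the natural map $\Ext^i_{\Cat_{\T_0,\Ring}}(M,N) \to \Ext^i_{\Cat_\Ring}(M,N)$ is an isomorphism; fullness of the inclusion on $D^b$ is the surjectivity part of this statement (applied with $N$ replaced by a shift), so it suffices to treat $\Hom$'s in the derived categories, i.e.\ to prove the $\Ext$-comparison. The standard device is to reduce to the case where $M$ and $N$ are standardly/costandardly filtered and then use an acyclicity statement. Concretely, first I would reduce to $M = \Delta_\Ring(\tau)$ and $N = \nabla_\Ring(\tau')$ with $\tau,\tau' \in \T_0$: every object of $\Cat_{\T_0,\Ring}$ has a finite resolution by objects that are $\T_0$-standardly filtered (using axiom (v) inside the highest weight subcategory $\Cat_{\T_0,\Ring}$, whose standard objects are $\Delta_\Ring(\tau)$, $\tau\in\T_0$), and dually a finite coresolution by costandardly filtered objects; a standardly filtered object is an iterated extension of $\Delta_\Ring(\tau)\otimes_\Ring P$ for projective $\Ring$-modules $P$, and similarly for costandard. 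A dimension-shifting/spectral-sequence argument then reduces the general comparison to the generators $\Delta_\Ring(\tau)$ and $\nabla_\Ring(\tau')$.

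The key computational input is that for $\tau,\tau'\in\T_0$ one has
\begin{equation*}
\Ext^i_{\Cat_{\T_0,\Ring}}(\Delta_\Ring(\tau),\nabla_\Ring(\tau')) \cong \Ring^{\oplus \delta_{i,0}\delta_{\tau,\tau'}} \cong \Ext^i_{\Cat_\Ring}(\Delta_\Ring(\tau),\nabla_\Ring(\tau')).
\end{equation*}
The right-hand isomorphism is exactly \eqref{eq:Ext_stand_costand}, which holds in any highest weight category. The left-hand isomorphism is the same statement applied inside $\Cat_{\T_0,\Ring}$, which is itself a highest weight category with standard objects $\Delta_\Ring(\tau)$ and costandard objects $\nabla_\Ring(\tau')$, $\tau,\tau'\in\T_0$, by the discussion in \S\ref{SSS_hw_sub}. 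So both sides are canonically identified with the same $\Ring$-module, and it remains only to check that the identifications are compatible with the inclusion functor — which follows because in both cases the degree-$0$ part is computed by $\Hom$ in the ambient abelian category and the inclusion $\Cat_{\T_0,\Ring}\hookrightarrow\Cat_\Ring$ is fully faithful (and exact), so $\Hom_{\Cat_{\T_0,\Ring}}(\Delta_\Ring(\tau),\nabla_\Ring(\tau')) = \Hom_{\Cat_\Ring}(\Delta_\Ring(\tau),\nabla_\Ring(\tau'))$ on the nose.

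With the $\Ext$-comparison for the distinguished generators in hand, one runs the resolution argument: pick a finite resolution $P_\bullet \to M$ by $\T_0$-standardly filtered objects and a finite coresolution $N \to Q^\bullet$ by $\T_0$-costandardly filtered objects. In both $\Cat_{\T_0,\Ring}$ and $\Cat_\Ring$ these compute $\Ext^i(M,N)$ via the total complex $\Hom(P_\bullet, Q^\bullet)$, because $\Ext^{>0}(\Delta\text{-filtered}, \nabla\text{-filtered}) = 0$ in either category (each summand $\Delta_\Ring(\tau)\otimes_\Ring P$ versus $\nabla_\Ring(\tau')\otimes_\Ring P'$ gives $\Ext^{>0}$ vanishing by the computation above together with flatness over $\Ring$, and extensions of acyclic objects are acyclic). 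Since the resolution and coresolution lie in $\Cat_{\T_0,\Ring}$ and the $\Hom$-complexes agree term by term (fully faithfulness of the inclusion), the two computations of $\Ext^i(M,N)$ coincide. In particular $\Hom_{D^b(\Cat_{\T_0,\Ring})}(M, N[i]) \xrightarrow{\sim} \Hom_{D^b(\Cat_\Ring)}(M,N[i])$ for all objects $M,N$ of $\Cat_{\T_0,\Ring}$ and all $i$, which by the standard devissage over the triangulated structure gives fullness (indeed full faithfulness) of $D^b(\Cat_{\T_0,\Ring}) \to D^b(\Cat_\Ring)$.

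The main obstacle I anticipate is purely bookkeeping: making sure that the resolutions/coresolutions can be chosen \emph{finite} and with terms that are standardly (resp.\ costandardly) filtered \emph{and} that the $\Hom$-double-complex argument is set up so that the two ambient categories genuinely produce the same spectral sequence — i.e.\ that the acyclicity $\Ext^{>0}_{\Cat_\Ring}(\Delta\text{-filt},\nabla\text{-filt}) = 0$ and its $\Cat_{\T_0,\Ring}$-analogue are invoked correctly when reducing the total complex. None of this is deep, but it is exactly the place where one could accidentally use an infinite resolution or forget a flatness hypothesis over $\Ring$; the results of \S\ref{SS_hw_finite} (finiteness of $\T$, axiom (v), Lemma~\ref{Lem:proj_charact}, and \eqref{eq:Ext_stand_costand}) are precisely what guarantee finiteness and the required $\Ext$-vanishing.
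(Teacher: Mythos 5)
Your overall strategy — reduce the $\Ext$-comparison to standard/costandard pairs and exploit the vanishing $\Ext^{>0}_{\Cat}(\Delta\text{-filt},\nabla\text{-filt})=0$ — is the same idea that underlies the paper's proof. The paper packages it more efficiently by working with the tilting objects $T_\Ring(\tau)$, $\tau\in\T_0$: being simultaneously $\Delta$- and $\nabla$-filtered, they have no higher self-extensions either in $\Cat_{\T_0,\Ring}$ or in $\Cat_\Ring$, so $K^b$ of the additive category of tiltings embeds fully into both derived categories, and since the tiltings generate $D^b(\Cat_{\T_0,\Ring})$, fullness of the inclusion follows at once.

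The concrete gap in your execution is the assertion that the total complex $\Hom(P_\bullet,Q^\bullet)$ computes $\Ext^i(M,N)$ \emph{in both} $\Cat_{\T_0,\Ring}$ and $\Cat_\Ring$ purely from $\Ext^{>0}(\Delta\text{-filt},\nabla\text{-filt})=0$. If $P_\bullet\to M$ is a projective resolution in $\Cat_{\T_0,\Ring}$ (a $\Delta$-filtered resolution) and $N\to Q^\bullet$ an injective coresolution in $\Cat_{\T_0,\Ring}$ (a $\nabla$-filtered coresolution), then the first spectral sequence $E_1^{p,q}=\Ext^q(P_p,N)$ of the double complex degenerates in $\Cat_{\T_0,\Ring}$, because $P_p$ is projective there; so $\mathrm{Tot}$ does compute $\Ext^\bullet_{\Cat_{\T_0,\Ring}}(M,N)$. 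But in $\Cat_\Ring$ the object $P_p$ is only $\Delta$-filtered, not projective, so $\Ext^q_{\Cat_\Ring}(P_p,N)$ need not vanish for $q>0$ (this needs $N$ to be $\nabla$-filtered, not arbitrary); dually $\Ext^p_{\Cat_\Ring}(M,Q^q)$ need not vanish for $p>0$. Neither spectral sequence collapses in $\Cat_\Ring$, and there is no reason for $\mathrm{Tot}$ to compute $\Ext^\bullet_{\Cat_\Ring}(M,N)$. The vanishing you quote only gives $\Ext^{>0}(P_p,Q^q)=0$, which shows $\mathrm{Tot}(\Hom(P_\bullet,Q^\bullet))$ is quasi-isomorphic to $\Hom(P_\bullet,N)$ and to $\Hom(M,Q^\bullet)$ — neither of which computes $\Ext^\bullet_{\Cat_\Ring}(M,N)$ for general $M,N$. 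So the final equality you draw is not justified.

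The argument can be repaired without tiltings by splitting the reduction into two separate dimension-shifting inductions, never using the double complex for arbitrary $M$ and $N$ simultaneously. First show $\Ext^i_{\Cat_{\T_0,\Ring}}(\Delta_\Ring(\tau),N)\xrightarrow{\sim}\Ext^i_{\Cat_\Ring}(\Delta_\Ring(\tau),N)$ for all $N\in\Cat_{\T_0,\Ring}$: coresolve $N$ by $\nabla$-filtered objects; the single complex $\Hom(\Delta_\Ring(\tau),Q^\bullet)$ then computes the $\Ext$'s in both ambient categories because $Q^\bullet$ is $\Hom(\Delta_\Ring(\tau),-)$-acyclic in each, and the complexes agree by full faithfulness of the inclusion. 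Extend to $\Delta$-filtered $M$ by the long exact sequence. Then handle arbitrary $M$ by induction on $i$: let $i_0$ be minimal such that some comparison map fails to be an isomorphism, choose $0\to M'\to P_0\to M\to 0$ with $P_0$ $\Delta$-filtered, and apply the five-lemma to the two long exact sequences; the terms in degrees $<i_0$ are isomorphisms by minimality, the $P_0$-terms are isomorphisms by the previous step, and one checks that $\Ext^{i_0}(M',N)\to\Ext^{i_0}_\Ring(M',N)$ is injective by repeating the same trick once more for $M'$. This induction works degree by degree and in particular does not require the $\Delta$-filtered resolution to be finite, which is an additional advantage over the way you phrased things (finite $\Delta$-resolutions of arbitrary objects are not guaranteed over a general Noetherian $\Ring$).
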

\begin{proof}
The tilting objects $T_\Ring(\tau), \tau\in \T_0,$ generate the triangulated category $D^b(\Cat_{\T_0,\Ring})$. Note that there are no higher self-extensions between the
objects $T_\Ring(\tau)$ in $\Cat_\Ring$. The claim of the lemma is a standard consequence of
these two observations.
\end{proof}

\begin{Rem}\label{Rem:proj_construction}
One can construct projective objects satisfying (v) as follows. Fix $\tau\in \T$ and a poset ideal $\T_0$ containing $\tau$. 
If $\tau$ is maximal in $\T_0$, then 
take $P^{\T_0}_{\Ring,\tau}:=\Delta_\Ring(\tau)$. Otherwise, let $\tau_1,\ldots,\tau_k$ be the maximal elements in $\T_0(>\tau)$.
Set $\T'_0:=\T_0\setminus \{\tau_1,\ldots,\tau_k\}$, this is a poset ideal in $\T_0$. 
Suppose that we have constructed a projective object $P^{\T'_0}_{\Ring,\tau}$ in $\Cat_{\Ring, \T'_0}$
satisfying axiom (v).  Form the $\Ring$-modules
$\operatorname{Ext}^1_{\Cat_\Ring}(P_{\Ring,\tau}^{\T'_0},\Delta_\Ring(\tau_i)), i=1,\ldots,k$. 
Observe that $$\operatorname{Ext}^1_{\Cat_\Ring}(P_{\Ring,\tau}^{\T'_0},\Delta_\Ring(\tau_i))\xrightarrow{\sim}
\operatorname{Ext}^1_{\Cat_\Ring}(P_{\Ring,\tau}^{\T'_0}/(P_{\Ring,\tau}^{\T'_0})_{\T'_0\setminus \T_0(<\tau_i)} ,\Delta_\Ring(\tau_i)).$$ Pick  projective covers $U_{\tau,\tau_i}
\twoheadrightarrow \operatorname{Ext}^1_{\Cat_\Ring}(P_{\Ring,\tau}^{\T'_0}/(P_{\Ring,\tau}^{\T'_0})_{\T'_0\setminus \T_0(<\tau_i)} ,\Delta_\Ring(\tau_i))$. 
For $P_{\Ring,\tau}^{\T_0}$ we take the extension of $P_{\Ring,\tau}^{\T_0'}$ by $\bigoplus_{i=1}^k U_{\tau,\tau_i}\otimes_{\Ring}\Delta_\Ring(\tau_i)$ corresponding to the fixed projective covers.

The construction yields epimorphisms $\eta_{\T_0^1,\T_0^2}: P^{\T_0^2}_{\Ring,\tau}\twoheadrightarrow 
P^{\T_0^1}_{\Ring,\tau}$ for poset ideals $\T_0^1\subset \T_0^2$ with $\tau\in \T_0^1$. Moreover, 
$\eta_{\T_0^1,\T_0^3}= \eta_{\T_0^1,\T_0^2}\circ \eta_{\T_0^2,\T_0^3}$ for $\T_0^1\subset \T_0^2\subset \T_0^3$. 
%
\end{Rem}

\begin{Rem}\label{Rem:hw_sub_recovery}
Choose projective objects $P_{\Ring,\tau}:=P^\T_{\Ring,\tau}$ as in Remark \ref{Rem:proj_construction}. 
The epimorphism $\pi_{\T_0,\T}:P_{\Ring,\tau}\twoheadrightarrow P^{\T_0}_{\Ring,\tau}$ identifies the target
with $P_{\Ring,\tau}/(P_{\Ring,\tau})_{\T\setminus \T^0}$. 
The objects $P^{\T_0}_{\Ring,\tau},\tau\in \T_0,$ generate $\Cat_{\T_0,\Ring}$.
Form the opposite endomorphism algebras $A_{\Ring}$ of $\bigoplus_{\tau\in \T}P_{\Ring,\tau}$
and $A_{\Ring,\T_0}$ of $\bigoplus_{\tau\in \T_0}P^{\T_0}_{\Ring,\tau}$. Then there is a natural 
homomorphism $\varpi_{\T_0,\T}:A_\Ring\rightarrow A^{\Ring,\T_0}$ induced by $\eta_{\T^0,\T}$ as every endomorphism of $\bigoplus_{\tau\in \T}P_{\Ring,\tau}$
preserves the sum of kernels of $P_{\Ring,\tau}\twoheadrightarrow P^{\T_0}_{\Ring,\tau}$ and so induces
an endomorphism of $\bigoplus_{\tau\in \T_0}P^{\T_0}_{\Ring,\tau}$. It is easy to see that every endomorphism
of $\bigoplus_{\tau\in \T_0}P^{\T_0}_{\Ring,\tau}$ lifts to an endomorphism of 
$\bigoplus_{\tau\in \T}P_{\Ring,\tau}$, and so $A_\Ring\twoheadrightarrow A_{\Ring,\T_0}$.
The kernel of this epimorphism can be described as follows. Let $e_\tau\in A_\Ring$ denote the 
projection to the direct summand $P_{\Ring,\tau}$, it is an idempotent. 
The kernel is generated by the idempotents $e_\tau$ with $\tau\not\in \T_0$. 
\end{Rem}

\subsubsection{Highest weight quotients}\label{SSS_hw_quot}
Now set $\T^0:=\T\setminus \T_0$, this is a poset coideal. Set $\Cat_{\T^0,\Ring}:=\Cat_\Ring/\Cat_{\T_0,\Ring}$. An equivalent description of
$\Cat_{\T^0,\Ring}$ is as follows. Thanks to axiom (v) we can choose a projective object
$P_{\T^0}$ in $\Cat_\Ring^{\Delta,\T^0}$ that admits an epimorphism onto
$\Delta_\Ring(\tau)$ for all $\tau\in \T^0$. Then $\Cat_{\T^0,\Ring}\cong
A_{\T^0,\Ring}\operatorname{-mod}$, where $A_{\T^0,\Ring}=\operatorname{End}_{\Cat_\Ring}(P_{\T^0})^{opp}$.
The quotient functor $\pi:\Cat_\Ring\twoheadrightarrow \Cat_{\T^0,\Ring}$ can be realized
as $\Hom_{\Cat_\Ring}(P_{\mathcal{T}^0},\bullet)$. It has left adjoint (and right inverse) functor
$\pi^!=P_{\mathcal{T}^0}\otimes_{A_{\T^0,\Ring}}\bullet$.

For $\tau\in \T^0$, set $\Delta_{\T^0,\Ring}(\tau):=\pi(\Delta_{\Ring}(\tau))$.

\begin{Lem}\label{Lem:adjoint_standard}
The natural morphism $\pi^!(\Delta_{\T^0,\Ring}(\tau))\rightarrow \Delta_\Ring(\tau)$ is an isomorphism.
Moreover, $L^i\pi^!\Delta_{\T^0,\Ring}(\tau)=0$ for $i>0$.
\end{Lem}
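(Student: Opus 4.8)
The plan is to construct, for $\tau\in\T^0$, a projective resolution $\cdots\to P_1\to P_0\to\Delta_\Ring(\tau)\to 0$ in $\Cat_\Ring$ whose terms are ``adapted'' to $\pi$, to apply the exact functor $\pi$ so as to recognize $\pi(P_\bullet)$ as a projective resolution of $\Delta_{\T^0,\Ring}(\tau)$, and then to observe that $\pi^!$ applied to $\pi(P_\bullet)$ recovers $P_\bullet$ via the adjunction counit. Since $\Cat_{\T^0,\Ring}$ together with $\pi$ and $\pi^!$ is independent, up to canonical equivalence, of the chosen object $P_{\mathcal{T}^0}$, I may take $P_{\mathcal{T}^0}=\bigoplus_{\sigma\in\T^0}P_{\Ring,\sigma}$ with $P_{\Ring,\sigma}$ a projective as in axiom (v) for $\sigma$. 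Let $\Acal$ denote the additive subcategory of $\Cat_\Ring$ generated by the objects $P_{\Ring,\sigma}\otimes_\Ring Q$ with $\sigma\in\T^0$ and $Q$ a finitely generated projective $\Ring$-module; these are projective in $\Cat_\Ring$. Two formal observations set things up: (a) since $\pi=\Hom_{\Cat_\Ring}(P_{\mathcal{T}^0},\bullet)$ sends $P_{\mathcal{T}^0}$ to the free module $A_{\T^0,\Ring}=\End_{\Cat_\Ring}(P_{\mathcal{T}^0})^{opp}$ and commutes with $\bullet\otimes_\Ring Q$, it carries every object of $\Acal$ to a projective object of $\Cat_{\T^0,\Ring}$; (b) the counit $\pi^!\pi\to\mathrm{id}$ evaluated at $P_{\mathcal{T}^0}$ is the canonical isomorphism $P_{\mathcal{T}^0}\otimes_{A_{\T^0,\Ring}}A_{\T^0,\Ring}\xrightarrow{\sim}P_{\mathcal{T}^0}$, hence, by additivity and the same commutation with $\bullet\otimes_\Ring Q$, the counit is an isomorphism on every object of $\Acal$.

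The resolution is built by a horseshoe induction. The key combinatorial point is that $\T_0$ being a poset ideal forces $\T(\geqslant\sigma)\subseteq\T^0$ for $\sigma\in\T^0$; hence axiom (v) provides a short exact sequence $0\to K_\sigma\to P_{\Ring,\sigma}\to\Delta_\Ring(\sigma)\to 0$ with $K_\sigma\in\Cat_\Ring^{\Delta,>\sigma}\subseteq\Cat_\Ring^{\Delta,\T^0}$. Given any $M\in\Cat_\Ring^{\Delta,\T^0}$, I peel off the top term $\Delta_\Ring(\sigma)\otimes Q$ of a standard filtration of $M$, lift $P_{\Ring,\sigma}\otimes Q\twoheadrightarrow\Delta_\Ring(\sigma)\otimes Q$ along $M\twoheadrightarrow\Delta_\Ring(\sigma)\otimes Q$ (possible as $P_{\Ring,\sigma}\otimes Q$ is projective), and combine this with a resolution of the remaining subobject, obtained by induction on the length of the standard filtration, to get an epimorphism onto $M$ from an object of $\Acal$ whose kernel, being an extension of $K_\sigma\otimes Q$ by the previous kernel, again lies in $\Cat_\Ring^{\Delta,\T^0}$. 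Iterating this starting from $M=\Delta_\Ring(\tau)$ produces the desired $P_\bullet\to\Delta_\Ring(\tau)$ with all $P_i\in\Acal$ and all syzygies in $\Cat_\Ring^{\Delta,\T^0}$; the resolution need not be finite, which causes no trouble.

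To conclude, apply $\pi$: by (a) and exactness of $\pi$, the complex $\pi(P_\bullet)\to\pi(\Delta_\Ring(\tau))=\Delta_{\T^0,\Ring}(\tau)$ is a projective resolution in $\Cat_{\T^0,\Ring}$, so $L\pi^!\,\Delta_{\T^0,\Ring}(\tau)$ is computed by $\pi^!\pi(P_\bullet)$. By (b) the counit gives an isomorphism of complexes $\pi^!\pi(P_\bullet)\xrightarrow{\sim}P_\bullet$, whence $L^i\pi^!\,\Delta_{\T^0,\Ring}(\tau)$ equals the homology of $P_\bullet$ in the relevant degree, i.e.\ $\Delta_\Ring(\tau)$ for $i=0$ and $0$ for $i>0$; chasing the counit through the right-exact sequence $\pi^!\pi(P_1)\to\pi^!\pi(P_0)\to\pi^!\pi(\Delta_\Ring(\tau))\to 0$ identifies the degree-zero isomorphism with the natural morphism $\pi^!(\Delta_{\T^0,\Ring}(\tau))\to\Delta_\Ring(\tau)$ of the statement. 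I expect the only genuine work to be in the horseshoe step of the second paragraph --- keeping the syzygies inside $\Cat_\Ring^{\Delta,\T^0}$ so the construction can be iterated, and carrying the $\Ring$-projective tensor factors along correctly; the rest is formal manipulation of the Serre-quotient adjunction.
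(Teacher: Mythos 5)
Your proof is correct and takes essentially the same approach as the paper, which states the argument in three lines: build a projective resolution of $\Delta_\Ring(\tau)$ by objects of $\Cat_\Ring^{\Delta,\T^0}$, note that $\pi$ sends these to projectives, and note that $L\pi^!\circ\pi$ restores them. Your horseshoe construction and the explicit verification of (a) and (b) simply fill in the details that the paper leaves to the reader, in particular the observation that the poset ideal condition keeps all kernels inside $\Cat_\Ring^{\Delta,\T^0}$ so the process iterates.
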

\begin{proof}
By (v), we can write a projective resolution of $\Delta_\Ring(\tau)$ by objects in
$\Cat_\Ring^{\Delta,\T^0}$. For such an object $P$, we have that $\pi(P)$ is a projective
in $\Cat_{\T^0,\Ring}$, and $L\pi^!\circ \pi(P)\xrightarrow{\sim} P$. The claim of the lemma follows.
\end{proof}

It follows that $\pi$ and $\pi^!$ restrict to mutually inverse equivalences
between $\Cat_\Ring^{\Delta,\T^0}$ and $\Cat_{\T^0,\Ring}^\Delta$. It is easy to see that
$\Cat_{\T^0,\Ring}$ is a highest weight category with poset $\T^0$ and standard objects
$\Delta_{\T^0,\Ring}(\tau)$.

We also note that $D^b(\Cat_{\T^0,\Ring})$ is the quotient category of $D^b(\Cat_\Ring)$
by $D^b(\Cat_{\T_0,\Ring})$.

\subsubsection{Base change}
Note that if $\Ring'$ is a Noetherian $\Ring$-algebra then the base change $\Cat_{\Ring'}$
of $\Cat_\Ring$ (defined as $A_{\Ring'}\operatorname{-mod}$) is also a highest weight
category, \cite[Proposition 4.14]{rouqqsch}. It has the same poset $\T$. The standard
(resp., costandard) objects are $\Delta_{\Ring'}(\tau):=\Ring'\otimes_{\Ring}\Delta_\Ring(\tau)$,
(resp., $\nabla_{\Ring'}(\tau):=\Ring'\otimes_{\Ring}\nabla_\Ring(\tau)$).

%
%

\subsection{Coideal finite posets}\label{SS_hw_coideal_finite}
In this section we consider a generalization of highest weight categories from Section \ref{SS_hw_finite}:
highest weight categories associated to coideal finite posets following \cite[Section 6.1.2]{EL}.

\subsubsection{Definition}\label{SSS_defi_coideal_finite_hw}
Let $\Cat_\Ring$ be an $\Ring$-linear abelian category. Suppose that $\Cat_\Ring$ satisfies the following conditions.

\begin{itemize}
\item[(I)] The Hom modules in $\Cat_\Ring$ are finitely generated over $\Ring$, and the Hom modules
between projective objects are projective $\Ring$-modules;
\item[(II)] and $\Cat_\Ring$ is Noetherian.
\end{itemize}

We say that an object $M_\Ring\in \Cat_\Ring$ is {\it projective over $\Ring$}
if $\Hom_{\Cat_\Ring}(P,M)$ is projective over $\Ring$ for all projectives $P$.
For example, every projective in $\Cat_\Ring$ is projective over $\Ring$.

\begin{defi}\label{defi:coideal_finite}
We say that a poset $\T$ is {\it coideal finite} if for all $\tau\in \T$, the poset coideal
$\{\tau'\in \T| \tau'\geqslant \tau\}$ is finite. 
\end{defi}

\begin{defi}\label{defi:hw_coideal_finite}
Let $\Cat_\Ring$ be a category satisfying (I) and (II) above and $\T$ be a coideal finite poset.
By the structure of a highest weight category on $\Cat_\Ring$ we mean a collection
of {\it standard objects} $\Delta_\Ring(\tau)\in \Cat_\Ring$ satisfying axioms
(i)-(v) from Section \ref{SSS_hw_finite_def}.
\end{defi}

Thanks to (iv) and (v), a highest weight category automatically has enough projectives.

\begin{Rem}\label{Rem:BS1}
Similar categories (under the name ``upper finite highest weight categories'') were considered in 
\cite{BS}. However, there are some differences: for example, our categories are Noetherian, 
unlike in \cite{BS}. Also, we work over general rings, while the categories in \cite{BS}
are over (algebraically closed) fields. Because of this differences, we decided to give 
self-contained proofs of various technical statements, many of which have analogs in 
\cite{BS}.
\end{Rem}

\subsubsection{Locally unital algebras and their modules}\label{SSS_loc_unit}
By definition, highest weight categories associated to finite posets are equivalent to
the categories of modules over finite projective $\Ring$-algebras. We need an analog of
this result in the coideal finite setting. 

Let $A_\Ring$ be an associative (but not necessarily unital)
$\Ring$-algebra. Let $\T$ be a set. Assume that $A_\Ring$ comes with a collection of orthogonal idempotents $e_\tau, \tau\in \T$. For a finite subset $\T'\subset \T$, we write $e_{\T'}$
for $\sum_{\tau\in \T'}e_\tau$.
Assume that the following conditions are satisfied:
\begin{itemize}
\item[($i$)] For each $a\in A_\Ring$, there is a finite subset $\T'\subset\T$ depending
on $a$ such that $e_{\T'}a=ae_{\T'}=a$. We say that $A_\Ring$ is {\it locally unital}.
\item[($ii$)] for each finite subset $\T'\subset \T$, the $\Ring$-module $e_{\T'}A_{\Ring}e_{\T'}$
is a finitely generated projective $\Ring$-module. We say that $A_\Ring$ is {\it locally finite projective}.
\end{itemize}
Note that if $\T$ itself is finite, then $A_\Ring$ is just a finite projective $\Ring$-algebra.

\begin{Ex}\label{Ex:loc_fin_proj_algebra}
Let $\Cat_\Ring$ satisfy (I)-(II). Let  $P_{\tau}$, where $\tau$ is in
some indexing set $\T$, be a collection of projective objects. Take $A_\Ring:=\left(\bigoplus_{\tau,\tau'}\Hom_{\Cat_\Ring}(P_{\Ring,\tau},P_{\Ring,\tau'})\right)^{opp}$,
this is an associative $\Ring$-algebra satisfying (i) and (ii) (where $e_\tau$ is the identity endomorphism of $P_{\Ring,\tau}$).
\end{Ex}

We get back to the situation of a general locally finite and locally projective $\Ring$-algebra
$A_\Ring$. We consider the category $A_\Ring\operatorname{-Mod}_{lf}$ (where ``lf'' is for ``locally finite'') 
consisting of all $A_\Ring$-modules $M$
satisfying the following two conditions:
\begin{itemize}
\item[($i'$)] $M$ is {\it locally unital} meaning that for each $m\in M$, there is a finite subset
$\T'\subset\T$ such that $e_{\T'}m=m$.
\item[($ii'$)] $M$ is {\it locally finite} meaning that for every finite subset $\T'$, the $\Ring$-module
$e_{\T'}M$ is finitely generated.
\end{itemize}

By $A_\Ring\operatorname{-mod}$ we denote the category of Noetherian objects in $A_\Ring\operatorname{-Mod}_{lf}$. The category $A_\Ring\operatorname{-mod}$
satisfies conditions (I) and (II) from Section \ref{SSS_defi_coideal_finite_hw}. It  has enough projectives
provided $A_\Ring e_\tau$ is a Noetherian $A_\Ring$-module for all $\tau$. The following lemma
is a partial converse of this claim. 

Let $\Cat_\Ring$ be as in Section \ref{SSS_defi_coideal_finite_hw} and  $A_\Ring$ be as in Example \ref{Ex:loc_fin_proj_algebra}.
Suppose that every object in $\Cat_\Ring$ is a quotient of a finite direct sum of $P_\tau$'s. 
We have the functor $\mathcal{F}: \Cat_\Ring\rightarrow A_\Ring\operatorname{-Mod}_{lf}$
sending $M\in \Cat_\Ring$ to $\mathcal{F}(M):=\bigoplus_{\tau\in \T}\Hom_{\Cat_\Ring}(P_\tau,M)$.

\begin{Lem}\label{Lem:full_embedding_coideal_finite}
The functor $\mathcal{F}$ is a full embedding whose essential image is $A_\Ring\operatorname{-mod}$.
In particular, $A_\Ring e_\tau$ is Noetherian for each $\tau$.
\end{Lem}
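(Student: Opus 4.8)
The plan is to establish the three assertions of Lemma \ref{Lem:full_embedding_coideal_finite} in turn: full faithfulness of $\mathcal{F}$, the identification of its essential image with $A_\Ring\operatorname{-mod}$, and then the Noetherianity of $A_\Ring e_\tau$ as a consequence. First I would prove fullness and faithfulness. Faithfulness is immediate from the assumption that every object of $\Cat_\Ring$ is a quotient of a finite direct sum of $P_\tau$'s: if $\varphi:M\to N$ has $\mathcal{F}(\varphi)=0$, then $\varphi$ kills the image of every map $P_\tau\to M$, hence kills a generating epimorphism $\bigoplus P_{\tau_i}\twoheadrightarrow M$, so $\varphi=0$. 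For fullness, given an $A_\Ring$-module morphism $\psi:\mathcal{F}(M)\to\mathcal{F}(N)$, I choose a two-step presentation $\bigoplus_{j}P_{\sigma_j}\to\bigoplus_{i}P_{\tau_i}\twoheadrightarrow M\to 0$ (possible since the category is Noetherian, so the kernel of the first epimorphism is again finitely generated, hence a quotient of a finite sum of $P$'s). Applying the fully faithful (on direct sums of $P_\tau$'s, essentially by Yoneda for the idempotents $e_\tau$) functor $\mathcal{F}$ to this presentation and using right exactness of $\Hom_{\Cat_\Ring}(P_\tau,-)$ on the kernel-of-epi sequences — which holds because the $P_\tau$ are projective — I get a presentation of $\mathcal{F}(M)$ by $A_\Ring e_{\tau_i}$'s; a diagram chase then lifts $\psi$ to a morphism of presentations and hence to a morphism $M\to N$ inducing it. This is the step I expect to require the most care, since one must check that $\mathcal{F}$ takes the chosen presentation to a genuine presentation in $A_\Ring\operatorname{-Mod}_{lf}$, i.e. that $\mathcal{F}$ is right exact on the relevant sequences; this follows from projectivity of the $P_\tau$.

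Next I would identify the essential image. Every $\mathcal{F}(M)$ lies in $A_\Ring\operatorname{-Mod}_{lf}$: local unitality holds because each element of $\mathcal{F}(M)$ lies in $\Hom_{\Cat_\Ring}(P_\tau,M)=e_\tau\mathcal{F}(M)$ for a single $\tau$, and local finiteness holds because $e_{\T'}\mathcal{F}(M)=\bigoplus_{\tau\in\T'}\Hom_{\Cat_\Ring}(P_\tau,M)$ is finitely generated over $\Ring$ by condition (I). Moreover $\mathcal{F}(M)$ is Noetherian: a strictly increasing chain of $A_\Ring$-submodules would, via the (already established) full embedding together with the fact that $\mathcal{F}$ reflects subobjects, pull back to a strictly increasing chain of subobjects of $M$, contradicting (II). Hence $\mathcal{F}$ lands in $A_\Ring\operatorname{-mod}$. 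For essential surjectivity, I take $N\in A_\Ring\operatorname{-mod}$ and choose a presentation $\bigoplus_j A_\Ring e_{\sigma_j}\to\bigoplus_i A_\Ring e_{\tau_i}\to N\to 0$ in $A_\Ring\operatorname{-mod}$ — finite sums suffice because $N$ is Noetherian, so it is finitely generated and the kernel of the first map is too. Since $\mathcal{F}(P_\tau)=A_\Ring e_\tau$, the map $\bigoplus_j\mathcal{F}(P_{\sigma_j})\to\bigoplus_i\mathcal{F}(P_{\tau_i})$ comes, by fullness of $\mathcal{F}$, from a morphism $g:\bigoplus_j P_{\sigma_j}\to\bigoplus_i P_{\tau_i}$ in $\Cat_\Ring$; set $M:=\operatorname{coker}(g)$. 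Applying $\mathcal{F}$ (right exact on this cokernel sequence by projectivity) gives $\mathcal{F}(M)\cong\operatorname{coker}(\mathcal{F}(g))=N$.

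Finally, the claim that $A_\Ring e_\tau$ is Noetherian as an $A_\Ring$-module is now immediate: $A_\Ring e_\tau=\mathcal{F}(P_\tau)$, and $P_\tau$ is an object of $\Cat_\Ring$, which is Noetherian by (II), so $\mathcal{F}(P_\tau)$ is Noetherian in $A_\Ring\operatorname{-mod}$ by the argument in the previous paragraph (the full embedding $\mathcal{F}$ turns ascending chains of $A_\Ring$-submodules of $\mathcal{F}(P_\tau)$ into ascending chains of subobjects of $P_\tau$). I should remark that all the standard homological facts used — Yoneda for the idempotents, right exactness of $\Hom(P_\tau,-)$ on kernel sequences coming from projectivity, and the stability of $A_\Ring\operatorname{-Mod}_{lf}$ under cokernels — are routine, so the only genuinely delicate point is organizing the presentation-lifting argument for fullness so that finiteness of the presenting objects is used consistently (this is where Noetherianity of $\Cat_\Ring$ enters).
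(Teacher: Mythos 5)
Your overall architecture (faithfulness, then fullness via finite presentations, then Noetherianity of the image, then essential surjectivity) is sound, and the faithfulness, fullness, and essential-surjectivity steps are correct as written. However, there is a genuine gap at the phrase ``the fact that $\mathcal{F}$ reflects subobjects.'' You invoke this as though it were a formal consequence of full faithfulness, but it is not: an $A_\Ring$-submodule $N\subset\mathcal{F}(M)$ is not a priori Noetherian, so you cannot appeal to essential surjectivity to realize it as $\mathcal{F}(M')$, and full faithfulness by itself says nothing about arbitrary submodules of an object in the target. Unwinding the claim shows that it does hold, but the argument is precisely the technical heart of the lemma: given $N\subset\mathcal{F}(M)$, one must form $M_N:=\sum_{\tau'}\sum_{\varphi\in e_{\tau'}N}\operatorname{im}\varphi\subset M$; Noetherianity of $M$ gives finitely many $\varphi_1,\ldots,\varphi_m\in N$ with $M_N=\sum_j\operatorname{im}\varphi_j$, whence an epimorphism $\bigoplus_j P_{\tau'_j}\twoheadrightarrow M_N$; projectivity of $P_\sigma$ then lets one lift any $\psi\in\Hom_{\Cat_\Ring}(P_\sigma,M_N)$ through this epimorphism and write $\psi=\sum_j \tilde\psi_j\cdot\varphi_j$ as an $A_\Ring$-linear combination of elements of $N$, giving $\mathcal{F}(M_N)=N$. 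Only after this does the pullback of an ascending chain make sense, and only then is strict increase preserved.

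This is exactly the content of the paper's own proof, which is organized differently: it proves directly that $A_\Ring e_\tau$ is Noetherian (the argument above specialized to $M=P_\tau$), deduces from that plus projectivity that $\mathcal{F}(M)$ is Noetherian for all $M$ (as a quotient of a finite sum of Noetherian modules), and then produces an explicit quasi-inverse $\mathcal{G}=\left(\bigoplus_\tau P_\tau\right)\otimes_{A_\Ring}\bullet$ rather than arguing full faithfulness and essential surjectivity separately. Your route is workable and arguably more transparent in separating the three assertions, but as written it labels the single nontrivial step as a ``fact'' while carefully justifying several genuinely routine ones; you should promote the subobject-reflection claim to a lemma with the projectivity-plus-Noetherianity argument spelled out, and note that it must logically precede (or at least be independent of) your essential-surjectivity step, since the latter uses Noetherianity of $\bigoplus_i A_\Ring e_{\tau_i}$ to obtain a finite presentation of the kernel.
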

\begin{proof}
We first show that the essential image of $\mathcal{F}$ is contained in $A_\Ring\operatorname{-mod}$.
It is enough to show that $A_\Ring e_\tau=\bigoplus_{\tau'}\Hom_{\Cat_\Ring}(P_{\tau'},P_{\tau})$ is Noetherian. Let $N\subset A_\Ring e_\tau$ be an $A_\Ring$-submodule. Note that
$N=\bigoplus_{\tau'\in \T}e_{\tau'}N$. Since $P_{\tau}$ is Noetherian, we can find a finite subset $\T'\subset \T$ such that
$\im \varphi\subset Q:=\sum_{\psi\in e_{\T'}N}\im \psi$ for all $\tau'\in \T, \varphi\in \Hom_{\Cat_\Ring}(P_{\tau'},N)=e_{\tau'}N$.
We have an epimorphism $\widetilde{\varphi}:\bigoplus_{\tau'\in \T'}P_{\tau'}
\twoheadrightarrow Q$, and $\varphi\in \Hom_{\Cat_\Ring}(P_{\tau'},P_{\tau})$
factors through $\widetilde{\varphi}$. Equivalently, $\varphi$ lies in the $A_\Ring$-submodule
of $N$ generated by $e_{\T'}N$. The latter is a submodule of $e_{\T'}P_{\tau}$, a finitely
generated $\Ring$-module by (I), so is finitely generated. It follows that $N$ is a Noetherian $A_\Ring$-module.

Now we are going to construct a quasi-inverse functor $\mathcal{G}:A_\Ring\operatorname{-mod}
\rightarrow \Cat_\Ring$. It is given by $M\mapsto (\bigoplus_{\tau\in \T}P_{\tau})\otimes_{A_\Ring}M$. The target is viewed as an object of $\Cat_\Ring$ as follows.
If $M=A_\Ring e_\tau$, then it is $P_{\tau}$. In general, $M$ is the cokernel of a morphism
of objects of the form $\bigoplus_{i=1}^k A_\Ring e_{\tau_i}$. Then $\mathcal{G}(M)$ is the cokernel of
the corresponding morphism of the objects of the form $\bigoplus_{i=1}^k P_{\tau_i}$. One defines
$\mathcal{G}$ on morphisms in a similar way. A check that $\mathcal{F}$ and $\mathcal{G}$
are quasi-inverse is standard and is left as an exercise.
\end{proof}

\subsubsection{Deformations and Noetherian property}
Let $\Ring$ be a complete regular local ring and $\kf$ be its residue field. Let $A_\Ring,\T$ be such as in  
Section \ref{SSS_loc_unit}. Suppose that (i) and (ii) hold. Set $A_\kf:=\kf\otimes_{\Ring}A_\Ring$. 

\begin{Lem}\label{Lem:deformation_Noetherian}
If $A_\kf e_\tau$ is a Noetherian $A_\kf$-module, then $A_\Ring e_{\tau}$ is a Noetherian $A_\Ring$-module. 
\end{Lem}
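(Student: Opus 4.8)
The plan is to pass to the $\m$-adic associated graded, apply a Hilbert basis theorem there, and then lift back using that $\Ring$ is complete. Write $M:=A_\Ring e_\tau$ and $M_{\tau'}:=e_{\tau'}A_\Ring e_\tau$, so that $M=\bigoplus_{\tau'\in\T}M_{\tau'}$ both as a left $A_\Ring$-module and as an $\Ring$-module (and $A_\Ring$-submodules of $M$ are automatically $\Ring$-submodules). Each $M_{\tau'}$ is a finitely generated projective $\Ring$-module, being a direct summand of $e_{\{\tau,\tau'\}}A_\Ring e_{\{\tau,\tau'\}}$; hence $M$, and likewise $A_\Ring$ itself, is flat over $\Ring$, we have $\m^nM=\bigoplus_{\tau'}\m^nM_{\tau'}$, the $\m$-adic filtration on $M$ is separated, and on each finite sub-sum $e_{\T'}M$ with $\T'\subset\T$ finite it is complete and separated, $e_{\T'}M$ being finitely generated over the complete local ring $\Ring$. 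Flatness also gives $\gr_\m M\cong\gr_\m(\Ring)\otimes_\kf(A_\kf e_\tau)$ as a module over $\gr_\m A_\Ring\cong\gr_\m(\Ring)\otimes_\kf A_\kf$.

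The first key step is that $\gr_\m M$ is a \emph{Noetherian} $\gr_\m A_\Ring$-module. Since $\Ring$ is regular, $\gr_\m\Ring$ is a polynomial ring $\kf[x_1,\dots,x_n]$; combining this with the hypothesis that $A_\kf e_\tau$ is a Noetherian $A_\kf$-module and the module version of the Hilbert basis theorem (if $M_0$ is a Noetherian $D$-module then $M_0[x]$ is a Noetherian $D[x]$-module, applied $n$ times), we conclude that $\gr_\m(\Ring)\otimes_\kf(A_\kf e_\tau)$ is Noetherian over $\gr_\m(\Ring)\otimes_\kf A_\kf$.

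Now let $N\subseteq M$ be an $A_\Ring$-submodule; it is enough to prove $N$ is finitely generated over $A_\Ring$, since a module is Noetherian iff all its submodules are finitely generated. Equip $N$ with the induced filtration $F^jN:=N\cap\m^jM$; then $\gr N\hookrightarrow\gr_\m M$ as $\gr_\m A_\Ring$-modules (because $F^jN/F^{j+1}N$ injects into $\m^jM/\m^{j+1}M$), so $\gr N$ is finitely generated. Pick homogeneous generators $\bar n_1,\dots,\bar n_k$ with $\bar n_i\in\gr^{d_i}N$ and lift each to some $n_i\in F^{d_i}N$. I claim $N=\sum_{i=1}^kA_\Ring n_i$. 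Given $n\in N$, choose a finite set $\S\subset\T$ containing $\tau$, the support of $n$, and the supports $\T_i$ of all the $n_i$. The usual successive-approximation procedure then produces inductively elements $c_i^{(l)}\in\m^{\,l-d_i}e_\S A_\Ring$ (with $c_i^{(l)}:=0$ for $l<d_i$, and chosen so that $c_i^{(l)}e_{\T_i}=c_i^{(l)}$) such that $n-\sum_i\sum_{l\le L}c_i^{(l)}n_i\in F^{L+1}N$ for every $L$, with all the partial sums supported on $\S$. Then the series $\delta_i:=\sum_l c_i^{(l)}e_{\T_i}$ converges in the finitely generated, hence $\m$-adically complete, $\Ring$-module $e_\S A_\Ring e_{\T_i}\subseteq A_\Ring$, and since $e_\S M$ is $\m$-adically separated one checks $n=\sum_i\delta_i n_i$. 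Thus $N$ is finitely generated over $A_\Ring$, and $A_\Ring e_\tau$ is Noetherian.

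The main obstacle, and the reason the statement is not purely formal, is the bookkeeping in the last paragraph: one must ensure that the corrections $c_i^{(l)}$, and hence all partial sums, can be kept supported on a single fixed finite set $\S$, so that they live in a finitely generated $\Ring$-module that is simultaneously complete (so the $\delta_i$ exist in $A_\Ring$, not just in some completion) and separated (so the limit identity holds). Completeness of $\Ring$ is needed exactly for this step; regularity of $\Ring$ enters separately, to make $\gr_\m\Ring$ polynomial so that the Hilbert basis theorem applies and $\gr_\m M$ is Noetherian over $\gr_\m A_\Ring$.
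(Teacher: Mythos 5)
Your proof is correct, and it takes a genuinely different route from the paper's. The paper proves the lemma by induction on $\dim\Ring$: it takes a regular parameter $x\in\mathfrak{m}\setminus\mathfrak{m}^2$, reduces modulo $(x)$ to the regular local ring $\Ring/(x)$ of one lower dimension where the claim holds by the inductive hypothesis, analyzes the ascending chain of ``saturations'' $N^i=\{m\in A_\Ring e_\tau : x^i m\in N\}$ and their stabilizing images in $A_{\Ring/(x)}e_\tau$, and finally uses completeness of $\Ring$ for a one-variable successive approximation. You instead pass directly to the full $\mathfrak{m}$-adic associated graded: flatness identifies $\gr_{\mathfrak{m}}M$ with $\gr_{\mathfrak{m}}(\Ring)\otimes_\kf A_\kf e_\tau$ as a module over $\gr_{\mathfrak{m}}(\Ring)\otimes_\kf A_\kf\cong A_\kf[x_1,\dots,x_n]$, and the module form of the Hilbert basis theorem---which does go through in the locally unital setting, since the leading-coefficient sets are $A_\kf$-submodules---gives Noetherianity of $\gr_{\mathfrak{m}}M$ in one stroke. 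The lifting step is then essentially the same in both arguments, and you correctly isolate the only real subtlety: once $n$ is fixed, all approximating coefficients must be kept in $e_{\mathcal{S}}A_\Ring e_{\T_i}$ for a single finite set $\mathcal{S}$, so that they lie in a finitely generated, hence complete and separated, $\Ring$-module. The paper handles this identically with its finite set $\T'$. What your version buys is a cleaner one-shot argument in which the role of regularity (making $\gr_{\mathfrak{m}}\Ring$ polynomial) is transparent; what the paper's version buys is that it avoids having to justify a Hilbert basis theorem over a possibly non-Noetherian, non-unital coefficient ring, replacing it by the inductive hypothesis one regular parameter at a time.
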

\begin{proof}
Let $N_\Ring$ be an $A_\Ring$-submodule of $A_\Ring e_\tau$. 
Take $x\in \mathfrak{m}\setminus \mathfrak{m}^2$, where $\mathfrak{m}$ is the maximal ideal in $\Ring$,
 and set $\Ring_0:=\Ring/(x)$, a regular local ring of dimension one less.
By induction, we can assume that $A_{\Ring_0}e_\tau$ is a Noetherian $A_{\Ring_0}$-module. 
For $i\geqslant 0$, set $N^i_\Ring:=\{m\in A_\Ring e_\tau| x^i m\in N_\Ring\}$.
Let $N^i_{\Ring_0}$ denote the image of $N^i_\Ring$ in $A_{\Ring_0}e_\tau$. This is an ascending 
chain of submodules in $A_{\Ring_0}e_\tau$ so it terminates: there is $j$ such that $N^i_{\Ring_0}:=N^j_{\Ring_0}$
for all $i>j$. By the inductive assumption, each  $N^i_{\Ring_0}$ is finitely generated.  More precisely, 
we can find elements $m_k^i\in N^i_\Ring, k=1,\ldots,d_i,$ such that, for each $i=0,\ldots,j$, the images of 
$m_k^i$ in $A_{\Ring_0}e_\tau$ generate $N^i_{\Ring_0}$. Note that the elements $x^i m_k^i$ lie in $N_\Ring$. 
We claim that they generate the $A_\Ring$-module $N_\Ring$. Let $n\in N_\Ring\cap x^\ell A_\Ring e_\tau$. Choose a finite subset $\T'\subset \T$
such that $e_{\T'}n=n, e_{\T'}m_k^i=m_k^i$.
By the construction, we can find an $e_{\T'}A_\Ring e_{\T'}$-linear combination $n'$ of the elements $x^i  m_k^i$ such that 
$n-n'\in x^{\ell+1} e_{\T'}A_\Ring e_\tau$. Moreover, for $\ell>j$, we can assume that all coefficients are in 
$x^{\ell-j}e_{\T'}A_\Ring e_{\T'}$. Since $\Ring$ is complete, and $e_{\T'}A_\Ring e_{\T'}$
is a finitely generated $\Ring$-module, this shows that the element $n$ lies in the 
$A_\Ring$-linear span of the elements $x^i m_k^i$. From here we easily deduce that $A_\Ring e_\tau$
is Noetherian.
\end{proof}

\subsubsection{Highest weight quotients}
Let $\Cat_\Ring$ be a highest weight category with a coideal finite poset $\T$. For each $\tau\in \T$, choose a
projective object $P_\tau$ as in axiom (v) from Section \ref{SS_hw_finite}. Form the algebra $A_\Ring$ with idempotents $e_\tau$ as
in Example \ref{Ex:loc_fin_proj_algebra}. We identify $\Cat_\Ring$ with $A_\Ring\operatorname{-mod}$
as in Lemma \ref{Lem:full_embedding_coideal_finite}.

Let $\T^0$ be a coideal of $\T$. The algebra
$A_{\T^0,\Ring}:=\bigoplus_{\tau,\tau'\in \T^0}e_\tau A_{\Ring}e_{\tau'}$  satisfies conditions (i) and (ii) from Section 
\ref{SSS_loc_unit}, moreover, $A_{\T^0,\Ring}e_\tau$ is a Noetherian $A_{\T^0,\Ring}$-module for all $\tau\in \T^0$.
Set $\Cat_{\T^0,\Ring}:=A_{\T^0,\Ring}\operatorname{-mod}$. Then we get an exact functor
$\pi(=\pi_{\T^0}):\Cat_\Ring\rightarrow \Cat_{\T^0,\Ring}, M\mapsto \bigoplus_{\tau\in \T^0}e_\tau M$. It admits a
left adjoint (and right inverse) functor $\pi^!: \Cat_{\T^0,\Ring}\rightarrow \Cat_\Ring$, it is given by
$(\bigoplus_{\tau\in \T^0} A_\Ring e_\tau)\otimes_{A_{\T^0,\Ring}}\bullet$.

For $\tau\in \T^0$, set $\Delta_{\T^0,\Ring}(\tau):=\pi(\Delta_\Ring(\tau))$.

\begin{Lem}\label{Lem:adjoint_standard_coideal_finite}
The following claims are true:
\begin{enumerate}
\item $\pi(\Delta_\Ring(\tau))=0$ if $\tau\not\in \T^0$.
\item
The natural morphism $\pi^!(\Delta_{\T^0,\Ring}(\tau))\rightarrow \Delta_\Ring(\tau)$ is an isomorphism.
Moreover, $L^i\pi^!\Delta_{\T^0,\Ring}(\tau)=0$ for $i>0$.
\end{enumerate}
\end{Lem}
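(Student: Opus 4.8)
The plan is to mirror the proof of Lemma~\ref{Lem:adjoint_standard}, the only structural change being that the single projective $P_{\T^0}$ used there is replaced by the whole family $\{P_\tau\mid \tau\in\T^0\}$; the hypothesis that $\T$ is coideal finite is exactly what keeps the syzygies of $\Delta_\Ring(\tau)$ finite, so that this replacement works. Throughout I identify $\Cat_\Ring$ with $A_\Ring\operatorname{-mod}$ via Lemma~\ref{Lem:full_embedding_coideal_finite}, so that $P_\sigma$ corresponds to $A_\Ring e_\sigma$, $\pi$ to $M\mapsto\bigoplus_{\tau\in\T^0}e_\tau M$, and $\pi^!$ to $(\bigoplus_{\tau\in\T^0}A_\Ring e_\tau)\otimes_{A_{\T^0,\Ring}}(-)$.

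For claim~(1), I would first record the auxiliary statement that $\Hom_{\Cat_\Ring}(M,\Delta_\Ring(\tau))=0$ whenever $M\in\Cat_\Ring^{\Delta,\T^0}$ and $\tau\notin\T^0$. This is proved by induction on the length of a standard filtration of $M$. For a length-one object $\Delta_\Ring(\sigma)\otimes_\Ring P$ with $\sigma\in\T^0$, writing $P$ as a direct summand of a free $\Ring$-module shows that $\Hom_{\Cat_\Ring}(\Delta_\Ring(\sigma)\otimes_\Ring P,\Delta_\Ring(\tau))$ is a direct summand of $\Hom_{\Cat_\Ring}(\Delta_\Ring(\sigma),\Delta_\Ring(\tau))^{\oplus n}$, and the latter vanishes by axiom~(iii): indeed $\sigma\leqslant\tau$ would force $\tau\in\T^0$ since $\T^0$ is a coideal. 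The inductive step follows by applying the left-exact functor $\Hom_{\Cat_\Ring}(-,\Delta_\Ring(\tau))$ to the top step of the filtration. Now for $\sigma\in\T^0$ axiom~(v) gives $P_\sigma\in\Cat_\Ring^{\Delta,\T(\geqslant\sigma)}\subseteq\Cat_\Ring^{\Delta,\T^0}$, so $e_\sigma\Delta_\Ring(\tau)=\Hom_{\Cat_\Ring}(P_\sigma,\Delta_\Ring(\tau))=0$ for all $\sigma\in\T^0$; that is exactly $\pi(\Delta_\Ring(\tau))=0$.

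For claim~(2), the heart of the argument is to construct a projective resolution $\cdots\to Q_1\to Q_0\to\Delta_\Ring(\tau)\to 0$ in $\Cat_\Ring$ in which every $Q_i$ is a \emph{finite} direct sum of objects $P_\sigma$ with $\sigma\in\T^0$. Take $Q_0:=P_\tau$; by axiom~(v) its kernel lies in $\Cat_\Ring^{\Delta,\T(>\tau)}$, which is contained in $\Cat_\Ring^{\Delta,\T^0}$ because $\T^0$ is a coideal containing $\tau$, and which involves only finitely many standards since $\T$ is coideal finite; hence the kernel is a quotient of a finite sum of $P_\sigma$'s with $\sigma\in\T^0$. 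Iterating — using Lemma~\ref{Lem:stand_exact_prop}(2) to see that each successive syzygy is again standardly filtered with labels in $\T^0$, and coideal finiteness to see it involves finitely many standards — produces the desired resolution. Then I would verify the two transport properties: for $\sigma\in\T^0$ one has $\pi(P_\sigma)=A_{\T^0,\Ring}e_\sigma$, which is a projective object of $\Cat_{\T^0,\Ring}$, and $\pi^!\pi(P_\sigma)=\bigl(\bigoplus_{\rho\in\T^0}A_\Ring e_\rho\bigr)\otimes_{A_{\T^0,\Ring}}A_{\T^0,\Ring}e_\sigma\cong A_\Ring e_\sigma=P_\sigma$, with $L^i\pi^!\pi(P_\sigma)=0$ for $i>0$ because $\pi(P_\sigma)$ is projective. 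Since $\pi$ is exact, applying it to the resolution yields a projective resolution of $\Delta_{\T^0,\Ring}(\tau)$ in $\Cat_{\T^0,\Ring}$; applying $\pi^!$ to that resolution returns, naturally, the complex $\cdots\to Q_1\to Q_0\to 0$, whose homology is $\Delta_\Ring(\tau)$ concentrated in degree $0$. Hence $L^i\pi^!\Delta_{\T^0,\Ring}(\tau)=0$ for $i>0$ and $\pi^!\Delta_{\T^0,\Ring}(\tau)\cong\Delta_\Ring(\tau)$, and a short diagram chase identifies this isomorphism with the adjunction counit $\pi^!\pi(\Delta_\Ring(\tau))\to\Delta_\Ring(\tau)$, which is the natural morphism in the statement.

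I expect the only genuine friction to be the bookkeeping in claim~(2): one must confirm both that the iterated syzygies never leave $\Cat_\Ring^{\Delta,\T^0}$ (so that $\pi$ sends the resolution to a complex of projectives over $A_{\T^0,\Ring}$) and that the identifications $\pi^!\pi(P_\sigma)\cong P_\sigma$ are compatible with the differentials, so that $\pi^!$ applied to the $\pi$-image of $Q_\bullet$ literally recovers $Q_\bullet$ rather than merely a quasi-isomorphic complex. Claim~(1), as well as the verification that $\pi(P_\sigma)$ is projective with $\pi^!\pi(P_\sigma)\cong P_\sigma$, is routine.
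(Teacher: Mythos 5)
Your argument matches the paper's (the paper reduces part (2) to Lemma~\ref{Lem:adjoint_standard}, whose one-line proof is exactly the resolution argument you spell out, and proves (1) from axioms (iii),(v) as you do), and it is correct, but one step is mis-attributed and is a bit more than bookkeeping. In your iteration for claim (2) you cite Lemma~\ref{Lem:stand_exact_prop}(2) to conclude that each syzygy is ``standardly filtered with labels in $\T^0$''; that lemma only gives ``standardly filtered''. Keeping the labels in $\T^0$ needs its own argument, which you rightly flag in your closing paragraph. Here is one way to discharge it. Let $0\to K_i\to Q_i\to K_{i-1}\to 0$ with $Q_i,K_{i-1}\in\Cat_\Ring^{\Delta,\T^0}$ and $K_i\in\Cat_\Ring^\Delta$. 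Form the canonical subobject $(K_i)_{\T^0}\subset K_i$ of Lemma~\ref{Lem:stand_filtration} and suppose $N:=K_i/(K_i)_{\T^0}\neq 0$; pick a minimal label $\sigma_0\notin\T^0$ of $N$, so $N$ has a nonzero quotient of the form $\Delta_\Ring(\sigma_0)\otimes_\Ring P$ and hence $\Hom_{\Cat_\Ring}(K_i,\Delta_\Ring(\sigma_0))\neq 0$. But applying $\Hom_{\Cat_\Ring}(-,\Delta_\Ring(\sigma_0))$ to the short exact sequence, and using $\Hom_{\Cat_\Ring}(Q_i,\Delta_\Ring(\sigma_0))=0$ (your auxiliary statement from claim (1)) together with $\Ext^1_{\Cat_\Ring}(K_{i-1},\Delta_\Ring(\sigma_0))=0$ (by devissage on the standard filtration of $K_{i-1}$: for $\sigma\in\T^0$, a nonzero $\Ext^1(\Delta_\Ring(\sigma),\Delta_\Ring(\sigma_0))$ would force $\sigma<\sigma_0$ and hence $\sigma_0\in\T^0$ since $\T^0$ is a coideal), yields $\Hom_{\Cat_\Ring}(K_i,\Delta_\Ring(\sigma_0))=0$, a contradiction. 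So $K_i\in\Cat_\Ring^{\Delta,\T^0}$, and the same argument in fact puts $K_i$ in $\Cat_\Ring^{\Delta,\T(>\tau)}$, a \emph{finite} coideal --- this is precisely where coideal finiteness is used. Your second worry, compatibility of the isomorphisms $\pi^!\pi(P_\sigma)\cong P_\sigma$ with the differentials, dissolves once you observe these are the adjunction counit, hence natural in $P_\sigma$.
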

\begin{proof}
(1) follows from axioms (iii) and (v) in Section \ref{SSS_hw_finite_def}. (2) is proved in the same way as
Lemma \ref{Lem:adjoint_standard}.
\end{proof}

As in Section \ref{SSS_hw_quot}, from this lemma we deduce that:
\begin{itemize}
\item The category $\Cat_{\T^0,\Ring}$ is highest weight with poset $\T^0$
and standard objects $\Delta_{\T^0,\Ring}(\tau), \tau\in \T^0$.
\item The functor $\pi$ and $\pi^!$ are mutually quasi-inverse equivalences
between $\Cat_\Ring^{\Delta, \T^0}$ (defined analogously to the finite case, see Section \ref{SSS_hw_finite_def}) and $\Cat_{\T_0,\Ring}^\Delta$.
\end{itemize}

\subsubsection{Standardly filtered objects}\label{SSS_coideal_finite_standard}
Passing to the highest weight subquotients associated to finite poset coideals is our main tool to
study highest weight categories with coideal finite posets. We start with the following lemma.

\begin{Lem}\label{Lem:stand_charact}
Let $M\in \Cat_\Ring$. The following conditions are equivalent:
\begin{itemize}
\item[(a)] $M\in \Cat_\Ring^\Delta$.
\item[(b)] $\pi_{\T^0}(M)\in \Cat_{\T^0,\Ring}^\Delta$ for all finite coideals $\T^0\subset \T$.
\item[(c)] There is a finite coideal $\T^0(M)$ such that $\pi_{\T^0}(M)$ is standardly filtered
for all finite coideals $\T^0$ containing $\T^0(M)$.
\end{itemize}
Moreover, we can find  $\T^0(M)$ as in (c), such that $\pi_{\T_0}^!\pi_{\T_0} M\xrightarrow{\sim} M$
for all $\T^0$ containing $\T^0(M)$.
\end{Lem}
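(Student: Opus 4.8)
The plan is to prove the cycle of implications $(a)\Rightarrow(b)\Rightarrow(c)\Rightarrow(a)$ and then extract the extra statement about $\pi_{\T^0}^!\pi_{\T^0}M\xrightarrow{\sim}M$ along the way. For $(a)\Rightarrow(b)$: if $M$ carries a filtration with subquotients $\Delta_\Ring(\tau)\otimes_\Ring P$, apply the exact functor $\pi_{\T^0}$; by part (1) of Lemma \ref{Lem:adjoint_standard_coideal_finite} it kills the subquotients with $\tau\notin\T^0$ and sends the rest to $\Delta_{\T^0,\Ring}(\tau)\otimes_\Ring P$, so $\pi_{\T^0}(M)$ is $\T^0$-standardly filtered. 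The implication $(b)\Rightarrow(c)$ is trivial once we know \emph{some} finite coideal works, but that is exactly what needs care: the point is to produce a single finite coideal $\T^0(M)$ beyond which the picture stabilizes. Here I would use that $M$ is Noetherian (condition (II)): pick a finite generating set of $M$, and since $M$ is locally unital (in the $A_\Ring\operatorname{-mod}$ description via Lemma \ref{Lem:full_embedding_coideal_finite}), there is a finite subset $\T_1\subset\T$ with $e_{\T_1}M=M$. Then take $\T^0(M)$ to be the coideal generated by $\T_1$, which is finite because $\T$ is coideal finite (Definition \ref{defi:coideal_finite}): the upward closure of a finite set is a finite union of finite coideals. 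For such $\T^0\supseteq\T^0(M)$ one checks $e_\tau M=0$ for $\tau\notin\T^0$ (using axioms (iii) and (v) as in the proof of Lemma \ref{Lem:adjoint_standard_coideal_finite}(1), applied to the generators), hence $\pi_{\T^0}^!\pi_{\T^0}M\xrightarrow{\sim}M$ and in particular $\pi_{\T^0}(M)$ determines $M$.

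For $(c)\Rightarrow(a)$, fix $\T^0\supseteq\T^0(M)$ finite with $\pi_{\T^0}(M)$ standardly filtered in $\Cat_{\T^0,\Ring}$. By the finite-poset theory applied to $\Cat_{\T^0,\Ring}$, $\pi_{\T^0}(M)$ admits a filtration with subquotients $\Delta_{\T^0,\Ring}(\tau)\otimes_\Ring P$. I would lift this filtration through $\pi_{\T^0}^!$: since $\pi_{\T^0}^!$ restricted to $\Cat_{\T^0,\Ring}^\Delta$ is an exact equivalence onto $\Cat_\Ring^{\Delta,\T^0}$ (the second bullet after Lemma \ref{Lem:adjoint_standard_coideal_finite}), it is exact on short exact sequences of standardly filtered objects, so it carries the filtration of $\pi_{\T^0}(M)$ to a filtration of $\pi_{\T^0}^!\pi_{\T^0}(M)$ with subquotients $\pi_{\T^0}^!(\Delta_{\T^0,\Ring}(\tau)\otimes_\Ring P)\cong\Delta_\Ring(\tau)\otimes_\Ring P$ by Lemma \ref{Lem:adjoint_standard_coideal_finite}(2). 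Combined with the isomorphism $\pi_{\T^0}^!\pi_{\T^0}(M)\xrightarrow{\sim}M$ established above, this exhibits $M\in\Cat_\Ring^\Delta$.

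The main obstacle is the construction of the single stabilizing coideal $\T^0(M)$ in $(b)\Rightarrow(c)$ together with the key vanishing $e_\tau M=0$ for $\tau\notin\T^0(M)$; everything else is a bookkeeping transport of filtrations through the adjunction $(\pi_{\T^0}^!,\pi_{\T^0})$ using the already-established exactness and the identification of standards. The subtlety is that a priori $M$ could have ``higher'' contributions supported arbitrarily far up in $\T$; coideal finiteness plus the Noetherian and locally-unital hypotheses are precisely what rule this out, and one must be slightly careful that the coideal generated by a \emph{finite} set is finite (it is: it equals $\bigcup_{\tau\in\T_1}\T(\geqslant\tau)$, a finite union of finite sets). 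Once that is in place, the last assertion $\pi_{\T^0}^!\pi_{\T^0}M\xrightarrow{\sim}M$ is immediate from $e_\tau M=0$ off $\T^0$, since then $M=\bigoplus_{\tau\in\T^0}e_\tau M=\pi_{\T^0}(M)$ as an $A_{\T^0,\Ring}$-module and the counit of the adjunction is visibly an isomorphism.
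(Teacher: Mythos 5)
Your (a)$\Rightarrow$(b) step is correct and matches the paper, and (b)$\Rightarrow$(c) is indeed a pure tautology (under (b), \emph{any} finite coideal can serve as $\T^0(M)$, even the empty one), so the ``care'' you allocate there is misplaced. The genuine work is in (c)$\Rightarrow$(a), and that is precisely where your argument has a gap.

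The gap is the claim that, after choosing a finite generating set of $M$ with labels supported in a finite set $\T_1$ and setting $\T^0(M)$ to be the upward closure of $\T_1$, one has $e_\tau M = 0$ for $\tau\notin\T^0(M)$. This is false. Note $e_\tau M = \Hom_{\Cat_\Ring}(P_\tau, M)$, and since $P_\tau$ is filtered by $\Delta_\Ring(\tau')$ with $\tau'\geqslant\tau$ while $M$ may contain $\Delta_\Ring(\sigma)$ for $\sigma\in\T^0(M)$ a coideal, the Hom can be nonzero whenever $\tau\leqslant\sigma$ for some $\sigma\in\T^0(M)$ --- and this set of $\tau$ is downward-closed, not contained in any finite coideal. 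Concretely, already $M=P_\sigma$ has $e_\tau M\neq 0$ for arbitrarily small $\tau$. The argument of Lemma \ref{Lem:adjoint_standard_coideal_finite}(1) that you cite shows the vanishing in the opposite direction ($e_\sigma\Delta_\Ring(\tau)=0$ for $\sigma\in\T^0$, $\tau\notin\T^0$) and cannot be repurposed. Moreover, even if $e_\tau M=0$ off $\T^0$ did hold, the counit $\pi_{\T^0}^!\pi_{\T^0}M\to M$ would not be ``visibly an isomorphism'': $\pi_{\T^0}^!$ is given by $(\bigoplus_{\tau\in\T^0}A_\Ring e_\tau)\otimes_{A_{\T^0,\Ring}}\bullet$, and the counit is an iso only for $M$ in the essential image of $\pi_{\T^0}^!$, which is what needs proof, not a consequence of $M$ being ``supported'' on $\T^0$.

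What the paper does instead is choose a two-step projective presentation $P_1\to P_0\to M\to 0$ (possible by axioms (iv),(v) and Noetherianity), with $P_0,P_1$ finite direct sums of $P_\tau$'s, enlarge $\T^0(M)$ so it contains all labels of standards occurring in the filtrations of $P_0,P_1$, observe that $\pi^!\pi P_i\cong P_i$ (since these projectives are standardly filtered with labels in $\T^0$, where $\pi^!\pi$ is the identity by Lemma \ref{Lem:adjoint_standard_coideal_finite}(2)), and then conclude $\pi^!\pi M\xrightarrow{\sim}M$ by the five lemma applied to the counit of the adjunction on the presentation. Your transport-of-filtration step at the end is fine, but it rests on the unestablished isomorphism, so the proof as written does not close.
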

\begin{proof}
(a) implies (b) thanks to the definition of $\Delta_{\T^0,\Ring}(\tau)$ and (1) of Lemma
\ref{Lem:adjoint_standard_coideal_finite}. (b)$\Rightarrow$(c) is a tautology.

Now we prove (c)$\Rightarrow$(a). Thanks to axioms (iv) and (v), we can find an exact sequence
$P_1\rightarrow P_0\rightarrow M\rightarrow 0$, where $P_1,P_2$ are direct sums of projectives $P_\tau$. We can enlarge $\T^0(M)$ and assume that it contains all labels
of standards that occur in $P_1,P_0$. Set $\T^0:=\T^0(M)$ and consider the corresponding
functors $\pi,\pi^!$. Thanks to (2) of Lemma \ref{Lem:adjoint_standard_coideal_finite},
$\pi^!\pi(P_i)\cong P_i, i=0,1$. By the 5-lemma, $\pi^!\pi(M)\xrightarrow{\sim} M$.
The source is standardly filtered, again by (2) of Lemma \ref{Lem:adjoint_standard_coideal_finite}.
\end{proof}

Using Lemma \ref{Lem:stand_charact}, we can carry over results about standardly filtered objects from the case of finite
posets to the case of coideal finite posets:

\begin{itemize}
\item[(A)] $\Cat_\Ring^\Delta$ is closed under taking direct summands, see Lemma \ref{Lem:stand_exact_prop}.
\item[(B)] The kernel of an epimorphism of standardly filtered objects is standardly filtered,
Lemma \ref{Lem:stand_exact_prop}.
\item[(C)] On any standardly filtered
object $M$, we have a canonical filtration $M_{\T^0}$ indexed by finite poset coideals. A direct analog of Lemma
\ref{Lem:stand_filtration} holds.
\end{itemize}

\subsubsection{Extensions}
We have the following lemma that reduces the computation of $\Ext's$ in $\Cat_\Ring$ to
the computation in highest weight quotients associated to finite coideals. Fix $M,N\in \Cat_\Ring$ and $i\geqslant 0$.
Take a finite coideal $\T^0$. Then we have an $\Ring$-linear map
$\psi_{\T_0}: \Ext^i_{\Cat_\Ring}(M,N)\rightarrow \Ext^i_{\Cat_{\T^0,\Ring}}(\pi_{\T^0}M,\pi_{\T^0}N)$.

\begin{Lem}\label{Lem:Ext_comput}
Let $M\in \Cat_\Ring$ and $i\geqslant 0$. Then there is a finite coideal $\T^{0}(M,i)\subset \T$
such that $\psi_{\T_0}$ is an isomorphism for all $N\in \Cat_\Ring$ and all finite coideals $\T^0$
that contain $\T^0(M,i)$.
\end{Lem}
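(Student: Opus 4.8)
The plan is to use a projective resolution of $M$ built from the objects $P_\tau$ and to show that, once the finite coideal $\T^0$ is large enough, this resolution is ``detected'' by $\pi_{\T^0}$. First I would invoke that $\Cat_\Ring$ has enough projectives (guaranteed by axioms (iv) and (v) together with Lemma \ref{Lem:full_embedding_coideal_finite}) to pick a projective resolution $\cdots\to P^2\to P^1\to P^0\to M\to 0$ where each $P^j$ is a finite direct sum of objects $P_\tau$. Here is the key point that makes a \emph{finite} $\T^0(M,i)$ suffice: to compute $\Ext^i_{\Cat_\Ring}(M,N)$ we only need the truncated complex $P^{i+1}\to P^i\to\cdots\to P^0$, so only finitely many labels $\tau$ occur. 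Let $\T^0(M,i)$ be the smallest finite coideal containing all those labels (more precisely, the coideal generated by them — finite because $\T$ is coideal finite, so each $\{\tau'\geqslant\tau\}$ is finite).

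Next I would fix a finite coideal $\T^0\supseteq \T^0(M,i)$ and analyze how $\pi_{\T^0}$ and $\pi_{\T^0}^!$ interact with the truncated resolution. By construction every $P^j$ for $j\leqslant i+1$ is a direct sum of $P_\tau$ with $\tau\in\T^0$, hence $\pi_{\T^0}(P^j)$ is projective in $\Cat_{\T^0,\Ring}$ and, by the argument in the proof of Lemma \ref{Lem:stand_charact} (using (2) of Lemma \ref{Lem:adjoint_standard_coideal_finite} that $\pi_{\T^0}^!\pi_{\T^0}P_\tau\cong P_\tau$ with no higher derived functors), the counit $\pi_{\T^0}^!\pi_{\T^0}(P^j)\to P^j$ is an isomorphism. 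Applying $\Hom_{\Cat_\Ring}(-,N)$ to $P^\bullet$ and $\Hom_{\Cat_{\T^0,\Ring}}(-,\pi_{\T^0}N)$ to $\pi_{\T^0}(P^\bullet)$, the adjunction $(\pi_{\T^0}^!,?)$ — or rather the adjunction $(\pi_{\T^0},\pi_{\T^0}^!)$ is the wrong way, so one uses instead that $\pi_{\T^0}$ is exact and $\pi_{\T^0}^!$ is its left adjoint and right inverse — identifies $\Hom_{\Cat_{\T^0,\Ring}}(\pi_{\T^0}P^j,\pi_{\T^0}N)$ with $\Hom_{\Cat_\Ring}(\pi_{\T^0}^!\pi_{\T^0}P^j, N)=\Hom_{\Cat_\Ring}(P^j,N)$ for $j\leqslant i+1$. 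These identifications are compatible with the differentials in degrees $\leqslant i$, so the two cochain complexes agree through degree $i$; taking cohomology gives that $\psi_{\T^0}$ is an isomorphism in degrees $\leqslant i$, in particular in degree $i$. Since this works for every $N$, we are done.

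The main obstacle I anticipate is bookkeeping around the direction of the adjunction and making sure the derived functors of $\pi_{\T^0}^!$ genuinely vanish on the projectives $P^j$ appearing in the truncated resolution: one needs $\pi_{\T^0}^!$ applied to $\pi_{\T^0}P^j$ to reproduce $P^j$ on the nose, which holds because $P^j$ is a sum of $P_\tau$ with $\tau\in\T^0$ and $A_\Ring e_\tau$ is a projective $A_{\T^0,\Ring}$-module after applying $e_{\T^0}\cdot$; this is exactly the content used in Lemma \ref{Lem:adjoint_standard_coideal_finite}(2). A secondary subtlety is that $\Ext^i$ in $\Cat_\Ring$ should be computed via a projective resolution of $M$ (available since there are enough projectives) rather than an injective resolution of $N$ — this is what lets the bound $\T^0(M,i)$ depend only on $M$ and $i$ and not on $N$. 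Everything else is a routine diagram chase with the five lemma, entirely parallel to the proof of Lemma \ref{Lem:stand_charact}.
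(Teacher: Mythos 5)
Your proposal is correct and follows essentially the same route as the paper: take a projective resolution of $M$ by finite direct sums of $P_\tau$'s, choose $\T^0(M,i)$ so that the relevant terms satisfy $\pi_{\T^0}^!\pi_{\T^0}(P^j)\cong P^j$, and then use the $(\pi_{\T^0}^!,\pi_{\T^0})$-adjunction to identify the $\Hom$-complexes in the relevant degrees. The only difference is economy: the paper only needs $j\in\{i-1,i,i+1\}$, since those three degrees already determine $\Ext^i$, whereas you include all of $P^0,\ldots,P^{i+1}$; this is harmless but unnecessary. Your parenthetical hesitation about the direction of the adjunction ultimately lands on the right identity, and the citation of Lemma \ref{Lem:adjoint_standard_coideal_finite}(2) is slightly loose (that lemma speaks of standards, not the $P_\tau$), but the mechanism it encodes — that $\pi_{\T^0}^!\pi_{\T^0}$ reproduces $P_\tau$ whenever $\tau$ and all labels in its standard filtration lie in $\T^0$ — is exactly what's used in Lemma \ref{Lem:stand_charact}, which the paper cites for the same purpose.
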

\begin{proof}
The category $\Cat_\Ring$ is Noetherian. So, thanks to axiom (v), we can find an epimorphism
$P_0\rightarrow M$, where $P_0$ is a finite direct sum of objects of the form $P_\tau$. Of course,
we can find a resolution $\rightarrow P_i\rightarrow P_{i-1}\rightarrow\ldots\rightarrow P_0$
of $M$ by the same kind of objects. Take $\T^0(M,i):=\T^0(P_{i+1})\cup \T^0(P_i)\cap \T^0(P_{i-1})$,
where in the right hand side we have coideals from (c) of Lemma \ref{Lem:stand_charact}.
For $j\in \{i-1,i,i+1\}$, and any $\T^0$ containing $\T^0(M,i)$, we have
$\pi^!_{\T^0}\pi_{\T^0}(P^j)\xrightarrow{\sim} P^j$. Hence
$$\Hom_{\Cat_\Ring}(P^j,N)\xrightarrow{\sim} \Hom_{\Cat_{\T^0,\Ring}}(\pi_{\T^0}P^j,\pi_{\T^0}N).$$
It follows that $\psi_{\T^0}$ is an isomorphism.
\end{proof}

\subsubsection{Projectives}
Using Lemma \ref{Lem:Ext_comput}, we can deduce some properties of projective objects
in $\Cat_\Ring$, some of which mirror the corresponding properties in the finite case.

\begin{Cor}\label{Cor:coideal_finite_projectives}
The following claims are true:
\begin{enumerate}
\item An object $P\in \Cat_\Ring$ is projective if and only if there is a finite coideal $\T^1(P)\subset \T$ such that $\pi_{\T^0}(P)\in \Cat_{\T^0,\Ring}$ is projective for all finite coideals $\T^0$
    containing $\T^1(P)$.
\item Every projective in $\Cat_\Ring$ is standardly filtered.
\item A standardly filtered object $P\in \Cat_\Ring$ is projective if and only if $\Ext^1_{\Cat_\Ring}(P,\Delta_\Ring(\tau))=0$ for all $\tau\in \T$.
\end{enumerate}
\end{Cor}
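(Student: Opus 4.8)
The plan is to reduce each statement to the corresponding statement for highest weight categories associated to finite posets, using the finite-coideal truncation functors $\pi_{\T^0},\pi_{\T^0}^!$ together with Lemmas \ref{Lem:stand_charact} and \ref{Lem:Ext_comput}, and then to invoke the finite-poset results already quoted (Lemma \ref{Lem:proj_charact}, the inclusion $\Cat_\Ring\operatorname{-proj}\subset\Cat_\Ring^\Delta$, and the construction of projectives from axiom (v)).

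\emph{Proof of (1).} For the ``only if'' direction, suppose $P$ is projective. By axiom (iv) and (v) and the Noetherian property we can choose a finite coideal $\T^1(P)$ large enough that $\pi_{\T^0}^!\pi_{\T^0}(P)\xrightarrow{\sim}P$ for all finite coideals $\T^0\supset\T^1(P)$ (enlarge the coideal $\T^0(P)$ from Lemma \ref{Lem:stand_charact}, using that $P$ is standardly filtered once we know (2)). For such $\T^0$, and for any $M_\Ring\in\Cat_{\T^0,\Ring}$, adjunction gives $\Hom_{\Cat_{\T^0,\Ring}}(\pi_{\T^0}P,M)\cong\Hom_{\Cat_\Ring}(P,\pi_{\T^0}^!M)$, and $\pi_{\T^0}^!$ is exact on $\Cat_{\T^0,\Ring}^\Delta$ and more generally by Lemma \ref{Lem:adjoint_standard_coideal_finite} $\pi_{\T^0}$ sends surjections to surjections; chasing the definitions shows $\pi_{\T^0}P$ is projective (alternatively: $\pi_{\T^0}$ sends the chosen $P_\tau$ to the $P^{\T^0}_\tau$, so it sends projectives to projectives, and $\pi_{\T^0}P\cong\pi_{\T^0}\pi_{\T^0}^!\pi_{\T^0}P$). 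For the ``if'' direction, pick a projective cover $Q\twoheadrightarrow P$ in $\Cat_\Ring$ with $Q$ a finite sum of $P_\tau$'s and let $K$ be the kernel. Choose a finite coideal $\T^0$ containing $\T^1(P)$, the labels of $Q$, and the coideals $\T^0(K,1),\T^0(K)$ supplied by Lemmas \ref{Lem:Ext_comput} and \ref{Lem:stand_charact}. Applying $\pi_{\T^0}$ to $0\to K\to Q\to P\to 0$ keeps it exact and yields a short exact sequence of objects of $\Cat_{\T^0,\Ring}$ with $\pi_{\T^0}Q$ and $\pi_{\T^0}P$ projective, so it splits; by the isomorphism $\psi_{\T^0}$ of Lemma \ref{Lem:Ext_comput} this splitting lifts to a splitting of $0\to K\to Q\to P\to 0$ in $\Cat_\Ring$, whence $P$ is a direct summand of the projective $Q$ and is projective.

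\emph{Proof of (2).} Let $P$ be projective. Choose $\T^0$ as in (1) so that $\pi_{\T^0}^!\pi_{\T^0}P\xrightarrow{\sim}P$ and $\pi_{\T^0}P$ is projective in $\Cat_{\T^0,\Ring}$. By the finite-poset result (the inclusion $\Cat_{\T^0,\Ring}\operatorname{-proj}\subset\Cat_{\T^0,\Ring}^\Delta$ from \cite[Proposition 4.13]{rouqqsch}, as recalled in Section \ref{SSS_proj_hw_finite}), $\pi_{\T^0}P$ is standardly filtered. Since $\pi_{\T^0}^!$ restricts to an equivalence $\Cat_{\T^0,\Ring}^\Delta\xrightarrow{\sim}\Cat_\Ring^{\Delta,\T^0}\subset\Cat_\Ring^\Delta$ (see the bullet points after Lemma \ref{Lem:adjoint_standard_coideal_finite}), $P\cong\pi_{\T^0}^!\pi_{\T^0}P$ lies in $\Cat_\Ring^\Delta$.

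\emph{Proof of (3).} One direction is immediate from (2) and the vanishing of $\Ext^1$ out of a projective. Conversely, assume $P\in\Cat_\Ring^\Delta$ and $\Ext^1_{\Cat_\Ring}(P,\Delta_\Ring(\tau))=0$ for all $\tau\in\T$. By Lemma \ref{Lem:Ext_comput} fix a finite coideal $\T^0$ containing $\T^0(P,1)$, the coideal $\T^0(P)$ of Lemma \ref{Lem:stand_charact}, and (by enlarging) all labels occurring in a fixed length-one projective presentation of $P$. Then $\pi_{\T^0}P\in\Cat_{\T^0,\Ring}^\Delta$, and for every $\tau\in\T^0$ the map $\psi_{\T^0}$ identifies $\Ext^1_{\Cat_\Ring}(P,\Delta_\Ring(\tau))$ with $\Ext^1_{\Cat_{\T^0,\Ring}}(\pi_{\T^0}P,\Delta_{\T^0,\Ring}(\tau))$, which therefore vanishes for all $\tau\in\T^0$. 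By Lemma \ref{Lem:proj_charact} applied to the finite highest weight category $\Cat_{\T^0,\Ring}$, $\pi_{\T^0}P$ is projective, and then part (1) (whose hypothesis is now verified with $\T^1(P)=\T^0$, after further enlarging to make $\pi_{\T^0}$-truncations stable as in the proof of (1)) gives that $P$ is projective.

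The main subtlety, as in the other lemmas of this subsection, is bookkeeping: one must choose a single finite coideal $\T^0$ large enough to simultaneously witness standard-filteredness of the relevant objects (Lemma \ref{Lem:stand_charact}), the $\Ext$-comparison isomorphisms (Lemma \ref{Lem:Ext_comput}), and the stability $\pi_{\T^0}^!\pi_{\T^0}(-)\xrightarrow{\sim}(-)$ on the finitely many objects appearing in a chosen presentation; since all of these conditions are preserved under enlarging $\T^0$ and the poset is coideal finite, such a choice exists. Everything else is a routine transport of the corresponding finite-poset statements through the truncation equivalences.
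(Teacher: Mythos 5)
Your argument has a genuine circularity between parts (1) and (2), plus a wrong adjunction identity, and the paper's proof is organised precisely to avoid that trap.

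In your ``only if'' direction of (1) you write that one should ``enlarge the coideal $\T^0(P)$ from Lemma \ref{Lem:stand_charact}, using that $P$ is standardly filtered once we know (2).'' But in your proof of (2) you then ``choose $\T^0$ as in (1) so that $\pi_{\T^0}^!\pi_{\T^0}P\xrightarrow{\sim}P$ and $\pi_{\T^0}P$ is projective.'' This is circular: part (1), even if proved correctly, only yields projectivity of the truncation $\pi_{\T^0}P$; it does \emph{not} yield the isomorphism $\pi_{\T^0}^!\pi_{\T^0}P\cong P$, which in your approach can only come from Lemma \ref{Lem:stand_charact} applied to $P$ --- and that requires $P\in\Cat_\Ring^\Delta$, i.e.\ (2). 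A second, smaller, problem: the stated ``adjunction'' $\Hom_{\Cat_{\T^0,\Ring}}(\pi_{\T^0}P,M)\cong\Hom_{\Cat_\Ring}(P,\pi_{\T^0}^!M)$ has the adjoint on the wrong side; $\pi_{\T^0}^!$ is a \emph{left} adjoint of $\pi_{\T^0}$, so the correct identity is $\Hom_{\Cat_\Ring}(\pi_{\T^0}^!M,N)\cong\Hom_{\Cat_{\T^0,\Ring}}(M,\pi_{\T^0}N)$, which is the opposite of what your display asserts.

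The way to break the circularity, and what the paper does, is to prove (1) using Lemma \ref{Lem:Ext_comput} applied to an arbitrary $P$ (that lemma places no standardly-filtered hypothesis on $M$): for $\T^0\supset\T^0(P,1)$ one has $\Ext^1_{\Cat_\Ring}(P,N)\cong\Ext^1_{\Cat_{\T^0,\Ring}}(\pi_{\T^0}P,\pi_{\T^0}N)$ for all $N$; since $\pi_{\T^0}$ is essentially surjective, projectivity of $P$ forces projectivity of $\pi_{\T^0}P$, and the converse follows by the same isomorphism (taking $\T^0$ large). Then (2) is proved by a Yoneda argument: take $\T^0\supset\T^0(P,0)\cup\T^1(P)$, note that by Lemma \ref{Lem:Ext_comput} with $i=0$ both $P$ and $\pi_{\T^0}^!\pi_{\T^0}P$ represent the functor $\Hom_{\Cat_{\T^0,\Ring}}(\pi_{\T^0}P,\pi_{\T^0}\bullet)$, hence they are isomorphic, and $\pi_{\T^0}^!\pi_{\T^0}P$ is standardly filtered since $\pi_{\T^0}P$ is a projective in a finite highest weight category. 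Your parenthetical alternative in (1) (``$\pi_{\T^0}$ sends the chosen $P_\tau$ to projectives, hence preserves projectives'') can also be made to work, but you still need the $i=0$ comparison to get $\pi_{\T^0}^!\pi_{\T^0}P\cong P$ in (2); Lemma \ref{Lem:stand_charact} is not the right tool because it presupposes what (2) claims. Your part (3) is essentially correct, though the invocation of $\T^0(K)$ from Lemma \ref{Lem:stand_charact} in the ``if'' direction of (1) should be dropped: $K$ is not known to be standardly filtered at that point, and you do not need it --- $\T^0(P,0)$ and $\T^0(P,1)$ and the labels of $Q$ suffice.
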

\begin{proof}
To prove (1) apply Lemma \ref{Lem:Ext_comput} to $P$ and $i=1$. We get $\Ext^1_{\Cat_\Ring}(P,N)\xrightarrow{\sim}\Ext^1_{\Cat_{\T^0,\Ring}}(\pi_{\T^0}P,\pi_{\T^0}N)$
for all $N\in \Cat_\Ring$ and all $\T^0$ containing $\T^0(P,1)$ from Lemma \ref{Lem:Ext_comput}.
If $P$ is projective,
then the source is zero for all $N$, hence the target is zero for all $N$. Since $\pi_{\T^0}$
is essentially surjective, this implies $\pi_{\T^0}P$ is projective. Conversely, if $\pi_{\T^0}P$
is projective for all $\T^0$ containing $\T^1(P)$, then the target is zero for all $\T^0$
containing $\T^1(P)\cup\T^0(P,1)$. It follows that $P$ is projective, finishing the proof of (1).

We proceed to (2). Take $\T^0$ containing $\T^0(P,0)$ from Lemma \ref{Lem:Ext_comput}
and $\T^1(P)$ from (a). The object $\pi_{\T^0}P$ is projective by (a), and the objects
$\pi_{\T^0}^!\pi_{\T^0}P$ and $P$ represent the same functor, $\Hom_{\Cat_{\T^0,\Ring}}(\pi_{\T^0}P,\pi_{\T^0}\bullet)$. Hence they are isomorphic.
Since $\pi_{\T^0}^!\pi_{\T^0}P$ is standardly filtered, we get (2).

(3) is left as an exercise using Lemma \ref{Lem:Ext_comput} and Lemma \ref{Lem:proj_charact}.
\end{proof}

\begin{Rem}\label{Rem:coideal_finite_stand_to_all}
Note that part (3) of Corollary \ref{Cor:coideal_finite_projectives} allows to recover the category of projective objects $\Cat_\Ring$
from $\Cat_\Ring^\Delta$. Then one can recover $\Cat_\Ring$ from the category of projectives,
see Lemma \ref{Lem:full_embedding_coideal_finite}. 
A precise statement to be used below is as follows. Suppose that $\Cat^1_\Ring,\Cat^2_\Ring$ are two highest weight categories with poset $\T$ and standard
objects $\Delta^1_\Ring(\tau),\Delta^2_\Ring(\tau),\tau\in \T$. Suppose that we have
an $\Ring$-linear equivalence of exact categories $\Cat_\Ring^{1,\Delta}\xrightarrow{\sim}
\Cat_{\Ring}^{2,\Delta}$ that sends $\Delta^1_\Ring(\tau)$ to $\Delta^2_\Ring(\tau)$. Then
there is an equivalence $\Cat^1_\Ring\xrightarrow{\sim}\Cat^2_\Ring$ of $\Ring$-linear abelian
categories extending the equivalence $\Cat^{1,\Delta}_\Ring\xrightarrow{\sim}\Cat^{2,\Delta}_\Ring$.
\end{Rem}

\subsubsection{Highest weight subcategories}
Let $\T^0$ be a coideal in $\T$, and $\T_0:=\T\setminus \T^0$ be its complement.

Let $\Cat_{\T_0,\Ring}$ denote the kernel of the quotient functor $\pi_{\T^0}$. This is a
Serre subcategory of $\Cat_\Ring$. The following lemma summarizes properties of $\Cat_{\T_0,\Ring}$.

\begin{Lem}\label{Lem:hw_sub_coideal_finite}
The following claims hold.
\begin{enumerate}
\item $\Cat_{\T_0,\Ring}$ is the Serre span of $\Delta_\Ring(\tau)$ with $\tau\in \T_0$.
\item $\Cat_{\T_0,\Ring}$ is a highest weight category with poset $\T_0$ and standard
objects $\Delta_\Ring(\tau),\tau\in \T_0$.
\item Let $\iota$ denote the inclusion functor $\Cat_{\T_0,\Ring}\hookrightarrow\Cat_{\Ring}$.
Then it has the left adjoint functor $\iota^!$. This functor sends $\Delta_{\Ring}(\tau)$ to itself if $\tau\in \T_0$ and to
$0$ else. Moreover, $\iota^!$ is acyclic on standard objects.
\item The inclusion functor $D^b(\Cat_{\T_0,\Ring})\hookrightarrow D^b(\Cat_\Ring)$
is a full embedding.
\end{enumerate}
\end{Lem}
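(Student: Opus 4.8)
The plan is to prove the four claims of Lemma~\ref{Lem:hw_sub_coideal_finite} by reducing everything to the finite-poset results of Section~\ref{SS_hw_finite} via the truncation functors $\pi_{\T^1}$ associated to large finite coideals $\T^1$, exactly as was done for standardly filtered objects in Section~\ref{SSS_coideal_finite_standard}. First I would fix, for each $\tau\in\T$, a projective $P_\tau$ as in axiom (v), form the locally finite projective algebra $A_\Ring$ of Example~\ref{Ex:loc_fin_proj_algebra}, and identify $\Cat_\Ring$ with $A_\Ring\operatorname{-mod}$ via Lemma~\ref{Lem:full_embedding_coideal_finite}. Then $\Cat_{\T_0,\Ring}=\ker\pi_{\T^0}$ consists of those $M$ with $e_\tau M=0$ for all $\tau\in\T^0$, i.e.\ the modules over the (non-unital) subalgebra $e_{\T_0}A_\Ring e_{\T_0}$ that are locally unital and locally finite.

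For (1): an object $M$ lies in $\Cat_{\T_0,\Ring}$ iff $\pi_{\T^0}M=0$; by Lemma~\ref{Lem:stand_charact} applied after noting $M$ is standardly filtered as soon as it is a quotient of $\bigoplus P_{\tau_i}$ with $\tau_i\in\T_0$, and by (1) of Lemma~\ref{Lem:adjoint_standard_coideal_finite} which says $\pi_{\T^0}\Delta_\Ring(\tau)=0$ exactly when $\tau\notin\T^0$, the Serre span of $\{\Delta_\Ring(\tau):\tau\in\T_0\}$ is contained in $\ker\pi_{\T^0}$. For the reverse inclusion, take $M\in\ker\pi_{\T^0}$, pick via axiom~(v) an epimorphism $\bigoplus P_{\tau_i}\twoheadrightarrow M$; since $e_\tau M=0$ for $\tau\in\T^0$ I can delete all summands with $\tau_i\in\T^0$ (their image is killed after intersecting with the appropriate canonical subobject from part (C) of Section~\ref{SSS_coideal_finite_standard}), reducing to $\tau_i\in\T_0$, and then the kernel again lies in $\ker\pi_{\T^0}$, so induction on the filtration length of $\bigoplus P_{\tau_i}$ (which is standardly filtered with labels in $\T_0$) finishes it. For (2): $\T_0$ with the induced order is again coideal finite (a coideal of $\tau$ in $\T_0$ is contained in a coideal of $\tau$ in $\T$); the category $\Cat_{\T_0,\Ring}$ satisfies (I) and (II) because it is a Serre subcategory closed under the relevant operations; I verify axioms (i)--(v) for the objects $\Delta_\Ring(\tau),\tau\in\T_0$ by truncating: for any finite coideal $\T^1\subset\T_0$, Lemma~\ref{Lem:Ext_comput} and Lemma~\ref{Lem:stand_charact} identify the relevant $\Hom$/$\Ext$ computations with those in the finite highest weight category $\Cat_{\T^1,\Ring}$, where the statement is Proposition~4.13 of \cite{rouqqsch} (the highest weight subcategory result for finite posets, recalled in Section~\ref{SSS_hw_sub}), so the axioms follow. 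The projective of axiom (v) for $\tau\in\T_0$ is obtained from the construction of Remark~\ref{Rem:proj_construction} applied inside $\Cat_{\T_0,\Ring}$, or equivalently as $P_\tau/(P_\tau)_{\T^0\cap\,?}$ truncated via Remark~\ref{Rem:hw_sub_recovery}.

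For (3): the inclusion $\iota$ corresponds to restriction of scalars along $e_{\T_0}A_\Ring e_{\T_0}\hookrightarrow A_\Ring$; its left adjoint $\iota^!$ is $e_{\T_0}A_\Ring\otimes_{A_\Ring}\bullet$, i.e.\ $M\mapsto e_{\T_0}M=\bigoplus_{\tau\in\T_0}e_\tau M$, which manifestly sends $\Delta_\Ring(\tau)$ to itself for $\tau\in\T_0$ and to $0$ otherwise by (1) of Lemma~\ref{Lem:adjoint_standard_coideal_finite}; acyclicity of $\iota^!$ on standard objects follows because $\Delta_\Ring(\tau)$ has a resolution by the $P_\sigma$'s (axiom~(v), iterated) and each $e_{\T_0}A_\Ring e_\sigma=\bigoplus_{\tau\in\T_0}\Hom(P_\tau,P_\sigma)$ is acyclic for $e_{\T_0}A_\Ring\otimes_{A_\Ring}\bullet$ since $A_\Ring e_\sigma$ is projective—concretely, on a projective resolution by sums of $P_\sigma$, $\iota^!=e_{\T_0}(\text{--})$ is exact, so $L^i\iota^!\Delta_\Ring(\tau)=0$ for $i>0$. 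Finally (4) is deduced from (3) in the same way as Lemma~\ref{Lem:full_derived_embedding}: the tilting objects $T_\Ring(\tau),\tau\in\T_0$, generate $D^b(\Cat_{\T_0,\Ring})$ and have no higher self-extensions in $\Cat_\Ring$ (compute via truncation and Lemma~\ref{Lem:Ext_comput}, reducing to the finite case), which forces fullness of the inclusion. The main obstacle I anticipate is (4): unlike the finite case, one must be careful that the $\Ext$-vanishing among tilting objects survives the passage from a large finite truncation to all of $\Cat_\Ring$, and that $D^b(\Cat_{\T_0,\Ring})$ really is generated by finitely many tiltings at a time; both are handled by choosing the finite coideal $\T^0(M,i)$ from Lemma~\ref{Lem:Ext_comput} large enough, but the bookkeeping that a single $\Ext$ computation lands inside one finite truncation is where the argument needs the most care.
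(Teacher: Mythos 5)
Your overall strategy — reduce everything to the finite-poset case via the truncation functors $\pi_{\T^1}$ and Lemma~\ref{Lem:Ext_comput} — is the right one, and parts (2) and (4) mostly follow the paper's line. But there are two genuine errors, both stemming from the asymmetry between poset ideals and poset coideals.

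\emph{Issue in (1).} You claim that after discarding the summands with $\tau_i\in\T^0$, the object $\bigoplus_{\tau_i\in\T_0}P_{\tau_i}$ is ``standardly filtered with labels in $\T_0$'' and that the kernel of its surjection onto $M$ lies in $\ker\pi_{\T^0}$. Neither is true. The projective $P_{\tau_i}$ for $\tau_i\in\T_0$ is filtered by $\Delta_\Ring(\sigma)$ with $\sigma\geqslant\tau_i$, and since $\T_0$ is an \emph{ideal} and not a coideal, such $\sigma$ can escape into $\T^0$; consequently $\pi_{\T^0}(\bigoplus P_{\tau_i})\neq 0$, so the kernel is certainly not in $\ker\pi_{\T^0}$. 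The fix (and what the paper does) is to observe that each map $P_{\tau_i}\to M$ factors through $\underline{P}_{\tau_i}:=P_{\tau_i}/(P_{\tau_i})_{\T^0}$ — because $(P_{\tau_i})_{\T^0}$ is standardly filtered with labels in $\T^0$ and $\Hom(\Delta_\Ring(\sigma),M)\subset e_\sigma M=0$ for $\sigma\in\T^0$ — and $\underline{P}_{\tau_i}$ genuinely is filtered by $\Delta_\Ring(\sigma)$, $\sigma\in\T_0$. Then $M$ is a quotient of an object of the Serre span, and Serre subcategories are quotient-closed.

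\emph{Issue in (3).} The identification of $\Cat_{\T_0,\Ring}$ with modules over the corner algebra $e_{\T_0}A_\Ring e_{\T_0}$, and of $\iota$ with restriction of scalars along $e_{\T_0}A_\Ring e_{\T_0}\hookrightarrow A_\Ring$, is wrong. Extension by zero of an $e_{\T_0}A_\Ring e_{\T_0}$-module is not an $A_\Ring$-module: associativity would require $e_{\T_0}a e_{\T^0}b e_{\T_0}$ to act by zero, which it need not. The correct statement (Remark~\ref{Rem:hw_sub_recovery_coideal_finite}) is that $\Cat_{\T_0,\Ring}\cong A_\Ring/I\operatorname{-mod}$ with $I$ the two-sided ideal generated by $e_\sigma$, $\sigma\in\T^0$, and $e_{\T_0}A_\Ring e_{\T_0}\to A_\Ring/I$ is surjective but in general not injective (its kernel contains elements like $e_{\T_0}a e_\sigma b e_{\T_0}$). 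Thus $\iota^!$ is $M\mapsto M/IM$, which is only right exact, not the exact functor $M\mapsto e_{\T_0}M$. Your argument that ``$\iota^!=e_{\T_0}(\text{--})$ is exact, so $L^i\iota^!\Delta_\Ring(\tau)=0$'' therefore proves nothing; acyclicity on standards is an honest statement that needs the structure of $\Delta_\Ring(\tau)$ — e.g.\ the observation that on standardly filtered objects $\iota^!M\cong M/M_{\T^0}$ and that $M\mapsto M_{\T^0}$ is exact on $\Cat_\Ring^\Delta$ by Lemma~\ref{Lem:stand_filtration}, or else the resolution argument of Lemma~\ref{Lem:adjoint_standard} with $\iota^!(P_\sigma)=\underline{P}_\sigma$.

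Finally, in (4) you invoke tilting objects in $\Cat_{\T_0,\Ring}$; the paper only discusses tiltings in the \emph{ideal}-finite setting (Remark~\ref{Rem:ideal_finite}), and for a coideal-finite poset their existence is not part of the framework. This is only a presentational slip, since the second argument you give (pass to a finite coideal $\T'$ with $\T'\cap\T_0=\T_0'$ via Lemma~\ref{Lem:Ext_comput} and then apply Lemma~\ref{Lem:full_derived_embedding}) is exactly the paper's proof and is correct on its own.
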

\begin{proof}
For $\tau\in\T_0$, set $\underline{P}_\tau:=P_\tau/(P_\tau)_{\T^0}$ (see (C)
in Section \ref{SSS_coideal_finite_standard}),
this is an object filtered by $\Delta_\Ring(\tau')$ with $\tau'\in \T_0$. Clearly, it is projective in $\Cat_{\T_0,\Ring}$ and every object in $\Cat_{\T_0,\Ring}$ is a quotient of a finite direct
sum of projects of the form $\underline{P}_\tau$. (1) follows. (2) is easy to check. (3) is proved
as Lemma \ref{SSS_coideal_finite_standard}.

To prove (4), we show that $\Ext^i_{\Cat_{\T_0,\Ring}}(M,N)\xrightarrow{\sim}
\Ext^i_{\Cat_{\Ring}}(M,N)$ for all $M,N\in \Cat_{\T_0,\Ring}$. This follows from
Lemma \ref{Lem:Ext_comput} that reduces the claim to the case of finite posets (note that for each finite coideal $\T_0'\subset \T_0$
there is a finite coideal $\T'\subset \T$ with $\T'\cap\T_0=\T'_0$) combined with Lemma \ref{Lem:full_derived_embedding} treating that case.
\end{proof}

\begin{Rem}\label{Rem:hw_sub_recovery_coideal_finite}
The direct analog of Remark \ref{Rem:hw_sub_recovery} holds. 
\end{Rem}

\subsection{Interval finite posets}
In this section we consider a more general class of highest weight categories (and, in fact, the class
we need in the present paper): those associated to interval finite posets.

\subsubsection{Definitions}
Let $\T$ be a poset.

\begin{defi}\label{defi:interval_finite_poset}
We say that $\T$ is {\it interval finite} if $\T$ can be represented as the union of poset ideals that are coideal finite as posets. 
This is equivalent to the condition that all poset intervals $[\tau_1,\tau_2]:=\{\tau\in \T| \tau_1\leqslant \tau\leqslant \tau_2\}$
are finite. 
\end{defi}

%
%

\begin{Ex}\label{Ex:poset_highest_weights}
Let $\T$ be the character lattice of a maximal torus in a  semisimple group $G$. We equip it with the standard dominance
order: $\tau\leqslant \tau'$ if $\tau'-\tau$ is a sum of positive roots. Then $\T$ is  
interval finite. For example, when $G$ is adjoint, then $\T$ is the union of the posets ideals of the form $\T_i:=\{\lambda\in \Lambda_0| \langle\rho^\vee,\lambda\rangle\leqslant i\}$ for $i\in \Z$ (where $\rho^\vee$ is half the sum of positive coroots), and these poset ideals are coideal finite. 
\end{Ex}

We note that the poset ideal $\T(\leqslant \tau)=\{\tau'\in \T| \tau'\leqslant \tau\}$ in an interval finite poset $\T$ is coideal finite for all $\tau$.

Let $\Ring$ be a Noetherian ring and $\Cat_\Ring$ be a Noetherian $\Ring$-linear category whose $\Hom$'s
are finitely generated over $\Ring$. Suppose $\T$ is an interval finite poset and we have a family
$\Delta_\Ring(\tau),\tau\in \T,$ of objects in $\Cat_\Ring$. For a coideal finite poset ideal
$\T_0\subset \T$ define the Serre subcategory $\Cat_{\T_0,\Ring}$ as the Serre span of
$\Delta_\Ring(\tau),\tau\in \T_0$.

\begin{defi}\label{defi:hw_interval_finite}
The category $\Cat_\Ring$ equipped with the objects $\Delta_\Ring(\tau),\tau\in \T,$ is said to be {\it highest weight} if the following conditions are satisfied:
\begin{itemize}
\item
for each coideal finite poset ideal $\T_0\subset \T$, the category $\Cat_{\T_0,\Ring}$
is highest weight with poset $\T_0$ and standard objects $\Delta_\Ring(\tau),\tau\in \T_0$;
\item Every object of $\Cat_\Ring$ is contained in $\Cat_{\T_0,\Ring}$ for some coideal finite
poset ideal $\T_0\subset \T$.
\end{itemize}

\begin{Rem}\label{Rem:BS2}
The notions of interval finite posets and the corresponding highest weight categories also appeared in 
\cite{BS}. Their definition of an interval finite highest weight categories
is too restrictive for our purposes. 
\end{Rem}

By a {\it standardly filtered object} in $\Cat_\Ring$ we mean an object that is standardly filtered in one of $\Cat_{\T_0,\Ring}$. The category of standardly filtered objects in $\Cat_\Ring$ will be denoted by
$\Cat_\Ring^\Delta$. By an {\it equivalence} of highest weight categories $\Cat^1_\Ring,\Cat^2_\Ring$
we mean an equivalence of $\Ring$-linear abelian categories $\Cat^1_\Ring\xrightarrow{\sim}\Cat^2_\Ring$
that sends standard objects to standard objects (automatically inducing a bijection between their labels).
\end{defi}

Below we will see that the two categories studied in this paper: the deformed quantum category $\OCat$ and the heart of the stabilized t-structure on the affine Hecke category are highest weight categories associated to interval finite posets (related to those from 
Example \ref{Ex:poset_highest_weights}).

\begin{Rem}\label{Rem:interval_finite_stand_to_all}
Note that $\Cat_\Ring^\Delta$ is an exact category. Thanks to Remark \ref{Rem:coideal_finite_stand_to_all}, one can recover $\Cat_\Ring$
from $\Cat_\Ring^\Delta$ in the same sense as in Remark \ref{Rem:coideal_finite_stand_to_all}:
an $\Ring$-linear equivalence $\Cat^{1,\Delta}_\Ring\xrightarrow{\sim}\Cat^{2,\Delta}_\Ring$
of exact categories sending
$\Delta^1_\Ring(\tau)$ to $\Delta^2_\Ring(\tau)$ for all $\tau\in \T$ extends to an
$\Ring$-linear equivalence of abelian categories $\Cat^1_\Ring\xrightarrow{\sim}\Cat^2_\Ring$.
\end{Rem}

\begin{Rem}\label{Rem:ideal_finite}
Suppose that $\T$ is {\it ideal finite} meaning that $\{\tau'\in \T|\tau'\leqslant \tau\}$
is finite for all $\tau$. Then we recover the definition of a highest weight category associated
to an ideal finite poset, \cite[Section 6.1.3]{EL}. Note that it makes sense to speak about
costandard and tilting objects in the corresponding highest weight category.
\end{Rem}

\subsubsection{Full embedding}
Below we will need the following result.

\begin{Lem}\label{Lem:derived_full_embedding_interval finite}
In the notation of the previous section, let $\T_0\subset\T$ be a coideal finite poset ideal.
Then the inclusion functor $D^b(\Cat_{\T_0,\Ring})\rightarrow D^b(\Cat_{\Ring})$ is a full
embedding.
\end{Lem}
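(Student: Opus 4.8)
The plan is to deduce the interval-finite case from the coideal-finite case (Lemma~\ref{Lem:hw_sub_coideal_finite}(4)), which has already been established, by exhausting $\Cat_\Ring$ by highest weight subcategories associated to coideal finite poset ideals. The key point is that the bounded derived category of a highest weight category with interval finite poset is the (filtered) colimit of the bounded derived categories of the $\Cat_{\T_1,\Ring}$ over coideal finite poset ideals $\T_1$, and fullness of a colimit of full embeddings over a filtered system is automatic once one checks compatibility.

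\medskip

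First I would observe that, by the second bullet in Definition~\ref{defi:hw_interval_finite}, every object of $D^b(\Cat_\Ring)$ — being a bounded complex with finitely many cohomology objects, each lying in some $\Cat_{\T_0,\Ring}$ — is quasi-isomorphic to a complex supported in $\Cat_{\T_1,\Ring}$ for some coideal finite poset ideal $\T_1\supset \T_0$. (Here one uses that the union of finitely many coideal finite poset ideals is again a coideal finite poset ideal, which is immediate.) In particular, given a morphism $f\colon X\to Y$ in $D^b(\Cat_\Ring)$ with $X,Y\in D^b(\Cat_{\T_0,\Ring})$, I can represent $f$ by a roof $X\xleftarrow{s} Z\xrightarrow{g} Y$ with $s$ a quasi-isomorphism; choosing $\T_1\supset\T_0$ coideal finite with $Z\in D^b(\Cat_{\T_1,\Ring})$, the roof lives entirely in $D^b(\Cat_{\T_1,\Ring})$, so $f$ lies in the image of $\Hom_{D^b(\Cat_{\T_1,\Ring})}(X,Y)$.

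\medskip

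Next I would invoke Lemma~\ref{Lem:hw_sub_coideal_finite}(4): since $\T_0\subset\T_1$ are both coideal finite poset ideals (with $\T_0$ a poset ideal of $\T_1$), that lemma — applied inside the highest weight category $\Cat_{\T_1,\Ring}$, whose standard objects restrict correctly — gives that $D^b(\Cat_{\T_0,\Ring})\hookrightarrow D^b(\Cat_{\T_1,\Ring})$ is a full embedding. Composing with the previous paragraph: any morphism in $D^b(\Cat_\Ring)$ between objects of $D^b(\Cat_{\T_0,\Ring})$ factors through $D^b(\Cat_{\T_1,\Ring})$, hence through $D^b(\Cat_{\T_0,\Ring})$ by fullness of $D^b(\Cat_{\T_0,\Ring})\hookrightarrow D^b(\Cat_{\T_1,\Ring})$. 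This shows $\Hom_{D^b(\Cat_{\T_0,\Ring})}(X,Y)\twoheadrightarrow \Hom_{D^b(\Cat_\Ring)}(X,Y)$, i.e.\ the inclusion $D^b(\Cat_{\T_0,\Ring})\to D^b(\Cat_\Ring)$ is full.

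\medskip

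The main obstacle I anticipate is the bookkeeping around the roof calculus: one must be careful that replacing $Z$ by a complex in $\Cat_{\T_1,\Ring}$ can be done compatibly with the given $s,g$, i.e.\ that $\Cat_{\T_1,\Ring}$ is a Serre subcategory closed under the relevant operations (truncations, extensions) so that $D^b(\Cat_{\T_1,\Ring})\to D^b(\Cat_\Ring)$ is itself fully faithful and the localization-theoretic manipulations stay inside it — this is exactly what Lemma~\ref{Lem:hw_sub_coideal_finite}(4) and the $\Ext$-comparison of Lemma~\ref{Lem:Ext_comput} provide, so once those are in hand the argument is formal. A clean alternative, avoiding roofs, is to check directly that $\Ext^i_{\Cat_{\T_0,\Ring}}(X,Y)\xrightarrow{\sim}\Ext^i_{\Cat_\Ring}(X,Y)$ for all $X,Y\in\Cat_{\T_0,\Ring}$ and all $i$ (again reducing to the coideal finite case via Lemma~\ref{Lem:Ext_comput}, then applying Lemma~\ref{Lem:hw_sub_coideal_finite}(4)), whence fullness on the derived level is the standard consequence.
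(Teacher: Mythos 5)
Your "clean alternative" at the end is exactly the paper's proof: one shows $\Ext^i_{\Cat_{\T_0,\Ring}}(M,N)\xrightarrow{\sim}\Ext^i_{\Cat_\Ring}(M,N)$ by combining the coideal finite case (Lemma \ref{Lem:hw_sub_coideal_finite}(4)) with the observation that $\Ext^i_{\Cat_\Ring}(M,N)$ is the filtered colimit of $\Ext^i_{\Cat_{\T_0',\Ring}}(M,N)$ over coideal finite ideals $\T_0'$, which the paper justifies via the Yoneda description of $\Ext$. Your roof-calculus route is a mild rephrasing of the same colimit observation (one should pick $\T_1$ large enough to contain the \emph{terms}, not merely the cohomology, of the apex $Z$, so the roof lives in $K^b(\Cat_{\T_1,\Ring})$), and it establishes fullness only — so the $\Ext$-comparison version is the cleaner way to get full faithfulness.
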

\begin{proof}
Again, we need to show that $\Ext^i_{\Cat_{\T_0,\Ring}}(M,N)\xrightarrow{\sim}\Ext^i_{\Cat_\Ring}(M,N)$
for all $M,N\in \Cat_{\T_0,\Ring}$ and all $i\geqslant 0$. By Lemma \ref{Lem:Ext_comput},
$\Ext^i_{\Cat_{\T_0,\Ring}}(M,N)\xrightarrow{\sim} \Ext^i_{\Cat_{\T'_0,\Ring}}(M,N)$ for all
coideal finite posets $\T_0'$ containing $\T_0$. On the other hand, $\Ext^i_{\Cat_\Ring}(M,N)$
is the colimit of $\Ext^i_{\Cat_{\T_0',\Ring}}(M,N)$ taken over all coideal finite poset ideals
$\T_0'$ such that $M,N\in \Cat_{\T_0',\Ring}$, this can be seen, for example, from the Yoneda description of $\Ext^i$. We see that indeed $\Ext^i_{\Cat_{\T_0,\Ring}}(M,N)\xrightarrow{\sim}\Ext^i_{\Cat_\Ring}(M,N)$.
\end{proof}

\subsubsection{Modules over algebras}\label{SSS_interval_finite_recovery}
In this part we explain how to realize a highest weight category with an interval finite poset in terms of modules 
over a family of algebras. 

Using  Remark \ref{Rem:proj_construction}, we produce projective objects $P^{\T_0}_{\Ring,\tau}\in \Cat_{\T_0,\Ring}$
for every coideal finite poset ideal $\T_0\subset \T$ and every $\tau\in \T_0$. Then we have epimorphisms
$\eta_{\T_0^1,\T_0^2}:P^{\T_0^2}_{\Ring,\tau}\twoheadrightarrow P^{\T_0^1}_{\Ring,\tau}$
for each pair $\T_0^1\subset \T_0^2$ of coideal finite poset ideals.

Form the locally unital algebra $A_{\Ring,\T_0}:=\left(\bigoplus_{\tau,\tau'\in \T_0}\Hom_{\Cat_\Ring}(P^{\T_0}_{\Ring,\tau},P_{\Ring,\tau'}^{\T_0})\right)^{opp}$. Similarly to 
Remark \ref{Rem:hw_sub_recovery}, we get algebra epimorphisms
$\varpi_{\T_0^1,\T_0^2}:A_{\Ring,\T_0^2}\twoheadrightarrow A_{\Ring,\T_0^1}$. There is the following compatibility: for any $\varphi\in A_{\Ring,\T_0^2}$, we have $\eta_{\T_0^1,\T_0^2}\circ \varphi=
\varpi_{\T_0^1,\T_0^2}(\varphi)\circ \eta_{\T_0^1,\T_0^2}$. Also, we write 
$\iota_{\T_0^2,\T_0^1}$ for the inclusion functor $\Cat_{\T_0^1,\Ring}\hookrightarrow \Cat_{\T_0^2,\Ring}$.

Moreover, for $\T_0^1\subset\T_0^2\subset \T_0^3$, we have 
$$\varpi_{\T_0^1,\T_0^3}=\varpi_{\T_0^1,\T_0^2}\circ \varpi_{\T_0^2,\T_0^3}, 
\eta_{\T_0^1,\T_0^3}=\eta_{\T_0^1,\T_0^2}\circ \eta_{\T_0^2,\T_0^3}, \iota_{T_0^3,\T_0^1}=\iota_{\T_0^3,\T_0^2}\circ \iota_{\T_0^2,\T_0^1}.$$ 

Let $\Phi_{\T_0}$ denote the equivalence $\bigoplus_{\tau\in \T_0}\Hom_{\Cat_\Ring}(P^{\T_0}_{\Ring,\tau},\bullet):
\Cat_{\Ring,\T_0}\xrightarrow{\sim} A_{\Ring,\T_0}\operatorname{-mod}$. We have a functor isomorphism 
$\varpi_{\T_0^1,\T_0^2}^*\circ \Phi_{\T_0^1}\xrightarrow{\sim}\Phi_{\T_0^2}\circ \iota_{\T_0^1}$, on an object $M$
it sends an element of $\bigoplus \Hom(P^{\T_0^1}_{\Ring,\tau},M)$ to 
its composition with $\eta_{\T_0^2,\T_0^1}$. 

Consider the following category $\Cat'_\Ring$. Its objects are modules over the associative non-unital algebra  $\varprojlim A_{\Ring,\T_0}$ 
that factor through a locally unital and locally finite (conditions (i') and (ii') in Section \ref{SSS_loc_unit}) Noetherian $A_{\Ring,\T_0'}$-module for some coideal finite poset ideal $\T_0'\subset\T$.  The morphisms 
are $\varprojlim A_{\Ring,\T_0}$-linear maps. The discussion above shows that the functors
$\Phi_{\T_0}:\Cat_{\T_0,\Ring}\xrightarrow{\sim} A_{\Ring,\T_0}\operatorname{-mod}$ extend
to an equivalence $\Phi:\Cat_\Ring\xrightarrow{\sim} \Cat'_\Ring$ of $\Ring$-linear abelian categories.

\begin{Rem}\label{Rem:interval_finite_base_change}
This construction, in particular, allows to define the base change $\Cat_{\bar{\Ring}}$ for a Noetherian $\Ring$-algebra 
$\bar{\Ring}$: we take the analog of the category $\Cat'_\Ring$ for   $\varprojlim (\bar{\Ring}\otimes_{\Ring}A_{\Ring,\T_0})$. 
The category  $\Cat_{\bar{\Ring}}$ is highest weight with standard objects $\bar{\Ring}\otimes_{\Ring}\Delta_{\Ring}(\tau), \tau\in \T$. 
To check the axioms is left as an exercise. 
\end{Rem}

\subsubsection{Example}\label{SSS:ex_hw_interval}
We will now give an example of a highest weight category with interval finite poset. This example is taken from 
\cite[Section 6]{ModHCO}. Take an adjoint semisimple algebraic $G$ and an algebraically closed field $\F$ of characteristic $p$
bigger than the maximum of the Coxeter numbers of the simple summands of
$\g$. Then we can consider the classical category $\OCat^{cl}$ for $\g_\F$: by definition, it consists of 
strongly $B_\F$-equivariant finitely generated $\g_\F$-modules, where $B_\F\subset G_\F$ is a Borel subgroup. An example of an object in $\OCat^{cl}$ is as follows: to a weight $\lambda$ in the character lattice of $B_\F$ we assign the Verma module $\Delta^{cl}_\F(\lambda)$ with highest weight $\lambda$. 
It is a standard observation that $\OCat^{cl}$ is a highest weight category with poset
from Example \ref{Ex:poset_highest_weights}. The standard objects are $\Delta^{cl}_\F(\lambda)$. Compare to Section \ref{SSS_quantum_hw_structure} that treats the quantum case. 

Recall that $\Lambda_0$ denotes the root lattice of $G$. 
Consider the action of the extended affine Weyl group $W^a:=W\ltimes \Lambda_0$ on $\Lambda_0$ given as follows.
As usual, we write $\rho$ for half the sum of positive roots. For $\mu\in \Lambda$, let $t_\mu$ denote 
the corresponding element of $W^a$. Then we have the unique action of $W^a$ on $\Lambda$ given by 
$w\cdot \lambda:=w(\lambda+\rho)-\rho, t_\mu\cdot \lambda=\lambda+p\mu$. The Serre span of the standard
objects $\Delta^{cl}_\F(x^{-1}\cdot (-2\rho))$ for $x\in W^a$ will be called the {\it principal block} of 
$\OCat^{cl}$  and is denoted by
$\OCat^{[0]}$. It is direct summand in $\OCat^{cl}$ and hence also a highest weight category with an interval finite poset.  
 
\section{Equivalences of highest weight categories}\label{S_RS}
We use the notation and conventions of Section \ref{SS_hw_finite}.
\subsection{Rouquier-Soergel functors}
\subsubsection{Definition}\label{SSS_RS_defn}
Let $\Ring$ be a Noetherian ring, and $\Cat_\Ring$ be a highest weight category over $\Ring$.
Let $A_\Ring$ be an a finite projective associative $\Ring$-algebra such that $\Cat_\Ring\cong A_\Ring\operatorname{-mod}$.
Let $\underline{\Cat}_\Ring$ be another $\Ring$-linear abelian category equivalent to the category
of modules over a finite $\Ring$-algebra $\underline{A}_\Ring$. Let $\pi_\Ring:\Cat_\Ring\rightarrow \underline{\Cat}_\Ring$
be a right exact $\Ring$-linear functor. It is given by $B_\Ring\otimes_{A_\Ring}\bullet$
for a uniquely determined $\underline{A}_\Ring$-$A_\Ring$-bimodule $B_\Ring$
such that the left and the right $\Ring$-actions on $B_\Ring$ coincide.
In particular, for any Noetherian $\Ring$-algebra $\Ring'$ we
have the induced functor $\pi_{\Ring'}:\Cat_{\Ring}\rightarrow \Cat_{\Ring'}$,
it is given by $(\Ring'\otimes_\Ring B_\Ring)\otimes_{A_{\Ring'}}\bullet$.

The following definition is inspired by \cite[Section 4.2.2]{rouqqsch}, compare also
to \cite[Section 3.1]{VV_proof}.

\begin{defi}\label{defi:faithful}
We say that the functor $\pi_\Ring$ is
\begin{itemize}
\item {\it $(-1)$-faithful} if it is faithful on $\Cat_\Ring^\Delta$, the category of standardly
filtered objects in $\Cat_\Ring$;
\item {\it 0-faithful} if it is fully faithful on $\Cat_\Ring^\Delta$;
\item {\it 1-faithful} if it is exact, 0-faithful, and, moreover,
$$\Ext^1_{\Cat_\Ring}(M_\Ring, N_\Ring)\xrightarrow{\sim}
\Ext^1_{\underline{\Cat}_\Ring}(\pi_\Ring(M_\Ring),\pi_\Ring(N_\Ring))$$
for any standardly filtered objects $M_\Ring, N_\Ring\in \Cat_\Ring$.
\end{itemize}
\end{defi}

\begin{defi}\label{defi:RS_functor} Suppose that $\Ring$ is a normal Noetherian domain. Let $\F$ stand for its field of fractions.
We say that $\pi_\Ring$ is a {\it Rouquier-Soergel} (shortly, {\it RS}) functor if the following conditions hold:
\begin{itemize}
\item[(RS1)] $\pi_\Ring$ is exact on $\Cat_\Ring^\Delta$.
\item[(RS2)] $\pi_\Ring(\Delta_\Ring(\tau))$ is projective over $\Ring$ for all $\tau\in \T$.
\item[(RS3)] The categories $\Cat_{\F},\underline{\Cat}_\F$ are split semisimple and
$\pi_\F$ is their equivalence.
\item[(RS4)] For each prime ideal $\p\subset \Ring$ of height not exceeding $2$, the functor
$\pi_{\kf(\p)}:\Cat_{\kf(p)}\rightarrow \underline{\Cat}_{\kf(\p)}$ is $(-1)$-faithful.
\end{itemize}
\end{defi}

\begin{Rem}\label{Rem:RS2_equiv}
Note that (RS1) implies that the higher derived functors $L_i\pi_\Ring$ vanish on standardly filtered objects thanks to condition (v) in the definition of a highest weight category.
\end{Rem}

\begin{Rem}\label{Rem:RS4}
Suppose $\Ring$ is, in addition, local, with residue field $\kf$. We claim that if $\pi_{\kf}$ is $(-1)$-faithful, then (RS4) holds.
Take a quotient $\Ring'$ of $\Ring$. 
We have the following implications (a)$\Rightarrow$(b)$\Rightarrow$(c)$\Rightarrow$(d): 
\begin{itemize}
\item[(a)] $\pi_{\kf}$ is $(-1)$-faithful,
\item[(b)] for any finite length quotient $\Ring_0$ of $\Ring$, the functor $\pi_{\Ring_0}$
is $(-1)$-faithful,
\item[(c)] $\pi_{\hat{\Ring}'}$ is $(-1)$-faithful, where $\hat{\Ring}'$ is the completion of $\Ring'$
at the maximal ideal; this is because of the inclusion 
$$\Hom_{\Cat_{\hat{\Ring}'}}(\hat{\Ring}'\otimes_\Ring M_\Ring, \hat{\Ring}'\otimes_\Ring N_\Ring)
\hookrightarrow \varprojlim \Hom_{\Cat_{\Ring_0}}(\Ring_0\otimes_\Ring M_\Ring, \Ring_0\otimes_\Ring N_\Ring),$$
for all $\Ring$-flat objects $M_\Ring, N_\Ring\in \Cat_\Ring$, where the inverse limit is taken over finite length quotients
$\Ring_0$ of $\Ring'$, and the similar inclusion for the category $\underline{\Cat}_{\hat{\Ring}'}$.
\item[(d)] $\pi_{\Ring'}$ is $(-1)$-faithful. 
\end{itemize}
In particular, we can take $\Ring'$ with full fraction ring $\kf(\p)$, and then (d) implies (RS4) for the given $\p$.
\end{Rem}

\subsubsection{Examples}
One of the main results of this paper is the construction of RS functors from two categories
(the deformed quantum categories $\mathcal{O}$ and the heart of the new t-structure on the affine Hecke category) to a certain category of bimodules (this doesn't quite fit in the above setting because these categories are not finite). Here we will sketch two families of examples already in the
literature. We are not going to use these examples in what follows.

The first example is essentially from \cite{Soergel}. Let $\g$ be a semisimple Lie algebra over $\C$ with Cartan subalgebra $\h$. Choosing simple roots in $\h^*$, one can talk about the BGG category $\mathcal{O}$
and its principal block $\mathcal{O}^{pr}$. This is a highest weight category over $\C$. Its standard objects are labelled by elements of the Weyl group $W$: to $w\in W$ we assign the Verma module $\Delta(w)$
with highest weight $w\cdot (-2\rho)$, the order on $W$ is the Bruhat order. Let $\Ring$ denote the completion of $\C[\h^*]$ at $0$. Then we can consider a deformation  $\mathcal{O}^{pr}_\Ring$, a highest weight category over $\Ring$, compare to \cite[Section 3.1]{Soergel} and 
Section \ref{SS_O_quant}. Consider the ``anti-dominant'' projective object $P_\Ring(w_0)$. Its endomorphisms are the algebra $\Ring\otimes_{\Ring^W}\Ring$, compare to \cite[Theorem 9]{Soergel}. Then we consider the functor
$\pi_\Ring:=\operatorname{Hom}_{\mathcal{O}^{pr}_\Ring}(P_\Ring(w_0),\bullet): \mathcal{O}^{pr}_\Ring\rightarrow \Ring\otimes_{\Ring^W}\Ring\operatorname{-mod}$.
It is an RS functor. The axioms (RS1) and (RS2) are manifest. (RS3) is easy to check. It is enough to check (RS4) only for the specialization to the closed point, see Remark \ref{Rem:RS4}. There it follows because the socle of every Verma module is the anti-dominant Verma that occurs with multiplicity $1$.

Some other families of examples can be found in \cite[Sections 5,6]{rouqqsch} they have to do
with categories $\mathcal{O}$ over rational Cherednik algebras (with KZ functors $\pi_\Ring$)
and higher level Schur algebras (with Schur functors $\pi_\Ring$). We note that we impose much weaker
conditions on our functors compared to \cite[Section 4.2]{rouqqsch}.

\subsubsection{Basic properties}
In what follows to simplify the notation, for $M\in \Cat_\Ring$, we write $\underline{M}$ for
$\pi_\Ring(M)$.

\begin{Lem}\label{Lem:RS_0_faith}
Every RS functor $\pi_\Ring: \Cat_\Ring\rightarrow \underline{\Cat}_\Ring$ is 0-faithful.
\end{Lem}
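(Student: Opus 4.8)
The plan is to show that an RS functor $\pi_\Ring$ is fully faithful on $\Cat_\Ring^\Delta$ by a devissage on standardly filtered objects, reducing everything to the faithfulness hypothesis (RS4) at primes of small height together with the projectivity hypothesis (RS2) and the generic equivalence (RS3). Since $\pi_\Ring$ is exact on $\Cat_\Ring^\Delta$ by (RS1), for two standardly filtered objects $M_\Ring, N_\Ring$ both $M_\Ring$ and $\pi_\Ring(M_\Ring)=\underline{M}_\Ring$ are $\Ring$-flat, and (using (RS2) on the subquotients of a standard filtration and Remark \ref{Rem:RS2_equiv}) one checks that the natural map
\[
\Hom_{\Cat_\Ring}(M_\Ring, N_\Ring)\longrightarrow \Hom_{\underline{\Cat}_\Ring}(\underline{M}_\Ring,\underline{N}_\Ring)
\]
is a map of $\Ring$-modules between objects that are \emph{torsion free} over the normal domain $\Ring$: the source is torsion free because $N_\Ring$ is flat (a nonzero morphism killed by $f\in\Ring\setminus 0$ would give a nonzero morphism into the $f$-torsion of $N_\Ring$, which vanishes), and similarly for the target. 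Moreover both are finitely generated $\Ring$-modules by the standing hypotheses on $\Cat_\Ring$ and $\underline{\Cat}_\Ring$.

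First I would establish injectivity (faithfulness) on $\Cat_\Ring^\Delta$. By (RS4), $\pi_{\kf(\p)}$ is $(-1)$-faithful for every prime $\p$ of height $\leqslant 2$; in particular for the generic point this is implied already by (RS3). Given a nonzero $\varphi\colon M_\Ring\to N_\Ring$ in $\Cat_\Ring^\Delta$, suppose $\underline{\varphi}=\pi_\Ring(\varphi)=0$. Because $\Ring$ is normal, $\Ring=\bigcap_{\operatorname{ht}\p=1}\Ring_\p$, so it suffices to reach a contradiction after localizing at some height-one prime $\p$; base-changing to $\kf(\p)$ and using that $M_\Ring, N_\Ring$ are $\Ring$-flat so that $M_\Ring\otimes_\Ring \kf(\p)\in \Cat_{\kf(\p)}^\Delta$ is again standardly filtered, $(-1)$-faithfulness of $\pi_{\kf(\p)}$ forces the image of $\varphi$ in $\Hom_{\Cat_{\kf(\p)}}(M_{\kf(\p)},N_{\kf(\p)})$ to vanish for all height-one $\p$. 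Since $\Hom_{\Cat_\Ring}(M_\Ring,N_\Ring)$ is torsion free and $\Ring=\bigcap_\p \Ring_\p$, this gives $\varphi=0$, a contradiction. Hence $\pi_\Ring$ is faithful on $\Cat_\Ring^\Delta$.

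Next I would upgrade faithfulness to full faithfulness using the codimension-two hypothesis in (RS4). Set $H:=\Hom_{\Cat_\Ring}(M_\Ring,N_\Ring)$ and $\underline{H}:=\Hom_{\underline{\Cat}_\Ring}(\underline{M}_\Ring,\underline{N}_\Ring)$; we have an injection $H\hookrightarrow \underline{H}$ of finitely generated torsion-free $\Ring$-modules. By (RS3) this becomes an isomorphism after inverting $0$, i.e.\ over $\F=\Frac(\Ring)$, so $\underline{H}/H$ is a torsion $\Ring$-module; I claim it vanishes. If not, its support contains a height-one prime $\p$. Localizing and then passing to the residue field $\kf(\p)$: here I would use that $\pi_{\Ring_\p}$ is still $(-1)$-faithful (by (RS4) at the height-one prime $\p$ and at height-two primes lying over it, via a base-change/Remark \ref{Rem:RS4}-type argument) to control $\underline{H}\otimes\kf(\p)$, and a counting argument: since $M_\Ring,N_\Ring$ are $\Ring$-flat, $H$ and $\underline H$ are reflexive once we know faithfulness holds after every codimension-one base change, and the reflexive hull of $H$ over the normal (hence, after localizing at $\p$, regular in codimension $\leqslant 2$) ring $\Ring$ agrees with $H$ in codimension $1$ and with $\underline H$ generically; comparing ranks at a height-two prime and using $(-1)$-faithfulness of $\pi_{\kf(\q)}$ for $\operatorname{ht}\q=2$ forces $H_\q=\underline H_\q$, hence $H=\underline H$. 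So $\pi_\Ring$ is full on $\Cat_\Ring^\Delta$.

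The main obstacle is the full-faithfulness step, specifically controlling the torsion quotient $\underline{H}/H$ in codimension two: faithfulness only needs the height-one primes (and that is essentially Hartogs for the normal domain $\Ring$), but fullness genuinely needs the $(-1)$-faithfulness at height-two primes from (RS4), and the correct packaging is to show $H$ and $\underline H$ are second-syzygy (reflexive) $\Ring$-modules that agree in codimension $1$ and generically, whence by normality they agree everywhere. I expect the technical heart of the argument to be verifying this reflexivity — i.e.\ that $\Hom$ out of a standardly filtered object into a standardly filtered object is reflexive over $\Ring$ — which in turn rests on (RS1), (RS2), flatness of standards, and the depth estimate $\operatorname{depth}_\Ring \geqslant 2$ at the relevant primes; once this is in place, (RS4) applied in codimensions $1$ and $2$ closes the comparison.
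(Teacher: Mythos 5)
Your overall architecture matches the paper's: show the two $\Hom$-modules are reflexive over the normal domain $\Ring$, reduce the comparison to height-one primes, then argue over the resulting DVR. The faithfulness half is essentially correct. However, there is a concrete error in your upgrade to fullness.

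You assert that ``fullness genuinely needs the $(-1)$-faithfulness at height-two primes from (RS4)'' and propose a rank-counting argument at height-two primes to force $H_\q=\underline H_\q$. This is both unnecessary and does not parse as stated: once $H$ and $\underline H$ are reflexive, the inclusion $H\hookrightarrow\underline H$ is an equality if and only if it is an equality after localizing at each height-\emph{one} prime, full stop. The height-two portion of (RS4) plays no role in $0$-faithfulness; it enters only later, in the $\Ext^1$ comparison (Proposition \ref{Prop:injectivity}), where one works with quotients $\Ring'$ of $\Ring$ that are complete intersections of codimension $\leqslant 2$ whose minimal primes can have height $2$. Conflating those two places is the gap here.

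The clean way to close fullness, entirely at height one, is this. Localize at a height-one prime $\p$, so $\Ring_\p$ is a DVR with uniformizer $t$ and residue field $\kf$. Since $M_\Ring$, $N_\Ring$ are $\Ring$-flat (highest weight structure) and $\pi_\Ring$ is exact and sends standards to $\Ring$-projectives ((RS1), (RS2)), the sequence $0\to\underline N_{\Ring_\p}\xrightarrow{t}\underline N_{\Ring_\p}\to\underline N_\kf\to 0$ is exact and hence $K_{\Ring_\p}/tK_{\Ring_\p}\hookrightarrow\Hom_{\Cat_\kf}(M_\kf,N_\kf)$, and similarly $\underline K_{\Ring_\p}/t\underline K_{\Ring_\p}\hookrightarrow\Hom_{\underline\Cat_\kf}(\underline M_\kf,\underline N_\kf)$. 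Then (RS4) \emph{at the height-one prime $\p$} gives $\Hom_{\Cat_\kf}(M_\kf,N_\kf)\hookrightarrow\Hom_{\underline\Cat_\kf}(\underline M_\kf,\underline N_\kf)$, so $\varphi_\kf$ is injective. Together with (RS3), which says $\varphi_\F$ is an isomorphism, this forces $\varphi_{\Ring_\p}$ to be an isomorphism of finite free modules over the DVR. That is exactly the paper's argument; you do not need a reflexive-hull/height-two detour, and the ``counting argument'' you gesture at is the step that would need to be fixed. Separately, the reflexivity of $H$ is not conditional on ``faithfulness after codimension-one base change'' as you write; it follows directly from $M_\Ring,N_\Ring,\underline M_\Ring,\underline N_\Ring$ being $\Ring$-projective and $A_\Ring,\underline A_\Ring$ being finite (projective) $\Ring$-algebras, since $\Hom_{A_\Ring}(M,N)$ is the kernel of a map between finitely generated projective $\Ring$-modules.
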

\begin{proof}
Let $A_\Ring$ be a finite projective $\Ring$-algebra. Let $M_\Ring,N_\Ring$ be two objects
in $A_\Ring\operatorname{-mod}$ that are projective over $\Ring$. It is easy to see that
$\Hom_{A_\Ring}(M_\Ring,N_\Ring)$ is a reflexive $\Ring$-module. A homomorphism
$\varphi_\Ring:K_\Ring\rightarrow \underline{K}_\Ring$ between two finitely generated reflexive
$\Ring$-modules is an isomorphism if and only if its localization at every height $1$ prime ideal is so.

Apply this to $K_\Ring:=\Hom_{\Cat_\Ring}(M_\Ring,N_\Ring), \underline{K}_\Ring:=
\Hom_{\underline{\Cat}_\Ring}(\underline{M}_\Ring,\underline{N}_\Ring)$ for standardly filtered objects $M_\Ring,N_\Ring$ and the homomorphism $\varphi_\Ring$ induced by $\pi_\Ring$. Note that $M_\Ring,N_\Ring$ are projective over
$\Ring$ because of the definition of a highest weight category (see Section \ref{SSS_hw_finite_def}),
while $\underline{M}_\Ring,\underline{N}_\Ring$ are projective over $\Ring$ thanks to (RS1),(RS2). So $K_\Ring,\underline{K}_\Ring$
are indeed finitely generated and reflexive.

By replacing $\Ring$ with $\Ring_\p$ for height $1$ prime ideals, we can reduce the proof to the case when $\Ring$ is a local Dedekind domain with residue field $\kf$. This case can be handled as in the proofs
of \cite[Proposition 3.1]{VV_proof} or \cite[Proposition 4.42]{rouqqsch}. But we provide the details
as the settings are a bit different. Let $t$ denote a uniformizer in $\Ring$.
The sequence $$0\rightarrow \underline{M}_\Ring\xrightarrow{t}\underline{M}_\Ring\rightarrow
\underline{M}_\kf\rightarrow 0,$$
where we write $M_\kf$ for $\kf\otimes_\Ring M_\Ring$ and $\underline{M}_\kf:=\pi_\Ring(M_\kf)$,
is exact thanks to (RS1) and (RS2), and similarly for $N_\Ring$. It follows that
$$K_\Ring/t K_\Ring\hookrightarrow \Hom_{\Cat_{\kf}}(M_\kf,N_\kf),
\underline{K}_\Ring/t \underline{K}_\Ring\hookrightarrow \Hom_{\underline{\Cat}_\kf}(\underline{M}_\kf,\underline{N}_\kf).$$
Thanks to (RS4), we have $ \Hom_{\Cat_\kf}(M_\kf,N_\kf)\hookrightarrow \Hom_{\kf}(
\underline{M}_\kf,\underline{N}_\kf)$. Hence the homomorphism
$\varphi_\kf:K_\Ring/ t K_\Ring\rightarrow \underline{K}_\Ring/ t \underline{K}_\Ring$
induced by $\varphi_\Ring$ is injective. By (RS3), $\varphi_{\F}$ is an isomorphism.
Combining these two observations, we see that $\varphi_\Ring$ is an isomorphism.
This finishes the proof.
\end{proof}


\subsection{Equivalence theorem}
Let $\Ring$ be a regular local ring. In this section we are going to prove an equivalence
between two highest weight categories over $\Ring$ with RS functors to the same category
that are subject to some compatibility condition.

\subsubsection{Main result}
Let $\Cat^i_\Ring$ be two highest weight categories over $\Ring$ with (finite) posets
$\T^i,i=1,2$. Let $\underline{\Cat}_\Ring$
have the same meaning as in Section \ref{SSS_RS_defn}. Let $\pi^i_{\Ring}:\Cat^i_\Ring\rightarrow
\underline{\Cat}_\Ring$ be RS functors.

We have the following result.

\begin{Thm}\label{Thm:hw_equiv}
Suppose that
\begin{itemize}
\item[(a)] there is a bijection $\iota:\T^1\xrightarrow{\sim}\T^2$ such that
$\pi^1_{\Ring}(\Delta^1_\Ring(\tau))\cong \pi^2_{\Ring}(\Delta^2_\Ring(\iota(\tau)))$
for all $\tau\in \T^1$,
\item[(b)] and for every height $1$ prime ideal $\p\subset \Ring$, the essential
images of $\Cat^{1,\Delta}_{\Ring_\p}$ and $\Cat^{2,\Delta}_{\Ring_\p}$
in $\underline{\Cat}_{\Ring_\p}$ coincide.
\end{itemize}
Then there is a unique equivalence of highest weight categories $\Cat^1_\Ring\xrightarrow{\sim}\Cat^2_\Ring$
intertwining $\pi^1_\Ring$ and $\pi^2_\Ring$. It automatically sends $\Delta^1_\Ring(\tau)$ to
$\Delta^2_\Ring(\iota(\tau))$ for all $\tau\in \T^1$.
\end{Thm}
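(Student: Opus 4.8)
The plan is to work over the complete local ring $\hat{\Ring}$ first and then descend, exploiting that an RS functor is $0$-faithful on standardly filtered objects (Lemma \ref{Lem:RS_0_faith}) together with the recovery principle of Remark \ref{Rem:interval_finite_stand_to_all}. By Lemma \ref{Lem:RS_0_faith}, each $\pi^i_\Ring$ is a full embedding of the exact category $\Cat^{i,\Delta}_\Ring$ into $\underline{\Cat}_\Ring$; so it suffices to show that the essential images of $\pi^1_\Ring$ and $\pi^2_\Ring$ on standardly filtered objects coincide. Once this is known, composing $\pi^1_\Ring$ with the inverse of $\pi^2_\Ring$ on that common image yields an $\Ring$-linear equivalence $\Cat^{1,\Delta}_\Ring\xrightarrow{\sim}\Cat^{2,\Delta}_\Ring$; hypothesis (a) guarantees it sends $\Delta^1_\Ring(\tau)$ to $\Delta^2_\Ring(\iota(\tau))$; and then Remark \ref{Rem:interval_finite_stand_to_all} (or Remark \ref{Rem:coideal_finite_stand_to_all} in the finite case) extends it uniquely to an equivalence of highest weight categories $\Cat^1_\Ring\xrightarrow{\sim}\Cat^2_\Ring$, which by construction intertwines $\pi^1_\Ring$ and $\pi^2_\Ring$.

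The heart of the argument is therefore the equality of essential images of the two subcategories of standardly filtered objects. First I would observe that it is enough to prove, for each $\tau$, that $\pi^1_\Ring(M^1)$ lies in the image of $\pi^2_\Ring$ for every standardly filtered $M^1$, and symmetrically; and that an object of $\underline{\Cat}_\Ring$ lying in both images, when it is a quotient of objects in the image of $\pi^i_\Ring$, is itself in the image --- this uses that $\Cat^{i,\Delta}_\Ring$ is closed under kernels of epimorphisms (Remark \ref{Rem:exact_cat_stand}) and that $\pi^i_\Ring$ is exact on $\Cat^{i,\Delta}_\Ring$ by (RS1) and Remark \ref{Rem:RS2_equiv}. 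So the problem reduces to matching up the standard objects themselves and then building up by extensions. Matching the standard objects is exactly hypothesis (a): $\pi^1_\Ring(\Delta^1_\Ring(\tau))\cong\pi^2_\Ring(\Delta^2_\Ring(\iota(\tau)))$. For extensions, one needs that if $\underline{M},\underline{N}$ lie in the common image, say $\underline{M}=\pi^1_\Ring(M^1)=\pi^2_\Ring(M^2)$ and similarly for $N$, then every extension of $\underline{M}$ by $\underline{N}$ in $\underline{\Cat}_\Ring$ that arises as $\pi^1_\Ring$ of an extension in $\Cat^1_\Ring$ also arises from $\Cat^2_\Ring$. This is where hypothesis (b) enters, via the reflexivity/codimension-one argument already used in Lemma \ref{Lem:RS_0_faith}: the comparison map on $\Ext^1$-modules between the two categories and $\underline{\Cat}_\Ring$ is a map of finitely generated reflexive $\Ring$-modules, and (b) together with (RS3), (RS4) forces it to be an isomorphism after localizing at every height-one prime, hence globally.

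More precisely, I would argue by induction on the length of a standard filtration of a given $M^1\in\Cat^{1,\Delta}_\Ring$. Write $0\to N^1\to M^1\to \Delta^1_\Ring(\tau)\otimes_\Ring P\to 0$ with $N^1$ of shorter filtration length; by induction $\pi^1_\Ring(N^1)=\pi^2_\Ring(N^2)$ for some $N^2\in\Cat^{2,\Delta}_\Ring$, and the quotient is $\pi^2_\Ring(\Delta^2_\Ring(\iota(\tau))\otimes_\Ring P)$ by (a). The class of $\pi^1_\Ring(M^1)$ in $\Ext^1_{\underline{\Cat}_\Ring}(\pi^2_\Ring(\Delta^2_\Ring(\iota(\tau))\otimes P),\pi^2_\Ring(N^2))$ is hit, after localizing at each height-one $\p$, by $\Ext^1_{\Cat^2_{\Ring_\p}}(\Delta^2_{\Ring_\p}(\iota(\tau))\otimes P,N^2_{\Ring_\p})$: this is exactly hypothesis (b) --- the images of the standardly filtered subcategories over $\Ring_\p$ agree, so in particular every such extension class over $\Ring_\p$ realized from side $1$ is realized from side $2$. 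A reflexivity argument (both $\Ext^1$-modules are finitely generated, and the targets $\pi^i_\Ring(\Delta)\otimes$ stuff are $\Ring$-projective by (RS2), making the relevant $\Ext^1$ a well-behaved $\Ring$-module; one may need to pass to $\Ext^1$ between a standardly filtered object and a projective resolution term, which is reflexive) then lets one glue these local realizations into a global extension $0\to N^2\to M^2\to \Delta^2_\Ring(\iota(\tau))\otimes P\to 0$ with $\pi^2_\Ring(M^2)\cong\pi^1_\Ring(M^1)$, using that $\pi^2_\Ring$ is exact and $0$-faithful on standardly filtered objects to see $M^2$ is forced and is a genuine object of $\Cat^{2,\Delta}_\Ring$. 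The symmetric statement follows by exchanging the roles of $1$ and $2$.

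The main obstacle, and the step I would expect to require the most care, is precisely this $\Ext^1$-reflexivity-and-descent step: one must check that the relevant $\Ext^1$-module over $\Ring$ is (or can be arranged to be) reflexive, so that an isomorphism after localization at all height-one primes is an isomorphism, and one must verify that (b) --- stated only as equality of \emph{essential images} of standardly filtered objects over each $\Ring_\p$ --- really does force the needed surjectivity on $\Ext^1$ at those primes. Here the fact that $\Ring$ is regular local (so reflexive modules over localizations at height-one primes are free, and normality gives the intersection-of-localizations description of reflexive modules) is what makes the descent work; I would be careful that the objects involved are $\Ring$-flat (axiom (i) and (RS2)) so that base change and the codimension-one comparison behave as in the proof of Lemma \ref{Lem:RS_0_faith}. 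Everything else --- uniqueness of the extension, that the resulting functor is an equivalence of highest weight categories, and that it sends standards to standards --- is formal given Remark \ref{Rem:interval_finite_stand_to_all} and the $0$-faithfulness already established.
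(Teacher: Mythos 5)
Your overall strategy is the right one and matches the paper's at the level of plan: reduce to the full subcategories of standardly filtered objects using $0$-faithfulness (Lemma \ref{Lem:RS_0_faith}) and the recovery principle (Remark \ref{Rem:stand_filt_recover}); match the standard objects via (a); then induct on the length of a standard filtration, at each step comparing the images of $\Ext^1_{\Cat^i_\Ring}$ inside $\Ext^1_{\underline{\Cat}_\Ring}$, with hypothesis (b) providing the required local coincidence at each height-one prime. You also correctly flag the genuine difficulty: how to descend from height-one localizations to a global statement.

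However, the mechanism you propose for the descent --- showing or arranging that the relevant $\Ext^1$-module over $\Ring$ is reflexive --- cannot work as stated, and this is the real gap. By (RS3) the module $\Ext^1_{\underline{\Cat}_\Ring}(\underline{M},\underline{N})$ vanishes after base change to $\F=\operatorname{Frac}(\Ring)$, so it is a \emph{torsion} $\Ring$-module, which is never reflexive unless zero. Worse, for torsion modules, equality of two submodules at all height-one localizations does not imply global equality: take $\Ring=\C[[x,y]]$, $Q=\C$ killed by the maximal ideal, and the submodules $\{0\}$ and $Q$; both localize to zero at every height-one prime but are distinct. So the "localize at all height-one primes and conclude globally" step you rely on fails for the object you apply it to, and the parenthetical suggestion to instead use $\Ext^1$ against a projective resolution term is not developed enough to carry the argument.

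The paper's resolution is a genuinely different move. Fix $a\in\Ring$ annihilating $\Ext^1_{\underline{\Cat}_\Ring}(\underline{M},\underline{N})$ and set $\Ring^1:=\Ring/(a)$. Using the long exact sequence for $0\to N_\Ring\xrightarrow{a\cdot}N_\Ring\to N_{\Ring^1}\to 0$, one re-expresses the image of $\Ext^1_{\Cat_\Ring}(M_\Ring,N_\Ring)$ inside $\Ext^1_{\underline{\Cat}_\Ring}(\underline{M}_\Ring,\underline{N}_\Ring)$ purely in terms of $\Hom$-modules over $\Ring^1$ (rather than $\Ext^1$-modules over $\Ring$). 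Proposition \ref{Prop:injectivity} then shows the comparison map $\Hom_{\Cat_{\Ring^1}}(M_{\Ring^1},N_{\Ring^1})\to\Hom_{\underline{\Cat}_{\Ring^1}}(\underline{M}_{\Ring^1},\underline{N}_{\Ring^1})$ is injective with cokernel of positive depth --- and positive depth, not reflexivity, is what lets one embed into $\Hom$ over the full quotient ring $\Loc(\Ring^1)$ of $\Ring^1$ (a finite sum of $\Ring_{\p_i}/(a)$ for the minimal primes $\p_i$ of $a$). Corollary \ref{Cor:Ext_description} records the upshot: the image of $\Ext^1_{\Cat_\Ring}$ in $\Ext^1_{\underline{\Cat}_\Ring}$ is controlled by the intersection of images of $\Hom_{\Cat_{\Loc(\Ring^1)}}$ and $\Hom_{\underline{\Cat}_{\Ring^1}}$ inside $\Hom_{\underline{\Cat}_{\Loc(\Ring^1)}}$. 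Finally, (b) combined with Remark \ref{Rem:stand_filt_recover} gives an equivalence $\Cat^1_{\Ring_{\p_j}}\xrightarrow{\sim}\Cat^2_{\Ring_{\p_j}}$ over each localization $\Ring_{\p_j}$, whence equality of the localized $\Hom$-images and hence of the $\Ext^1$-images. Your plan would need to be restructured along these lines --- passing from $\Ext^1$ over $\Ring$ to $\Hom$ over $\Ring^1$ and replacing "reflexive" by "positive depth" --- to close the gap.
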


This theorem will be proved below in this section. Here is how the proof works:
\begin{itemize}
\item By Lemma \ref{Lem:RS_0_faith}, the functor $\pi^i_\Ring$ is 0-faithful.
By Remark \ref{Rem:stand_filt_recover}, we can recover $\Cat^i_\Ring$ from its
full subcategory $\Cat^{i,\Delta}_\Ring$. So Theorem boils down to proving that
\begin{equation}\label{eq:image_stand_coincide}
\pi^1_\Ring(\Cat^{1,\Delta}_\Ring)=\pi^2_\Ring(\Cat^{2,\Delta}_\Ring).
\end{equation}
\item Since $\pi^i_\Ring$ is 0-faithful and (Remark \ref{Rem:RS2_equiv}) acyclic on the standard objects, it induces embeddings
$\Ext^1_{\Cat^i_\Ring}(M^i_\Ring, N^i_\Ring)\hookrightarrow
\Ext^1_{\underline{\Cat}_\Ring}(\pi^i_\Ring(M^i_\Ring),\pi^i_\Ring(N^i_\Ring))$ for $i=1,2$.
If we know that for all $M^i_\Ring, N^i_\Ring$ such that $\pi^1_\Ring(M^1_\Ring)=\pi^2_\Ring(M^2_\Ring),\pi^1_\Ring(N^1_\Ring)=\pi^2_\Ring(N^2_\Ring)$,
the images of $\Ext^1$'s for $i=1,2$, coincide, we can argue inductively on the number of standards in a filtration, to deduce (\ref{eq:image_stand_coincide}).
\item We will show that the images of $\Ext^1$'s indeed coincide.
\end{itemize}

\subsubsection{Description of $\Ext^1$}
Let $\Cat_\Ring,\underline{\Cat}_\Ring$ be as in Section \ref{SSS_RS_defn}, and $\pi_\Ring:\Cat_\Ring
\rightarrow \underline{\Cat}_\Ring$ be an RS functor. For $M\in \Cat_\Ring$ we will write
$\underline{M}$ for $\pi_\Ring(M)$.

Our goal in this section is, for $M_\Ring,N_\Ring\in \Cat_\Ring^\Delta$, to describe the image of $\Ext^1_{\Cat_\Ring}(M_\Ring,N_\Ring)$ in $\Ext^1_{\underline{\Cat}_\Ring}(\underline{M}_\Ring,\underline{N}_\Ring)$.

In order to state the description we need some preparation. The $\Ring$-module $\Ext^1_{\underline{\Cat}_\Ring}(\underline{M}_\Ring,\underline{N}_\Ring)$ is finitely generated.
By (RS3) it is torsion. Fix an element $a\in \Ring$ annihilating this module. Then $a$
also annihilates the submodule $\Ext^1_{\Cat_\Ring}(M_\Ring,N_\Ring)$. We write $\Ring^1$ for $\Ring/(a)$.
Let $\p_1,\ldots,\p_k\subset \Ring$ be the minimal prime ideals of $a$.
Set $\Loc(\Ring):=\bigoplus_{i=1}^k \Ring_{\p_i}$. For an $\Ring$-module $Q_\Ring$, we write
$Q_{\Ring^1}:=Q_\Ring/a Q_\Ring$ and $Q_{\Loc(\Ring)}$ for $\Loc(\Ring)\otimes_\Ring Q_\Ring$.
The same notation is used for objects in
$\Ring$-linear abelian categories.

Note that $\Loc(\Ring^1)$ is the full
fraction ring of $\Ring^1$.

Let $\Ring'$ be a Noetherian ring.
We can talk about $\Ring'$-modules of positive depth -- these are finitely generated
$\Ring'$-modules $Q_{\Ring'}$ such that every non-zero divisor $b\in \Ring'$ is not a zero divisor
in $Q_{\Ring'}$. For example, the Hom module to every positive depth (e.g. free) $\Ring'$-module
is also positive depth. Equivalently, $Q_{\Ring'}$ is of positive depth if the natural homomorphism
$Q_{\Ring'}\rightarrow Q_{\operatorname{Frac}(\Ring')}$ is injective, where $\operatorname{Frac}(\Ring')$ stands for the full quotient ring of $\Ring'$.

The following proposition is a key result of this section that will yield the description we need.

\begin{Prop}\label{Prop:injectivity}
The homomorphism
\begin{equation}\label{eq:Hom_homomorphism}
\Hom_{\Cat_{\Ring^1}}(M_{\Ring^1},N_{\Ring^1})\rightarrow
\Hom_{\underline{\Cat}_{\Ring^1}}(\underline{M}_{\Ring^1},\underline{N}_{\Ring^1})
\end{equation}
is injective. Moreover, its cokernel is of positive depth.
\end{Prop}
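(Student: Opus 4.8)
The plan is to reduce everything modulo $a$, compare long exact sequences, and bootstrap from Lemma \ref{Lem:RS_0_faith} (which makes $\pi_\Ring$ $0$-faithful). Since $M_\Ring,N_\Ring\in\Cat_\Ring^\Delta$ they are projective over $\Ring$, and by (RS1)--(RS2) the objects $\underline M_\Ring,\underline N_\Ring$ are flat over $\Ring$; hence $\pi_\Ring$ sends the short exact sequence $0\to N_\Ring\xrightarrow{a}N_\Ring\to N_{\Ring^1}\to 0$ to a short exact sequence $0\to\underline N_\Ring\xrightarrow{a}\underline N_\Ring\to\underline N_{\Ring^1}\to 0$. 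Applying $\Hom_{\Cat_\Ring}(M_\Ring,-)$ and $\Hom_{\underline{\Cat}_\Ring}(\underline M_\Ring,-)$, and using that $a$ annihilates $\Ext^1_{\Cat_\Ring}(M_\Ring,N_\Ring)$ and $\Ext^1_{\underline{\Cat}_\Ring}(\underline M_\Ring,\underline N_\Ring)$, together with the base‑change identifications $\Hom_{\Cat_\Ring}(M_\Ring,N_{\Ring^1})=\Hom_{\Cat_{\Ring^1}}(M_{\Ring^1},N_{\Ring^1})$ and its analogue in $\underline{\Cat}$, I get a commutative ladder whose two rows are the short exact sequences
\[
0\to\Hom_{\Cat_\Ring}(M_\Ring,N_\Ring)/a\to\Hom_{\Cat_{\Ring^1}}(M_{\Ring^1},N_{\Ring^1})\to\Ext^1_{\Cat_\Ring}(M_\Ring,N_\Ring)\to 0
\]
and
\[
0\to\Hom_{\underline{\Cat}_\Ring}(\underline M_\Ring,\underline N_\Ring)/a\to\Hom_{\underline{\Cat}_{\Ring^1}}(\underline M_{\Ring^1},\underline N_{\Ring^1})\to\Ext^1_{\underline{\Cat}_\Ring}(\underline M_\Ring,\underline N_\Ring)\to 0,
\]
the middle vertical map being the homomorphism \eqref{eq:Hom_homomorphism}. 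The left vertical arrow is the reduction mod $a$ of the isomorphism of Lemma \ref{Lem:RS_0_faith}, hence an isomorphism. The right vertical arrow is injective: choosing an epimorphism $P_\Ring\twoheadrightarrow M_\Ring$ with $P_\Ring$ projective and standardly filtered kernel $K_\Ring$ (Lemma \ref{Lem:stand_exact_prop}), applying $\pi_\Ring$ (exact and acyclic on $\Cat_\Ring^\Delta$ by (RS1) and Remark \ref{Rem:RS2_equiv}), and using $0$-faithfulness on $P_\Ring,K_\Ring,N_\Ring$, one identifies $\Ext^1_{\Cat_\Ring}(M_\Ring,N_\Ring)$ with $\operatorname{coker}\!\big(\Hom_{\underline{\Cat}_\Ring}(\underline P_\Ring,\underline N_\Ring)\to\Hom_{\underline{\Cat}_\Ring}(\underline K_\Ring,\underline N_\Ring)\big)$, which embeds into $\Ext^1_{\underline{\Cat}_\Ring}(\underline M_\Ring,\underline N_\Ring)$. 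The five lemma then gives the injectivity of \eqref{eq:Hom_homomorphism}, and the snake lemma identifies its cokernel with $C:=\operatorname{coker}\!\big(\Ext^1_{\Cat_\Ring}(M_\Ring,N_\Ring)\hookrightarrow\Ext^1_{\underline{\Cat}_\Ring}(\underline M_\Ring,\underline N_\Ring)\big)$.

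It remains to prove $C$ has positive depth over $\Ring^1$. Since $\Ring$ is regular and $a$ is a nonzerodivisor, $\Ring^1$ is Cohen--Macaulay with $\operatorname{Ass}(\Ring^1)=\{\bar\p_1,\dots,\bar\p_k\}$, so the claim is equivalent to $C$ having no associated prime of height $\geqslant 2$ in $\Ring$. Using the presentation above, $C\cong\ker\!\big(\Ext^1_{\underline{\Cat}_\Ring}(\underline P_\Ring,\underline N_\Ring)\to\Ext^1_{\underline{\Cat}_\Ring}(\underline K_\Ring,\underline N_\Ring)\big)$, so $C$ is a submodule of $\Ext^1_{\underline{\Cat}_\Ring}(\underline P_\Ring,\underline N_\Ring)$ with $P_\Ring$ projective; dually, choosing a monomorphism $N_\Ring\hookrightarrow T_\Ring$ with $T_\Ring$ tilting and $T_\Ring/N_\Ring$ standardly filtered and running the same bookkeeping (now using that $\Hom_{\Cat_\Ring}(P_\Ring,-)$ is exact), $C$ embeds into $\Ext^1_{\underline{\Cat}_\Ring}\!\big(\pi_\Ring(P_\Ring),\pi_\Ring(T_\Ring)\big)$ with $P_\Ring$ projective and $T_\Ring$ tilting. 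It therefore suffices to show this last $\Ext^1$ has no associated prime of height $\geqslant 2$. Since $\Ext$ and associated primes commute with localization, this is a local statement at primes $\q\subset\Ring$, and I would argue it by induction on $\dim\Ring$: for primes $\q$ of height $\geqslant 3$ one cuts $\Ring_\q$ down by a generic element of $\q\Ring_\q$ and tracks the torsion of $\Ext^1_{\underline{\Cat}_{\Ring_\q}}(\pi P,\pi T)$ through the universal‑coefficient exact sequence (noting that base change preserves projectivity and the tilting property), reducing to smaller‑dimensional base rings; the case $\operatorname{ht}\q\leqslant 2$ is the base case, where every prime of $\Ring_\q$ has height $\leqslant 2$, so (RS4) applies over $\Ring_\q$ and forces $\operatorname{depth}_{\Ring_\q/(a)}C_\q\geqslant 1$, i.e.\ positive depth over the one‑dimensional Cohen--Macaulay ring $\Ring_\q/(a)$.

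The first part is a diagram chase plus the standard highest‑weight input recalled in Section \ref{SS_hw_finite} (projective presentations, tilting embeddings, exactness and acyclicity of $\pi_\Ring$ on $\Cat_\Ring^\Delta$, $0$-faithfulness). The genuine obstacle is the positive‑depth statement: one must control the associated primes of an $\Ext^1$ between images under $\pi_\Ring$ of a projective and a tilting object, and it is exactly here that (RS4) enters --- and enters only at primes of height $2$ --- so the dimension reduction and the base case at height $2$ are the technical heart.
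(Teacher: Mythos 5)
Your injectivity argument is correct, though it is organized differently from the paper's: you run a five-lemma argument on the ladder coming from the long exact sequence for multiplication by $a$, with the left vertical a reduction of the $0$-faithfulness isomorphism and the right vertical the embedding $\Ext^1_{\Cat_\Ring}(M_\Ring,N_\Ring)\hookrightarrow\Ext^1_{\underline{\Cat}_\Ring}(\underline M_\Ring,\underline N_\Ring)$. The paper instead proves injectivity of the Hom map over \emph{every} quotient $\Ring'$ of $\Ring$ that is a complete intersection of codimension $\leqslant 2$ (not just $\Ring^1$), by noting that the relevant Hom modules are of positive depth over $\Ring'$, hence embed into their localization at $\operatorname{Frac}(\Ring')$, and then using (RS4) on the residue fields of the minimal primes of $\Ring'$, which have height $\leqslant 2$. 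That extra generality is not cosmetic: it is exactly what makes the positive-depth part go through.

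The positive-depth part of your proposal has a genuine gap. You reduce to showing that $\Ext^1_{\underline{\Cat}_\Ring}(\pi_\Ring P_\Ring,\pi_\Ring T_\Ring)$ has no associated prime of height $\geqslant 2$, but this is neither proved nor obviously true; none of the axioms (RS1)--(RS4) controls the associated primes of such an $\Ext^1$ directly, and the Rouquier--Soergel axioms are deliberately weak (the functors are not assumed $1$-faithful). Your proposed induction on $\dim\Ring$ also cannot close: (RS4) is only assumed at primes of height $\leqslant 2$, so ``cutting down by a generic element'' brings in primes at which no faithfulness hypothesis is available, and the base case is itself not a consequence of (RS4) alone --- (RS4) gives injectivity of Hom maps over residue fields, not depth bounds for a cokernel over a one-dimensional local ring. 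The missing idea is the paper's reinterpretation trick: to show the cokernel of \eqref{eq:Hom_homomorphism} has positive depth, one shows that for every nonzero divisor $b\in\Ring^1$ the reduction of \eqref{eq:Hom_homomorphism} modulo $b$ remains injective. The source and target of that reduction embed (via the long exact sequence for multiplication by $b$) into $\Hom_{\Cat_{\Ring'}}(M_{\Ring'},N_{\Ring'})$ and $\Hom_{\underline{\Cat}_{\Ring'}}(\underline M_{\Ring'},\underline N_{\Ring'})$ with $\Ring':=\Ring^1/(b)=\Ring/(a,b)$, and these embeddings intertwine the reduction with the Hom map over $\Ring'$. Since $\Ring'$ is a complete intersection of codimension $2$, the Step-1 injectivity applies to it, and (RS4) is invoked precisely at the height-$\leqslant 2$ minimal primes of $\Ring'$. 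This is where the ``$\leqslant 2$'' in (RS4) is calibrated; your scheme effectively needs (RS4) at higher heights, which is not part of the hypotheses.
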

\begin{proof}
{\it Step 1}. Let $\Ring'$ be a quotient of $\Ring$ that is a complete intersection of codimension $\leqslant 2$
(for example, we can take $\Ring'=\Ring^1$) and $\F'$ be its full fraction ring.
We claim that
\begin{equation}\label{eq:Hom_homomorphism1}
\Hom_{\Cat_{\Ring'}}(M_{\Ring'},N_{\Ring'})\rightarrow
\Hom_{\underline{\Cat}_{\Ring'}}(\underline{M}_{\Ring'},\underline{N}_{\Ring'})
\end{equation}
is injective.

Since $M_{\Ring'},N_{\Ring'},\underline{M}_{\Ring'},\underline{N}_{\Ring'}$
are projective over $\Ring$, hence of positive depth,  the $\Ring'$-modules $\Hom_{\Cat_{\Ring'}}(M_{\Ring'},N_{\Ring'}),
\Hom_{\underline{\Cat}_{\Ring'}}(\underline{M}_{\Ring'},\underline{N}_{\Ring'})$
are of positive depth. So their natural maps to $\Hom_{\Cat_{\F'}}(M_{\F'},N_{\F'}),
\Hom_{\underline{\Cat}_{\F'}}(\underline{M}_{\F'},\underline{N}_{\F'})$
are embeddings. To prove that (\ref{eq:Hom_homomorphism1}) is injective it is enough to show that
\begin{equation}\label{eq:localization_embedding}
\Hom_{\Cat_{\F'}}(M_{\F'},N_{\F'})\hookrightarrow
\Hom_{\underline{\Cat}_{\F'}}(\underline{M}_{\F'},\underline{N}_{\F'}),
\end{equation}
here $\Ring'=\Ring^1$ and so $\F'=\Loc(\Ring^1)$.

Let $\q_1,\ldots,\q_\ell$ be the prime ideals in $\Ring$ containing $\ker(\Ring\twoheadrightarrow \Ring')$ whose images in $\Ring'$ are the minimal prime ideals. The heights of the ideals $\q_i$ are all $\leqslant 2$. Note that $\F'$ has finite length and it is filtered by $\kf(\q_i)$'s. Thanks to (RS4), we know that
$$\Hom_{\Cat_{\kf(\q_i)}}(M_{\kf(\q_i)},N_{\kf(\q_i)})\hookrightarrow
\Hom_{\underline{\Cat}_{\kf(\q_i)}}(\underline{M}_{\kf(\q_i)},\underline{N}_{\kf(\q_i)}).$$
(\ref{eq:localization_embedding}) follows. This shows that  (\ref{eq:Hom_homomorphism1})
(and its special case (\ref{eq:Hom_homomorphism})) are injective.

{\it Step 2}.
Now we show that the cokernel of (\ref{eq:Hom_homomorphism}) is of positive depth. Equivalently,
we need to show that for every nonzero divisor $b\in \Ring^1$, the map
\begin{equation}\label{eq:Hom_homomorphism2}
\Hom_{\Cat_{\Ring^1}}(M_{\Ring^1},N_{\Ring^1})/b \Hom_{\Cat_{\Ring^1}}(M_{\Ring^1},N_{\Ring^1})\rightarrow
\Hom_{\underline{\Cat}_{\Ring^1}}(\underline{M}_{\Ring^1},\underline{N}_{\Ring^1})/
b\Hom_{\underline{\Cat}_{\Ring^1}}(\underline{M}_{\Ring^1},\underline{N}_{\Ring^1}).
\end{equation}
is injective.

Set $\Ring':=\Ring^1/(b)$. The source of (\ref{eq:Hom_homomorphism2}) embeds into
$\Hom_{\Cat_{\Ring'}}(M_{\Ring'},N_{\Ring'})$ and the target embeds into
$\Hom_{\underline{\Cat}_{\Ring'}}(\underline{M}_{\Ring'},\underline{N}_{\Ring'})$ in such a way that
(\ref{eq:Hom_homomorphism2}) gets intertwined with (\ref{eq:Hom_homomorphism1}).
So  (\ref{eq:Hom_homomorphism2}) is injective by Step 1.
\end{proof}

Consider the natural homomorphisms
\begin{equation}\label{eq:hom_Ext_comput}
\begin{split}
&\kappa:\Hom_{\Cat_\Ring}(M_\Ring,N_\Ring)\rightarrow
\Hom_{\Cat_{\Ring^1}}(M_{\Ring^1},N_{\Ring^1}),\\
&\underline{\kappa}:\Hom_{\underline{\Cat}_\Ring}(\underline{M}_\Ring,\underline{N}_\Ring)\rightarrow
\Hom_{\underline{\Cat}_{\Ring^1}}(\underline{M}_{\Ring^1},\underline{N}_{\Ring^1}).
\end{split}
\end{equation}
Recall, Lemma \ref{Lem:RS_0_faith}, that $\pi_\Ring$ is $0$-faithful, so $\Hom_{\Cat_\Ring}(M_\Ring,N_\Ring)\xrightarrow{\sim}
\Hom_{\underline{\Cat}_\Ring}(\underline{M}_\Ring,\underline{N}_\Ring)$. Let $\mathcal{H}(M_\Ring, N_\Ring):=\Hom_{\Cat_\Ring}(M_\Ring,N_\Ring)/a\Hom_{\Cat_\Ring}(M_\Ring,N_\Ring)$.

By the long exact sequence for $R\Hom_{\Cat_\Ring}(M_\Ring, \bullet)$ applied to
$$0\rightarrow N_\Ring\xrightarrow{a\cdot}N_\Ring\rightarrow N_{\Ring^1}\rightarrow 0$$
we get that $\kappa$ factors through the inclusion
$$\mathcal{H}(M_\Ring, N_\Ring)\hookrightarrow
\Hom_{\Cat_{\Ring^1}}(M_{\Ring^1},N_{\Ring^1})$$
while $\Ext^1_{\Cat_\Ring}(M_\Ring,N_\Ring)\cong \operatorname{coker}\kappa$. Similarly,
$\underline{\kappa}$ factors through the inclusion
$$\mathcal{H}(M_\Ring, N_\Ring)\hookrightarrow
\Hom_{\underline{\Cat}_{\Ring^1}}(\underline{M}_{\Ring^1},\underline{N}_{\Ring^1})$$
while $\Ext^1_{\underline{\Cat}_\Ring}(\underline{M}_\Ring,\underline{N}_\Ring)\cong \operatorname{coker}\underline{\kappa}$. Note that the notation $\underline{M}_{\Ring^1}$ is unambiguous -- this module is identified with
$\Ring^1\otimes_\Ring \underline{M}_\Ring$ and with $\pi_{\Ring}(M_{\Ring^1})$.

Now we are going to define an $\Ring^1$-submodule $\widetilde{\mathcal{H}}(M_\Ring, N_\Ring)
\subset \Hom_{\underline{\Cat}_{\Ring^1}}(\underline{M}_{\Ring^1},\underline{N}_{\Ring^1})$.

Since the $\Ring^1$-module $\Hom_{\underline{\Cat}_{\Ring^1}}(\underline{M}_{\Ring^1},\underline{N}_{\Ring^1})$
is of positive depth, we get
\begin{equation}\label{eq:Ext1_embedding1}
\Hom_{\underline{\Cat}_{\Ring^1}}(\underline{M}_{\Ring^1},\underline{N}_{\Ring^1})\hookrightarrow
\Hom_{\underline{\Cat}_{\Loc(\Ring^1)}}(\underline{M}_{\Loc(\Ring^1)},\underline{N}_{\Loc(\Ring^1)}).
\end{equation}
By Proposition \ref{Prop:injectivity}, we also have
\begin{equation}\label{eq:Ext1_embedding2}
\Hom_{\Cat_{\Loc(\Ring^1)}}(M_{\Loc(\Ring^1)},N_{\Loc(\Ring^1)})
\hookrightarrow \Hom_{\underline{\Cat}_{\Loc(\Ring^1)}}(\underline{M}_{\Loc(\Ring^1)},\underline{N}_{\Loc(\Ring^1)})
\end{equation}
For $\widetilde{\mathcal{H}}(M_\Ring, N_\Ring)$ we take the intersection of the images of embeddings
(\ref{eq:Ext1_embedding1}) and (\ref{eq:Ext1_embedding2}).

\begin{Cor}\label{Cor:Ext_description}
The $\Ring^1$-submodule $\mathcal{H}(M_\Ring,N_\Ring)\subset \Hom_{\underline{\Cat}_{\Ring^1}}(\underline{M}_{\Ring^1},\underline{N}_{\Ring^1})$ is contained
in $\widetilde{\mathcal{H}}(M_\Ring, N_\Ring)$. Moreover,
$$\widetilde{\mathcal{H}}(M_\Ring, N_\Ring)/\mathcal{H}(M_\Ring, N_\Ring)
\subset \Hom_{\underline{\Cat}_{\Ring^1}}(\underline{M}_{\Ring^1},\underline{N}_{\Ring^1})/\mathcal{H}(M_\Ring, N_\Ring)\cong \Ext^1_{\underline{\Cat}_\Ring}(\underline{M}_\Ring,\underline{N}_\Ring)$$
coincides with the image of the embedding $\Ext^1_{\Cat_\Ring}(M_\Ring,N_\Ring)\hookrightarrow \Ext^1_{\underline{\Cat}_\Ring}(\underline{M}_\Ring,\underline{N}_\Ring)$.
\end{Cor}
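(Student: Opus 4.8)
The plan is to reduce both assertions of the corollary to the single equality $\operatorname{im}\bar\pi=\widetilde{\mathcal{H}}(M_\Ring,N_\Ring)$, holding inside $\Hom_{\underline{\Cat}_{\Loc(\Ring^1)}}(\underline{M}_{\Loc(\Ring^1)},\underline{N}_{\Loc(\Ring^1)})$, where $\bar\pi$ denotes the homomorphism $\Hom_{\Cat_{\Ring^1}}(M_{\Ring^1},N_{\Ring^1})\to\Hom_{\underline{\Cat}_{\Ring^1}}(\underline{M}_{\Ring^1},\underline{N}_{\Ring^1})$ induced by $\pi_\Ring$, and $\operatorname{im}\bar\pi$ is viewed inside the larger module through the positive-depth embedding (\ref{eq:Ext1_embedding1}). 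First I would record, from the computations carried out just before the statement, that $\Ext^1_{\Cat_\Ring}(M_\Ring,N_\Ring)\cong\operatorname{coker}\kappa$ and $\Ext^1_{\underline{\Cat}_\Ring}(\underline{M}_\Ring,\underline{N}_\Ring)\cong\operatorname{coker}\underline{\kappa}$ with $\operatorname{im}\kappa=\operatorname{im}\underline{\kappa}=\mathcal{H}(M_\Ring,N_\Ring)$, and that, by naturality of these long exact sequences with respect to $\pi_\Ring$ (exact on the objects in question by (RS1)), the map $\Ext^1_{\Cat_\Ring}\to\Ext^1_{\underline{\Cat}_\Ring}$ is the one induced by $\bar\pi$. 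Since $\underline{\kappa}$ equals $\bar\pi\circ\kappa$ after identifying $\Hom_{\underline{\Cat}_\Ring}(\underline{M}_\Ring,\underline{N}_\Ring)$ with $\Hom_{\Cat_\Ring}(M_\Ring,N_\Ring)$ via the $0$-faithfulness isomorphism of Lemma \ref{Lem:RS_0_faith}, we get $\mathcal{H}(M_\Ring,N_\Ring)=\operatorname{im}\underline{\kappa}\subseteq\operatorname{im}\bar\pi$; and $\bar\pi$ is injective by Proposition \ref{Prop:injectivity}, so the induced map on the quotients by $\mathcal{H}(M_\Ring,N_\Ring)$ is an embedding with image $\operatorname{im}\bar\pi/\mathcal{H}(M_\Ring,N_\Ring)$. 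Hence once the displayed equality is known, the first claim $\mathcal{H}(M_\Ring,N_\Ring)\subseteq\widetilde{\mathcal{H}}(M_\Ring,N_\Ring)$ and the identification of $\widetilde{\mathcal{H}}(M_\Ring,N_\Ring)/\mathcal{H}(M_\Ring,N_\Ring)$ with the image of $\Ext^1_{\Cat_\Ring}(M_\Ring,N_\Ring)\hookrightarrow\Ext^1_{\underline{\Cat}_\Ring}(\underline{M}_\Ring,\underline{N}_\Ring)$ both follow.

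For $\operatorname{im}\bar\pi\subseteq\widetilde{\mathcal{H}}(M_\Ring,N_\Ring)$: tautologically $\operatorname{im}\bar\pi$ lies in the image of (\ref{eq:Ext1_embedding1}); and since $\pi_\Ring$ commutes with the base change $\Ring^1\to\Loc(\Ring^1)$ while $\Hom$-modules commute with this localization (the modules being finitely presented over the relevant finite $\Ring^1$-algebras), the composite of $\bar\pi$ with (\ref{eq:Ext1_embedding1}) factors through $\Hom_{\Cat_{\Loc(\Ring^1)}}(M_{\Loc(\Ring^1)},N_{\Loc(\Ring^1)})$ followed by (\ref{eq:Ext1_embedding2}); hence $\operatorname{im}\bar\pi$ also lies in the image of (\ref{eq:Ext1_embedding2}), so in the intersection $\widetilde{\mathcal{H}}(M_\Ring,N_\Ring)$.

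The reverse inclusion $\widetilde{\mathcal{H}}(M_\Ring,N_\Ring)\subseteq\operatorname{im}\bar\pi$ is the crux. The quotient $\widetilde{\mathcal{H}}(M_\Ring,N_\Ring)/\operatorname{im}\bar\pi$ embeds into $\operatorname{coker}\bar\pi$, which is of positive depth by Proposition \ref{Prop:injectivity}; being a submodule of a positive-depth module it is again of positive depth, hence injects into its localization at the total fraction ring $\Loc(\Ring^1)$ of $\Ring^1$. It is therefore enough to prove $\widetilde{\mathcal{H}}(M_\Ring,N_\Ring)\otimes_{\Ring^1}\Loc(\Ring^1)=\operatorname{im}\bar\pi\otimes_{\Ring^1}\Loc(\Ring^1)$ inside $\Hom_{\underline{\Cat}_{\Loc(\Ring^1)}}(\underline{M}_{\Loc(\Ring^1)},\underline{N}_{\Loc(\Ring^1)})$. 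Flatness of $\Loc(\Ring^1)$ over $\Ring^1$ lets tensoring commute with the finite intersection defining $\widetilde{\mathcal{H}}(M_\Ring,N_\Ring)$ and with the two $\Hom$-modules appearing in (\ref{eq:Ext1_embedding1}), (\ref{eq:Ext1_embedding2}); the image of (\ref{eq:Ext1_embedding1}) tensors up to the whole of $\Hom_{\underline{\Cat}_{\Loc(\Ring^1)}}(\underline{M}_{\Loc(\Ring^1)},\underline{N}_{\Loc(\Ring^1)})$, while the image of (\ref{eq:Ext1_embedding2}) is already a $\Loc(\Ring^1)$-submodule, so the left side becomes $\Hom_{\Cat_{\Loc(\Ring^1)}}(M_{\Loc(\Ring^1)},N_{\Loc(\Ring^1)})$. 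On the other hand $\operatorname{im}\bar\pi\otimes_{\Ring^1}\Loc(\Ring^1)=\operatorname{im}\pi_{\Loc(\Ring^1)}=\Hom_{\Cat_{\Loc(\Ring^1)}}(M_{\Loc(\Ring^1)},N_{\Loc(\Ring^1)})$ (identified via (\ref{eq:Ext1_embedding2})), using once more that $\pi_\Ring$ commutes with base change and that $\bar\pi$ is injective. The two localizations coincide, finishing the proof.

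I expect the third paragraph to be the main obstacle: one has to organize the positive-depth reduction so that the correct positive-depth statement of Proposition \ref{Prop:injectivity} — the one about $\operatorname{coker}\bar\pi$ — is what makes it legitimate to detect vanishing after passing to the total fraction ring, and one has to make sure all identifications of $\Hom$-modules under $\Ring^1\to\Loc(\Ring^1)$ are compatible with the embeddings (\ref{eq:Ext1_embedding1}) and (\ref{eq:Ext1_embedding2}). The first two paragraphs are diagram chasing together with flat base change.
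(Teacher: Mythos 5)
Your proof is correct and follows essentially the same route as the paper: the reduction to $\widetilde{\mathcal{H}}(M_\Ring,N_\Ring)=\operatorname{im}\bar\pi$, and the use of the positive-depth assertion of Proposition \ref{Prop:injectivity} (that $\operatorname{coker}\bar\pi$ embeds into its base change to $\Loc(\Ring^1)$) to get the inclusion $\widetilde{\mathcal{H}}(M_\Ring,N_\Ring)\subseteq\operatorname{im}\bar\pi$, is exactly the paper's argument. You have merely unpacked the details (flat base change of Hom modules, naturality of the long exact sequences under $\pi_\Ring$) that the paper leaves implicit.
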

\begin{proof}
The containment $\mathcal{H}(M_\Ring,N_\Ring)\subset \widetilde{\mathcal{H}}(M_\Ring, N_\Ring)$
follows from $\mathcal{H}(M_\Ring,N_\Ring)\subset \Hom_{\Cat_{\Ring^1}}(M_{\Ring^1},N_{\Ring^1})$.
Proving the description of $\Ext^1_{\Cat_\Ring}(M_\Ring,N_\Ring)$ amounts to checking that $\Hom_{\Cat_{\Ring^1}}(M_{\Ring^1},N_{\Ring^1})
\subset \Hom_{\underline{\Cat}_{\Ring^1}}(\underline{M}_{\Ring^1},\underline{N}_{\Ring^1})$
coincides with $\widetilde{\mathcal{H}}(M_\Ring, N_\Ring)$. This is a straightforward consequence
of the claim that $\Hom_{\underline{\Cat}_{\Ring^1}}(\underline{M}_{\Ring^1},\underline{N}_{\Ring^1})/\Hom_{\Cat_{\Ring^1}}(M_{\Ring^1},N_{\Ring^1})$
is of positive depth (Proposition \ref{Prop:injectivity}), equivalently, embeds into its base change to
$\Loc(\Ring^1)$.
\end{proof}

\subsubsection{Completion of proof}
\begin{proof}[Proof of Theorem \ref{Thm:hw_equiv}]
For $\ell>0$, let $\Cat_\Ring^{i,\Delta}(\ell)$ denote the full subcategory of standardly filtered objects, where the length of the filtration is at most $\ell$. It is enough to prove
that
\begin{itemize}
\item[($A_\ell$)]
$\pi^1_\Ring(\Cat_\Ring^{1,\Delta}(\ell))=\pi^2_\Ring(\Cat_\Ring^{2,\Delta}(\ell))$.
\end{itemize}
We prove ($A_\ell$) by induction on $\ell$. The base, $\ell=1$, follows from condition (a).
To prove the induction step, notice that every $L^i_\Ring\in \Cat_\Ring^{i,\Delta}(\ell+1)$
fits into a short exact sequence
\begin{equation}\label{eq:SES_stand}
0\rightarrow N_\Ring^i\rightarrow L^i_\Ring\rightarrow M_\Ring^i\rightarrow 0,\quad M_\Ring^i\in \Cat_\Ring^{i,\Delta}(\ell),\,N_\Ring^i\in \Cat_\Ring^{i,\Delta}(1).
\end{equation}

Pick $L^1_\Ring\in \Cat_\Ring^{2,\Delta}(\ell+1)$ and let $\underline{L}_\Ring$ be its image in
$\underline{\Cat}_\Ring$. Take $N^1_\Ring$ and $M^1_\Ring$ as in (\ref{eq:SES_stand}).
By the induction assumption, there are $N^2_\Ring\in \Cat_\Ring^{2,\Delta}(\ell),
M^2_\Ring\in \Cat_\Ring^{2,\Delta}(1)$ with $\underline{N}^2_\Ring\cong \underline{N}^1_\Ring$
and $\underline{M}^2_\Ring\cong \underline{M}^1_\Ring$. We will write
$\underline{N}_\Ring$ and $\underline{M}_\Ring$ for these objects. Our goal is to construct
an extension $L^2_\Ring$ of $M^2_\Ring$ by $N^2_\Ring$ with $\underline{L}^2_\Ring\cong \underline{L}^1_\Ring$, this will finish the induction step.
This will follow if we show that the inclusions $\Ext^1_{\Cat_\Ring}(M_\Ring^i, N_\Ring^i)\rightarrow \Ext^1_{\underline{\Cat}_\Ring}(\underline{M}_\Ring,\underline{N}_\Ring)$
have the same image. Using Corollary \ref{Cor:Ext_description}, the  coincidence of the images will follow 
once we know that the following property holds:
\begin{itemize}
\item[(*)]
the images of $$\Hom_{\Cat^i_{\Loc(\Ring^1)}}(M^i_{\Loc(\Ring^1)},N^i_{\Loc(\Ring^1)})\hookrightarrow
\Hom_{\underline{\Cat}_{\Loc(\Ring^1)}}(\underline{M}_{\Loc(\Ring^1)},\underline{N}_{\Loc(\Ring^1)})$$
coincide for $i=1,2$.\end{itemize}
 
Note that $\Loc(\Ring^1)=\bigoplus_{j=1}^k \Loc_j(\Ring^1)$, where
$\Loc_j(\Ring^1):=\Ring_{\p_j}/(a)$. 

Thanks to (b) and Remark \ref{Rem:stand_filt_recover}, for all $j$ we get an equivalence $\Cat^1_{\Ring_{\p_j}}\xrightarrow{\sim}\Cat^2_{\Ring_{\p_j}}$ intertwining the functors
$\pi^i_{\Ring_{\p_j}}$. Note that 
$$\pi^i_{\Ring_{\p_j}}(M_{\Loc_j(\Ring^1)})\cong \underline{M}_{\Loc_j(\Ring^1)}, 
\pi^i_{\Ring_{\p_j}}(N_{\Loc_j(\Ring^1)})\cong \underline{N}_{\Loc_j(\Ring^1)}.$$
(*) holds thanks to the equivalence  $\Cat^1_{\Ring_{\p_j}}\xrightarrow{\sim}\Cat^2_{\Ring_{\p_j}}$. This finishes the proof.
\end{proof}

\section{Stabilized t-structure on the affine Hecke category}\label{S_stabilized_Hecke}
\subsection{Affine Hecke category}
In this section we recall various things related to the affine Hecke category. It essentially contains
no new results.

\subsubsection{Setting}\label{SSS_Hecke_setting}
Fix a finite type root system. Let $W,\Lambda_0,\h$ be the corresponding Weyl group, root lattice and the Cartan space
(so that $\Lambda_0\subset \h^*$).   Then we can form the affine Weyl
group $W^{a}=W\ltimes \Lambda_0$. We write $\hat{\h}$ for $\h\oplus \C \hbar$, where $\hbar$ is the notation for an
additional basis element. 

The group $W^{a}$ acts on $\hat{\h}$ as follows: $\hbar$ is invariant, $W$ acts on $\h$ by the default action,
and for $\lambda\in \Lambda_0, \xi\in \h$, we have $t_\lambda x=x+\langle\lambda,\xi\rangle\hbar$. Here and below $t_\lambda$
denotes $\lambda$ viewed as an element of $W^a$. Identify $\hat{\h}^*$ with $\h^*\oplus \C$ by
$$\langle(\mu,z), \xi\rangle=\langle\mu,\xi\rangle, \langle(\mu,z),\hbar\rangle=-z.$$
The action of $W^a$ on $\hat{\h}^*$ becomes 
$$w.(\mu,z)=(w\lambda,z), t_\lambda(\mu,z)=(\mu+z\lambda,z).$$
We note that $W^a$ is an affine reflection group: the reflections are given by the elements 
$s_\alpha t_{k\alpha}=t_{-k\alpha}s_\alpha$. This element acts on $\hat{\h}^*$ by 
$(\lambda,z)\mapsto (\lambda-(zk+\langle\lambda,\alpha^\vee\rangle)\alpha,z)$ and so the corresponding reflection 
hyperplane given by the linear function $\alpha^\vee-k\hbar:\hat{\h}^*\rightarrow \C$.   


Set $\Ring:=\C[\hat{\h}^*]^{\wedge_0}$, the completion of $\C[\hat{\h}^*]=S(\hat{\h})$ at $0$.
This algebra is acted on by $W^{a}$.
Consider the category $\Ring\operatorname{-bimod}$ of $\Ring$-bimodules. An example of
an object is the bimodule $\Ring_x$ for $x\in W^{a}$, it is a free rank one module
as a right $\Ring$-module, and the action of $\Ring$ on the left is twisted by $x^{-1}$.
In other words, this is the completed version of the functions on the graph of
$x$ under its action on $\hat{\h}^*$, i.e., the locus $\{(x\xi,\xi)| \xi\in \hat{\h}^*\}$.

\subsubsection{The category $\SBim$}
Inside $\Ring\operatorname{-bimod}$ we have the full subcategory of Soergel bimodules to be denoted
by $\SBim(W^{a})$. By definition, it is the full Karoubian monoidal
additive subcategory of $\Ring\operatorname{-bimod}$ generated by the elementary Bott-Samelson bimodules $\Ring\otimes_{\Ring^s}\Ring$, where $s$ runs over the set of simple affine reflections.

We will denote the tensor product functor by $*$. 

This is the additive version of the Hecke category for $W^{a}$. We also consider the triangulated version, $\Hecke:=K^b(\SBim)$. By the construction, $\Hecke$ is a monoidal category.

\subsubsection{Perverse t-structure}
\begin{Lem}\label{Lem:usual_t_structure}
The category $\Hecke$ comes with a unique t-structure (to be called perverse) with the following properties:
\begin{enumerate}
\item  its heart is
a highest weight category over $\Ring$, whose poset is
$W^{a}$ with respect to the Bruhat order (an ideal finite poset, see Remark \ref{Rem:ideal_finite}).
\item the indecomposable Soergel bimodule
$B_x\in \Hecke$ corresponds to the tilting object labelled by $x\in W^{a}$.
\end{enumerate}
\end{Lem}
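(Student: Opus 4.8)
The plan is to assemble the perverse t-structure from its restrictions to the ``finite'' affine Hecke categories attached to Bruhat ideals, and then to recognize the heart as an interval-finite highest weight category in the sense of Section~\ref{SS_hw_coideal_finite}. This is essentially the content of \cite[Section~6]{EL}; the constructible realization of \cite{BY} gives an alternative route. First I would introduce, for $x\in W^a$, the \emph{standard} and \emph{costandard} objects $\Delta_x,\nabla_x\in\Hecke$, namely the two Rouquier complexes attached to a reduced expression of $x$; for a simple reflection $s$ these are the two-term complexes with terms $\Ring_s$ and $\Ring$, and in general they are well defined by a theorem of Rouquier (braid relations for Rouquier complexes), satisfy $\Delta_x*\Delta_y\cong\Delta_{xy}$ and $\nabla_x*\nabla_y\cong\nabla_{xy}$ whenever $\ell(xy)=\ell(x)+\ell(y)$, and $\Delta_s*\nabla_s\cong\nabla_s*\Delta_s\cong\Ring$. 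The two bimodule-theoretic inputs I would establish are: (1) \emph{orthogonality}, $\operatorname{Ext}^i_\Hecke(\Delta_x,\nabla_y)\cong\Ring$ if $i=0$ and $x=y$, and $=0$ otherwise, proved by induction on $\ell(x)+\ell(y)$ using the two-step filtrations of $B_s$ by $\Ring$ and $\Ring_s$ together with the self-biadjointness (up to shift) of $?*B_s$; and (2) every indecomposable Soergel bimodule $B_x$ admits a $\Delta$-flag and a $\nabla$-flag, with $\Delta_x$ (resp.\ $\nabla_x$) occurring once and all other subquotients labelled by $y<x$ --- this because Bott--Samelson bimodules acquire such flags from the standard bimodule filtration of each tensor factor, and such flags descend to direct summands.

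Then I would treat the finite-rank case. For a finite Bruhat ideal $I\subset W^a$ let $\Hecke_I\subset\Hecke$ be the triangulated subcategory generated by $\{B_x\mid x\in I\}$; by (2) this equals the subcategory generated by $\{\Delta_x\mid x\in I\}$, and also by $\{\nabla_x\mid x\in I\}$. The family $(\Delta_x)_{x\in I}$, ordered by the Bruhat order, together with (1), is precisely the datum needed for the ``exceptional collection $\Rightarrow$ quasi-hereditary'' construction, so $\Hecke_I$ carries a bounded t-structure whose heart is a highest weight category over $\Ring$ with poset $(I,\leqslant)$, standard objects $\Delta_x$, costandard objects $\nabla_x$, and --- by (2) and the characterization of tilting objects recalled in Section~\ref{SS_hw_finite} --- tilting objects $B_x$. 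For nested finite ideals $I\subset I'$ the inclusion $\Hecke_I\hookrightarrow\Hecke_{I'}$ is the one attached to the poset ideal $I$; since it carries $\Delta$'s to $\Delta$'s and $\nabla$'s to $\nabla$'s, the t-structure of $\Hecke_{I'}$ restricts to that of $\Hecke_I$.

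Finally I would glue. The Bruhat order on $W^a$ is ideal finite (Remark~\ref{Rem:ideal_finite}), and every object of $\Hecke$ is a bounded complex of Soergel bimodules, hence of direct summands of Bott--Samelson bimodules, so every object lies in some $\Hecke_I$. Therefore the compatible t-structures on the $\Hecke_I$ glue to a t-structure on $\Hecke$, with heart the union of the hearts of the $\Hecke_I$. By the interval-finite highest weight formalism (Sections~\ref{SS_hw_finite}--\ref{SS_hw_coideal_finite} applied to the ideal finite poset $W^a$) this union is a highest weight category over $\Ring$ with poset $W^a$ under the Bruhat order, standard objects $\Delta_x$ and tilting objects $B_x$, which is assertions (1)--(2). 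For uniqueness: property (2) pins down the standard objects, since in a highest weight category the tilting object $T(x)$ contains $\Delta(x)$ with $\Delta$-filtered cokernel supported on $\{y<x\}$, so the $\Delta_x$ are recovered inductively from the $B_x$ and the poset; and a bounded t-structure is determined by its standard objects, its aisle being $\{M\mid\operatorname{Ext}^{>0}(M,\nabla_x)=0\ \forall x\}$, with $\nabla_x$ recovered from $\Delta_x$ by orthogonality.

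I expect the main obstacle to be the two bimodule-theoretic inputs (1) and (2) for the affine Coxeter group, and then the bookkeeping of the infinite gluing --- verifying that the restriction of the perverse t-structure of $\Hecke_{I'}$ to $\Hecke_I$ is again perverse, and that the colimit over the ideal finite poset $W^a$ genuinely defines a t-structure whose heart is the advertised highest weight category. The finite-rank exceptional-collection step is classical, and the remainder fits into the highest weight machinery set up in Sections~\ref{SS_hw_coideal_finite} and~\ref{S_RS}.
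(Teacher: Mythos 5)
Your proposal is correct in outline and essentially fleshes out what the paper delegates to \cite[Section 6]{EL}. The paper's proof is just a citation; it notes that most of \cite[Section~6]{EL} in fact constructs the Ringel dual of the desired category (with $B_x$ appearing as projective/injective rather than tilting) and then appeals to \cite[Section~6.6.7]{EL} for the version actually needed. You bypass the Ringel-dual detour and build the target category directly from the exceptional collection $(\Delta_x)$ of Rouquier complexes, with $B_x$ tilting from the start. Both routes rest on the same two bimodule-theoretic inputs you isolate: orthogonality of Rouquier complexes and the existence of standard/costandard flags on indecomposable Soergel bimodules, followed by gluing over the ideal-finite poset $(W^a,\leqslant)$. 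The paper also records (Remark~\ref{Rem:BY}) a geometric alternative via \cite{BY}, which you do not use and do not need.

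Two points deserve more care than you give them. First, the assertion that $\Delta$-flags on Bott--Samelson bimodules \emph{descend to direct summands} is not automatic from the filtration on tensor factors; it requires the Ext-characterization of $\Delta$-filtered objects, namely $M$ is $\Delta$-filtered iff $\operatorname{Ext}^{>0}(M,\nabla_y)=0$ for all $y$, which is itself a consequence of your orthogonality (1) and so must be established after, not before, that step. Second, your inputs are being used over the completed ring $\Ring=\C[\hat{\h}^*]^{\wedge_0}$ rather than the polynomial ring; you should note that Rouquier's braid-relation theorem and the two-step standard filtrations of $B_s$ persist after completion (which they do, since the relevant filtrations are by free modules and base change is exact), so the induction in (1) and the flags in (2) go through. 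With these clarifications the argument is complete and matches the paper's intent.
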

\begin{proof}
One can argue as in \cite[Section 6]{EL} to formally
recover the highest weight category from $\SBim$, by the construction it is the heart of
a unique t-structure with the prescribed properties. Note that most of \cite[Section 6]{EL}
deals with the Ringel dual of the highest weight category we need, the category we need
is explained in \cite[Section 6.6.7]{EL} in a more general case.
\end{proof}

The heart of the t-structure will be denoted by $\OCat^-_\Ring$. 
Let $\Delta^-_\Ring(x)$ denote the standard object in $\OCat^-_\Ring$ labelled by $x\in W^{a}$.
The specialization of $\OCat^-_\Ring$ to the closed
point of $\operatorname{Spec}(\Ring)$ will be denoted by $\OCat^-$. This is a highest weight category over $\C$
with poset $W^a$ and standard objects $\Delta^-(x):=\C\otimes_\Ring \Delta_\Ring^-(x)$. Note that by condition (2) in
the lemma, we have
\begin{equation}\label{eq:derived_O_minus}
D^b(\OCat^-_\Ring)\xrightarrow{\sim}\Hecke.
\end{equation}

\begin{Rem}\label{Rem:BY}
One can also extract Lemma \ref{Lem:usual_t_structure} from \cite{BY}, which gave a geometric construction of $\OCat^-_\Ring$
and $\OCat^-$. In particular, $\OCat^-$ is nothing else but the category of Iwahori-equivariant
perverse sheaves on the (thin) affine flag variety.
\end{Rem}

We would like to give a corollary of Remark \ref{Rem:BY} that will be used later. It is proved in \cite[Lemma 2.1]{BBM},
note that while that lemma deals with the case of finite type flag varieties, the proof carries over to
the case of the affine flag variety verbatim.

\begin{Cor}\label{Cor:simple_socle}
The socle of any standard object in $\OCat^-$ is isomorphic to the simple object $\Delta^-(1)$.
\end{Cor}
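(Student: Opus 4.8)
The plan is to transport the statement to the geometric side via Remark~\ref{Rem:BY}, where $\OCat^-$ is identified with the category of Iwahori-equivariant perverse $\C$-sheaves on the affine flag variety $\mathrm{Fl}=G^\vee((t))/I$. Under this identification the standard object $\Delta^-(x)$ becomes $(j_x)_!\underline{\C}_{\mathrm{Fl}_x}[\ell(x)]$, where $j_x\colon\mathrm{Fl}_x\hookrightarrow\mathrm{Fl}$ is the inclusion of the Iwahori orbit labelled by $x$, and $\Delta^-(1)$ becomes the skyscraper perverse sheaf $\delta$ at the base point $o=eI/I$; note that $\delta=L(1)$ is the unique simple object supported at $o$ and is simultaneously standard and costandard, so the case $x=1$ is vacuous. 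I will split the proof into: (a) $\Hom_{\OCat^-}(\delta,\Delta^-(x))$ is one-dimensional, and (b) every simple subobject of $\Delta^-(x)$ is isomorphic to $\delta$. Granting these, the socle of $\Delta^-(x)$ is a nonzero direct sum of copies of $\delta$ by (b), containing exactly one copy by (a), so $\operatorname{soc}\Delta^-(x)\cong\delta=\Delta^-(1)$.

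For (a): by adjunction for the closed embedding $i_o\colon\{o\}\hookrightarrow\mathrm{Fl}$, one has $\Hom_{\OCat^-}(\delta,\Delta^-(x))=H^0(i_o^!\Delta^-(x))$, so it suffices to show the costalk $i_o^!\Delta^-(x)$ sits in non-negative cohomological degrees (automatic, since $\Delta^-(x)$ is perverse) with $H^0$ one-dimensional. This is a purely local assertion at $o$, handled by a contraction argument: there is a $\mathbb{G}_m$-action on $\mathrm{Fl}$ through a regular cocharacter of $T\subset I$ that fixes $o$ and for which $i_o^!$ of a $!$-extension is computed by (compactly supported) cohomology along the relevant attracting locus, which for $\mathrm{Fl}_x$ is an affine space (a Bruhat attracting cell); the cohomology of the constant sheaf on affine space then yields the claim. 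This is exactly the computation performed in \cite[Lemma~2.1]{BBM}; as remarked there, the argument is written for finite-dimensional flag varieties but uses only that Bruhat cells are attracting cells of a torus action singling out $o$, hence it applies to $\mathrm{Fl}$ verbatim.

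For (b): a simple object of $\OCat^-$ is of the form $L(y)=\operatorname{IC}(\overline{\mathrm{Fl}_y})$. Any embedding $L(y)\hookrightarrow\Delta^-(x)$ forces $y\le x$ by comparison of composition factors, and $y=x$ is impossible when $x\ne1$ (else $L(x)$ would be both the top and a subobject of $\Delta^-(x)$, against $[\Delta^-(x):L(x)]=1$); composing with the canonical morphism $\Delta^-(x)\to\nabla^-(x)$, whose kernel is supported on $\overline{\mathrm{Fl}_x}\setminus\mathrm{Fl}_x$, further forces $\operatorname{supp}L(y)$ into that boundary, so $y<x$ strictly. One then promotes $y<x$ to $y=1$ by induction on $\ell(x)$: the base case $\ell(x)=1$ is the direct computation on $\mathrm{Fl}_s\cup\{o\}\cong\mathbb{P}^1$, where the short exact sequence $0\to\delta\to\Delta^-(s)\to L(s)\to0$ displays $\operatorname{soc}\Delta^-(s)=\delta$; for the inductive step one chooses a simple reflection $s$ with $sx<x$ and uses the $\mathbb{P}^1$-bundle $p\colon\mathrm{Fl}\to\mathrm{Fl}^s$ to relate $\Delta^-(x)$ and the putative subobject to standard and simple objects of strictly smaller length pulled back from $\mathrm{Fl}^s$, to which the inductive hypothesis applies. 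This bookkeeping is again carried out in \cite[Lemma~2.1]{BBM} and carries over to the affine setting verbatim.

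The main obstacle is part (b), and within it the descent step along $p\colon\mathrm{Fl}\to\mathrm{Fl}^s$: one must control how the socle interacts with $p^*$ and with the relevant short exact sequences, and verify that iterating the procedure forces the support of the socle down to the single point $o$. Together with the attracting-cell computation behind (a), this is precisely the content of \cite[Lemma~2.1]{BBM}, which is why the efficient route is to cite that lemma while observing that nothing in its proof is particular to finite-dimensional flag varieties.
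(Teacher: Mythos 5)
Your proposal is correct and takes essentially the same route as the paper: the paper also deduces the corollary by passing to the realization of $\OCat^-$ as Iwahori-equivariant perverse sheaves on the affine flag variety (Remark~\ref{Rem:BY}) and citing \cite[Lemma 2.1]{BBM}, with the observation that the argument there is not specific to finite-type flag varieties and carries over verbatim to the affine setting. You unpack the internals of the BBM argument (the costalk/attracting-cell computation for part (a) and the $\mathbb{P}^1$-bundle descent for part (b)) more explicitly than the paper does, but the key citation and the overall logic are identical.
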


\subsubsection{Exactness of the convolution}\label{SSS_conv_exact}
\begin{Cor}\label{Cor:tens_t_exact}
Tensoring with a standardly (resp., costandardly) filtered object in $\OCat^-_\Ring$ is left (resp., right)
t-exact endo-functor of $\Hecke$.
\end{Cor}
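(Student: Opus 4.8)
The plan is to reduce, by formal manipulations, to controlling a single convolution with an indecomposable Soergel bimodule, and to isolate that as the one non‑formal point.

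First, the two assertions are equivalent, so it is enough to treat the standardly filtered case. Since $\Hecke$ is rigid monoidal, $?*X$ has right adjoint $?*X^\vee$, and $X\mapsto X^\vee$ (with $\Delta^-_\Ring(x)^\vee\cong\nabla^-_\Ring(x^{-1})$) interchanges standardly and costandardly filtered objects of $\OCat^-_\Ring$; as a functor is left t-exact iff its right adjoint is right t-exact, it suffices to show that $?*X$ is left t-exact for every $X\in\OCat^{-,\Delta}_\Ring$. Left t-exactness of triangulated endofunctors of $\Hecke$ is stable under finite direct sums, composition, and extensions (in a triangle of functors $F'\to F\to F''$ with $F',F''$ left t-exact, $F$ is left t-exact because $D^{\geqslant 0}$ is closed under extensions). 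Filtering $X$ by standard objects — over the local ring $\Ring$ each subquotient is a finite direct sum of copies of some $\Delta^-_\Ring(x)$ — reduces the problem to $X=\Delta^-_\Ring(x)$; writing $x=s_1\cdots s_k$ reduced and using the multiplicativity $\Delta^-_\Ring(x)\cong\Delta^-_\Ring(s_1)*\cdots*\Delta^-_\Ring(s_k)$ reduces it further to $X=\Delta^-_\Ring(s)$ with $s$ a simple (affine) reflection.

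Next I would use that $B_s$ is the tilting object $T(s)$, so that its $\Delta$-flag is a short exact sequence $0\to\Delta^-_\Ring(s)\to B_s\to\Ring_1\to 0$ in $\OCat^-_\Ring$ (the submodule is $\Delta^-_\Ring(s)$ and not $\Ring_1=\Delta^-_\Ring(1)$, since otherwise $B_s$ would decompose, as $\Ext^1_{\OCat^-_\Ring}(\Delta^-_\Ring(s),\Ring_1)=0$). This gives a distinguished triangle $\Delta^-_\Ring(s)\to B_s\to\Ring_1\xrightarrow{+1}$ in $\Hecke$, hence for every $Z$ a triangle $Z*\Delta^-_\Ring(s)\to Z*B_s\to Z\xrightarrow{+1}$; so $Z*\Delta^-_\Ring(s)$ is the fiber of a morphism $Z*B_s\to Z$. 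Since the fiber of a morphism between objects of $D^{\geqslant 0}$ again lies in $D^{\geqslant 0}$ (the fiber is the $[-1]$-shift of the cone, and the cone of a morphism of coconnective objects lies in $D^{\geqslant -1}$), the claim reduces to: $?*B_s$ is left t-exact.

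This last point is the main obstacle. I would establish it by realizing $?*B_s$ as $\mathrm{Ind}_s\circ\mathrm{Res}_s$, the composition of the restriction functor $\Hecke\to\,_{\{s\}}\Hecke$ to the singular Hecke category for $\{s\}\subset I^a$ and the induction functor back, and invoking that both $\mathrm{Res}_s$ and $\mathrm{Ind}_s$ are t-exact for the perverse t-structures (a standard feature of the singular Soergel and constructible realizations, cf.\ \cite{EL,BY}); alternatively, one argues directly that $?*B_s$, which is self-biadjoint since $B_s\cong B_s^\vee$, sends $\OCat^-_\Ring$ into $D^{\geqslant 0}$, using that $B_s$ is tilting together with structural features of $\OCat^-_\Ring$ — notably that standard objects are $\Ring$-flat and have simple socle (Corollary \ref{Cor:simple_socle}). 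Everything else — the triangle manipulations, the reductions, and the adjunction argument linking the standard and costandard statements — is formal, so I expect the verification that convolution with $B_s$ produces no cohomology in negative perverse degrees to be the only substantive step.
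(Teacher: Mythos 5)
Your proposal is correct and arrives at the same conclusion, but by a different decomposition than the paper's. Both arguments bottom out at the same non-formal fact: convolution with the Bott--Samelson generators $B_s$ is t-exact for the perverse t-structure, which the paper (and you) attribute to the construction in [EL]. The difference lies entirely in the bookkeeping that reduces the general statement to this input. The paper's route: once $?*T$ is t-exact for tilting objects $T\in\SBim$, each costandardly filtered $M$ admits a finite resolution $0\to T_k\to\cdots\to T_0\to M\to 0$ by tiltings (cf.\ [rouqqsch, Prop.\ 4.26]); replacing $M$ by this complex immediately shows $?*M$ is right t-exact, and the standardly filtered case follows dually. Your route: reduce to the standardly filtered case by passing to right adjoints and using $X\mapsto X^\vee$; filter by standards (the local base disposes of the projective multiplicities); use multiplicativity $\Delta^-_\Ring(x)\cong\Delta^-_\Ring(s_1)*\cdots*\Delta^-_\Ring(s_k)$ to reduce to a simple reflection; then use the $\Delta$-flag $0\to\Delta^-_\Ring(s)\to B_s\to\Ring_1\to 0$ and stability of $D^{\geqslant 0}$ under fibers to reduce to $?*B_s$. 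Your reduction is longer but avoids invoking the existence of finite tilting resolutions, replacing it with the braid-multiplicativity of standard objects plus the single short exact sequence defining $B_s$; the paper's route is shorter but leans harder on the tilting theory of highest weight categories. As you note, the proof of t-exactness of $?*B_s$ itself is not reproduced in either; your two sketched routes (factoring through the singular category, or a direct socle argument) are both reasonable, though the cleanest reading is simply that this is what [EL, \S 6] constructs the t-structure to do. One small imprecision worth noting: the simple-socle statement (Cor.\ \ref{Cor:simple_socle}) is about $\OCat^-$ over $\C$, not $\OCat^-_\Ring$, so if you pursue the second route you should first specialize to the closed point and then use $\Ring$-flatness to lift the conclusion, rather than arguing directly over $\Ring$.
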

\begin{proof}
Tensoring with the generators of
$\SBim$ is t-exact by the construction in \cite{EL},
and then we use that every costandardly filtered object
$M$ admits a resolution
$$0\rightarrow T_k\rightarrow\ldots \rightarrow T_0\rightarrow M,$$
hence tensoring with $M$ is right t-exact. Dually, tensoring with a standardly filtered object is
left t-exact.
\end{proof}

We have a group homomorphism  from the  affine braid group $\Br^a$ to $\Hecke$,
\cite{R_braid}. The positive braid $\mathsf{T}_x, x\in W^{a},$ maps to $\nabla^-_\Ring(x)$,
while $\mathsf{T}_{x^{-1}}^{-1}$ maps to $\Delta^-_\Ring(x)$. Recall that we have a group embedding
$\Lambda_0\hookrightarrow \Br^a$ uniquely characterized by $\lambda\mapsto \mathsf{T}_{t_\lambda}$
if $\lambda$ is dominant. Let $J_\lambda$ denote the image of $\lambda\in \Lambda$
under this embedding. The following statement is a straightforward consequence of
the discussion above in this paragraph and Corollary \ref{Cor:tens_t_exact}.

\begin{Cor}\label{Cor:transl_exact}
For every dominant $\lambda\in \Lambda$, the functor $\bullet*J_\lambda$ is right t-exact,
while the functor $\bullet*J_{-\lambda}$ is left t-exact.
\end{Cor}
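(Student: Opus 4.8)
The plan is to deduce Corollary~\ref{Cor:transl_exact} directly from the two facts recalled just above it: that the positive braid lift sends $\mathsf{T}_{t_\lambda}$ to $\nabla^-_\Ring(t_\lambda)$ and $\mathsf{T}_{t_\lambda}^{-1}$ to $\Delta^-_\Ring(t_\lambda^{-1})$, together with Corollary~\ref{Cor:tens_t_exact}, which says that convolution with a standardly (resp.\ costandardly) filtered object in $\OCat^-_\Ring$ is left (resp.\ right) t-exact. So the whole issue reduces to identifying $J_\lambda$, for $\lambda$ dominant, with the class of a costandard object, and $J_{-\lambda}$ with the class of a standard object.

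First I would recall that the embedding $\Lambda_0\hookrightarrow \Br^a$ was \emph{defined} by $\lambda\mapsto \mathsf{T}_{t_\lambda}$ for $\lambda$ dominant; hence, by definition, $J_\lambda$ for $\lambda$ dominant maps under $\Br^a\to\Hecke$ precisely to $\nabla^-_\Ring(t_\lambda)$, which is a costandard (in particular costandardly filtered) object. Therefore $\bullet * J_\lambda = \bullet * \nabla^-_\Ring(t_\lambda)$ on $\Hecke$, and Corollary~\ref{Cor:tens_t_exact} (the costandard case) gives that this functor is right t-exact. For the second assertion, note that $J_{-\lambda}$ is by construction the inverse of $J_\lambda$ in $\Br^a$, so it maps to $\mathsf{T}_{t_\lambda}^{-1}$; since $t_{-\lambda} = t_\lambda^{-1}$ in $W^a$, this is $\mathsf{T}_{t_{-\lambda}^{-1}}^{-1}$, which by the stated correspondence is $\Delta^-_\Ring(t_{-\lambda})$, a standard object. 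Then Corollary~\ref{Cor:tens_t_exact} (the standard case) gives that $\bullet * J_{-\lambda} = \bullet * \Delta^-_\Ring(t_{-\lambda})$ is left t-exact. This is exactly the claim.

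The only point requiring a little care — and the step I would treat as the main (minor) obstacle — is bookkeeping with the two sign/inverse conventions: the one in the braid group $\Lambda_0\hookrightarrow\Br^a$ (which only directly pins down $J_\lambda$ for \emph{dominant} $\lambda$, so $J_{-\lambda}$ must be read as $(J_\lambda)^{-1}$), and the one in the homomorphism $\Br^a\to\Hecke$ (which sends $\mathsf{T}_x\mapsto\nabla^-_\Ring(x)$ but $\mathsf{T}_{x^{-1}}^{-1}\mapsto\Delta^-_\Ring(x)$). One must check these line up so that $J_\lambda$ indeed lands on a costandard object and $J_{-\lambda}$ on a standard object, rather than the reverse; this is a direct unwinding of definitions using $t_{-\lambda}=t_\lambda^{-1}$ and $\ell(t_\lambda t_{-\lambda})=0$. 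Once this is confirmed, the corollary follows immediately with no further computation, since t-exactness of the convolution functors is already in hand from Corollary~\ref{Cor:tens_t_exact}.
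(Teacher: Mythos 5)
Your proposal is correct and is essentially identical to the paper's own argument: the paper explicitly calls Corollary~\ref{Cor:transl_exact} ``a straightforward consequence of the discussion above in this paragraph and Corollary~\ref{Cor:tens_t_exact},'' which is precisely the bookkeeping you carry out — identifying $J_\lambda$ (for $\lambda$ dominant) with the costandard $\nabla^-_\Ring(t_\lambda)$ and $J_{-\lambda}=(J_\lambda)^{-1}=\mathsf{T}_{t_\lambda}^{-1}$ with the standard $\Delta^-_\Ring(t_{-\lambda})$, and then invoking the (co)standard halves of Corollary~\ref{Cor:tens_t_exact}.
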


\subsubsection{Soergel functor}\label{SSS_Soergel_fun}
We consider the tautological exact functor of triangulated categories
$\Hecke\rightarrow D^b(\Ring\operatorname{-bimod})$ to be denoted by
$\Vfun$. Here are basic properties of this functor.

\begin{Lem}\label{Lem:V_O_negative_properties}
The functor $\Vfun$ has the following properties.
\begin{enumerate}
\item It is monoidal.
\item $\Vfun(\Delta^-_\Ring(x))\cong \Vfun(\nabla^-_\Ring(x))\cong \Ring_x$.
\item The functor $\Vfun$ is t-exact (w.r.t. the perverse t-structure
on $\Hecke$ and the usual t-structure on $D^b(\Ring\operatorname{-bimod})$).
\item The functor $\Vfun$ is faithful on standardly filtered objects in $\OCat^-$.
\end{enumerate}
\end{Lem}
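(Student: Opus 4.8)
The four assertions are of rather different natures, so I would handle them separately. Claim (1), that $\Vfun$ is monoidal, is essentially formal: $\Vfun$ is the functor $K^b(\SBim)\to D^b(\Ring\operatorname{-bimod})$ obtained by viewing a Soergel bimodule as just a bimodule and taking the induced functor on bounded homotopy categories; since the convolution $*$ on $\Hecke$ is, by its very definition, the tensor product $\otimes_\Ring$ of bimodules, the tautological functor strictly intertwines the two monoidal structures (on complexes one uses the standard total-complex formula for the tensor product, which is preserved since Bott-Samelson bimodules are free as one-sided $\Ring$-modules, so no derived corrections appear). For claim (2): by Corollary \ref{Cor:tens_t_exact} the standard and costandard objects in $\OCat^-_\Ring$ are built from Bott-Samelson bimodules, and one identifies $\Delta^-_\Ring(s)$ and $\nabla^-_\Ring(s)$ for a simple reflection $s$ as the two-term complexes $[\Ring_s\to \Ring]$ and $[\Ring\to\Ring_s]$ (up to shift), whose images under $\Vfun$ compute to $\Ring_s$; for general $x$ one writes $\Delta^-_\Ring(x)$ as an iterated convolution of the $\Delta^-_\Ring(s_i)$ along a reduced word (this holds because $\mathsf{T}_{x^{-1}}^{-1}\mapsto\Delta^-_\Ring(x)$ is a group homomorphism from $\Br^a$), apply monoidality from (1), and use $\Ring_{s_{i_1}}\otimes_\Ring\cdots\otimes_\Ring\Ring_{s_{i_k}}\cong\Ring_x$ when $s_{i_1}\cdots s_{i_k}$ is reduced. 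The same computation with $\mathsf{T}_x\mapsto\nabla^-_\Ring(x)$ gives $\Vfun(\nabla^-_\Ring(x))\cong\Ring_x$.

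For claim (3), t-exactness, the cleanest route is to use that a t-structure is determined by its heart and that t-exactness of a triangulated functor can be checked on a generating set compatible with the t-structures. By construction in \cite{EL} the perverse t-structure on $\Hecke$ is the one for which tensoring with the Bott-Samelson generators is t-exact (this is recorded in the proof of Corollary \ref{Cor:tens_t_exact}); since $\Vfun$ sends each Bott-Samelson bimodule to a complex concentrated in degree $0$ (namely the bimodule itself) and $\Vfun$ is monoidal, $\Vfun$ carries the objects obtained by convolving Bott-Samelson bimodules — which include all standardly and costandardly filtered objects, and in particular generate the heart — into $\Ring\operatorname{-bimod}$ sitting in cohomological degree $0$. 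Combined with the fact that these objects generate $\Hecke$ as a triangulated category, a standard dévissage (using the long exact cohomology sequences attached to distinguished triangles) shows $\Vfun$ takes $\OCat^-_\Ring$ into $\Ring\operatorname{-bimod}$ and is t-exact. Alternatively, via Remark \ref{Rem:BY} one can invoke the geometric description: under $\OCat^-\cong\operatorname{Perv}_{I}(\mathrm{Fl})$, the functor $\Vfun$ becomes (a completed version of) taking equivariant cohomology / hyperbolic localization, which is known to be t-exact.

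Claim (4), faithfulness on $\OCat^-$, is the one requiring a genuine argument, and I expect it to be the main obstacle. The strategy is to exploit that $\Vfun$ is a monoidal functor together with the structure of $\OCat^-$ as a highest weight category with a distinguished minimal simple object. Here is the plan: let $f\colon M\to N$ be a morphism of standardly filtered objects with $\Vfun(f)=0$; suppose $f\neq 0$ and pick a nonzero subobject in the image, hence (since $\OCat^-$ is highest weight over $\C$) a simple subquotient of $N$ lying in the image of $f$. By Corollary \ref{Cor:simple_socle} the socle of any standard object, hence of any standardly filtered object, is a sum of copies of $\Delta^-(1)=L^-(1)$; so there is a nonzero map $\Delta^-(1)\to N$ factoring through $f$, i.e. $f$ is nonzero already after precomposing with some $g\colon\Delta^-(1)\to M$. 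It then suffices to show $\Vfun$ is faithful on $\Hom(\Delta^-(1),-)$ restricted to standardly filtered targets. Now $\Delta^-(1)=\Delta^-_\Ring(1)$ reduced mod $\m$ is the monoidal unit $\mathbf 1$ of $\OCat^-$ (its image under $\Vfun$ is $\Ring_1=\Ring$, the unit bimodule), so $\Hom(\mathbf 1,N)$ is computed by applying a fiber-type functor; concretely, for a standard object $N=\Delta^-(x)$ one has $\Hom(\Delta^-(1),\Delta^-(x))\hookrightarrow \Hom_{\Ring\operatorname{-bimod}}(\Ring_1,\Ring_x)$ because $\Vfun$ on standard objects is given by (2) and the map on Hom's is computed by the same Bott-Samelson presentation. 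One then propagates this injectivity from standard objects to standardly filtered objects by induction on filtration length, using the left exactness of $\Hom(\Delta^-(1),-)$ and the compatibility of $\Vfun$ with the short exact sequences realizing the $\Delta$-filtration (here one needs that $\Vfun$ is exact on these sequences, which follows from (3) and the degreewise-freeness of the terms). The delicate point — and the step I would allocate the most care to — is checking that the comparison map $\Hom_{\OCat^-}(M,N)\to\Hom_{\Ring\operatorname{-bimod}}(\Vfun M,\Vfun N)$ is literally injective rather than merely injective after some localization; this is where the explicit Bott-Samelson / reduced-word presentation of the standard objects, and the fact that the relevant bimodule Hom-spaces are free $\C$-modules detecting the coefficients of the defining maps, has to be used honestly.
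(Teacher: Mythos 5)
Parts (1) and (2) of your proposal match the paper's argument (the paper treats (1) as a tautology and establishes (2) for simple reflections then propagates via monoidality and the braid group action, as you describe). The problems are in (3) and (4).

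For (3), your dévissage claim does not actually work as stated. Knowing that $\Vfun$ sends a class of objects in the heart (tilting objects, standards, costandards) to complexes concentrated in degree zero, together with the fact that these objects generate $\Hecke$ triangulatedly, does not by itself yield t-exactness of $\Vfun$ on the whole heart: the heart contains kernels and cokernels not reachable from the generating class by extensions alone, and the long exact sequence in cohomology only propagates degree-zero concentration one step at a time in the wrong direction. What $\Vfun(\Delta^-_\Ring(x))\in\Ring\operatorname{-bimod}$ immediately gives (via projective, hence standardly filtered, resolutions) is \emph{right} t-exactness. Left t-exactness, i.e.\ acyclicity of $\Vfun$ on the heart, needs more. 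The paper's route is: first show $\Vfun$ is acyclic on every object of $\OCat^-_\Ring$ killed by $\mathfrak{m}$, using costandard coresolutions in the category over $\C$ (which do exist there) and $\Vfun(\nabla^-(x))\cong\Ring_x$; then upgrade to all of $\OCat^-_\Ring$ by a flatness argument, writing $\Vfun$ restricted to a highest-weight subcategory $A_\Ring\operatorname{-mod}$ as $B_\Ring\otimes_{A_\Ring}\bullet$ and checking that $B_\Ring$ is flat over $A_\Ring$ because it is $\Ring$-flat and its reduction mod $\mathfrak{m}$ is flat over $A_\Ring/\mathfrak{m}A_\Ring$. Your alternative geometric sketch via Remark \ref{Rem:BY} is plausible for the specialized category over $\C$ but is not obviously adapted to the deformation over $\Ring$, which is what is needed here.

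For (4), you assembled all the needed ingredients but then veered into an unnecessary induction that you correctly suspect you cannot close; meanwhile you walked right past the one-line finish. You produce a nonzero composite $h=f\circ g\colon \Delta^-(1)\to N$. Since $\Delta^-(1)$ is simple, any nonzero morphism out of it is a monomorphism. By (3) the functor $\Vfun$ is exact on $\OCat^-$, so $\Vfun(h)$ is again a monomorphism with source $\Vfun(\Delta^-(1))\cong\Ring\neq 0$, hence $\Vfun(h)\neq 0$, hence $\Vfun(f)\neq 0$. No further analysis of $\Hom$-spaces or Bott--Samelson presentations is needed. The paper's own formulation of this step is parallel but reduces instead to maps between two standards: by Corollary \ref{Cor:simple_socle} (simple socle, occurring with multiplicity one) any nonzero morphism $\Delta^-(x)\to\Delta^-(y)$ is injective, and then exactness of $\Vfun$ plus $\Vfun(\Delta^-(x))\neq 0$ finish it. In both versions, the mechanism is the same and is precisely the step you identified as ``the delicate point'': you do \emph{not} need to prove an abstract injectivity of the comparison map on $\Hom$-spaces; you only need that $\Vfun$, being exact, carries monomorphisms to monomorphisms and does not kill the relevant source object.
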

\begin{proof}
(1) is a tautology. (2) is true for the generators (the simple affine reflections)
and then for the general $x$ we deduce the claim using the left action of $\Br^a$.  
In more detail, for $x=s$, a simple affine reflection, we have an exact
sequence $0\rightarrow \nabla^-_\Ring(s)\rightarrow T_s\rightarrow \nabla^-_\Ring(1)\rightarrow 0$, where $T_s$
stands for the tilting object labelled by $s$. Note that
$\Ring_s$ is the kernel of the natural epimorphism $\Ring\otimes_{\Ring^s}\Ring\rightarrow \Ring$.
It follows that $\Vfun(\nabla^-_\Ring(s))\cong \Ring_s$. Note that since $\mathsf{T}_x$ maps to $\nabla_\Ring^-(x)$, we 
get that $\nabla^-_\Ring(x)*\nabla^-_\Ring(s)\cong \nabla_\Ring^-(xs)$ if $\ell(xs)=\ell(x)+1$. Now we use that $\Vfun$ is a monoidal functor
to deduce that $\Vfun(\nabla^-_\Ring(x))\cong \Ring_x$ for all $x\in W^a$. The proof for standard objects is analogous.

Let us prove (3). We use that $\OCat^-_\Ring$ is a highest weight category with an ideal finite poset
(and so is the union of highest weight subcategories with finite posets).
By (2), $\Vfun(\Delta^-_\Ring(x))$ has no
higher cohomology. Every object in $\OCat^-_\Ring$ admits a resolution by standardly filtered objects,
so $\Vfun$ is right t-exact.
Also by (2), $\Vfun(\nabla^-(x))$ has no higher homology (where
$\nabla^-(x)=\C\otimes_{\Ring}\nabla^-_\Ring(x)$). It follows that $\Vfun$ is acyclic on every object in
$\OCat^-_\Ring$ annihilated by the maximal ideal  $\mathfrak{m}\subset \Ring$. We claim that this implies that $\Vfun$
is acyclic on all objects in $\OCat^-_\Ring$. Indeed, let $A_\Ring\operatorname{-mod}$ be a highest weight
subcategory of $\OCat^-_\Ring$ associated to a finite poset ideal. The restriction of $\Vfun$ to
$A_\Ring\operatorname{-mod}$ is isomorphic to a functor of the form $B_\Ring\otimes_{A_\Ring}\bullet$,
where $B_\Ring$ is an $\Ring$-$A_\Ring$-bimodule finitely generated over $\Ring\subset A_\Ring$.  Note that $B_\Ring/B_{\Ring}\mathfrak{m}$ is 
a flat $A_\Ring/A_\Ring \mathfrak{m}$-module. Also, since $B_\Ring\otimes^L_{A_\Ring}(A_\Ring/A_{\Ring}\mathfrak{m})=B_\Ring\otimes^L_{\Ring}(\Ring/\mathfrak{m})$ has no higher homology, $B_\Ring$
is flat over $\Ring$. It follows that $B_\Ring$ is flat over $A_\Ring$, which finishes the proof of (3).

Finally, we prove (4). We need to show that a nonzero homomorphism between standard objects
is sent to a nonzero homomorphism.
Part (2) implies that $\Vfun(\Delta^-(x))\neq 0$ for all $x\in W^{a}$.
Corollary \ref{Cor:simple_socle} implies that any homomorphism between standard objects is injective. 
This, together with the exactness of $\Vfun$, imply (4).
\end{proof}

\subsubsection{Singular Soergel bimodules}\label{SSS_sing_Soergel}
Let $I^a$ denote the system of affine simple roots for $W^{a}$ and let $J$ be its proper subset. 
We write $W_J\subset W^a$ for the corresponding parabolic subgroup, it is a finite Weyl group.
We write $\Ring^J$ for the ring of invariants $\Ring^{W_J}$. We consider the category $\,_J\SBim$ of singular 
Soergel $\Ring^J$-$\Ring$-bimodules, by definition, these are direct sums of direct $\Ring^J$-$\Ring$-bimodule summands of objects 
in $\SBim$. It is a right $\SBim$-module category. Note that we have $\Ring^J\otimes\Ring$-linear functors
$\pi_J:\SBim\rightarrow \,_J\SBim$ of restricting the left $\Ring$-action to $\Ring^J$, and its 
biadjoint $\pi^J:\,_J\SBim\rightarrow \SBim$ sending $B$ to $\Ring\otimes_{\Ring^J}B$.
We have the following isomorphism
\begin{equation}\label{eq:piJ_identity}
\pi_J\circ \pi^J\cong \operatorname{id}^{\oplus |W_J|}.
\end{equation}

The indecomposable objects in $\,_J\SBim$ are labelled by the left cosets for the action 
of $W_J$ on $W^{a}$,  the indecomposable object labelled by $W_Jx$ will be labelled by
$B_{W_Jx}$.

Consider the category $\,_J\Hecke:=K^b(\,_J\SBim)$. We have the following analog of Lemma 
\ref{Lem:usual_t_structure}, that also follows from the results of \cite[Section 6]{EL}. 

\begin{Lem}\label{Lem:usual_t_structure_singular}
The category $\,_J\Hecke$ comes with a unique t-structure (to be called perverse) with the following properties:
\begin{enumerate}
\item its heart is
a highest weight category over $\Ring$, whose poset is
$W_J\backslash W^{a}$ with respect to the order induced by the Bruhat order.
\item the indecomposable singular Soergel bimodules
$B_{W_Jx}\in \Hecke$ correspond to the tilting object labelled by $W_Jx$.
\item The standard object labelled by $W_Jx$ is $\pi_J(\Delta_\Ring(x))$ for all
$x\in W^{ea}$. 
\end{enumerate}
\end{Lem}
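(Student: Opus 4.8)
\emph{Overall strategy.} The plan is to obtain (1) and (2) exactly as in the proof of Lemma~\ref{Lem:usual_t_structure}, and then to identify the standard objects separately. Namely, the additive category $\,_J\SBim$ carries the structure required by the cellular machinery of \cite[Section 6]{EL}: its ``standard'' bimodules $\Ring^J_{W_Jx}:=\pi_J(\Ring_x)$ depend only on the coset $W_Jx$ (for $f\in\Ring^J$ and $w\in W_J$, $f$ acts on $\Ring_{wx}$ by $x^{-1}w^{-1}(f)=x^{-1}(f)$), are indexed by the poset $W_J\backslash W^a$ with the order induced by the Bruhat order, and the objects $\pi_J(\text{Bott--Samelson bimodule})$ provide the cellular filtrations. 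So that formalism, in the singular incarnation of \cite[Section 6.6.7]{EL}, produces a highest weight category over $\Ring$ that is the heart of a unique t-structure on $\,_J\Hecke$ for which the indecomposable singular Soergel bimodules are the indecomposable tilting objects. This gives (1) and (2), so only the identification of the standard objects in (3) needs a separate argument (and may in fact already be part of how \cite{EL} sets up the singular case).

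\emph{Proof of (3).} I would compare $\,_J\Hecke$ with $\Hecke$ through the biadjoint pair $\pi_J\colon\Hecke\to\,_J\Hecke$, $\pi^J\colon\,_J\Hecke\to\Hecke$ induced by the functors of the same names. The first point is that $\pi_J$ is t-exact for the two perverse t-structures: $\pi^J\circ\pi_J$ is isomorphic to convolution with $\Ring\otimes_{\Ring^J}\Ring$, which is the indecomposable Soergel bimodule $B_{w_J}$ for the longest element $w_J$ of $W_J$ and hence a tilting object of $\OCat^-_\Ring$, so $\pi^J\circ\pi_J$ is t-exact by Corollary~\ref{Cor:tens_t_exact}; since in addition $\Ring\otimes_{\Ring^J}(-)$ is exact on bimodules and $\eqref{eq:piJ_identity}$ makes $\pi^J$ faithful and conservative, a short diagram chase with truncation triangles forces $\pi_J$ (and $\pi^J$) to be t-exact. (Alternatively this is a formal feature of the \cite{EL} construction, $\pi_J$ and $\pi^J$ being cellular functors.) Consequently $\pi_J(\Delta^-_\Ring(x))$ lies in the heart $\,_J\OCat^-_\Ring$; and since $\pi_J$ is monoidal in its first argument, $\pi_J(\Delta^-_\Ring(x))=\pi_J(\Delta^-_\Ring(s_{i_1}))*\Delta^-_\Ring(s_{i_2})*\cdots*\Delta^-_\Ring(s_{i_k})$ for a reduced word, from which one reads off that it is standardly filtered.

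\emph{Identification of the standard.} To see $\pi_J(\Delta^-_\Ring(x))\cong\Delta^-_\Ring(W_Jx)$, I would apply the singular Soergel functor $\Vfun_J\colon\,_J\Hecke\to D^b(\Ring^J\operatorname{-}\Ring\operatorname{-bimod})$, which satisfies $\Vfun_J\circ\pi_J\cong\operatorname{res}\circ\Vfun$ (restriction of the left $\Ring$-action to $\Ring^J$) and which, by the evident singular analogue of Lemma~\ref{Lem:V_O_negative_properties}, is t-exact and faithful on standardly filtered objects. By Lemma~\ref{Lem:V_O_negative_properties}(2), $\Vfun_J(\pi_J\Delta^-_\Ring(x))\cong\operatorname{res}(\Ring_x)=\Ring^J_{W_Jx}$ concentrated in homological degree $0$, which is exactly the $\Vfun_J$-image of the standard object $\Delta^-_\Ring(W_Jx)$ in the \cite{EL}-normalization. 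Since both objects are standardly filtered and $\Vfun_J$ is faithful there, comparing them through the $W_J\backslash W^a$-indexed canonical filtration of Lemma~\ref{Lem:stand_filtration} (whose successive subquotients are detected on the nose by $\Vfun_J$) yields the isomorphism.

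\emph{Main obstacle.} I expect the genuine difficulty to be bookkeeping rather than conceptual: checking the compatibilities $\pi^J\circ\pi_J\cong B_{w_J}*(-)$ and $\Vfun_J\circ\pi_J\cong\operatorname{res}\circ\Vfun$ and the t-exactness of $\pi_J$, and---most delicately---confirming that the standard object produced by the \cite[Section 6.6.7]{EL} highest weight structure on $\,_J\OCat^-_\Ring$ is really the one whose $\Vfun_J$-image is $\Ring^J_{W_Jx}$ placed in degree $0$ (and not a homological shift of it, nor a larger costandardly filtered object), and that the poset is indeed $W_J\backslash W^a$ with the induced Bruhat order. None of this is deep, but it requires care with left/right conventions and with the cellular normalizations.
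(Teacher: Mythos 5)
The paper gives no proof of this lemma: it simply states the result as a singular analogue of Lemma~\ref{Lem:usual_t_structure} and cites \cite[Section 6]{EL} (more precisely Section~6.6.7 there, per the proof of Lemma~\ref{Lem:usual_t_structure}). Your proposal is therefore not being compared to a written proof, but your strategy --- derive (1) and (2) from the \cite{EL} cellular machinery on $\,_J\SBim$, and identify the standard objects in (3) via the singular Soergel functor --- is consistent with what the paper is tacitly invoking.

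Two of your bridging steps are shakier than you suggest, though both are fixable by leaning harder on \cite{EL} instead of re-deriving them. First, the diagram chase giving t-exactness of $\pi_J$ does not close as stated: from $\pi^J\circ\pi_J$ being t-exact and $\pi^J$ conservative you cannot conclude $\pi_J$ is t-exact unless you already know that membership in $\,_J\Hecke^{\leqslant 0}$ (resp.\ $\geqslant 0$) is tested by applying $\pi^J$ and checking against the perverse t-structure on $\Hecke$. The paper \emph{proves} precisely that pull-back characterization for the stabilized t-structure in Step~1 of Proposition~\ref{Prop:stabilized_t_structure_singular}; for the perverse t-structure it is a feature of the \cite{EL} construction (the cellular functors $\pi_J,\pi^J$ are t-exact by design), not something that drops out of conservativity. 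Second, the claim that you can ``read off'' from the reduced-word factorization that $\pi_J(\Delta^-_\Ring(x))$ is standardly filtered needs that right convolution with $\Delta^-_\Ring(s_i)$ preserves the standardly filtered subcategory of $\,_J\OCat^-_\Ring$; this is again part of the cellular package, not a purely formal consequence of the right module structure. Your closing argument is sound once these are granted: $\Vfun_J\circ\pi_J\cong\operatorname{res}\circ\Vfun$ gives $\Vfun_J(\pi_J\Delta^-_\Ring(x))\cong\Ring_x$ in degree $0$, and since $\Ring_x$ is free of rank one on each side it admits at most one graph-bimodule filtration factor, so exactness of $\Vfun_J$ on the standardly filtered subcategory forces the standard filtration of $\pi_J(\Delta^-_\Ring(x))$ to have a single term, namely $\Delta^-_\Ring(W_Jx)$. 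That last step is the right way to get from ``same $\Vfun_J$-image'' to ``isomorphic''; bare faithfulness would not suffice.
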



\subsubsection{Main result on the stabilized t-structure}
Now let us state our main result: the existence and properties of another t-structure on
$\Hecke$. It can be called the {\it Frenkel-Gaitsgory} t-structure -- it is closely
related to the t-structure introduced in \cite{FG}, the {\it new} t-structure, following
\cite{BLin}, where the restriction of the Frenkel-Gaitsgory t-structure to $D^b(\OCat^-)$
was studied, or the {\it stabilized} t-structure, which is the term that we are going to use.

\begin{Thm}\label{Thm:stabilized_t_structure}
The following claims are true:
\begin{enumerate}
\item
There is a unique t-structure on $\Hecke$
such that its negative part $\Hecke^{st,\leqslant 0}$ coincides with
the full subcategory of $\Hecke$ consisting of all objects $\mathcal{F}$ such that
$\mathcal{F}*J_\lambda\in \Hecke^{\leqslant 0}$ (for the perverse t-structure).
This t-structure is bounded.
\item
Moreover, the heart of the t-structure is a highest weight category with the following interval finite poset and
standard objects:
\begin{itemize}
\item The poset is $W^{a}$ with order $x\preceq y$ if $xt_{-\lambda}\leqslant yt_{-\lambda}$
for all $\lambda$ sufficiently dominant, where $\leqslant$ is the usual Bruhat order (below we will see that this gives a well-defined
order).
\item The standard object corresponding to $x\in W^{a}$ is $\Delta^-_\Ring(xt_{-\lambda})*J_{\lambda}$ for all $\lambda$
    sufficiently dominant (again, below we will see that this is well-defined).
\end{itemize}
\item Let $\OCat^{st}_\Ring$ denote the heart of the stabilized t-structure. Then $D^b(\OCat^{st}_\Ring)\xrightarrow{\sim} \Hecke$.
\end{enumerate}
\end{Thm}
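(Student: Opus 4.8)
The plan is to deduce all three claims from statements about the heart $\OCat^-_\Ring$ of the perverse t-structure, using two inputs from the previous section: the identification $D^b(\OCat^-_\Ring)\xrightarrow{\sim}\Hecke$ of \eqref{eq:derived_O_minus}, and the fact that for every $\mu\in\Lambda_0$ the endofunctor $\bullet*J_\mu$ of $\Hecke$ is an auto-equivalence (the $\Br^a$-action sends $J_\mu$ to an invertible object), which for $\mu$ dominant is right t-exact while $\bullet*J_{-\mu}$ is left t-exact by Corollary \ref{Cor:transl_exact}. The first step is to set up the candidate standard and costandard objects: for $x\in W^a$ put $\Delta^{st}_\Ring(x):=\Delta^-_\Ring(xt_{-\lambda})*J_\lambda$ and $\nabla^{st}_\Ring(x):=\nabla^-_\Ring(xt_{-\lambda})*J_\lambda$ for $\lambda$ dominant and large relative to $x$. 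Here one must run the combinatorics of the affine Bruhat order: for $\lambda$ sufficiently dominant the relevant lengths in $W^a$ add up so that these convolutions lie in $\OCat^-_\Ring$, are independent of $\lambda$, and are flat over $\Ring$; the same combinatorics shows that the relation ``$x\preceq y$ iff $xt_{-\lambda}\leqslant yt_{-\lambda}$ for $\lambda$ large dominant'' stabilizes, is a partial order, and makes $W^a$ interval finite in the sense of Definition \ref{defi:interval_finite_poset}. Transporting the $\Ext$-orthogonality of perverse standards and costandards through the equivalences $\bullet*J_\lambda$ then yields $\Ext^i_\Hecke(\Delta^{st}_\Ring(x),\nabla^{st}_\Ring(y))\cong\Ring^{\oplus\delta_{i,0}\delta_{x,y}}$, the vanishing of negative $\Ext$'s among the $\Delta^{st}_\Ring$, and $\Hom_\Hecke(\Delta^{st}_\Ring(x),\Delta^{st}_\Ring(y))\neq 0\Rightarrow x\preceq y$.

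Next I would assemble $\OCat^{st}_\Ring$ as a highest weight category with interval finite poset $(W^a,\preceq)$ and standard objects $\Delta^{st}_\Ring(x)$, using the formalism around Definition \ref{defi:hw_interval_finite} and Remark \ref{Rem:proj_construction}: on each coideal-finite poset ideal build projective objects by iterated extensions of the $\Delta^{st}_\Ring$'s, with the $\Ext^1$-groups supplied by the previous step, check these projectives actually lie in $\OCat^-_\Ring\subseteq\Hecke$, and let $\OCat^{st}_\Ring$ be the resulting category of modules over the inverse system of their endomorphism algebras, which carries a fully faithful exact embedding into $\Hecke$. This is the step where I would follow the route of the paper: rather than verifying the highest weight axioms by hand on the Soergel side, one imports the highest weight structure over $\C$ from the coherent realization of \cite{BLin} on the Springer resolution --- after base change to a large positive characteristic it becomes the modular principal block of category $\OCat$, whose highest weight structure, standard objects, and Verma images are understood by \cite[Section 6]{ModHCO} --- and then deforms the standard objects, which already lie in $\OCat^-$, uniquely to $\Ring$-flat objects of $\OCat^-_\Ring$.

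It remains to recover claims (1)--(3) from the category $\OCat^{st}_\Ring$ thus constructed. Consider the realization functor $D^b(\OCat^{st}_\Ring)\to\Hecke$. It is fully faithful because the tilting objects of $\OCat^{st}_\Ring$, produced by the highest weight formalism, have no higher self-extensions computed in $\Hecke$ --- this is the same argument yielding \eqref{eq:derived_O_minus} from Lemma \ref{Lem:usual_t_structure}, part (2) --- and it is essentially surjective because the $\Delta^{st}_\Ring(x)$ generate $\Hecke$: the collection $\{\Delta^{st}_\Ring(x)\}_{x\in W^a}$ is permuted by each $\bullet*J_\mu$, namely $\Delta^{st}_\Ring(x)*J_\mu\cong\Delta^{st}_\Ring(xt_\mu)$, while every perverse standard satisfies $\Delta^-_\Ring(y)\cong\Delta^{st}_\Ring(yt_\lambda)*J_{-\lambda}$ for $\lambda$ large dominant, so the thick subcategory generated by the $\Delta^{st}_\Ring(x)$ contains all $\Delta^-_\Ring(y)$ and hence is $\Hecke$. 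This proves claim (3). Transporting the tautological (bounded) t-structure on $D^b(\OCat^{st}_\Ring)$ across this equivalence produces a bounded t-structure on $\Hecke$ with heart $\OCat^{st}_\Ring$, which is the highest weight category of the previous paragraph, giving (2); that its negative part is the one described in (1) follows from the inclusions $\OCat^{st}_\Ring*J_\lambda\subseteq\OCat^-_\Ring\subseteq\Hecke^{\leqslant 0}$ for $\lambda$ large dominant together with the converse statement that any $\mathcal{F}$ with $\mathcal{F}*J_\lambda\in\Hecke^{\leqslant 0}$ for $\lambda$ large dominant has vanishing positive $\OCat^{st}_\Ring$-cohomology. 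Uniqueness of the t-structure is then automatic.

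The step I expect to be the genuine obstacle is the construction of $\OCat^{st}_\Ring$ as a highest weight category directly from the Soergel-bimodule description: controlling the $\Ext^1$-groups between the Wakimoto objects $\Delta^{st}_\Ring(x)$ well enough to build the projectives, and checking that those projectives remain inside $\OCat^-_\Ring$ rather than merely in $\Hecke$, does not appear to follow easily from what is available on that side. This is exactly why the clean path detours through positive characteristic, where the category is the already-understood modular category $\OCat$, all of the required structure is visible, and the description then descends to $\C$ and deforms over $\Ring$.
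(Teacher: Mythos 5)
Your overall strategy---set up the objects $\Delta^{st}_\Ring(x)$, import the highest weight structure over $\C$ from the coherent realization of \cite{BLin} and the modular category $\OCat$ of \cite[Section 6]{ModHCO}, deform over $\Ring$, and then recover the three claims by transport across the realization functor---is precisely the route the paper takes, and your final paragraph correctly locates where the real work is. That said, there are two concrete gaps in paragraph 3 that would have to be repaired.

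First, your generation argument for essential surjectivity is incorrect as stated. You claim $\Delta^-_\Ring(y)\cong\Delta^{st}_\Ring(yt_\lambda)*J_{-\lambda}$ for $\lambda$ sufficiently dominant, but by the very identity $\Delta^{st}_\Ring(x)*J_\mu\cong\Delta^{st}_\Ring(xt_\mu)$ that you just established, the right-hand side equals $\Delta^{st}_\Ring(yt_\lambda t_{-\lambda})=\Delta^{st}_\Ring(y)$, which is not $\Delta^-_\Ring(y)$ unless $y$ already has anti-dominant $t$-part. Right convolution with the $J_\mu$'s merely permutes the $\Delta^{st}_\Ring(x)$ among themselves; it cannot by itself produce new $\Delta^-_\Ring$'s. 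The paper's Lemma \ref{Lem:generators} instead uses \emph{left} convolution: from $\Delta^{st}_\Ring(x)=\Delta^-_\Ring(xt_{-\lambda})*J_\lambda$ one sees that $T_\Ring(s)*\Delta^{st}_\Ring(x)$ is an extension of $\Delta^{st}_\Ring(x)$ and $\Delta^{st}_\Ring(sx)$, so the thick subcategory generated by the $\Delta^{st}_\Ring(?)$ is closed under left convolution by $\mathsf{T}_s^{\pm 1}$ for all affine simple reflections $s$; starting from $\Delta^{st}_\Ring(1)=\Delta^-_\Ring(1)$, the braid orbit then sweeps out all $\Delta^-_\Ring(x)$.

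Second, your full faithfulness argument appeals to tilting objects of $\OCat^{st}_\Ring$ with no higher self-extensions, invoking the analogy with \eqref{eq:derived_O_minus}. But the poset $(W^a,\leqslant^{st})$ is interval finite, not ideal finite (compare Lemma \ref{Lem:order_implication}: stabilized order embeds into the dominance order, which has infinite lower sets), so tilting objects are not available in the paper's formalism (Remark \ref{Rem:ideal_finite}). The paper instead verifies the full embedding $D^b(A_{\Ring,\T_0}\operatorname{-mod})\hookrightarrow\Hecke$ via the deformed \emph{projectives} $P^{st}_{\T_0,\Ring}(x)$: these lie in $\OCat^-_\Ring$ because they are standardly filtered, the functor $\mathcal{F}_{\T_0}$ is tautologically fully faithful on them (Lemma \ref{Lem:full_embedding_deformed}), and derived boundedness is controlled by the evaluation functor $\C\otimes^L_\Ring\bullet$ via Lemma \ref{Lem:evaluation_properties}. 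The derived equivalence in (3) is then assembled in Proposition \ref{Prop:deformed_derived_equiv} by gluing these full embeddings over the coideal-finite ideals $\T_0$.

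Finally, a more minor point: the Ext-orthogonality between $\Delta^{st}_\Ring$ and $\nabla^{st}_\Ring$ that you establish in paragraph 1 via the auto-equivalences $\bullet*J_\lambda$ is valid, but the paper does not actually run the highest weight verification from that side. The bulk of the effort (Sections covering Propositions \ref{Prop:hw_new_posit}, \ref{Prop:new_t_structure_properties}, \ref{Prop:ART0_hw}, and \ref{Prop:full_embeddings_hw_subs}) is devoted to building the locally unital algebras $A_{\Ring,\T_0}$ from the deformed projectives, showing they are Noetherian via Lemma \ref{Lem:deformation_Noetherian}, and proving that the gluing over poset ideals is compatible. Your sketch names the inputs correctly but compresses what is in fact the technical core of the proof.
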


In the subsequent sections we prove this theorem. And in Section \ref{SS_stab_tstr_singular}, we extend it to the singular blocks,
which is fairly standard. 

\subsection{Stabilized t-structure on $D^b(\OCat^-)$}
The goal of this section is to review the new t-structure on $\Hecke_0:=D^b(\OCat^-)$
following \cite{BLin} and to show that the heart is a highest weight category
(with poset and standard objects basically as explained in Theorem \ref{Thm:stabilized_t_structure})
based on results of \cite{ModHCO}.

\subsubsection{New t-structure following \cite{BLin}}\label{SSS_BLin}
Here we recall a few results on the new t-structure on $\Hecke_0$  following
\cite{BLin} and using constructions from \cite{B_Hecke,BM}.

Consider the ring $\Sring$ that is the localization
of $\Z$ by sufficiently many primes (it is enough to take $\Sring=\Z[\frac{1}{h!}]$, where $h$
is the maximum of the Coxeter numbers of the simple summands of $\g$,
but we will not need this). Let $G$ be the adjoint group for our fixed root system. 
The group $G$ is defined over $\mathbb{Z}$, hence over $\Sring$. In particular, we can consider the Lie algebra $\g_\Sring$, the Springer resolution $\tilde{\Nilp}_\Sring$, and the simultaneous Grothendieck resolution $\tilde{\g}_{\Sring}$. Then we can form
the Steinberg $\Sring$-scheme, $\St_\Sring:=\tilde{\g}_{\Sring}\times_{\g_\Sring^*}\tilde{\Nilp}_\Sring$,
it is a complete intersection in the regular $\Sring$-scheme $\tilde{\g}_{\Sring}\times \tilde{\Nilp}_\Sring$. The group scheme $G_\Sring$ acts on $\St_{\Sring}$ and we can consider
the equivariant derived category $D^b(\Coh^{G_\Sring}\St_\Sring)$.

Bezrukavnikov and Mirkovic in \cite{BM} have constructed a tilting generator $\Tilt_\Sring$ on
$\tilde{\g}_\Sring$. This is a vector bundle on $\tilde{\g}_\Sring$ without higher self-extensions
whose endomorphism algebra, to be denoted by $\Acal_\Sring$, has finite homological dimension.
We then have a derived equivalence
\begin{equation}\label{eq:derived_equiv_NCS_ring}
R\Gamma(\Tilt_\Sring\otimes \bullet):D^b(\Coh \tilde{\g}_{\Sring})\xrightarrow{\sim}
D^b(\Acal_\Sring\operatorname{-mod}).
\end{equation}
We can also consider the restriction of $\Tilt_{0,\Sring}$ of $\Tilt_{\Sring}$
to $\tilde{\Nilp}_\Sring\subset \tilde{\g}_\Sring$. We note that $\Acal_{\Sring}$
is naturally an algebra over $\g^*_{\Sring}\times_{\h^*_{\Sring}/W}\h^*_{\Sring}$
(the algebra of global functions on $\tilde{\g}_{\Sring}$).
The endomorphism algebra of $\Tilt_{0,\Sring}$ coincides with the specialization $\Acal_{0,\Sring}:=\Acal_{\Sring}\otimes_{\Sring[\h^*]}\Sring_0$, where $\Sring_0$
stands for the rank $1$ free $\Sring$-module, where $\h$ acts by $0$. Similarly
to (\ref{eq:derived_equiv_NCS_ring}) we have a derived equivalence
\begin{equation}\label{eq:derived_equiv_NCS_ring_specialized}
R\Gamma(\Tilt_{0,\Sring}^*\otimes \bullet):D^b(\Coh \tilde{\Nilp}_{\Sring})\xrightarrow{\sim}
D^b(\Acal_{0,\Sring}^{opp}\operatorname{-mod}).
\end{equation}

The bundle $\Tilt_{\Sring}$ is $G_\Sring$-equivariant. It follows that
(\ref{eq:derived_equiv_NCS_ring}) and  (\ref{eq:derived_equiv_NCS_ring_specialized})
lift to equivalences between the equivariant derived categories.
As argued in \cite[Section 1.5.3]{BM}, from (\ref{eq:derived_equiv_NCS_ring}) and
(\ref{eq:derived_equiv_NCS_ring_specialized}) one can deduce
that we have a derived equivalence
\begin{equation}\label{eq:derived_equiv_NCS_ring_St}
R\Gamma(\Tilt_{\Sring}\otimes\Tilt_{0,\Sring}^*\otimes \bullet):D^b(\Coh^{G_\Sring} \St_{\Sring})\xrightarrow{\sim}
D^b(\Acal_\Sring\otimes_{\Sring[\g^*]}\Acal_{0,\Sring}^{opp}\operatorname{-mod}^{G_\Sring}).
\end{equation}

We will need a related derived equivalence that appeared in \cite{BLin}.
Let $\pi:\tilde{\Nilp}_{\Sring}\rightarrow \g^*_{\Sring}$ denote the Springer
map. Consider the sheaf of algebras $\pi^*\Acal_{\Sring}$ on $\tilde{\Nilp}$.
It is $G_{\Sring}$-equivariant so we can consider the equivariant derived
category $\Coh^{G_\Sring}(\pi^* \Acal_{\Sring})$. Note that we have
the functor
\begin{equation}\label{eq:another_equiv}
D^b(\Coh^{G_{\Sring}}\St_{\Sring})\rightarrow D^b(\Coh^{G_{\Sring}}(\pi^*\Acal_{\Sring})),
\mathcal{F}\mapsto R\pi_{2*}(\pi_1^*\Tilt\otimes\bullet),
\end{equation}
where we write $\pi_1,\pi_2$ for the projections  $\St_{\Sring}\rightarrow \tilde{\g}_\Sring,
\tilde{\Nilp}_\Sring$. Then (\ref{eq:another_equiv}) is an equivalence, see \cite[Section 2.2]{BLin}.

Now we return to the situation when the base field is $\C$ (and we drop the subscript). One of the main result of \cite{B_Hecke}, see \cite[Theorem 1]{B_Hecke}, is an equivalence $\Hecke_0\xrightarrow{\sim}D^b(\Coh^{G}\St)$.
Composing with the base change to $\C$ of (\ref{eq:another_equiv}), we get an
equivalence $\Hecke_0\xrightarrow{\sim}  D^b(\Coh^{G}(\pi^*\Acal))$.
It was proved in \cite[Corollary 1]{BLin} that the transfer of the default t-structure on
$D^b(\Coh^{G}(\pi^*\Acal))$ to $\Hecke_0$ satisfies the conditions
analogous to those in Theorem \ref{Thm:stabilized_t_structure}.
This is the new t-structure on $\Hecke_0$.

\subsubsection{Reminder on braid group actions}
The goal of this section is to recall left and right braid group actions on
$D^b(\Coh^{G_{\Sring}}\St_{\Sring})$ following \cite{BR_braid}, see, in particular, \cite[Theorem 1.3.2]{BR_braid}.

The first step is to construct a homomorphism from the affine braid group
$\mathsf{Br}^{a}$ to the group of isomorphism classes of invertible objects
in $D^b(\Coh^{G_{\Sring}}(\tilde{\g}_{\Sring}\times_{\g_{\Sring}^*}\tilde{\g}_{\Sring}))$.
Let $\tilde{\g}_{\Sring,\Delta}$ denote the diagonal in $\tilde{\g}_{\Sring}\times_{\g_{\Sring}^*}\tilde{\g}_{\Sring}$.
The homomorphism sends $J_\lambda,\lambda\in \Lambda_0,$
to the sheaf $\Str(\lambda)$ on $\tilde{\g}_{\Sring,\Delta}$, where the normalization 
is such that the line bundle $\Str(\lambda)$ on the flag variety is ample if and only 
if $\lambda$ is strictly dominant.
The images of the generators $\mathsf{T}_s$ of the finite braid group
inside $\mathsf{Br}^{a}$ are determined as follows. Let
$\g_{\Sring}^{*,reg}$ denote the locus of regular semisimple elements in $\g_\Sring^*$
and
$$(\tilde{\g}_{\Sring}\times_{\g_{\Sring}^*}\tilde{\g}_{\Sring})^{reg}\subset
\tilde{\g}_{\Sring}\times_{\g_{\Sring}^*}\tilde{\g}_{\Sring}$$
be its preimage. For a simple reflection $s\in W$ consider the locus in
$(\tilde{\g}_{\Sring}\times_{\g_{\Sring}^*}\tilde{\g}_{\Sring})^{reg}$, where the
two Borel subalgebras are in relative position $s$. Denote it by $Z_{s,\Sring}^{reg}$.
Let $Z_{s,\Sring}$ denote the Zariski closure of $Z_{s,\Sring}^{reg}$ in
$\tilde{\g}_{\Sring}\times_{\g_{\Sring}^*}\tilde{\g}_{\Sring}$.
The element $\mathsf{T}_s$ is sent to $\Str_{Z_{s,\Sring}}$.

Note that $D^b(\Coh^{G_{\Sring}}(\tilde{\g}_{\Sring}\times_{\g^*_{\Sring}}\tilde{\g}_{\Sring}))$
acts on $D^b(\Coh^{G_{\Sring}}\St_{\Sring})$ by convolutions. This gives the first (left) action
of $\Br^{a}$ on $D^b(\Coh^{G_{\Sring}}\St_{\Sring})$.

We also have a commuting right action. Consider the derived scheme $\tilde{\Nilp}_{\Sring}\times^L_{\g^*_{\Sring}}\tilde{\Nilp}_{\Sring}$.
It makes sense to speak about the derived category of $G_{\Sring}$-equivariant
coherent sheaves on this scheme, to be denoted by
$D^b(\Coh^{G_{\Sring}}(\tilde{\Nilp}_{\Sring}\times^L_{\g^*_{\Sring}}\tilde{\Nilp}_{\Sring}))$.
It acts on $D^b(\Coh^{G_{\Sring}}\St_{\Sring})$ by convolutions from the right.
As explained in \cite[Section 1.3]{BR_braid}, there is a homomorphism from $\Br^{a}$ to the group of
invertible objects in $D^b(\Coh^{G_{\Sring}}(\tilde{\Nilp}_{\Sring}\times^L_{\g^*_{\Sring}}\tilde{\Nilp}_{\Sring}))$.
Namely, note that we have a natural morphism from the usual fiber product
$\tilde{\Nilp}_{\Sring}\times_{\g^*_{\Sring}}\tilde{\Nilp}_{\Sring}$ to the derived
tensor product. So we can view objects of $\Coh^{G_{\Sring}}(\tilde{\Nilp}_{\Sring}\times_{\g^*_{\Sring}}\tilde{\Nilp}_{\Sring})$
as objects of
$D^b(\Coh^{G_{\Sring}}(\tilde{\Nilp}_{\Sring}\times^L_{\g^*_{\Sring}}\tilde{\Nilp}_{\Sring}))$ by pushforward.

An element $J_\lambda, \lambda\in \Lambda,$ is sent to $\Str_{\tilde{\Nilp}_{\Sring, \Delta}}(\lambda)$, while $\mathsf{T}_s$ is sent to the structure sheaf of the scheme-theoretic
intersection $Z_{s,\Ring}\cap \St_{\Sring}$ (that is actually a subscheme in
$\tilde{\Nilp}_{\Sring}\times_{\g^*_{\Sring}}\tilde{\Nilp}_{\Sring}$).

The following claim is a consequence \cite[Proposition 5.8]{BL} (for the left action; the proof for the right actions
is analogous). Note that by the realization of
$\Hecke_0$ as the Iwahori constructible perverse sheaves on the affine flag variety 
we have natural actions of the affine braid group $\Br^a$ on this category from the left and from the right.

\begin{Lem}\label{Lem:braid_intertwined}
The equivalence $\Hecke_0\xrightarrow{\sim}D^b(\Coh^{G}\St)$, see Remark
\ref{Rem:BY} and  \cite[Theorem 1]{B_Hecke}, is equivariant with respect to
both $\Br^a$-actions.
\end{Lem}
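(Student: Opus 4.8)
The plan is to reduce the assertion to a compatibility of the equivalence with the categorified braid generators, which is already available in the literature. Recall that $\Br^a$ is generated by the positive lifts $\mathsf{T}_s$ of the finite simple reflections $s\in W$ together with the lattice elements $J_\lambda$, $\lambda\in\Lambda_0$, and that on both sides of the equivalence of Remark \ref{Rem:BY} and \cite[Theorem 1]{B_Hecke} the two $\Br^a$-actions in question are given by convolving — on the left, respectively on the right — with a family of objects categorifying the corresponding elements of the affine Hecke algebra. On the constructible side, in the realization of $\OCat^-$, hence of $\Hecke_0=D^b(\OCat^-)$, as Iwahori-equivariant perverse sheaves on the affine flag variety, $\mathsf{T}_s$ acts by convolution with the (co)standard sheaf attached to $s$ and $J_\lambda$ by convolution with the corresponding Wakimoto sheaf, while the right action convolves with the same objects from the other side. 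On the coherent side, the left action of \cite{BR_braid} sends $\mathsf{T}_s$ to the kernel $\Str_{Z_s}$ on $\tilde{\g}\times_{\g^*}\tilde{\g}$ and $J_\lambda$ to $\Str(\lambda)$ on the diagonal, and the right action is the analogous assignment using kernels on $\tilde{\Nilp}\times^L_{\g^*}\tilde{\Nilp}$. Since each action is by exact functors and these convolutions are functorial, it suffices to identify the images of these generating objects under the equivalence.

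This identification is exactly the content of \cite[Proposition 5.8]{BL}: Bezrukavnikov's equivalence $\Phi\colon\Hecke_0\xrightarrow{\sim}D^b(\Coh^G\St)$ carries the (co)standard sheaf attached to a simple affine reflection to the corresponding Bezrukavnikov--Riche kernel, and the Wakimoto sheaf attached to $\lambda$ to the line bundle $\Str(\lambda)$. Granting this, for $\beta\in\Br^a$ and $M\in\Hecke_0$ one obtains $\Phi(\beta\cdot M)\cong b^{\mathrm{coh}}_\beta\ast\Phi(M)\cong\beta\cdot\Phi(M)$, where $b^{\mathrm{coh}}_\beta$ is the coherent kernel attached to $\beta$, because $\Phi$ matches the constructible kernel $b^{\mathrm{constr}}_\beta$ with $b^{\mathrm{coh}}_\beta$ and is compatible with the relevant module structures; this handles the left action. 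For the right action I would run the same argument verbatim, now using convolution with kernels on $\tilde{\Nilp}\times^L_{\g^*}\tilde{\Nilp}$ and the right-handed analogue of \cite[Proposition 5.8]{BL}, which holds by the same proof with the roles of the two factors of $\St=\tilde{\g}\times_{\g^*}\tilde{\Nilp}$ interchanged.

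The point that requires care — and the reason the statement is not completely formal — is that the two $\Br^a$-actions on $D^b(\Coh^G\St)$ are defined through convolution with kernels living on the ``doubled'' spaces $\tilde{\g}\times_{\g^*}\tilde{\g}$ and $\tilde{\Nilp}\times^L_{\g^*}\tilde{\Nilp}$ rather than intrinsically inside $D^b(\Coh^G\St)$, so matching them with the intrinsic convolution actions on $\Hecke_0$ requires knowing that $\Phi$ is functorial for these kernel convolutions, equivalently that it intertwines the module structures over the ``doubled'' kernel categories and over their constructible counterparts. This is precisely what the combination of \cite{BR_braid} (constructing the coherent actions together with this functoriality) and \cite[Proposition 5.8]{BL} (matching the generators) supplies, so no new work is needed beyond invoking these results and noting that the right-handed version of \cite[Proposition 5.8]{BL} follows by symmetry. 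I expect this last verification — that the symmetry argument genuinely applies despite the asymmetry between $\tilde{\g}$ and $\tilde{\Nilp}$ in the definition of $\St$ — to be the only nontrivial step in writing the argument out.
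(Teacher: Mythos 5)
Your argument matches the paper's: the lemma is justified by invoking \cite[Proposition 5.8]{BL} for the left action and noting that the right action is handled analogously, which is precisely how you proceed. Your elaboration on reducing to the generators $\mathsf{T}_s$, $J_\lambda$ and on the functoriality of the kernel convolutions fills in detail the paper leaves implicit, but the substance and the key reference are the same.
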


\subsubsection{Order}
The goal of this section is to equip $W^{a}$ with a new partial order and study its properties. 

We choose simple reflections in $W^{a}$ with respect to the {\it anti-dominant} alcove. So,
the affine simple reflection $s_0$ is $s_{\tilde{\alpha}}t_{\tilde{\alpha}}$, where $\tilde{\alpha}$
is the dominant short root. Correspondingly, the length $w\in W^{a}$ is given by 
\begin{equation}\label{eq:length}
\ell(wt_\lambda)=\sum_{\alpha, w(\alpha)>0}|\langle\lambda,\alpha^\vee\rangle|+\sum_{\alpha,w(\alpha)<0}|\langle\lambda,\alpha^\vee\rangle-1|.
\end{equation}
Here $w\in W,\lambda\in \Lambda_0$, and $\alpha$ runs over the positive roots. 

Define the {\it stabilized length} of $x=wt_\lambda\in W^{a}$
by $\ell^{st}(x)=\ell(w)-2\langle\rho^\vee,\lambda\rangle$. This formula is motivated by the 
observation that 
\begin{itemize}
\item[(*)]
$\ell^{st}(x)=\ell(x)$ if $\lambda$ is anti-dominant. 
\end{itemize}

\begin{defi}\label{defi:stabilized_order}
For $x,y\in W^{a}$ set $x\leqslant^{st} y$ if there is a sequence of roots $\beta_1,\ldots,\beta_k$
such that $x=s_{\beta_1}\ldots s_{\beta_k}y$ and $\ell^{st}(s_{\beta_i}\ldots s_{\beta_k}y)<
\ell^{st}(s_{\beta_{i+1}}\ldots s_{\beta_k}y)$ for all $i$.
\end{defi}  

It is clear that $\leqslant^{st}$ is a partial order on $W^{a}$. Moreover, for each $\mu\in \Lambda$
the map $x\mapsto xt_\mu$ is a poset automorphism. 

The following lemma provides an alternative characterization of $\leqslant^{st}$. Let $\Lambda_0^+$ 
denote the subset of dominant weights in $\Lambda_0$.

\begin{Lem}\label{Lem:stabilized_order_characterization}
For $x,y\in W^{a}$, the following are equivalent.
\begin{enumerate}
\item $x\leqslant^{st}y$.
\item there is $\mu_0\in \Lambda^+_0$ depending on $x,y$ such that for all $\mu\in \mu_0+\Lambda^+_0$, 
we have $xt_{-\mu}\leqslant yt_{-\mu}$.
\end{enumerate}
\end{Lem}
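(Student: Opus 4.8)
The plan is to reduce everything to the combinatorics of the affine Weyl group acting on $\hat\h^*$, using the stabilized length $\ell^{st}$ and the key observation (*) that $\ell^{st}(x)=\ell(x)$ when the translation part of $x$ is anti-dominant. The first step is to record the effect of right translation by $t_{-\mu}$ on the two relevant functions: since $x\mapsto xt_\mu$ is a poset automorphism for $\leqslant^{st}$ (already noted after Definition \ref{defi:stabilized_order}) and since conjugation $s_\beta\mapsto t_{-\mu}s_\beta t_\mu=s_{\beta'}$ sends reflections to reflections with $\ell^{st}$ shifted by the fixed amount $-2\langle\rho^\vee,\mu\rangle$ applied uniformly, the relation $x\leqslant^{st}y$ is unchanged. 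So I may replace $(x,y)$ by $(xt_{-\mu},yt_{-\mu})$ freely. For $\mu$ sufficiently dominant (depending on $x,y$), the translation parts of both $xt_{-\mu}$ and $yt_{-\mu}$ become anti-dominant, so by (*) on that locus $\ell^{st}$ agrees with $\ell$, and hence $\leqslant^{st}$ restricted to such elements agrees with the Bruhat order $\leqslant$ on $W^a$ (this is the usual exchange-condition characterization of Bruhat order via length-decreasing sequences of reflections). This is really the heart: showing that a chain realizing $x\leqslant^{st}y$ can be transported, after applying $t_{-\mu}$, into a chain of reflections that is genuinely length-decreasing in the ordinary sense, and conversely.

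For the implication (1)$\Rightarrow$(2): given a witnessing sequence $\beta_1,\dots,\beta_k$ for $x\leqslant^{st}y$, conjugate the whole chain by $t_{-\mu}$; each $s_{\beta_i}$ becomes a reflection $s_{\beta_i^{(\mu)}}$ (the root $\beta_i^{(\mu)}$ is the $t_{-\mu}$-translate of the affine reflection hyperplane of $\beta_i$), and the chain now realizes $xt_{-\mu}=s_{\beta_1^{(\mu)}}\cdots s_{\beta_k^{(\mu)}}yt_{-\mu}$. For $\mu$ large enough (dominant, depending on $x,y$ and hence on the finitely many intermediate elements $s_{\beta_{i+1}}\cdots s_{\beta_k}y$), all the intermediate elements $s_{\beta_{i+1}^{(\mu)}}\cdots s_{\beta_k^{(\mu)}}yt_{-\mu}$ have anti-dominant translation part, so (*) converts each strict inequality $\ell^{st}(\cdots)<\ell^{st}(\cdots)$ into the corresponding strict inequality in $\ell$; this exhibits $xt_{-\mu}\leqslant yt_{-\mu}$ in the Bruhat order. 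One then needs this to persist for all $\mu\in\mu_0+\Lambda_0^+$, which is automatic because enlarging $\mu$ within the dominant cone keeps translation parts anti-dominant, so the same transported chain keeps working. For the converse (2)$\Rightarrow$(1): pick one $\mu$ as in (2) large enough that $xt_{-\mu},yt_{-\mu}$ both have anti-dominant translation part; the Bruhat relation $xt_{-\mu}\leqslant yt_{-\mu}$ gives a length-decreasing reflection chain for the ordinary length, which by (*) is simultaneously $\ell^{st}$-decreasing among those elements, and then conjugating back by $t_\mu$ produces a chain witnessing $x\leqslant^{st}y$ via Definition \ref{defi:stabilized_order}.

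The main obstacle I anticipate is making the phrase ``for all $\mu$ sufficiently dominant'' genuinely uniform: one must check that a single $\mu_0$ works both to anti-dominantize the translation parts of $x,y$ \emph{and} of all the relevant intermediate products, and that the set of reflections $\beta_i^{(\mu)}$ behaves predictably as $\mu$ ranges over $\mu_0+\Lambda_0^+$ (in particular that once a chain is length-decreasing it stays so). This is bookkeeping about how $\ell(wt_\lambda)$ from \eqref{eq:length} depends on $\lambda$: for $\lambda$ deep in the anti-dominant cone, $\ell(wt_\lambda)=\ell(w)-2\langle\rho^\vee,\lambda\rangle+O(1)$-type estimates make the comparison of lengths reduce to comparing the $w$-parts, which stabilizes. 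I would isolate this as a preliminary sublemma (possibly the content of a separate lemma in the paper that I would cite) asserting that for fixed $w,w'\in W$ and $\lambda,\lambda'\in\Lambda_0$, the truth value of $w t_{\lambda-\mu}\leqslant w' t_{\lambda'-\mu}$ is independent of $\mu$ once $\mu$ is dominant and large, and equals the truth value of $x\leqslant^{st}y$. Granting that stabilization statement, the equivalence of (1) and (2) follows formally from the two directions sketched above.
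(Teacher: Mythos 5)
Your overall strategy matches the paper's: reduce to the Bruhat order via right translation by $t_{-\mu}$ and use the observation (*) that $\ell^{st}=\ell$ on elements with anti-dominant translation part. However, there is a genuine error in your implementation of $(1)\Rightarrow(2)$. If $x=s_{\beta_1}\cdots s_{\beta_k}y$, then right-multiplying by $t_{-\mu}$ gives $xt_{-\mu}=s_{\beta_1}\cdots s_{\beta_k}\,(yt_{-\mu})$ with the \emph{same} left reflections; no conjugation occurs. Your claim that $xt_{-\mu}=s_{\beta_1^{(\mu)}}\cdots s_{\beta_k^{(\mu)}}\,yt_{-\mu}$ for the conjugated reflections $s_{\beta_i^{(\mu)}}=t_{-\mu}s_{\beta_i}t_\mu$ is false: that product equals $t_{-\mu}(s_{\beta_1}\cdots s_{\beta_k})t_\mu\, y t_{-\mu}$, which is $xt_{-\mu}$ only if $s_{\beta_1}\cdots s_{\beta_k}$ commutes with $t_\mu$. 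Fortunately the correct statement is simpler than what you wrote: the intermediate elements are just $(s_{\beta_{i+1}}\cdots s_{\beta_k}y)\,t_{-\mu}$, i.e.\ the original intermediates right-translated, and one picks $\mu_0$ large enough that all finitely many of them have anti-dominant translation part. Then (*) converts the $\ell^{st}$-decreasing chain into an $\ell$-decreasing chain, exhibiting $xt_{-\mu}\leqslant yt_{-\mu}$. (This is exactly what the paper does.)

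For $(2)\Rightarrow(1)$ you correctly flag the real obstacle but do not resolve it: a Bruhat chain from $xt_{-\mu}$ to $yt_{-\mu}$ need not have intermediate elements with anti-dominant translation parts, so one cannot directly invoke (*) term by term. One must argue that (after possibly enlarging $\mu$) a Bruhat chain can be chosen whose intermediate elements all lie in the region where (*) applies, or otherwise compare $\ell^{st}$ and $\ell$ along a general chain using the inequality $\ell^{st}(z)\leqslant\ell(z)$. The paper explicitly leaves this direction as an exercise, so you are not being less rigorous than the source, but your sketch should not assert "which by (*) is simultaneously $\ell^{st}$-decreasing among those elements" as though this were immediate; it is precisely the point that needs an argument.
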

\begin{proof}
(1)$\Rightarrow$(2): we can find $\mu_0\in \Lambda_0^+$ such that all elements $s_{\beta_i}\ldots s_{\beta_k}yt_{-\mu}$
for $\mu\in \mu_0+\Lambda_0^+$ as in Definition \ref{defi:stabilized_order} are of the form $w_i t_{\lambda_i}$ with 
$\lambda_i\in -\Lambda_0^+$. Now the desired implication follows from (*). 

(2)$\Rightarrow$(1): is similar and is also based on (*). It is left as an exercise to a reader. 
\end{proof}

In particular, the order in (2) of Theorem \ref{Thm:stabilized_t_structure} is well-defined. 

Now pick a positive number $p$ and consider the action of $W^{a}$ on $\Lambda$ as in Section 
\ref{SSS:ex_hw_interval}. Consider the action of $W^{a}$ on $\Lambda$ by 
$$w\cdot \mu:=w(\mu+\rho)-\rho, t_\lambda\cdot \mu=\mu+p\lambda, \lambda,\mu\in \Lambda, w\in W.$$

\begin{Lem}\label{Lem:order_implication}
Suppose $p>h$ (the Coxeter number). If $x\leqslant^{st}y$, then $x^{-1}\cdot (-2\rho)\leqslant y^{-1}\cdot (-2\rho)$ with respect to the dominance ordering.
\end{Lem}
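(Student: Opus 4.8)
The plan is to reduce the statement, via Definition~\ref{defi:stabilized_order}, to a single reflection step, and then to settle that step by two short computations together with the elementary bound $\ell(s_\beta)\leqslant 2\langle\rho^\vee,\beta\rangle-1$, valid for every positive root $\beta$. First I would unwind Definition~\ref{defi:stabilized_order}: $x\leqslant^{st}y$ means there are elements $z_1:=x,z_2,\dots,z_{k+1}:=y$ of $W^a$ and affine reflections $s_{\beta_1},\dots,s_{\beta_k}$ with $z_{i+1}=s_{\beta_i}z_i$ and $\ell^{st}(z_{i+1})>\ell^{st}(z_i)$ for all $i$. Since the dominance order is transitive, the lemma follows once I prove the one-step claim $(\ast)$: \emph{if $z\in W^a$ and $s_\beta$ is an affine reflection with $\ell^{st}(s_\beta z)>\ell^{st}(z)$, then $z^{-1}\cdot(-2\rho)\leqslant(s_\beta z)^{-1}\cdot(-2\rho)$}. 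Indeed, applying $(\ast)$ to $z=z_i$ and chaining gives $x^{-1}\cdot(-2\rho)\leqslant\dots\leqslant y^{-1}\cdot(-2\rho)$. (Note that Lemma~\ref{Lem:stabilized_order_characterization} is not needed for this argument.)

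To prove $(\ast)$ I would write the affine reflection as $s_{\alpha,n}$, the reflection in the hyperplane $\langle\,\cdot\,{+}\rho,\alpha^\vee\rangle=np$ for a positive root $\alpha$ and $n\in\Z$; in $W^a$ one has $s_{\alpha,n}=s_\alpha t_{-n\alpha}$. Writing $z=wt_\lambda$ with $w\in W$, $\lambda\in\Lambda_0$, a direct computation gives $s_{\alpha,n}z=(s_\alpha w)\,t_{\lambda-nw^{-1}\alpha}$, whence
\[
\ell^{st}(s_{\alpha,n}z)-\ell^{st}(z)=\bigl(\ell(s_\alpha w)-\ell(w)\bigr)+2n\langle\rho^\vee,w^{-1}\alpha\rangle ,
\]
while, since $w^{-1}$ is the linear part of $z^{-1}$ and the affine shifts cancel,
\[
(s_{\alpha,n}z)^{-1}\cdot(-2\rho)-z^{-1}\cdot(-2\rho)=w^{-1}\bigl(s_{\alpha,n}\cdot(-2\rho)-(-2\rho)\bigr)=\bigl(\langle\rho,\alpha^\vee\rangle+np\bigr)\,w^{-1}\alpha .
\]
As $p>h$ we have $0<\langle\rho,\alpha^\vee\rangle<p$, so $\langle\rho,\alpha^\vee\rangle+np$ is nonzero, positive iff $n\geqslant0$ and negative iff $n\leqslant-1$. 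Hence $(\ast)$ is equivalent to the implications $w^{-1}\alpha>0\Rightarrow n\geqslant0$ and $w^{-1}\alpha<0\Rightarrow n\leqslant-1$: in the two cases the second display is then a positive multiple of the positive root $w^{-1}\alpha$, resp. of $-w^{-1}\alpha$, hence $\geqslant 0$ in the dominance order.

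Both implications follow from the length identity. Set $\delta:=\ell(s_\alpha w)-\ell(w)$ and $H:=\langle\rho^\vee,w^{-1}\alpha\rangle$, and recall $\operatorname{sign}(\delta)=\operatorname{sign}(H)=\operatorname{sign}(w^{-1}\alpha)$. If $w^{-1}\alpha<0$ then $\delta<0$ and $H<0$, so $n\geqslant0$ gives $\delta+2nH\leqslant\delta<0$; hence $\ell^{st}(s_{\alpha,n}z)>\ell^{st}(z)$ forces $n\leqslant-1$. If $w^{-1}\alpha>0$, put $\beta':=w^{-1}\alpha>0$; then $s_\alpha w=ws_{\beta'}$, so $\delta=\ell(ws_{\beta'})-\ell(w)\leqslant\ell(s_{\beta'})\leqslant 2\langle\rho^\vee,\beta'\rangle-1=2H-1$, and therefore $n\leqslant-1$ gives $\delta+2nH\leqslant(2H-1)-2H=-1<0$; hence $\ell^{st}(s_{\alpha,n}z)>\ell^{st}(z)$ forces $n\geqslant0$. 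This proves $(\ast)$. The bound $\ell(s_\beta)\leqslant 2\langle\rho^\vee,\beta\rangle-1$ used here is standard: induct on the height $\langle\rho^\vee,\beta\rangle$; if $\beta$ is not simple, pick a simple root $\alpha_i$ in its support with $(\beta,\alpha_i)>0$, so that $s_i\beta$ is again a positive root of strictly smaller height and $s_\beta=s_is_{s_i\beta}s_i$ yields $\ell(s_\beta)\leqslant\ell(s_{s_i\beta})+2$.

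The only real care needed is the sign bookkeeping — correlating ``$\ell^{st}$ increases'' with ``the translation part becomes more anti-dominant'' and with the sign of $n$ — but once the two displayed identities are written down this is routine, and the length bound for reflections is completely elementary; so I do not expect a serious obstacle.
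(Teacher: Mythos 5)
Your proof is correct and follows essentially the same route as the paper: reduce to a single affine reflection step, compare the change in $\ell^{st}$ with the change in $z^{-1}\cdot(-2\rho)$ under $z\mapsto s_\beta z$, and use $p>h$ to correlate the signs. The one genuine improvement is that you isolate and prove the inequality $\ell(s_{\beta'})\leqslant 2\langle\rho^\vee,\beta'\rangle-1$; the paper's assertion that the $\ell^{st}$-inequality ``holds precisely when'' (a) or (b) does in fact require exactly this bound (to rule out the case $w^{-1}\alpha<0$, $k>0$ in the paper's notation), and the paper invokes it silently, so making it explicit tightens the argument.
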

\begin{proof}
It is enough to consider the situation when $x=s_\beta y$ with $\ell^{st}(x)<\ell^{st}(y)$. Let 
$y=wt_\lambda$. Then $s_\beta=s_\alpha t_{k\alpha}$ for some finite positive root $\alpha$ and $k\in \Z$. 
We have $s_\beta y=s_\alpha t_{k\alpha} w t_\lambda=(s_\alpha w)t_{k w^{-1}(\alpha)+\lambda}$.
So we know that $\ell(s_\alpha w)- 2k\langle\rho^\vee, w^{-1}(\alpha)\rangle<\ell(w)$. This inequality 
holds precisely when one of the following options holds:
\begin{itemize}
\item[(a)] $w^{-1}(\alpha)>0$ and $k>0$,
\item[(b)] $w^{-1}(\alpha)<0$ and $k\leqslant 0$.
\end{itemize}
Now note that 
\begin{align*}&y^{-1}\cdot (-2\rho)-x^{-1}\cdot (-2\rho)=(w^{-1}\cdot (-2\rho)-p\lambda)-(s_\alpha w)^{-1}\cdot (-2\rho)+ p(\lambda+kw^{-1}(\alpha))=\\
&(w^{-1}\cdot (-2\rho)-(s_\alpha w)^{-1}\cdot (-2\rho))+pkw^{-1}\alpha=(pk-\langle\alpha^\vee,\rho\rangle)w^{-1}\alpha. 
\end{align*}
Both (a) and (b) imply that the difference is a positive multiple of a positive root (note that $\langle\alpha^\vee,\rho\rangle\leqslant h$).  
\end{proof}

\begin{Cor}\label{Cor:order_implication}
$(W^a, \leqslant^{st})$ is an interval finite poset in the sense of 
Definition \ref{defi:interval_finite_poset}.
\end{Cor}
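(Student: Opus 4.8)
The plan is to prove Corollary \ref{Cor:order_implication} by reducing the claim for $(W^a,\leqslant^{st})$ to the known interval-finiteness of the dominance order on $\Lambda$ (Example \ref{Ex:poset_highest_weights}), using Lemma \ref{Lem:order_implication} as the comparison tool. By Definition \ref{defi:interval_finite_poset}, it suffices to show that every interval $[x,y]^{st}:=\{z\in W^a\mid x\leqslant^{st}z\leqslant^{st}y\}$ is finite.

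First I would fix $x\leqslant^{st}y$ and consider an arbitrary $z$ with $x\leqslant^{st}z\leqslant^{st}y$. Pick an integer $p>h$ and use the $W^a$-action on $\Lambda$ from Section \ref{SSS:ex_hw_interval} (with this $p$). Applying Lemma \ref{Lem:order_implication} twice gives
\begin{equation*}
x^{-1}\cdot(-2\rho)\leqslant z^{-1}\cdot(-2\rho)\leqslant y^{-1}\cdot(-2\rho)
\end{equation*}
in the dominance order on $\Lambda$. Since the dominance order on $\Lambda$ is interval finite (Example \ref{Ex:poset_highest_weights}), the set $S:=\{\nu\in\Lambda\mid x^{-1}\cdot(-2\rho)\leqslant\nu\leqslant y^{-1}\cdot(-2\rho)\}$ is finite; hence $z^{-1}\cdot(-2\rho)$ lies in a finite set. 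So it remains to bound the number of $z\in W^a$ whose value $z^{-1}\cdot(-2\rho)$ is a prescribed element of $\Lambda$.

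The key point is that the map $W^a\to\Lambda$, $z\mapsto z^{-1}\cdot(-2\rho)$, has finite fibers once $p>h$: indeed, with $p>h$ the weight $-2\rho$ is in the interior of the fundamental alcove, so its stabilizer in $W^a$ (for the $\cdot$-action) is trivial, and the stabilizer of $z^{-1}\cdot(-2\rho)$ in $W^a$ is the $z^{-1}$-conjugate of that stabilizer. Thus $z\mapsto z^{-1}$ is injective on each fiber, i.e., the fiber over any $\nu\in\Lambda$ has at most one element. (If one prefers to avoid the alcove discussion, one can instead note that $z^{-1}\cdot(-2\rho)=z^{-1}(-2\rho+\rho)-\rho$ determines the translation part of $z^{-1}$ modulo $p\Lambda_0$ together with enough of the linear part; but the cleanest argument is the stabilizer one.) Combining, $[x,y]^{st}$ injects into the finite set $S$ via $z\mapsto z^{-1}\cdot(-2\rho)$, hence is finite.

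The main obstacle is making the finiteness of the fibers of $z\mapsto z^{-1}\cdot(-2\rho)$ completely precise and checking that $p>h$ really forces triviality of the stabilizer of $-2\rho$; this is standard (it is exactly the regularity condition ensuring $-2\rho$ is a regular weight for the $p$-dilated affine action), but it is the one place where the hypothesis $p>h$ of Lemma \ref{Lem:order_implication} is genuinely used, so it should be stated carefully. Everything else is a formal consequence of Lemma \ref{Lem:order_implication} and Example \ref{Ex:poset_highest_weights}.
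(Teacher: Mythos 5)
Your proposal is correct and follows essentially the same route as the paper: use Lemma \ref{Lem:order_implication} to compare $\leqslant^{st}$ with the dominance order pulled back along $z\mapsto z^{-1}\cdot(-2\rho)$, note that the latter order is interval finite, and that the map is injective since $-2\rho$ is a regular weight for the $p$-dilated $\cdot$-action when $p>h$. The paper states this more tersely (``the poset structure on $W^a$ induced by the usual poset structure on $W^a\cdot(-2\rho)$ is interval finite''), implicitly using the freeness of the action, which you correctly unpack via the triviality of the stabilizer of $-2\rho$.
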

\begin{proof}
The poset structure on $W^{a}$ induced by the usual poset on structure on $W^{a}\cdot (-2\rho)$ is  interval finite. 
Thanks to Lemma \ref{Lem:order_implication}, $(W^a,\leqslant^{st})$ is interval finite.
\end{proof}

\subsubsection{Highest weight structure on $\Coh^G(\pi^*\Acal)$: positive characteristic}
Now we can state a result from \cite{ModHCO} describing a highest weight structure
on $\Coh^{G_\F}(\pi^*\Acal_\F)$, where $\F$ is an algebraically closed
field of large enough positive characteristic  $p$ (it is enough to take $p>h$).

The following claim follows from \cite[Proposition 6.5]{ModHCO}. We identify
$D^b(\Coh^{G_\F}(\pi^*\Acal_\F))$ with $D^b(\Coh^{G_{\F}}(\St_\F))$ via the specialization
of the equivalence (\ref{eq:another_equiv}) to $\F$. Consider the diagonal $\tilde{\mathcal{N}}_{diag,\F}
\subset \St_\F$.

\begin{Prop}\label{Prop:hw_new_posit}
The category $\Coh^{G_\F}(\pi^*\Acal_\F)$ is highest weight with interval finite poset
and standard objects $\Delta^{st}_\F(x):=\mathsf{T}^{-1}_{w^{-1}}\mathcal{O}_{\tilde{\mathcal{N}}_{diag, \F}}(\lambda)$
for $w\in W,\lambda\in \Lambda, x=wt_\lambda$. A highest weight order is given by $x\leqslant y$
if $x^{-1}\cdot (-2\rho)\leqslant y^{-1}\cdot (-2\rho)$.  
\end{Prop}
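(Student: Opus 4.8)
The plan is to deduce this from \cite[Proposition 6.5]{ModHCO} and the description of the modular category $\OCat$ in \cite[Section 6]{ModHCO}. Recall from Section \ref{SSS:ex_hw_interval} that the principal block $\OCat^{[0]}$ of the classical category $\OCat^{cl}$ for $\g_\F$ (with $p>h$) is a highest weight category whose poset is $W^a\cdot(-2\rho)\subset\Lambda_0$ with the dominance order --- this is interval finite by Example \ref{Ex:poset_highest_weights}, and interval finiteness passes to subsets of the character lattice --- and whose standard objects are the Verma modules $\Delta^{cl}_\F(x^{-1}\cdot(-2\rho))$, $x\in W^a$. Relabelling the poset along the bijection $x\mapsto x^{-1}\cdot(-2\rho)$ turns this into a highest weight category with poset $(W^a,\leqslant)$, where $x\leqslant y$ iff $x^{-1}\cdot(-2\rho)\leqslant y^{-1}\cdot(-2\rho)$, still interval finite (compare Corollary \ref{Cor:order_implication}).

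Next I would invoke the equivalence of abelian categories $\OCat^{[0]}\xrightarrow{\sim}\Coh^{G_\F}(\pi^*\Acal_\F)$ supplied by \cite[Section 6]{ModHCO}: this is the $\F$-point of the chain of equivalences built on \cite{BMR,BM,B_Hecke,BLin} that identifies $D^b(\OCat^{[0]})$ with $D^b(\Coh^{G_\F}\St_\F)\cong D^b(\Coh^{G_\F}(\pi^*\Acal_\F))$ in a way that is t-exact for the natural t-structures. Transporting the highest weight structure of $\OCat^{[0]}$ across this equivalence shows immediately that $\Coh^{G_\F}(\pi^*\Acal_\F)$ is highest weight with the asserted interval finite poset and order, the standard objects being the images of the $\Delta^{cl}_\F(x^{-1}\cdot(-2\rho))$.

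It then remains to identify these images with the objects $\mathsf{T}^{-1}_{w^{-1}}\mathcal{O}_{\tilde{\mathcal{N}}_{diag,\F}}(\lambda)$, $x=wt_\lambda$. For this I would use the $\F$-analog of Lemma \ref{Lem:braid_intertwined} together with the description of the two affine braid group actions on $D^b(\Coh^{G_\F}\St_\F)$ recalled above: $\mathsf{T}^{-1}_{w^{-1}}$ acts by the relevant convolution functors and $J_\lambda$ acts by twisting the diagonal by $\mathcal{O}(\lambda)$, and both actions are intertwined with the corresponding actions on $\OCat^{[0]}$ under which the Vermas are permuted in the expected way. Matching $\mathcal{O}_{\tilde{\mathcal{N}}_{diag,\F}}$ with the distinguished Verma $\Delta^{cl}_\F(-2\rho)$ --- part of the Bezrukavnikov--Mirkovic--Rumynin dictionary recalled in \cite[Section 6]{ModHCO} --- and applying $J_\lambda$ and the simple braids inside $\mathsf{T}_{w^{-1}}$ then moves this to the Verma labelled by $(wt_\lambda)^{-1}\cdot(-2\rho)$, so $\Delta^{st}_\F(x)$ is the image of $\Delta^{cl}_\F(x^{-1}\cdot(-2\rho))$ and the two highest weight orders agree. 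The only real difficulty is bookkeeping: making sure the normalizations (the anti-dominant alcove conventions, the sign of the central character, the shift by $-2\rho$, and the normalizations of $\mathcal{O}(\lambda)$ and of the $\mathsf{T}_s$) are all compatible with those of \cite{ModHCO}; once these are pinned down, the argument is a direct transport of structure.
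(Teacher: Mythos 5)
Your proposal follows the same route the paper takes: the paper itself gives essentially no argument beyond citing \cite[Proposition 6.5]{ModHCO}, which produces the equivalence $\OCat^{[0]}\simeq \Coh^{G^{(1)}_\F}(\pi^*\Acal^{(1)}_\F)$, and then transporting the highest weight structure of the principal block across it — exactly the plan you describe, including the identification of the standards via the two braid group actions and the dictionary sending $\mathcal{O}_{\tilde{\mathcal{N}}_{diag,\F}}$ to the anti-dominant Verma. The extra bookkeeping you flag (Frobenius twist on the $\Coh$ side, normalization of $\mathcal{O}(\lambda)$ and of the $\mathsf{T}_s$, the $-2\rho$ shift, and the anti-dominant alcove convention) is indeed all that remains, and it is what \cite[Section 6]{ModHCO} settles; your proposal is correct and matches the paper.
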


More precisely, \cite[Proposition 6.5]{ModHCO} produces an equivalence of $\Coh^{G^{(1)}_\F}(\pi^*\Acal^{(1)}_\F)$
with the principal block of the classical category $\OCat^{[0]}$ mentioned in Section \ref{SSS:ex_hw_interval}.

\subsubsection{Highest weight structure on $\Coh^G(\pi^*\Acal)$: result}
Now assume that the base field is $\C$. In this section we are going to state a result concerning the highest weight
structure on $\Coh^G(\pi^*\Acal)$. Recall that thanks to Remark \ref{Rem:BY}
combined with (\ref{eq:another_equiv}), we have an equivalence
\begin{equation}\label{eq:main_equiv_new_specialized}
\Hecke_0\xrightarrow{\sim}D^b(\Coh^G(\pi^*\Acal)).
\end{equation}

Then we have the following result.

\begin{Prop}\label{Prop:new_t_structure_properties}
The following claims hold:
\begin{enumerate}
\item The transfer of the default t-structure from $D^b(\Coh^G(\pi^*\Acal))$
to $\Hecke_0$ is the stabilized (a.k.a. new) t-structure.
\item
Moreover, the heart of the t-structure is a highest weight category with the following interval finite poset and
standard objects:
\begin{itemize}
\item The poset is $W^{a}$ with order $\leqslant^{st}$.
\item The standard object corresponding to $x\in W^{a}$ is $\Delta^-(xt_{-\lambda})*J_{\lambda}$ for all $\lambda$
    sufficiently dominant.
\end{itemize}
\end{enumerate}
\end{Prop}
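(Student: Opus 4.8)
The plan is to get part (1) directly from \cite{BLin}, and part (2) by combining the positive‑characteristic highest weight structure of \cite{ModHCO} with an $\Ext$‑computation carried out on the side of $\OCat^-$.

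\emph{Part (1).} By construction (Section \ref{SSS_BLin}), the new t‑structure on $\Hecke_0 = D^b(\OCat^-)$ is the transfer of the default t‑structure along (\ref{eq:main_equiv_new_specialized}), and \cite[Corollary 1]{BLin} shows that its non‑positive part is $\{\mathcal{F}\mid \mathcal{F}*J_\lambda \in D^{\leqslant 0}(\OCat^-)\ \text{for}\ \lambda\ \text{sufficiently dominant}\}$ (by Corollary \ref{Cor:transl_exact} this condition only weakens as $\lambda$ grows, so it stabilizes). The only thing to verify is that under (\ref{eq:main_equiv_new_specialized}) the endofunctor $\bullet*J_\lambda$ of $\Hecke_0$ corresponds to the analogous operation on $D^b(\Coh^G(\pi^*\Acal))$; this is Lemma \ref{Lem:braid_intertwined} for the right $\Br^a$‑action, using that $J_\lambda$ is the positive lift $\mathsf{T}_{t_\lambda}$ of $t_\lambda$ when $\lambda$ is dominant. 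This proves (1) and identifies the heart in question with $\OCat^{st}$.

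\emph{Part (2): standard objects.} For $x = wt_\lambda$ set $\Delta^{st}(x) := \Delta^-(xt_{-\mu})*J_\mu$ with $\mu\in\Lambda_0$ sufficiently dominant. A short length computation using (\ref{eq:length}) shows that, for $\mu$ dominant enough, $\Delta^-(xt_{-\mu})*J_\mu = \Delta^-(w)*J_\lambda$; in particular this object is independent of $\mu$ once $\mu\gg 0$. From the same formula, $\Delta^{st}(x)*J_\nu = \Delta^-(w)*J_{\lambda+\nu} \in D^{\leqslant 0}(\OCat^-)$ for $\nu$ dominant (right t‑exactness, Corollary \ref{Cor:transl_exact}) and $\Delta^{st}(x)*J_{-\nu} = \Delta^-(xt_{-\nu}) \in \OCat^- \subset D^{\geqslant 0}(\OCat^-)$ for $\nu\gg 0$; hence by part (1) and its dual $\Delta^{st}(x)$ lies in the heart $\OCat^{st}$. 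Finally, under the identification of $\Hecke_0$ with $D^b(\Coh^G(\pi^*\Acal))$, Lemma \ref{Lem:braid_intertwined} for both $\Br^a$‑actions, together with the fact that $\mathcal{O}_{\tilde{\Nilp}_{diag}}$ corresponds to the monoidal unit $\Delta^-(1)$ (see \cite{B_Hecke,BLin}), identifies $\Delta^{st}(x)$ with the base change to $\C$ of the object $\Delta^{st}_\Sring(x) := \mathsf{T}^{-1}_{w^{-1}}\mathcal{O}_{\tilde{\Nilp}_{diag,\Sring}}(\lambda)$ of Proposition \ref{Prop:hw_new_posit}.

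\emph{Part (2): highest weight structure over $\C$.} By Proposition \ref{Prop:hw_new_posit} (i.e. \cite[Section 6]{ModHCO}), for every algebraically closed $\F$ of sufficiently large characteristic the category $\Coh^{G_\F}(\pi^*\Acal_\F)$ is highest weight with interval finite poset and standard objects $\Delta^{st}_\F(x)$. The standard objects $\Delta^{st}_\Sring(x)$, the finiteness of the $\Hom$‑modules, and the highest weight axioms are all defined (respectively provable) over $\Sring$ and hold at the dense set of large‑characteristic closed points of $\operatorname{Spec}\Sring$; so, after inverting finitely many more primes, they hold over $\Sring$, and base change (Remark \ref{Rem:interval_finite_base_change}) yields a highest weight structure on $\OCat^{st}$ over $\C$ with standard objects $\Delta^{st}(x)$. \textbf{I expect this descent to be the main obstacle}: one must realize the endomorphism algebras $A_{\Sring,\T_0}$ of projective generators for finite poset ideals $\T_0$ as $\Sring$‑free lifts of the quasi‑hereditary algebras $A_{\F,\T_0}$, and control the relevant $\Ext$‑modules over $\Sring$ — they are finitely generated, so positivity of their free rank would force nonvanishing over all large $\F$; hence the vanishings known over all large $\F$ force these modules to be torsion, and since $\C$ is flat over $\Sring$, $\Ext^i_\C = \Ext^i_\Sring\otimes_\Sring\C = 0$, so the required vanishings descend to $\C$.

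\emph{Part (2): the order.} It remains to check that $(W^a,\leqslant^{st})$ is an admissible interval finite highest weight poset for the $\Delta^{st}(x)$; it is interval finite by Corollary \ref{Cor:order_implication}. Since the stabilized t‑structure is bounded and equals the default t‑structure under (\ref{eq:main_equiv_new_specialized}), $D^b(\OCat^{st})\xrightarrow{\sim}\Hecke_0$, and since $\bullet*J_\mu$ is an autoequivalence of $\Hecke_0 = D^b(\OCat^-)$ we get, for any $\mu\gg 0$,
\[
\Ext^i_{\OCat^{st}}(\Delta^{st}(x_1),\Delta^{st}(x_2))\;\cong\;\Ext^i_{\OCat^-}(\Delta^-(x_1t_{-\mu}),\Delta^-(x_2t_{-\mu})).
\]
By Lemma \ref{Lem:usual_t_structure} ($\OCat^-$ is highest weight for the Bruhat order) the right‑hand side vanishes unless $x_1t_{-\mu}\leqslant x_2t_{-\mu}$, equals $\C$ when $x_1 = x_2$ and $i = 0$, and vanishes unless $x_1t_{-\mu} < x_2t_{-\mu}$ for $i > 0$; as $\Delta^{st}(x_j)$ does not depend on $\mu\gg 0$, these hold for all $\mu\gg 0$, so Lemma \ref{Lem:stabilized_order_characterization} converts them into $x_1\leqslant^{st}x_2$, resp. $x_1<^{st}x_2$. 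This gives axioms (ii), (iii) and the $\Ext$‑triangularity of the $\Delta^{st}(x)$ with respect to $\leqslant^{st}$, so by Remark \ref{Rem:coarsest_hw_order} the coarsest admissible order is contained in $\leqslant^{st}$; taking the projectives as in Remark \ref{Rem:proj_construction} inside each finite highest weight subquotient, the kernel of $P(x)\twoheadrightarrow\Delta^{st}(x)$ is standardly filtered by $\Delta^{st}(x'')$ with $x''$ linked to $x$ through a chain of nonzero $\Ext^1$'s, hence $\leqslant^{st}$‑above $x$, which gives axiom (v) for $\leqslant^{st}$; axioms (i) and (iv) are already part of the structure from the previous step, and Lemma \ref{Lem:order_implication} confirms that $\leqslant^{st}$ only refines the dominance order used there, so nothing is lost. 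This completes the plan.
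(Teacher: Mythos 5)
Your part (1) and your $\Ext$-computation for axioms (ii) and (iii) (via the derived equivalence and the autoequivalence $\bullet*J_\mu$) match the paper's approach (they are exactly Lemma \ref{Lem:new_t_structure_reduction} and Corollary \ref{Cor:new_t_structure_reduction}, reproduced in Steps 4--5 of Section \ref{SSS_Prop_new_t_hw_proof}). The genuine gap is in your descent from positive characteristic to $\C$, and your proposed fix does not close it.

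You correctly flag that the ``main obstacle'' is producing projective generators of $\OCat^{st}$ over a characteristic-zero base; but your repair — control the $\Ext$-modules over $\Sring$, observe they are finitely generated, and argue that vanishing at a Zariski-dense set of large-characteristic closed points forces them to be torsion after inverting finitely many primes — answers the wrong question. Axioms (iv) and (v) of a highest weight structure are \emph{existence} statements: you must construct actual objects $P^{st}_{\T_0}(x)$, and an argument about vanishing of some $\Ext$-modules over $\Sring$ does not build them. The correct tool is deformation theory (an object with vanishing $\Ext^1$ and $\Ext^2$ self-extensions has a unique flat deformation), which works over \emph{complete local} rings, not over the non-local $\Sring$. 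In the paper this is exactly what is done (Steps 1--3 of \ref{SSS_Prop_new_t_hw_proof}): one fixes an arbitrary $M\in\Coh^G(\pi^*\Acal)$, descends it to a finitely generated $\Z$-algebra $\Sring'$, completes $\Sring'$ at a maximal ideal of large residue characteristic to get a complete local ring $\hat\Sring$ with residue field $\kf$, lifts the modular projective $P_\kf$ step by step over $\Sring_\ell:=\hat\Sring/\mathfrak m^\ell$ using vanishing of $\Ext^1$ and $\Ext^2$ (Lemma \ref{Lem:Ext_fin_gen} provides the needed finite generation, and a $T$-weight boundedness argument allows passage to the $\varprojlim$), and then embeds $\hat\Sring\hookrightarrow\C$ to base change. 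Your plan skips all of this and contains no mechanism for producing the lifts.

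There is also a secondary problem with the ``invert finitely many primes'' step: as the coideal finite poset ideal $\T_0$ varies (and it varies over an infinite family, since $(W^a,\leqslant^{st})$ is interval finite but infinite), there is no a priori uniform bound on which primes must be inverted, so even if the fixed-$\T_0$ statement were arranged, it would not yield a single $\Sring$ over which the whole highest weight structure is defined. The paper's strategy is immune to this because the maximal ideal is chosen per-object, and the conclusion over $\C$ is obtained by the embedding $\hat\Sring\hookrightarrow\C$, not by a global descent over $\Sring$. If you want to keep a descent-over-$\Sring$ flavor, you would at a minimum need to (a) replace $\Ext$-vanishing ``for free rank'' arguments with a genuine deformation-theoretic lifting of objects, (b) explain why $\Coh^{G_\Sring}(\pi^*\Acal_\Sring)$ has enough projectives despite $\Sring$ not being local, and (c) show that the relevant primes are uniformly bounded across all $\T_0$ — none of which are obvious, and all of which the paper sidesteps.
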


(1) is \cite[Corollary 1]{BLin}. Part (2) will be proved below in this section.

\subsubsection{Highest weight structure on $\Coh^G(\pi^*\Acal)$: standards}\label{SSS_hw_0_reduction}
Here we define objects in $D^b(\Coh^G(\pi^*\Acal))$ that are candidates to be standard objects 
for the yet to be defined highest weight structure on $\Coh^G(\pi^*\Acal)$.

\begin{Lem}\label{Lem:new_t_structure_reduction}
Suppose $x=wt_\lambda$ with anti-dominant $\lambda$. Then, for any dominant $\mu$ we have,
$$\Delta^-(xt_{-\mu})*J_\mu\cong T^{-1}_{w^{-1}}\mathcal{O}_{\tilde{\mathcal{N}}_{diag}}(\lambda).$$
\end{Lem}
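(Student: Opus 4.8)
The plan is to reduce the general statement to two building blocks and then assemble them using the braid group action. First I would treat the case $\lambda = 0$, i.e.\ the claim that $\Delta^-(wt_{-\mu})*J_\mu \cong \mathsf{T}^{-1}_{w^{-1}}\Str_{\tilde{\Nilp}_{diag}}$ for dominant $\mu$. Under the equivalence $\Hecke_0\xrightarrow{\sim} D^b(\Coh^G(\pi^*\Acal))$ from \eqref{eq:main_equiv_new_specialized}, which by Lemma \ref{Lem:braid_intertwined} intertwines both the left and the right $\Br^a$-actions, the object $\Delta^-(wt_{-\mu})$ corresponds to $\mathsf{T}^{-1}_{(wt_{-\mu})^{-1}} = \mathsf{T}^{-1}_{t_\mu w^{-1}}$ acting on the structure sheaf of the appropriate point object; more precisely, I would write $\Delta^-_\Ring(x)$ as the image of $\mathsf{T}^{-1}_{x^{-1}}$ under the braid map into $\Hecke$ applied to $\nabla^-_\Ring(1)=\Str_{\tilde{\Nilp}_{diag}}$-type object. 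The key point is that $t_{-\mu}$, being anti-dominant, satisfies $\Delta^-(wt_{-\mu}) = \Delta^-(w)*\Delta^-(t_{-\mu})$ with lengths adding (equation \eqref{eq:length}), so that convolving on the right by $J_\mu = \mathsf{T}_{t_\mu}$ cancels the $\Delta^-(t_{-\mu}) = \mathsf{T}^{-1}_{t_\mu}$ factor. Hence $\Delta^-(wt_{-\mu})*J_\mu \cong \Delta^-(w) = \mathsf{T}^{-1}_{w^{-1}}\Str_{\tilde{\Nilp}_{diag}}$. Here the geometric content is that on the right-action side $\mathsf{T}_{t_\mu}$ is sent to a shift by the ample line bundle $\Str(\mu)$ restricted to the Steinberg scheme, and on the left side $\mathsf{T}^{-1}_{w^{-1}}$ is sent to the inverse of $\Str_{Z_{w,\Sring}}$-convolution; one checks these commute and that the cancellation happens at the level of the derived category, not merely up to some truncation.

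Next I would incorporate the general anti-dominant $\lambda$, i.e.\ $x = wt_\lambda$ with $\lambda$ not necessarily zero. Writing $x t_{-\mu} = w t_{\lambda - \mu}$ with $\lambda - \mu$ anti-dominant (since $\lambda$ is anti-dominant and $\mu$ dominant), I would factor $\Delta^-(wt_{\lambda-\mu}) = \Delta^-(wt_\lambda)*\Delta^-(t_{-\mu})$ with additive lengths --- this uses that $t_\lambda$ and $t_{-\mu}$ are both in the anti-dominant cone so their product has length the sum. Again right-convolving by $J_\mu$ kills the $\Delta^-(t_{-\mu})$ factor, leaving $\Delta^-(wt_\lambda)$. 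It then remains to identify $\Delta^-(wt_\lambda)$ on the coherent side with $\mathsf{T}^{-1}_{w^{-1}}\Str_{\tilde{\Nilp}_{diag}}(\lambda)$: the braid element $J_\lambda = \mathsf{T}_{t_\lambda}$ goes to twisting by $\Str(\lambda)$ on $\tilde{\Nilp}_{diag}$ under the \emph{right} action, so $\Delta^-(wt_\lambda) = \Delta^-(w) * \mathsf{T}_{t_\lambda}^{-1}$ wait --- I must be careful with which side carries $t_\lambda$; since $\lambda$ is anti-dominant, $\mathsf{T}_{t_\lambda}$ is the \emph{negative} braid lift, so $\Delta^-(wt_\lambda)$ corresponds to $\mathsf{T}^{-1}_{(wt_\lambda)^{-1}} = \mathsf{T}^{-1}_{t_{-\lambda}w^{-1}}$, and the $t_{-\lambda}$ part (now with $-\lambda$ dominant) contributes the twist by $\Str(\lambda)$. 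Combining with the $\mathsf{T}^{-1}_{w^{-1}}$ on the left gives exactly the claimed formula, matching the standard objects $\Delta^{st}_\F(x)$ of Proposition \ref{Prop:hw_new_posit}.

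The main obstacle, and the step I would spend the most care on, is the precise bookkeeping of \emph{which} braid action (left vs.\ right) implements \emph{which} operation, together with the sign/dominance conventions that decide whether a given $\mathsf{T}_y$ maps to a standard or a costandard object and whether a line-bundle twist appears with $\lambda$ or $-\lambda$. The cleanest way to control this is to work entirely inside $\Hecke$ (or $\Hecke_0$) using Corollary \ref{Cor:tens_t_exact} and Corollary \ref{Cor:transl_exact} to justify that the relevant convolutions are computed by honest objects (no hidden cohomology), transport everything through Lemma \ref{Lem:braid_intertwined} only at the very end, and match against Proposition \ref{Prop:hw_new_posit}. A secondary point requiring verification is the length-additivity claims $\ell(wt_\lambda t_{-\mu}) = \ell(wt_\lambda) + \ell(t_{-\mu})$ for $\lambda$ anti-dominant and $\mu$ dominant; this follows directly from the explicit length formula \eqref{eq:length}, since all the relevant $\langle\,\cdot\,,\alpha^\vee\rangle$ are non-positive and the absolute values add without cancellation, but it should be stated explicitly because the whole cancellation argument rests on it.
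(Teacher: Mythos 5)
Your argument is correct and essentially matches the paper's: both proofs rest on the identification $\Delta^-(1)\leftrightarrow\mathcal{O}_{\tilde{\mathcal{N}}_{diag}}$, the braid-group interpretation of line-bundle twists ($J_\nu$ acting by $\mathcal{O}(\nu)$), and the length additivity $\ell(ut_\nu)=\ell(u)+\ell(t_\nu)$ in the anti-dominant cone. The paper compresses your two-step factorization and cancellation into a single invocation of $\ell(wt_\lambda)=\ell(w)+\ell(t_\lambda)$, but the underlying computation and the convention-checking you flag are the same.
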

\begin{proof}
The claim that under the equivalence from \cite{B_Hecke}, the object $\Delta^-(1)$ goes to
$\mathcal{O}_{\tilde{\mathcal{N}}_{diag}}$, follows from the construction of that
equivalence. Further, note that, for $\lambda$ dominant, $\mathcal{O}_{\tilde{\mathcal{N}}_{diag}}(-\lambda)\cong \mathsf{T}^{-1}_{t_\lambda}\mathcal{O}_{\tilde{\mathcal{N}}_{diag}}$ by the construction of the
braid group action. The isomorphism we need to check reduces to
$\ell(wt_\lambda)=\ell(w)+\ell(t_\lambda)$. If $\lambda$ is anti-dominant, then for every $u\in W$, we have $\ell(ut_\lambda)=\ell(u)-2\langle\rho^\vee,\lambda\rangle$ and our claim follows.
\end{proof}

For $x\in W^{a}$, we set
$$\Delta^{st}(x):=\Delta^-(xt_{-\mu})*J_{\mu}$$
for $\mu$ sufficiently dominant. If $x=wt_\lambda$, with $\lambda$
anti-dominant, then $\Delta^{st}(x)=\Delta^-(x)$. 

\begin{Cor}\label{Cor:new_t_structure_reduction}
We have
$\operatorname{Ext}^i(\Delta^{st}(x),\Delta^{st}(y))\neq 0\Rightarrow
x\leqslant^{st}y$. Moreover, $\operatorname{End}(\Delta^{st}(x))=\C$.
\end{Cor}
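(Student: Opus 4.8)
The plan is to derive both statements from the properties of $\OCat^-$ as a highest weight category (with poset $W^a$, Bruhat order) together with the t-exactness properties of the functors $\bullet * J_\mu$ collected in Corollary \ref{Cor:transl_exact}, and the description $\Delta^{st}(x) = \Delta^-(xt_{-\mu}) * J_\mu$ for $\mu$ sufficiently dominant from Lemma \ref{Lem:new_t_structure_reduction}. First I would reduce to a single, common choice of translation parameter: given $x, y \in W^a$, pick $\mu$ dominant and sufficiently large so that simultaneously $\Delta^{st}(x) = \Delta^-(xt_{-\mu}) * J_\mu$, $\Delta^{st}(y) = \Delta^-(yt_{-\mu}) * J_\mu$, and so that $xt_{-\mu}, yt_{-\mu}$ both lie in the ``anti-dominant'' regime where $\leqslant^{st}$ agrees with the Bruhat order $\leqslant$ on the relevant interval (this is exactly the content of condition (*) and Lemma \ref{Lem:stabilized_order_characterization}). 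Such $\mu$ exists because both conditions are ``stable for $\mu$ large enough''.

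Next I would compute the relevant $\Ext$-groups after applying the inverse braid functor $\bullet * J_{-\mu}$, which is an auto-equivalence of $\Hecke_0 = D^b(\OCat^-)$ (it comes from an invertible object in the affine braid group action). Thus
\[
\operatorname{Ext}^i_{\Hecke_0}(\Delta^{st}(x), \Delta^{st}(y)) \cong \operatorname{Ext}^i_{\Hecke_0}(\Delta^-(xt_{-\mu}), \Delta^-(yt_{-\mu})).
\]
Now I invoke the standard property of highest weight categories (Remark \ref{Rem:coarsest_hw_order}, i.e. \cite[Proposition 4.13]{rouqqsch}): $\operatorname{Ext}^i_{\OCat^-}(\Delta^-(u), \Delta^-(v)) \neq 0 \Rightarrow u \leqslant v$ in the Bruhat order, and $\operatorname{End}_{\OCat^-}(\Delta^-(u)) = \Ring$, hence after specializing to $\C$ (the closed point), $\operatorname{End}(\Delta^-(u)) = \C$. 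Applying this with $u = xt_{-\mu}$, $v = yt_{-\mu}$ gives $xt_{-\mu} \leqslant yt_{-\mu}$, and by the choice of $\mu$ and Lemma \ref{Lem:stabilized_order_characterization} this translates back to $x \leqslant^{st} y$. Taking $x = y$ yields $\operatorname{End}(\Delta^{st}(x)) = \C$.

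The one point that requires care — and which I expect to be the main (though minor) obstacle — is making sure the choice of $\mu$ can be made uniformly: we need a single $\mu$ that works for the identities $\Delta^{st}(x) = \Delta^-(xt_{-\mu})*J_\mu$ and $\Delta^{st}(y) = \Delta^-(yt_{-\mu})*J_\mu$, for the ``anti-dominant regime'' of both $xt_{-\mu}$ and $yt_{-\mu}$, and simultaneously for every element appearing in comparing $xt_{-\mu} \leqslant yt_{-\mu}$ with $x \leqslant^{st} y$ via Lemma \ref{Lem:stabilized_order_characterization}. Since each of these is a ``holds for all $\mu$ in a translate of $\Lambda_0^+$'' condition and $\Lambda_0^+$ is a sub-monoid, the intersection of finitely many such translates is again nonempty of this form, so a common $\mu$ exists; I would just note this explicitly rather than belabor it. Everything else is a formal consequence of the highest weight axioms and the braid action.
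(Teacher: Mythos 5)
Your proof is correct and takes essentially the approach the paper intends (the corollary is stated without proof, as a direct consequence of the definition of $\Delta^{st}(x)$, the fact that $\bullet * J_\mu$ is an autoequivalence of $\Hecke_0$, and the highest weight axioms for $\OCat^-$ combined with Lemma \ref{Lem:stabilized_order_characterization}). One small imprecision in your wording: the cleanest reading is not that $xt_{-\mu}$, $yt_{-\mu}$ lie in a regime where $\leqslant^{st}$ ``agrees with $\leqslant$ on an interval,'' but rather that the Ext-nonvanishing propagates to \emph{every} sufficiently dominant $\mu$ at once (since $\Delta^{st}(x)\cong\Delta^-(xt_{-\mu})*J_\mu$ holds for all such $\mu$ simultaneously), giving $xt_{-\mu}\leqslant yt_{-\mu}$ on a translate of $\Lambda_0^+$, which is exactly condition (2) of Lemma \ref{Lem:stabilized_order_characterization}.
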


\subsubsection{Preparation for proof of (2) of Proposition \ref{Prop:new_t_structure_properties}}
The proof will be given after two lemmas. Consider the object 
$\Delta^{st}_{\Sring}(x):=\mathsf{T}^{-1}_{w^{-1}}\mathcal{O}_{\tilde{\mathcal{N}}_{\Sring,diag}}(\lambda)$
in $D^b(\Coh^{G_\Sring}(\pi^*\Acal_\Sring))$. 

\begin{Lem}\label{Lem:stand_objects_base}
The following claims are true:
\begin{enumerate}
\item 
We have $\Delta^{st}_{\Sring}(x)\in \Coh^{G_\Sring}(\pi^*\Acal_\Sring)$.
\item We have that $\operatorname{End}(\Delta^{st}_{\Sring}(x))$ is a free rank $1$ module
over $\Sring$ and $\Delta^{st}_\Sring(x)$ has no higher self-extensions for all $x\in W^{a}$.
\end{enumerate} 
\end{Lem}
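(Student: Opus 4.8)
The plan is to deduce both claims over $\Sring$ from the corresponding facts over residue fields, using generic flatness together with a cohomology-and-base-change argument. Write $x=wt_\lambda$ with $w\in W$ and $\lambda\in\Lambda$. Throughout I will use that the claim only requires $\Sring$ to be a sufficiently fine localization of $\Z$, so I am free at any point to invert finitely many more primes; in particular I may assume that every closed point of $\operatorname{Spec}\Sring$ has residue characteristic $>h$.

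The first step would be to set up base change. The Fourier--Mukai kernels defining the braid functors $\mathsf{T}_s^{\pm1}$ (the structure sheaves of $Z_{s,\Sring}$ and of $Z_{s,\Sring}\cap\St_\Sring$, and the line bundles $\Str(\mu)$), as well as the sheaf $\mathcal{O}_{\tilde{\mathcal{N}}_{\Sring,diag}}(\lambda)$, are coherent sheaves on finite-type $\Sring$-schemes, so by generic flatness, after shrinking $\operatorname{Spec}\Sring$ these kernels, and the finitely many convolutions needed to build $\Delta^{st}_\Sring(x)$, become flat over $\Sring$. Since convolution and the equivalence \eqref{eq:another_equiv} are all defined over $\Sring$ and commute with base change, it follows that $\F\otimes^L_\Sring\Delta^{st}_\Sring(x)\cong\Delta^{st}_\F(x)$ in $D^b(\Coh^{G_\F}(\pi^*\Acal_\F))$ for every field $\F$ with a map $\operatorname{Spec}\F\to\operatorname{Spec}\Sring$, where the right-hand side is the standard object of Proposition \ref{Prop:hw_new_posit} when $\operatorname{char}\F=p>h$ (in particular when $\F$ is an algebraic closure of a closed-point residue field) and the object of Section \ref{SSS_hw_0_reduction} when $\F=\C$.

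Granting this, for (1) I would shrink once more so that each of the (finitely many) cohomology sheaves $H^i(\Delta^{st}_\Sring(x))$ is $\Sring$-flat; then $\F\otimes_\Sring H^i(\Delta^{st}_\Sring(x))\cong H^i(\F\otimes^L_\Sring\Delta^{st}_\Sring(x))=H^i(\Delta^{st}_\F(x))$, which vanishes for $i\neq0$ since, for $\F$ of characteristic $>h$, $\Delta^{st}_\F(x)$ lies in the heart $\Coh^{G_\F}(\pi^*\Acal_\F)$ by Proposition \ref{Prop:hw_new_posit}. Nakayama at each maximal ideal of $\Sring$ then gives $H^i(\Delta^{st}_\Sring(x))=0$ for $i\neq0$, i.e.\ $\Delta^{st}_\Sring(x)\in\Coh^{G_\Sring}(\pi^*\Acal_\Sring)$, and it is moreover $\Sring$-flat. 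For (2), set $C:=R\Hom_{\Coh^{G_\Sring}(\pi^*\Acal_\Sring)}(\Delta^{st}_\Sring(x),\Delta^{st}_\Sring(x))$. Finite homological dimension of $\Acal_\Sring$ and properness of $\St_\Sring$ over $\g^*_\Sring$ make $C$ a bounded complex with finitely generated cohomology over $\Sring$; since $\Sring$ has global dimension $\leq1$, $C$ is quasi-isomorphic to $\bigoplus_iH^i(C)[-i]$. Using $\Sring$-flatness of $\Delta^{st}_\Sring(x)$ from (1), one gets $\F\otimes^L_\Sring C\cong R\Hom_\F(\Delta^{st}_\F(x),\Delta^{st}_\F(x))$, which over each residue field $\F$ equals $\F$ in degree $0$ and vanishes in positive degree because $\Delta^{st}_\F(x)$ is a standard object of a highest weight category (by \cite[Proposition 4.13]{rouqqsch}, and additionally Corollary \ref{Cor:new_t_structure_reduction} for $\F=\C$). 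Comparing fibers degree by degree forces $H^i(C)=0$ for $i\neq0$ and $\F\otimes_\Sring H^0(C)=\F$ for all residue fields; since $\Sring$ is a localization of $\Z$, hence a PID, a finitely generated module with all fibers one-dimensional is free of rank $1$. This yields $\operatorname{End}(\Delta^{st}_\Sring(x))\cong\Sring$ and $\operatorname{Ext}^{>0}(\Delta^{st}_\Sring(x),\Delta^{st}_\Sring(x))=H^{>0}(C)=0$.

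The hard part is the base-change compatibility invoked in the second paragraph: one must verify that the braid group action of \cite{BR_braid} and the equivalence \eqref{eq:another_equiv}, including the $G_\Sring$-equivariant structure, genuinely commute with derived base change along $\Sring\to\F$, and that the "sufficiently large $p$" hypothesis in Proposition \ref{Prop:hw_new_posit} excludes only finitely many primes, so that those primes can be inverted in $\Sring$. Both points are essentially built into the constructions of \cite{BM,BR_braid,BLin}; once they are in hand, the remainder is the routine generic-flatness plus fiberwise-dimension-count package sketched above.
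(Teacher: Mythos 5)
Your proposal is correct and matches the paper's approach: reduce to fibers over residue fields of $\operatorname{Spec}\Sring$ and invoke Proposition \ref{Prop:hw_new_posit}, then conclude over $\Sring$ by a Nakayama/cohomology-and-base-change argument. The paper's own proof is just the two-sentence version of this (with ``the proof of (2) is analogous''); your write-out of the generic-flatness step for the Fourier--Mukai kernels, the treatment of $H^i(C)$ via $\Sring$ being a PID, and the explicit flagging of the base-change compatibility for the braid action and the equivalence \eqref{eq:another_equiv} are exactly the details the paper leaves implicit.
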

\begin{proof}
To prove (1) note that $\F\otimes^{L}_{\Sring}\Delta^{st}_{\Sring}(x)\cong \Delta_\F^{st}(x)$ lies in 
$\Coh^{G_\F}(\Acal_\F)$ by Proposition \ref{Prop:hw_new_posit} for all algebraically closed fields
$\F$ that are algebras over $\Sring$. This implies (1). The proof of (2) is analogous.  
\end{proof}

The next technical result we need is as follows. 

\begin{Lem}\label{Lem:Ext_fin_gen}
Let $M_\Sring,N_\Sring\in \Coh^{G_{\Sring}}(\pi^*\Acal_\Sring)$. Then 
$\operatorname{Ext}^i(M_\Sring, N_\Sring)$ (where the Ext is taken in $\Coh^{G_{\Sring}}(\pi^*\Acal_\Sring)$) is a finitely generated 
$\Sring$-module. 
\end{Lem}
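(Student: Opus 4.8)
The plan is to reduce the finite generation to the coherence (finiteness) theorem for proper morphisms together with the finiteness of rational cohomology of the reductive group scheme $G_\Sring$. The first step is to record the relevant structure of $\pi^*\Acal_\Sring$. Since $\Tilt_\Sring$ is a vector bundle on $\tilde{\g}_\Sring$, its sheaf of $\Str_{\tilde{\g}_\Sring}$-linear endomorphisms is locally free over $\Str_{\tilde{\g}_\Sring}$; pulling back, $\pi^*\Acal_\Sring$ is a $G_\Sring$-equivariant sheaf of algebras on $\tilde{\Nilp}_\Sring$ that is locally free, in particular coherent, as an $\Str_{\tilde{\Nilp}_\Sring}$-module. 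Consequently $\Coh^{G_\Sring}(\pi^*\Acal_\Sring)$ is a Noetherian abelian category, the functor $\bullet\otimes_{\Str}\pi^*\Acal_\Sring$ from $G_\Sring$-equivariant coherent $\Str_{\tilde{\Nilp}_\Sring}$-modules is exact and left adjoint to the forgetful functor, and every $M_\Sring\in\Coh^{G_\Sring}(\pi^*\Acal_\Sring)$ admits a resolution
\[
\cdots\to \mathcal{V}_1\otimes_{\Str}\pi^*\Acal_\Sring\to \mathcal{V}_0\otimes_{\Str}\pi^*\Acal_\Sring\to M_\Sring\to 0
\]
with each $\mathcal{V}_p$ a $G_\Sring$-equivariant coherent locally free sheaf on $\tilde{\Nilp}_\Sring$; there are enough of these since $\tilde{\Nilp}_\Sring$ is a $G_\Sring$-equivariant vector bundle over the flag variety $G_\Sring/B_\Sring$, hence $G_\Sring$-quasi-projective over $\Sring$.

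Next I would feed this resolution into $R\Hom_{\pi^*\Acal_\Sring}(-,N_\Sring)$ and use the adjunction $\Hom_{\pi^*\Acal_\Sring}(\mathcal{V}\otimes_\Str\pi^*\Acal_\Sring,N_\Sring)\cong\Hom_{\Str}(\mathcal{V},N_\Sring)$ for locally free $\mathcal{V}$, obtaining a first-quadrant spectral sequence
\[
E_1^{p,q}=H^q_{G_\Sring}\!\left(\tilde{\Nilp}_\Sring,\;\mathcal{V}_p^{\vee}\otimes_{\Str}N_\Sring\right)\ \Longrightarrow\ \Ext^{p+q}_{\Coh^{G_\Sring}(\pi^*\Acal_\Sring)}(M_\Sring,N_\Sring),
\]
where $H^{\bullet}_{G_\Sring}$ denotes $G_\Sring$-equivariant coherent sheaf cohomology on $\tilde{\Nilp}_\Sring$. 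In total degree $i$ the abutment is a finite iterated extension of subquotients of the finitely many terms $E_1^{p,q}$ with $p+q=i$, so (since $\Sring$ is Noetherian) it suffices to prove that $H^q_{G_\Sring}(\tilde{\Nilp}_\Sring,\mathcal{F})$ is a finitely generated $\Sring$-module for every $G_\Sring$-equivariant coherent sheaf $\mathcal{F}$ on $\tilde{\Nilp}_\Sring$.

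For the latter I would factor $R\Gamma_{G_\Sring}(\tilde{\Nilp}_\Sring,-)$ as $R\Gamma(G_\Sring,-)\circ R\Gamma(\tilde{\Nilp}_\Sring,-)$. The Springer map $\tilde{\Nilp}_\Sring\to\operatorname{Spec}\Sring[\Nilp_\Sring]$ is proper, so each $H^b(\tilde{\Nilp}_\Sring,\mathcal{F})$ is a finitely generated $\Sring[\Nilp_\Sring]$-module with a compatible rational $G_\Sring$-action, and $\Sring[\Nilp_\Sring]^{G_\Sring}=\Sring$ because the nilpotent cone maps to the origin of $\g^*_\Sring/\!/G_\Sring=\h^*_\Sring/W$. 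Thus the remaining input is that for the reductive group scheme $G_\Sring$ over the Noetherian ring $\Sring$, the rational cohomology $H^a(G_\Sring,V)$ of a Noetherian $\Sring[\Nilp_\Sring]$-module $V$ with compatible $G_\Sring$-action is finitely generated over $\Sring[\Nilp_\Sring]^{G_\Sring}=\Sring$: this is the cohomological finite generation theorem for reductive group schemes. In characteristic $0$ it is immediate from linear reductivity; in characteristic $p>h$ it also follows from Proposition \ref{Prop:hw_new_posit}, which exhibits $\Coh^{G_\F}(\pi^*\Acal_\F)$ as a highest weight category with interval finite poset and hence with finite-dimensional $\Ext$-groups. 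The passage from the geometric fibres to the arithmetic base $\Sring$, where $G_\Sring$ is no longer linearly reductive, is the step I expect to be the main obstacle; everything else is formal homological algebra together with the finiteness theorem for proper morphisms. One could alternatively carry this out after the equivalence (\ref{eq:another_equiv}), realizing the category as $G_\Sring$-equivariant modules over a finite $\Sring$-algebra and invoking the same group-cohomology finiteness.
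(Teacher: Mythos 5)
Your reduction via a resolution by objects $\mathcal{V}\otimes_{\Str}\pi^*\Acal_\Sring$ and the resulting spectral sequence is sound, and it correctly identifies the crux: finite generation over $\Sring$ of $G_\Sring$-equivariant coherent cohomology on $\tilde{\Nilp}_\Sring$. But at precisely that point the argument stops being a proof. You appeal to "the cohomological finite generation theorem for reductive group schemes" over the arithmetic base $\Sring$, note that it is immediate only over characteristic-$0$ fibres and (via highest weight structure) over characteristic-$p$ fibres, and then write that the passage to $\Sring$ itself "is the step I expect to be the main obstacle." That step is the whole lemma: fibrewise finiteness does \emph{not} imply finite generation over $\Sring$ (e.g.\ $\bigoplus_p \Z/p$ has finite fibres but is not finitely generated), and $G_\Sring$ is not linearly reductive over $\Sring$, so no formal averaging argument is available. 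Van der Kallen's CFG theorem over a Noetherian base would close this, but it is a deep result that you neither prove nor cite precisely, and invoking it would be far heavier machinery than the statement warrants.

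The paper sidesteps this entirely. It first replaces $\Coh^{G_\Sring}(\pi^*\Acal_\Sring)$ by the equivalent category $\underline{\Acal}_\Sring\operatorname{-mod}^{B_\Sring}$ of $B_\Sring$-equivariant modules over the restriction $\underline{\Acal}_\Sring$ of $\Acal_\Sring$ to the fibre $\mathfrak{n}_\Sring$ over the base point of the flag variety (the standard induction equivalence for the $G/B$-bundle $\tilde{\Nilp}\to G/B$). Then, after noting that $\underline{\Acal}_\Sring$ is finite over $\Sring[\mathfrak{n}]$ so that the non-equivariant $\Ext$'s are finitely generated over $\Sring[\mathfrak{n}]$, it reduces to showing that $H^i_{B_\Sring}(L_\Sring)$ is finitely generated over $\Sring$ for any $L_\Sring\in\Sring[\mathfrak{n}]\operatorname{-mod}^{B_\Sring}$. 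This it proves directly: writing $H^i_{B_\Sring}=\left(H^i_{U_\Sring}(-)\right)^{T_\Sring}$ and exploiting that the $T_\Sring$-weights appearing in $\Sring[\mathfrak{n}]$ and $\Sring[U]$ are nonpositive sums of simple roots (with $0$ only on scalars), it reduces to finite generation of each weight space of $H^i_{U_\Sring}(\Sring)$ and handles that by an induction on $i$ using the sequence $0\to\Sring\to\Sring[U]\to\Sring[U]/\Sring\to 0$ and cutting down to finite-rank $B_\Sring$-stable quotients. This weight-by-weight argument is what makes the proof elementary and genuinely over $\Sring$. To fix your proof, you would either need to carry out an analogous direct weight argument for $G_\Sring$-cohomology (which is essentially what the $B$-reduction accomplishes, but is harder to do without first passing to $B$), or correctly state and cite a base-ring CFG theorem — at which point the argument is no longer self-contained.
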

\begin{proof}
Let $\underline{\Acal}_\Sring$ denote the pullback of $\Acal_\Sring$ to the fiber of
$\tilde{\Nilp}_\Sring\twoheadrightarrow \mathcal{B}_{\Sring}$ (the target is the flag variety) corresponding 
to the chosen Borel, which can be identified with the maximal nilpotent subalgebra $\mathfrak{n}_\Sring$. Then we have an equivalence $\Coh^{G_{\Sring}}(\pi^*\Acal_\Sring)\xrightarrow{\sim} \underline{\Acal}_\Sring\operatorname{-mod}^{B_\Sring}$. So we can deal with the latter category. 
The proof is in two steps. 

{\it Step 1}. Observe that $\underline{\Acal}_\Sring$ is a finitely generated algebra over $\Sring[\mathfrak{n}]$. It follows that, for all $j$, the $\Sring[\mathfrak{n}]$-module 
$\Ext^j_{\underline{\Acal}_\Sring}(\underline{M}_\Sring, \underline{N}_\Sring)$ is finitely
generated for all finitely generated $\underline{\Acal}_\Sring$-modules  $\underline{M}_\Sring, \underline{N}_\Sring$. This reduces the claim to showing that 
\begin{itemize}
\item[(*)] $H^i_{B_\Sring}(L_{\Sring})$
is finitely generated over $\Sring$ for all $L_{\Sring}\in \Sring[\mathfrak{n}]\operatorname{-mod}^{B_{\Sring}}$. 
\end{itemize}

{\it Step 2}. Here we prove (*). Let
$U$ denote the unipotent radical of $B$. Note that
$H^i_{B_\Sring}(L_\Sring)=H^i_{U_{\Sring}}(N_{\Sring})^{T_{\Sring}}$.
The weights of $T_{\Sring}$ in both $\Sring[\mathfrak{n}], \Sring[U]$
are the nonpositive linear combinations of simple roots and $0$ occurs
only in the scalars. Using this we easily reduce (*) to showing that each $T_{\Sring}$-weight
component in each of $H^i_{U_{\Sring}}(\Sring)$ is finitely generated. For $i=0$, this is
certainly true. Applying the long
exact sequence in cohomology to $0\rightarrow \Sring\rightarrow \Sring[U]\rightarrow
\Sring[U]/\Sring\rightarrow 0$ we see that
$H^i_{U_{\Sring}}(\Sring)\xrightarrow{\sim}H^{i-1}_{U_\Sring}(\Sring[U]/\Sring)$.
Note the weight components in $\Sring[U]/\Sring$ are finite rank free $\Sring$-modules.
For any weight $\lambda$ and any $i$, we can find a $B_{\Sring}$-module quotient $L'_{\Sring}$
of $\Sring[U]/\Sring$ that is a free finite rank $\Sring$-module
such that $H^{i-1}_{U_{\Sring}}(\Sring[U]/\Sring)_\lambda\xrightarrow{\sim}
H^{i-1}_{U_{\Sring}}(L'_{\Sring})_\lambda$. Now the finite generation of
$H^i_{U_{\Sring}}(\Sring)_\lambda$ is proved by induction on $i$. This implies (*)
and finishes the proof.  
\end{proof}

\subsubsection{Proof of (2) of Proposition \ref{Prop:new_t_structure_properties}}\label{SSS_Prop_new_t_hw_proof}

\begin{proof}
The proof is in several steps. For a ring $R$, we write $\mathcal{C}_R$ for 
$\Coh^{G_R}(\pi^*\Acal_R)$ and $\widetilde{\mathcal{C}}_R$ for 
$\operatorname{QCoh}^{G_R}(\pi^*\Acal_R)$.

{\it Step 1}.  Our first goal is to show that every object 
$M\in \mathcal{C}$ is a quotient of an object filtered by $\Delta^{st}(?)$'s. 

The object $M$
is defined over a finitely generated ring $\Sring'$, let $M_{\Sring'}$ be a corresponding lattice.
Replacing $\Sring'$ with its finite localization, we can assume that $M_{\Sring'}$ is flat over 
$\Sring'$. Take a maximal ideal $\mathfrak{m}\subset \Sring'$ with residue field $\kf$, 
let $\hat{\Sring}$ be the corresponding completion. We will assume that the characteristic $p$ of $\kf$
is bigger than $h$.  Set $\Sring_\ell:=\hat{\Sring}/\mathfrak{m}^\ell$. 
Set $M_{\kf}:=\kf\otimes_{\Sring'}M_{\Sring'}$.

The category  $\mathcal{C}_{\kf}$ is highest weight with standards $\Delta^{st}_{\kf}(x)$ because it is highest weight with standard 
objects $\Delta^{st}_{\bar{\kf}}(x)$ after 
base change to the algebraic closure $\bar{\kf}$ (in which case we use Proposition \ref{Prop:hw_new_posit}).
A highest weight poset was described in Proposition \ref{Prop:hw_new_posit}, denote it by $\T$. 

In particular, take  a coideal finite poset ideal $\T_0\subset \T$. We can choose $\T_0$ in such a way 
that there is a projective object $P_{\kf}\in \Cat_{\kf,\T_0}$ such that 
$M_{\kf}$ is a quotient of  $P_{\kf}$. Note that since $P_{\kf}$ is projective, it has zero 1st and 
2nd extensions in $\Cat_{\kf,\T_0}$ and hence, by Lemma \ref{Lem:derived_full_embedding_interval finite}
in $\Cat_{\kf}$. 

Note that the category $\widetilde{\Cat}_{\Sring_\ell}$ has enough injectives:
we can identify it with $\operatorname{QCoh}^{B_{\Sring_\ell}}(\underline{\Acal}_{\Sring_\ell})$, then every object embeds to an induced (as a trivial representation of $B_{\Sring_\ell}$) object from an injective $\underline{\Acal}_{\Sring_\ell}$-module.

It follows that every object in $\Cat_{\kf}$ with vanishing 1st and 2nd self-extensions 
has a unique $\Sring_\ell$-free deformation 
to an object in  $\tilde{\Cat}_{\Sring_\ell}$. It automatically lies in  $\Cat_{\Sring_\ell}$. For $P_{\kf}$, 
denote this deformation by $P_{\Sring_\ell}$. 

{\it Step 2}. Let $M_{\Sring_\ell}$ denote an object in $\Cat_{\Sring_\ell}$
that is flat over $\Sring_\ell$. Write $M_\kf$ for $\kf\otimes_{\Sring_\ell}M_{\Sring_\ell}$. 
Let $N_{\kf}$ denote an object in $\Cat_{\kf}$ (and hence of $\Cat_{\Sring_\ell}$). 
We claim that $\operatorname{Ext}^1_{\Cat_{\Sring_\ell}}(M_{\Sring_\ell}, N_{\kf})
\xrightarrow{\sim} \operatorname{Ext}^1_{\Cat_{\kf}}(M_{\kf}, N_{\kf})$. This is because 
$\kf\otimes_{\Sring_\ell}\bullet: \Cat_{\Sring_\ell}\rightarrow \Cat_{\kf}$ is left adjoint to 
the inclusion functor $\Cat_{\kf}\hookrightarrow \Cat_{\Sring_\ell}$.

Since there are no
1st extensions in $\Cat_{\kf}$ between $P_{\kf}$ and $M_{\kf}$, the epimorphism 
$P_{\kf}\twoheadrightarrow M_{\kf}$ extends to a homomorphism (and automatically an epimorphism)
$P_{\Sring_\ell}\twoheadrightarrow M_{\Sring_\ell}$. Now we use the notation of 
Lemma \ref{Lem:Ext_fin_gen} and identify $\Cat_{\Sring}$ with 
$\Coh^{B_{\Sring_\ell}}(\underline{\Acal}_{\Sring_\ell})$. Note that $T_{\Sring_\ell}$
acts on $P_{\Sring_\ell}$ with weights uniformly (in $\ell$) bounded from below. From here one deduces that 
the $T_{\hat{\Sring}}$-finite part in $\varprojlim P_{\Sring_\ell}$ is an object in 
$\Coh^{B_{\hat{\Sring}}}(\underline{\Acal}_{\hat{\Sring}})$. For the same reason, the epimorphism to
$M_{\hat{\Sring}}$ is still an epimorphism. We can pick an embedding $\hat{\Sring}\hookrightarrow \C$
and base change to $\C$ getting an epimorphism $P\twoheadrightarrow M$.

{\it Step 3}.
We still need to show that $P$ is filtered by $\Delta^{st}(?)$. The projective $P_{\kf}$ is filtered 
by $\Delta^{st}_{\kf}(?)$'s thanks to Proposition \ref{Prop:hw_new_posit}. We claim that the similar assertion is 
true for $P_{\hat{\Sring}}$. Pick a maximal element $\tau$
of $\T_0$. We can assume that $\Delta^{st}_{\kf}(\tau)$ occurs in the standard filtration of 
$P_{\kf}$, otherwise we can shrink $\T_0$. Set $\T_1:=\T_0\setminus \{\tau\}$. Let $P'_{\kf}$ denote the maximal quotient of $P_{\kf}$ lying
in $\Cat_{\kf,\T_1}$. The kernel of $P_{\kf}\twoheadrightarrow P'_{\kf}$
is the direct sum of several copies of $\Delta^{st}_{\kf}(\tau)$. We can deform $P'_{\kf}$ similarly 
to Step 2, we get an object $P'_{\hat{\Sring}}$ together with an epimorphism $P_{\hat{\Sring}}\twoheadrightarrow 
P'_{\hat{\Sring}}$. Its kernel is an $\hat{\Sring}$-flat deformation of the direct sum of some copies of 
$\Delta^{st}_{\kf}(\tau)$. Such a deformation is unique for the same Ext vanishing reason and so is 
isomorphic to the direct sum of several copies of $\Delta^{st}_{\hat{\Sring}}(\tau)$. We reduce 
to the claim that $P'_{\hat{\Sring}}$ is filtered by $\Delta^{st}_{\hat{\Sring}}(?)$'s and this can be done by
induction (say on the number of standards that occur in $P_{\kf}$). 

The claim that $P$ is filtered by $\Delta^{st}(?)$'s follows by base change now that the similar claim about 
$P_{\hat{\Sring}}$ is proved. The goal in the beginning of Step 1 is now accomplished. 
  
{\it Step 4}. Now we prove that $\Cat$ is a highest weight category with interval finite poset $\T$.
The category $\Cat$ is Noetherian because 
$\pi^*\Acal$ is a coherent sheaf of algebras on $\tilde{\mathcal{N}}$. As a module over $\C[\g^*]$,
$\Gamma(\pi^*\Acal)$ is supported on the nilpotent cone that has finitely many $G$-orbits. From here we see that 
Hom's in $\Coh^G(\pi^*\Acal)$ are finite dimensional.
So we need to show that, for every coideal finite poset ideal $\T_0\subset \T$, the Serre span of $\Delta^{st}(x)$
with $x\in \T_0$, to be denoted by $\Cat_{\T_0}$ is a highest weight category with poset 
$\T_0$ and standards $\Delta^{st}(x)$ (thanks to the previous two steps, $\Coh^G(\pi^*\Acal)$ is the union 
of the subcategories $\Cat_{\T_0}$).  In other words we need to  check axioms (i)-(v) from 
Section \ref{SSS_hw_finite_def} for $\Cat_{\T_0}$. (i) is vacuous. (ii) follows from Corollary \ref{Cor:new_t_structure_reduction}.
(iii) follows from Corollary \ref{Cor:new_t_structure_reduction} combined with Lemma \ref{Lem:order_implication}. 
(iv) has been established in the previous two steps. 

So, it remains to establish (v). It will follow once we check that the object $P\in \Cat_{\T_0}$
from Step 2 is projective. This will follow once we show that $\operatorname{Ext}^1_{\Cat_{\hat{\Sring}}}(P_{\hat{\Sring}}, M_{\hat{\Sring}})=0$
for all $M_{\hat{\Sring}}\in \Cat_{\T_0,\hat{\Sring}}$. By Lemma 
\ref{Lem:Ext_fin_gen}, all Ext's in $\Cat_{\Sring}$ are finitely generated
over $\Sring$, and hence all Ext's in $\Cat_{\hat{\Sring}}$ are finitely
generated over $\hat{\Sring}$. Therefore, to show that $\operatorname{Ext}^1_{\Cat_\Sring}(P_{\hat{\Sring}}, M_{\hat{\Sring}})$ vanishes
it is enough to show that $\operatorname{Ext}^1(P_{\hat{\Sring}}, M_{\kf})$ does. It is enough to show this Ext coincides with 
$\operatorname{Ext}_{\Cat_{\kf}}^1(P_{\kf}, M_{\kf})$. 
This is analogous to the first paragraph in Step 2. 

{\it Step 5}. It remains to show that $\Cat$ is a highest weight category with respect to the coarser partial order, namely
$\leqslant^{st}$.  This follows from 
Remark \ref{Rem:coarsest_hw_order} combined with Corollary \ref{Cor:new_t_structure_reduction}. 
\end{proof}

\subsection{Deformation of $\Coh^G(\pi^*\Acal)_{\T_0}$}
Our goal in this section is to define a deformation (over $\Ring$) of the highest weight category $\Coh^G(\pi^*\Acal)_{\T^0}$,
where $\T^0$ is a coideal finite poset ideal in $(W^a,\leqslant^{st})$. 
The deformed category will come with a full embedding (on the level of derived categories) into $\Hecke$
and in the next section we will see that this embedding is essentially surjective and finish the proof of 
Theorem \ref{Thm:stabilized_t_structure}.

\subsubsection{Evaluation}\label{SSS:evaluation_functor}
In what follows we will often use the following construction. Let $\Cat_\Ring$ be an
$\Ring$-linear abelian category. We write $\mathfrak{m}$ for the maximal ideal of $\Ring$. Let $\Cat$ denote the full subcategory 
of $\Cat_\Ring$ that consists of all objects that are killed by $\mathfrak{m}$. 
The inclusion functor $\Cat\hookrightarrow \Cat_{\Ring}$ has left adjoint,
$\C\otimes_\Ring\bullet$. The left adjoint of $D^b(\Cat)\rightarrow D^b(\Cat_\Ring)$
is the functor $\C\otimes^L_\Ring\bullet$, it will be called the {\it evaluation functor}. The same holds for the functors
between the $D^-$ categories. 


\begin{Lem}\label{Lem:evaluation_properties}
Suppose that every object $N$ in $\Cat_\Ring$ is  separated in the $\mathfrak{m}$-adic topology
(meaning that $N\hookrightarrow \varprojlim N/\mathfrak{m}^k N$). Then
\begin{itemize}
\item[(i)] 
for $M\in D^b(\Cat_\Ring)$ the following conditions are equivalent:
\begin{enumerate}
\item $M\in D^{b,\leqslant 0}(\Cat_\Ring)$.
\item $\C\otimes^L_{\Ring}M\in D^{b,\leqslant 0}(\Cat)$
\end{enumerate}
\item[(ii)]
for $M\in D^-(\Cat_\Ring)$, the following conditions are equivalent:
\begin{enumerate}
\item $M\in D^b(\Cat_\Ring)$,
\item $\C\otimes^L_\Ring M\in D^b(\Cat)$.
\end{enumerate}
\end{itemize}
\end{Lem}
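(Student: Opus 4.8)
The plan is to prove both parts by exploiting the hypothesis that every object of $\Cat_\Ring$ is $\mathfrak{m}$-adically separated, which lets us detect vanishing of cohomology objects after applying $\C\otimes^L_\Ring\bullet$. First I would set up the following basic observation: for $N\in\Cat_\Ring$ we have $\C\otimes_\Ring^L N\in D^{\leqslant 0}(\Cat)$ always, and $H^0(\C\otimes^L_\Ring N)=\C\otimes_\Ring N=N/\mathfrak m N$. The key point is that $N/\mathfrak m N=0$ forces $N=0$: indeed by Nakayama-type reasoning, if $N=\mathfrak m N$ then $N=\mathfrak m^k N$ for all $k$, so $N\subset\bigcap_k \mathfrak m^k N=0$ by separatedness. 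More generally $\C\otimes_\Ring^L N$ being concentrated in a single degree (namely degree $0$) is what one expects when $N$ is $\Ring$-flat, but we only need the vanishing direction. For part (i), I would argue by descending induction on the top nonzero cohomology: write $n=\max\{i: H^i(M)\neq 0\}$; applying $\C\otimes^L_\Ring\bullet$ and using the hyperhomology spectral sequence (or the truncation triangle $\tau_{<n}M\to M\to H^n(M)[-n]$), one sees $H^n(\C\otimes^L_\Ring M)=\C\otimes_\Ring H^n(M)=H^n(M)/\mathfrak m H^n(M)$, since the terms $L_j(\C\otimes_\Ring\bullet)$ applied to lower cohomology contribute only in degrees $<n$. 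If $n>0$ then $H^n(\C\otimes^L_\Ring M)\neq 0$ by the Nakayama argument, contradicting (2); this gives (2)$\Rightarrow$(1). The implication (1)$\Rightarrow$(2) is immediate since $\C\otimes_\Ring^L\bullet$ has amplitude in $[-\dim\Ring,0]$, in particular is right $t$-exact, so it sends $D^{\leqslant 0}$ to $D^{\leqslant 0}$.

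For part (ii), the implication (1)$\Rightarrow$(2) is again formal: $\C\otimes_\Ring^L\bullet$ has bounded homological amplitude (being $L(\C\otimes_\Ring\bullet)$ with $\Ring$ regular local of finite Krull dimension), so it preserves boundedness. For (2)$\Rightarrow$(1), I would argue that if $M\in D^-(\Cat_\Ring)$ has $\C\otimes^L_\Ring M\in D^b(\Cat)$, then $M$ is bounded below in a uniform way: suppose $H^i(M)\neq 0$ for arbitrarily negative $i$; pick the largest such $i=n$ below the claimed bound, apply the same top-cohomology computation as in part (i) but now from the bottom — using that $\C\otimes_\Ring^L\bullet$ is right $t$-exact, the cohomology of $\C\otimes^L_\Ring M$ in the most negative degree where it is nonzero equals $H^{n}(M)/\mathfrak m H^n(M)$ up to a shift by the amplitude, hence is nonzero by Nakayama whenever $H^n(M)\neq 0$, contradicting boundedness of $\C\otimes^L_\Ring M$. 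Being more careful: if $H^i(M)=0$ for $i>i_0$ and $\C\otimes^L_\Ring M$ is bounded, then the most negative nonvanishing cohomology of $\C\otimes^L_\Ring M$ is controlled by the most negative nonvanishing $H^i(M)$, so the latter must be bounded below too; hence $M\in D^b$.

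The main obstacle I anticipate is making the ``bottom cohomology survives'' step in (ii) fully rigorous: one must check that in the convergent hyperhomology spectral sequence $E_2^{p,q}=L_{-p}(\C\otimes_\Ring\bullet)(H^q(M))\Rightarrow H^{p+q}(\C\otimes^L_\Ring M)$, the corner term corresponding to the lowest surviving $H^q(M)$ is not killed by a differential — this uses that differentials into that corner come from terms with strictly larger $q$, which vanish. Since $\Ring$ is regular (hence $L_j(\C\otimes_\Ring\bullet)$ is supported in $0\leqslant j\leqslant\dim\Ring$) the spectral sequence lives in a bounded strip, so convergence and the edge-map identification $H^{\min}(\C\otimes^L_\Ring M)\cong (\text{top Tor})\otimes H^{\min}(M)$ or $H^{\min}(M)\otimes_\Ring\C$ are unproblematic once written out. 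I expect the whole argument to be short given these ingredients, with the Nakayama-via-separatedness observation doing the essential work in both directions.
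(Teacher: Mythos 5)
Your treatment of part (i) is fine: the top nonvanishing $H^n(M)$ survives to $H^n(\C\otimes^L_\Ring M)$ via the truncation triangle $\tau_{<n}M\to M\to H^n(M)[-n]$ and right $t$-exactness, and then Nakayama plus $\mathfrak{m}$-adic separatedness shows $H^n(M)/\mathfrak m H^n(M)\neq 0$. Likewise, (1)$\Rightarrow$(2) in part (ii) follows from the bounded Tor-amplitude of $\C$ over the regular local ring $\Ring$. These parts are correct, and in spirit similar to (if more spectral-sequence-flavored than) the paper's.

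The gap is in (2)$\Rightarrow$(1) of part (ii), and it is exactly the ``main obstacle'' you flag yourself — but your proposed patch does not close it. First, speaking of ``the lowest surviving $H^q(M)$'' presupposes there \emph{is} a lowest one, i.e.\ that $M\in D^b$, which is what needs to be proved. Second, even granting a minimal $q=q_{\min}$: the spot $(p,q)=(0,q_{\min})$, whose entry $E_2^{0,q_{\min}}=H^{q_{\min}}(M)/\mathfrak m H^{q_{\min}}(M)$ is the unique Tor group that Nakayama protects, \emph{receives} differentials $d_r\colon E_r^{-r,\,q_{\min}+r-1}\to E_r^{0,q_{\min}}$. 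The sources sit at $q>q_{\min}$, so they are precisely the terms that do \emph{not} vanish under minimality of $q_{\min}$; so $E_\infty^{0,q_{\min}}$ may be $0$ even though $E_2^{0,q_{\min}}\neq 0$, and you cannot conclude $H^{q_{\min}}(\C\otimes^L_\Ring M)\neq 0$. The genuinely protected corner is $(-k,q_{\min})$ (differentials in come from $p<-k$, differentials out go to $q<q_{\min}$, both zero), but its entry is $\operatorname{Tor}_k^\Ring(\C,H^{q_{\min}}(M))$, which can vanish (e.g.\ if $H^{q_{\min}}(M)$ is a nonzero free $\Ring$-module). Neither corner argument works, and there is no ``edge map'' giving the identification you write down.

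The paper avoids the issue entirely by inducting on the number of variables in $\Ring=\C[[x_1,\ldots,x_k]]$. One takes $i$ minimal such that the derived quotient of $M$ by $x_1,\ldots,x_i$ lies in $D^b$; if $i\geqslant 1$, replace $M$ by its derived quotient by $x_1,\ldots,x_{i-1}$, so that $M\notin D^b$ but $\underline M:=\operatorname{cone}(x_i\colon M\to M)$ is in $D^b$. The triangle $M\xrightarrow{x_i} M\to\underline M$ gives a long exact sequence, and in degrees beyond the range of $\underline M$ the map $x_i\colon H_j(M)\to H_j(M)$ is surjective; separatedness then forces $H_j(M)=0$, a contradiction. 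This avoids both of the problems above: no higher Tor appears because the derived quotient by a single element is just a cone, and one never needs a minimal nonvanishing cohomology to exist in advance. Your part (i) argument is sound, but for (ii) you should adopt this one-variable-at-a-time reduction rather than the spectral sequence.
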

\begin{proof}
We will prove (ii), the proof of (i) is similar. We write $\Ring$ as $\C[[x_1,\ldots,x_k]]$. Pick $M\in D^-(\Cat_\Ring)$
and let $i$ be minimal such that taking the derived quotient by $x_1,\ldots,x_i$ sends $M$ to $D^b$. We need to show that $i=0$. Assume the 
contrary. Replace $M$
with its derived quotient by $x_1,\ldots,x_{i-1}$ and set $x:=x_i$. Let $\underline{M}:=\C\otimes^L_{\C[[x]]}M$. Then we have the long 
exact sequence in homology
$$\ldots\rightarrow H_j(M)\xrightarrow{x\cdot} H_j(M)\rightarrow H_j(\underline{M})\rightarrow\ldots$$ 
For $j\gg 0$ we have $x:H_j(M)\twoheadrightarrow H_j(M)$. By the assumptions of the lemma, $H_j(M)$ is 
separated in the $x$-adic topology, and so $H_j(M)=0$. This contradicts the choice of $i$ and finishes the proof. 
\end{proof}

\subsubsection{Objects $\Delta^{st}_\Ring(x)$}\label{SSS:Delta_st}
As in Section \ref{SSS_hw_0_reduction}, the object 
$$\Delta^-_{\Ring}(xt_{-\mu})*J_{\mu}$$
is well-defined for $\mu$ sufficiently dominant. Denote it by $\Delta^{st}_\Ring(x)$.

Note that 
\begin{equation}\label{eq:cl_stand_base_change}
\C\otimes^L_{\Ring}\Delta^{st}_\Ring(x)\cong\Delta^{st}(x),
\end{equation}
an isomorphism of objects of $D^b(\OCat^-)=\Hecke_0$.

\begin{Lem}\label{Lem:new_stand}
We have $\Delta^{st}_\Ring(x)\in  \OCat^-_\Ring$
and $\Delta^{st}(x)\in  \OCat^-$.
\end{Lem}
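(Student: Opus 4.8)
The claim has two halves: that $\Delta^{st}_\Ring(x) = \Delta^-_\Ring(xt_{-\mu})*J_\mu$ lies in the heart $\OCat^-_\Ring$ for $\mu$ sufficiently dominant, and that its evaluation $\Delta^{st}(x)$ lies in $\OCat^-$. Since \eqref{eq:cl_stand_base_change} gives $\C\otimes^L_\Ring \Delta^{st}_\Ring(x)\cong \Delta^{st}(x)$, and $\OCat^-_\Ring$ consists of $\mathfrak{m}$-adically separated objects (being a highest weight category over the complete local ring $\Ring$, so its objects embed in their $\mathfrak{m}$-adic completions), Lemma \ref{Lem:evaluation_properties}(i) tells me that $\Delta^{st}_\Ring(x)\in D^{b,\leqslant 0}(\OCat^-_\Ring)$ if and only if $\Delta^{st}(x)\in D^{b,\leqslant 0}(\OCat^-)$, and the analogous equivalence holds after applying the relevant duality/opposite statement for the $\geqslant 0$ part. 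So the first reduction is: it suffices to prove the evaluated statement $\Delta^{st}(x)\in\OCat^-$, together with the fact that $\Delta^{st}_\Ring(x)\in D^{\leqslant 0}\cap D^{\geqslant 0}$, which by the Lemma follows once the evaluated object is concentrated in degree $0$ on both sides.

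For the evaluated statement $\Delta^{st}(x)\in\OCat^-$, I would argue as follows. Write $x=wt_\lambda$ with $\lambda$ anti-dominant (every coset of $W$ in $W^a$ has such a representative after adjusting, or rather we reduce to this normal form as in Lemma \ref{Lem:new_t_structure_reduction}). By Lemma \ref{Lem:new_t_structure_reduction} we have $\Delta^{st}(x) = \mathsf{T}^{-1}_{w^{-1}}\mathcal{O}_{\tilde{\mathcal{N}}_{diag}}(\lambda)$ under the equivalence $\Hecke_0\xrightarrow{\sim} D^b(\Coh^G(\pi^*\Acal))$; but more directly, since $\lambda$ is anti-dominant, $\Delta^{st}(x)=\Delta^-(x)\in\OCat^-$ by definition of the perverse heart. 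For general $x$, write $x = x' t_{\lambda'}$ with $x'=wt_\lambda$, $\lambda$ anti-dominant, and $\lambda'$ dominant, so $\Delta^{st}(x)=\Delta^-(x')*J_{\lambda'}$. Now I invoke Corollary \ref{Cor:transl_exact}: $\bullet*J_{\lambda'}$ is right t-exact for dominant $\lambda'$, so $\Delta^-(x')*J_{\lambda'}\in\Hecke_0^{\leqslant 0}$. For the reverse inequality, $\Delta^{st}(x)$ is one of the candidate standard objects for the stabilized t-structure, and by Proposition \ref{Prop:new_t_structure_properties}(2) (more precisely, the construction of $\OCat^{st}$ as the heart of the new t-structure, together with Proposition \ref{Prop:hw_new_posit} over a positive characteristic base and the reduction in Section \ref{SSS_Prop_new_t_hw_proof}) each $\Delta^{st}(x)$ is a genuine object of the abelian category $\Coh^G(\pi^*\Acal)$, hence concentrated in degree $0$ for the new t-structure. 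What I actually need, though, is that it is concentrated in degree $0$ for the \emph{perverse} t-structure. This forces me to check directly that $\Delta^-(x')*J_{\lambda'}$ has no negative perverse cohomology, which — combined with right t-exactness giving no positive cohomology — yields the claim. The cleanest route: use that $\Vfun(\Delta^-(x'))\cong \Ring_{x'}$ (Lemma \ref{Lem:V_O_negative_properties}(2)) and that $\Vfun$ is t-exact and detects the t-structure on standardly filtered complexes, reducing the vanishing to a computation of $\Ring_{x'}*$ (a Bott–Samelson-type complex), which is concentrated in degree $0$ because $\ell(x't_{\mu})=\ell(x')+\ell(t_\mu)$ for $\mu$ sufficiently dominant — exactly the length additivity established in the proof of Lemma \ref{Lem:new_t_structure_reduction}.

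So the key steps, in order, are: (1) reduce the $\Ring$-linear statement to the $\C$-linear one via Lemma \ref{Lem:evaluation_properties}(i) and $\mathfrak{m}$-adic separatedness of objects in $\OCat^-_\Ring$; (2) for the $\C$-linear statement, put $x$ in the normal form $x't_{\lambda'}$ with $\lambda'$ dominant and $\Delta^-(x')$ already a genuine perverse sheaf; (3) apply right t-exactness of $\bullet*J_{\lambda'}$ (Corollary \ref{Cor:transl_exact}) for the $\leqslant 0$ direction; (4) prove the $\geqslant 0$ direction using the t-exactness and faithfulness of $\Vfun$ on standardly filtered objects (Lemma \ref{Lem:V_O_negative_properties}) together with the length-additivity $\ell(x't_\mu)=\ell(x')+\ell(t_\mu)$; (5) push the degree-$0$ conclusion back up to $\Ring$ via the second half of Lemma \ref{Lem:evaluation_properties}(i). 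I expect step (4) — verifying that no negative perverse cohomology appears in $\Delta^-(x')*J_{\lambda'}$, i.e.\ that the convolution is an honest object rather than a complex — to be the main obstacle, since it is exactly the point where the interplay between the braid group action and the perverse t-structure has to be controlled; everything else is a formal consequence of results already in hand.
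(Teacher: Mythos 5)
Your proposal misses the short argument the paper actually uses, and both halves of your reduction scheme have gaps.

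The paper handles the deformed statement directly and symmetrically using Corollary~\ref{Cor:tens_t_exact}: the functor $?*J_\mu$ is right t-exact because $J_\mu=\nabla^-_\Ring(t_\mu)$ is costandardly filtered, so $\Delta^-_\Ring(xt_{-\mu})*J_\mu\in D^{b,\leqslant 0}$; dually, the functor $\Delta^-_\Ring(xt_{-\mu})*?$ is \emph{left} t-exact because $\Delta^-_\Ring(xt_{-\mu})$ is standardly filtered, and $J_\mu$ lies in the heart $\OCat^-_\Ring$, so $\Delta^-_\Ring(xt_{-\mu})*J_\mu\in D^{b,\geqslant 0}$. Intersecting the two puts the object in the heart, and the $\C$-linear version follows by the same two-sided t-exactness (or, for the $\leqslant 0$ half, from $\Delta^{st}(x)$ lying in the negative part of the new t-structure). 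You found only the $\leqslant 0$ half of this (your step~3) and then, for $\geqslant 0$, reached for $\Vfun$. But Lemma~\ref{Lem:V_O_negative_properties} only gives you that $\Vfun$ is t-exact and faithful \emph{on standardly filtered objects} — it does not say $\Vfun$ is conservative on $D^b(\OCat^-_\Ring)$, which is what you would need to conclude that vanishing of $H^{<0}(\Vfun(\Delta^{st}(x)))$ forces vanishing of $H^{<0}(\Delta^{st}(x))$. The clean dual argument is sitting right next to the one you used.

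The reduction in your steps~1 and~5 also does not close. Lemma~\ref{Lem:evaluation_properties}(i) is genuinely one-sided: it says $M\in D^{b,\leqslant 0}(\Cat_\Ring)$ iff $\C\otimes^L_\Ring M\in D^{b,\leqslant 0}(\Cat)$. There is no companion statement with $\geqslant 0$, and you cannot manufacture one by ``duality'': $\C\otimes^L_\Ring$ is right exact, so its effect on a complex spreads cohomology into \emph{more negative} degrees via Tor, and knowing $\C\otimes^L_\Ring M$ is concentrated in degree $0$ does not by itself rule out $H^{j}(M)\neq 0$ for $j<0$ — the spectral sequence $\operatorname{Tor}^\Ring_{-p}(\C,H^q(M))\Rightarrow H^{p+q}(\C\otimes^L_\Ring M)$ allows incoming differentials that kill $\C\otimes_\Ring H^j(M)$ on the $E_\infty$ page unless you separately know the cohomology is $\Ring$-flat. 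So ``prove it over $\C$, then lift'' is not a valid strategy here without extra input. The moral is that you should treat the deformed case as primary, exactly as the paper does, and use the two-sided t-exactness directly there.
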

\begin{proof}
To show that $\Delta^{st}_\Ring(x)\in \OCat^-_\Ring$ we use results from Section \ref{SSS_conv_exact}:
the functor $\Delta^-_\Ring(xt_{-\mu})*?$ is left t-exact, while the functor $?*J_{\mu}$ is right t-exact. 
Since $\Delta^-_\Ring(xt_{-\mu})$ and $J_{\mu}=\nabla_\Ring(t_{\mu})$ are both in $\OCat^-_\Ring$, our claim follows. 

Now we prove $\Delta^{st}(x)\in \OCat^-$. Since $\Delta^{st}(x)$ lies in the negative part of the new t-structure,
we have $\Delta^{st}(x)\in D^{b,\leqslant 0}(\OCat^-)$. The proof of $\Delta^{st}(x)\in D^{b,\geqslant 0}(\OCat^-)$
is similar to the previous paragraph. 
\end{proof}

Here are some other properties of the objects $\Delta_\Ring^{st}(x)$ that will be used below.

\begin{Lem}\label{Lem:cl_stand_properties}
The following claims hold:
\begin{enumerate}
\item $\operatorname{Ext}^i(\Delta^{st}_\Ring(x),\Delta_\Ring^{st}(y))\neq 0\Rightarrow
x\leqslant^{st}y$.
\item $\operatorname{End}(\Delta^{st}_\Ring(x))=\Ring$.
\end{enumerate}
\end{Lem}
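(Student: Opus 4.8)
The plan is to transport both statements to the highest weight structure on the heart $\OCat^-_\Ring$ of the perverse t-structure, exploiting that convolution with $J_\mu$ is an autoequivalence of $\Hecke$. First I would fix, for the given pair $x,y$, an element $\mu\in\Lambda_0$ that is simultaneously sufficiently dominant for $x$ and for $y$, so that by the well-definedness recorded in Section~\ref{SSS:Delta_st} one has both $\Delta^{st}_\Ring(x)\cong\Delta^-_\Ring(xt_{-\mu})*J_\mu$ and $\Delta^{st}_\Ring(y)\cong\Delta^-_\Ring(yt_{-\mu})*J_\mu$. Since $J_\mu$ is the image of $\mu$ under the homomorphism $\Lambda_0\hookrightarrow\Br^a\to\Hecke$ of \cite{R_braid}, the endofunctor $\bullet*J_\mu$ of $\Hecke$ is an equivalence with quasi-inverse $\bullet*J_{-\mu}$, whence
$$\operatorname{Ext}^i\bigl(\Delta^{st}_\Ring(x),\Delta^{st}_\Ring(y)\bigr)\;\cong\;\operatorname{Ext}^i\bigl(\Delta^-_\Ring(xt_{-\mu}),\Delta^-_\Ring(yt_{-\mu})\bigr)$$
for every $i\geqslant 0$, the right-hand side being computed in $\Hecke=D^b(\OCat^-_\Ring)$ by (\ref{eq:derived_O_minus}).

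Part (2) is then immediate: take $y=x$ and $i=0$. As $\Delta^-_\Ring(xt_{-\mu})$ lies in the heart $\OCat^-_\Ring$, the right-hand side is $\operatorname{End}_{\OCat^-_\Ring}(\Delta^-_\Ring(xt_{-\mu}))$, which equals $\Ring$ by axiom (ii) of the highest weight structure of Lemma~\ref{Lem:usual_t_structure}. For part (1), both $\Delta^-_\Ring(xt_{-\mu})$ and $\Delta^-_\Ring(yt_{-\mu})$ lie in the finite highest weight subcategory $\Cat_{\T_0,\Ring}\subset\OCat^-_\Ring$, where $\T_0$ is the (finite, hence coideal finite) poset ideal of $(W^a,\leqslant)$ generated by $xt_{-\mu}$ and $yt_{-\mu}$; by Lemma~\ref{Lem:derived_full_embedding_interval finite} the Ext on the right is the same whether computed in $\Cat_{\T_0,\Ring}$ or in $\Hecke$. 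So non-vanishing of this Ext forces $xt_{-\mu}\leqslant yt_{-\mu}$ in the Bruhat order, by axiom (iii) when $i=0$ and by \cite[Proposition 4.13]{rouqqsch} when $i\geqslant 1$. Since this conclusion holds for every sufficiently dominant $\mu$, Lemma~\ref{Lem:stabilized_order_characterization} gives $x\leqslant^{st}y$, which is (1).

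The argument is essentially formal; the only point requiring care is that a single $\mu$ can be chosen sufficiently dominant for both $x$ and $y$ and that the two resulting presentations of $\Delta^{st}_\Ring(x)$ and $\Delta^{st}_\Ring(y)$ are independent of that choice. This is precisely the well-definedness asserted in Section~\ref{SSS:Delta_st}, which itself rests on the identity $\ell(wt_\lambda)=\ell(w)-2\langle\rho^\vee,\lambda\rangle$ for $\lambda$ anti-dominant together with the braid relations, as in the proof of Lemma~\ref{Lem:new_t_structure_reduction}; granting it, there is no further obstacle.
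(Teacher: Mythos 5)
Your proof is correct, and it is a cleaner, more self-contained route than the one the paper takes. The paper proves part (1) by first establishing the $\C$-level statement as Corollary~\ref{Cor:new_t_structure_reduction} (itself derived from the identity $\Delta^{st}(x)=\Delta^-(xt_{-\mu})*J_\mu$) and then lifting to $\Ring$ via the base-change isomorphism (\ref{eq:cl_stand_base_change}), using the ``standard'' Nakayama/universal-coefficient argument that nonvanishing of an $\Ring$-linear Ext forces nonvanishing after derived base change to $\C$; part (2) the paper likewise reads off from the corresponding property of $\Delta^-_\Ring(xt_{-\lambda})$. You instead transport the Ext computation directly at the $\Ring$-level through the autoequivalence $?*J_\mu$ of $\Hecke$, reducing to the highest weight structure of $\OCat^-_\Ring$ (axiom (iii), resp.\ axiom (ii), together with \cite[Proposition 4.13]{rouqqsch} for $i\geqslant 1$), and then invoking Lemma~\ref{Lem:stabilized_order_characterization}. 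This sidesteps the base-change step entirely. What the paper's route buys is brevity given that Corollary~\ref{Cor:new_t_structure_reduction} had already been stated and the base-change reduction is a one-liner for the cognoscenti; what your route buys is that it proves the $\Ring$-statement directly without appealing to the $\C$-specialization, and it makes explicit the dependence on the well-definedness of the stabilized standards, which you correctly flag as the one point requiring care. Your appeal to Lemma~\ref{Lem:derived_full_embedding_interval finite} to localize the Ext computation to a finite-poset highest weight subcategory is also correct (the relevant down-set in the Bruhat order on $W^a$ is finite, as noted in Remark~\ref{Rem:ideal_finite}).
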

\begin{proof}
(1) follows in a standard way from Corollary \ref{Cor:new_t_structure_reduction} combined with 
(\ref{eq:cl_stand_base_change}). (2) follows from the corresponding property of $\Delta^-_\Ring(xt_{-\lambda})$.
\end{proof}

\begin{Rem}\label{Rem:filtr_delta_cl}
Assume that $M_\Ring$ is an object in $\OCat^-_\Ring$ that is filtered by $\Delta^{st}_\Ring(x)$ with 
$x\in \T_0$, where $\T_0$ is a coideal finite poset ideal in $(W^{a},\leqslant^{st})$. Let $\T^0$ be a finite poset 
coideal in $\T_0$. Then, as in (1) of Lemma \ref{Lem:cl_stand_properties}, there is the unique 
subobject $M_{\Ring,\T^0}$ of $M_\Ring$ with the following two properties:
\begin{itemize}
\item $M_{\Ring,\T^0}$ is filtered by $\Delta^{st}_\Ring(x)$ with $x\in \T^0$,
\item $M_\Ring/M_{\Ring,\T^0}$ is filtered by $\Delta^{st}_\Ring(x)$ with $x\in \T_0\setminus \T^0$.
\end{itemize}
\end{Rem}

\subsubsection{Objects $P^{st}_{\T_0,\Ring}(x)$}
Let $\T$ denote the poset $W^{a}$ with respect to the partial order $\leqslant^{st}$. Let $\T_0$ be a coideal finite poset ideal of $\T$.
Pick $x\in \T_0$. Let $P_{\T_0}(x)$ denote a projective object satisfying axiom (v) in Section \ref{SSS_hw_finite_def}
for the label $x\in \T_0$. We can view it as an object in $D^b(\OCat^-)$. Our goal now is to construct 
an $\Ring$-flat object $P^{st}_{\T_0,\Ring}(x)$ with $P^{st}_{\T_0}(x)\cong\C\otimes_{\Ring}P^{st}_{\T_0,\Ring}(x)$.

Thanks to Proposition \ref{Prop:new_t_structure_properties},  $P^{st}_{\T_0}(x)$ is filtered by objects 
$\Delta^{st}(y)$, where $y\geqslant^{st} x$. In particular, Lemma \ref{Lem:new_stand} implies that $P^{st}_{\T_0}(x)\in 
\OCat^-$.

\begin{Lem}\label{Lem:proj_deformation}
The object $P^{st}_{\T_0}(x)\in \OCat^-$ admits a unique $\Ring$-flat deformation in $\OCat^-_\Ring$, to be denoted
by $P^{st}_{\T_0,\Ring}(x)$. This object admits an epimorphism onto $\Delta^{st}_\Ring(x)$ whose kernel is filtered
by $\Delta^{st}_\Ring(y)$ with $y\geqslant^{st}x$.  
\end{Lem}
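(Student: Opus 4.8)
The plan is to deform $P^{st}_{\T_0}(x)$ by using the fact that it is a tilting-type object without higher self-extensions, and then to identify the deformation using the standard filtration. First I would observe that by Lemma \ref{Lem:cl_stand_properties}(1) and the finiteness of $\T_0$ as a poset coideal of each of its elements, the object $P^{st}_{\T_0}(x) \in \OCat^-$ has a finite filtration by $\Delta^{st}(y)$ with $y \geqslant^{st} x$, so in particular $P^{st}_{\T_0}(x) \in \OCat^-$ by Lemma \ref{Lem:new_stand} (as already noted in the excerpt). The key vanishing statement I would extract from Proposition \ref{Prop:new_t_structure_properties} is that $\operatorname{Ext}^1_{\OCat^-}(P^{st}_{\T_0}(x), \Delta^{st}(y)) = 0$ for all $y \in \T_0$ and hence, by the standard filtration of any object in the heart of the new t-structure together with Lemma \ref{Lem:derived_full_embedding_interval finite}, that $\operatorname{Ext}^1_{\OCat^-}(P^{st}_{\T_0}(x), M) = 0$ for every $M$ in the highest weight subcategory $\Coh^G(\pi^*\Acal)_{\T_0}$; likewise $\operatorname{Ext}^2$ vanishes against standard objects (since $P^{st}_{\T_0}(x)$ is projective in that subcategory, which has finite global dimension on any finite coideal). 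Transporting through the equivalence (\ref{eq:main_equiv_new_specialized}), these are vanishing statements in $\Hecke_0 = D^b(\OCat^-)$.

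Next I would run the standard obstruction-theory / formal-smoothness argument for lifting an object with $\operatorname{Ext}^1(-,-)=\operatorname{Ext}^2(-,-)=0$ to the formal deformation over $\Ring = \C[\hat\h^*]^{\wedge_0}$. Writing $\Ring = \C[[x_1,\dots,x_k]]$, I would lift $P^{st}_{\T_0}(x)$ step by step over $\Ring/\mf^{\ell}$: the obstruction to lifting from $\Ring/\mf^{\ell}$ to $\Ring/\mf^{\ell+1}$ lives in $\operatorname{Ext}^2$ of the object with (a Tor-twist of) itself, which vanishes by the previous paragraph restricted to $\Delta^{st}$-filtered coefficients; uniqueness of the lift at each stage follows from the $\operatorname{Ext}^1$-vanishing. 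Passing to the limit and invoking $\mf$-adic separatedness (as in Lemma \ref{Lem:evaluation_properties}) produces an object of $D^b(\OCat^-_\Ring)$; the fact that it lies in the heart $\OCat^-_\Ring$ (not just $D^b$) follows from Lemma \ref{Lem:evaluation_properties}(i)–(ii) since its reduction mod $\mf$ is $P^{st}_{\T_0}(x) \in \OCat^-$. Flatness over $\Ring$ is then automatic from the base-change isomorphism $\C \otimes^L_\Ring P^{st}_{\T_0,\Ring}(x) \cong P^{st}_{\T_0}(x)$ being concentrated in degree $0$. I would denote the result $P^{st}_{\T_0,\Ring}(x)$, and uniqueness of the $\Ring$-flat deformation follows from the same $\operatorname{Ext}^1$-vanishing, applied inductively over the $\mf$-adic filtration.

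For the last assertion, I would deform the epimorphism $P^{st}_{\T_0}(x) \twoheadrightarrow \Delta^{st}(x)$: since $\operatorname{Ext}^1_{\OCat^-_\Ring}(P^{st}_{\T_0,\Ring}(x), K)$ vanishes for any $\Delta^{st}_\Ring(\geqslant x)$-filtered $K$ (again by the deformed analogue of the vanishing above, e.g. using that $\Ext$'s are finitely generated over $\Ring$ à la Lemma \ref{Lem:Ext_fin_gen} and reducing mod $\mf$), the map $P^{st}_{\T_0}(x) \to \Delta^{st}(x)$ lifts to $P^{st}_{\T_0,\Ring}(x) \to \Delta^{st}_\Ring(x)$; it is an epimorphism by Nakayama since it is so mod $\mf$, and its kernel $N_\Ring$ is $\Ring$-flat (being the kernel of a surjection of $\Ring$-flat objects, as in Lemma \ref{Lem:stand_exact_prop}(2)) with $\C \otimes_\Ring N_\Ring$ the $\Delta^{st}(\geqslant x)$-filtered kernel over $\C$; a descending induction on the standard filtration of this kernel, deforming one layer at a time exactly as in Steps 3--5 of \ref{SSS_Prop_new_t_hw_proof}, shows $N_\Ring$ is filtered by $\Delta^{st}_\Ring(y)$ with $y \geqslant^{st} x$.

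The main obstacle I anticipate is not the obstruction-theory bookkeeping, which is routine, but making precise the $\operatorname{Ext}^2$-vanishing needed for unobstructedness: $P^{st}_{\T_0}(x)$ is projective only inside the finite truncations $\Coh^G(\pi^*\Acal)_{\T^0}$ for finite coideals $\T^0 \subset \T_0$, not in $\OCat^-$ itself, so one must be careful to carry out the lifting within the appropriate highest weight subcategory and then use Lemma \ref{Lem:Ext_comput} / Lemma \ref{Lem:derived_full_embedding_interval finite} to see that the relevant $\operatorname{Ext}^1, \operatorname{Ext}^2$ computed there agree with those in $\OCat^-_\Ring$. The other point requiring care is the passage to the inverse limit over $\Ring/\mf^\ell$ and the verification that the limit object is genuinely in $D^b(\OCat^-_\Ring)$ and bounded — this is precisely what Lemma \ref{Lem:evaluation_properties} is designed to handle, so I would cite it rather than reprove it.
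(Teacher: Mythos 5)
Your proposal follows the paper's route essentially verbatim: both arguments rest on the vanishing of all higher self-extensions of $P^{st}_{\T_0}(x)$ (from its projectivity in $\Coh^G(\pi^*\Acal)_{\T_0}$, transported to $\OCat^-$ via Lemma~\ref{Lem:derived_full_embedding_interval finite} and the derived equivalence) to obtain existence and uniqueness of the $\Ring$-flat deformation by the $\Ext^1$/$\Ext^2$ obstruction-theory mechanism, and then both peel off one maximal layer of the standard filtration at a time exactly as in Step~3 of Section~\ref{SSS_Prop_new_t_hw_proof} to get the $\Delta^{st}_\Ring(y)$-filtration on the kernel. One small correction: your passage from $\Ext^1$-vanishing against standards to $\Ext^1$-vanishing against arbitrary $M$ invokes ``the standard filtration of any object,'' which is not available (not every object is standardly filtered); this detour is unnecessary since you already have projectivity of $P^{st}_{\T_0}(x)$ in $\Coh^G(\pi^*\Acal)_{\T_0}$, which immediately kills \emph{all} $\Ext^{>0}$'s out of it there, and Lemma~\ref{Lem:derived_full_embedding_interval finite} transports this wholesale to $\OCat^-$ --- which is what the paper does.
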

\begin{proof}
The proof of the existence and the uniqueness of the deformation follows Step 1 of the 
proof in Section \ref{SSS_Prop_new_t_hw_proof}: the object $P^{st}_{\T_0}(x)$ has no higher 
self-extensions in $\Coh^G(\pi^*\Acal)_{\T_0}$, hence (by Lemma \ref{Lem:derived_full_embedding_interval finite})
in $\Coh^G(\pi^*\Acal)$ and hence, thanks to the derived equivalence $D^b(\OCat^-)\xrightarrow{\sim}
D^b(\Coh^G(\pi^*\Acal))$, in $\OCat^-$. So there is the unique deformation.

The proof of the claim about a filtration on $P^{st}_{\T_0,\Ring}(x)$ follows Step 3 of the proof in Section 
\ref{SSS_Prop_new_t_hw_proof}. Let $\T'$ denote the subset of $\T$ consisting of the labels $y$ such that 
$\Delta^{st}(y)$ appears in the standard filtration of $P^{st}_{\T_0}(x)$. We can find a coideal finite poset ideal $\T_1\subset \T_0$
such that $\T_0\setminus \T_1$ is finite, $\T'\cap \T_1$ is a single element, say $\tau$, and this element is maximal in $\T'$.
We have an epimorphism $P^{st}_{\T_0}(x)\rightarrow P^{st}_{\T_1}(x)$ whose kernel is the direct sum of several copies 
of $\Delta^{st}(\tau)$. The latter object has no higher self-extensions, and hence admits a unique $\Ring$-flat 
deformation in $\OCat_\Ring^-$. Thanks to Lemma \ref{Lem:new_stand}, this deformation is the direct sum of
several copies of $\Delta^{st}_\Ring(\tau)$. So the kernel of the epimorphism $P^{st}_{\T_0,\Ring}(x)\rightarrow P^{st}_{\Ring,\T_1}(x)$
is the direct sum of several copies of $\Delta^{st}_\Ring(\tau)$. Now we can replace $\T_0$ with $\T_1$
and argue by induction on the cardinality of $\T'$.
\end{proof}

\subsubsection{Algebra $A_{\Ring,\T_0}$}
We define the non-unital $\Ring$-algebra $A_{\Ring,\T_0}$ as $$\left(\bigoplus_{x,y\in \T_0}\Hom_{\OCat_\Ring^-}(P^{st}_{\Ring,\T_0}(x),P^{st}_{\Ring,\T_0}(y))\right)^{opp}.$$ 

Set
$A_{\T_0}:=\C\otimes_{\Ring}A_{\Ring,\T_0}$. Since there are no higher Ext's between the 
objects $P^{st}_{\T_0}(x)$, we see that $A_{\T_0}=\left(\bigoplus_{x,y\in \T_0}\Hom_{\OCat^-}(P^{st}_{\T_0}(x),P^{st}_{\T_0}(y)\right)^{opp}$.
It follows that the category $A_{\T_0}\operatorname{-mod}$ is identified with the highest weight 
subcategory $\Coh^G(\pi^*\Acal)_{\T_0}$, an equivalence is defined by sending $A_{\T_0}e_x$ to $P^{st}_{\T_0}(x)$
for all $x\in \T_0$. 

\begin{Lem}\label{Lem:nonunit_alg_property}
The algebra $A_{\Ring,\T_0}$ satisfies assumptions (i) and (ii) of Section \ref{SSS_loc_unit}.
Moreover, for every $x\in \T_0$, the $A_{\Ring,\T_0}$-module $A_{\Ring,\T_0} e_x$ is Noetherian.
\end{Lem}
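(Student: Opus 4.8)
The plan is to verify the three assertions in order, reducing everything to finite truncations where the results of Section \ref{SS_hw_finite} on highest weight categories over $\Ring$ apply directly.

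First I would check condition (i) of Section \ref{SSS_loc_unit}, local unitality: this is essentially a formal consequence of the fact that, for any $x,y\in\T_0$, the bimodule $\Hom_{\OCat^-_\Ring}(P^{st}_{\Ring,\T_0}(x),P^{st}_{\Ring,\T_0}(y))$ is nonzero only when $x\preceq y$ in the order $\leqslant^{st}$ (this follows from Lemma \ref{Lem:proj_deformation}, which says $P^{st}_{\Ring,\T_0}(y)$ is filtered by $\Delta^{st}_\Ring(z)$ with $z\geqslant^{st}y$, combined with Lemma \ref{Lem:cl_stand_properties}(1)). Given $a\in A_{\Ring,\T_0}$, the component of $a$ in $e_yA_{\Ring,\T_0}e_x$ is therefore supported on the finite set of pairs with $y\leqslant^{st}x$; but since $\T_0$ is coideal finite, the coideal $\{z\in\T_0\mid z\geqslant^{st}x\}$ is finite for each $x$, and likewise, running over the finitely many $x$ occurring in $a$, we land in a finite subset $\T'\subset\T_0$ with $e_{\T'}a=ae_{\T'}=a$. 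For condition (ii), local finite projectivity, I would observe that for a finite subset $\T'\subset\T_0$ the bimodule $e_{\T'}A_{\Ring,\T_0}e_{\T'}$ is the (opposite) endomorphism algebra of $\bigoplus_{x\in\T'}P^{st}_{\Ring,\T_0}(x)$ in $\OCat^-_\Ring$; since each $P^{st}_{\Ring,\T_0}(x)$ is standardly filtered and $\Ring$-flat (Lemma \ref{Lem:proj_deformation}), and $\OCat^-_\Ring$ is highest weight over $\Ring$ with finitely generated $\Hom$'s, the $\Hom$-module between two standardly filtered $\Ring$-flat objects is a finitely generated projective (in fact, it is even free after passing to the associated standard-filtration filtration and using axiom (i)) $\Ring$-module — this is exactly the statement that standardly filtered objects are "projective over $\Ring$" in the sense of condition (I) of Section \ref{SSS_defi_coideal_finite_hw}, which holds because $\Ext^{>0}$ between standard objects and costandard objects vanishes.

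For the Noetherian statement, the plan is to apply Lemma \ref{Lem:deformation_Noetherian}: since $\Ring=\C[\hat\h^*]^{\wedge_0}$ is a complete regular local ring with residue field $\C$, it suffices to show that $A_{\T_0}e_x$ is a Noetherian $A_{\T_0}$-module, where $A_{\T_0}=\C\otimes_\Ring A_{\Ring,\T_0}$. But as noted just before the lemma, $A_{\T_0}\operatorname{-mod}$ is identified with the highest weight subcategory $\Coh^G(\pi^*\Acal)_{\T_0}$ of $\Coh^G(\pi^*\Acal)$ under $A_{\T_0}e_x\mapsto P^{st}_{\T_0}(x)$, and $\Coh^G(\pi^*\Acal)$ is Noetherian (Step 4 of the proof in Section \ref{SSS_Prop_new_t_hw_proof}: $\pi^*\Acal$ is a coherent sheaf of algebras on $\tilde\Nilp$, so $\Coh^G(\pi^*\Acal)$ is a Noetherian category); hence $P^{st}_{\T_0}(x)$ is a Noetherian object, i.e. $A_{\T_0}e_x$ is a Noetherian $A_{\T_0}$-module, and Lemma \ref{Lem:deformation_Noetherian} promotes this to $A_{\Ring,\T_0}e_x$ being Noetherian over $A_{\Ring,\T_0}$.

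The main obstacle I anticipate is the verification of condition (ii) — local finite \emph{projectivity} over $\Ring$ — because it is not enough to know the $\Hom$-modules are finitely generated; one genuinely needs $\Ring$-projectivity, and this relies on the fact that $P^{st}_{\Ring,\T_0}(x)$ is not merely $\Ring$-flat but standardly filtered with $\Ring$-flat standard subquotients, so that $\Hom_{\OCat^-_\Ring}(P^{st}_{\Ring,\T_0}(x),P^{st}_{\Ring,\T_0}(y))$ can be computed via a resolution and seen to be a summand of a free $\Ring$-module. This is where Lemma \ref{Lem:proj_deformation} and the general theory of standardly filtered objects over $\Ring$ (Section \ref{SS_hw_finite}, and the $\Ext$-vanishing in \cite[Proposition 4.13]{rouqqsch}) do the real work; everything else is bookkeeping with the coideal-finiteness of $\T_0$.
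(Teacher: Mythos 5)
Your argument for the Noetherian assertion is exactly the paper's: identify $A_{\T_0}\operatorname{-mod}$ with the highest weight subcategory $\Coh^G(\pi^*\Acal)_{\T_0}$, note that the latter is a Noetherian category, and then invoke Lemma~\ref{Lem:deformation_Noetherian} to lift Noetherianity from $A_{\T_0}e_x$ to $A_{\Ring,\T_0}e_x$. Your argument for condition (ii) is also essentially the paper's (the paper phrases it as: the $\Hom$-module is a finitely generated $\Ring$-flat deformation of its special fiber, hence projective over the regular local ring $\Ring$; you phrase it in terms of $\Hom$'s between standardly filtered objects being $\Ring$-projective, which amounts to the same thing).

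However, your argument for condition (i) rests on a false claim: it is \emph{not} true that $\Hom_{\OCat^-_\Ring}\bigl(P^{st}_{\Ring,\T_0}(x),P^{st}_{\Ring,\T_0}(y)\bigr)\neq 0$ implies $x\leqslant^{st}y$. The object $P^{st}_{\Ring,\T_0}(y)$ is filtered by $\Delta^{st}_\Ring(z)$ with $z\geqslant^{st}y$, and $P^{st}_{\Ring,\T_0}(x)$ by $\Delta^{st}_\Ring(w)$ with $w\geqslant^{st}x$; Lemma~\ref{Lem:cl_stand_properties}(1) then only yields that a nonzero $\Hom$ requires some $w\geqslant^{st}x$ and $z\geqslant^{st}y$ with $w\leqslant^{st}z$ --- i.e.\ that $x$ and $y$ admit a common upper bound, not that $x\leqslant^{st}y$. (In a standard highest weight category, $\Hom(P(x),P(y))$ computes the multiplicity $[P(y):L(x)]$, which is typically nonzero for incomparable $x,y$.) Fortunately none of this is needed: $A_{\Ring,\T_0}$ is by construction a \emph{direct sum} $\bigoplus_{x,y\in\T_0}\Hom(\cdot,\cdot)^{opp}$, so every element has only finitely many nonzero components and local unitality is immediate --- this is why the paper dismisses (i) as vacuous. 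Your overcomplication introduces a genuine error, but it does not invalidate the proof since the conclusion is trivially true; you should simply delete the $\Hom$-vanishing argument.
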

\begin{proof}
(i) is vacuous. (ii) follows because Hom's in $D^b(\OCat^-_\Ring)$ are finitely generated over $\Ring$
and $\Hom_{\OCat_\Ring^-}(P^{st}_{\T_0,\Ring}(x),P^{st}_{\T_0,\Ring}(y))$ is an $\Ring$-flat deformation 
of $\Hom_{\OCat^-}(P^{st}_{\T_0}(x),P^{st}_{\T_0}(y))$. 

Thanks to the equivalence  $A_{\T_0}\operatorname{-mod}\xrightarrow{\sim} \Coh^G(\pi^*\Acal)_{\T_0}$,
the module $A_{\T_0}e_x$ is Noetherian.  The claim that $A_{\Ring,\T_0} e_x$ is Noetherian now follows from
Lemma \ref{Lem:deformation_Noetherian}. 
\end{proof}

\subsubsection{Full embedding}
Consider the category $A_{\Ring,\T_0}\operatorname{-mod}$, see Section \ref{SSS_loc_unit}. 
We have the functor
\begin{equation}\label{eq:F_functor}\mathcal{F}:D^-(A_{\Ring,\T_0}\operatorname{-mod})\rightarrow D^-(\OCat^-_\Ring), 
M\mapsto \left(\bigoplus_{\tau\in \T_0}P^{st}_{\T_0,\Ring}(x)\right)\otimes^L_{A_{\Ring,\T_0}}M
\end{equation}
Note that the image of $\mathcal{F}$ is indeed in $D^-(\OCat^-_\Ring)$ as every object in 
$A_{\Ring,\T_0}\operatorname{-mod}$ is a quotient of a finite direct sum of objects of the form 
$A_{\Ring,\T_0}e_\tau$. When we need to indicate the dependence of $\mathcal{F}$ on $\T_0$ we write 
$\mathcal{F}_{\T_0}$.

The following lemma summarizes properties of $\mathcal{F}$.

\begin{Lem}\label{Lem:full_embedding_deformed}
The following claims hold:
\begin{enumerate}
\item The functor $\mathcal{F}$ sends $D^b(A_{\Ring,\T_0}\operatorname{-mod})$ to $D^b(\OCat^-_\Ring)$.
\item The functor $\mathcal{F}$ is a full embedding $D^b(A_{\Ring,\T_0}\operatorname{-mod})\rightarrow D^b(\OCat^-_\Ring)$.
\item The essential image  $\mathcal{F}(D^b(A_{\Ring,\T_0}\operatorname{-mod}))$ contains the objects $\Delta^{st}_\Ring(x), x\in \T_0$.
\end{enumerate}
\end{Lem}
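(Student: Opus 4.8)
The plan is to prove the three claims of Lemma \ref{Lem:full_embedding_deformed} in order, bootstrapping each from the previous one and from the corresponding statements over $\C$ (which are encoded in Proposition \ref{Prop:new_t_structure_properties} and the identification $A_{\T_0}\operatorname{-mod}\cong \Coh^G(\pi^*\Acal)_{\T_0}$).

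\textbf{Step 1: boundedness.} To prove (1), first observe that $\mathcal{F}$ sends the generators $A_{\Ring,\T_0}e_x$ to the objects $P^{st}_{\T_0,\Ring}(x)\in \OCat^-_\Ring$, which are honest objects of the heart, hence lie in $D^b(\OCat^-_\Ring)$. Since $A_{\Ring,\T_0}e_x$ is Noetherian (Lemma \ref{Lem:nonunit_alg_property}) and the category $A_{\Ring,\T_0}\operatorname{-mod}$ has enough projectives of this form, every object of $A_{\Ring,\T_0}\operatorname{-mod}$ admits a (possibly infinite, but at each homological degree finitely generated) resolution by finite sums of the $A_{\Ring,\T_0}e_x$. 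So a priori $\mathcal{F}$ only lands in $D^-(\OCat^-_\Ring)$. To upgrade this to $D^b$, I would invoke the evaluation functor $\C\otimes^L_\Ring\bullet$ together with Lemma \ref{Lem:evaluation_properties}(ii): it suffices to check that $\C\otimes^L_\Ring \mathcal{F}(M)$ lies in $D^b(\OCat^-)$ for $M\in D^b(A_{\Ring,\T_0}\operatorname{-mod})$. Since the $P^{st}_{\T_0,\Ring}(x)$ are $\Ring$-flat with $\C\otimes_\Ring P^{st}_{\T_0,\Ring}(x)\cong P^{st}_{\T_0}(x)$ (Lemma \ref{Lem:proj_deformation}), base change commutes with $\mathcal{F}$, i.e. $\C\otimes^L_\Ring\mathcal{F}_{\T_0}(M)\cong \mathcal{F}_{\T_0}(\C\otimes^L_{A_{\Ring,\T_0}}M)$ is the analogous convolution over $\C$, and this is the image of an object of $D^b(A_{\T_0}\operatorname{-mod})=D^b(\Coh^G(\pi^*\Acal)_{\T_0})$ under the functor $M'\mapsto (\bigoplus_x P^{st}_{\T_0}(x))\otimes^L_{A_{\T_0}}M'$. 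But this latter functor is an equivalence onto $D^b(\Coh^G(\pi^*\Acal)_{\T_0})\subset D^b(\OCat^-)$ (it is the inverse of $\bigoplus_x\Hom(P^{st}_{\T_0}(x),\bullet)$ and has bounded cohomological amplitude because $\Coh^G(\pi^*\Acal)_{\T_0}$ has finite global dimension, the $P^{st}_{\T_0}(x)$ being projective generators of a highest weight category with finite poset). Hence $\C\otimes^L_\Ring\mathcal{F}(M)\in D^b(\OCat^-)$, so $\mathcal{F}(M)\in D^b(\OCat^-_\Ring)$; here I use that objects of $\OCat^-_\Ring$ are $\mathfrak{m}$-adically separated, which holds since $\OCat^-_\Ring$ is a highest weight category over the complete local ring $\Ring$ and its objects are finitely generated over finite $\Ring$-algebras.

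\textbf{Step 2: full faithfulness.} For (2) I would show $\mathcal{F}$ is fully faithful by checking it on the generating objects and using a dévissage. It suffices to prove
\[
\operatorname{Hom}_{D^b(A_{\Ring,\T_0}\operatorname{-mod})}(A_{\Ring,\T_0}e_x, A_{\Ring,\T_0}e_y[n])\xrightarrow{\;\sim\;}\operatorname{Hom}_{D^b(\OCat^-_\Ring)}(P^{st}_{\T_0,\Ring}(x),P^{st}_{\T_0,\Ring}(y)[n])
\]
for all $x,y\in \T_0$ and all $n$. The left side is $\operatorname{Ext}^n_{A_{\Ring,\T_0}}(A_{\Ring,\T_0}e_x,A_{\Ring,\T_0}e_y)$, which is $e_xA_{\Ring,\T_0}e_y=\operatorname{Hom}_{\OCat_\Ring^-}(P^{st}_{\T_0,\Ring}(y),P^{st}_{\T_0,\Ring}(x))^{opp}$ in degree $0$ and vanishes for $n>0$ since $A_{\Ring,\T_0}e_x$ is projective. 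On the right side, the degree-$0$ Hom matches by definition of $A_{\Ring,\T_0}$, so I must show $\operatorname{Hom}_{D^b(\OCat^-_\Ring)}(P^{st}_{\T_0,\Ring}(x),P^{st}_{\T_0,\Ring}(y)[n])=0$ for $n\neq 0$. For $n<0$ this is automatic as both objects lie in the heart. For $n>0$, I would reduce modulo $\mathfrak{m}$: using $\Ring$-flatness of the $P$'s and the universal coefficients spectral sequence, it is enough to know that $\operatorname{Hom}_{D^b(\OCat^-)}(P^{st}_{\T_0}(x),P^{st}_{\T_0}(y)[n])=0$ for $n>0$, and this holds because $P^{st}_{\T_0}(x),P^{st}_{\T_0}(y)$ are projective in $\Coh^G(\pi^*\Acal)_{\T_0}$, hence have no higher Ext there, hence none in $\OCat^-$ either by Lemma \ref{Lem:derived_full_embedding_interval finite} via the equivalence $D^b(\OCat^-)\cong D^b(\Coh^G(\pi^*\Acal))$. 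More carefully, flatness over the regular local ring $\Ring$ lets me argue by induction on $\dim\Ring$, killing one regular parameter at a time and using the long exact sequence, with $\mathfrak{m}$-adic separatedness (as in Lemma \ref{Lem:evaluation_properties}) to conclude the vanishing ascends from the special fiber. Once the Ext-isomorphism is established on generators, full faithfulness on all of $D^b(A_{\Ring,\T_0}\operatorname{-mod})$ follows by the standard argument: the generators generate the triangulated category, and a triangulated functor that is fully faithful on a generating set with the "no negative Exts among generators" property (here even "no nonzero higher Exts") is fully faithful.

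\textbf{Step 3: standards are in the image.} For (3), I must exhibit each $\Delta^{st}_\Ring(x)$, $x\in\T_0$, as $\mathcal{F}$ of some object of $D^b(A_{\Ring,\T_0}\operatorname{-mod})$. Over $\C$, the equivalence $A_{\T_0}\operatorname{-mod}\xrightarrow{\sim}\Coh^G(\pi^*\Acal)_{\T_0}$ sends some (explicitly, the standardly-filtered) $A_{\T_0}$-module $D_{\T_0}(x)$ to $\Delta^{st}(x)$; concretely $D_{\T_0}(x)=\bigoplus_y\operatorname{Hom}_{\OCat^-}(P^{st}_{\T_0}(y),\Delta^{st}(x))$, which is the $A_{\T_0}$-module with a two-term projective presentation coming from the kernel-of-epimorphism structure in axiom (v). I would lift this presentation to $\Ring$: by Lemma \ref{Lem:proj_deformation}, $P^{st}_{\T_0,\Ring}(x)$ surjects onto $\Delta^{st}_\Ring(x)$ with kernel filtered by $\Delta^{st}_\Ring(y)$, $y\geqslant^{st}x$; iterating inside the finite coideal, and using that the maps can be deformed because the relevant Hom/Ext groups are $\Ring$-flat, produces a finite complex $Q^\bullet$ of finite sums of $A_{\Ring,\T_0}e_y$'s with $\mathcal{F}(Q^\bullet)\simeq \Delta^{st}_\Ring(x)$; equivalently $Q^\bullet$ is a projective resolution in $A_{\Ring,\T_0}\operatorname{-mod}$ of the $\Ring$-flat module $\bigoplus_y\operatorname{Hom}_{\OCat^-_\Ring}(P^{st}_{\T_0,\Ring}(y),\Delta^{st}_\Ring(x))$, whose $\C$-fiber is $D_{\T_0}(x)$. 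Then $\mathcal{F}$ of this module is $\Delta^{st}_\Ring(x)$ by construction. This uses $\Ring$-flatness of $\Delta^{st}_\Ring(x)$ (Lemma \ref{Lem:new_stand}, Lemma \ref{Lem:cl_stand_properties}) and the compatibility of $\mathcal{F}$ with base change.

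\textbf{Main obstacle.} The delicate point is the higher-Ext vanishing over $\Ring$ in Step 2 (equivalently, that the $P^{st}_{\T_0,\Ring}(x)$ behave like projectives in the relevant range): the category $\OCat^-_\Ring$ is highest weight with an interval finite poset and is not finite global dimension globally, so one cannot directly quote a finite-gldim statement; instead one must localize to the finite highest weight subcategory / coideal quotient attached to $\T_0$, deform the $\C$-fiber vanishing upward using $\Ring$-flatness plus completeness of $\Ring$, and check that $\mathcal{F}$ is compatible with the relevant base-change and truncation functors. Managing the interaction between the (possibly unbounded below) projective resolutions over the locally unital algebra $A_{\Ring,\T_0}$ and the boundedness statement (1) is the part that requires the most care, and it is precisely where Lemma \ref{Lem:evaluation_properties}, Lemma \ref{Lem:deformation_Noetherian}, and the $\mathfrak{m}$-adic separatedness of $\OCat^-_\Ring$ all get used together.
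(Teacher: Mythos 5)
Your proof is correct and takes essentially the same route as the paper: for (1), intertwine $\mathcal{F}$ with its special fiber $\mathcal{F}_0$ via the evaluation functor and invoke Lemma~\ref{Lem:evaluation_properties}(ii); for (2), reduce to full faithfulness on the generators $A_{\Ring,\T_0}e_\tau$ (the paper works in $D^-$, which avoids the generation issue in $D^b$ that your ``standard argument'' glosses over; the key higher-$\Ext$ vanishing you spell out is the content the paper compresses into ``clear from the construction'', and is in fact supplied by Lemma~\ref{Lem:stand_image} together with the $\Delta^{st}$-filtration of $P^{st}_{\T_0,\Ring}$); for (3), iterate the epimorphism of Lemma~\ref{Lem:proj_deformation}. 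One small correction in Step~1: $\T_0$ is coideal finite but generally infinite, so $\Coh^G(\pi^*\Acal)_{\T_0}$ need not have finite global dimension as you assert. That claim isn't needed: $\mathcal{F}_0$ restricted to $D^b(A_{\T_0}\operatorname{-mod})$ is by construction the inverse of the abelian equivalence $\bigoplus_x\Hom(P^{st}_{\T_0}(x),\bullet)$, hence is exact on the heart and therefore sends $D^b$ to $D^b$ with no appeal to global dimension.
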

\begin{proof}
(1): Consider the functor $\mathcal{F}_0:D^-(A_{\T_0}\operatorname{-mod})\rightarrow D^-(\OCat^-)$ given by 
$$\left(\bigoplus_{x\in \T_0}P^{st}_{\T_0}(x)\right)\otimes^L_{A_{\T_0}}\bullet.$$
Its restriction to $D^b(A_{\T_0}\operatorname{-mod})$ is the composition of the t-exact functor 
$D^b(A_{\T_0}\operatorname{-mod})\rightarrow D^b(\Coh^G(\pi^*\Acal))$, the equivalence 
$D^b(\Coh^G(\pi^*\Acal))\xrightarrow{\sim} D^b(\OCat^-)$, and the full embedding 
$D^b(\OCat^-)\hookrightarrow D^-(\OCat^-)$. So $\mathcal{F}_0$ sends $D^b(A_{\T_0}\operatorname{-mod})$
to $D^b(\OCat^-)$.


Now note that the evaluation functors $$\C\otimes^L_\Ring\bullet:
D^-(A_{\Ring,\T_0}\operatorname{-mod})\rightarrow D^-(A_{\T_0}\operatorname{-mod}), 
D^-(\OCat^-_\Ring)\rightarrow D^-(\OCat^-)$$ 
intertwine $\mathcal{F}$ and $\mathcal{F}_0$. 
Using (ii) of Lemma \ref{Lem:evaluation_properties}, we get  (1).

(2): It is enough to show that $\mathcal{F}: D^-(A_{\Ring,\T_0}\operatorname{-mod})\rightarrow D^-(\OCat^-_\Ring)$
is a full embedding. Every object in $ D^-(A_{\Ring,\T_0}\operatorname{-mod})$ is represented by a complex
whose terms are finite direct sums of $A_{\Ring,\T_0} e_\tau$'s. Each $A_{\Ring,\T_0} e_\tau$ is sent to an object in $\OCat^-_\Ring$.
So, it is enough to show that $\mathcal{F}$ is fully faithful on the objects $A_{\Ring,\T_0} e_\tau,\tau\in \T_0$.
But this is clear from the construction of the functor $\mathcal{F}$.

(3): The essential image of $\mathcal{F}$
contains the objects $P^{st}_{\T_0,\Ring}(x)$ for all $x\in \T_0$ and hence, by Lemma \ref{Lem:proj_deformation}, 
the objects $\Delta^{st}_\Ring(x)$ as well. 
\end{proof}

\subsubsection{Highest weight structure}
The goal of this part is to establish a highest weight structure on $A_{\Ring,\T_0}\operatorname{-mod}$.
We start with candidates for the standard objects. For $x\in \T_0$, let $\Delta^{st,A}_{\Ring}(x)$
be the object in $D^b(A_{\Ring,\T_0}\operatorname{-mod})$ with $\mathcal{F}(\Delta^{st,A}_{\Ring}(x))
\cong \Delta^{st}_\Ring(x)$, this makes sense thanks to (3) of Lemma \ref{Lem:full_embedding_deformed}.

\begin{Lem}\label{Lem:stand_image}
For each $x\in \T_0$, the complex $\Delta^{A,st}_\Ring(x)$ is concentrated in homological degree $0$.
\end{Lem}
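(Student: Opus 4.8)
The goal is to show that $\Delta^{A,st}_\Ring(x)$, defined as the unique object of $D^b(A_{\Ring,\T_0}\operatorname{-mod})$ mapping to $\Delta^{st}_\Ring(x)$ under the full embedding $\mathcal{F}$, is concentrated in homological degree $0$. The natural strategy is to reduce modulo $\mathfrak{m}$ and invoke the known classical statement, then lift. First I would apply the evaluation functor $\C\otimes^L_\Ring\bullet$. Since $\mathcal{F}$ and $\mathcal{F}_0$ are intertwined by the evaluation functors (as used in the proof of Lemma \ref{Lem:full_embedding_deformed}), and since $\mathcal{F}_0$ restricted to $D^b(A_{\T_0}\operatorname{-mod})$ is the composition of the t-exact functor $D^b(A_{\T_0}\operatorname{-mod})\to D^b(\Coh^G(\pi^*\Acal))$ with the equivalence to $D^b(\OCat^-)$, the object $\C\otimes^L_\Ring \Delta^{A,st}_\Ring(x)$ corresponds under $\mathcal{F}_0$ to $\C\otimes^L_\Ring\Delta^{st}_\Ring(x)\cong\Delta^{st}(x)$ (using (\ref{eq:cl_stand_base_change})). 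By Lemma \ref{Lem:new_stand}, $\Delta^{st}(x)\in\OCat^-$, so it lies in the heart; pulling back through the t-exact $\mathcal{F}_0$ and the equivalence $A_{\T_0}\operatorname{-mod}\xrightarrow{\sim}\Coh^G(\pi^*\Acal)_{\T_0}$, we conclude $\C\otimes^L_\Ring\Delta^{A,st}_\Ring(x)$ is concentrated in degree $0$, i.e. lies in $A_{\T_0}\operatorname{-mod}$.

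\smallskip

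Next I would use this to deduce the statement over $\Ring$. A priori $\Delta^{A,st}_\Ring(x)$ is an object of $D^b(A_{\Ring,\T_0}\operatorname{-mod})$, say with homology $H_j$ in degrees $j$ in some finite range. The plan is to show the only nonzero homology is in degree $0$. The key tool is Lemma \ref{Lem:evaluation_properties}: since $A_{\Ring,\T_0}\operatorname{-mod}$ consists of objects separated in the $\mathfrak{m}$-adic topology (Hom modules into the $P^{st}_{\T_0,\Ring}(x)$'s are finitely generated $\Ring$-modules, and modules over $A_{\Ring,\T_0}$ are built from these, so $\mathfrak{m}$-adic separatedness holds), part (i) of that lemma gives that $\C\otimes^L_\Ring\Delta^{A,st}_\Ring(x)\in D^{b,\leqslant 0}$ forces $\Delta^{A,st}_\Ring(x)\in D^{b,\leqslant 0}$; combined with the standard long exact sequence argument, the fact that the reduction is concentrated in degree $0$ pins the top homology. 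More precisely: having shown $\Delta^{A,st}_\Ring(x)\in D^{b,\leqslant 0}$, I would argue that $H_0(\Delta^{A,st}_\Ring(x))$ is $\Ring$-flat (its reduction mod $\mathfrak{m}$ is $\Delta^{st}(x)$, which has the expected dimension, so there is no $\mathrm{Tor}$, forcing $H_{-j}=0$ for $j>0$ by the homology long exact sequence for multiplication by a regular sequence generating $\mathfrak{m}$, exactly as in the proof of Lemma \ref{Lem:evaluation_properties}).

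\smallskip

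The cleanest way to organize this: choose a regular system of parameters $x_1,\dots,x_k$ for $\Ring$; taking the derived quotient successively, at each stage one gets a long exact sequence $\cdots\to H_j\xrightarrow{x_i} H_j\to H_j(\text{quotient})\to H_{j-1}\to\cdots$. Since the final reduction $\C\otimes^L_\Ring\Delta^{A,st}_\Ring(x)$ has homology only in degree $0$, and each $H_j$ of $\Delta^{A,st}_\Ring(x)$ is a finitely generated $\Ring$-module hence $\mathfrak{m}$-adically separated, an induction (as in Lemma \ref{Lem:evaluation_properties}) shows all $H_j$ with $j\ne 0$ vanish. This is essentially the same Nakayama-type argument twice: once to kill positive degrees (using $D^{b,\leqslant 0}$ membership) and once to kill negative degrees (using $\mathfrak{m}$-adic separatedness plus surjectivity of multiplication in high homological degree).

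\smallskip

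\textbf{Main obstacle.} The main point requiring care is verifying that $H_0(\Delta^{A,st}_\Ring(x))$ is $\Ring$-flat, equivalently that the higher homology of $\Delta^{A,st}_\Ring(x)$ vanishes after reduction mod $\mathfrak{m}$ —\ but this is precisely what the identification $\C\otimes^L_\Ring\Delta^{A,st}_\Ring(x)\cong \Delta^{st}(x)\in\OCat^-$ provides, via the t-exactness of $\mathcal{F}_0$ on $D^b(A_{\T_0}\operatorname{-mod})$ established in the proof of Lemma \ref{Lem:full_embedding_deformed}. So the real content is bookkeeping with the evaluation functor and Lemma \ref{Lem:evaluation_properties}; no new geometric input is needed beyond what is already in hand (Lemma \ref{Lem:new_stand}, the isomorphism (\ref{eq:cl_stand_base_change}), and the properties of $\mathcal{F}$).
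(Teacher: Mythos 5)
Your argument is correct and is, at bottom, the same as the paper's: reduce mod $\mathfrak{m}$ (where the degree-$0$ concentration follows from $P^{st}_{\T_0}(y)$ being projective in the highest weight subcategory $\Coh^G(\pi^*\Acal)_{\T_0}$, of which $\Delta^{st}(x)$ is an object), then lift to $\Ring$ by a Nakayama-type argument — the paper phrases the lift as ``it follows from the $\Ring$-flat deformation'' while you invoke Lemma \ref{Lem:evaluation_properties}, but these are the same computation. One small imprecision worth flagging: the relevant fact in your Step 1 is not $\Delta^{st}(x)\in\OCat^-$ (Lemma \ref{Lem:new_stand}, which concerns the perverse heart) nor t-exactness of $\mathcal{F}_0$ with respect to the perverse t-structure, but simply that $\Delta^{st}(x)$ is an object of $\Coh^G(\pi^*\Acal)_{\T_0}\cong A_{\T_0}\operatorname{-mod}$, so its preimage under the fully faithful $\mathcal{F}_0|_{D^b}$ lies in degree $0$.
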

\begin{proof}
This boils down to checking that $\operatorname{Ext}^i_{\OCat^-_\Ring}(P^{st}_{\T_0,\Ring}(y),\Delta^{st}_\Ring(x))=0$
for $i>0$ and $x,y\in \T_0$. Recall, Lemma \ref{Lem:derived_full_embedding_interval finite}, 
that $D^b(\Coh^G(\pi^*\Acal)_{\T_0})\rightarrow D^b(\Coh^G(\pi^*\Acal))$ is a full embedding. 
Since $P^{st}_{\T_0}(y)$ is a projective object in $\Coh^G(\pi^*\Acal)_{\T_0}$
and $\Delta^{st}(y)$ is an object of that category, we get 
$\operatorname{Ext}^i_{\OCat^-}(P^{st}_{\T_0}(y),\Delta^{st}(x))=\operatorname{Ext}^i_{\Coh^G(\pi^*\Acal)}(P^{st}_{\T_0}(y),\Delta^{st}(x))=0$ for all $i>0$. Recall that $P^{st}_{\Ring,\T_0}(y),
\Delta^{st}_\Ring(x)$ are $\Ring$-flat deformations of $P^{st}_{\T_0}(y), \Delta^{st}(x)$. It follows that
$\operatorname{Ext}^i_{\OCat^-_\Ring}(P^{st}_{\Ring,\T_0}(y),\Delta^{st}_\Ring(x))=0$ for $i>0$. 
\end{proof}

Then we have the following result.

\begin{Prop}\label{Prop:ART0_hw}
The category $A_{\Ring,\T_0}\operatorname{-mod}$ is highest weight with poset $(\T_0,\leqslant^{st})$
and standard objects $\Delta^{A,st}_\Ring(x)$.
\end{Prop}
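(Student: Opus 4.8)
The plan is to verify axioms (i)--(v) from Section~\ref{SSS_hw_finite_def} for $\Cat_\Ring := A_{\Ring,\T_0}\operatorname{-mod}$ with poset $(\T_0,\leqslant^{st})$ and standard objects $\Delta^{A,st}_\Ring(x)$, transporting everything through the full embedding $\mathcal{F}: D^b(A_{\Ring,\T_0}\operatorname{-mod})\hookrightarrow D^b(\OCat^-_\Ring)$ of Lemma~\ref{Lem:full_embedding_deformed}. First I would record that $\mathcal{F}$ identifies $\Delta^{A,st}_\Ring(x)$ with $\Delta^{st}_\Ring(x)$ (Lemma~\ref{Lem:stand_image}) and that $P^{st}_{\T_0,\Ring}(x)$ corresponds, under the defining equivalence $A_{\Ring,\T_0}\operatorname{-mod}\xrightarrow{\sim}\Cat_\Ring$, to the projective $A_{\Ring,\T_0}e_x$; since $\mathcal{F}$ is fully faithful on bounded complexes, all $\Hom$ and $\Ext$ computations in $\Cat_\Ring$ between the objects $A_{\Ring,\T_0}e_x$, $\Delta^{A,st}_\Ring(x)$ may be carried out between $P^{st}_{\T_0,\Ring}(x)$, $\Delta^{st}_\Ring(x)$ inside $\OCat^-_\Ring$. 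I would also note $\Cat_\Ring$ is a genuine highest-weight-candidate category in the coideal-finite sense: $A_{\Ring,\T_0}$ satisfies (i),(ii) of Section~\ref{SSS_loc_unit} and each $A_{\Ring,\T_0}e_x$ is Noetherian by Lemma~\ref{Lem:nonunit_alg_property}, so $\Cat_\Ring$ is Noetherian with finitely generated, $\Ring$-flat $\Hom$'s; the poset $(\T_0,\leqslant^{st})$ is coideal finite by Corollary~\ref{Cor:order_implication} restricted to the coideal $\T_0$.

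Next, the axioms themselves. Axiom (i) (flatness of $\Delta^{A,st}_\Ring(x)$ over $\Ring$) is immediate from $\Delta^{st}_\Ring(x)$ being $\Ring$-flat (Section~\ref{SSS:Delta_st}, Lemma~\ref{Lem:new_stand}), since $\mathcal{F}$ is $\Ring$-linear and flatness can be detected on $\Hom$'s into it. Axiom (ii) ($\Ring\xrightarrow{\sim}\End(\Delta^{A,st}_\Ring(x))$) is part (2) of Lemma~\ref{Lem:cl_stand_properties}. Axiom (iii) ($\Hom(\Delta^{A,st}_\Ring(\tau_1),\Delta^{A,st}_\Ring(\tau_2))\neq 0\Rightarrow \tau_1\leqslant^{st}\tau_2$) follows from part (1) of Lemma~\ref{Lem:cl_stand_properties}, the vanishing $\operatorname{Ext}^i(\Delta^{st}_\Ring(x),\Delta^{st}_\Ring(y))\neq 0\Rightarrow x\leqslant^{st}y$. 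For axiom (iv), I would use that every object of $\Cat_\Ring$ is a quotient of a finite direct sum of the $A_{\Ring,\T_0}e_x$'s, i.e.\ corresponds to an object admitting an epimorphism from $\bigoplus_i P^{st}_{\T_0,\Ring}(x_i)$; by Lemma~\ref{Lem:proj_deformation} each $P^{st}_{\T_0,\Ring}(x_i)$ surjects onto $\Delta^{st}_\Ring(x_i)$ with kernel filtered by $\Delta^{st}_\Ring(y)$, $y\geqslant^{st}x_i$. A nonzero quotient $M$ then receives a nonzero map from some $P^{st}_{\T_0,\Ring}(x)$ with $M$ not killing the image of $\Delta^{st}_\Ring(x)$ for a suitably minimal such $x$; here I would argue, over the residue field $\C$ first and then lift via $\Ring$-flatness and Nakayama-type separatedness, that some $\Delta^{st}_\Ring(x)$ maps nontrivially into $M$ --- concretely, reduce to $\C$ where $\Coh^G(\pi^*\Acal)_{\T_0}$ is known highest weight (Proposition~\ref{Prop:new_t_structure_properties}), where (iv) holds, then use that a map $\Delta^{st}_\Ring(x)\to M$ whose reduction mod $\mathfrak{m}$ is nonzero is itself nonzero.

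Axiom (v) is the main point. For each $x\in\T_0$ I would take $P^{st}_{\T_0,\Ring}(x)$ itself (equivalently $A_{\Ring,\T_0}e_x$) as the required projective: it is projective in $\Cat_\Ring$ by construction, and by Lemma~\ref{Lem:proj_deformation} it admits an epimorphism onto $\Delta^{st}_\Ring(x)$ whose kernel is filtered by $\Delta^{st}_\Ring(y)$ with $y>^{st}x$, i.e.\ lies in $\Cat_\Ring^{\Delta,>x}$. What remains is to check $A_{\Ring,\T_0}e_x$ is actually projective \emph{in the abelian category $\Cat_\Ring$}, which is a tautology, and that $\Delta^{A,st}_\Ring(x)\in\Cat_\Ring$ (not just $D^b$), which is Lemma~\ref{Lem:stand_image}; so the genuine content is that the kernel filtration from Lemma~\ref{Lem:proj_deformation} matches the filtration by $\Delta^{A,st}_\Ring(y)$ under $\mathcal{F}$, which follows since $\mathcal{F}$ is exact (it sends the short exact sequence $0\to K\to P^{st}_{\T_0,\Ring}(x)\to\Delta^{st}_\Ring(x)\to 0$ of $\Ring$-flat objects to one in $\OCat^-_\Ring$, being a left-derived tensor applied to flat-resolved objects) and reflects filtrations by full faithfulness. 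I expect the subtle bookkeeping to be in axiom (iv): ensuring a nonzero morphism from a standard \emph{exists} over $\Ring$ rather than merely over the residue field requires combining the $\C$-level statement with $\Ring$-flatness of $\Hom$-modules and an $\mathfrak{m}$-adic argument, much as in Lemma~\ref{Lem:evaluation_properties}. Finally I would invoke Remark~\ref{Rem:coarsest_hw_order} together with Lemma~\ref{Lem:cl_stand_properties}(1) to confirm that $\leqslant^{st}$ (as opposed to some finer order) is a valid highest weight order, completing the proof.
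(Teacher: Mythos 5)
Your overall strategy --- transporting everything through the full embedding $\mathcal{F}$ of Lemma~\ref{Lem:full_embedding_deformed} and invoking Lemmas~\ref{Lem:stand_image}, \ref{Lem:cl_stand_properties}, \ref{Lem:proj_deformation} --- matches the paper, and your treatment of axioms (ii), (iii), (v) is essentially correct. The problem is with axiom (i), where your argument has a genuine gap.

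You claim flatness of $\Delta^{A,st}_\Ring(x)$ over $\Ring$ is ``immediate from $\Delta^{st}_\Ring(x)$ being $\Ring$-flat, since $\mathcal{F}$ is $\Ring$-linear and flatness can be detected on $\Hom$'s into it.'' Granting the criterion (flatness of $M$ is equivalent to $\Hom(P,M)$ being $\Ring$-flat for all projectives $P$), you would then need to know that $e_y\Delta^{A,st}_\Ring(x)\cong\Hom_{\OCat^-_\Ring}(P^{st}_{\T_0,\Ring}(y),\Delta^{st}_\Ring(x))$ is $\Ring$-flat, and this does \emph{not} follow just from $\Delta^{st}_\Ring(x)$ being $\Ring$-flat: $\Hom$ from a flat module to a flat module is not in general flat. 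You would need to combine it with the vanishing $\operatorname{Ext}^i_{\OCat^-_\Ring}(P^{st}_{\T_0,\Ring}(y),\Delta^{st}_\Ring(x))=0$ for $i>0$ (established in the proof of Lemma~\ref{Lem:stand_image}) and a base-change argument for $R\Hom$. The paper takes a cleaner route that avoids this: since $\mathcal{F}$ commutes with $\C\otimes^L_\Ring\bullet$, one has $\mathcal{F}_0(\C\otimes^L_\Ring\Delta^{A,st}_\Ring(x))\cong\Delta^{st}(x)$, which lies in the heart $\Coh^G(\pi^*\Acal)$; because the equivalence $D^b(A_{\T_0}\operatorname{-mod})\xrightarrow{\sim}D^b(\Coh^G(\pi^*\Acal)_{\T_0})$ is t-exact and $\Delta^{st}(x)\in\Coh^G(\pi^*\Acal)_{\T_0}$, the complex $\C\otimes^L_\Ring\Delta^{A,st}_\Ring(x)$ is concentrated in degree $0$, which is exactly $\Ring$-flatness.

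Separately, your argument for axiom (iv) is correct but significantly overcomplicated. You do not need to reduce to $\C$ and lift via ``Nakayama-type separatedness'': once Lemma~\ref{Lem:proj_deformation} hands you $P^{st}_{\T_0,\Ring}(x)=A_{\Ring,\T_0}e_x$ as an $\Ring$-flat object filtered by $\Delta^{st}_\Ring(?)$'s, axiom (iv) is formal. A nonzero $M\in A_{\Ring,\T_0}\operatorname{-mod}$ has $e_\tau M\neq 0$ for some $\tau$, hence a nonzero $\varphi:A_{\Ring,\T_0}e_\tau\to M$; restricting $\varphi$ to the last step of the standard filtration where it is nonzero and passing to the subquotient produces a nonzero map from some $\Delta^{A,st}_\Ring(\tau')$ to $M$. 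No specialization or lifting is involved.
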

\begin{proof}
It remains to check axioms (i)-(v) from Section \ref{SSS_hw_finite_def}. Axioms (iv) and (v) follow
from Lemma \ref{Lem:proj_deformation}. 

Next we need the following isomorphism, which follows from parts (2) and (3) of  
Lemma \ref{Lem:full_embedding_deformed}
\begin{equation}\label{eq:same_hom}
\operatorname{Hom}_{D^b(\OCat^-_\Ring)}(\Delta^{st}_\Ring(x), \Delta^{st}_\Ring(y)[i])\xrightarrow{\sim}
\operatorname{Hom}_{D^b(A_{\Ring,\T_0})}(\Delta^{A,st}_\Ring(x), \Delta^{A,st}_\Ring(y)[i]).
\end{equation}
Using (\ref{eq:same_hom}) combined with Lemma \ref{Lem:cl_stand_properties} we get axioms (ii) and (iii) from Lemma \ref{Lem:cl_stand_properties}. 

It remains to establish axiom (i): that the $A_{\Ring,\T_0}$-modules $\Delta^{A,st}_\Ring(x)$
is flat over $\Ring$, equivalently, that $\C\otimes^L_\Ring \Delta^{A,st}_\Ring(x)$ does
not have the higher homology. Note that $\mathcal{F}(\C\otimes^L_\Ring \Delta^{A,st}_\Ring(x))\cong \Delta^{st}(x)$.
It follows that  $\C\otimes^L_\Ring \Delta^{A,st}_\Ring(x)\in D^b(A_{\T_0}\operatorname{-mod})\subset 
D^b(\Coh^G(\pi^*\Acal))$ lies in $\Coh^G(\pi^*\Acal)$ and hence in $A_{\T_0}\operatorname{-mod}$ finishing the check of (i). 
\end{proof}

\subsection{Deformation of $\Coh^G(\pi^*\Acal)$}
In this section we define a deformed version of the entire category $\Coh^G(\pi^*\Acal)$. This deformation will be a highest weight category over $\Ring$
with interval finite poset $(W^{a}, \leqslant^{st})$. 
\subsubsection{Full embeddings}
The goal of this part is to prove the following claim.

\begin{Prop}\label{Prop:full_embeddings_hw_subs}
Let $\T_1\subset\T_0\subset \T$ be two coideal finite poset ideals in $(W^{a},\leqslant^{st})$. Then there is an equivalence of $A_{\Ring,\T_1}\operatorname{-mod}$ with a highest weight subcategory
in $A_{\Ring,\T_0}\operatorname{-mod}$ associated to the poset ideal $\T_1\subset \T_0$ that intertwines the full
embeddings $A_{\Ring,\T_i}\operatorname{-mod}\hookrightarrow D^b(\OCat^-_\Ring)$. 
\end{Prop}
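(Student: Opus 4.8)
The strategy is to construct the required equivalence on the underlying abelian categories using the characterization of highest weight subcategories in the coideal finite setting (Section \ref{SSS_hw_sub}, or rather its coideal finite analog in Lemma \ref{Lem:hw_sub_coideal_finite}), and then check that the construction is compatible with the full embeddings $\mathcal{F}_{\T_i}$ into $D^b(\OCat^-_\Ring)$. The key point is that both $A_{\Ring,\T_1}\operatorname{-mod}$ and the relevant Serre subcategory of $A_{\Ring,\T_0}\operatorname{-mod}$ can be described via the projective objects $P^{st}_{\T_i,\Ring}(x)$, and these objects are pinned down uniquely inside $\OCat^-_\Ring$ by Lemma \ref{Lem:proj_deformation} (uniqueness of the $\Ring$-flat deformation). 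So the compatibility is forced once we identify the abstract identification with one coming from $\OCat^-_\Ring$.

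\textbf{Step 1: compare the projective objects.} First I would show that, inside $\OCat^-_\Ring$, there is a canonical epimorphism $P^{st}_{\T_0,\Ring}(x)\twoheadrightarrow P^{st}_{\T_1,\Ring}(x)$ for $x\in\T_1$, realizing $P^{st}_{\T_1,\Ring}(x)$ as the quotient of $P^{st}_{\T_0,\Ring}(x)$ by the subobject $(P^{st}_{\T_0,\Ring}(x))_{\T_0\setminus\T_1}$ from Remark \ref{Rem:filtr_delta_cl} (applied to the coideal $\T_0\setminus\T_1$ inside $\T_0$). This follows by reducing modulo $\mathfrak{m}$: over $\C$, the category $\Coh^G(\pi^*\Acal)_{\T_0}$ is highest weight (Proposition \ref{Prop:new_t_structure_properties}), the object $P^{st}_{\T_0}(x)$ is projective there with a standard filtration, and its maximal quotient lying in $\Coh^G(\pi^*\Acal)_{\T_1}$ is projective in the latter, hence isomorphic to $P^{st}_{\T_1}(x)$ by uniqueness of indecomposable projectives covering $\Delta^{st}(x)$; then one lifts this epimorphism to $\OCat^-_\Ring$ using that the higher self-extensions of these objects vanish (Lemma \ref{Lem:derived_full_embedding_interval finite} combined with the derived equivalence $D^b(\OCat^-)\xrightarrow{\sim}D^b(\Coh^G(\pi^*\Acal))$) so that $\Ring$-flat deformations are unique and morphisms deform.

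\textbf{Step 2: compare the algebras.} Using Step 1, every endomorphism of $\bigoplus_{x\in\T_0}P^{st}_{\T_0,\Ring}(x)$ preserves the sum of kernels of the epimorphisms onto $\bigoplus_{x\in\T_1}P^{st}_{\T_1,\Ring}(x)$ and thus induces an endomorphism of the latter; conversely (again by vanishing of Ext${}^1$ between these projectives, which gives lifting of maps) every endomorphism of $\bigoplus_{x\in\T_1}P^{st}_{\T_1,\Ring}(x)$ lifts. This yields an algebra epimorphism $A_{\Ring,\T_0}\twoheadrightarrow A_{\Ring,\T_1}$ whose kernel is the ideal generated by the idempotents $e_x$ with $x\in\T_0\setminus\T_1$ — exactly the form of Remark \ref{Rem:hw_sub_recovery_coideal_finite}. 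Therefore $A_{\Ring,\T_1}\operatorname{-mod}$ is identified with the Serre subcategory of $A_{\Ring,\T_0}\operatorname{-mod}$ spanned by modules killed by all $e_x$, $x\notin\T_1$; by Lemma \ref{Lem:hw_sub_coideal_finite} (parts (1),(2)) this Serre subcategory is precisely the highest weight subcategory $A_{\Ring,\T_0}\operatorname{-mod}_{\T_1}$ associated to the poset ideal $\T_1$, with standard objects $\Delta^{A,st}_\Ring(x)$, $x\in\T_1$ — and these are sent to $\Delta^{st}_\Ring(x)$ by $\mathcal{F}_{\T_0}$, hence match the standard objects of $A_{\Ring,\T_1}\operatorname{-mod}$ under the equivalence.

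\textbf{Step 3: compatibility with the full embeddings.} It remains to check that the equivalence $A_{\Ring,\T_1}\operatorname{-mod}\xrightarrow{\sim}A_{\Ring,\T_0}\operatorname{-mod}_{\T_1}$ intertwines $\mathcal{F}_{\T_1}$ and $\mathcal{F}_{\T_0}$ restricted to the subcategory. Both functors are computed by tensoring with $\bigoplus_x P^{st}_{?,\Ring}(x)$, and the identification of the two projective generators from Step 1 identifies the two tensor-product functors on the subcategory of objects built from the $A_{\Ring,\T_1}e_x$; since every object of $A_{\Ring,\T_1}\operatorname{-mod}$ is a cokernel of a map between such objects and both $\mathcal{F}$'s are right exact (indeed exact on standardly filtered objects, Lemma \ref{Lem:full_embedding_deformed}), the natural isomorphism on generators propagates to all objects. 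I expect the main obstacle to be Step 1: one must carefully produce and lift the comparison epimorphism between the deformed projectives, making sure the deformation-uniqueness (Ext-vanishing) arguments apply uniformly — this is where one leans most heavily on the modular characteristic $p$ reduction of Section \ref{SSS_Prop_new_t_hw_proof} together with the full embedding Lemma \ref{Lem:derived_full_embedding_interval finite}, and the rest is bookkeeping with Remark \ref{Rem:hw_sub_recovery_coideal_finite} and Lemma \ref{Lem:hw_sub_coideal_finite}.
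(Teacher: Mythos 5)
Your proposal is correct and follows essentially the same path as the paper: realize $P^{st}_{\T_1,\Ring}(x)$ as the quotient of $P^{st}_{\T_0,\Ring}(x)$ by its $\T_0\setminus\T_1$-piece, deduce the algebra epimorphism $A_{\Ring,\T_0}\twoheadrightarrow A_{\Ring,\T_1}$ with kernel the two-sided ideal generated by the $e_x$, $x\in\T_0\setminus\T_1$, and check the functor compatibility via right-exactness together with the Ext-vanishing of $\operatorname{Ext}^1_{\OCat^-_\Ring}(P^{st}_{\T_0,\Ring}(?),\Delta^{st}_\Ring(?))$ from Lemma \ref{Lem:stand_image}. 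One small repair in your Step 1: the objects $P^{st}_{\T_i}(x)$ from axiom (v) need not be indecomposable, so ``uniqueness of indecomposable projective covers'' is not literally available; the paper instead pins down the identification $P^{st}_{\T_1,\Ring}(x)\cong P^{st}_{\T_0,\Ring}(x)/(P^{st}_{\T_0,\Ring}(x))_{\T_0\setminus\T_1}$ via the compatible choice of projectives in Remark \ref{Rem:proj_construction} together with uniqueness of $\Ring$-flat deformations, which is what you should invoke in place of indecomposability.
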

\begin{proof}
For all $x\in \T_1$, the object $P^{st}_{\T_1,\Ring}(x)$ is the quotient of $P^{st}_{\T_0,\Ring}(x)$ by 
the subobject associated to the poset coideal $\T_0\setminus \T_1\subset \T_0$
as in Remark \ref{Rem:filtr_delta_cl}, denote it here by 
$P^{st}_{\T_0,\Ring}(x)_{\T_0\setminus \T_1}$. Inside $\Hom_{\OCat^-_\Ring}(P^{st}_{\T_0,\Ring}(x), P^{st}_{\T_0,\Ring}(y))$
consider the $\Ring$-submodule $\Hom_{\OCat^-_\Ring}(P^{st}_{\T_0,\Ring}(x), P^{st}_{\T_0,\Ring}(y))_{\T_0\setminus \T_1}$
of all homomorphisms with image in $P^{st}_{\T_0,\Ring}(y)_{\T_0\setminus \T_1}$. Set 
$$I:=\bigoplus_{x,y\in \T_0}\Hom_{\OCat^-_\Ring}(P^{st}_{\T_0,\Ring}(x), P^{st}_{\T_0,\Ring}(y))_{\T_0\setminus \T_1}.$$
From (1) of Lemma \ref{Lem:cl_stand_properties} (the $\Hom$ vanishing part) it follows that $I$ is a two-sided ideal
in $A_{\Ring,\T_0}$. Note that $e_\tau\in I$ for $\tau\in \T_0\setminus \T_1$, moreover $I$ is generated by these elements. 
It follows that we have a homomorphism
$A_{\Ring,\T_0}/I\rightarrow A_{\Ring,\T_1}$ that sends $e_x$ to $e_x$ for all $x\in \T_1$. We claim that this 
homomorphism is an isomorphism. The injectivity is clear from the construction. The surjectivity follows from 
$\operatorname{Ext}^1_{\OCat^-_\Ring}(P^{st}_{\T_0,\Ring}(x), \Delta^{st}_\Ring(y))=0$ for all $x,y\in \T_0$
that was established in the proof of Lemma \ref{Lem:stand_image}.

The isomorphism $A_{\Ring,\T_0}/I\xrightarrow{\sim} A_{\Ring,\T_1}$ gives rise to a full embedding 
$A_{\Ring,\T_1}\operatorname{-mod}\hookrightarrow A_{\Ring,\T_0}\operatorname{-mod}$, to be denoted by $\iota$. 
Since $e_x$
goes to $e_x$ for all $x\in \T_1$, the image is exactly the highest weight subcategory of $A_{\Ring,\T_0}\operatorname{-mod}$
associated to $\T_1$. 

Consider the functors $\mathcal{F}_{\T_i}:A_{\Ring,\T_i}\operatorname{-mod}\rightarrow D^b(\OCat^-_\Ring), i=0,1$.
We need to show that $\mathcal{F}_1$ and $\mathcal{F}_0\circ \iota$ (functors from $A_{\Ring,\T_1}\operatorname{-mod}$) 
are isomorphic. This will follow if we check that the non-derived functors (where we use the usual tensor product 
instead of the derived one) are isomorphic. This boils down to showing that the epimorphism
$$(\bigoplus_{x\in \T_0}P^{st}_{\T_0,\Ring}(x))/(\bigoplus_{x\in \T_0}P^{st}_{\T_0,\Ring}(x))I\twoheadrightarrow 
\bigoplus_{y\in \T_1}P^{st}_{\T_1,\Ring}(y)$$
is an isomorphism. This follows because of the Ext vanishing between $P^{st}_{\T_0,\Ring}(?)$'s and $\Delta^{st}_{\Ring}(?)$
proved in Lemma \ref{Lem:stand_image}.   
\end{proof}

\begin{Rem}\label{Rem:algebra_epim_compat}
By the construction, the specialization of the epimorphism $A_{\Ring,\T_0}\twoheadrightarrow A_{\Ring,\T_1}$ to the closed point of 
$\operatorname{Spec}(\Ring)$ is the epimorphism $A_{\T_0}\twoheadrightarrow A_{\T_1}$ coming from the inclusion
of highest weight subcategories $\Coh^G(\pi^*\Acal)_{\T_1}\subset \Coh^G(\pi^*\Acal)_{\T_0}$.
\end{Rem}

\subsubsection{Category $\OCat^{st}_\Ring$}
We define the full subcategory $\OCat^{st}_\Ring\subset \Hecke=D^b(\OCat^-_\Ring)$ as the union of the images of 
the full embeddings $\mathcal{F}_{\T_0}:A_{\Ring,\T_0}\operatorname{-mod}\hookrightarrow \Hecke$,
where $\T_0$ runs over the coideal finite poset ideals in $(W^{a},\leqslant^{st})$ and $\mathcal{F}_{\T_0}$
is the functor given by (\ref{eq:F_functor}). This is an 
$\Ring$-linear category. A morphism in $\OCat^{st}_\Ring$ between two objects $M,N$
is an epimorphism (resp., monomorphism) if it is so in every $\mathcal{F}_{\T_0}(A_{\Ring,\T_0}\operatorname{-mod})$
containing both $M$ and $N$. It is clear that $\OCat^{st}_\Ring$ is an abelian category. 

\begin{Lem}\label{Lem:Ost_hw}
The category $\OCat^{st}_\Ring$ is highest weight with poset $(W^{a},\leqslant^{st})$ and 
standard objects $\Delta^{st}_\Ring(x)$ with $x\in W^{a}$. 
\end{Lem}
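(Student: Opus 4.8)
The statement is essentially a consequence of the work already done for the finite poset ideals $\T_0$, packaged according to the definition of a highest weight category attached to an interval finite poset (Definition \ref{defi:hw_interval_finite}). Recall from Corollary \ref{Cor:order_implication} that $(W^a,\leqslant^{st})$ is interval finite, so it suffices to verify the two bullet points of Definition \ref{defi:hw_interval_finite}: that for every coideal finite poset ideal $\T_0\subset (W^a,\leqslant^{st})$ the Serre span of $\Delta^{st}_\Ring(x)$, $x\in \T_0$, inside $\OCat^{st}_\Ring$ is a highest weight category with poset $\T_0$ and standard objects $\Delta^{st}_\Ring(x)$, and that every object of $\OCat^{st}_\Ring$ lies in one of these Serre spans.

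First I would identify the Serre subcategory $\Cat_{\T_0,\Ring}$ of $\OCat^{st}_\Ring$ spanned by $\Delta^{st}_\Ring(x)$, $x\in \T_0$, with the image $\mathcal{F}_{\T_0}(A_{\Ring,\T_0}\operatorname{-mod})$. One inclusion follows because $\mathcal{F}_{\T_0}(A_{\Ring,\T_0}\operatorname{-mod})$ is an abelian subcategory of $\Hecke$ (Lemma \ref{Lem:full_embedding_deformed}(2)), is highest weight with standard objects $\Delta^{A,st}_\Ring(x)$ mapping to $\Delta^{st}_\Ring(x)$ (Proposition \ref{Prop:ART0_hw}), and hence is generated as a Serre subcategory by those standards; the reverse inclusion uses that $\Cat_{\T_0,\Ring}$ is by definition the smallest Serre subcategory of $\OCat^{st}_\Ring$ containing the $\Delta^{st}_\Ring(x)$ with $x\in\T_0$, combined with the compatibility of the embeddings $\mathcal{F}_{\T_0}$ established in Proposition \ref{Prop:full_embeddings_hw_subs} (which guarantees that for $\T_1\subset\T_0$ the subcategory $\Cat_{\T_1,\Ring}$ sits inside $\Cat_{\T_0,\Ring}$ as the prescribed highest weight subcategory). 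Thus $\Cat_{\T_0,\Ring}\cong A_{\Ring,\T_0}\operatorname{-mod}$ as highest weight categories with standard objects $\Delta^{st}_\Ring(x)$, which is exactly the first bullet of Definition \ref{defi:hw_interval_finite}.

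For the second bullet, take $M\in \OCat^{st}_\Ring$. By the very definition of $\OCat^{st}_\Ring$ as the union of the images $\mathcal{F}_{\T_0}(A_{\Ring,\T_0}\operatorname{-mod})$, there is some coideal finite poset ideal $\T_0$ with $M\in \mathcal{F}_{\T_0}(A_{\Ring,\T_0}\operatorname{-mod})=\Cat_{\T_0,\Ring}$, as required. Finally, the claim that the standard objects for this highest weight structure really are the $\Delta^{st}_\Ring(x)$ follows from the identification in Proposition \ref{Prop:ART0_hw}: $\mathcal{F}_{\T_0}$ carries $\Delta^{A,st}_\Ring(x)$ to $\Delta^{st}_\Ring(x)$, and these are independent of the auxiliary choice of $\T_0$ thanks to Proposition \ref{Prop:full_embeddings_hw_subs}.

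\textbf{Main obstacle.} The one point requiring care is the identification $\Cat_{\T_0,\Ring}=\mathcal{F}_{\T_0}(A_{\Ring,\T_0}\operatorname{-mod})$ of abelian subcategories of $\Hecke$ — in particular, checking that this image is closed under subobjects and quotients taken \emph{in} $\OCat^{st}_\Ring$ (as opposed to in all of $\Hecke$), and that the notions of mono/epi in $\OCat^{st}_\Ring$ restrict correctly. This is where the compatibility statements of Proposition \ref{Prop:full_embeddings_hw_subs} (and Remark \ref{Rem:algebra_epim_compat}), together with the fact that for any two coideal finite poset ideals $\T_0,\T_0'$ there is a common one $\T_0\cup\T_0'$ containing both (so that any two objects of $\OCat^{st}_\Ring$ and any morphism between them are visible inside a single $\mathcal{F}_{\T_0}(A_{\Ring,\T_0}\operatorname{-mod})$), do the real work; the rest is formal bookkeeping with Definition \ref{defi:hw_interval_finite}.
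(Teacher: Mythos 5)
Your argument is correct and follows the same route as the paper's proof: cite Proposition \ref{Prop:ART0_hw} and Lemma \ref{Lem:full_embedding_deformed} to obtain a highest weight structure on each $\OCat^{st}_{\T_0,\Ring}:=\mathcal{F}_{\T_0}(A_{\Ring,\T_0}\operatorname{-mod})$, then use Proposition \ref{Prop:full_embeddings_hw_subs} to match these up across inclusions of coideal finite poset ideals. The paper dispatches the remaining bookkeeping (identifying the Serre span of the standards with the image of $\mathcal{F}_{\T_0}$, checking exhaustion, checking that the ambient abelian structure restricts correctly) with ``from here it easily follows''; your proposal usefully spells out exactly this bookkeeping, so it is a fuller version of the same proof rather than a different one.
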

\begin{proof}
It follows 
from Proposition \ref{Prop:ART0_hw} (together with the claim that each $\mathcal{F}_{\T_0}$ is a full embedding
from Lemma \ref{Lem:full_embedding_deformed}) that each  $\OCat^{st}_{\T_0,\Ring}:=\mathcal{F}_{\T_0}(A_{\Ring,\T_0}\operatorname{-mod})$
is a highest weight category with coideal finite poset $\T_0$ and standard objects $\Delta^{st}_\Ring(x), x\in \T_0$.
By Proposition \ref{Prop:full_embeddings_hw_subs}, for an inclusion of poset ideals $\T_1\subset \T_0$
with finite $\T_0\setminus \T_1$, we have the inclusion $\OCat^{st}_{\T_1,\Ring}\subset \OCat^{st}_{\T_0,\Ring}$
and it is the inclusion of the highest weight subcategory associated to the poset ideal $\T_1\subset \T_0$.
From here it easily follows that $\OCat^{st}_\Ring$ is a highest weight category in the sense of 
Definition \ref{defi:hw_interval_finite}.
\end{proof}

The following lemma shows $\OCat^{st}_\Ring$ is a deformation of $\Coh^G(\pi^*\Acal)$. Recall that we can talk about base changes of 
$\OCat^{st}_\Ring$ to $\Ring$-algebras, see Remark \ref{Rem:interval_finite_base_change}.

\begin{Lem}\label{Lem:Ost_spec}
The base change $\OCat^{st}$ of $ \OCat^{st}_\Ring$ to $\C$ is equivalent to $\Coh^G(\pi^*\Acal)$ in such a way 
that $\Delta^{st}(x):=\C\otimes_\Ring \Delta^{st}_\Ring(x)\in \OCat^{st}$ is sent to $\Delta^{st}(x)\in \Coh^G(\pi^*\Acal)$.
\end{Lem}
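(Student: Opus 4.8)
The plan is to identify both categories with the same description as a ``gluing'' of the highest weight subcategories attached to coideal finite poset ideals of $(W^{a},\leqslant^{st})$, using the realization of an interval finite highest weight category as modules over an inverse limit of locally unital algebras from Section \ref{SSS_interval_finite_recovery}. By Remark \ref{Rem:interval_finite_base_change}, the base change $\OCat^{st}=\C\otimes_\Ring\OCat^{st}_\Ring$ is, by definition, the analogue of the category $\Cat'_\Ring$ of that section for the inverse system formed by the algebras $\C\otimes_\Ring A_{\Ring,\T_0}$ and the specializations of the transition epimorphisms $A_{\Ring,\T_0}\twoheadrightarrow A_{\Ring,\T_1}$ from Proposition \ref{Prop:full_embeddings_hw_subs}. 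So the first step is to observe that $\C\otimes_\Ring A_{\Ring,\T_0}=A_{\T_0}$ and that, by Remark \ref{Rem:algebra_epim_compat}, the specialized transition map $A_{\T_0}\twoheadrightarrow A_{\T_1}$ is precisely the epimorphism induced by the inclusion of highest weight subcategories $\Coh^G(\pi^*\Acal)_{\T_1}\subset\Coh^G(\pi^*\Acal)_{\T_0}$, under the equivalence $A_{\T_0}\operatorname{-mod}\xrightarrow{\sim}\Coh^G(\pi^*\Acal)_{\T_0}$ that sends $A_{\T_0}e_x$ to $P^{st}_{\T_0}(x)$.

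Next I would apply the construction of Section \ref{SSS_interval_finite_recovery} to the category $\Coh^G(\pi^*\Acal)$ itself. By Section \ref{SSS_Prop_new_t_hw_proof} it is a highest weight category with interval finite poset $(W^{a},\leqslant^{st})$ and standard objects $\Delta^{st}(x)$, and it is the union of its highest weight subcategories $\Coh^G(\pi^*\Acal)_{\T_0}$. Taking the projective objects $P^{st}_{\T_0}(x), x\in\T_0$ (which satisfy axiom (v)) together with the transition epimorphisms $P^{st}_{\T_0}(x)\twoheadrightarrow P^{st}_{\T_1}(x)$, that construction realizes $\Coh^G(\pi^*\Acal)$ as the category $\Cat'$ attached to exactly the inverse system $(A_{\T_0},\,A_{\T_0}\twoheadrightarrow A_{\T_1})$ identified in the previous step. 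Comparing the two descriptions then yields a $\C$-linear equivalence of abelian categories $\OCat^{st}\xrightarrow{\sim}\Coh^G(\pi^*\Acal)$. To see that it matches standard objects, I would fix $x\in\T_0$ and use that under $\mathcal{F}_{\T_0}$ the object $\Delta^{st}_\Ring(x)$ corresponds to $\Delta^{A,st}_\Ring(x)\in A_{\Ring,\T_0}\operatorname{-mod}$, which is $\Ring$-flat by the proof of Proposition \ref{Prop:ART0_hw}; hence $\C\otimes_\Ring\Delta^{st}_\Ring(x)$ corresponds to $\C\otimes_\Ring\Delta^{A,st}_\Ring(x)=\Delta^{A,st}(x)$, and the last paragraph of the proof of Proposition \ref{Prop:ART0_hw} shows that the equivalence $A_{\T_0}\operatorname{-mod}\xrightarrow{\sim}\Coh^G(\pi^*\Acal)_{\T_0}$ carries $\Delta^{A,st}(x)$ to $\Delta^{st}(x)$.

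The hard part will be bookkeeping rather than conceptual: one must check carefully that the inverse system of algebras and maps used to build $\OCat^{st}_\Ring$ in Proposition \ref{Prop:full_embeddings_hw_subs} specializes, at the closed point of $\operatorname{Spec}(\Ring)$, to the inverse system that the construction of Section \ref{SSS_interval_finite_recovery} produces intrinsically from the highest weight structure on $\Coh^G(\pi^*\Acal)$. This comes down to combining Remark \ref{Rem:algebra_epim_compat} with the recovery statements of Remarks \ref{Rem:hw_sub_recovery_coideal_finite} and \ref{Rem:interval_finite_stand_to_all}. Alternatively, I could bypass the inverse limit entirely: construct the equivalence first on the exact categories of standardly filtered objects, one finite coideal at a time and compatibly with the inclusions, and then invoke Remark \ref{Rem:interval_finite_stand_to_all} to extend it uniquely to an equivalence of abelian categories sending $\Delta^{st}(x)$ to $\Delta^{st}(x)$.
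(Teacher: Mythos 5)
Your proposal is correct and takes essentially the same route as the paper: the paper's proof is a two-sentence sketch that invokes Remark \ref{Rem:algebra_epim_compat} and the construction of Section \ref{SSS_interval_finite_recovery} to identify both categories with modules over the same inverse limit of locally unital algebras, and you have simply expanded those sentences into a careful argument, including the verification (via $\Ring$-flatness of $\Delta^{A,st}_\Ring(x)$) that the standard objects correspond.
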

\begin{proof}
We can use Remark \ref{Rem:algebra_epim_compat} 
and the construction in Section \ref{SSS_interval_finite_recovery} to identify both $\OCat^{st}$
and $\Coh^G(\pi^*\Acal)$ with the category of modules over the same inverse limit of locally unital 
algebras. This identification preserves the standard objects. 
\end{proof}

\subsection{Stabilized t-structure on $\Hecke$}
The goal of this section is to finish the proof of Theorem \ref{Thm:stabilized_t_structure}
and show that the heart of the t-structure there is $\OCat^{st}_\Ring$.

\subsubsection{Generators}
We start with the following result. 

\begin{Lem}\label{Lem:generators}
The objects $\Delta^{st}_\Ring(x), x\in W^{a},$ generate the triangulated category $\Hecke=D^b(\OCat^-_\Ring)$.
\end{Lem}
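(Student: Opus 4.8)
The plan is to show that the $\Delta^{st}_\Ring(x)$ generate $\Hecke$ by exhibiting each generator $B_x$ of $\Hecke = K^b(\SBim)$ (equivalently each standard object $\Delta^-_\Ring(x)$ for the perverse t-structure) as built out of the $\Delta^{st}_\Ring(x)$ via triangles and shifts. The key observation is that, by definition, $\Delta^{st}_\Ring(x) = \Delta^-_\Ring(xt_{-\mu})*J_\mu$ for $\mu$ sufficiently dominant, so that $\Delta^-_\Ring(y) = \Delta^{st}_\Ring(yt_\mu) * J_{-\mu}$ for any $y$ and $\mu$ dominant enough. Thus the triangulated subcategory generated by the $\Delta^{st}_\Ring(x)$ contains every object of the form $\Delta^-_\Ring(y) * J_\mu$ with $y \in W^a$ and $\mu$ anti-dominant. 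The task is then to bootstrap from these to \emph{all} $\Delta^-_\Ring(y)$, i.e.\ to remove the $*J_\mu$.

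First I would recall that the $\Delta^-_\Ring(y)$, $y\in W^a$, already generate $\Hecke$: this is (\ref{eq:derived_O_minus}) together with the fact that $\OCat^-_\Ring$ is highest weight with poset $W^a$, so its derived category is generated by standard objects. So it suffices to show that the triangulated subcategory $\mathcal{D}$ generated by the $\Delta^{st}_\Ring(x)$ contains $\Delta^-_\Ring(y)$ for all $y$. I would argue by a two-step reduction. Step one: $\mathcal{D}$ contains $\Delta^-_\Ring(y)*J_\mu$ for all $y\in W^a$ and all $\mu\in\Lambda_0$ (not just anti-dominant ones). Indeed $?*J_\nu$ is an auto-equivalence of $\Hecke$ (it is invertible, being the image of an element of the braid group $\Br^a$), and $\mathcal{D}$ is closed under this auto-equivalence because the set $\{\Delta^{st}_\Ring(x)\}$ is permuted by $?*J_\nu$: from (S1)/the definition, $\Delta^{st}_\Ring(x)*J_\nu = \Delta^-_\Ring(xt_{-\mu})*J_{\mu+\nu}$, and for $\mu$ large enough this is again of the form $\Delta^-_\Ring(x't_{-\mu'})*J_{\mu'}$ with $x' = xt_\nu$ and $\mu'$ sufficiently dominant, hence equals $\Delta^{st}_\Ring(xt_\nu)$. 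So $?*J_\nu$ maps the generating set of $\mathcal{D}$ to itself, and $\mathcal{D}*J_\nu = \mathcal{D}$. Step two: since $\Delta^{st}_\Ring(y) \in \mathcal{D}$ by definition and $\Delta^{st}_\Ring(y) = \Delta^-_\Ring(yt_{-\mu})*J_\mu$, applying $?*J_{-\mu}$ (which preserves $\mathcal{D}$ by step one) gives $\Delta^-_\Ring(yt_{-\mu}) \in \mathcal{D}$; as $y$ and $\mu$ range freely, $yt_{-\mu}$ ranges over all of $W^a$, so every $\Delta^-_\Ring(z)$ lies in $\mathcal{D}$.

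The main point to be careful about — and the only place where something could go wrong — is the well-definedness/stabilization statement underlying $\Delta^{st}_\Ring(x)$: one must know that $\Delta^-_\Ring(xt_{-\mu})*J_\mu$ stabilizes for $\mu$ sufficiently dominant and is independent of the choice, and that the identity $\Delta^{st}_\Ring(x)*J_\nu \cong \Delta^{st}_\Ring(xt_\nu)$ holds on the nose. Both are part of the setup in Section~\ref{SSS:Delta_st} (and its classical counterpart in Section~\ref{SSS_hw_0_reduction}), using that $\bullet*J_\lambda$ for $\lambda$ dominant is right t-exact (Corollary~\ref{Cor:transl_exact}) together with the braid relations $J_\lambda * J_\mu = J_{\lambda+\mu}$; I would cite these rather than reprove them. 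Once that is in hand, the argument above is purely formal: it uses only that $\{\Delta^{st}_\Ring(x)\}$ is stable under the invertible functors $?*J_\nu$, that these functors act transitively enough on $W^a$ to reach every $\Delta^-_\Ring(z)$, and that the $\Delta^-_\Ring(z)$ generate $\Hecke$. I do not anticipate a serious obstacle; the one subtlety is just bookkeeping with ``sufficiently dominant'' — choosing, for a fixed finite collection of elements of $W^a$, a single $\mu$ that works for all of them simultaneously, which is possible because each stabilization condition is an eventual condition on $\mu$ in the dominant cone.
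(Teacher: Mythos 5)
There is a genuine gap in your Step 2. The identity $\Delta^{st}_\Ring(y)=\Delta^-_\Ring(yt_{-\mu})*J_\mu$ holds \emph{only} for $\mu$ sufficiently dominant (relative to $y$): concretely, writing $y=wt_\nu$, the formula is valid precisely when $\nu-\mu$ is anti-dominant (Lemma~\ref{Lem:new_t_structure_reduction} and Section~\ref{SSS:Delta_st}). Under that constraint the elements $yt_{-\mu}=wt_{\nu-\mu}$ range only over $\{wt_\lambda : \lambda\ \text{anti-dominant}\}$, not over all of $W^a$. So what your argument actually yields is that $\mathcal{D}$ contains $\Delta^-_\Ring(z)$ for $z$ with anti-dominant lattice part and is closed under $?*J_\nu$ — which is tautological, since on those objects $?*J_\nu$ just permutes the generating set ($\Delta^{st}_\Ring(x)*J_\nu\cong\Delta^{st}_\Ring(xt_\nu)$, as you correctly note in Step 1). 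The same error infects the assertion at the start of Step 1 that $\mathcal{D}$ contains $\Delta^-_\Ring(y)*J_\mu$ for \emph{all} $y$; closure under $?*J_\nu$ gives this only for $y$ of anti-dominant lattice type. Note in particular that it is \emph{false} that $\Delta^-_\Ring(z)*J_\delta\cong\Delta^-_\Ring(zt_\delta)$ in general (e.g.\ $\Ring*J_\alpha=\nabla^-_\Ring(t_\alpha)\neq\Delta^-_\Ring(t_\alpha)$ for $\alpha$ dominant), which is essentially the identity you would need for $yt_{-\mu}$ to range freely.

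The paper's proof closes the gap by using a different, stronger closure property: \emph{left} convolution by the braid group. The key observation is that $T_\Ring(s)*\Delta^{st}_\Ring(x)$ sits in a distinguished triangle with $\Delta^{st}_\Ring(x)$ and $\Delta^{st}_\Ring(sx)$ (apply $T_\Ring(s)*?$ to $\Delta^-_\Ring(xt_{-\mu})$ and then convolve on the right with $J_\mu$). Hence $\mathcal{D}$ is closed under $T_\Ring(s)*?$ and therefore under $\mathsf{T}_s^{\pm 1}*?$, i.e.\ under the whole left $\Br^a$-action. Since $\Delta^{st}_\Ring(1)=\Delta^-_\Ring(1)=\Ring$ is the monoidal unit, $\mathsf{T}_{x^{-1}}^{-1}*\Ring=\Delta^-_\Ring(x)\in\mathcal{D}$ for every $x$, which finishes the proof. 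Your argument, confined to right convolution by the abelian subgroup $\{J_\nu\}\subset\Br^a$, cannot escape the anti-dominant region; the left action (or some additional input) is needed.
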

\begin{proof}
We remark that $D^b(\OCat^-_\Ring)$ is generated by the standard objects $\Delta^-_\Ring(x),x\in W^{a}$.
Let $T_{\Ring}(s)$ denote the tilting object in $D^b(\OCat^-_\Ring)$ corresponding to
a simple reflection $s$. Since $\Delta^{st}_\Ring(x)=\Delta^-_\Ring(xt_{-\lambda})*J_\lambda$ for $\lambda$
sufficiently dominant, we see that $T_{\Ring}(s)*\Delta_\Ring^{st}(x)$ fits into a distinguished triangle 
with $\Delta^{st}_\Ring(x)$ and $\Delta^{st}(sx)$ (in some order). So the triangulated 
subcategory in $\Hecke$ spanned by the objects $\Delta^{st}_\Ring(x)$ is closed under convolutions 
on the left with $T_{\Ring}(s)$ and hence with convolutions with $\mathsf{T}_s$ and $\mathsf{T}_s^{-1}$ (that are given by cones of 
$\operatorname{id}\rightarrow T_\Ring(s)*?$ and $T_{\Ring}(s)*?\rightarrow \operatorname{id}$). 
It follows that every $\Delta^-_\Ring(x)$ is in the subcategory generated by the $\Delta^{st}_\Ring(?)$ 
finishing the proof. 
\end{proof}

Now note that, for every coideal finite poset ideal $\T_0$ in $(W^{a},\leqslant^{st})$, we can view $D^b(\OCat^{st}_{\T_0,\Ring})=\mathcal{F}_{\T_0}(A_{\Ring,\T_0}\operatorname{-mod})$
as a full subcategory in $\Hecke$.

\begin{Cor}\label{Cor:category_union}
The category $\Hecke$ is the union of its full subcategories $D^b(\OCat^{st}_{\T_0,\Ring})$. 
\end{Cor}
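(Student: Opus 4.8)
The plan is to deduce this from Lemma \ref{Lem:generators} (the objects $\Delta^{st}_\Ring(x)$ generate $\Hecke$ as a triangulated category) together with the interval finiteness of $(W^{a},\leqslant^{st})$ established in Corollary \ref{Cor:order_implication}. First I would reduce the statement to showing that every single object $M\in\Hecke$ lies in $D^b(\OCat^{st}_{\T_0,\Ring})$ for some coideal finite poset ideal $\T_0\subset(W^{a},\leqslant^{st})$. This reduction uses that the family of such ideals is directed: if $\T_0,\T_0'$ are coideal finite poset ideals then so is $\T_0\cup\T_0'$, and by Proposition \ref{Prop:full_embeddings_hw_subs} (applied to the inclusions $\T_0\subset\T_0\cup\T_0'$ and $\T_0'\subset\T_0\cup\T_0'$) together with Lemma \ref{Lem:hw_sub_coideal_finite}(4), both $D^b(\OCat^{st}_{\T_0,\Ring})$ and $D^b(\OCat^{st}_{\T_0',\Ring})$ are contained in $D^b(\OCat^{st}_{\T_0\cup\T_0',\Ring})$, compatibly with their embeddings into $\Hecke$. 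Granting the reduced claim, any finite collection of objects of $\Hecke$ then lies in a common $D^b(\OCat^{st}_{\T_0,\Ring})$; since $\mathcal{F}_{\T_0}$ is a full embedding by Lemma \ref{Lem:full_embedding_deformed}(2), every morphism between such objects lies there as well, which is exactly the assertion that $\Hecke$ is the union of these subcategories.

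For the reduced claim, I would proceed as follows. Given $M\in\Hecke$, Lemma \ref{Lem:generators} places $M$ in the full triangulated subcategory of $\Hecke$ generated by finitely many of the $\Delta^{st}_\Ring(x)$, say $\Delta^{st}_\Ring(x_1),\dots,\Delta^{st}_\Ring(x_n)$ (the triangulated subcategory generated by a set is the union of those generated by its finite subsets). Then I would set
\[
\T_0:=\bigcup_{i=1}^{n}\{z\in W^{a}\mid z\leqslant^{st}x_i\};
\]
by the discussion following Definition \ref{defi:interval_finite_poset}, each lower set $\{z\leqslant^{st}x_i\}$ is a coideal finite poset ideal in the interval finite poset $(W^{a},\leqslant^{st})$, so the finite union $\T_0$ is again a coideal finite poset ideal, and it contains $x_1,\dots,x_n$. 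By part (3) of Lemma \ref{Lem:full_embedding_deformed} together with Lemma \ref{Lem:stand_image}, each $\Delta^{st}_\Ring(x_i)$ lies in $\OCat^{st}_{\T_0,\Ring}\subset D^b(\OCat^{st}_{\T_0,\Ring})$. Since $\mathcal{F}_{\T_0}$ is a full embedding of triangulated categories, $D^b(\OCat^{st}_{\T_0,\Ring})$ is a full triangulated subcategory of $\Hecke$ (closed, in addition, under direct summands, as the essential image of the idempotent-complete category $D^b(A_{\Ring,\T_0}\operatorname{-mod})$). Hence it contains the triangulated subcategory generated by $\Delta^{st}_\Ring(x_1),\dots,\Delta^{st}_\Ring(x_n)$, and therefore it contains $M$.

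I do not expect a genuine obstacle: the argument is essentially formal given the results already in place. The only points requiring care are (a) the directedness and mutual compatibility inside $\Hecke$ of the subcategories $D^b(\OCat^{st}_{\T_0,\Ring})$, which is where Proposition \ref{Prop:full_embeddings_hw_subs} is needed, and (b) the precise meaning of ``generate'' in Lemma \ref{Lem:generators} — in particular whether it requires passing to direct summands — which is harmless here since $D^b(\OCat^{st}_{\T_0,\Ring})$ is in any case a thick subcategory of $\Hecke$.
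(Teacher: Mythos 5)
Your proof is correct and follows essentially the same approach as the paper's (the paper's proof is a one‑liner: the union contains all $\Delta^{st}_\Ring(x)$ and then invoke Lemma \ref{Lem:generators}); you have merely filled in the routine details — directedness of the family of coideal finite poset ideals, that $\Delta^{st}_\Ring(x)\in\OCat^{st}_{\T_0,\Ring}$ for $x\in\T_0$ via Lemma \ref{Lem:full_embedding_deformed}(3) and Lemma \ref{Lem:stand_image}, and that each $D^b(\OCat^{st}_{\T_0,\Ring})$ sits in $\Hecke$ as a thick full triangulated subcategory.
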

\begin{proof}
The union contains all objects $\Delta^{st}_\Ring(x)$ and we can use Lemma \ref{Lem:generators}. 
\end{proof}

\subsubsection{t-structure}
We set 
\begin{equation}\label{eq:st_t_str_neg_pos}
\Hecke^{st,\leqslant 0}:=\bigcup D^{b,\leqslant 0}(\OCat^{st}_{\T_0,\Ring}), 
\Hecke^{st,\geqslant 0}:=\bigcup D^{b,\geqslant 0}(\OCat^{st}_{\T_0,\Ring}).
\end{equation}

\begin{Prop}\label{Prop:stabilized_t_structure}
The following claims are true:
\begin{enumerate}
\item (\ref{eq:st_t_str_neg_pos})  defines a t-structure on $D^b(\OCat_\Ring^-)$.
\item The heart is $\OCat^{st}_\Ring$.
\item This t-structure is bounded. 
\end{enumerate}
\end{Prop}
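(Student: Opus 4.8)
The plan is to verify the three claims of Proposition~\ref{Prop:stabilized_t_structure} by reducing everything to the highest weight structures on the finite pieces $\OCat^{st}_{\T_0,\Ring}$ and the compatibility of the full embeddings $\mathcal{F}_{\T_0}$ with inclusions of poset ideals (Proposition~\ref{Prop:full_embeddings_hw_subs}). The key structural fact I would invoke first is Corollary~\ref{Cor:category_union}: every object of $\Hecke$ lies in $D^b(\OCat^{st}_{\T_0,\Ring})$ for some coideal finite poset ideal $\T_0$, and for $\T_1\subset\T_0$ the inclusion $D^b(\OCat^{st}_{\T_1,\Ring})\hookrightarrow D^b(\OCat^{st}_{\T_0,\Ring})$ is the derived inclusion of a highest weight subcategory, hence t-exact and fully faithful on bounded derived categories (Lemma~\ref{Lem:derived_full_embedding_interval finite} applied inside $\OCat^{st}_{\T_0,\Ring}$, equivalently Lemma~\ref{Lem:full_derived_embedding}). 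Consequently the truncation functors for the standard t-structures on the various $D^b(\OCat^{st}_{\T_0,\Ring})$ are compatible under these inclusions: if $\T_1\subset\T_0$ and $M\in D^b(\OCat^{st}_{\T_1,\Ring})$, then $\tau^{\leqslant 0}_{\T_1}M\cong\tau^{\leqslant 0}_{\T_0}M$ and similarly for $\tau^{\geqslant 1}$. This is the technical heart of the argument and I expect it to be the main obstacle: one must check that the cohomological truncation computed in the smaller abelian category $\OCat^{st}_{\T_1,\Ring}$ agrees with the one computed in $\OCat^{st}_{\T_0,\Ring}$, which follows because the inclusion $\OCat^{st}_{\T_1,\Ring}\hookrightarrow\OCat^{st}_{\T_0,\Ring}$ is exact and the subcategory is closed under subobjects and quotients (being a Serre subcategory corresponding to a poset ideal, cf.\ Lemma~\ref{Lem:hw_sub_coideal_finite}(1), which forces the truncated pieces to again lie in $\OCat^{st}_{\T_1,\Ring}$).

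Granting that compatibility, claim (1) is routine. Given $M\in\Hecke$, choose $\T_0$ with $M\in D^b(\OCat^{st}_{\T_0,\Ring})$ and set $\tau^{\leqslant 0}M:=\tau^{\leqslant 0}_{\T_0}M$, $\tau^{\geqslant 1}M:=\tau^{\geqslant 1}_{\T_0}M$; the compatibility shows this is independent of the choice of $\T_0$, so the distinguished triangle $\tau^{\leqslant 0}M\to M\to\tau^{\geqslant 1}M\to$ is well-defined. The two axioms of a t-structure---$\Hom(\Hecke^{st,\leqslant 0},\Hecke^{st,\geqslant 1})=0$ and the existence of the truncation triangle---then follow immediately: for the vanishing, given $X\in\Hecke^{st,\leqslant 0}$ and $Y\in\Hecke^{st,\geqslant 1}$, pick a single $\T_0$ containing both (using that the union of two coideal finite poset ideals is again one, since $(W^a,\leqslant^{st})$ is interval finite by Corollary~\ref{Cor:order_implication}), and apply the t-structure axiom inside $D^b(\OCat^{st}_{\T_0,\Ring})$. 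The inclusions $\Hecke^{st,\leqslant 0}[1]\subset\Hecke^{st,\leqslant 0}$ and $\Hecke^{st,\geqslant 0}[-1]\subset\Hecke^{st,\geqslant 0}$ are clear from the definition.

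For claim (2), the heart is $\Hecke^{st,\leqslant 0}\cap\Hecke^{st,\geqslant 0}=\bigcup(\OCat^{st}_{\T_0,\Ring})$, where I use that a complex in $D^b(\OCat^{st}_{\T_0,\Ring})$ concentrated in degree $0$ is exactly an object of $\OCat^{st}_{\T_0,\Ring}$, and that for $\T_1\subset\T_0$ an object of $\OCat^{st}_{\T_1,\Ring}$ is the same whether viewed inside the smaller or the larger category (the embedding $\mathcal{F}_{\T_1}=\mathcal{F}_{\T_0}\circ\iota$ of Proposition~\ref{Prop:full_embeddings_hw_subs}). By the very definition of $\OCat^{st}_\Ring$ as the union of the images $\mathcal{F}_{\T_0}(A_{\Ring,\T_0}\operatorname{-mod})$, this union is precisely $\OCat^{st}_\Ring$. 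Finally, claim (3), boundedness, follows because every $M\in\Hecke$ lies in $D^b(\OCat^{st}_{\T_0,\Ring})$ for some $\T_0$, and the standard t-structure on that bounded derived category is bounded, so $M\in\Hecke^{st,\leqslant n}\cap\Hecke^{st,\geqslant -n}$ for $n\gg 0$; since the truncations are computed in $D^b(\OCat^{st}_{\T_0,\Ring})$ this transfers directly. I would close by remarking that, combined with Lemma~\ref{Lem:generators} and Lemma~\ref{Lem:Ost_spec}, this t-structure restricts on $D^b(\OCat^-)$ to the one of \cite{BLin} and has the characterization via $\bullet*J_\lambda$ from Theorem~\ref{Thm:stabilized_t_structure}(1), which is how the present proposition feeds into the proof of that theorem.
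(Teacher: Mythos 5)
Your proposal is correct and matches the paper's approach: it reduces all three claims to the compatibility of the tautological t-structures on the subcategories $D^b(\OCat^{st}_{\T_0,\Ring})$ under the inclusions of highest weight subcategories provided by Proposition~\ref{Prop:full_embeddings_hw_subs}, together with the exhaustion from Corollary~\ref{Cor:category_union}. You have simply fleshed out the paper's rather terse "follows from the observation" — the compatibility of truncations is established from the exactness of the Serre subcategory inclusion $\OCat^{st}_{\T_1,\Ring}\hookrightarrow\OCat^{st}_{\T_0,\Ring}$ exactly as you say — and the treatment of (2) and (3) is identical to the paper's.
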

\begin{proof}
(1) follows from the observation that the inclusions $D^b(\OCat_{\T_1,\Ring}^{st})\subset D^b(\OCat_{\T_0,\Ring}^{st})$
(for an inclusion of poset ideals with finite complement) and Corollary \ref{Cor:category_union}. (2) follows from the observation that the heart of the union is the union of the hearts.

Let us show (3). Take $M\in \Hecke$. It lies in $D^b(\OCat^{st}_{\T_0,\Ring})$ for some $\T_0$. The restriction of 
our t-structure to  $D^b(\OCat^{st}_{\T_0,\Ring})$ is the tautological t-structure, which is bounded. So $M$ only has finitely 
many non-vanishing cohomology modules. It follows that our t-structure is bounded.  
\end{proof}

Note that this finishes the proof of (2) of Theorem \ref{Thm:stabilized_t_structure}.

\subsubsection{Derived equivalence}
Thanks to Proposition \ref{Prop:stabilized_t_structure}, the full inclusion of the heart $\OCat^{st}_\Ring\hookrightarrow \Hecke$ gives rise to the realization functor 
$D^b(\OCat^{st}_\Ring)\rightarrow \Hecke$.

\begin{Prop}\label{Prop:deformed_derived_equiv}
The functor $D^b(\OCat^{st}_\Ring)\rightarrow \Hecke$ is an equivalence. 
\end{Prop}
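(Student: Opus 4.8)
\textbf{Proof plan for Proposition \ref{Prop:deformed_derived_equiv}.}
The plan is to verify the standard criterion for a realization functor out of the bounded derived category of the heart of a t-structure to be an equivalence: it suffices to check that the realization functor is fully faithful, since it is automatically essentially surjective once the heart generates (which follows from Lemma \ref{Lem:generators}, as the objects $\Delta^{st}_\Ring(x)$ lie in $\OCat^{st}_\Ring$). So the crux is full faithfulness, i.e.\ that
$$
\operatorname{Hom}_{D^b(\OCat^{st}_\Ring)}(M,N[i])\xrightarrow{\ \sim\ }\operatorname{Hom}_{\Hecke}(M,N[i])
$$
for all $M,N\in\OCat^{st}_\Ring$ and all $i$. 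First I would reduce to the case where $M$ and $N$ both lie in a single highest weight subcategory $\OCat^{st}_{\T_0,\Ring}=\mathcal{F}_{\T_0}(A_{\Ring,\T_0}\operatorname{-mod})$ for some coideal finite poset ideal $\T_0$: indeed, any two objects of $\OCat^{st}_\Ring$ lie in a common $\OCat^{st}_{\T_0,\Ring}$ by definition, and this containment is compatible with passing to larger $\T_0$.

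Next I would upgrade this to a statement about derived categories. On one side, $D^b(\OCat^{st}_{\T_0,\Ring})$ includes into $D^b(\OCat^{st}_\Ring)$, and by the interval-finite analogue of the full-embedding statement (Lemma \ref{Lem:derived_full_embedding_interval finite}, applied to the highest weight category $\OCat^{st}_\Ring$ with its coideal finite poset ideal $\T_0$) this inclusion is full. On the other side, $D^b(\OCat^{st}_{\T_0,\Ring})=D^b(A_{\Ring,\T_0}\operatorname{-mod})$ includes into $\Hecke$ via the functor $\mathcal{F}_{\T_0}$, which is a full embedding by part (2) of Lemma \ref{Lem:full_embedding_deformed}. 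So it remains to check that these two full embeddings of $D^b(\OCat^{st}_{\T_0,\Ring})$ into $\Hecke$ — the one factoring through $D^b(\OCat^{st}_\Ring)$ and the composite of the realization functor with that, versus the one given directly by $\mathcal{F}_{\T_0}$ — agree, at least on Hom spaces between objects of the heart. Concretely, both send $\Delta^{st}_\Ring(x)$ (for $x\in\T_0$) to $\Delta^{st}_\Ring(x)\in\Hecke$, and both are exact; since $D^b(\OCat^{st}_{\T_0,\Ring})$ is generated by the standard objects and there is the identification of Hom and Ext groups between standards in $\OCat^{st}_{\T_0,\Ring}$ with those in $\Hecke$ coming from \eqref{eq:same_hom}, the two embeddings induce the same maps on all $\operatorname{Hom}$'s and $\operatorname{Ext}$'s. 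Hence the realization functor $D^b(\OCat^{st}_\Ring)\to\Hecke$ is fully faithful on objects of the heart, which by the generation statement extends to full faithfulness on all of $D^b(\OCat^{st}_\Ring)$.

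The step I expect to be the main obstacle is the last one: matching the realization functor built abstractly from the t-structure with the concrete full embedding $\mathcal{F}_{\T_0}$, i.e.\ checking that the canonical map from $\operatorname{Ext}^i_{\OCat^{st}_\Ring}(M,N)$ to $\operatorname{Hom}_{\Hecke}(M,N[i])$ is an isomorphism rather than merely that both sides have the same dimension. The clean way around this is to observe that a t-structure whose heart $\mathcal{A}$ is generated (under shifts, cones and summands) by a set of objects admitting no negative self- or cross-extensions in the ambient triangulated category automatically has $D^b(\mathcal{A})\xrightarrow{\sim}\Hecke$; here the relevant generating set can be taken to be the tilting objects of the various $\OCat^{st}_{\T_0,\Ring}$ (or, after passing to a finite poset ideal, the genuine tilting objects of a finite highest weight category), and the required vanishing of higher self-extensions is exactly what was used in Lemma \ref{Lem:proj_deformation} and Lemma \ref{Lem:stand_image}. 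Assembling these vanishings over all $\T_0$, together with Corollary \ref{Cor:category_union}, yields the equivalence.
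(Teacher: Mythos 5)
Your first three steps reproduce the paper's proof exactly: reduce to showing that $\operatorname{Ext}^i_{\OCat^{st}_\Ring}(M,N)\to\operatorname{Hom}_{\Hecke}(M,N[i])$ is an isomorphism, place $M,N$ into a common $\OCat^{st}_{\T_0,\Ring}$, and feed in Lemma \ref{Lem:derived_full_embedding_interval finite} and part (2) of Lemma \ref{Lem:full_embedding_deformed}. The remaining issue you flag --- matching the realization functor against $\mathcal{F}_{\T_0}$ --- is resolved by a general observation rather than the tilting argument you sketch: if $F,G\colon D^b(\mathcal{A}')\to\mathcal{D}$ are exact functors that restrict to the inclusion on a full abelian subcategory $\mathcal{A}'\subset\mathcal{D}$, then they induce the same map $\operatorname{Ext}^i_{\mathcal{A}'}(M,N)\to\operatorname{Hom}_{\mathcal{D}}(M,N[i])$, because any Yoneda extension in $\mathcal{A}'$ is spliced from the same triangles of $\mathcal{D}$ under either functor. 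This is what the paper's terse ``(\ref{eq:Ext_morphism}) intertwines the maps'' amounts to, and it closes the gap you worry about without introducing anything new.

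Your proposed tilting workaround also has real problems in this setting. The paper develops costandards and tiltings only for \emph{ideal}-finite posets (see Remark \ref{Rem:ideal_finite}), whereas $\T_0$ is \emph{coideal}-finite, so $\OCat^{st}_{\T_0,\Ring}$ has no tiltings in the paper's framework. Passing to a finite poset ideal of $\T_0$ gives genuine tiltings, but one must then still verify that the tiltings of all these finite highest weight subcategories, ranging over all $\T_0$, generate $D^b(\OCat^{st}_\Ring)$ --- which is not obvious and is essentially equivalent to the statement being proved --- and that they have no higher $\operatorname{Hom}$ in $\Hecke$, which already invokes Lemma \ref{Lem:full_embedding_deformed}. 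So this detour is both less elementary than the direct argument and partly circular; the direct functoriality of the realization map is the cleaner route and is what the paper uses.
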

\begin{proof}
We need to show that 
\begin{equation}\label{eq:Ext_morphism}\operatorname{Ext}^i_{\OCat^{st}_\Ring}(M,N)\rightarrow \operatorname{Hom}_{\Hecke}(M,N[i])
\end{equation}
for all $M,N\in \OCat^{st}_\Ring$. Note that $M,N\in \OCat^{st}_{\T_0,\Ring}$ for some coideal finite poset ideal $\T_0$. 
Then (\ref{eq:Ext_morphism}) intertwines the maps 
$$\operatorname{Ext}^i_{\OCat^{st}_{\T_0,\Ring}}(M,N)\rightarrow \operatorname{Ext}^i_{\OCat^{st}_\Ring}(M,N),
\operatorname{Ext}^i_{\OCat^{st}_{\T_0,\Ring}}(M,N)\rightarrow \operatorname{Hom}_{\Hecke}(M,N[i]).$$
The former is an isomorphism by Lemma \ref{Lem:derived_full_embedding_interval finite}, and the latter is an 
isomorphism by Lemma \ref{Lem:full_embedding_deformed}. So (\ref{eq:Ext_morphism}) is an isomorphism. 
\end{proof}

This finishes the proof of (3) of Theorem \ref{Thm:stabilized_t_structure}.

\subsubsection{Completion of the proof of Theorem \ref{Thm:stabilized_t_structure}}\label{SSS_proof_stab_compl}
It remains to prove (1) of the theorem.

\begin{proof}
First, we claim that the evaluation functor $\C\otimes^L_\Ring\bullet:D^b(\OCat^-_\Ring)\rightarrow D^b(\OCat^-)$ intertwines the right braid group actions, i.e., we need to show that 
$\C\otimes^L_\Ring(\mathcal{F}*\mathsf{T}_x)\cong (\C\otimes^L_\Ring\mathcal{F})*\mathsf{T}_x$ for all $x$. To see this notice that the evaluation functor intertwines the actions of $\Hecke$ (from the left). The left hand side above is isomorphic to $\mathcal{F}*(\C\otimes^L_\Ring \nabla^-_\Ring(x))$,
while the right hand side is isomorphic to $\mathcal{F}*\nabla^-(x)$.  So our claim reduces to checking 
that $\C\otimes^L_\Ring \nabla_\Ring^-(x)\cong \nabla^-(x)$, which is a tautology.

Now (1) of the theorem easily follows by applying (1) of Lemma \ref{Lem:evaluation_properties} to the categories 
$D^b(\OCat^{st}_\Ring)$ and $\Hecke$. The details are left as an exercise to the reader. 
\end{proof}

\subsection{Properties of $\OCat^{st}_\Ring$}
\subsubsection{Soergel functor}
Recall the Soergel functor $\Vfun: \Hecke\rightarrow D^b(\Ring\operatorname{-bimod})$ from 
Section \ref{SSS_Soergel_fun}. We are interested
in the restriction of this functor to the category of standardly filtered objects in $\OCat^{st}_\Ring$.

\begin{Lem}\label{Lem:Vfun_st}
The following claims hold:
\begin{enumerate}
\item $\Vfun(\Delta^{st}_\Ring(x))\cong \Ring_x$.
\item $\Vfun$ is faithful on standardly filtered objects in $\OCat^{st}$.  
\end{enumerate}
\end{Lem}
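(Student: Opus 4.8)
The plan is to prove the two claims of Lemma \ref{Lem:Vfun_st} by reducing them to facts about the perverse t-structure that have already been established, exploiting the formula $\Delta^{st}_\Ring(x)=\Delta^-_\Ring(xt_{-\mu})*J_\mu$ for $\mu$ sufficiently dominant together with the monoidality of $\Vfun$.

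\smallskip

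First I would prove (1). Recall from Lemma \ref{Lem:V_O_negative_properties}(2) that $\Vfun(\Delta^-_\Ring(y))\cong \Ring_y$ for all $y\in W^a$, and from Lemma \ref{Lem:V_O_negative_properties}(1) that $\Vfun$ is monoidal. Since $J_\mu\in \Br^a$ maps to $\nabla^-_\Ring(t_\mu)$ in $\Hecke$ and $\Vfun(\nabla^-_\Ring(t_\mu))\cong \Ring_{t_\mu}$ again by Lemma \ref{Lem:V_O_negative_properties}(2), monoidality gives
\begin{equation*}
\Vfun(\Delta^{st}_\Ring(x))=\Vfun(\Delta^-_\Ring(xt_{-\mu})*J_\mu)\cong \Ring_{xt_{-\mu}}*\Ring_{t_\mu}\cong \Ring_{xt_{-\mu}t_\mu}=\Ring_x,
\end{equation*}
where the middle isomorphism is the obvious multiplicativity $\Ring_a*\Ring_b\cong\Ring_{ab}$ of the standard invertible bimodules (these are the completed structure sheaves of graphs of the $W^a$-action, so composing graphs multiplies the group elements). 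This step is essentially bookkeeping.

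\smallskip

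For (2), the strategy mirrors the proof of Lemma \ref{Lem:V_O_negative_properties}(4): I would show that a nonzero homomorphism between standardly filtered objects in $\OCat^{st}$ goes to a nonzero homomorphism. By part (1) (specialized to $\C$, using $\Delta^{st}(x)=\C\otimes_\Ring\Delta^{st}_\Ring(x)$) we have $\Vfun(\Delta^{st}(x))\cong\C_x\neq 0$ for all $x$, so $\Vfun$ does not kill standard objects. The key point I need is an analog of Corollary \ref{Cor:simple_socle} for the stabilized t-structure: that every standard object $\Delta^{st}(x)$ has simple socle, equal to some fixed simple object, so that any nonzero morphism between standardly filtered objects in $\OCat^{st}$ is injective on the bottom standard piece — or more precisely, that a nonzero morphism $\Delta^{st}(x)\to M$ with $M$ standardly filtered is a monomorphism. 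This should follow by transporting the socle statement across the equivalence with $\Coh^G(\pi^*\Acal)$ (Proposition \ref{Prop:hw_new_posit} / \ref{Prop:new_t_structure_properties}) and using that the functor $\bullet*J_\mu$, being an equivalence of $\Hecke_0$ coming from a braid group element, carries the socle of $\Delta^-(xt_{-\mu})$ to the socle of $\Delta^{st}(x)$; since all socles of standard objects in $\OCat^-$ are $\Delta^-(1)$ by Corollary \ref{Cor:simple_socle}, the socles of all $\Delta^{st}(x)$ agree, and by (1) $\Vfun$ is nonzero on that common simple object. Then exactness of $\Vfun$ (Lemma \ref{Lem:V_O_negative_properties}(3), which holds for the perverse t-structure and hence applies to objects of $\OCat^{st}\subset\Hecke$ viewed as complexes) together with the injectivity statement forces $\Vfun$ of a nonzero map between standardly filtered objects to be nonzero.

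\smallskip

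The main obstacle I expect is the socle/injectivity statement for $\OCat^{st}$: Corollary \ref{Cor:simple_socle} is stated only for $\OCat^-$, and I need that convolution with $J_\mu$ (for $\mu$ large) behaves well with respect to socles of standard objects, or alternatively a direct argument inside $\Coh^G(\pi^*\Acal)$ that standard objects there have simple socle of a fixed isomorphism type. One clean route is: $\bullet*J_\mu$ is t-exact from the perverse t-structure on $\Hecke$ to the stabilized one on the relevant block (this is implicit in how $\Delta^{st}$ is defined and in Corollary \ref{Cor:transl_exact}), it is an equivalence of abelian categories onto its image, hence preserves socles, and it sends $\Delta^-(xt_{-\mu})$ to $\Delta^{st}(x)$ and the simple $\Delta^-(1)$ to a simple object $L^{st}$ independent of $x$; combined with $\Vfun(L^{st})\neq 0$ (which I extract from (1), since $L^{st}$ embeds in $\Delta^{st}(1)=\Delta^-(1)$ whose image under $\Vfun$ is the nonzero $\C_1$ and $L^{st}$ contributes to the socle) this closes the argument. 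If the t-exactness of $\bullet*J_\mu$ between these hearts requires a separate lemma I would insert that as a preliminary step.
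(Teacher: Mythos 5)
Your proof of (1) is correct and is essentially identical to the paper's (monoidality of $\Vfun$, part (2) of Lemma \ref{Lem:V_O_negative_properties}, and the formula $\Delta^{st}_\Ring(x)=\Delta^-_\Ring(xt_{-\mu})*J_\mu$).

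For (2), however, there is a genuine gap in your plan, and the paper takes a different, simpler route. You want to deduce an analog of Corollary \ref{Cor:simple_socle} inside $\OCat^{st}$ by pushing the socle statement across $\bullet*J_\mu$. This does not work: $\bullet*J_\mu$ is a self-equivalence of $\Hecke$ that preserves the \emph{stabilized} t-structure, not a t-exact functor from the perverse heart to the stabilized heart, so it transports the $\OCat^{st}$-socle of $\Delta^-(xt_{-\mu})$ (which is not known) to the $\OCat^{st}$-socle of $\Delta^{st}(x)$, not the $\OCat^-$-socle that Corollary \ref{Cor:simple_socle} controls. Concretely, $\Delta^-(1)*J_\mu=\Delta^{st}(t_\mu)$, which depends on $\mu$ (and $\mu$ depends on $x$) and is not a fixed simple object, so there is no candidate $L^{st}$. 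The exactness of $\Vfun$ that you then invoke is also only established for the perverse t-structure (part (3) of Lemma \ref{Lem:V_O_negative_properties}); $\Vfun$ is not known to be t-exact on $\OCat^{st}$.

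The paper avoids all of this by never touching socles in $\OCat^{st}$. Since $?*J_\lambda$ is an exact self-equivalence of $\OCat^{st}$ (and of $\OCat^{st}_\Ring$), and since $\Vfun(?*J_\lambda)\cong\Vfun(?)\otimes_\Ring\Ring_{t_\lambda}$ with $\Ring_{t_\lambda}$ invertible, faithfulness of $\Vfun$ on $\Hom(M,N)$ is equivalent to faithfulness on $\Hom(M*J_{-\lambda},N*J_{-\lambda})$ for any $\lambda$. Choosing $\lambda$ sufficiently dominant relative to the finitely many standard factors of $M$ and $N$, all those factors become objects of the form $\Delta^-(?)$, and because $\Ext^1_{\OCat^{st}}$ and $\Ext^1_{\OCat^-}$ between these objects coincide (both are computed as $\Hom_{\Hecke}(-,-[1])$ via the derived equivalences with $\Hecke$), the objects $M*J_{-\lambda},N*J_{-\lambda}$ are standardly filtered in $\OCat^-$. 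One then invokes part (4) of Lemma \ref{Lem:V_O_negative_properties} directly. If you want to repair your argument, this reduction to $\OCat^-$, rather than a socle claim in $\OCat^{st}$, is the missing idea.
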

\begin{proof}
By (1) of Lemma \ref{Lem:V_O_negative_properties}, $\Vfun$ is monoidal. Now (1) follows from (2) of 
Lemma \ref{Lem:V_O_negative_properties} and the construction of $\Delta^{st}_\Ring(x)$ in 
Section \ref{SSS:Delta_st}.

We proceed to (2). The functor $?*J_{\lambda}$ is a self-equivalence of $\OCat^{st}_\Ring$, this follows, for example,
from (1) of Theorem \ref{Thm:stabilized_t_structure}. Similarly, $?*J_\lambda$ is a self-equivalence of 
$\OCat^{st}$, this follows from results of \cite{BLin} recalled in Section \ref{SSS_BLin}. 
We have $\Vfun(?*J_\lambda)\cong \Vfun(?)\otimes_{\Ring}\Ring_{t_\lambda}$. So the homomorphism
$$\Hom_{\OCat^{st}_\Ring}(M,N)\rightarrow \Hom_{D^b(\Ring\operatorname{-bimod})}(\Vfun(M),\Vfun(N))$$
is injective if and only if the induced homomorphism
$$\Hom_{\OCat^{st}_\Ring}(M*J_\lambda,N*J_\lambda)\rightarrow \Hom_{D^b(\Ring\operatorname{-bimod})}(\Vfun(M*J_\lambda),\Vfun(N*J_\lambda))$$
is injective for some $\lambda$. 

Now pick $x_1,x_2\in W^{a}$. For a sufficiently dominant $\lambda$, we have $\Delta^{st}(x_i)*J_{-\lambda}=\Delta(x_it_{-\lambda})$
The injectivity in question follows from (4) of Lemma \ref{Lem:V_O_negative_properties}. This proves (2). 
\end{proof}

\subsubsection{Base change to $\operatorname{Frac}(\Ring)$}
We write $\F$ for $\operatorname{Frac}(\Ring)$. Consider the base change $\OCat^{st}_\F$, see Remark \ref{Rem:interval_finite_base_change}
for the discussion of base change for interval finite highest weight categories. 

\begin{Lem}\label{Lem:ss_base_change}
The category $\OCat^{st}_\F$ is semisimple.
\end{Lem}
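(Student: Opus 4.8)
The plan is to show that $\OCat^{st}_\F$ is semisimple by establishing that the standard and costandard objects coincide after base change to $\F$, so that every standard object is both projective and injective in the base-changed category, which forces semisimplicity. Recall that $\OCat^{st}_\Ring$ is a highest weight category with interval finite poset $(W^a,\leqslant^{st})$ and standard objects $\Delta^{st}_\Ring(x)$, and that by Remark \ref{Rem:interval_finite_base_change} the base change $\OCat^{st}_\F$ is again highest weight with the same poset and standard objects $\Delta^{st}_\F(x) := \F\otimes_\Ring \Delta^{st}_\Ring(x)$.

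First I would reduce to a computation of $\operatorname{Ext}^1$ between standard objects over $\F$. Since the poset is interval finite, it suffices to work inside each highest weight subcategory $\OCat^{st}_{\T_0,\F}$ attached to a coideal finite poset ideal $\T_0$; and there it is enough to show that $\operatorname{Ext}^i_{\OCat^{st}_\F}(\Delta^{st}_\F(x),\Delta^{st}_\F(y)) = 0$ for all $i>0$ and all $x,y$ (this is the standard criterion for a highest weight category over a field to be semisimple: it forces standards to be projective, hence by the $\nabla$-$\Delta$ duality also injective). To compute these Ext groups I would pass through the derived equivalence $D^b(\OCat^{st}_\Ring)\xrightarrow{\sim}\Hecke$ of Proposition \ref{Prop:deformed_derived_equiv}, so that $\operatorname{Ext}^i_{\OCat^{st}_\Ring}(\Delta^{st}_\Ring(x),\Delta^{st}_\Ring(y)) = \operatorname{Hom}_{\Hecke}(\Delta^{st}_\Ring(x),\Delta^{st}_\Ring(y)[i])$, and then I would use the Soergel functor $\Vfun$. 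By Lemma \ref{Lem:Vfun_st}(1), $\Vfun(\Delta^{st}_\Ring(x)) \cong \Ring_x$, a rank one $\Ring$-bimodule supported on the graph of $x$. The key point is that over $\F = \operatorname{Frac}(\Ring)$ — equivalently, generically on $\hat{\h}^*$ — the graphs of distinct elements of $W^a$ are disjoint, because $W^a$ acts on $\hat{\h}^*$ without global fixed points away from the reflection hyperplanes, so $\Ring_x$ and $\Ring_y$ have disjoint support after inverting the appropriate functions.

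Concretely, I would argue as follows. The bimodule $\Ring_x$ corresponds to the completed structure sheaf of the locus $\{(x\xi,\xi)\}\subset \hat{\h}^*\times\hat{\h}^*$. For $x\neq y$ in $W^a$, the intersection of the graphs of $x$ and $y$ is contained in the union of the reflection hyperplanes (those $\xi$ with $x\xi = y\xi$, i.e. $y^{-1}x$ fixes $\xi$, and $y^{-1}x\neq 1$ is a product of affine reflections so its fixed locus is a proper subspace). Hence after inverting the product of the corresponding linear functions — which become units in $\F$ after base change — the $\F$-bimodules corresponding to $\F_x$ and $\F_y$ are supported on disjoint closed subschemes, so $R\Hom_{\F\operatorname{-bimod}}(\F_x,\F_y) = 0$ for $x\neq y$, while $R\Hom(\F_x,\F_x) = \F$ concentrated in degree $0$. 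Combining this with the faithfulness of $\Vfun$ on standardly filtered objects (Lemma \ref{Lem:Vfun_st}(2)) and with property (1) of Lemma \ref{Lem:cl_stand_properties} (which tells us $\operatorname{Ext}^i(\Delta^{st}_\Ring(x),\Delta^{st}_\Ring(y))\neq 0 \Rightarrow x\leqslant^{st}y$, so only finitely many $y$ contribute for fixed $x$ within any $\T_0$), I would deduce that $\operatorname{Ext}^i_{\OCat^{st}_\F}(\Delta^{st}_\F(x),\Delta^{st}_\F(y))$ vanishes for $i > 0$ and for $i=0$, $x\neq y$, and equals $\F$ for $i=0$, $x=y$. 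This gives that each $\Delta^{st}_\F(x)$ is projective (by Lemma \ref{Lem:proj_charact}, adapted to the interval finite setting via Corollary \ref{Cor:coideal_finite_projectives}(3)) and simple, hence $\OCat^{st}_\F$ is semisimple with simple objects $\Delta^{st}_\F(x)$, $x\in W^a$.

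The main obstacle I anticipate is making the support/disjointness argument for $\Vfun$ fully rigorous at the level of \emph{completed} bimodules rather than honest coherent sheaves: $\Ring$ is the completion of $S(\hat{\h})$ at $0$, so "generic behaviour" and "inverting linear functions" need to be interpreted correctly — one works with $\F = \operatorname{Frac}(\Ring)$, and must check that the $\F$-bimodule $\Vfun(\Delta^{st}_\Ring(x))\otimes_\Ring \F$ really is the expected rank-one module on the graph and that $\Hom$ between two such over $\F$ vanishes when the graphs are generically disjoint. A clean way to handle this is to observe that the $\F$-linear category $D^b(\F\operatorname{-bimod})$ decomposes, after restricting to the essential image of $\Vfun$, according to $W^a$-cosets, and that $\Vfun$ intertwines the evident gradings; alternatively one can cite the analogous statement for the constructible/geometric realization of $\Hecke$ from \cite{BY}. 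Once disjointness of generic supports is in hand, the rest is the formal highest-weight-category argument above.
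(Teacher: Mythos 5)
Your reduction to showing $\operatorname{Ext}^i_{\OCat^{st}_\F}(\Delta^{st}_\F(x),\Delta^{st}_\F(y))=0$ for $i>0$ is correct, and your bimodule-support observation (graphs of distinct $x,y\in W^a$ are generically disjoint, so $\Ring_x$ and $\Ring_y$ have no nonzero $\Ext$'s after passing to $\F$) is also essentially right and is the same input the paper uses. But the step where you pass this information back to $\OCat^{st}_\F$ has a real gap: Lemma \ref{Lem:Vfun_st}(2) says $\Vfun$ is \emph{faithful} on standardly filtered objects, i.e.\ injective on $\Hom^0$'s, and this gives you nothing about $\Ext^1$. To conclude $\Ext^1_{\OCat^{st}_\F}(\Delta^{st}_\F(x),\Delta^{st}_\F(y))=0$ from vanishing on the bimodule side you would need injectivity of $\Vfun$ on $\Ext^1$ (or, equivalently, that $\Vfun$ does not kill nontrivial classes in $\Hom_{\Hecke}(\Delta^{st}_\Ring(x),\Delta^{st}_\Ring(y)[1])$), which is not what faithfulness on the abelian category provides. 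The statement that \emph{would} make your argument work, Corollary \ref{Cor:Vfun_ff_Ost} (full faithfulness of $\Vfun$ on $\OCat^{st,\Delta}_\Ring$), is proved in the paper \emph{using} Lemma \ref{Lem:ss_base_change}, so invoking anything in its orbit here is circular. Vanishing of the $\Hom$'s alone does not yield semisimplicity of a highest weight category, so the $\Ext^1$ step is the crux and it is missing.

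The paper's actual argument is shorter and sidesteps $\Vfun$: it notes that after base change to $\F$ every Bott--Samelson bimodule splits as a direct sum of the graph bimodules $\F_x$, so $\SBim_\F$ (hence the heart $\OCat^-_\F$ of the perverse t-structure) is semisimple, and in particular there are no higher $\Ext$'s between the $\Delta^-_\F(x)$. Since $\Delta^{st}_\Ring(x)=\Delta^-_\Ring(xt_{-\mu})*J_\mu$ and $?*J_\mu$ is an autoequivalence of $\Hecke$, $\Ext$'s between the $\Delta^{st}_\F$'s are identified with $\Ext$'s between $\Delta^-_\F$'s, which vanish. If you want to keep your $\Vfun$-flavoured perspective, the fix is to prove semisimplicity of $\SBim_\F$ first (which is exactly your disjoint-graph observation, phrased on the source side rather than the target side) and then transfer; but at that point you have simply rediscovered the paper's route, and $\Vfun$ is no longer doing any work.
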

\begin{proof}
After base change to $\F$, every Bott-Samelson bimodule becomes the direct sum of graph bimodules $\F_x, x\in W^{a}$
(the localizations of the graph bimodules $\Ring_x$). It follows that $\OCat^-_\F$ is a semisimple category, in particular,
there are no higher Ext's between the standard objects. From here and the construction of the standard objects 
$\Delta^{st}_\Ring(x), x\in W^{a},$ we deduce that there are no higher Ext's between the standard objects
in $\OCat^{st}_\F$, hence this category is semisimple. 
\end{proof}

\begin{Cor}\label{Cor:Vfun_ff_Ost}
The restriction of $\Vfun$ to the category of standardly filtered objects in $\OCat^{st}_\Ring$ is a fully
faithful embedding into $\Ring\operatorname{-bimod}$.
\end{Cor}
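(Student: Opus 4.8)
The plan is to deduce this from the argument of Lemma \ref{Lem:RS_0_faith}, applied to $\Vfun$ viewed as a functor on the finite highest weight subcategories of $\OCat^{st}_\Ring$. First I would reduce to a finite situation: any two standardly filtered objects $M_\Ring, N_\Ring \in \OCat^{st}_\Ring$ involve only finitely many standards, hence lie in $\OCat^{st}_{\T_0,\Ring} = \mathcal{F}_{\T_0}(A_{\Ring,\T_0}\operatorname{-mod})$ for some coideal finite poset ideal $\T_0 \subset (W^a, \leqslant^{st})$, and $\Hom_{\OCat^{st}_\Ring}(M_\Ring, N_\Ring) = \Hom_{\OCat^{st}_{\T_0,\Ring}}(M_\Ring, N_\Ring)$ by the full embedding of highest weight subcategories. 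Moreover, since $\Delta^{st}_\Ring(x) \in \OCat^-_\Ring$ by Lemma \ref{Lem:new_stand} and the heart $\OCat^-_\Ring$ is closed under extensions in $\Hecke$, every standardly filtered object of $\OCat^{st}_\Ring$ lies in $\OCat^-_\Ring$; as $\Vfun$ is t-exact for the perverse t-structure (Lemma \ref{Lem:V_O_negative_properties}(3)), its restriction to standardly filtered objects is exact with values in $\Ring\operatorname{-bimod}$, and it sends such objects to $\Ring$-free bimodules filtered by the $\Ring_x$ (Lemma \ref{Lem:Vfun_st}(1)).

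Next I would check that $\Vfun|_{\OCat^{st}_{\T_0,\Ring}}$, corestricted to the category of modules over the finite $\Ring$-algebra $\End_{\Ring\operatorname{-bimod}}\bigl(\Vfun(\bigoplus_{x\in\T_0}P^{st}_{\T_0,\Ring}(x))\bigr)^{opp}$, is a Rouquier--Soergel functor in the sense of Definition \ref{defi:RS_functor}. Axiom (RS1) is the exactness on standardly filtered objects just noted; (RS2) holds since $\Vfun(\Delta^{st}_\Ring(x)) \cong \Ring_x$ is $\Ring$-free (Lemma \ref{Lem:Vfun_st}(1)); (RS3) holds because $\OCat^{st}_\F$ is semisimple (Lemma \ref{Lem:ss_base_change}) with simples $\Delta^{st}_\F(x)$, which $\Vfun_\F$ sends bijectively to the pairwise non-isomorphic simple graph bimodules $\F_x$ with endomorphism ring $\F$; and (RS4) follows from Lemma \ref{Lem:Vfun_st}(2), which gives $(-1)$-faithfulness at the closed point $\C$, combined with Remark \ref{Rem:RS4} (using that $\Ring$ is complete local regular with residue field $\C$) to propagate $(-1)$-faithfulness to all specializations $\kf(\p)$ with $\operatorname{ht}\p \leqslant 2$. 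Then Lemma \ref{Lem:RS_0_faith} applies and shows $\Vfun|_{\OCat^{st}_{\T_0,\Ring}}$ is $0$-faithful, i.e.\ fully faithful on standardly filtered objects; letting $\T_0$ vary gives the claim.

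If one prefers to avoid formally invoking the RS framework, the same conclusion can be obtained by rerunning the proof of Lemma \ref{Lem:RS_0_faith} directly with these inputs: the $\Ring$-modules $\Hom_{\OCat^{st}_\Ring}(M_\Ring,N_\Ring)$ and $\Hom_{\Ring\operatorname{-bimod}}(\Vfun M_\Ring,\Vfun N_\Ring)$ are reflexive (the first by the reflexivity observation in that proof, the second as the kernel of a map of reflexive modules with torsion-free target, since $\Vfun M_\Ring,\Vfun N_\Ring$ are $\Ring$-free), so one may localize at a height-$1$ prime $\p$, reduce to a discrete valuation ring $\Ring_\p$ with uniformizer $t$ and residue field $\kf(\p)$, use the exact sequences $0\to \Vfun M_\Ring \xrightarrow{t}\Vfun M_\Ring\to \Vfun M_{\kf(\p)}\to 0$ (and likewise for $N_\Ring$) together with left exactness of $\Hom$ to get injectivity of the map modulo $t$ from $(-1)$-faithfulness over $\kf(\p)$, and bijectivity over $\Frac(\Ring)$ from semisimplicity, concluding that the map is an isomorphism over $\Ring_\p$, hence over $\Ring$ by reflexivity.

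The step I expect to require the most care is verifying that $\Vfun$ genuinely fits the setup of Section \ref{SSS_RS_defn} — in particular that, after corestriction, it is a right-exact $\Ring$-linear functor between module categories over finite projective $\Ring$-algebras, hence given by a bimodule $B_\Ring\otimes_{A_\Ring}-$ — and the attendant bookkeeping ensuring that all the relevant $\Hom$-modules are reflexive and commute with the base changes $\otimes_\Ring\kf(\p)$ on standardly filtered objects. All of this is routine once one knows that standardly filtered objects of $\OCat^{st}_\Ring$ live inside $\OCat^-_\Ring$, on which $\Vfun$ is exact and $\Ring$-flatness preserving, but it is the place where the argument must be assembled with some attention rather than merely cited.
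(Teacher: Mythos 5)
Your third paragraph is exactly the paper's proof: reflexivity of the relevant $\Hom$-modules, localization at height-$1$ primes, injectivity modulo $t$ from $(-1)$-faithfulness of $\Vfun$ at the closed point (Lemma \ref{Lem:Vfun_st}(2) pushed through Remark \ref{Rem:RS4}), and bijectivity over $\F$ from semisimplicity of $\OCat^{st}_\F$ (Lemma \ref{Lem:ss_base_change}). One small correction to the alternative packaging in your second paragraph: for an infinite coideal-finite ideal $\T_0$ the direct sum $\bigoplus_{x\in\T_0}P^{st}_{\T_0,\Ring}(x)$ is infinite, so its endomorphism algebra is not a finite projective $\Ring$-algebra as Definition \ref{defi:RS_functor} requires; to invoke Lemma \ref{Lem:RS_0_faith} literally one should first pass to the highest weight quotient by a \emph{finite} coideal $\T^0$ containing the (finitely many) labels of standards occurring in $M,N$, since $\pi_{\T^0}$ is fully faithful on standardly filtered objects and $\Cat_{\T^0,\Ring}$ does have a finite poset — this is essentially what the paper does later in Step 4 of the proof of Theorem \ref{Thm:main_equivalence_deformed}.
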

\begin{proof}
The claim that $\Vfun$ sends standardly filtered objects to $\Ring\operatorname{-bimod}$ (rather than just complexes of bimodules)
follows from (1) of Lemma \ref{Lem:Vfun_st}. Now we prove that it is fully faithful on standardly filtered objects. 
Note that $\Vfun$ is 
\begin{itemize}
\item[(a)] faithful on standardly filtered objects in $\OCat^{st}$, (2) of Lemma \ref{Lem:Vfun_st}, 
\item[(b)] fully faithful on $\OCat^{st}_\F$, this follows from (1) of Lemma \ref{Lem:Vfun_st}
combined with Lemma \ref{Lem:ss_base_change}.
\end{itemize} 
Now the claim of this lemma follows similarly to the proof of Lemma \ref{Lem:RS_0_faith}. Namely, take 
$M,N\in \OCat_\Ring^{st,\Delta}$. The objects $M,N,\Vfun(M),\Vfun(N)$ are flat over $\Ring$. It follows 
that $\Hom_{\OCat^{st}_\Ring}(M,N), \Hom_{\Ring\operatorname{-bimod}}(\Vfun(M),\Vfun(N))$ are 
reflexive $\Ring$-modules. So it is enough to show that 
\begin{equation}\label{eq:localized_Hom_iso11}
\Hom_{\OCat^{st}_\Ring}(M,N)_\p\xrightarrow{\sim} \Hom_{\Ring\operatorname{-bimod}}(\Vfun(M),\Vfun(N))_\p
\end{equation}
for every prime ideal $\p\subset \Ring$ of height $1$. Let $\kf_\p$ denote the residue field of 
$R_\p$. Then, by (a) combined with Remark \ref{Rem:RS4}, we get
\begin{equation}\label{eq:localized_Hom_emb11}\Hom_{\OCat^{st}_{\kf_\p}}(M_{\kf_\p},N_{\kf_\p})\hookrightarrow \Hom_{\Ring\otimes\kf_\p\operatorname{-mod}}(\Vfun(M_{\kf_\p}),\Vfun(N_{\kf_\p})).
\end{equation}
Here we write $M_{\kf_\p}:=\kf_\p\otimes_\Ring M, N_{\kf_\p}:=\kf_{\p}\otimes_\Ring N$.
By (b), we have
\begin{equation}\label{eq:localized_Hom_iso12}
\Hom_{\OCat^{st}_{\F}}(M_{\F},N_{\F})\xrightarrow{\sim} 
\Hom_{\Ring\otimes\F\operatorname{-mod}}(\Vfun(M_{\F}),\Vfun(N_{\F})).
\end{equation}
Similarly to the proof of Lemma \ref{Lem:RS_0_faith}, (\ref{eq:localized_Hom_emb11}) and (\ref{eq:localized_Hom_iso12}) imply (\ref{eq:localized_Hom_iso11}). 
\end{proof}

\subsubsection{Ext's between standards: result}
We would like to understand $\Ext^1$ between standard objects in $\OCat^{st}_\Ring$, more precisely, its localization 
to $\Ring_{\p}$ for height $1$ prime ideals. Here is our main result. 

\begin{Prop}\label{Prop:Ext1_stand_Ost}
We have $\Ext^1_{\OCat^{st}_\Ring}(\Delta^{st}_\Ring(x),\Delta^{st}_\Ring(y))_{\p}\neq 0$ if and only if the following two conditions
hold:
\begin{itemize}
\item[(a)] $xs_\alpha=y$ for a real root $\alpha$ and $x<^{st}y$.
\item[(b)] $\p=(\alpha)$.
\end{itemize}
If these (a) and (b) hold, then $\Ext^1_{\OCat^{st}_\Ring}(\Delta^{st}_\Ring(x),\Delta^{st}_\Ring(y))_{\p}$ is isomorphic 
to the residue field of $\Ring_\p$.
\end{Prop}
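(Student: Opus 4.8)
The strategy is to reduce the computation of $\Ext^1$ between standards in $\OCat^{st}_\Ring$ to a computation in the perverse heart $\OCat^-_\Ring$, where Ext's between standards are controlled by the classical theory of (affine) category $\mathcal O$ and the geometry of the affine flag variety, and then to transport the answer through the braid group action. First I would recall from Section \ref{SSS:Delta_st} that $\Delta^{st}_\Ring(x) = \Delta^-_\Ring(xt_{-\mu})*J_\mu$ for $\mu$ sufficiently dominant, and that $?*J_\mu$ is a self-equivalence of $\Hecke$ (and preserves the stabilized t-structure by (1) of Theorem \ref{Thm:stabilized_t_structure}, hence restricts to a self-equivalence of $\OCat^{st}_\Ring$). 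Therefore
\[
\Ext^1_{\OCat^{st}_\Ring}(\Delta^{st}_\Ring(x),\Delta^{st}_\Ring(y)) \cong \Ext^1_{\OCat^{st}_\Ring}(\Delta^-_\Ring(xt_{-\mu}),\Delta^-_\Ring(yt_{-\mu}))
\]
for $\mu$ large. Now both $\Delta^-_\Ring(xt_{-\mu})$ and $\Delta^-_\Ring(yt_{-\mu})$ lie in $\OCat^-_\Ring$ (they are honest standard objects there), and by Lemma \ref{Lem:Ost_hw} combined with the fact that $\OCat^-_\Ring$ and $\OCat^{st}_\Ring$ are both hearts of t-structures on $\Hecke$ with the same ambient derived category, I would relate $\Ext^1_{\OCat^{st}_\Ring}$ of these objects to $\Ext^1_{\OCat^-_\Ring}(\Delta^-_\Ring(xt_{-\mu}),\Delta^-_\Ring(yt_{-\mu}))$: a perverse $\Ext^1$ between two objects that also lie in the stabilized heart maps to the stabilized $\Ext^1$, and using that $\Delta^-(1)$ is the simple socle of every perverse standard (Corollary \ref{Cor:simple_socle}) together with an analysis of which $\Hom_{\Hecke}(\Delta^-_\Ring(xt_{-\mu}),\Delta^-_\Ring(yt_{-\mu})[1])$ can be nonzero, I would identify the two groups (at least after localizing at height-one primes, which is all that is claimed).

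Next I would carry out the computation in $\OCat^-_\Ring$, where the answer is classical: $\Ext^1_{\OCat^-_\Ring}(\Delta^-_\Ring(u),\Delta^-_\Ring(v))$ is nonzero only if $v$ covers $u$ (or at least $u<v$) in the Bruhat order, and for a Bruhat cover $v = u s_\beta$ it is a cyclic torsion $\Ring$-module supported exactly at the prime $(\beta')$, where $\beta'$ is the linear functional on $\hat\h^*$ cutting out the reflection hyperplane of $s_\beta$; moreover when $v$ covers $u$ this module is precisely the residue field $\Ring_{(\beta')}/(\beta')$. This is the deformed version of the standard fact that in the deformed BGG category $\mathcal O$ over $\C[[\h^*]]$, $\Ext^1(\Delta_\Ring(u),\Delta_\Ring(v))$ for a Bruhat cover is a one-dimensional torsion module over the appropriate root hyperplane; I would cite or reprove it using the Soergel functor $\Vfun$: by Lemma \ref{Lem:V_O_negative_properties}(2)--(3), $\Vfun(\Delta^-_\Ring(u)) = \Ring_u$ and $\Vfun$ is exact and faithful on standardly filtered objects, so $\Ext^1_{\OCat^-_\Ring}(\Delta^-_\Ring(u),\Delta^-_\Ring(v))$ embeds into $\Ext^1_{\Ring\operatorname{-bimod}}(\Ring_u,\Ring_v)$, and a direct computation of this latter Ext group of graph bimodules gives the root-hyperplane answer (it is nonzero iff the graphs of $u$ and $v$ on $\hat\h^*$ meet in a hyperplane, i.e.\ $v = u s_\beta$, and then it is the residue field of the localization at that hyperplane). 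Combined with the matching upper bound coming from the highest weight order (Corollary \ref{Cor:new_t_structure_reduction} gives that $\Ext^1_{\OCat^{st}_\Ring}(\Delta^{st}_\Ring(x),\Delta^{st}_\Ring(y))\ne 0 \Rightarrow x<^{st}y$), this pins down the answer.

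Translating back: under $x\mapsto xt_{-\mu}$, a real root $\alpha$ with $xs_\alpha = y$ and $x<^{st}y$ corresponds, for $\mu$ sufficiently dominant, to a genuine Bruhat cover $xt_{-\mu}s_{\alpha'} = yt_{-\mu}$ with $xt_{-\mu} < yt_{-\mu}$ (this is exactly Lemma \ref{Lem:stabilized_order_characterization} together with the fact that $t_{-\mu}$ conjugates the reflection $s_\alpha$ to a reflection $s_{\alpha'}$ whose hyperplane is a translate of $\alpha$'s — but crucially the linear functional cutting out the hyperplane, which is what determines the prime, is unchanged up to the translation in the $\hbar$-direction and in particular the localization at height-one primes sees the reflection hyperplane $\alpha^\vee - k\hbar$ of the original $s_\alpha$). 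Hence $\Ext^1_{\OCat^{st}_\Ring}(\Delta^{st}_\Ring(x),\Delta^{st}_\Ring(y))_\p\ne 0$ iff $y = xs_\alpha$ with $x<^{st}y$ and $\p = (\alpha)$, and then it is the residue field of $\Ring_\p$. The main obstacle I anticipate is the first reduction step — carefully justifying that $\Ext^1$ in the \emph{stabilized} heart of these two objects (which happen to lie in both hearts) agrees with the $\Ext^1$ in the \emph{perverse} heart after localizing at a height-one prime; this requires controlling the potential contributions of objects in the perverse heart that are not in the stabilized heart (and vice versa), which one does by noting that over $\Ring_\p$ with $\p$ height one the relevant subquotients are governed by a rank-one Dedekind localization where the two highest weight structures differ only by the single reflection $s_\alpha$, so one can pass to a two-element-poset highest weight subquotient and compute directly, much as in the proof of Proposition \ref{Prop:new_t_structure_properties}(2) and Corollary \ref{Cor:Vfun_ff_Ost}.
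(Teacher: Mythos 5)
Your proposal is correct in outline for the ``only if'' direction: using $\Vfun$ to embed the $\Ext^1$ into the graph-bimodule $\Ext^1$ and reading off the condition $x^{-1}y = s_\alpha$ with $\p = (\alpha)$ matches the paper's Section on ``Non-vanishing implies (a) and (b)'' exactly, and the first reduction step (translating by $t_{-\mu}$ so that $\Delta^{st}_\Ring$ becomes $\Delta^-_\Ring$, then invoking the two derived equivalences to identify $\Ext^1$ in either heart with $\Hom_{\Hecke}(\cdot,\cdot[1])$) is also the paper's Step~1.

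However, there is a genuine gap in the other direction (showing the localized $\Ext^1$ is actually nonzero). Your last paragraph claims that, after translating by a sufficiently dominant $\mu$, the pair $(xt_{-\mu},\, yt_{-\mu})$ with $y = xs_\alpha$ is ``a genuine Bruhat cover.'' This is false in general: the translation preserves the stabilized length difference $\ell^{st}(y)-\ell^{st}(x)$, and once $\mu$ is large enough this equals $\ell(yt_{-\mu})-\ell(xt_{-\mu})$, which can be any odd positive integer (e.g.\ for $\g=\mathfrak{sl}_3$, $x=1$, $\alpha=\alpha_1+\alpha_2$ gives length difference $3$). Since your computation of the nonvanishing only covers Bruhat covers, your argument does not establish the required nonvanishing for general reflections $s_\alpha$ --- and the bimodule computation via $\Vfun$ gives only an \emph{injection}, hence an upper bound, not a lower bound. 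The paper closes this gap with a reduction to the case of simple roots: choosing a simple reflection $s$ that shortens the conjugating element in $s_\alpha = u s_\beta u^{-1}$, one convolves on the right with $\Delta^-_\Ring(s)$ or $\nabla^-_\Ring(s)$ and uses that the map $\Delta^-_{\Ring_\p}(s)\to\nabla^-_{\Ring_\p}(s)$ is an isomorphism after localizing away from $(\alpha_s)$ (which is automatic when $\p=(\alpha)\neq(\alpha_s)$), producing a semilinear isomorphism of localized $\Ext^1$'s. Iterating brings $\alpha$ to a simple root, where $x<y=xs$ \emph{is} a cover and one computes $\Ext^1(\Delta^-_\Ring(1),\Delta^-_\Ring(s))\cong\Ring/(\alpha_s)$ directly. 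You need some version of this conjugation step (or another explicit construction of a nontrivial extension for non-cover reflections) to make the proof complete.
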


This proposition will be proved in the subsequent parts of this section.

\subsubsection{Non-vanishing implies (a) and (b)}\label{SSS_non_vanish_Ext}
Here we prove the $\Rightarrow$ part of the theorem. 

\begin{proof}
Suppose $\Ext^1_{\OCat^{st}_\Ring}(\Delta^{st}_\Ring(x),\Delta^{st}_\Ring(y))_{\p}\neq 0$. 
By 
Corollary \ref{Cor:Vfun_ff_Ost}, we have 
\begin{equation}\label{eq:Ext1_inclusion_Ost}
\Ext^1_{\OCat^{st}_\Ring}(\Delta^{st}_\Ring(x),\Delta^{st}_\Ring(y))\hookrightarrow 
\Ext^1_{\Ring\operatorname{-bimod}}(\Vfun(\Delta^{st}_\Ring(x)),\Vfun(\Delta^{st}_\Ring(y))).
\end{equation}
By (1) of Lemma \ref{Lem:Vfun_st}, $\Vfun(\Delta^{st}_\Ring(x))\cong \Ring_x, 
\Vfun(\Delta^{st}_\Ring(y)))\cong \Ring_y$, so the target of 
(\ref{eq:Ext1_inclusion_Ost}) is $\operatorname{Ext}^1_{\Ring\operatorname{-bimod}}(\Ring_x,\Ring_y)$.
The latter Ext is nonvanishing after the localization to $R_\p$ (with $\p$ of height $1$) 
if and only if the intersection of the graphs of $x$ and $y$ has codimension 
$1$ in each, which is equivalent to $x^{-1}y$ being a reflection, say $s_\alpha$.  In this case we have 
$\operatorname{Ext}^1_{\Ring\operatorname{-bimod}}(\Ring_x,\Ring_y)\cong \Ring/(\alpha)$, as a $\Ring$-module. 
So its base change to $\Ring_\p$ is the residue field of $\Ring_\p$ if $\p=(\alpha)$ and vanishes otherwise. 
\end{proof}

The proof also shows that it remains to show that 
\begin{itemize}
\item[(*)] if $x^{-1}y=s_\alpha$ and $\p=(\alpha)$, then 
$$\Ext^1_{\OCat^{st}_\Ring}(\Delta^{st}_\Ring(x),\Delta^{st}_\Ring(y))_{\p}\neq 0.$$
\end{itemize}

\subsubsection{Proof of non-vanishing}
\begin{proof}[Proof of (*)]
The proof is in several steps. 

{\it Step 1}. As in the proof of Lemma \ref{Lem:Vfun_st}, we can replace $x,y$ with $xt_{-\lambda},yt_{-\lambda}$
for $\lambda$ sufficiently dominant. This changes the $\Ring$-module structure by twisting it with $t_{-\lambda}$.
With this, we can achieve that $\Delta^{st}_\Ring(x)\cong \Delta^-_\Ring(x), 
\Delta^{st}_\Ring(y)\cong \Delta^-_\Ring(y)$. We then further reduce to proving the analog of (*)
for $\Ext^1_{\OCat^-_\Ring}(\Delta^-_\Ring(x), \Delta^-_\Ring(y))$. 

{\it Step 2}. The main part of the proof is to reduce to the case when $\alpha$ is a simple root.
We assume that $\alpha$ is positive and not simple.  
Pick a simple reflection $s$ and set $x':=xs, y':=ys, \alpha':=s(\alpha)$ and $\p'=(\alpha')$. We claim that 
\begin{equation}\label{eq:Ext_localized_iso}
\Ext^1_{\OCat^-_\Ring}(\Delta^-_\Ring(x), \Delta^-_\Ring(y))_\p\xrightarrow{\sim} \Ext^1_{\OCat^-_\Ring}(\Delta^-_\Ring(x'), \Delta^-_\Ring(y'))_{\p'},
\end{equation}
an $\Ring$-semilinear isomorphism (with respect to the twist of the $\Ring$-action by $s$). Let $\alpha_s$ denote the 
simple root corresponding to $s$. 

Note that $\Delta^-_\Ring(x')=\Delta^-_\Ring(x)*\Delta^-_\Ring(s)$ if $\ell(x')>\ell(x)$, else  
$\Delta^-_\Ring(x')=\Delta^-_\Ring(x)*\nabla^-_\Ring(s)$. We have a homomorphism $\Delta^-_\Ring(s)\rightarrow \nabla^-_\Ring(s)$
coming from the highest weight structure on $\OCat^-_\Ring$. It is an isomorphism outside the divisor $(\alpha_s)$. 
Since $(\alpha)\neq (\alpha_s)$, we have $\Delta^-_{\Ring_\p}(s)\xrightarrow{\sim} \nabla^-_{\Ring_\p}(s)$. This implies 
(\ref{eq:Ext_localized_iso}).

Assume that $\alpha$ is conjugate to a simple root $\beta$, so that $s_\alpha= us_{\beta}u^{-1}$ for $u\in W^a$,
which we can take to be the shortest possible. Let $s$ be such that $\ell(su)<\ell(u)$. Then $s_{\alpha'}=(su)s_\beta (su)^{-1}$.
To complete the reduction to the case when $\alpha$ is simple we can now argue by induction on $\ell(u)$. 

{\it Step 3}. Now we have $y=xs$ and, by our assumption on the order, $x<y$ in the Bruhat order (otherwise,
$\Ext^1$ vanishes). So, $\Delta^-_{\Ring}(y)=\Delta^-_{\Ring}(x)*\Delta^-_\Ring(s)$. We then have an $\Ring$-linear 
isomorphism 
$$\Ext^1_{\OCat^-_\Ring}(\Delta^-_\Ring(x), \Delta^-_\Ring(y))\xrightarrow{\sim} \Ext^1_{\OCat^-_\Ring}(\Delta^-_\Ring(1), \Delta^-_\Ring(s)).$$
The latter $\Ring$-module is easily seen to be isomorphic to $\Ring/(\alpha_s)$, which finishes the proof. 
\end{proof}

\subsection{Stabilized t-structure for singular blocks}\label{SS_stab_tstr_singular}
The goal of this section is to establish a singular version of Theorem \ref{Thm:stabilized_t_structure}.

\begin{Prop}\label{Prop:stabilized_t_structure_singular}
The following claims are true:
\begin{enumerate}
\item
There is unique, \underline{stabilized}, t-structure on $\,_J\Hecke$ 
such that its negative part $\,_J\Hecke^{st,\leqslant 0}$ coincides with
the full subcategory of $\Hecke$ consisting of all objects $\mathcal{F}$ such that
$\mathcal{F}*J_\lambda\in \,_J\Hecke^{\leqslant 0}$ (for the perverse t-structure).
The stabilized t-structure is bounded. 
The functors $\pi_J$ and $\pi^J$ are t-exact for the stabilized t-structure. 
\item
Moreover, the heart of the t-structure, to be denoted by $\,_J\OCat_\Ring^{st}$, is a highest weight category with the following interval finite poset and
standard objects:
\begin{itemize}
\item The poset is $W_J\backslash W^{a}$ with order $W_Jx\preceq W_Jy$ if $W_Jxt_{-\lambda}\leqslant W_Jyt_{-\lambda}$
for all $\lambda$ sufficiently dominant (below we will see that this gives a well-defined
order).
\item The standard object corresponding to $W_Jx$ is $\pi_J(\Delta_\Ring(xt_{-\lambda}))*J_{\lambda}$ for all $\lambda$
    sufficiently dominant (again, below we will see that this is well-defined).
\end{itemize}
\item We have a derived equivalence $D^b(\,_J \OCat_\Ring^{st})\xrightarrow{\sim} \,_J\Hecke$
given by the realization functor.
\end{enumerate}
\end{Prop}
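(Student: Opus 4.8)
The plan is to deduce the whole statement from its non-singular counterpart, Theorem~\ref{Thm:stabilized_t_structure}, through the adjoint pair $\pi_J\colon\Hecke\to\,_J\Hecke$, $\pi^J\colon\,_J\Hecke\to\Hecke$ of Section~\ref{SSS_sing_Soergel}. First I would record the inputs about these functors: both are exact triangulated functors, $\pi^J$ is biadjoint to $\pi_J$; $\pi_J$ is t-exact for the perverse t-structures by Lemma~\ref{Lem:usual_t_structure_singular} and conservative (since $\Ring$ is faithfully flat over $\Ring^J$), and $\pi^J$ is perverse-t-exact as well, sending (co)standard objects of $\,_J\OCat^-_\Ring$ to (co)standardly filtered objects (with a flag indexed by $W_J$), so Corollary~\ref{Cor:tens_t_exact} applies; both commute with $\bullet*J_\lambda$ as they are right $\SBim$-linear; and (\ref{eq:piJ_identity}) gives $\pi_J\circ\pi^J\cong\mathrm{id}^{\oplus|W_J|}$, whence $\pi^J$ is faithful and conservative, while $\pi^J\circ\pi_J$ is convolution on the left with the Soergel (hence tilting) bimodule $\Ring\otimes_{\Ring^J}\Ring$, again perverse-t-exact by Corollary~\ref{Cor:tens_t_exact}. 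From these one extracts the \emph{detection property}: for every $n$, an object $\mathcal F\in\,_J\Hecke$ lies in $\,_J\Hecke^{st,\leqslant n}$ (resp.\ $\geqslant n$) if and only if $\pi^J(\mathcal F)$ lies in $\Hecke^{st,\leqslant n}$ (resp.\ $\geqslant n$): by $*J_\lambda$-compatibility this reduces to the perverse statement, which follows from $\pi^J$ being perverse-t-exact in one direction and from $\pi_J$ being perverse-t-exact, conservative, together with $\pi_J\pi^J\cong\mathrm{id}^{\oplus|W_J|}$, in the other; the same circle of ideas also shows $\pi_J,\pi^J$ are t-exact for the stabilized t-structures.

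Granting this, part (1) is essentially formal. Stability of $\,_J\Hecke^{st,\leqslant0},\,_J\Hecke^{st,\geqslant0}$ under the respective shifts is clear, and $\Hom(\,_J\Hecke^{st,\leqslant0},\,_J\Hecke^{st,\geqslant1})=0$ follows from faithfulness of $\pi^J$ plus the detection property, reducing it to $\Hecke$, where it holds by Theorem~\ref{Thm:stabilized_t_structure}. For the truncation triangles, given $\mathcal F\in\,_J\Hecke$ I would take the stabilized truncation triangle of $\pi^J\mathcal F$ in $\Hecke$, apply $\pi_J$ to obtain a triangle with outer terms in $\,_J\Hecke^{st,\leqslant0}$, $\,_J\Hecke^{st,\geqslant1}$ and middle term $\mathcal F^{\oplus|W_J|}\cong\pi_J\pi^J\mathcal F$, and then split off the $\mathcal F$-summand: the downstairs truncation triangle is functorial by the orthogonality just established, so the idempotent on $\mathcal F^{\oplus|W_J|}$ cutting out $\mathcal F$ propagates to the outer terms, and one passes to images, using that $\Hecke$ and $\,_J\Hecke$ are Karoubian. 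Boundedness of the stabilized t-structure on $\,_J\Hecke$ and the t-exactness of $\pi_J,\pi^J$ then follow at once from the detection property and Theorem~\ref{Thm:stabilized_t_structure}.

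For part (2), write $\,_J\OCat^{st}_\Ring$ for the heart and set $\Delta^{st}_\Ring(W_Jx):=\pi_J(\Delta^{st}_\Ring(x_0))$, where $x_0$ is the minimal-length representative of $W_Jx$ and $\Delta^{st}_\Ring(x_0)=\Delta^-_\Ring(x_0t_{-\lambda})*J_\lambda$ is the non-singular stabilized standard of Section~\ref{SSS:Delta_st}. Since $\pi_J$ commutes with $*J_\lambda$ and $\pi_J(\Delta^-_\Ring(x_0t_{-\lambda}))$ is the singular perverse standard labelled $W_Jxt_{-\lambda}$ (Lemma~\ref{Lem:usual_t_structure_singular}), this matches the formula in the statement, and its well-definedness (independence of the representative and of $\lambda\gg0$), together with the well-definedness and interval finiteness of $\preceq$ on $W_J\backslash W^{a}$, follow from the corresponding facts about $(W^{a},\leqslant^{st})$ (Lemma~\ref{Lem:stabilized_order_characterization}, Corollary~\ref{Cor:order_implication}) pushed through $\pi_J$, the projection $(W^{a},\leqslant^{st})\twoheadrightarrow(W_J\backslash W^{a},\preceq)$ being a map of posets. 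Axioms (i)--(v) of Section~\ref{SSS_hw_finite_def} for each highest weight subcategory $\,_J\OCat^{st}_{\T_0,\Ring}$ attached to a coideal finite ideal $\T_0\subset W_J\backslash W^{a}$ are then verified as in the non-singular case: $\Ring$-flatness of the standards because $\pi_J$ is exact and the $\Delta^{st}_\Ring(x_0)$ are $\Ring$-flat; $\End=\Ring$ and $\Ext$-vanishing for incomparable labels from Lemma~\ref{Lem:cl_stand_properties} via $\Hom_{\,_J\Hecke}(\pi_JM,\pi_JN)\cong\Hom_{\Hecke}(M,\pi^J\pi_JN)$ and t-exactness; and the projectives of axiom (v) as $\pi_J$-images of the $P^{st}_{\T_0,\Ring}(x_0)$ of Lemma~\ref{Lem:proj_deformation}. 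Gluing over all $\T_0$ as in Proposition~\ref{Prop:full_embeddings_hw_subs} and Lemma~\ref{Lem:Ost_hw} gives (2); and (3) follows as in Proposition~\ref{Prop:deformed_derived_equiv} and Corollary~\ref{Cor:category_union}, since the $\Delta^{st}_\Ring(W_Jx)$ generate $\,_J\Hecke$ (apply the triangulated, essentially surjective, $*J_\lambda$-compatible $\pi_J$ to the generators of $\Hecke$ from Lemma~\ref{Lem:generators}), each $D^b(\,_J\OCat^{st}_{\T_0,\Ring})\hookrightarrow\,_J\Hecke$ is a full embedding by Lemma~\ref{Lem:derived_full_embedding_interval finite} and the analogue of Lemma~\ref{Lem:full_embedding_deformed}, and these subcategories exhaust $\,_J\Hecke$.

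I expect the main obstacle to be the ``parabolic induction'' inputs flagged above: that $\pi^J$ and $\pi^J\circ\pi_J$ are perverse-t-exact (equivalently, that $\pi^J$ preserves standard and costandard filtrations and that $\Ring\otimes_{\Ring^J}\Ring$ is a tilting bimodule), and that the $\pi_J$-images of the deformed indecomposable projectives carry standard flags indexed by the correct elements of $W_J\backslash W^{a}$ --- the latter amounting to the combinatorics of the order-preserving projection $(W^{a},\leqslant^{st})\twoheadrightarrow(W_J\backslash W^{a},\preceq)$. Everything else is a faithful transcription of Sections~\ref{SSS:Delta_st}--\ref{SSS_proof_stab_compl} through $\pi_J$ and $\pi^J$.
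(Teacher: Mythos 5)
Your proposal is essentially correct and follows the same reduction-to-the-nonsingular-case strategy as the paper: the detection property ($\mathcal F\in\,_J\Hecke^{st,\leqslant n}\Leftrightarrow\pi^J\mathcal F\in\Hecke^{st,\leqslant n}$) is precisely the equivalence of conditions (a) and (b) in the paper's Step~1, and your Karoubian-splitting argument for truncation triangles is its Step~3. One small imprecision: Corollary~\ref{Cor:tens_t_exact} concerns convolution \emph{endo}functors of $\Hecke$ and does not directly apply to $\pi^J\colon\,_J\Hecke\to\Hecke$; the one-line reason $\pi^J$ is perverse-t-exact is that it is both left and right adjoint to the perverse-t-exact $\pi_J$ (so it is right- and left-t-exact). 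The paper avoids asserting perverse-t-exactness of $\pi^J$ at all by working exclusively with the composite $\pi^J\circ\pi_J\cong T_{\Ring,J}*\bullet$ on $\Hecke$, whose perverse-t-exactness \emph{is} a direct consequence of Corollary~\ref{Cor:tens_t_exact} (it is a tilting bimodule), and then deduces the singular statements from the retraction $\pi_J\pi^J\cong\mathrm{id}^{\oplus|W_J|}$ (this is Lemma~\ref{Lem:reflection_Ost}); your route through $\pi^J$ itself is equivalent and a hair more direct once the adjunction argument is substituted for the mis-cited corollary.
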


\subsubsection{Reflection functors}
The goal of this section is to prove the following result. We write $T_{\Ring,J}$ for the indecomposable tilting in $\OCat_\Ring^-$
corresponding to the longest element in $W_J$, it corresponds to the Soergel bimodule $\Ring\otimes_{\Ring^J}\Ring$. 
We note that 
\begin{equation}\label{eq:convolution_projection}
T_{\Ring,J}*\bullet\cong \pi^J\circ \pi_J.
\end{equation}

\begin{Lem}\label{Lem:reflection_Ost} 
\begin{itemize}
\item[(i)] 
the  endo-functor $T_{\Ring,J}*\bullet$
of $D^b(\OCat^-_\Ring)$ restricts to an (automatically t-exact) endo-functor of $\OCat^{st}_\Ring$
\item[(ii)]
and, moreover, $T_{\Ring,J}*\bullet$ sends $\Delta_\Ring^{st}(x)$ to an object filtered by 
$\Delta_{\Ring}^{st}(ux)$ for $u\in W_J$ (in the order dictated 
by the highest weight structure) and each such $u$ occurs exactly once.  
\end{itemize}
\end{Lem}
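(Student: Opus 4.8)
Throughout, $w_J$ denotes the longest element of $W_J$, so that $T_{\Ring,J}$ is the indecomposable Soergel bimodule $B_{w_J}=\Ring\otimes_{\Ring^J}\Ring$, and we freely use the identifications $\Hecke=D^b(\OCat^-_\Ring)$ and $\,_J\Hecke=K^b(\,_J\SBim)$. The plan is to deduce both (i) and (ii) from the corresponding statements for the \emph{perverse} t-structure, exploiting that $T_{\Ring,J}*\bullet$ commutes with the right $\Br^a$-action and is self-biadjoint.

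For (i), I would first record that $T_{\Ring,J}*\bullet$ is t-exact for the perverse t-structure: $T_{\Ring,J}$ is a tilting object of $\OCat^-_\Ring$, hence both standardly and costandardly filtered, so Corollary \ref{Cor:tens_t_exact} gives left and right t-exactness. Since the monoidal product is associative, $T_{\Ring,J}*\bullet$ commutes with $\bullet*J_\lambda$; combined with perverse right t-exactness and the description of $\Hecke^{st,\leqslant 0}$ from Theorem \ref{Thm:stabilized_t_structure}(1) (i.e. $\mathcal F\in\Hecke^{st,\leqslant 0}$ iff $\mathcal F*J_\lambda\in\Hecke^{\leqslant 0}$ for $\lambda$ sufficiently dominant), this shows $(T_{\Ring,J}*\mathcal F)*J_\lambda=T_{\Ring,J}*(\mathcal F*J_\lambda)\in\Hecke^{\leqslant 0}$ for $\lambda$ sufficiently dominant, i.e. $T_{\Ring,J}*\bullet$ is right t-exact for the stabilized t-structure. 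Finally, by (\ref{eq:convolution_projection}) we have $T_{\Ring,J}*\bullet\cong\pi^J\circ\pi_J$, and since $\pi_J$ and $\pi^J$ are biadjoint this functor is its own left and right adjoint; a self-right-adjoint triangulated endofunctor is right t-exact iff it is left t-exact, so $T_{\Ring,J}*\bullet$ is t-exact for the stabilized t-structure and therefore restricts to a (t-exact) endofunctor of the heart $\OCat^{st}_\Ring$. This proves (i).

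For (ii), fix $x\in W^{a}$ and choose $\lambda\in\Lambda_0$ dominant enough that $\Delta^{st}_\Ring(y)=\Delta^-_\Ring(yt_{-\lambda})*J_\lambda$ for every $y$ in the finite set $\{ux: u\in W_J\}$. Then $T_{\Ring,J}*\Delta^{st}_\Ring(x)\cong\bigl(T_{\Ring,J}*\Delta^-_\Ring(xt_{-\lambda})\bigr)*J_\lambda$. Since $T_{\Ring,J}=B_{w_J}\in\SBim$, convolution of a Soergel bimodule with a standard object of $\OCat^-_\Ring$ is standardly filtered (standard, cf. \cite{EL}), and the multiplicities and order of its subquotients are read off from the expansion of $b_{w_J}\mathsf T_z$ in the $\mathsf T$-basis of the Hecke algebra of $W^{a}$: one gets that $T_{\Ring,J}*\Delta^-_\Ring(z)$ has a $\Delta^-_\Ring$-filtration with subquotients $\Delta^-_\Ring(uz)$, $u\in W_J$, each occurring exactly once, in an order refining the Bruhat order on $\{uz\}$. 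Taking $z=xt_{-\lambda}$ and applying the exact functor $\bullet*J_\lambda$ to this filtration turns the subquotients into $\Delta^-_\Ring(uxt_{-\lambda})*J_\lambda=\Delta^{st}_\Ring(ux)$, which lie in $\OCat^{st}_\Ring$, while the total object $T_{\Ring,J}*\Delta^{st}_\Ring(x)$ lies in $\OCat^{st}_\Ring$ by (i); since an iterated extension of objects of the heart by objects of the heart lies in the heart, the images of the steps of the filtration form short exact sequences in $\OCat^{st}_\Ring$. Finally, Lemma \ref{Lem:stabilized_order_characterization} identifies the Bruhat order on $\{uxt_{-\lambda}\}$ (for $\lambda$ sufficiently dominant) with $\leqslant^{st}$ on $\{ux\}$, so the resulting filtration of $T_{\Ring,J}*\Delta^{st}_\Ring(x)$ refines the highest weight order and has each $\Delta^{st}_\Ring(ux)$, $u\in W_J$, occurring exactly once; this is (ii).

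The step I expect to be the main obstacle is the perverse-level input to (ii): one must know not merely the $K_0$-identity but that $T_{\Ring,J}*\Delta^-_\Ring(z)$ is genuinely $\Delta^-$-filtered as an object of the abelian category $\OCat^-_\Ring$, with exactly the stated subquotients — equivalently, that convolution with the Soergel bimodule $B_{w_J}$ preserves standardly filtered objects — and then to propagate this filtration through $\bullet*J_\lambda$ while keeping each partial term inside the heart $\OCat^{st}_\Ring$; the t-exactness established in (i) is precisely what makes the last point work, so (i) is logically the load-bearing part.
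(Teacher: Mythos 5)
Your proof is correct and follows essentially the same route as the paper: for (i), use perverse t-exactness of $T_{\Ring,J}*\bullet$, commutation with $?*J_\lambda$, the characterization of $\Hecke^{st,\leqslant 0}$ from Theorem \ref{Thm:stabilized_t_structure}(1), and self-biadjointness of $\pi^J\pi_J$ to upgrade right t-exactness to t-exactness; for (ii), reduce to the perverse regime via $\Delta^{st}_\Ring(ux)=\Delta^-_\Ring(uxt_{-\lambda})*J_\lambda$ for $\lambda$ sufficiently dominant and invoke the standard filtration of $T_{\Ring,J}*\Delta^-_\Ring(z)$. The only difference is that you spell out the Hecke-algebra bookkeeping behind the ``standard fact'' the paper leaves implicit, together with the inductive heart-closure argument showing the intermediate filtration steps land in $\OCat^{st}_\Ring$; both additions are sound and in the paper's spirit.
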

\begin{proof}
The functor $T_{\Ring,J}*?$ is self-biadjoint, so, as long as we know that it preserves $D^b(\OCat_\Ring)^{st,\leqslant 0}$, 
the claim of (i) will follow. Note that $T_{\Ring,J}*?$ is t-exact for the usual t-structure. 
To prove that $T_{\Ring,J}*\bullet$ preserves $D^b(\OCat_\Ring)^{st,\leqslant 0}$, we use the fact that $T_{\Ring,J}*?$ commutes with 
$?*J_\lambda$ for all $\lambda\in \Lambda$ and (1) of Theorem \ref{Thm:stabilized_t_structure}. This proves (i).

To prove (ii) observe that for sufficiently dominant $\lambda$ all elements of the form $uxt_{-\lambda}$ with $u\in W_J$
have the form $wt_\mu$, where $\mu$ is anti-dominant. It follows that $\Delta^{st}_\Ring(uxt_{-\lambda})=
\Delta_\Ring(uxt_{-\lambda})$. Now we just use the standard fact about the standard filtration on $T_{\Ring,J}*\Delta_\Ring(xt_{-\lambda})$.
\end{proof} 

\subsubsection{Proof of (1) of Proposition \ref{Prop:stabilized_t_structure_singular}}
\begin{proof}
The proof is in several steps. 

{\it Step 1}. We claim that for $\mathcal{F}\in \,_J\Hecke$ the following two conditions are equivalent:
\begin{itemize}
\item[(a)] $\mathcal{F}$ lies in the Karoubian envelope of $ \pi_J(\Hecke^{st,\leqslant 0})$,
\item[(b)] and $\pi^J(\mathcal{F})\in \Hecke^{st,\leqslant 0}$.
\end{itemize}
(b)$\Rightarrow$(a) follows from (\ref{eq:piJ_identity}). (a)$\Rightarrow$(b) follows from
(\ref{eq:convolution_projection}) combined with (i) of Lemma \ref{Lem:reflection_Ost}. 
Similarly, the following two conditions 
\begin{itemize}
\item[(a')] $\mathcal{F}$ lies in the Karoubian envelope of $\pi_J(\Hecke^{st,\geqslant 0})$,
\item[(b')] and $\pi^J(\mathcal{F})\in \Hecke^{st,\geqslant 0}$.
\end{itemize}  

{\it Step 2}. Define $\,_J\Hecke^{st,\leqslant 0}$ (resp., $\,_J\Hecke^{st,\geqslant 0}$) as the full
subcategory of $\,_J\Hecke$ of objects satisfying the equivalent conditions (a),(b) (resp., 
the equivalent conditions (a'),(b')). These are Karoubian subcategories.
Since $\pi_J$ and $\pi^J$ commute with $?*J_\lambda$ for all $\lambda$, we see that 
$$\,_J\Hecke^{st,\leqslant 0}=\{\mathcal{F}\in \,_J\Hecke| \mathcal{F}*J_\lambda\in \,_J\Hecke^{\leqslant 0},\forall \lambda\}.$$

{\it Step 3}. We claim that the subcategories  $\,_J\Hecke^{st,\leqslant 0},\,_J\Hecke^{st,\geqslant 0}$ constitute the non-positive
and the non-negative parts of a t-structure. A check that these subcategories are stable under appropriate 
homological shifts is trivial. To check that there are no homomorphisms from  
$\mathcal{F}\in\,_J\Hecke^{st,\leqslant 0}$ to $\mathcal{G}\in\,_J\Hecke^{st,>0}$
we use that $\mathcal{F}$ lies in the Karoubian envelope of $ \pi_J(\Hecke^{st,\leqslant 0})$, while $\pi^J(\mathcal{G})\in\Hecke^{st,>0}$,
and the claim that $\pi_J$ is left adjoint to $\pi^J$. 

It remains to show that any $\mathcal{F}\in \,_J\Hecke$ fits into a distinguished triangle 
of the form $\mathcal{F}^{\leqslant 0}\rightarrow \mathcal{F}\rightarrow \mathcal{F}^{>0}\xrightarrow{+1}$
with $\mathcal{F}^{\leqslant 0}\in\,_J\Hecke^{st,\leqslant 0}, \mathcal{F}^{> 0}\in\,_J\Hecke^{st,> 0}$.
Let $\mathcal{C}$ denote the full subcategory of $\Hecke$ of all $\mathcal{F}$ such that 
such a triangle exists. Then the assignment $\,_J\tau_{\leqslant 0}:\mathcal{C}\rightarrow \,_J\Hecke^{st,\leqslant 0},\mathcal{F}\mapsto\mathcal{F}^{\leqslant 0},$
is a part of a functor. On morphisms, $\,_J\tau_{\leqslant 0}$ sends $\varphi:\mathcal{F}\rightarrow \mathcal{F}$
 to a unique morphism $\varphi_{\leqslant 0}: \mathcal{F}^{\leqslant 0}\rightarrow \mathcal{F}^{\leqslant 0}$
such that $\iota\circ\varphi_{\leqslant 0}= \varphi\circ \iota$, where $\iota:\mathcal{F}^{\leqslant 0}
\rightarrow \mathcal{F}$ is the natural morphism. Similarly, we can define a functor
$\,_J\tau_{> 0}: \mathcal{C}\rightarrow \,_J\Hecke^{>0}$. From this construction applied to an idempotent $\varphi$, 
it is easy to see that $\mathcal{C}$ is a Karoubian subcategory.  

Let $\tau_{\leqslant 0},\tau_{>0}$ be the truncation functors for the stabilized t-structure on $\Hecke$.
Then we have a distinguished triangle
$$\pi_J \tau_{\leqslant 0}\pi^J\mathcal{F}\rightarrow \pi_J\pi^J\mathcal{F}\cong\mathcal{F}^{\oplus |W_J|}
\rightarrow \pi_J \tau_{> 0}\pi^J\mathcal{F}\xrightarrow{+1}.$$
So $\mathcal{F}^{\oplus |W_J|}\in \mathcal{C}$. Since $\mathcal{C}$ is Karoubian, we get $\mathcal{F}\in \mathcal{C}$.
We have checked that $(\,_J\Hecke^{st,\leqslant 0},\,_J\Hecke^{st,\geqslant 0})$ is a t-structure.

{\it Step 4}. The claim that the functors $\pi_J,\pi^J$ are t-exact with respect to the stabilized 
t-structure easily follows from the construction. Now let us prove that the t-structure is bounded. 
Take $M\in \,_J\Hecke$. Then $\pi^J M$ has only finitely many nonzero homology groups because 
the t-structure on $\Hecke$ is bounded. Then we use the claim that $\pi_J$ is t-exact combined with 
(\ref{eq:piJ_identity}) to conclude that $M$ has only finitely many nonzero homology groups with respect
to the stabilized t-structure. 
\end{proof}

\subsubsection{Proof of (2) of Proposition \ref{Prop:stabilized_t_structure_singular}}
\begin{proof}
{\it Step 1}.
Note that $\,_J\OCat_\Ring^{st}$ can be characterized as the Karoubian envelope of $\pi_J(\OCat^{st}_\Ring)\subset \,_J\Hecke$
or, equivalently, as the full subcategory of all objects $\mathcal{F}\in \,_J\Hecke$ with $\pi^J(\mathcal{F})\in \OCat^{st}_\Ring$.
This follows from Step 1 of the proof of part (1). Recall that $\OCat^{st}_\Ring$ is a Noetherian category with Hom's finitely 
generated over $\Ring$. (\ref{eq:piJ_identity}) now implies that $\,_J\OCat_\Ring^{st}$ has analogous properties. 

{\it Step 2}. We claim that the transitive closure of the relation 
$\leqslant^{st}$ on $W_J\backslash W^{a}$ given by $W_Jx\leqslant^{st}W_Jy$ if there are $x'\in W_Jx, y'\in W_Jy$ 
with $x'\leqslant^{st}y'$ is a partial order. Indeed, what we need to prove is that 
\begin{enumerate}
\item 
there no cycles, i.e., collections of elements 
$W_Jx_1,\ldots,W_Jx_k\in W_J\backslash W^{a}$ with $W_Jx_1<^{st}W_Jx_2<^{st}\ldots<^{st}W_Jx_k<^{st}W_Jx_1$.  
\end{enumerate}
For this, note that $\leqslant^{st}$ on $W_J\backslash W^{a}$ is invariant under $W_Jx\mapsto W_Jxt_\lambda$ for all
$\lambda\in \Lambda$. So we can assume that all elements in the cosets $W_Jx_i$ have the form $wt_\mu$ with 
anti-dominant $\mu$. Now our claim follows from the fact that the singular Bruhat order is well-defined. 
Note also $W_Jx\leqslant^{st}W_Jy$ is equivalent to $W_Jxt_{-\lambda}\leqslant W_Jyt_{-\lambda}$ for sufficiently dominant 
$\lambda$. 

{\it Step 3}. Fix $W_Jx\in W_J\backslash W^{a}$. Recall that $\Delta^-_\Ring(W_Jx):=\pi_J(\Delta^-_\Ring(x))$ 
only depends on $W_Jx$. Set $\Delta^{st}_\Ring(W_Jx):=\Delta^-_\Ring(W_Jxt_{-\lambda})*J_\lambda$
for $\lambda$ sufficiently dominant, equivalently, $\Delta^{st}_\Ring(W_Jx):=\pi_J(\Delta^{st}_\Ring(x))$.
As these definitions are equivalent, they are independent of the choice of $\lambda$ or choice of $x$ in $W_Jx$.

Now we will show that $\,_J\OCat^{st}_\Ring$ is a highest weight category with poset 
$(W_J\backslash W^{a}, \leqslant^{st})$.

Axiom (i): $\Delta^{st}_\Ring(W_Jx)$ is flat over $\Ring$ because it is isomorphic to $\pi_J(\Delta^{st}_\Ring(x))$,
the object $\Delta^{st}_\Ring(x)$ is flat over $\Ring$ ((ii) of Theorem \ref{Thm:stabilized_t_structure}) 
and $\pi_J$ is exact, part (i) of this proposition.   

Axioms (ii) and (iii): follow from $\Delta^{st}_\Ring(W_Jx)=\Delta^-_\Ring(W_Jxt_{-\lambda})*J_\lambda$
for sufficiently dominant $\lambda$.

Axiom (iv): Let $M\in \,_J\OCat^{st}_{\Ring,\T_0}$ for a coideal finite poset ideal $\T_0\subset W_J\backslash W^{a}$.
Let $\tilde{\T}_0\subset W^{a}$ be its preimage. This is also a coideal finite poset and $\pi^J(M)\in 
\OCat^{st}_{\Ring,\tilde{\T}_0}$. Note that the latter object is nonzero thanks to (\ref{eq:piJ_identity}). 
So we can find $x\in \T_0$ with 
$$\Hom_{\,_J\OCat^{st}_{\Ring,\T_0}}(\pi_J\Delta^{st}_\Ring(x),M)=
\Hom_{\OCat^{st}_{\Ring,\tilde{\T}_0}}(\Delta^{st}_\Ring(x),\pi^JM)\neq 0.$$

Axiom (v): The functors $\pi_J,\pi^J$ restrict to the subcategories $\OCat^{st}_{\Ring,\tilde{\T}_0}$
and $\,_J\OCat^{st}_{\Ring,\T_0}$. So $\pi_J$ sends projective objects in 
$\OCat^{st}_{\Ring,\tilde{\T}_0}$ to projective objects in $\,_J\OCat^{st}_{\Ring,\T_0}$.
Axiom (v) easily follows from the construction of  the objects $\Delta^{st}_\Ring(W_Jx)$.  
\end{proof}

\subsubsection{Proof of (3) of Proposition \ref{Prop:stabilized_t_structure_singular}}
\begin{proof}
As in the proof of Proposition \ref{Prop:deformed_derived_equiv}, we need to show that 
\begin{equation}\label{eq:Ext_isom_singular}
\operatorname{Ext}^i_{\,_J\OCat^{st}_\Ring}(M,N)\xrightarrow{\sim} \operatorname{Hom}_{\,_J\Hecke}(M,N[i]).
\end{equation}
We know the analogous statement for the regular blocks. Suppose $\tilde{N}\in \OCat^{st}_\Ring$
is such that $N\cong \pi_J \tilde{N}$.
Then the isomorphism
$$\operatorname{Ext}^i_{\OCat^{st}_\Ring}(\pi^J M,\tilde{N})\xrightarrow{\sim} \operatorname{Hom}_{\Hecke}(\pi^J M,\tilde{N}[i]).$$
is intertwined with (\ref{eq:Ext_isom_singular}) by the following two isomorphisms
$$\operatorname{Ext}^i_{\,_J\OCat^{st}_\Ring}(M,N)\xrightarrow{\sim} \operatorname{Ext}^i_{\OCat^{st}_\Ring}(\pi^J M,\tilde{N}), 
\operatorname{Hom}_{\,_J\Hecke}(M,N[i])\xrightarrow{\sim} \operatorname{Hom}_{\Hecke}(\pi^J M,\tilde{N}[i]).$$
So (\ref{eq:Ext_isom_singular}) is an isomorphism. 

In general, every object $N$ in $\,_J\OCat^{st}_\Ring$ is a direct summand in the object of the form $\pi_J\tilde{N}$
(with $\tilde{N}=\pi^J N$). This shows (3). 
\end{proof}

\subsubsection{Soergel functor}
Similar to the regular case we have the functor $$\Vfun: \,_J\Hecke\rightarrow D^b(\Ring^J\operatorname{-}\Ring\operatorname{-bimod}).$$
We have the following analog of Lemmas \ref{Lem:Vfun_st}, \ref{Lem:ss_base_change} and Corollary \ref{Cor:Vfun_ff_Ost}. 

Note that we can consider the base change  $\,_J\OCat^{st}$ of $\,_J\OCat_\Ring^{st}$ to $\C$. We still have biadjoint functors $\pi_J:\OCat^{st}\rightarrow 
\,_J\OCat^{st}, \pi^J:\,_J\OCat^{st}\rightarrow \OCat^{st}$.

\begin{Lem}\label{Lem:Vfun_singular}
The following claims are true:
\begin{enumerate}
\item For every $x\in W^{a}$, we have $\Vfun(\Delta^{st}_\Ring(W_Jx))\cong \Ring_x$ as an $\Ring^J$-$\Ring$-bimodule.
\item The functor $\Vfun$ is faithful on standardly filtered objects in $\,_J \OCat^{st}$. 
\item The base change of $\,_J\OCat_\Ring$ to $\F:=\operatorname{Frac}(\Ring)$ is semisimple.
\item The restriction of $\Vfun$ to the category of standardly filtered objects in $\,_J\OCat^{st}_\Ring$ is a fully
faithful embedding into $\Ring^J\operatorname{-}\Ring\operatorname{-bimod}$.
\end{enumerate}
\end{Lem}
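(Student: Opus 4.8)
The four claims of Lemma \ref{Lem:Vfun_singular} are the singular analogs of Lemma \ref{Lem:Vfun_st}, Lemma \ref{Lem:ss_base_change} and Corollary \ref{Cor:Vfun_ff_Ost}, and the plan is to reduce each of them to its regular counterpart using the functors $\pi_J,\pi^J$, together with the isomorphism \eqref{eq:piJ_identity} $\pi_J\circ\pi^J\cong \operatorname{id}^{\oplus|W_J|}$ and the compatibility $\Vfun\circ\pi^J\cong \pi^J\circ\Vfun$ coming from the fact that $\pi^J$ is ``tensor by $\Ring\otimes_{\Ring^J}\bullet$'' on both sides. For (1): by the characterization of $\,_J\OCat^{st}_\Ring$ from Step 3 of the proof of part (2), $\Delta^{st}_\Ring(W_Jx)=\pi_J(\Delta^{st}_\Ring(x))$ for $x$ minimal in $W_Jx$, and $\Vfun$ of this is obtained from $\Vfun(\Delta^{st}_\Ring(x))\cong \Ring_x$ (Lemma \ref{Lem:Vfun_st}(1)) by restricting the left $\Ring$-action to $\Ring^J$; since $\Ring_x$ is already free of rank one as a right $\Ring$-module, this restriction is precisely $\Ring_x$ viewed as an $\Ring^J$-$\Ring$-bimodule. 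One only needs that the indecomposable object $\Delta^{st}_\Ring(W_Jx)$ does not change if we pick a different representative of the coset (for which we can twist by $J_\lambda$ and reduce to $\Delta^-_\Ring(W_Jxt_{-\lambda})$, where this is standard singular Soergel theory).

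\textbf{Faithfulness (2) and semisimplicity (3).} For (3) I would argue exactly as in Lemma \ref{Lem:ss_base_change}: after base change to $\F$ every (singular) Bott--Samelson bimodule splits as a direct sum of graph bimodules $\F_x$, so $\,_J\OCat^-_\F$ is semisimple, hence there are no higher Ext's between standard objects of $\,_J\OCat^-_\F$; since the standard objects $\Delta^{st}_\Ring(W_Jx)$ are built from the $\Delta^-_\Ring(W_J x t_{-\lambda})$ by convolving with the invertible objects $J_\lambda$, the same vanishing passes to $\,_J\OCat^{st}_\F$, which is therefore semisimple. Alternatively, and more cheaply, apply $\pi_J$ to the semisimplicity of $\OCat^{st}_\F$ (Lemma \ref{Lem:ss_base_change}): since $\pi_J,\pi^J$ are exact, biadjoint, and every object of $\,_J\OCat^{st}_\F$ is a summand of $\pi_J\pi^J$ of it, semisimplicity of the source forces semisimplicity of the target. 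For (2): the functor $?*J_\lambda$ is a self-equivalence of both $\,_J\OCat^{st}$ and $\Ring^J$-$\Ring$-bimod, and $\Vfun(?*J_\lambda)\cong\Vfun(?)\otimes_\Ring\Ring_{t_\lambda}$, so as in the proof of Lemma \ref{Lem:Vfun_st}(2) it suffices to check faithfulness on $\Delta^{st}(W_Jx_i)$ after twisting so that these become $\Delta^-(W_J x_i t_{-\lambda})=\pi_J(\Delta^-(x_i t_{-\lambda}))$. A nonzero homomorphism between such objects pulls back along $\pi^J$ (using $\pi^J\pi_J\cong T_{\Ring,J}*\bullet$ of \eqref{eq:convolution_projection} and that $T_{\Ring,J}*\Delta^-_\Ring(x)$ is standardly filtered, so a nonzero map survives on the appropriate graded piece) to a nonzero homomorphism between standardly filtered objects in $\OCat^-$, on which $\Vfun$ is faithful by Lemma \ref{Lem:V_O_negative_properties}(4); and $\Vfun\circ\pi^J\cong\pi^J\circ\Vfun$ is itself faithful because $\pi^J=\Ring\otimes_{\Ring^J}\bullet$ is a faithful functor on bimodules. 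Combining, $\Vfun$ is faithful on $\,_J\OCat^{st,\Delta}$.

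\textbf{Full faithfulness (4).} This is the main point, and I would model it verbatim on the proof of Corollary \ref{Cor:Vfun_ff_Ost}. Given $M,N\in\,_J\OCat^{st,\Delta}_\Ring$, all four of $M,N,\Vfun(M),\Vfun(N)$ are flat over $\Ring$ (using part (1) for the standardly filtered targets), hence $\Hom_{\,_J\OCat^{st}_\Ring}(M,N)$ and $\Hom_{\Ring^J\text{-}\Ring\text{-bimod}}(\Vfun M,\Vfun N)$ are reflexive $\Ring$-modules; so it is enough to prove the comparison map is an isomorphism after localizing at every height one prime $\p\subset\Ring$. At such a $\p$, faithfulness after reduction to the residue field $\kf_\p$ (which follows from part (2) together with Remark \ref{Rem:RS4}, applied with $\Ring$ replaced by $\Ring_\p$) gives injectivity of $\Hom_{\,_J\OCat^{st}_{\kf_\p}}\hookrightarrow \Hom_{\Ring^J\otimes\kf_\p}$, while part (3) gives that the map over $\F$ is an isomorphism (full faithfulness of $\Vfun$ on the semisimple category $\,_J\OCat^{st}_\F$ follows from part (1): the $\Delta^{st}_\F(W_Jx)$ go to the pairwise non-isomorphic simple bimodules $\F_x$ with the correct endomorphism ring $\F$). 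Exactly the diagram-chase at the end of the proof of Lemma \ref{Lem:RS_0_faith} then upgrades these two facts to the isomorphism over $\Ring_\p$. I expect the only mild subtlety to be bookkeeping the $\Ring^J$-versus-$\Ring$ module structures when invoking Remark \ref{Rem:RS4} (which is phrased for one ring), but since the relevant Hom-modules are finitely generated over the common central $\Ring$ and reflexivity/localization are tested over $\Ring$, this causes no real difficulty.
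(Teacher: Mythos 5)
Your proof is correct and essentially mirrors the paper's: part (1) by $\Vfun\circ\pi_J\cong\pi_J\circ\Vfun$ applied to $\Delta^{st}_\Ring(W_Jx)=\pi_J(\Delta^{st}_\Ring(x))$ and Lemma \ref{Lem:Vfun_st}(1); part (3) exactly as in Lemma \ref{Lem:ss_base_change}; part (4) by the reflexivity-and-localization argument of Corollary \ref{Cor:Vfun_ff_Ost}.

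The one place you genuinely diverge is part (2). The paper uses the adjunction $\Hom_{\,_J\OCat^{st}}(\pi_J\Delta^{st}(x),\Delta^{st}(W_Jy))\cong\Hom_{\OCat^{st}}(\Delta^{st}(x),\pi^J\Delta^{st}(W_Jy))$ together with $\Vfun\circ\pi_J\cong\pi_J\circ\Vfun$ and $\Vfun\circ\pi^J\cong\pi^J\circ\Vfun$, transporting the question to a Hom between standardly filtered objects in $\OCat^{st}$ where Lemma \ref{Lem:Vfun_st}(2) gives injectivity. You instead apply $\pi^J$ directly to the morphism, observe that $\pi^J$ is faithful (by (\ref{eq:piJ_identity})), that $\pi^J\Delta^{st}(W_Jy)$ is standardly filtered, and that $\pi^J=\Ring\otimes_{\Ring^J}\bullet$ is a faithful endofunctor of bimodules. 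Both routes are valid one-line adjunction bookkeeping; the paper's is marginally shorter since it avoids the detour through $\Vfun\pi^J(\varphi)\neq 0\Rightarrow\Vfun(\varphi)\neq 0$, but there is no substantive difference. One small caveat in your write-up of (1): when you assert that $\Ring_x$ "viewed as an $\Ring^J$-$\Ring$-bimodule" is independent of the representative of the coset $W_Jx$, it is worth noting that this is because the left $\Ring^J$-action, being by $W_J$-invariants, is unchanged by twisting by $u^{-1}$ for $u\in W_J$ — you state only well-definedness of the standard object itself.
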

\begin{proof}
Note that we can consider the restriction functor $\pi_J:\Ring\operatorname{-bimod}\rightarrow \Ring^J\operatorname{-}\Ring\operatorname{-bimod}$ and its biadjoint $\pi^J:\Ring^J\operatorname{-}\Ring\operatorname{-bimod}$.
We also have 
\begin{align}\label{eq:Soergel_projection}
&\Vfun\circ\pi_J\cong \pi_J\circ \Vfun,\\\label{eq:Soergel_projection_adjoint}
&\Vfun\circ \pi^J\cong \pi^J\circ \Vfun.
\end{align}

(i): This follows from (\ref{eq:Soergel_projection}) combined with (i) of Lemma \ref{Lem:Vfun_st}.

(ii): We need to show that $\Vfun$ does not kill a nonzero homomorphism $\Delta^{st}(W_Jx)\rightarrow \Delta^{st}(W_Jy)$.
Note that $\Delta^{st}(W_Jx)=\pi_J(\Delta^{st}(x))$, while $\pi^J(\Delta^{st}(W_Jy))$ is standardly filtered.
By (ii) of Lemma \ref{Lem:Vfun_st}, $\Vfun$ gives an injective map 
$$\Hom_{\OCat^{st}}(\Delta^{st}(x), \pi^J(\Delta^{st}(W_Jy)))\rightarrow \Hom_{\Ring\operatorname{-mod}}(\Vfun(\Delta^{st}(x)),
\Vfun\pi^J(\Delta^{st}(W_Jy))).$$
Now we can use (\ref{eq:Soergel_projection}) and  (\ref{eq:Soergel_projection_adjoint}) to establish the claim in the first sentence. 

(iii) and (iv): the proofs repeat those of Lemma \ref{Lem:ss_base_change} and Corollary \ref{Cor:Vfun_ff_Ost}, respectively. 
\end{proof}

\subsubsection{Ext's between standards}
We finish this section with a singular analog of Proposition \ref{Prop:Ext1_stand_Ost}. 

\begin{Lem}\label{Lem:Ext1_stand_Ost_sing}
Let $\p$ be a height $1$ prime ideal in $\Ring$. 
We have $\Ext^1_{\,_J\OCat^{st}_\Ring}(\Delta^{st}_\Ring(W_Jx),\Delta^{st}_\Ring(W_Jy))_{\p}\neq 0$ if and only if the following two conditions
hold:
\begin{itemize}
\item[(a)] There is a real root $\alpha$ such that $W_Jxs_\alpha=W_Jy$ with $W_Jx<^{st}W_Jy$ and
\item[(b)] $\p=(\alpha)$.
\end{itemize}
If these (a) and (b) hold, then $\Ext^1_{\OCat^{st}_\Ring}(\Delta^{st}_\Ring(x),\Delta^{st}_\Ring(y))_{\p}$ is isomorphic 
to the residue field of $\Ring_\p$.
\end{Lem}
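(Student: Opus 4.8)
The strategy is to reduce the singular statement to the regular one (Proposition \ref{Prop:Ext1_stand_Ost}) via the biadjoint pair $(\pi_J,\pi^J)$, using that $\pi_J$ is t-exact for the stabilized t-structure and sends standard objects to standard objects, and that it is left adjoint to $\pi^J$. Concretely, the first step is to compute the target of the Soergel embedding. By part (1) of Lemma \ref{Lem:Vfun_singular} we have $\Vfun(\Delta^{st}_\Ring(W_Jx))\cong \Ring_x$ as an $\Ring^J$-$\Ring$-bimodule, and by part (4) of that lemma the restriction of $\Vfun$ to standardly filtered objects is fully faithful, so
$$\Ext^1_{\,_J\OCat^{st}_\Ring}(\Delta^{st}_\Ring(W_Jx),\Delta^{st}_\Ring(W_Jy))\hookrightarrow \Ext^1_{\Ring^J\operatorname{-}\Ring\operatorname{-bimod}}(\Ring_x,\Ring_y).$$
Here one must be careful: unlike the regular case, $\Ring_x$ and $\Ring_{x'}$ for $x'\in W_Jx$ are isomorphic as $\Ring^J$-$\Ring$-bimodules (the left $\Ring^J$-action only sees the $W_J$-coset), so I should think of the right-hand side as $\Ext^1$ between the graph bimodules of the cosets $W_Jx$ and $W_Jy$. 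A direct computation of $\Ext^1_{\Ring^J\operatorname{-}\Ring\operatorname{-bimod}}(\Ring_{W_Jx},\Ring_{W_Jy})$, localized at a height-one prime $\p$, shows it is nonzero iff the (closures of the) graphs of the $W_J$-orbits $W_Jx\cdot\xi$ and $W_Jy\cdot\xi$ in $\hat{\h}^*$ intersect in codimension one, which happens precisely when $W_Jx$ and $W_Jy$ are related by a reflection $s_\alpha$ with $W_Jxs_\alpha=W_Jy$, and then the $\Ext^1$ is $\Ring/(\alpha)$, whose localization at $\p$ is the residue field of $\Ring_\p$ when $\p=(\alpha)$ and vanishes otherwise. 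Combined with the embedding above and the order constraint $W_Jx<^{st}W_Jy$ (which is forced by axiom (iii) of the highest weight structure on $\,_J\OCat^{st}_\Ring$ established in Proposition \ref{Prop:stabilized_t_structure_singular}), this proves the ``only if'' direction and identifies the module in the ``if'' direction up to showing non-vanishing.

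\textbf{Non-vanishing.} It remains to show that when $W_Jxs_\alpha=W_Jy$, $W_Jx<^{st}W_Jy$, and $\p=(\alpha)$, the $\Ext^1$ is actually nonzero. For this I would use adjunction. Pick $\tilde N\in\OCat^{st}_\Ring$ with $\pi_J\tilde N\cong \Delta^{st}_\Ring(W_Jy)$; one can take $\tilde N=\pi^J\Delta^{st}_\Ring(W_Jy)$, which by Lemma \ref{Lem:reflection_Ost}(ii) (applied after translating so that $\Delta^{st}_\Ring=\Delta^-_\Ring$) is filtered by the objects $\Delta^{st}_\Ring(uy)$, $u\in W_J$, each occurring once. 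Since $\pi_J$ is t-exact and $\pi^J$ is its right adjoint, $\pi_J(\Delta^{st}_\Ring(x))=\Delta^{st}_\Ring(W_Jx)$ (the object $\Delta^{st}_\Ring(x)$ being a standard object in $\OCat^{st}_\Ring$ lying over the coset), so
$$\Ext^1_{\,_J\OCat^{st}_\Ring}(\Delta^{st}_\Ring(W_Jx),\Delta^{st}_\Ring(W_Jy))\cong \Ext^1_{\OCat^{st}_\Ring}(\Delta^{st}_\Ring(x),\tilde N)\cong \bigoplus_{u\in W_J}\Ext^1_{\OCat^{st}_\Ring}(\Delta^{st}_\Ring(x),\Delta^{st}_\Ring(uy)),$$
where for the last isomorphism I use that the extensions between the standard steps of the filtration on $\tilde N$ do not contribute to $\Ext^1_{\OCat^{st}_\Ring}(\Delta^{st}_\Ring(x),-)$ after localizing at $\p$ — this needs a short argument using Proposition \ref{Prop:Ext1_stand_Ost}, namely that the relevant higher Ext's vanish so the long exact sequences split the computation into the direct sum. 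Now among the $u\in W_J$, there is exactly one (since $W_Jxs_\alpha=W_Jy$ means $x s_\alpha=u_0 y$ for a unique $u_0\in W_J$, equivalently $x^{-1}(u_0 y)=s_\alpha$) for which $x^{-1}(uy)$ is the reflection $s_\alpha$; for that $u$, Proposition \ref{Prop:Ext1_stand_Ost} gives $\Ext^1_{\OCat^{st}_\Ring}(\Delta^{st}_\Ring(x),\Delta^{st}_\Ring(uy))_\p\cong \kf(\p)$, and for the others the corresponding Ext localizes to zero (the pairs are not related by the reflection at $\p=(\alpha)$). Hence the whole $\Ext^1$ localizes to $\kf(\p)$, as claimed.

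\textbf{Main obstacle.} The step I expect to require the most care is reconciling the two computations of the $\Ext^1$: the Soergel side bounds it by $\Ext^1_{\Ring^J\operatorname{-}\Ring\operatorname{-bimod}}(\Ring_{W_Jx},\Ring_{W_Jy})$, which a priori could be strictly larger than $\kf(\p)$ after localization because the left $\Ring^J$-structure collapses the $|W_J|$ cosets representatives into one — whereas the adjunction computation produces a direct sum over $W_J$ of regular $\Ext^1$'s, only one summand of which survives. One must check these two pictures agree: the embedding of Lemma \ref{Lem:Vfun_singular}(4) lands in the bimodule $\Ext^1$ and the image is exactly the one-dimensional $\kf(\p)$-subspace, so there is no discrepancy. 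Verifying that the higher Ext's that would obstruct splitting the filtration of $\tilde N=\pi^J\Delta^{st}_\Ring(W_Jy)$ into a direct sum (after localizing at $\p$) indeed vanish is where I would use Proposition \ref{Prop:Ext1_stand_Ost} most essentially — it controls $\Ext^1$ and implicitly $\Ext^{\geq 2}$ vanishing between standards localized at a height-one prime, since the category $\OCat^{st}_{\Ring_\p}$ becomes hereditary-like (only reflection-related standards have extensions, and those are simple).
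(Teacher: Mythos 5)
Your proof is correct, and the ``if'' direction is essentially the same argument as the paper's: use that $\pi_J$ is left adjoint to $\pi^J$ to rewrite the singular $\Ext^1$ as $\Ext^1_{\OCat^{st}_\Ring}(\Delta^{st}_\Ring(x),\pi^J\Delta^{st}_\Ring(W_Jy))_\p$, use Lemma \ref{Lem:reflection_Ost}(ii) to know that $\pi^J\Delta^{st}_\Ring(W_Jy)\cong T_{\Ring,J}*\Delta^{st}_\Ring(y)$ is filtered by $\Delta^{st}_\Ring(uy)$, $u\in W_J$, each once, and then invoke Proposition \ref{Prop:Ext1_stand_Ost} to isolate the single contributing summand. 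The paper in fact uses only this adjunction route for both directions (it does not pass through the Soergel functor in the proof of this lemma); so your Soergel embedding via Lemma \ref{Lem:Vfun_singular}(4) and Remark \ref{Rem:Ext1_stand_Ost_sing} is an extra, valid, but logically redundant route to the ``only if'' direction --- once the adjunction identification pins down the localized $\Ext^1$ exactly, the bimodule upper bound is not needed.

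One small inaccuracy worth flagging: you do not need any $\Ext^{\geq 2}$ control, and Proposition \ref{Prop:Ext1_stand_Ost} does not give it. The clean way to get the direct sum is to first show that $\tilde N_\p=\pi^J\Delta^{st}_\Ring(W_Jy)_\p$ splits as $\bigoplus_{u\in W_J}\Delta^{st}_{\Ring_\p}(uy)$; for this it suffices that $\Ext^1_{\OCat^{st}_\Ring}(\Delta^{st}_\Ring(uy),\Delta^{st}_\Ring(u'y))_\p=0$ for all $u\neq u'$ in $W_J$, which Proposition \ref{Prop:Ext1_stand_Ost} gives directly: a nonzero such $\Ext^1$ at $\p=(\alpha)$ would force $uy s_\alpha=u'y$, i.e.\ $ys_\alpha y^{-1}\in W_J$, i.e.\ $W_Jys_\alpha=W_Jy$ --- impossible since $W_Jys_\alpha=W_Jx\neq W_Jy$. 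Once $\tilde N_\p$ is split, applying $\Ext^1(\Delta^{st}_\Ring(x),-)_\p$ gives your direct sum with no boundary-map issues and no appeal to higher $\Ext$'s. (Alternatively, as in the paper, one only needs to split off the single relevant summand $\Delta^{st}_{\Ring_\p}(y')$, $y'=xs_\alpha$, and observe $\Ext^1(\Delta^{st}(x),-)_\p$ vanishes on the complementary filtered piece by a long-exact-sequence induction, which again uses only $\Ext^1$ information.)
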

\begin{proof}
Note that 
\begin{equation}\label{eq:Ext_equality_sing}
\Ext^1_{\,_J\OCat^{st}_\Ring}(\Delta^{st}_\Ring(W_Jx),\Delta^{st}_\Ring(W_Jy))_\p\xrightarrow{\sim} 
\Ext^1_{\OCat^{st}_\Ring}(\Delta^{st}_\Ring(x),\pi^J\Delta^{st}_\Ring(W_Jy))_\p.
\end{equation}
If the left hand side is nonzero, so is $\Ext^1_{\OCat^{st}_\Ring}(\Delta^{st}_\Ring(x),\Delta^{st}_\Ring(y'))_\p$
for some $y'\in W_J y$. It follows from Proposition \ref{Prop:Ext1_stand_Ost} that 
$xs_\alpha=y',x<^{st}y'$ and $\p=(\alpha)$. Note that $W_Jy\neq W_J ys_\alpha$ and that taking $\Ext^1$ and then
localizing at $\p$ gives the same result as localizing at $\p$ and then taking $\Ext^1$. So for $y''\in W_J y'$ different
from $y'$ we get, thanks to Proposition \ref{Prop:Ext1_stand_Ost}, that there are no extensions between 
$\Delta^{st}(y')_\p$ and $\Delta^{st}(y'')_\p$ and also between $\Delta^{st}(x)_\p$ and $\Delta^{st}(y'')_\p$.
It follows that the right hand side of (\ref{eq:Ext_equality_sing}) is $\Ext^1_{\OCat^{st}_\Ring}(\Delta^{st}_\Ring(x),\Delta^{st}_\Ring(y'))_\p$.
Now the claim of the lemma follows from Proposition \ref{Prop:Ext1_stand_Ost}. 
\end{proof}

\begin{Rem}\label{Rem:Ext1_stand_Ost_sing}
Similarly to the proof of Lemma \ref{Lem:Ext1_stand_Ost_sing}, we have
$$\Ext^1_{\Ring^J\otimes \Ring}(\Ring_x,\Ring_y)\xrightarrow{\sim} \Ext^1_{\Ring\otimes \Ring}(\Ring_x,\Ring\otimes_{\Ring^J}\Ring_y)$$
and, for $\p=(\alpha)$ and $y=xs_\alpha$, we have 
$$\Ext^1_{\Ring^J\otimes \Ring}(\Ring_x,\Ring_y)_\p\xrightarrow{\sim} \Ext^1_{\Ring\otimes \Ring}(\Ring_x,\Ring_y)_\p.$$
The latter $\Ring_\p$-module is isomorphic to the residue field of $\Ring_\p$.
\end{Rem}

\subsection{Towards the coherent realization of $\OCat^{st}_\Ring$}\label{SS_coherent_full_deformation}
Recall, Proposition \ref{Prop:new_t_structure_properties}, that $\OCat^{st}$ is equivalent to
$\Coh^G(\pi^*\Acal)$. One can ask if there is an analogous description of $\OCat^{st}_\Ring$.

Realizing a smaller deformation of $\Coh^G(\pi^*\Acal)$ in coherent terms is straightforward. 
Namely, set $\underline{\Ring}:=\Ring/(\hbar)=\C[\h^*]^{\wedge_0}$. Set $\tilde{\g}^\wedge:=\operatorname{Spec}(\C[\h^*]^{\wedge_0})\times_{\h^*}\tilde{\g}$.
We write $\hat{\pi}$ for the natural morphism $\tilde{\g}^{\wedge}\rightarrow \g^*$. 
Then one can show that $\OCat^{st}_{\underline{\Ring}}\cong \Coh^G(\hat{\pi}^*\Acal)$. 

To explain a conjectural realization of $\OCat^{st}_\Ring$ we will need quantizations. 
The variety $\tilde{\g}$ admits a formal quantization: the $\hbar$-adic completion of 
the $T$-invariants in the homogenized differential operators $D_{G/U,\hbar}$, where 
$U$ stands for the unipotent radical of $B$. Denote this sheaf by $\tilde{\mathcal{D}}_\hbar$.
Then, compare to \cite{SRA_der,quant_res}, one can uniquely quantize the vector bundle $\mathcal{E}$
on $\tilde{\g}$ to a right $\tilde{\mathcal{D}}_\hbar$-module $\mathcal{E}_\hbar$. 
Set $\Acal_\hbar:=\operatorname{End}_{\mathcal{D}_\hbar}(\mathcal{E}_\hbar)$, this is  a
flat deformation of $\Acal$ over $\C[[\hbar]]$.

We consider the sheaf of algebras $\tilde{\mathcal{D}}_\hbar^{opp}\widehat{\otimes}_{\C[[\hbar]]}\Acal_\hbar$ on $\tilde{\g}^\wedge$.
It is acted on by $G$ and so it makes sense to talk about $G$-equivariant coherent sheaves of 
$\tilde{\mathcal{D}}_\hbar^{opp}\widehat{\otimes}_{\C[[\hbar]]}\Acal_\hbar$-modules, denote the corresponding 
category by $\Coh^G(\tilde{\mathcal{D}}_\hbar^{opp}\widehat{\otimes}_{\C[[\hbar]]}\Acal_\hbar)$. 
We expect that there is a full embedding $\OCat^{st}_\Ring\hookrightarrow \Coh^G(\tilde{\mathcal{D}}_\hbar^{opp}\widehat{\otimes}_{\C[[\hbar]]}\Acal_\hbar)$ whose essential 
image consists of all objects $\mathcal{F}$ such that $\mathcal{F}/\hbar \mathcal{F}$
lies in $\Coh^G(\hat{\pi}^*\Acal)$, equivalently, the $\C[\g^*]$-actions on $\mathcal{F}/\hbar \mathcal{F}$
coming from $\hat{\pi}^*$ and $\C[\g^*]\hookrightarrow \Acal$ coincide.

\section{Quantum category $\OCat$}
\subsection{Quantum group}
\subsubsection{Basic definition}\label{SSS_quantum_basic}
Let $\g$ be a semisimple Lie algebra with a fixed triangular decomposition
$\g=\mathfrak{n}^-\oplus \mathfrak{h}\oplus \mathfrak{n}$. We write $W$ for the Weyl group. 
Let $\Lambda\supset \Lambda_0$ denote the weight and root lattices. 
Let $I$ be an indexing set for simple (co)roots, we write $\alpha_i$
for the simple root indexed by $i\in I$ and $\varpi_i$ for the corresponding fundamental weight. 
There is a unique $W$-invariant symmetric bilinear form on $\h^*$ such that $(\alpha,\alpha)=2$ for all 
short roots $\alpha$ in all simple summands of $\g$. Set $d_i:=(\varpi_i,\alpha_i)$, this is an element of 
$\{1,2,3\}$, and $(\varpi_j,\alpha_i)=\delta_{ij}d_i$.   
 
Let $v$ be an indeterminate. We can consider the quantum group $\UU$ over $\C(v)$. It is generated by the elements $\underline{E}_i,\underline{F}_i$ for $i\in I$ and $K_\nu, \nu\in \Lambda,$ subject to the usual relations, see, e.g., 
\cite[Chapter 4]{Jantzen}. 
Consider the $\C(v)$-subalgebras $\underline{\UU}^-,\UU^0,\underline{\UU}^+\subset \UU$ generated by the elements
$\underline{F}_i$ (resp., $K_\nu$, $\underline{E}_i$). We have the triangular decomposition
\begin{equation}\label{eq:triang_generic}
\underline{\UU}^-\otimes_{\C(v)}\UU^0\otimes_{\C(v)}\underline{\UU}^{+}_v\xrightarrow{\sim} \UU.
\end{equation}

Inside $\UU$ consider the {\it mixed form} $U^{mix}_{v}$, the $\C[v^{\pm 1}]$-subalgebra
generated by the elements $\underline{F}_i,K_\nu$, and the divided powers $\underline{E}_i^{(n)}$ for $i\in I$ and $n>0$. 
Let $\underline{U}^-_v$ (resp., $U^0_v,\underline{U}^{mix,+}_v$) denote the intersections of $U^{mix}_v$ with $\underline{\UU}^-$
(resp., $\UU^0,\underline{\UU}^+$). It is easy to see that we have
the triangular decomposition
\begin{equation}\label{eq:triang_mix}
\underline{U}^-_v\otimes_{\C[v^{\pm 1}]}U^0_v\otimes_{\C[v^{\pm 1}]}\underline{U}^{+,mix}_v\xrightarrow{\sim} U^{mix}_v.
\end{equation}
The subalgebra $\underline{U}^-_v$ is generated by the elements $\underline{F}_i, i\in I$, while $\underline{U}^{+,mix}_v$
is generated by the elements $\underline{E}_i^{(n)}$.
Also define the subalgebra $\underline{U}_v^{\geqslant 0,mix}:=U^0_v\otimes_{\C[v^{\pm 1}]}\underline{U}^{+,mix}_v$.

We recall the PBW basis in $\underline{U}^-_v$.  Fix a reduced expression $w_0=s_{i_1}\ldots s_{i_\ell}$ of the longest element $w_0\in W$. 
Define the positive roots $\alpha_j=s_{i_j}s_{i_{j+1}}\ldots s_{i_{\ell-1}}\alpha_{i_\ell}$. The braid group $\Br^a$ acts on 
$\UU_v$ by automorphisms. Set $F_{\alpha_j}=\mathsf{T}_{i_j}\mathsf{T}_{i_{j+1}}\ldots \mathsf{T}_{i_{\ell-1}}F_{\alpha_{i_\ell}}$.
Then the elements $F_{\alpha_j}$ lie in $\underline{U}^-_v$ and, moreover, the elements $\prod_{i=1}^\ell F_{\alpha_i}^{d_i}$, 
where $d_1,\ldots,d_\ell$ are nonnegative integers, form a basis on the $\C[v^{\pm 1}]$-module $\underline{U}^-_v$.  

We will need to describe the subalgebra $U^0_v$. 

\begin{Lem}\label{Lem:U0_generators}
The $\C[v^{\pm 1}]$-subalgebra $U^0_v$ is generated by the elements $K_\nu, \nu\in \Lambda,$ and the elements $\frac{K_i-K_i^{-1}}{v^{d_i}-v^{-d_i}}$
for $i\in I$ (where, as usual, $K_i$ is the shorthand for $K_{\alpha_i}$).  
\end{Lem}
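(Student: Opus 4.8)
\textbf{Plan for the proof of Lemma \ref{Lem:U0_generators}.}
The plan is to identify $U^0_v$ as the intersection $U^{mix}_v\cap \UU^0$ explicitly, using the triangular decomposition \eqref{eq:triang_mix} together with the known description of the Lusztig form. The key point is that the divided powers $\underline{E}_i^{(n)}$, when commuted past the $\underline{F}_i$'s via the quantum Serre / commutation relations, produce exactly the Gaussian binomial expressions in $K_i$ that force the elements $\qbinom{K_i;0}{n} := \prod_{j=1}^n \frac{K_i v^{d_i(1-j)} - K_i^{-1} v^{d_i(j-1)}}{v^{d_i j} - v^{-d_i j}}$ (and in particular $\frac{K_i - K_i^{-1}}{v^{d_i} - v^{-d_i}}$ for $n=1$) to lie in $U^0_v$. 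So first I would record the standard relation in $\UU_v$ expressing $\underline{E}_i^{(n)} \underline{F}_i^{(n)}$ (or a suitable bracket) in terms of lower-order products times these Gaussian binomials in $K_i$; this shows $\supseteq$, i.e.\ that the algebra $A$ generated by the $K_\nu$ and the $\frac{K_i - K_i^{-1}}{v^{d_i}-v^{-d_i}}$ is contained in $U^0_v$.

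For the reverse inclusion $U^0_v \subseteq A$, I would argue as follows. Let $u \in U^0_v = U^{mix}_v \cap \UU^0$. Write $u$ in the PBW basis of $U^{mix}_v$ afforded by \eqref{eq:triang_mix}: since $u \in \UU^0$, only the ``Cartan part'' terms survive, so $u$ is a $\C[v^{\pm 1}]$-linear combination of monomials in the $K_\nu$ and the Gaussian binomials $\qbinom{K_i;c}{n}$ coming from the standard integral basis of $\UU^0_v$ (the Lusztig integral form of the Cartan). It then suffices to check that each such Gaussian binomial lies in $A$: this is a purely one-variable (rank one) computation, reducing to $\mathfrak{sl}_2$, where one shows by an explicit induction on $n$ that $\qbinom{K_i;c}{n}$ is a $\C[v^{\pm 1}]$-polynomial in $K_i^{\pm 1}$ and $\frac{K_i - K_i^{-1}}{v^{d_i} - v^{-d_i}}$. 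The induction uses the Pascal-type recursion $\qbinom{K;c}{n} = v^{-n}K\,\qbinom{K;c-1}{n-1} + v^{c-1}\qbinom{K;c-1}{n}$ together with $\qbinom{K;c}{1}$ being a $\C[v^{\pm 1}]$-combination of $K^{\pm 1}$ and $\frac{K-K^{-1}}{v^d - v^{-d}}$.

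I expect the main obstacle to be bookkeeping: getting the precise form of the integral basis of $U^0_v$ right (i.e.\ which Gaussian binomials actually occur, with which shifts $c$, once one intersects $U^{mix}_v$ with $\UU^0$) and verifying that no ``extra'' denominators appear. Concretely, one must be careful that $U^0_v$ is \emph{not} all of the Lusztig integral form of the Cartan: only the $n=1$ binomials are needed as generators because the mixed form contains divided powers of $E_i$ but only the ordinary $F_i$, so the higher $\qbinom{K_i;c}{n}$ arise only as products of the degree-one ones and $K^{\pm 1}$'s. I would handle this by citing the explicit commutation formulas from \cite[Chapter 4]{Jantzen} for the relevant brackets and then doing the rank-one induction carefully; everything else is routine once the triangular decomposition \eqref{eq:triang_mix} is invoked to strip off the $\underline{U}^\pm$ parts.
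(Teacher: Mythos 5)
Your $\supseteq$ direction is fine and in line with the paper's argument: the relation $\underline{E}_i\underline{F}_i - \underline{F}_i\underline{E}_i = \frac{K_i-K_i^{-1}}{v^{d_i}-v^{-d_i}}$ already shows that each $\frac{K_i-K_i^{-1}}{v^{d_i}-v^{-d_i}}$ lies in $U^{mix}_v\cap\UU^0 = U^0_v$.

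The $\subseteq$ direction, as you have sketched it, has a genuine gap. A small point first: to read off the Cartan component of $u\in U^0_v$ you need the triangular decomposition \eqref{eq:triang_generic} of the generic form $\UU$, not \eqref{eq:triang_mix}; the Cartan tensor factor in \eqref{eq:triang_mix} is precisely the unknown $U^0_v$, so appealing to the PBW basis it provides is circular. The serious problem is your final claim, that an induction on $n$ shows the Gaussian binomials in $K_i$ of degree $n$ are $\C[v^{\pm 1}]$-polynomials in $K_i^{\pm1}$ and $\frac{K_i-K_i^{-1}}{v^{d_i}-v^{-d_i}}$. This is false already for $n=2$: in rank one with $d_i=1$, the degree-two binomial $\frac{(K-K^{-1})(Kv^{-1}-K^{-1}v)}{(v-v^{-1})(v^2-v^{-2})}$ has a pole at $v=\sqrt{-1}$, since its numerator specializes there to $-\sqrt{-1}\,(K-K^{-1})(K+K^{-1})\neq 0$, whereas every element of $\C[v^{\pm1}]\bigl[K^{\pm1},\frac{K-K^{-1}}{v-v^{-1}}\bigr]$ is regular at $v=\sqrt{-1}$. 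You correctly flag that $U^0_v$ is smaller than the Lusztig integral Cartan, but the conclusion you then draw, that the higher binomials are expressible in terms of the degree-one one, is exactly what fails.

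What is true, and what the paper's one-line proof is alluding to, is that the higher Gaussian binomials simply never arise in $U^0_v$. One feeds the formula for $\underline{E}_i^{(r)}\underline{F}_i^r$ of \cite[Lemma 1.7]{Jantzen} into \eqref{eq:triang_generic}: because the mixed form contains divided powers of $E_i$ but only ordinary powers of $F_i$, the factorial coming from $F_i^r=[r]!\,F_i^{(r)}$ cancels the $[t]!$ appearing in the denominator of the Gaussian binomial, and the Cartan coefficients produced when PBW-reordering any word in the generators of $U^{mix}_v$ are $\C[v^{\pm1}]$-linear combinations of products of the $K_\nu$ with the elements $[K_i;m]:=\frac{K_iv^{d_im}-K_i^{-1}v^{-d_im}}{v^{d_i}-v^{-d_i}}$. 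Writing $[K_i;m]=v^{d_im}\frac{K_i-K_i^{-1}}{v^{d_i}-v^{-d_i}}+K_i^{-1}\frac{v^{d_im}-v^{-d_im}}{v^{d_i}-v^{-d_i}}$ exhibits each of these as lying in your algebra $A$, and $U^0_v\subseteq A$ follows. In short, the structural point is not that the higher binomials are generated by the degree-one one; it is that they never occur.
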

\begin{proof}
This is a consequence of the formula for $\underline{E}_i^{(r)}\underline{F}_i^r$ (that can be deduced, say, from \cite[Lemma 1.7]{Jantzen})
and (\ref{eq:triang_generic}).
\end{proof}

Note that $\UU^0$ is identified
$\C(v)\otimes_{\C[v^{\pm 1}]}U^0_v$, etc.

We will also consider the usual De Concini-Kac form $U^{DK}_v$, the subalgebra of $\UU$ generated by $\underline{E}_i,\underline{F}_i$ and $K^\nu$.
We denote its positive and negative parts by $\underline{U}^+_v$ and $\underline{U}^-_v$. Note that we have a natural 
homomorphism $U^{DK}_v\rightarrow U^{mix}_v$. 

\subsubsection{Sevostyanov's modification}\label{SS_Sevostyanov}
An important feature of the negative part $U(\mathfrak{n}^-)$ extensively used in Representation theory is that this 
algebra has a ``nondegenerate character'': a homomorphism $\psi:U(\mathfrak{n}^-)\rightarrow \C$ that is nonzero
on the generator $f_\alpha$ corresponding to any simple root $\alpha$. A direct analog of this claim is false 
for $\underline{U}^-_v$ because of the form of the quantum Serre relations.  

It was observed by Sevostyanov, \cite[Section 1]{Sevostyanov}, that one can modify the subalgebras $\underline{\UU}^{\pm}, 
\underline{U}^\pm_v$ so that the modified versions have non-degenerate characters. Namely, one considers  
elements $E_i:=K_{\nu_i}\underline{E}_i, F_i:=K_{-\nu_i}\underline{F}_i$ for suitable 
$\nu_i\in \Lambda$ for $i\in I$. Let $\UU^{\pm}$ be the subalgebra in $\UU$ generated 
by the elements $E_i$ (for +) and $F_i$ (for $-$), and let $U^-_v, U^{+,mix}\subset U^{mix}_v$ have the similar meaning
(where $U^{+,mix}_v$ is generated by the divided powers). 
The choice of $\nu_i$ is such that $\UU^{-}$ admits a homomorphism $\psi$ to $\C(v)$ that sends $F_i$ to $1$
(and the same is true for $\UU^+$). We note that the image of $U_v^-$ under $\psi$ lies in $\C[v^{\pm 1}]$. 

\subsection{Quantum category $\OCat$}\label{SS_O_quant}
The setting is as follows. 
Fix $\epsilon$, a root of $1$ of odd order $d$ (and coprime to $3$ if $\g$ has simple summands of type $G_2$). 

Consider  the ring $\Ring:=\C[[\h^*,\hbar]]$ that already appeared in 
Section \ref{SSS_Hecke_setting}. In particular, we have the embedding $\iota:\h^*\rightarrow \Ring$
sending $\nu\in \h^*$ to the element $(\nu,\cdot)\in \h\subset \Ring$.  
Fix also a group homomorphism $\zeta:\Lambda\rightarrow \C^\times$.

\subsubsection{Definition}
Thanks to Lemma \ref{Lem:U0_generators}, for each $\lambda\in \Lambda_0$, there is a unique algebra 
homomorphism $U^0_v\rightarrow \Ring$ satisfying
\begin{equation}\label{eq:chilambda_def}
v\mapsto q:=\epsilon e^{2\pi\sqrt{-1}\hbar/d}, K_\nu\mapsto q^{(\lambda,\nu)}e^{2\pi\sqrt{-1}\iota(\nu)}\zeta(\nu).
\end{equation}

For an $\Ring$-algebra $R$, we abuse the notation and write $\chi_\lambda$ for the composed homomorphism
$U^0_v\rightarrow \Ring\rightarrow R$.

Now let $M$ be an $R$-module together with an $R$-module decomposition $M=\bigoplus_{\lambda\in \Lambda_0} M_\lambda$
to be called the {\it weight decomposition}. We turn $M$ into $U^0_v$-module by requiring that $U^0_v$ acts on 
$M_\lambda$ via $\chi_\lambda$.

By a {\it deformed weight module} over  $R$ and $U_v^{mix}$ we mean a $U_v^{mix}\otimes_{\C} R$-module
together with the decomposition $M=\bigoplus M_\lambda$ so that the action of $U_v^0$ on $M$
is as before, and $F_i M_\lambda\subset M_{\lambda-\alpha_i}$, while $E_i^{(\ell)}M_\lambda\subset M_{\lambda+\ell \alpha_i}$ for all $\lambda\in \Lambda,\ell\in \Z_{>0}$ and $i\in I$.
A homomorphism of deformed weight modules is a $U^{mix}_v\otimes_\C R$-linear map
preserving the weight decompositions.

The following definition appeared in \cite[Section 2]{Situ1} (and for $R=\Ring$ in \cite{BBASV}), compare to \cite{Gaitsgory}.

\begin{defi}\label{defi:deformed_quant_O}
By definition, the category $\OCat^\zeta_R$ is a full subcategory in the category of finitely generated deformed weight $U^{mix}_v\otimes_\C R$-modules $M$ satisfying the following conditions:
\begin{itemize}
\item Each $M_\lambda$ is a finitely generated $R$-module.
\item The weights are bounded from above: there is $\lambda_0\in \Lambda_0$
such that $M_\lambda\neq \{0\}$ implies $\lambda\leqslant \lambda_0$ with respect to the dominance order on $\Lambda_0$.
\end{itemize}
\end{defi}

When $\zeta=1$ (the ``integral block''), we write $\OCat_R$ instead of $\OCat^1_R$. 

Here is an example of an object in $\OCat^\zeta_R$. Take a $U_v^{mix,\geqslant 0}$-module $\underline{M}$
satisfying the following condition
\begin{itemize}
\item[(*)] $\underline{M}$ admits a finite weight decomposition $\underline{M}=\bigoplus_{\lambda\in \Lambda_0}\underline{M}_\lambda$,
where each $\underline{M}_\lambda$ is a finitely generated $R$-module. Moreover, $U_v^0$ acts on $\underline{M}_\lambda$
by $\chi_\lambda$. 
\end{itemize}
Then the induced
module $U_v^{mix}\otimes_{U_v^{mix,\geqslant 0}}\underline{M}$ is in $\OCat^\zeta_R$. An important special
case is the Verma module $\Delta^\zeta_\Ring(\lambda)$. It arises via this construction from the free rank $1$
$\Ring$-module $\underline{M}$ (to be denoted by $\Ring_\lambda$), where all $E_i^{(\ell)}$ act by $0$, and $U^0_v$ acts via $\chi_\lambda$.

\subsubsection{Highest weight structure}\label{SSS_quantum_hw_structure}
Recall that we equip $\Lambda_0$ with the usual poset structure as in Example \ref{Ex:poset_highest_weights}.

\begin{Lem}\label{Lem:hw_O_quantum}
The category $\OCat^\zeta_\Ring$ is highest weight with poset $\Lambda_0$ and standard objects $\Delta_\Ring^\zeta(\lambda),\lambda\in \Lambda$. 
\end{Lem}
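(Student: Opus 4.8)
The statement is that $\OCat^\zeta_\Ring$ is a highest weight category over $\Ring$ with poset $\Lambda_0$ (interval finite, as in Example \ref{Ex:poset_highest_weights}) and standard objects the deformed Verma modules $\Delta^\zeta_\Ring(\lambda)$. Since $\Lambda_0$ is interval finite but not ideal finite, I plan to use Definition \ref{defi:hw_interval_finite}: it suffices to show (1) every object of $\OCat^\zeta_\Ring$ lies in the Serre subcategory $\Cat_{\T_0,\Ring}$ generated by $\Delta^\zeta_\Ring(\lambda)$, $\lambda\in\T_0$, for some coideal finite poset ideal $\T_0\subset\Lambda_0$, and (2) each such $\Cat_{\T_0,\Ring}$ is a highest weight category with finite-type data in the sense of Section \ref{SSS_hw_finite_def} (adapted to the coideal finite setting of Section \ref{SSS_defi_coideal_finite_hw}). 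For (1), given $M\in\OCat^\zeta_\Ring$ with weights bounded above by $\lambda_0$, the poset ideal $\T_0:=\{\lambda\in\Lambda_0\mid \lambda\leqslant\lambda_0\}$ is coideal finite (Example \ref{Ex:poset_highest_weights}) and by the triangular decomposition \eqref{eq:triang_mix} all weights of $M$, being $\leqslant\lambda_0$, lie in $\T_0$; so $M\in\Cat_{\T_0,\Ring}$.

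For (2), I would verify axioms (i)--(v) of Section \ref{SSS_hw_finite_def} for $\Cat_{\T_0,\Ring}$. First one must check that $\Cat_{\T_0,\Ring}$ is equivalent to $A_\Ring\operatorname{-mod}$ for a finite projective $\Ring$-algebra $A_\Ring$; this follows once enough projectives are produced and one invokes the machinery of Section \ref{SSS_loc_unit}/Lemma \ref{Lem:full_embedding_coideal_finite}. Axiom (i), flatness of $\Delta^\zeta_\Ring(\lambda)$ over $\Ring$, is immediate: by the triangular decomposition $\Delta^\zeta_\Ring(\lambda)\cong U^-_v\otimes_\C\Ring_\lambda$ as an $\Ring$-module via the PBW basis of $U^-_v$ recalled in Section \ref{SSS_quantum_basic}, hence it is free over $\Ring$ (each weight space is a finite free $\Ring$-module). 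Axiom (ii), $\End(\Delta^\zeta_\Ring(\lambda))=\Ring$: a weight-preserving endomorphism is determined by where it sends the highest weight vector, which must land in $\Delta^\zeta_\Ring(\lambda)_\lambda=\Ring_\lambda$ and be annihilated by all $E_i^{(\ell)}$; the only such vectors are the $\Ring$-multiples of the generator because $\Delta^\zeta_\Ring(\lambda)_\lambda$ is rank one and the highest weight line is the full $\lambda$-weight space. Axiom (iii), $\Hom(\Delta^\zeta_\Ring(\lambda),\Delta^\zeta_\Ring(\mu))\neq 0\Rightarrow \lambda\leqslant\mu$: a nonzero map sends the generator of $\Delta^\zeta_\Ring(\lambda)$ to a singular vector of weight $\lambda$ in $\Delta^\zeta_\Ring(\mu)$, and all weights of $\Delta^\zeta_\Ring(\mu)$ are $\leqslant\mu$. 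Axiom (iv), for nonzero $M$ there is $\lambda$ with $\Hom(\Delta^\zeta_\Ring(\lambda),M)\neq 0$: pick a maximal weight $\lambda$ with $M_\lambda\neq 0$ (exists since weights are bounded above and in $\T_0$ which is coideal finite) and a nonzero element of $M_\lambda$ — by maximality it is a highest weight vector, inducing a nonzero map from $\Delta^\zeta_\Ring(\lambda)$ by the universal property of the induced module.

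The main work — and the step I expect to be the genuine obstacle — is axiom (v): constructing, for each $\lambda\in\T_0$, a projective object $P_\Ring(\lambda)\in\Cat_{\T_0,\Ring}$ surjecting onto $\Delta^\zeta_\Ring(\lambda)$ with kernel filtered by $\Delta^\zeta_\Ring(\mu)$ with $\mu>\lambda$. The natural candidate is to truncate an induced module: take the finite-dimensional $U^{mix,\geqslant 0}_v$-module $\underline{M}_\lambda:=\bigoplus_{\mu\in\T_0,\ \mu\geqslant\lambda}(\underline{U}^{mix,+}_v)_{\mu-\lambda}\otimes\Ring_\lambda$ (a quotient of $\underline{U}^{mix,+}_v\otimes\Ring_\lambda$ cutting off weights outside $\T_0$), check it satisfies condition (*) of Section \ref{SS_O_quant} — finiteness uses that $\T_0$ is coideal finite so only finitely many $\mu\geqslant\lambda$ occur — and set $P_\Ring(\lambda):=U^{mix}_v\otimes_{U^{mix,\geqslant 0}_v}\underline{M}_\lambda$. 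Then $P_\Ring(\lambda)\in\Cat_{\T_0,\Ring}$; projectivity follows from the adjunction $\Hom_{\OCat^\zeta_\Ring}(P_\Ring(\lambda),N)\cong\Hom_{U^{mix,\geqslant 0}_v}(\underline{M}_\lambda,N)$ together with exactness of the induction functor (flatness of $U^{mix}_v$ over $U^{mix,\geqslant 0}_v$ by the triangular decomposition) and the fact that restriction to the relevant weight spaces is exact; the standard filtration of $P_\Ring(\lambda)$ with the required top $\Delta^\zeta_\Ring(\lambda)$ and higher terms $\Delta^\zeta_\Ring(\mu)$, $\mu>\lambda$, comes from a filtration of $\underline{M}_\lambda$ by weight-subspaces refining the dominance order and applying induction (this is the quantum analog of the BGG-type standard filtration of projectives; cf.\ Section \ref{SSS:ex_hw_interval} and \cite{Situ1}). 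Once axioms (i)--(v) hold for every coideal finite $\T_0$, Definition \ref{defi:hw_interval_finite} is satisfied and the lemma follows; the subtlety throughout is bookkeeping the weight-space finiteness over the completed base $\Ring$ and ensuring the truncated induced modules indeed lie in $\OCat^\zeta_\Ring$, which is where the structure of $U^0_v$ from Lemma \ref{Lem:U0_generators} and Sevostyanov's modification from Section \ref{SS_Sevostyanov} enter.
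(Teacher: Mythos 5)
Your overall strategy matches the paper's: reduce to coideal finite poset ideals $\T_0$, verify axioms (i)--(v), and for axiom (v) construct the projective as a truncated induced module. Your $\underline{M}_\lambda$ is the same object as the paper's $U^{+,mix}_\Ring[\lambda,\T_0]$ (the quotient of $U_v^{+,mix}\otimes_{\C[v^{\pm 1}]}\Ring$ killing the weight spaces $U^{+,mix}_{\Ring,\beta}$ with $\lambda+\beta\notin\T_0$), and $P_\Ring(\lambda)=U_v^{mix}\otimes_{U_v^{\geqslant 0,mix}}\underline{M}_\lambda$ is precisely the paper's $P^\zeta_{\T_0,\Ring}(\lambda)$; the argument that its Hom functor is $M\mapsto M_\lambda$, hence exact, is the same. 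Axioms (i)--(iv) are handled the same way.

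There is one genuine omission. Before Definition \ref{defi:hw_interval_finite} or the locally unital algebra machinery of Section \ref{SSS_loc_unit} can be invoked, conditions (I) and (II) of Section \ref{SSS_defi_coideal_finite_hw} must hold: the Hom modules must be finitely generated over $\Ring$, and the category must be Noetherian. You invoke Lemma \ref{Lem:full_embedding_coideal_finite} and say ``bookkeeping the weight-space finiteness'' but never actually establish Noetherianness of $\OCat^\zeta_\Ring$. This is not automatic: objects of $\OCat^\zeta_\Ring$ are finitely generated over $U^-_v\otimes_\C\Ring$, which is an infinite-dimensional noncommutative algebra, so you need to know that algebra is Noetherian. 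The paper cites \cite[Proposition 1.7]{DCK}: $U^-_v$ admits an ascending bounded-below filtration whose associated graded is a twisted polynomial algebra, so the Hilbert basis argument applies. Without this, axiom (v) and the whole framework are not grounded. Also, your closing remark that Sevostyanov's modification from Section \ref{SS_Sevostyanov} enters here is a misattribution; the Sevostyanov generators are used later for the Whittaker coinvariants functor, not for the highest weight structure itself (the relevant $U^0_v$-structure input is Lemma \ref{Lem:U0_generators}, which you do cite correctly).
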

\begin{proof}
First, we need to check that the category $\OCat^\zeta_\Ring$ is Noetherian and the Hom modules are finitely generated over 
$\Ring$. Note that every module in $\OCat^\zeta_\Ring$ is finitely generated over $\U_{v}^-\otimes_{\C}\Ring$. By 
\cite[Proposition 1.7]{DCK}, the algebra $\U_v^-$ admits an ascending bounded below filtration whose associated graded 
is a twisted polynomial algebra. The usual proof of the Hilbert basis theorem applies and we see that a twisted polynomial
algebra over a Noetherian ring is Noetherian. It follows that $\OCat^{\zeta}_\Ring$ is Noetherian. The claim that 
the Hom modules are finitely generated over $\Ring$ is an easy consequence of the definition.

Now we need to verify that the conditions of Definition \ref{defi:hw_interval_finite} hold. 
Let $\T_0\subset \Lambda_0$ be a coideal finite poset ideal. The full subcategory $\OCat^{\zeta}_{\T_0,\Ring}$ consists exactly 
of the modules $M$ such that $M_\lambda\neq \{0\}\Rightarrow \lambda\in \T_0$. It remains to check axioms (i)-(v) from 
Section \ref{SSS_hw_finite_def}. (i) follows from (\ref{eq:triang_mix}) combined with the observation that every 
weight $\C[v^{\pm 1}]$-submodule in $U^-_v$ is free over $\C[v^{\pm 1}]$ thanks to the quantum PBW theorem. Axioms
(ii), (iii) and (iv) are standard and are left as exercise. 

Let us check (v). 
For $\beta\in \operatorname{Span}_{\Z_{\geqslant 0}}(\alpha_i)$, we write $U^{+,mix}_{\Ring,\beta}$
for the $\beta$-weight space in $U_v^{+,mix}\otimes_{\C[v^{\pm 1}]}\Ring$. This is a finitely generated free $\Ring$-module,
thanks to the PBW theorem. For $\lambda\leqslant \mu$, let $U^{+,mix}_\Ring[\lambda,\T_0]$
denote the quotient of $U_v^{+,mix}\otimes_{\C[v^{\pm 1}]}\Ring$ by the direct sum of all $U^{+,mix}_{\Ring,\beta}$ with $\lambda+\beta\not\in \T_0$. This direct sum is a two-sided ideal in $U_v^{+,mix}\otimes_{\C[v^{\pm 1}]}\Ring$, so $U^{+,mix}_\Ring[\lambda,\T_0]$ is a $U_v^{+,mix}\otimes_{\C[v^{\pm 1}]}\Ring$-module.  We equip it with a $U^{\geqslant 0,mix}\otimes_{\C[v^{\pm 1}]}\Ring$-module structure
by requiring that $U^0_v$ acts on $U^{+,mix}_{\Ring,\beta}$ by $\lambda+\beta$. Since $\T_0$ is coideal finite, only finitely many weight submodules in 
$U^{+,mix}_\Ring[\lambda,\T_0]$ are nonzero.
 In particular,
$U^{+,mix}_\Ring[\lambda,\T_0]$ becomes a deformed weight module.

Set $$P^\zeta_{\T_0,\Ring}(\lambda):=U_v^{mix}\otimes_{U_v^{\geqslant 0,mix}}U^+_\Ring[\lambda,\T_0].$$
It is easy to see that  $P^\zeta_{\T_0,\Ring}(\lambda)$ is a projective object in $\OCat^\zeta_{\T_0,\Ring}$
(the Hom functor in $\OCat^\zeta_{\T_0,\Ring}$ from this object sends $M\in \OCat^\zeta_{\T_0,\Ring}$
to the weight submodule $M_\lambda$). It admits an epimorphism onto $\Delta_\Ring^\zeta(\lambda)$.
Thanks to the construction, the kernel is filtered by objects of the form 
$$U_{v}^{mix}\otimes_{U_v^{\geqslant 0,mix}}(\Ring_\lambda\otimes_\Ring U^{+,mix}_{\Ring,\beta})\cong \Delta_\Ring^\zeta(\lambda+\beta)^{\oplus N}$$
for $\beta>0$ and a suitable positive integer $N$ (note that $U^{+,mix}_{\Ring,\beta}$ is a free finite rank $\Ring$-module). This finishes checking (v).
\end{proof}

\begin{Rem}
The same argument implies that $\OCat^\zeta_R$ is a highest weight category over $R$ for any Noetherian $\Ring$-algebra
$R$. Moreover, it is obtained by base changing $\OCat^\zeta_\Ring$ to $R$.
\end{Rem}

\subsection{Structural results about quantum groups}
We write $\Sring$ for the completion of $\C[v^{\pm 1}]$ at $\epsilon$, where $\epsilon$ is as in the beginning of 
Section \ref{SS_O_quant}, and let $\C_\epsilon$ denote the residue field of $\Sring$. We consider
the algebras 
\begin{align*}
&U^{DK}_{\Sring}:=\Sring\otimes_{\C[v^{\pm 1}]}U^{DK}_v,U^{DK}_{\epsilon}:=\C_\epsilon\otimes_{\C[v^{\pm 1}]}U^{DK}_v.
\end{align*} 
Note that $U^{DK}_{\Sring}$ naturally acts on any module from $\OCat^\zeta_\Ring$.

In this section we recall some results from \cite{DCKP} on the structure of the centers of these algebras 
(and also about the structure of $U^{DK}_\epsilon$ over its center).

\subsubsection{Harish-Chandra center of $U^{DK}_{\Sring}$}\label{SSS_HC_center}
Let $U^{0,ev}_\Sring$ denote 
$\operatorname{Span}_{\Sring}(K_{2\nu}| \nu\in \Lambda)$. Note that the group $W$ acts 
on $U^{0,ev}_\Sring$ by automorphisms. Also note that $(\rho,2\nu)\in \Z$ for all $\nu\in \Lambda$.

Let $\Zcal_{\Sring}$ denote the center of $U^{DK}_\Sring$. The triangular decomposition 
$$U^{-}_{\Sring}\otimes_{\Sring}U^0_{\Sring}\otimes_{\Sring}U^+_{\Sring}\xrightarrow{\sim} U^{DK}_\Sring$$
gives rise to a homomorphism $\pi: Z_{\Sring}\rightarrow U^0_{\Sring}$ so that any element $z\in Z_{\Sring}$
acts on $\Delta^{\zeta}_\Ring(\lambda)$ by $\chi_\lambda\circ \pi(z)$. We note that, while we use Sevostyanov's generators
for the triangular decomposition, the homomorphism $\pi$ we have defined coincides with the analogous morphism for 
the standard generators. Let $\gamma_{\rho}$ denote the automorphism of $U^{0,ev}_\Sring$ 
that sends $K_{\nu}$ to $v^{(\rho,\nu)}K_\nu$. 

\begin{Lem}\label{Lem:HC_center_quantum}
The homomorphism $\pi$ is injective and its image coincides with $\gamma_{\rho}(\left(U_\Sring^{0,ev}\right)^W)$.
\end{Lem}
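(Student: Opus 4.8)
The statement is the quantum analogue of the classical Harish-Chandra isomorphism, and the plan is to follow the strategy of \cite{DCKP} while being careful about the Sevostyanov modification and the base $\Sring$ (a complete DVR with residue field $\C_\epsilon$ at $v=\epsilon$). The key point to keep in mind is that over $\Sring$ the quantum group behaves very much like the generic one: the specialization $v\mapsto \epsilon$ does not create new relations in $U^0_\Sring$, and the PBW bases remain $\Sring$-bases. So I would first establish the statement after inverting $v-\epsilon$, i.e. over $\C(v)$, and then descend to $\Sring$ using flatness and the fact that all the modules in sight are free $\Sring$-modules with $W$-stable lattices.

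First I would set up the map $\pi$ carefully. Using the triangular decomposition $U^-_\Sring\otimes_\Sring U^0_\Sring\otimes_\Sring U^+_\Sring\xrightarrow{\sim}U^{DK}_\Sring$ (Sevostyanov's generators), the Harish-Chandra projection $\pi\colon U^{DK}_\Sring\to U^0_\Sring$ is the composition with the projection killing $U^-_\Sring U^{DK}_\Sring+U^{DK}_\Sring U^+_\Sring$ onto the degree-zero part (the augmentation ideals of $U^\pm_\Sring$). I would first check the remark in the statement: that on $Z_\Sring$ this projection agrees with the one built from the \emph{standard} generators $\underline E_i,\underline F_i$. This is because $E_i=K_{\nu_i}\underline E_i$, $F_i=K_{-\nu_i}\underline F_i$ differ from the standard generators only by elements of $U^0_\Sring$, so the two-sided ideal $U^-_\Sring U+U U^+_\Sring$ is the same for both choices, and on $U^0_\Sring$ the two projections literally coincide (they only differ by how the $K$-factors are absorbed, which is invisible on the zero-weight part). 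The fact that $z\in Z_\Sring$ acts on $\Delta^\zeta_\Ring(\lambda)$ by $\chi_\lambda\circ\pi(z)$ is then a direct computation with the universal Verma (the cyclic generator has weight $\lambda$ and $U^+$ kills it, $U^0$ acts by $\chi_\lambda$).

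Next, injectivity of $\pi$. Over $\C(v)$ this is classical (\cite{DCKP}, or Tanisaki): a central element acting by zero on all generic Verma modules is zero, because the Vermas separate points. To descend: $Z_\Sring=Z_v\otimes_{\C[v^{\pm1}]}\Sring$ since taking the center commutes with the flat base change $\C[v^{\pm1}]\to\Sring$ (the center of $U^{DK}_v$ is a finitely generated $\C[v^{\pm1}]$-module-ish object — more precisely one uses that $U^{DK}_v$ is a free $\C[v^{\pm1}]$-module and that $v-\epsilon$ is a nonzerodivisor, so $\ker\pi$ is a saturated submodule that vanishes generically, hence vanishes). Then I would show $\pi(Z_\Sring)\subseteq \gamma_\rho((U^{0,ev}_\Sring)^W)$. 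The containment in $U^{0,ev}_\Sring$ (even degrees only) follows from the $\Lambda/\Lambda_0$-grading on $U^{DK}$ together with the fact that the central characters see the $\rho$-shift; the $W$-invariance after untwisting by $\gamma_\rho$ is the standard ``linked weights'' argument: if $\lambda$ and $w\cdot\lambda$ are linked (which happens on a Zariski-dense set of $\lambda$ over $\C(v)$ once one allows the dot-action with the $v$-dependent shift), then $z$ acts by the same scalar, forcing $\gamma_\rho^{-1}\pi(z)$ to be $W$-invariant on a dense set, hence $W$-invariant.

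The main obstacle, and the part I expect to require real work, is \emph{surjectivity} onto $\gamma_\rho((U^{0,ev}_\Sring)^W)$ — i.e. constructing enough central elements. Over $\C(v)$ this is done in \cite{DCKP} via the quantum Casimir / the ad-locally-finite part: one produces, for each dominant weight, a central element from the quantum trace on a Weyl module, and shows these span the target under $\pi$. I would run this construction over $\Sring$ directly: the Weyl modules $\Delta_\Sring(\mu)$ for $\mu$ dominant are free $\Sring$-modules (PBW), the relevant quantum-trace central elements $z_\mu\in U^{DK}_\Sring$ are defined integrally and $\pi(z_\mu)=\gamma_\rho(\sum_{\nu\in W\mu}(\dim\text{-weight})K_{2\nu})$ or similar, and a highest-weight/leading-term argument shows these generate $(U^{0,ev}_\Sring)^W$ over $\Sring$ (the characters of Weyl modules form a $\Z$-basis of the virtual character ring, which stays a basis after $\otimes\Sring$). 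The only quantum-specific subtlety is that $U^{DK}_v$ is \emph{not} ad-locally-finite, so one cannot literally use the decomposition $U=U^{lf}\oplus\ldots$; instead one works with the explicit $z_\mu$'s and checks their centrality by a direct relation-by-relation computation (or cites \cite{DCKP} for the generic statement and descends by flatness, noting that ``$z_\mu$ is central'' is a closed condition that holds generically, hence over $\Sring$). I would expect to spend most of the write-up on pinning down exactly which $W$-invariant combination each $z_\mu$ projects to and verifying that these exhaust the invariants after the $\gamma_\rho$-twist.
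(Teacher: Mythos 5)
Your proposal follows essentially the same route as the paper: reduce to the generic case (base change to the fraction field), cite \cite[Section 6.2]{DCKP} for the classical Harish-Chandra isomorphism there, deduce the embedding $\Zcal_\Sring\hookrightarrow\gamma_\rho((U^{0,ev}_\Sring)^W)$, and prove surjectivity by the ``absence of poles'' argument of \cite[Proposition 6.2]{DCKP} — which is exactly the integrality-of-$z_\mu$ issue you correctly identify as the crux. The paper's proof is simply a two-sentence citation of these facts, whereas you spell out the skeleton; but you do not actually establish the integrality (you assert it and then defer to a descent from the generic case, which only yields centrality, not membership in $U^{DK}_\Sring$), so at the one genuinely delicate step you end up citing DCKP just as the paper does.
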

\begin{proof}
The claim is classical when we base change to $\operatorname{Frac}(\Sring)$, see, e.g., \cite[Section 6.2]{DCKP}. This isomorphism easily implies that $\pi$ restricts to an embedding $\Zcal_{\Sring}\hookrightarrow \gamma_{\rho}(\left(U_\Sring^{0,ev}\right)^W)$. 
The claim that this restriction is surjective follows from the argument of the proof (absence of poles) in 
\cite[Proposition 6.2]{DCKP}.   
\end{proof}     

Set $\Zcal^{HC}_\epsilon:=\C_{\epsilon}\otimes_{\Sring}\Zcal_\Sring$. This algebra embeds into the center of 
$U^{DK}_\epsilon$ and it is known as the Harish-Chandra center of $U^{DK}_\epsilon$. 

\subsubsection{$\epsilon$-center}\label{SSS_epsilon_center}
In \cite{DCKP}, another central subalgebra of $\U^{DK}_\epsilon$ was introduced. Here we call it the {\it $\epsilon$-center} 
(by the analogy with the $p$-center in the universal enveloping algebras over characteristic $p$ fields; another name is the {\it Frobenius center}) and denote it by $\Zcal^\epsilon$. By the construction, \cite[Section 4]{DCKP}, this algebra splits as the tensor product
$$\Zcal^{\epsilon,-}\otimes \Zcal^{\epsilon,0}\otimes \Zcal^{\epsilon,+},$$
where
\begin{itemize}
\item $\Zcal^{\epsilon,-}$ is the polynomial algebra on $\underline{F}_\alpha^d$, where $\alpha$ runs over the set of positive roots, and  the $\underline{F}_\alpha$'s are the usual root vectors in $\underline{U}^-_\epsilon$ recalled in Section \ref{SSS_quantum_basic}. 
\item $\Zcal^{\epsilon,0}$ is the span of $K_\nu$ with $\nu\in d\Lambda$.
\item $\Zcal^{\epsilon,+}$ is the polynomial algebra on the elements $\mathsf{T}_{w_0}\underline{F}_\alpha^d$. We note that 
$T_{w_0}\underline{F}_\alpha^d$ is the product of a suitable element $K_?$ and an element from $\underline{U}^+$. 
\end{itemize} 

Let $G$ be the simply connected semisimple group with Lie algebra $\g$.
Let $N^{\pm}$ denote the positive and negative unipotent subgroups of $G$, so that $G^0:=N^+\otimes T\otimes N^-$
is the open Bruhat cell in $G$. As explained in \cite[Section 4]{DCKP}, we have identifications $\C[N^\pm]\xrightarrow{\sim}
\Zcal^{\epsilon,\mp}$ (note that the role of $N^+,N^-$ here is swapped compared to \cite{DCKP}, see, e.g., Section 0.6
there). We will need to recall the construction of the isomorphism $\operatorname{Spec}(\C[\Zcal^{\epsilon,-}])\xrightarrow{\sim} N^+$,
see \cite[Section 0.6]{DCKP}: it is given by 
\begin{equation}\label{eq:isomorphism_N+}
z\mapsto \prod_{i=1}^\ell \exp([F_{\alpha_i}^d](z)e_{\alpha_i}).
\end{equation}

Note that the isomorphisms $\C[N^{\pm}]\xrightarrow{\sim} \Zcal^{\epsilon,\mp}$ are $T$-equivariant. 
Next, consider the embedding $\C[T]\hookrightarrow \Zcal^{\epsilon,0}$ that sends the function on 
$T$ corresponding to the character $\lambda$ to $K_{2d\lambda}$. So we get an etale finite morphism $\operatorname{Spec}(\Zcal^\epsilon)
\twoheadrightarrow G^0$ of degree $2^{|I|}$. 

Finally, we can get a description of the full center of $U^{DK}_\epsilon$. The following is a consequence of 
\cite[Theorem 6.4]{DCKP} and its proof. 

\begin{Prop}\label{Prop:UDK_center_structure}
The natural homomorphism 
$$\Zcal^{HC}_\epsilon\otimes_{\Zcal^{HC}_\epsilon\cap \Zcal^{\epsilon}}\Zcal^\epsilon\rightarrow U^{DK}_\epsilon$$
is an embedding whose image coincides with the center $Z(U^{DK}_\epsilon)$ of $U^{DK}_\epsilon$. The algebra 
$\Zcal^{HC}_\epsilon\cap \Zcal^\epsilon$ is identified with $\C[G]^G$.
\end{Prop}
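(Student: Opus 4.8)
The statement has three parts: (1) the natural map $\Zcal^{HC}_\epsilon\otimes_{\Zcal^{HC}_\epsilon\cap\Zcal^\epsilon}\Zcal^\epsilon\to U^{DK}_\epsilon$ is injective; (2) its image is exactly $Z(U^{DK}_\epsilon)$; and (3) the ``overlap'' subalgebra $\Zcal^{HC}_\epsilon\cap\Zcal^\epsilon$ is $\C[G]^G$. The whole thing is basically a citation to \cite[Theorem 6.4]{DCKP} together with its proof, so the plan is to recall the relevant structure and explain why the cited statements give what we need here (the only subtlety being that we have set $v=\epsilon$ and are using Sevostyanov's generators, but neither affects the center).

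First I would record the geometric picture behind the two central subalgebras, using Section \ref{SSS_epsilon_center}. The $\epsilon$-center $\Zcal^\epsilon=\Zcal^{\epsilon,-}\otimes\Zcal^{\epsilon,0}\otimes\Zcal^{\epsilon,+}$ has $\operatorname{Spec}$ a degree-$2^{|I|}$ étale cover of the big Bruhat cell $G^0=N^+TN^-$, compatibly with the $T$-action; the Harish-Chandra center is $\Zcal^{HC}_\epsilon=\C_\epsilon\otimes_\Sring\Zcal_\Sring$, and by Lemma \ref{Lem:HC_center_quantum} it is identified with $\gamma_\rho((U^{0,ev}_\epsilon)^W)$, i.e. with $\C[T/W]$ up to the shift $\gamma_\rho$ (and up to the squaring $\nu\mapsto 2\nu$ in the even part). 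The key input from \cite[Section 6]{DCKP} is that $\operatorname{Spec} Z(U^{DK}_\epsilon)$ is (a finite cover of) the categorical quotient-type space obtained by gluing: over $G^0$ the center is the fiber product of $\operatorname{Spec}\Zcal^\epsilon\to G^0$ with $G^0\to G\sslash G=T/W$, and \cite[Theorem 6.4]{DCKP} shows this already describes the whole center of $U^{DK}_\epsilon$, not just its restriction to the cell. Concretely this says precisely that $Z(U^{DK}_\epsilon)=\Zcal^{HC}_\epsilon\cdot\Zcal^\epsilon$ inside $U^{DK}_\epsilon$ and that the two subalgebras are ``independent'' except along their common image of $\C[G]^G=\C[T/W]^{(\text{étale})}$; translating this into the tensor-product statement (1)+(2) is then formal. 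For (3): $\Zcal^\epsilon$ maps to functions on the étale cover of $G^0$, whose $T$-invariant / ``class-function'' part is $\C[G^0]^G=\C[G]^G$ (restriction from $G$ to the big cell is injective on $G$-invariants and surjective onto $\C[G^0]^G$ since $G^0$ is dense and its complement has codimension $\geq 1$); and $\C[G]^G$ is exactly the Harish-Chandra part $\C[T/W]$ realized inside $\Zcal^{HC}_\epsilon$ — this is the content of the pull-back square and is spelled out in the proof of \cite[Proposition 6.2]{DCKP} and \cite[Theorem 6.4]{DCKP}.

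So concretely the steps are: (i) recall from Section \ref{SSS_epsilon_center} the identification $\operatorname{Spec}\Zcal^\epsilon\twoheadrightarrow G^0$ and its $T$-equivariance; (ii) recall from Lemma \ref{Lem:HC_center_quantum} the identification $\Zcal^{HC}_\epsilon\cong\gamma_\rho((U^{0,ev}_\epsilon)^W)\cong\C[T/W]$; (iii) identify the image of $\C[G]^G=\C[T\sslash W]$ in each of the two subalgebras — in $\Zcal^{HC}_\epsilon$ via the Chevalley restriction/Harish-Chandra isomorphism, in $\Zcal^\epsilon$ via pulling back along $\operatorname{Spec}\Zcal^\epsilon\to G^0\to G\sslash G$ — and check these two images coincide with $\Zcal^{HC}_\epsilon\cap\Zcal^\epsilon$, which is \cite[Theorem 6.4]{DCKP}; (iv) deduce that the multiplication map from the tensor product over this common subalgebra is injective with image the span $\Zcal^{HC}_\epsilon\cdot\Zcal^\epsilon$, and that this span is all of $Z(U^{DK}_\epsilon)$, again by \cite[Theorem 6.4]{DCKP}. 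The only thing to say about our modifications is that replacing $v$ by the scalar $\epsilon$ is a flat base change that commutes with taking centers here (this is implicit in the definition $\Zcal^{HC}_\epsilon:=\C_\epsilon\otimes_\Sring\Zcal_\Sring$ together with Lemma \ref{Lem:HC_center_quantum}), and that Sevostyanov's root vectors $F_i=K_{-\nu_i}\underline{F}_i$ differ from the standard ones by invertible Cartan factors, so they generate the same subalgebras $\Zcal^{\epsilon,\pm}$ up to the $T$-equivariant coordinate change \eqref{eq:isomorphism_N+} and do not change the center.

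The main obstacle — really the only non-bookkeeping point — is part (2): that $\Zcal^{HC}_\epsilon\cdot\Zcal^\epsilon$ exhausts $Z(U^{DK}_\epsilon)$ rather than just being a large central subalgebra whose spectrum covers $G$. This requires the ``no extra center'' half of \cite[Theorem 6.4]{DCKP}: a priori the center could be larger over the complement of the big cell, and one must know that $U^{DK}_\epsilon$ is a finite module over $\Zcal^{HC}_\epsilon\cdot\Zcal^\epsilon$ (so that $\operatorname{Spec}$ of the putative center is finite over $\operatorname{Spec}(\Zcal^{HC}_\epsilon\cdot\Zcal^\epsilon)$) together with a normality/degree count identifying the two. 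Since we are allowed to invoke \cite{DCKP} wholesale, I would simply cite this, note that the $v\mapsto\epsilon$ specialization and Sevostyanov's generators do not affect the argument, and be done; if one wanted to be self-contained one would reprove the PBW-basis finiteness of $U^{DK}_\epsilon$ over $\Zcal^{\epsilon,-}\otimes\Zcal^{\epsilon,0}\otimes\Zcal^{\epsilon,+}$ (from the quantum PBW theorem recalled in Section \ref{SSS_quantum_basic}) and the analogous finiteness over the Harish-Chandra direction, but that is exactly the calculation \cite{DCKP} already did.
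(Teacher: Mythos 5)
Your proposal is correct and takes the same route as the paper: the paper itself gives no argument beyond the sentence ``The following is a consequence of \cite[Theorem 6.4]{DCKP} and its proof,'' and your plan is exactly to unpack that citation (the geometric picture over $G^0$, the fiber-product description of $\operatorname{Spec} Z(U^{DK}_\epsilon)$, and the observations that specializing $v\mapsto\epsilon$ and switching to Sevostyanov's generators change nothing). The extra detail you supply about why the overlap is $\C[G]^G$ and why $\Zcal^{HC}_\epsilon\cdot\Zcal^\epsilon$ exhausts the center is a faithful explication of what \cite{DCKP} proves, not a different argument.
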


In particular, we see that $Z(U^{DK}_\epsilon)$ is a finitely generated algebra, and its spectrum is an affine 
variety. 

\subsubsection{Azumaya locus}
The algebra $U^{DK}_\epsilon$ is a free rank $d^{\dim \g}$-module over $\Zcal^\epsilon$. In particular, 
it is a finitely generated module over $Z(U^{DK}_\epsilon)$. So, it makes sense to speak about the 
{\it Azumaya locus} of $U^{DK}_\epsilon$, the maximal open subset in $\operatorname{Spec}(Z(U^{DK}_\epsilon))$
over which $U^{DK}_\epsilon$ is Azumaya. We write $\operatorname{Spec}(Z(U^{DK}_\epsilon))^{pr}$ for 
the preimage of the locus of regular elements in $G$ under the composition 
$$\operatorname{Spec}(Z(U^{DK}_\epsilon))\rightarrow \operatorname{Spec}(\Zcal^\epsilon)\rightarrow G^0\hookrightarrow G.$$ 

The following was essentially obtained in \cite{DCKP_degen} but we provide a proof for reader's convenience. 

\begin{Prop}\label{Prop:Azumaya_locus}
$\operatorname{Spec}(Z(U^{DK}_\epsilon))^{pr}$ is contained in the Azumaya locus.
\end{Prop}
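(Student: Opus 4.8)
The plan is to follow the classical strategy for controlling the Azumaya locus of a prime algebra finite over its center, going back to \cite{DCKP_degen} and \cite{BG}: an irreducible module of the maximal possible dimension forces the corresponding central character to lie in the Azumaya locus, because a prime Noetherian algebra that is finite over its center is Azumaya at a point $\chi$ precisely when $\dim U^{DK}_\epsilon/\mathfrak{m}_\chi U^{DK}_\epsilon = \operatorname{PI-deg}(U^{DK}_\epsilon)^2$, equivalently when the unique (up to isomorphism) semisimple quotient at $\chi$ is simple of dimension equal to the PI degree. Here the PI degree of $U^{DK}_\epsilon$ is $d^{N}$ where $N$ is the number of positive roots (so that $d^{2N}$ divides $d^{\dim\g}$ with the torus part accounting for the rest), and the claim reduces to exhibiting, for every $\chi\in \operatorname{Spec}(Z(U^{DK}_\epsilon))^{pr}$, an irreducible $U^{DK}_\epsilon$-module of dimension $d^{N}$ with central character $\chi$.

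First I would reduce the problem using the $\epsilon$-center: by Proposition \ref{Prop:UDK_center_structure}, $\operatorname{Spec}(Z(U^{DK}_\epsilon))$ maps to $\operatorname{Spec}(\Zcal^\epsilon)$, which by the construction recalled in Section \ref{SSS_epsilon_center} is a finite \'etale cover of the big Bruhat cell $G^0\subset G$. A point of $\operatorname{Spec}(Z(U^{DK}_\epsilon))^{pr}$ lies over a regular element $g\in G^0$. The key geometric input from \cite{DCKP} is that the $\epsilon$-center behaves like a Frobenius/$p$-center: the restriction of $U^{DK}_\epsilon$ to a fixed fiber of $\operatorname{Spec}(\Zcal^\epsilon)\to G$ over $g$ is a finite-dimensional algebra whose representation theory is governed by the conjugacy class of $g$, and for $g$ regular this algebra is as close to a matrix algebra as possible. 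Concretely, one can use the quantum coadjoint action of \cite{DCKP_degen}: the group (an algebraic group acting on $\operatorname{Spec}(\Zcal^\epsilon)$ integrating the derivations coming from $\frac{K_i - K_i^{-1}}{v^{d_i}-v^{-d_i}}$-type operators) acts transitively on the regular semisimple locus and, more importantly, lifts to an action by Morita self-equivalences of $U^{DK}_\epsilon$. So it suffices to verify the statement at a single convenient regular point in each connected component of the regular locus — e.g. a regular semisimple $g$, or better a point where the fiber is completely explicit.

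The concrete step I expect to carry out the computation at is a regular semisimple point, or a regular unipotent point, depending on which is cleaner. At a regular semisimple $g$ one can conjugate $g$ into the maximal torus $T$ and use that the fiber of $U^{DK}_\epsilon$ there decomposes according to the triangular decomposition $\Zcal^{\epsilon,-}\otimes\Zcal^{\epsilon,0}\otimes\Zcal^{\epsilon,+}$, with $\Zcal^{\epsilon,\pm}\cong \C[N^{\pm}]$ evaluated at the coordinates of $g$'s unipotent parts (which vanish for $g\in T$). One then builds a baby Verma / induced module from a character of the ``Borel part'' $U^{\geqslant 0}_\epsilon$ and shows, using regularity of $g$, that this induced module (of dimension $d^{N}$) is irreducible: irreducibility follows because the central character being regular makes the relevant ``linkage'' trivial, so no proper submodule or quotient can exist — this is the standard argument that for a regular central character the baby Verma is simple and coincides with its contravariant dual. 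Then transport along the quantum coadjoint action gives irreducible $d^{N}$-dimensional modules at all regular points, hence Azumaya there.

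The main obstacle will be making the transport-of-structure argument fully rigorous over $\operatorname{Spec}(Z(U^{DK}_\epsilon))$ rather than over $\operatorname{Spec}(\Zcal^\epsilon)$: one has to check that the Harish-Chandra directions are compatible, i.e. that ``regular in $G$'' (which is a condition on the $\epsilon$-center only) together with an arbitrary Harish-Chandra central character still yields an irreducible module of full dimension. I would handle this by the semicontinuity/upper bound argument: the function $\chi\mapsto \dim U^{DK}_\epsilon/\mathfrak{m}_\chi U^{DK}_\epsilon$ is, by general PI theory, lower semicontinuous and bounded below by $\operatorname{PI-deg}^2$ on the locus where it attains that value, which is open and dense; the explicit construction above shows the full regular locus $\operatorname{Spec}(Z(U^{DK}_\epsilon))^{pr}$ meets this Azumaya locus, and the quantum coadjoint action (which acts transitively on fibers over regular elements while preserving the Harish-Chandra center) spreads the Azumaya property across all of $\operatorname{Spec}(Z(U^{DK}_\epsilon))^{pr}$. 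Alternatively, and perhaps more cleanly, I would simply cite \cite{DCKP_degen} for the statement that the regular locus is Azumaya and restrict to spelling out the reduction to their setting, since the excerpt explicitly says the result ``was essentially obtained in \cite{DCKP_degen}''.
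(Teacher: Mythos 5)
Your framework is exactly the paper's: use the Artin--Procesi criterion to reduce Azumaya-ness at a central character $\chi$ to the statement that the (unique) irreducible $U^{DK}_{\epsilon,\chi}$-module has dimension equal to the PI degree $d^n$ (with $n=\dim\mathfrak{n}$), and compute that PI degree from the generic fiber being a $d^n\times d^n$ matrix algebra over $\operatorname{Frac}(Z(U^{DK}_\epsilon))$, which in turn rests on $U^{DK}_\epsilon$ being prime (no zero divisors). The paper gets the input that all irreducibles over regular points have dimension $d^n$ directly from \cite[Theorem 5.1]{DCKP_degen}, translating the $\tilde G$-orbit language of that theorem into a statement about the regular locus via \cite[Theorem 6.6]{DCKP}. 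Your fallback option (``simply cite \cite{DCKP_degen}'') is precisely what the paper does.

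Where you deviate is the proposed explicit verification route: build a baby Verma at one regular semisimple point, show it is simple of dimension $d^n$ using triviality of linkage at a regular Harish--Chandra character, and then transport via the quantum coadjoint action. The idea is natural, but as stated it has a real gap: the $\tilde G$-orbits are preimages of intersections of \emph{individual adjoint conjugacy classes} with $G^0$, not the whole regular locus, so the coadjoint action does not move a regular semisimple point into the regular unipotent (or other non-semisimple regular) orbits. Your semicontinuity argument also fails to close this, because the non-semisimple regular orbits lie in the \emph{closure} of the regular semisimple locus, not in an open set containing it, so openness of the Azumaya locus plus knowing it at regular semisimple points does not force Azumaya-ness at regular unipotent points. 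To make the explicit route work you would need a separate construction (say, a Whittaker-type module) at a representative in each non-semisimple regular orbit --- which is essentially the content of \cite[Theorem 5.1]{DCKP_degen}, so at that point you have reproduced their proof rather than shortcut it. The paper's choice to cite that theorem directly is therefore the right economy; your first, more constructive plan would need the extra orbits handled, and in its present form is incomplete.

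One small bookkeeping issue: you cite \cite{BG}, which does not appear in the paper's bibliography; the relevant reference here is \cite[Theorem 13.7.14]{MR} for Artin--Procesi and \cite{DCK} for primeness and the generic matrix algebra statement.
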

\begin{proof}
For $z\in \operatorname{Spec}(Z(U^{DK}_\epsilon))$ we write $U^{DK}_{\epsilon,z}$ for the fiber of 
$U^{DK}_\epsilon$ at $z$.
Set $n:=\dim \mathfrak{n}$. According to \cite[Theorem 5.1]{DCKP_degen}, for $z\in \operatorname{Spec}(Z(U^{DK}_\epsilon))^{pr}$ 
the dimension of every irreducible $U^{DK}_{\epsilon,z}$-module is $d^n$ (note that the theorem talks about orbits of a certain infinite
dimensional group $\tilde{G}$, according to \cite[Theorem 6.6]{DCKP}, these orbits are preimages of the intersections 
of adjoint $G$-orbits with $G^0$). From \cite[Sections 1.7]{DCK} it follows that $U^{DK}_\epsilon$ has no zero divisors. 
Using \cite[(3.8.3)]{DCK} we conclude that 
$$\operatorname{Frac}(Z(U^{DK}_\epsilon))\otimes_{Z(U^{DK}_\epsilon)}U^{DK}_\epsilon$$
is the matrix algebra of dimension  $d^{2n}$ over $\operatorname{Frac}(Z(U^{DK}_\epsilon))$. 
It follows that $U^{DK}_\epsilon$ is a PI algebra of rank $d^{2n}$. Now we can combine the claim that the dimension 
of every irreducible $U^{DK}_{\epsilon,z}$-module is $d^n$ with the Artin-Procesi theorem, 
\cite[Theorem 13.7.14]{MR}, to deduce the claim of the proposition. 
\end{proof}

\subsection{Whittaker coinvariants}
The goal of this section is to produce a functor $\OCat^\zeta_\Ring\rightarrow \Zcal_v\otimes_{\C[v^{\pm 1}]}\Ring\operatorname{-mod}$
(where the homomorphism $\C[v^{\pm 1}]\rightarrow \Ring$ is given by $v\mapsto q:=\epsilon e^{2\pi\sqrt{-1}\hbar/d}$)
and study its properties. In particular, we will see that, for $\zeta=1$, this functor is 
\begin{itemize}
\item Faithful on standardly filtered objects in $\OCat^{\zeta}_\epsilon$, the specialization of 
$\OCat^\zeta_\Ring$ to the closed point of $\operatorname{Spec}(\Ring)$. 
\item Fully faithful on standardly filtered objects in $\OCat^\zeta_\Ring$. 
\end{itemize}
We will also compute the images of standard objects under the functor. 

\subsubsection{Definition}
Recall, Section \ref{SS_Sevostyanov}, that we have a homomorphism $\psi: U^-_v\rightarrow \C[v^{\pm 1}]$ defined on the generators 
by $\psi(F_i)=1$ for all $i\in I$. Set $B:=\Zcal_v\otimes_{\C[v^{\pm 1}]}\Ring$.

\begin{defi}\label{defi:Wh_coinv}
Define a functor $\Wh: \OCat^\zeta_{\Ring}\rightarrow B\operatorname{-mod}$
by sending $M\in \OCat_{\Ring}$ to
$$\C[v^{\pm 1}]\otimes_{U^-_v}M,$$
where the homomorphism $U^-_v\rightarrow \C[v^{\pm 1}]$ is $\psi$. Note that $\Wh(M)$ is a quotient of 
$M$, and the kernel is $\Zcal_v$ and $\Ring$-stable. Hence we get a natural action of 
$B$ on $\Wh(M)$.
\end{defi}

Note that by the very definition, $\Wh$ is right exact and $\Ring$-linear. 

\begin{Ex}\label{Ex:Wh_Verma}
Note that $\Delta_\Ring^\zeta(\lambda)$ is a free rank $1$ module over $\Ring\otimes_{\C[v^{\pm 1}]}U_v^-$.
It follows that one has an $\Ring$-linear identification of $\Wh(\Delta_\Ring^\zeta(\lambda))$ with $\Ring$.
The algebra $\Zcal_v$ acts on $\Wh(\Delta_\Ring^\zeta(\lambda))$ via a homomorphism to $\Ring$, the same as
for the action on $\Delta_\Ring^\zeta(\lambda)$ itself. Under the identification $\Zcal_v\cong (U^{0,ev}_\Sring)^{W}$,
an element $z\in (U^{0,ev}_\Sring)^{W}$ acts by $\chi_{\lambda}\circ \gamma_\rho(z)$. One can write 
$\chi_{\lambda}\circ \gamma_\rho$ as $\chi_{\lambda+\rho}$: while $\rho$ may fail to be in  $\Lambda_0$,
formula (\ref{eq:chilambda_def}) is well-defined for $\lambda+\rho$ and $\nu\in 2\Lambda$.
\end{Ex}

\begin{Rem}\label{Rem:Wh_acyclic}
The functor $\Wh$ is acyclic on the Verma modules because they are free over $U^-_v\otimes_{\C[v^{\pm 1}]}\Ring$.
\end{Rem}

\subsubsection{Faithfulness}
Consider the functor $\Wh_\epsilon: \OCat^{\zeta}_\epsilon\rightarrow \Zcal^{HC}_\epsilon\operatorname{-mod}$
obtained by restricting $\Wh$. The goal of this part is to prove the following claim. 

\begin{Prop}\label{Prop:Wh_faithful}
The functor $\Wh$ is faithful on the standardly filtered objects in $\OCat^1_\epsilon$. 
\end{Prop}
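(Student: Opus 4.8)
The plan is to reduce faithfulness of $\Wh$ on standardly filtered objects to a single statement about maps between Verma modules, and then isolate that statement as the main point.

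First I would establish that $\Wh$ is exact on the exact category of standardly filtered objects in $\OCat^1_\epsilon$. By Remark~\ref{Rem:Wh_acyclic} the higher derived functors of $\Wh$ vanish on Verma modules; dévissage along a Verma filtration extends this to all standardly filtered objects, and the long exact sequence then shows that $\Wh$ carries short exact sequences of standardly filtered objects to short exact sequences. Combined with $\Wh(\Delta_\epsilon(\lambda))\cong\C$ (Example~\ref{Ex:Wh_Verma}, base changed to $\C$), this gives $\dim_\C\Wh(M)=\sum_\mu[M:\Delta_\epsilon(\mu)]$ for $M$ standardly filtered; in particular $\Wh$ kills no nonzero standardly filtered object, and if $0=N_0\subset\cdots\subset N_m=N$ is a Verma filtration of $N$ then each $\Wh(N_t)\hookrightarrow\Wh(N)$ is injective with one-dimensional successive quotients $\Wh(N_t)/\Wh(N_{t-1})\cong\C$.

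Next, let $\varphi\colon M\to N$ be a nonzero morphism of standardly filtered objects. Fixing a Verma filtration $0=M_0\subset\cdots\subset M_k=M$ with $M_i/M_{i-1}\cong\Delta_\epsilon(\lambda_i)$ and taking $i$ minimal with $\varphi|_{M_i}\neq 0$, the map $\varphi|_{M_i}$ factors through a nonzero $\bar\varphi\colon\Delta_\epsilon(\lambda_i)\to N$, while the restriction of $\Wh(\varphi)$ to the subobject $\Wh(M_i)\subseteq\Wh(M)$ equals $\Wh(\bar\varphi)$ precomposed with the surjection $\Wh(M_i)\twoheadrightarrow\Wh(\Delta_\epsilon(\lambda_i))$; so it suffices to treat the case $M=\Delta_\epsilon(\lambda)$. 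Here, since $\Delta_\epsilon(\lambda)$ is generated over the (Sevostyanov) negative part $U^-_v$ by its highest weight vector $v_\lambda$, a nonzero $\bar\varphi$ sends $v_\lambda$ to a nonzero element, and any nonzero map between Verma modules is injective because $U^-_v$ is a domain. Intersecting $\bar\varphi(\Delta_\epsilon(\lambda))$ with the Verma filtration of $N$ one finds an index $j$ with $\bar\varphi(\Delta_\epsilon(\lambda))\subset N_j$ and $\bar\varphi(\Delta_\epsilon(\lambda))\not\subset N_{j-1}$, so that the induced map $\Delta_\epsilon(\lambda)\to N_j/N_{j-1}\cong\Delta_\epsilon(\mu_j)$ is a nonzero — hence injective — map of Verma modules. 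Using the injectivity of $\Wh(N_{j-1})\hookrightarrow\Wh(N_j)\hookrightarrow\Wh(N)$ and the surjectivity of $\Wh(N_j)\twoheadrightarrow\Wh(\Delta_\epsilon(\mu_j))$ recorded above, $\Wh(\bar\varphi)\neq 0$ will follow once we know that $\Wh$ does not annihilate the injection $\Delta_\epsilon(\lambda)\hookrightarrow\Delta_\epsilon(\mu_j)$.

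The proof thus comes down to the following assertion, which is the heart of the matter and the main obstacle: \emph{for every nonzero map $\Delta_\epsilon(\lambda)\to\Delta_\epsilon(\mu)$, the induced map $\C=\Wh(\Delta_\epsilon(\lambda))\to\Wh(\Delta_\epsilon(\mu))=\C$ is nonzero}; equivalently, the singular vector $Pv_\mu$ generating the image satisfies $\psi(P)\neq 0$ for Sevostyanov's character $\psi$. This is the quantum counterpart of the classical fact (used for the Soergel RS-functor; cf.\ the first example of Section~\ref{S_RS} following Remark~\ref{Rem:RS4}) that the anti-dominant Verma occurs in the socle of every Verma with multiplicity one. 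I would extract it from the nondegeneracy of Sevostyanov's modified character (Section~\ref{SS_Sevostyanov}) together with the structure of singular vectors: the finite simple reflections of $W^{a}$ reduce to the rank-one computation $\psi(F_i)=1$ (up to a $U^0_v$-twist acting by a nonzero scalar), while for the remaining real reflections one must control the Shapovalov-type singular vectors and the attendant $U^0_v$-twists — possibly using the description of the De~Concini--Kac $\epsilon$-center (Section~\ref{SSS_epsilon_center}) and the Azumaya-type statement Proposition~\ref{Prop:Azumaya_locus}, which pin down the point of $N^+$ at which $\psi$ evaluates. I expect this last step to require the most work; once it is in hand everything assembles formally as above.
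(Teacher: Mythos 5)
Your dévissage reducing the proposition to the assertion that $\Wh$ does not annihilate any nonzero homomorphism between Verma modules is correct (this is essentially the one-line reduction with which the paper's proof begins). But that reduction is the cheap part; the actual content of the proposition is the claim you isolate as the ``heart of the matter'' --- that $\psi(P)\neq 0$ for every element $P\in U_v^-$ determining a nonzero map $\Delta_\epsilon(\lambda)\to\Delta_\epsilon(\mu)$ --- and this you leave as a sketch. The direction you propose (rank-one reductions plus an analysis of ``Shapovalov-type singular vectors'' and their $U^0_v$-twists) is not the route the paper takes, and it is unlikely to be tractable: at a root of unity the homomorphisms between Vermas are governed by the affine Weyl group, there is no closed formula for the corresponding singular vectors, and composites of rank-one embeddings are delicate to relate to general Verma embeddings. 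Moreover, the analogue of the classical fact you cite (anti-dominant Verma in the socle with multiplicity one) is unavailable here because the weight poset is not bounded below within a block, so no single Verma plays the role of the socle.

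The paper's proof avoids singular vectors entirely, and this is the idea your sketch is missing. Write $p=\bigl((1,1,\psi^+),\chi_\lambda\bigr)\in\operatorname{Spec} Z(U^{DK}_\epsilon)$ for the point determined by the central character, where $\psi^+\in N^+$ is defined by evaluating $\Zcal^{\epsilon,-}$ at the powers $\psi(F_\alpha)^d$. Using the De Concini--Kac identification $\operatorname{Spec}\Zcal^{\epsilon,-}\cong N^+$ (formula~(\ref{eq:isomorphism_N+})) one sees $\psi^+$ is a \emph{principal} unipotent element, and Proposition~\ref{Prop:Azumaya_locus} then says the fiber $\mathsf{u}_p$ of $U^{DK}_\epsilon$ at $p$ is a matrix algebra; hence $\Wh$ factors through the fiber functor $\Wh^\epsilon$ at $p$ followed by a Morita equivalence, and $\Wh^\epsilon(\varphi)=0$ (for $\varphi$ a nonzero morphism of Vermas with cokernel $M$) is equivalent to $M_p\neq 0$. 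The divided powers $E_i^{(d)}$ give operators $e_i$ on $M$ that are derivations relative to the Poisson bracket $\{E_i^d,\cdot\}$ on $\Zcal^\epsilon$, identified in \cite{DCKP} with invertible multiples of the left-invariant vector fields $e_i$ on $G$. Together with the $T$-action this makes the support of $M$ in $N^+$ a $B$-stable closed subset; if it contains $\psi^+$ it contains the dense open orbit $B\psi^+$, forcing $\operatorname{im}\varphi\subset\bigcap_{n\in B\psi^+}\mathfrak{m}_n\Delta_\epsilon(\mu)=0$ by freeness of $\Delta_\epsilon(\mu)$ over $\Zcal^{\epsilon,-}\cong\C[N^+]$ --- contradicting $\varphi\neq 0$. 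You should replace your sketch of the key lemma with this support and derivation argument.
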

\begin{proof}
It is enough to show that $\Wh$ gives an injective map 
$$\operatorname{Hom}_{\OCat^1_\epsilon}(\Delta^1_\epsilon(\lambda),
\Delta^1(\mu))\rightarrow  \operatorname{Hom}_{\Zcal^{HC}_\epsilon}(\Wh(\Delta^1_\epsilon(\lambda)),
\Wh(\Delta^1(\mu)))$$ for all $\lambda,\mu\in \Lambda_0$. The proof is in several steps. 


{\it Step 1}. 
First, we are going to identify scalars by which various central subalgebras act on $\Delta^1_\epsilon(\lambda)$.
The HC center $\Zcal_\epsilon$ (identified with 
$(U^{0,ev}_\epsilon)^W$ via the HC isomorphism, see Section \ref{SSS_HC_center}) acts  on $\Delta_\epsilon^1(\lambda)$ 
via $\chi_{\lambda,\epsilon}$, the homomorphism $U^0_\epsilon\rightarrow \C$ induced 
by $\chi_\lambda:U^0_\Sring\rightarrow \Sring$.

Now we describe the action of the $\epsilon$-center $\Zcal^\epsilon$.
The subalgebra $\Zcal^{\epsilon,+}\cong \C[N^-]$ acts via evaluation at $1$. The subalgebra $\Zcal^{\epsilon,0}$
acts by the restriction of $\chi_\lambda: U^0_\epsilon\rightarrow \C$ to $\Zcal^{\epsilon,0}$, since $\zeta=1$, this restriction is 
the evaluation at $1$.
The subalgebra $\Zcal^{\epsilon,-}=\C[N^+]$ does not act by a scalar, but we note that for each element $x^+\in N^+$,
the image of $(1,1,x^+)\in N^-\times T_0\times N^+$ in the adjoint quotient $G/\!/G$ coincides with the image of $1$. So the point 
\begin{equation}\label{eq:point_def}
p:=\left((1,1,x^+),\chi_\lambda\right)\in G^0\times \operatorname{Spec}(\Zcal^{HC}_\epsilon)
\end{equation}
actually lies in the subvariety $\operatorname{Spec}(Z(U^{DK}_\epsilon))$, see Proposition \ref{Prop:UDK_center_structure}. 

{\it Step 2}. 
Now take an element $\underline{F}_\alpha^d\in \Zcal^{\epsilon,-}$ for a positive root $\alpha$. It is of the form 
$z_\alpha F_\alpha^d$, where $z_\alpha$ is a nonzero multiple of $K_?$ with $?\in d\Lambda$. 
Define a point $\psi^+\in N^+=\operatorname{Spec}(\Zcal^{\epsilon,-})$ by sending $\underline{F}_\alpha^d$ 
to $\psi(F_\alpha)^d$.  
Let $p\in \operatorname{Spec}(Z(U^{DK}_\epsilon))$ be the point $\left((1,1,\psi^+),\chi_\lambda\right)$. 
We claim that $z\in \operatorname{Spec}(Z(U^{DK}_\epsilon))^{pr}$. This will follow once we check that 
$\psi^+$ is a principal unipotent element. This is an easy consequence of  
(\ref{eq:isomorphism_N+}): thanks to this formula the expansion of $\ln(\psi^+)\in \mathfrak{n}^+$ in the root vectors 
$e_\alpha$ contains all root vectors corresponding to simple roots with nonzero coefficients.

{\it Step 3}. Take  $\chi\in \operatorname{Spec}(\Zcal^{HC}_\epsilon)$ and write 
$\OCat^1_\epsilon(\chi)$ for the full subcategory in $\OCat^1_\epsilon$ consisting of all
objects, where $\Zcal^{\epsilon,0}$, $\C[N^-]$ act via the evaluation at $1$,
and $\Zcal^{HC}_\epsilon$ acts by $\chi$. Note that $\Delta^1_\epsilon(\lambda)\in \OCat^1_\epsilon(\chi_\lambda)$. 
If $\mu\in \Lambda^0$ is such that $\chi_\mu\neq \chi_\lambda$,
then there are no homomorphisms between $\Delta^1_\epsilon(\lambda)$ and $\Delta^1_\epsilon(\mu)$
and there is nothing to prove. So we can assume that both $\Delta^1_\epsilon(\lambda)$ and $\Delta^1_\epsilon(\mu)$ lie in
the same subcategory $\OCat^1_\epsilon(\chi)$.

{\it Step 4}. We note that $\Wh: \OCat^1_\epsilon\rightarrow \operatorname{Vect}$ factors
through $\operatorname{Wh}^\epsilon: M\mapsto M/(\underline{F}_\alpha^d-\psi^+(\underline{F}_\alpha^d))M$.
For $M\in \OCat^1_\epsilon(\chi)$, the algebra $Z(U^{DK}_\epsilon)$ acts on $\operatorname{Wh}^\epsilon(M)$
by evaluation at $p$ given by (\ref{eq:point_def}). So we get a functor  $\operatorname{Wh}^\epsilon: \OCat^1_\epsilon(\chi)\rightarrow 
\mathsf{u}_p\operatorname{-mod}$, where we write $\mathsf{u}_p$ for the fiber of $U^{DK}_\epsilon$
at $p$. It is a matrix algebra of dimension $d^{2N}$, where $N=\dim \mathfrak{n}$, see 
Proposition \ref{Prop:Azumaya_locus}. Note that $\operatorname{Wh}^\epsilon(\Delta^1_\epsilon(\lambda))$ is of dimension $d^N$. Since 
$\Wh(\Delta^1_\epsilon(\lambda))\cong \C$, we see that $\Wh$ is the composition of 
$\Wh^\epsilon$ and an equivalence $\mathsf{u}_p\operatorname{-mod}\rightarrow \operatorname{Vect}$. 
So it remains to show that $\Wh^\epsilon$ is faithful on the objects $\Delta^1_\epsilon(\lambda)\in \OCat^1_\epsilon(\chi)$.
Let $\varphi:\Delta^1_\epsilon(\lambda_1)\rightarrow \Delta^1_\epsilon(\lambda_2)$ be a homomorphism.
Let $M$ be its cokernel. Since $\Wh^\epsilon(\Delta^1_\epsilon(\lambda_i))$ is the irreducible $\mathsf{u}_p$-module, 
the claim that $\Wh^\epsilon(\varphi)=0$ is equivalent to $M_p\neq 0$, equivalently, $p$ lies in the support of 
the $M\in \OCat^1_\epsilon$ viewed as a $Z(U^{DK}_\epsilon)$-module.

{\it Step 5}.  The elements $E_i^{(d)}$ (and hence $\underline{E}_i^{(d)}$) act on $M$. 
We can view $M$ as a module over $\Zcal^\epsilon$. As was shown in \cite[Section 7.3]{DCKP}, $\Zcal^\epsilon$ has a natural Poisson bracket. For $a\in \Zcal^{\epsilon}, m\in M$, we have $E_i^{(d)}(am)=\{E_i^d,a\}m+ a(E_i^{(d)}m)$. The derivation 
$\{E_i^d,\bullet\}$ of $\Zcal^\epsilon$ has been computed in \cite[Theorem 5.4]{DCKP}, it is equal to
$z_i e_i$, where $z_i$ is an invertible element of $\Zcal^\epsilon$, and $e_i$ is the left-invariant 
vector field on $G$ corresponding to the Cartan generator $e_i\in \g$ (the roles of $e_i,f_i$
in this paper are switched from those in \cite{DCKP}, compare to Section \ref{SSS_epsilon_center}).
So we can define the operators $e_i,i\in I,$ on $M$ that are derivations for the $\Zcal^\epsilon$-module structure.

Note that $\C[G^0]\subset \Zcal^\epsilon$ is stable under the derivations $e_i$. We view modules from 
$\OCat^1_\epsilon(\chi)$ as graded $\C[G^0]$-modules equipped additionally with the derivations $e_i$. 
In these terms, the functor $\Wh^\epsilon$ takes the fiber of such a module at the principal point $\psi^+\in N^+\subset G^0$. 
The support of $M$ in $B$ has the following properties:
\begin{enumerate}
\item it is contained in the closed subvariety $N^+\subset G^0$, 
\item it is stable under the left-invariant constant vector fields $e_i$ on $N^+$,
\item it is invariant under the diagonal action of $T$,
\item and it contains $\psi^+$.
\end{enumerate} 
Note that the vectors $e_i\in \mathfrak{n}^+$ generate the Lie algebra $\mathfrak{n}^+$. The group $N^+$
acts on $N^+$ by $n.n':=n'n^{-1}$, while $T$ acts by $t.n'=t^{-1}n't$,
together these actions give an action of $B$. Thanks to (2), (3) and (4), the support of $M$ contains 
$B\psi^+$. But $\psi^+$ is principal and so $B\psi^+$ is open in $N^+$. By the construction of $M$,
it follows that the image of $\varphi$ is supported on  $N^+\setminus B\psi^+$. This is impossible
because $\Delta^1_\epsilon(\lambda_2)$ is a free $U^-_\epsilon$-module and hence free over 
$\mathcal{Z}^{\epsilon,-}$.
\end{proof}

\begin{Rem}\label{Rem:Wh_nonintegral}
One can generalize Proposition \ref{Prop:Wh_faithful} as long as a suitable element of $B=T\ltimes N^+$
is principal. Namely, we can define $\psi^+$ similarly to the above. Also, we consider $\zeta'$, the restriction of
$\zeta: U^0_\epsilon\rightarrow \C$ to $\C[T]\hookrightarrow U^0_\epsilon$. The element we need is $(\zeta',\psi^+)$.
\end{Rem}

\subsection{Ext's between standards}
From now on we are going to assume that $\zeta=1$. We will write $\OCat_\Ring$ instead of $\OCat^1_\Ring$
and $\Delta_\Ring(\lambda)$ instead of $\Delta^1_\Ring(\lambda)$.
In this section we are interested in the (non)vanishing of the $\Ring'$-modules $\Ext^i_{\OCat^{\zeta}_{\Ring'}}(\Delta_{\Ring'}(\lambda), \Delta_{\Ring'}(\mu))$ for suitable local $\Ring$-algebras $\Ring'$. 

%

Let $\mathfrak{m}'\subset \Ring'$ denote the maximal ideal
and $\kf'$ be the residue field. Recall that $q$ stands for $\epsilon e^{2\pi\sqrt{-1}\hbar/d}$.
 
\subsubsection{Central characters}\label{SSS_centr_char}
For time being, let $\Ring'$ be a local $\Ring$-algebra. The following lemma is standard. 

\begin{Lem}\label{Lem:Ext_non_vanish_necessary}
Suppose $\Ext^i_{\OCat_{\Ring'}}(\Delta^\zeta_{\Ring'}(\lambda), \Delta_{\Ring'}(\mu))\neq 0$ for some $i=0,1$. 
Then for all $\nu$ in $\Lambda$ we have $\chi_{\lambda+\rho}(\sum_{w\in W}K_{2w\nu})-\chi_{\mu+\rho}(\sum_{w\in W} K_{2w\nu})\in \mathfrak{m}'$.
\end{Lem}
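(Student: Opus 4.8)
The statement asserts that a nonzero $\mathrm{Hom}$ or $\mathrm{Ext}^1$ between Verma modules forces equality of central characters, where the relevant central elements are the $W$-symmetrizations $\sum_{w\in W}K_{2w\nu}$. The plan is to deduce this from the action of the Harish-Chandra center $\Zcal_{\Sring}$ described in Lemma~\ref{Lem:HC_center_quantum}, together with the fact that any extension between modules in $\OCat_{\Ring'}$ is itself an object of $\OCat_{\Ring'}$ and hence is acted on by a single central character in a suitable sense.

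First I would recall that $\OCat^\zeta_{\Ring'}$ is obtained from $\OCat^\zeta_\Ring$ by base change, so $U^{DK}_{\Sring}$ (and in particular its center $\Zcal_{\Sring}$) acts on every object; after base change along $\C[v^{\pm1}]\to \Ring\to \Ring'$ we get an action of $\Zcal_{\Ring'}:=\Ring'\otimes_{\Sring}\Zcal_{\Sring}$. By Lemma~\ref{Lem:HC_center_quantum} the homomorphism $\pi:\Zcal_{\Sring}\hookrightarrow \gamma_\rho((U^{0,ev}_\Sring)^W)$ is an isomorphism onto its image, and an element $z\in\Zcal_{\Sring}$ acts on $\Delta^\zeta_\Ring(\lambda)$ by the scalar $\chi_\lambda\circ\pi(z)$; by Example~\ref{Ex:Wh_Verma}, writing things through $\gamma_\rho$, this is $\chi_{\lambda+\rho}$ applied to the corresponding $W$-invariant element of $U^{0,ev}$. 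In particular the element $\sum_{w\in W}K_{2w\nu}$ lies in $(U^{0,ev}_\Sring)^W$, so (up to the invertible twist $\gamma_\rho$, which only rescales by a fixed power of $v$ and does not affect whether a difference lies in $\mathfrak m'$ — in fact one absorbs it by passing to $\chi_{\lambda+\rho}$) there is a well-defined central element acting on $\Delta_{\Ring'}(\lambda)$ by $\chi_{\lambda+\rho}(\sum_w K_{2w\nu})\in\Ring'$ and on $\Delta_{\Ring'}(\mu)$ by $\chi_{\mu+\rho}(\sum_w K_{2w\nu})$.

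Next, for the $\mathrm{Hom}$ case: any $U^{DK}_{\Ring'}$-linear map intertwines the central actions, so if the two scalars differ by a unit of $\Ring'$ (equivalently, their difference is not in $\mathfrak m'$) then the map is zero, because after reducing mod $\mathfrak m'$ it intertwines two distinct central characters of the local ring $\kf'$. For the $\mathrm{Ext}^1$ case I would argue that a nonzero class gives a non-split short exact sequence $0\to\Delta_{\Ring'}(\mu)\to E\to\Delta_{\Ring'}(\lambda)\to 0$ in $\OCat_{\Ring'}$; the central element $z=\sum_w K_{2w\nu}$ acts on $E$ as an endomorphism commuting with everything, and the induced map on the sub/quotient are the two scalars. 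If the two scalars are distinct modulo $\mathfrak m'$, then working modulo $\mathfrak m'$ the operator $z-c$ (for $c$ the scalar on the sub) is invertible on the quotient, which lets one split the reduced sequence, and then a standard Nakayama/completeness argument (or: the generalized eigenspace decomposition for $z$ over the local ring, since the two eigenvalues are coprime in $\Ring'$) splits the original sequence over $\Ring'$ — contradicting nonvanishing of $\mathrm{Ext}^1$. Hence the difference must lie in $\mathfrak m'$, as claimed. The only mildly delicate point (``the main obstacle'') is bookkeeping the $\gamma_\rho$-twist and the passage $\chi_\lambda\circ\gamma_\rho=\chi_{\lambda+\rho}$ so that the precise elements named in the statement, $\sum_{w\in W}K_{2w\nu}$, are exactly the ones acting via $\chi_{\lambda+\rho}$; this is handled exactly as in Example~\ref{Ex:Wh_Verma}, noting $(\rho,2\nu)\in\Z$ so the formula (\ref{eq:chilambda_def}) makes sense for $\lambda+\rho$ and $2\nu\in 2\Lambda$. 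Everything else is the routine ``distinct central characters kill Homs and Ext$^1$'' argument over a local base.
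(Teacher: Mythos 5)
Your proof is correct and supplies exactly the ``standard'' argument the paper invokes without writing down: the elements $\gamma_\rho\bigl(\sum_{w\in W}K_{2w\nu}\bigr)$ lie in the image of the Harish--Chandra center by Lemma~\ref{Lem:HC_center_quantum}, act on $\Delta^\zeta_{\Ring'}(\lambda)$ and $\Delta^\zeta_{\Ring'}(\mu)$ by $\chi_{\lambda+\rho}$ and $\chi_{\mu+\rho}$ respectively (the $\gamma_\rho$ bookkeeping being handled just as in Example~\ref{Ex:Wh_Verma}), and over the local ring $\Ring'$ a unit difference of central scalars annihilates $\Hom$ and splits any extension via the map $(z-c_\mu)$ factoring through the quotient. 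Your parenthetical about the generalized eigenspace/coprimality argument is the cleanest version: one does not even need to reduce modulo $\mathfrak m'$ first, since $c_\lambda-c_\mu$ being a unit immediately yields a section $M\to E$.
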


This lemma can be interpreted as follows. Let $T_{ev}$ denote the torus $\operatorname{Spec}(\operatorname{Span}(K_{2\nu}|\nu\in \Lambda))$.
For $\lambda\in\Lambda_0$, define a $\kf'$-point $p_\lambda$ in $T_{ev}$ by sending $K_{2\nu}$ to the image of $\chi_{\lambda+\rho}(K_{2\nu})$ in 
$\kf'$. The (usual, not shifted) action of $W$ on $2\Lambda$ gives rise to an action of $W$ on $T_{ev}$. The condition of the lemma means that 
$p_\lambda$ and $p_\mu$ lie in the same $W$-orbit. 

We still need a more convenient equivalent formulation. For this, we define a $\kf'$-point $\hat{p}_\lambda$ of $\h^*$. Let $a\in \{0,1,\ldots,d-1\}$ be such that $\epsilon=\exp(2\pi\sqrt{-1}a/d)$. Define the lattice $\Lambda_0^\vee\subset \h^*$ as $\{\lambda\in \h^*| (\lambda,\Lambda)\subset \Z\}$, note that $\Lambda_0\subset \Lambda_0^\vee$. 
Then define $\hat{p}_\lambda$ by
\begin{equation}\label{eq:hat_p_point}
\hat{p}_\lambda:=\frac{a+\hbar}{d}(\lambda+\rho)+\sum_{i\in I}\iota(\varpi_i)\alpha^\vee_i, 
\end{equation} 
where $\iota$ is defined in the beginning of Section \ref{SS_O_quant}.
Here we abuse the notation and write $\hbar,\iota(\varpi_i)$ for their images in $\kf'$. 

Consider the natural action of $W\ltimes (\frac{1}{2}\Lambda_0^\vee)$ on $\h^*(\kf')=\h^*\otimes_{\C}\kf'$.   The following is an immediate consequence of Lemma \ref{Lem:Ext_non_vanish_necessary}. 

\begin{Cor}\label{Cor:Ext_non_vanish_necessary}
Suppose $\Ext^i_{\OCat^{\zeta}_{\Ring'}}(\Delta_{\Ring'}(\lambda), \Delta_{\Ring'}(\mu))\neq 0$ for some $i=0,1$. 
Then $\hat{p}_\lambda,\hat{p}_\mu$ lie in the same $W\ltimes (\frac{1}{2}\Lambda_0^\vee)$-orbit.
\end{Cor}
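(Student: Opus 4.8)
The plan is to derive Corollary~\ref{Cor:Ext_non_vanish_necessary} from Lemma~\ref{Lem:Ext_non_vanish_necessary} by a purely bookkeeping argument that translates the statement ``$p_\lambda$ and $p_\mu$ lie in the same $W$-orbit on $T_{ev}$'' into ``$\hat p_\lambda$ and $\hat p_\mu$ lie in the same $W\ltimes(\tfrac12\Lambda_0^\vee)$-orbit on $\h^*(\kf')$''. First I would spell out the map $\h^*(\kf')\to T_{ev}(\kf')$ given by $\nu\mapsto\big(K_{2\mu}\mapsto \exp(2\pi\sqrt{-1}\langle 2\mu,\nu\rangle)\big)$, i.e.\ exponentiation against the pairing with $2\Lambda$; this identifies $T_{ev}(\kf')$ with $\h^*(\kf')$ modulo the kernel of exponentiation, which is exactly $\tfrac12\Lambda_0^\vee$ (since $K_{2\mu}$ sees $\nu$ only through $\langle 2\mu,\nu\rangle\bmod\Z$, i.e.\ $\nu\bmod\{\nu:\langle2\mu,\nu\rangle\in\Z\ \forall\mu\}=\tfrac12\Lambda_0^\vee$). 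This exponential map is $W$-equivariant for the usual (unshifted) actions, so equality of $W$-orbits downstairs is equivalent to $W\ltimes(\tfrac12\Lambda_0^\vee)$-equivalence upstairs.

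Next I would check that the point $\hat p_\lambda$ defined by~\eqref{eq:hat_p_point} maps under this exponential map to $p_\lambda$. This is the one computation that has to be done: from~\eqref{eq:chilambda_def} with $\zeta=1$ and $v\mapsto q=\epsilon e^{2\pi\sqrt{-1}\hbar/d}=\exp(2\pi\sqrt{-1}(a+\hbar)/d)$, one has $\chi_{\lambda+\rho}(K_{2\nu})=q^{(\lambda+\rho,2\nu)}e^{2\pi\sqrt{-1}\iota(2\nu)}$ (the $\zeta$ factor is trivial, and $(\lambda+\rho,2\nu)\in\Z$ so the fractional power of $\epsilon$ is unambiguous). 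Writing this out, $\chi_{\lambda+\rho}(K_{2\nu})=\exp\!\big(2\pi\sqrt{-1}\big[\tfrac{a+\hbar}{d}(\lambda+\rho,2\nu)+(\iota^{-1}(\iota),2\nu)\big]\big)$, and I would match the bracket against $\langle 2\nu,\hat p_\lambda\rangle$ with $\hat p_\lambda=\tfrac{a+\hbar}{d}(\lambda+\rho)+\sum_i\iota(\varpi_i)\alpha_i^\vee$, using $\langle\alpha_i^\vee,2\nu\rangle$ versus the form $(\,,\,)$ normalization and $(\varpi_j,\alpha_i)=\delta_{ij}d_i$; here one has to be a little careful about whether $\iota(\varpi_i)\alpha_i^\vee$ contributes $\iota(\varpi_i)\langle\alpha_i^\vee,2\nu\rangle=2\iota(\varpi_i)(\varpi_i^\vee\text{-type pairing})$ and that this reassembles into $\iota(\nu)$-type terms — essentially the identity $\sum_i\varpi_i^{\phantom\vee}\langle\alpha_i^\vee,\cdot\rangle=\mathrm{id}$ on $\h^*$. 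Once the bracket is seen to be $\langle 2\nu,\hat p_\lambda\rangle$ up to an element of $\tfrac12\Lambda_0^\vee$'s annihilator, we get $\exp(\hat p_\lambda)=p_\lambda$ in $T_{ev}(\kf')$.

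With these two pieces in place the corollary is immediate: by Lemma~\ref{Lem:Ext_non_vanish_necessary}, nonvanishing of $\Ext^i$ forces $p_\lambda$ and $p_\mu$ to lie in the same $W$-orbit in $T_{ev}(\kf')$; pulling back along the $W$-equivariant surjection $\h^*(\kf')\twoheadrightarrow T_{ev}(\kf')$ with kernel $\tfrac12\Lambda_0^\vee$ (and using that $W$ normalizes $\tfrac12\Lambda_0^\vee$) shows $\hat p_\lambda$ and $\hat p_\mu$ lie in the same $W\ltimes(\tfrac12\Lambda_0^\vee)$-orbit. I would also note that Lemma~\ref{Lem:Ext_non_vanish_necessary} itself is the only input that requires representation theory — it follows by acting by the Harish-Chandra center, whose image under $\pi$ is $\gamma_\rho((U^{0,ev})^W)$ by Lemma~\ref{Lem:HC_center_quantum}, so an element $z$ of the center acts on $\Delta_{\Ring'}(\lambda)$ by $\chi_{\lambda}\circ\gamma_\rho(z)=\chi_{\lambda+\rho}(z)$ (cf.\ Example~\ref{Ex:Wh_Verma}), and a nonzero map or extension between two Vermas forces the corresponding scalars to agree in $\kf'$, i.e.\ in $\Ring'/\mathfrak m'$, which is precisely the stated congruence.

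I expect the main obstacle to be the normalization bookkeeping in the second paragraph: keeping straight the factor of $2$ between $\Lambda$ and $2\Lambda$, the relation between the form $(\,,\,)$ (used to define $\iota$ and to evaluate $K_\nu$) and the coroot pairing $\langle\,,\,\rangle$ (used in~\eqref{eq:hat_p_point}), and the fact that $\hat p_\lambda$ is only well-defined up to $\tfrac12\Lambda_0^\vee$ — which is exactly the ambiguity that the target of the corollary quotients out, so nothing is lost, but it must be tracked carefully. Everything else is formal.
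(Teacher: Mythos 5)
Your proposal is correct and unwinds what the paper presents as ``an immediate consequence'' of Lemma~\ref{Lem:Ext_non_vanish_necessary}: the key verification $\exp(\hat p_\lambda)=p_\lambda$ reduces, via $(\hat p_\lambda,2\nu)=\tfrac{a+\hbar}{d}(\lambda+\rho,2\nu)+\sum_i\iota(\varpi_i)(\alpha_i^\vee,2\nu)$, to the identity $\nu=\sum_i\langle\nu,\alpha_i^\vee\rangle\varpi_i$ together with $\C$-linearity of $\iota$, exactly as you indicate. One small cleanup: the expression ``$(\iota^{-1}(\iota),2\nu)$'' in your second paragraph should read $\iota(2\nu)$, and $\exp$ need not actually be surjective onto $T_{ev}(\kf')$ --- but you only need $W$-equivariance and that the kernel is $\tfrac12\Lambda_0^\vee$ on the points $p_\lambda,p_\mu$ in question, which gives the one implication the corollary asserts.
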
  

Explicitly, the condition of Corollary \ref{Cor:Ext_non_vanish_necessary} means that there is $w\in W$ such that 
\begin{equation}\label{eq:Ext_non_vanish_nec_explicit}
d^{-1}\hbar(w(\lambda+\rho)-(\mu+\rho))+\sum_{i\in I}\iota(\varpi_i)(w\alpha^\vee_i-\alpha^\vee_i)\in  -\frac{a}{d}(w(\lambda+\rho)-(\mu+\rho))+ \frac{1}{2}\Lambda_0^\vee.  
\end{equation}

\subsubsection{Main result}
Here is the main result of this section. We use the notation introduced in Section \ref{SSS_centr_char}. 

\begin{Prop}\label{Prop:quantum_stand_extension}
Let $\p$ be a height $1$ prime ideal in $\Ring$. Then the following two conditions are equivalent:
\begin{enumerate}
\item $\Ext^1_{\OCat_{\Ring}}(\Delta_{\Ring}(\lambda), \Delta_{\Ring}(\mu))_\p\neq 0$,
\item $\p=(\alpha^\vee-k\hbar)$ and $\mu=\lambda-((\lambda+\rho,\alpha^\vee)+dk)\alpha$,
where $k\in \Z$ and $\alpha$ is a finite root. 
\end{enumerate}
Moreover, if this condition holds then the Ext in (1) is isomorphic to the residue field of $\Ring_\p$
(as an $\Ring_\p$-module).
\end{Prop}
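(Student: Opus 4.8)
The strategy is to compute $\operatorname{Ext}^1$ between Verma modules in the deformed quantum category $\OCat$ by the same device that Lemma \ref{Lem:hw_O_quantum} already uses for the highest weight axioms: realize it as the cohomology of the ``Whittaker-like'' complex arising from the triangular decomposition (\ref{eq:triang_mix}). Concretely, $\Delta_\Ring(\lambda)$ is free of rank one over $U^-_v\otimes_{\C[v^{\pm1}]}\Ring$, so $\operatorname{Hom}_{\OCat_\Ring}(\Delta_\Ring(\lambda),M)\cong M_\lambda^{\,\mathfrak{n}^-}$ and $\operatorname{Ext}^i_{\OCat_\Ring}(\Delta_\Ring(\lambda),M)$ is the $i$-th (relative) Lie algebra cohomology of $\underline{U}^-_v$ with coefficients in the $\lambda$-weight space $M_\lambda$. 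Applying this to $M=\Delta_\Ring(\mu)$, one gets $\operatorname{Ext}^1_{\OCat_\Ring}(\Delta_\Ring(\lambda),\Delta_\Ring(\mu))\cong H^1$ of an explicit Koszul-type complex built from the PBW generators $F_{\alpha_j}$ of $\underline{U}^-_v$ and the weight space $\Delta_\Ring(\mu)_\lambda$. Before any such computation, Corollary \ref{Cor:Ext_non_vanish_necessary} lets us discard all pairs $(\lambda,\mu)$ whose points $\hat p_\lambda,\hat p_\mu$ lie in distinct $W\ltimes(\tfrac12\Lambda^\vee_0)$-orbits; so throughout we may assume (\ref{eq:Ext_non_vanish_nec_explicit}) holds, which, once we localize at a \emph{height one} prime $\p$, forces the relation between $\lambda$ and $\mu$ to be of the very constrained shape in (2) — a single reflection $s_\alpha t_{k\alpha}\in W^a$ in a single reflection hyperplane, which is exactly the locus $\alpha^\vee-k\hbar$ from Section \ref{SSS_Hecke_setting}.

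The first main step is the $\Rightarrow$ direction together with the precise shape of $\p$. After localizing at a height-one prime $\p$, the ring $\Ring_\p$ is a DVR; the deformed Verma modules are $\Ring$-flat, so all the Hom and Ext modules are torsion-free, and (as in the proof of Lemma \ref{Lem:RS_0_faith} or in Section \ref{SSS_non_vanish_Ext}) it suffices to detect non-vanishing after passing to $\kf(\p)$. Over $\kf(\p)$ the central character condition of Corollary \ref{Cor:Ext_non_vanish_necessary} becomes a $\C$-linear condition on $\hbar$; combined with the requirement that $\p$ be height one (hence that $\hbar$ not be forced to lie in $\C$), one deduces that $\mu-\lambda$ is a positive multiple of a single finite root $\alpha$ and that $\p$ is the prime generated by the corresponding affine reflection hyperplane $\alpha^\vee-k\hbar$, with $k$ and the multiplicity determined as in (2). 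This is the combinatorial book-keeping part: it mirrors the finite-dimensional linkage principle, but one has to be careful that the ``$\tfrac12\Lambda^\vee_0$'' and the odd order $d$ enter correctly through formula (\ref{eq:chilambda_def}) and (\ref{eq:hat_p_point}).

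The second main step is the $\Leftarrow$ direction and the identification of the Ext as the residue field of $\Ring_\p$. Here I would follow the ``reduction to rank one'' argument that already appears in the proof of Proposition \ref{Prop:Ext1_stand_Ost} (Section \ref{SSS_non_vanish_Ext}): using the braid group action on $\UU_v$ (and hence on the category $\OCat_\Ring$, via the translation functors $\bullet * J_\lambda$ — or rather their quantum-group incarnation) one conjugates $\alpha$ to a simple root, at the cost of twisting the $\Ring$-action by the corresponding element of $W^a$, which only permutes the reflection hyperplanes. Once $\alpha=\alpha_i$ is simple, $\operatorname{Ext}^1_{\OCat_\Ring}(\Delta_\Ring(\lambda),\Delta_\Ring(\mu))$ reduces to a computation inside the rank-one quantum group $U_v^{mix}(\mathfrak{sl}_2)$ (or rather its $\mathfrak{sl}_2$-triple), where the relevant weight space $\Delta_\Ring(\mu)_\lambda$ is a free rank-one $\Ring$-module and the Koszul differential is, up to a unit, multiplication by the quantum integer $[(\lambda+\rho,\alpha^\vee)+dk]_q$; over $\Ring$ this element generates exactly the ideal $(\alpha^\vee-k\hbar)=\p$, so $H^1$ of the complex is $\Ring/\p$, whose localization at $\p$ is the residue field of $\Ring_\p$.

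The step I expect to be the main obstacle is making the rank-one reduction fully rigorous in the \emph{deformed} setting: one must check that the braid-group twists are compatible with the $\Ring$-structure (the completion at $0$ of $S(\hat\h)$), that they indeed send $\Delta_\Ring(\lambda)$ to $\Delta_\Ring$ of the twisted weight up to the expected $\Ring$-semilinear identification, and that localizing at $\p$ before or after the twist gives the same module — exactly the kind of bookkeeping done in Steps 1--3 of the proof of (*) in Section \ref{SSS_non_vanish_Ext}, but now on the quantum side where Sevostyanov's modified generators and the explicit formula (\ref{eq:chilambda_def}) must be tracked through. A secondary technical point is computing the quantum integer appearing as the Koszul differential with the correct normalization (the factor $d$ coming from $q=\epsilon e^{2\pi\sqrt{-1}\hbar/d}$, so that $[m]_q$ with $m=(\lambda+\rho,\alpha^\vee)+dk$ vanishes precisely on the hyperplane $\alpha^\vee=k\hbar$) and verifying it generates $\p$ and not a proper multiple of it; this is a direct check but the place where an off-by-$d$ error would be easy to make.
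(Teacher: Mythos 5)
Your outline contains a genuine gap and one significant inaccuracy that undermines the argument as written.

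\textbf{The imaginary-root case is neither excluded nor addressed.} You assert that after localizing at a height-one prime $\p$, the central-character constraint (\ref{eq:Ext_non_vanish_nec_explicit}) ``forces the relation between $\lambda$ and $\mu$ to be of the very constrained shape in (2).'' That is not true: Lemma \ref{Lem:nonvanishing_necessary_main} shows that the constraint is satisfied in a second case, $\p=(\hbar)$, $w=1$, $\lambda-\mu\in d\Lambda_0$. Since $\Ring=\C[[\h^*,\hbar]]$, the ideal $(\hbar)$ is a perfectly good height-one prime, so ``height one'' alone does not rule out this case. The paper has to do real work — the subsection ``Affine root hyperplane'' — to show that this case produces no nonzero $\Ext^1$; the argument there uses the $\Lambda_0$-grading on the fibers over the Azumaya locus and the faithfulness of the fiber functor on standardly filtered objects, and it is not a formal consequence of anything you cite. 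As written, your proof of the implication $(1)\Rightarrow(2)$ is missing exactly this step.

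\textbf{The rank-one reduction you propose is a different (and riskier) route.} The paper's rank-one reduction is performed entirely on the Hecke-category side (Proposition \ref{Prop:Ext1_stand_Ost}, Steps 2--3, via convolution with $\Delta^-_\Ring(s)$ and $\nabla^-_\Ring(s)$, which are isomorphisms away from $(\alpha_s)$). On the quantum side the paper does not conjugate by braid-group elements at all: instead it uses the Whittaker functor $\Wh$, shows (Corollary~\ref{Cor:Wh_stand_image}) that $\Wh$ is fully faithful on standardly filtered objects and sends $\Delta_\Ring(x^{-1}\cdot\lambda^\circ)$ to the graph bimodule $\Ring_x$, and then reads off the localized $\Ext^1$ from Remark~\ref{Rem:Ext1_stand_Ost_sing} about $\Ring^J$-$\Ring$-bimodules. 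Your alternative — apply braid automorphisms $T_w$ of $\UU_v$ to reduce $\alpha$ to a simple root inside $\OCat_\Ring$ — is not a step the paper takes and is genuinely problematic: the braid automorphisms of the full quantum group do not preserve the mixed form $U^{mix}_v$, nor do they send highest-weight modules to highest-weight modules, so they do not induce self-equivalences of $\OCat_\Ring$ compatible with the $\Ring$-structure in the way your reduction requires. You do flag this as ``the main obstacle,'' but it is more than a bookkeeping issue; as far as I can see it would require establishing a statement that is not in the paper and is not obviously true.

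\textbf{Secondary points.} Your plan to detect non-vanishing after base change to $\kf(\p)$ is sound in spirit but needs the generic semisimplicity of $\OCat_\F$ (Corollary~\ref{Cor:generic_semisimple}) to give the precise equivalence of Corollary~\ref{Cor:Ext_Hom_relation}; that is a cheap step but worth stating. The Koszul/relative-cohomology realization of $\Ext^\bullet$ from the triangular decomposition is a reasonable heuristic, but it is not what the paper does and it is not automatic for $\underline{U}^-_v$, which is not a universal enveloping algebra; you would need to construct and justify the resolution you are implicitly using. Finally, the existence of the nonzero homomorphism in case (b) is handled in the paper not by a direct quantum-integer computation but by reduction (Lemma~\ref{Lem:homom_nonvanish}) to a Zariski-dense family of specializations where the classical Verma embedding theorem and its quantum analogue \cite[Theorem 4.7]{EK} apply; your direct computation of the Koszul differential as a quantum integer, while plausible for $\mathfrak{sl}_2$, would still require the problematic rank-one reduction described above.

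In short: the ``$\Rightarrow$'' direction has a concrete missing case that your argument wrongly claims is excluded, and the ``$\Leftarrow$'' direction and the identification of the Ext as the residue field rest on a quantum-side braid reduction that the paper deliberately avoids and that I do not believe can be made to work without substantial new arguments.
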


\begin{Rem}\label{Rem:Wa_lattice_action}
Define the action of $W^a$ on $\Lambda_0$ so that $W$ acts by the $\cdot$-action, while
$t_\lambda\in \Lambda_0\subset W^a$ acts by $t_\lambda \mu=\mu+d\lambda$. Then $\mu$
is obtained from $\lambda$ by the reflection about the hyperplane given by $\alpha^\vee-k\hbar$,
compare to Section \ref{SSS_Hecke_setting}. 
\end{Rem}

\subsubsection{Necessary conditions}
The next lemma gives a necessary condition for (1) of Proposition \ref{Prop:quantum_stand_extension}. 

\begin{Lem}\label{Lem:nonvanishing_necessary_main}
Let $\Ring'=\kf'$ be  $\operatorname{Frac}(\Ring/\mathfrak{p})$ for a height at most $1$ prime ideal $\p\subset\Ring$. Suppose (\ref{eq:Ext_non_vanish_nec_explicit}) holds for 
some $\lambda,\mu, w$. Then one of 
the following conditions holds: 
\begin{itemize}
\item[(a)] $\p=(\hbar),w=1,$ and $\lambda-\mu\in d\Lambda_0$ or
\item[(b)] $w=s_\alpha$, and (2) of Proposition \ref{Prop:quantum_stand_extension} holds. 
\end{itemize} 
\end{Lem}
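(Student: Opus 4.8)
The strategy is to analyze equation \eqref{eq:Ext_non_vanish_nec_explicit} directly as a condition in the $\kf'$-vector space $\h^*(\kf')$, treating the cases $\p=(\hbar)$ and $\p\ne(\hbar)$ (so $\hbar$ is invertible in $\kf'$) separately, and within the latter splitting further according to whether $w=1$ or $w\ne 1$. First I would rewrite \eqref{eq:Ext_non_vanish_nec_explicit} as the assertion that the vector
\[
\left(d^{-1}\hbar+\tfrac{a}{d}\right)\bigl(w(\lambda+\rho)-(\mu+\rho)\bigr)+\sum_{i\in I}\iota(\varpi_i)(w\alpha_i^\vee-\alpha_i^\vee)
\]
lies in $\tfrac12\Lambda_0^\vee$, i.e.\ pairs integrally (up to a factor $2$) against $\Lambda$. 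The key point is that $\hbar$ and the $\iota(\varpi_i)$ are algebraically independent transcendentals over $\C$ in $\Ring$, so after localizing at a height-$\leqslant 1$ prime $\p$ and passing to $\kf'=\Frac(\Ring/\p)$, the images of $\hbar,\iota(\varpi_1),\ldots,\iota(\varpi_n)$ in $\kf'$ satisfy at most one independent algebraic relation over $\C$ (the one cutting out $\p$, if $\p\ne 0$), and in fact the relevant relation must be \emph{linear} because $\Ring$ is a power series ring and $\p$ has height $\leqslant 1$ — more precisely the reflection hyperplanes of $W^a$ acting on $\hat\h^*$ are linear, cut out by $\alpha^\vee-k\hbar$, as recalled in Section \ref{SSS_Hecke_setting}.

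The main case analysis runs as follows. If $w=1$, then \eqref{eq:Ext_non_vanish_nec_explicit} reduces to $\bigl(d^{-1}\hbar+\tfrac ad\bigr)(\lambda-\mu)\in\tfrac12\Lambda_0^\vee$ with $\lambda-\mu\in\Lambda_0$; since $d^{-1}\hbar+a/d$ is not a root of unity times a rational unless $\hbar$ is a specific scalar, the only way this holds generically (i.e.\ over $\kf'$ with $\p$ of height $\leqslant 1$) is $\lambda=\mu$, or $\lambda-\mu\in d\Lambda_0$ forced together with $\hbar\mapsto 0$, i.e.\ $\p=(\hbar)$ — this gives case (a). If $w\ne 1$, I would argue that the ``transcendental part'' $d^{-1}\hbar\,(w(\lambda+\rho)-(\mu+\rho))$ must be cancelled: the term $\sum_i\iota(\varpi_i)(w\alpha_i^\vee-\alpha_i^\vee)$ supplies the $\iota(\varpi_i)$-linear terms and the right-hand side $\tfrac12\Lambda_0^\vee$ is constant, so matching coefficients of the independent transcendentals $\hbar$ and $\iota(\varpi_i)$ forces both $w(\lambda+\rho)=\mu+\rho$ as the $\hbar$-coefficient relation \emph{unless} a single linear relation among $\hbar,\iota(\varpi_i)$ is imposed by $\p$. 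That single relation, by the description of reflection hyperplanes, must be $\p=(\alpha^\vee-k\hbar)$ for a real root $\alpha^\vee-k\hbar$ of $W^a$; and then $w$ must be the reflection $s_\alpha$ in the corresponding finite root $\alpha$ (a longer Weyl element would impose more than one relation). Working out $\mu$ from $w=s_\alpha$ using the explicit formula $s_\alpha(\lambda+\rho)=(\lambda+\rho)-(\lambda+\rho,\alpha^\vee)\alpha$ together with the hyperplane relation $\alpha^\vee=k\hbar$ in $\kf'$ and reducing modulo $\tfrac12\Lambda_0^\vee$ yields $\mu=\lambda-\bigl((\lambda+\rho,\alpha^\vee)+dk\bigr)\alpha$, which is exactly the reflection of $\lambda$ about the affine hyperplane as in Remark \ref{Rem:Wa_lattice_action}. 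This is case (b).

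I expect the main obstacle to be the bookkeeping around the lattice $\tfrac12\Lambda_0^\vee$ versus $\Lambda_0^\vee$ and the exact role of the integer $a$ with $\epsilon=\exp(2\pi\sqrt{-1}a/d)$: one has to be careful that the ``constant part'' $\tfrac ad(w(\lambda+\rho)-(\mu+\rho))$ (coming from the $\epsilon$, not $\hbar$, direction) does not secretly impose an extra constraint that kills the non-vanishing, and conversely that it is consistent with the claimed $\mu$. Since $d$ is odd (and coprime to $3$ in type $G_2$), one has $d\Lambda_0\subset\tfrac12\Lambda_0^\vee$ behaving well with respect to the $W^a$-action, and I would use this to absorb the $\tfrac ad$-term into the lattice condition, reducing everything to the clean statement about the $\hbar$-direction hyperplane. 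A secondary subtlety is confirming that when $\p=(\hbar)$ we genuinely land in case (a) and not accidentally in a degenerate subcase of (b); this is handled by noting that $\hbar\mapsto 0$ makes $\alpha^\vee-k\hbar\mapsto\alpha^\vee$, which is not a prime of $\Ring$ cutting out a height-$1$ locus transverse in the needed way, so the two cases are genuinely disjoint. The final identification of the Ext-module with the residue field of $\Ring_\p$ — though stated in Proposition \ref{Prop:quantum_stand_extension} rather than in the lemma at hand — would follow the same pattern as Proposition \ref{Prop:Ext1_stand_Ost} on the Hecke side: a rank-one computation localized at $\p$, but that lies beyond the lemma we are asked to prove here.
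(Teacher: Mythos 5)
Your overall route — separate the ``constant'' and ``transcendental'' contributions to \eqref{eq:Ext_non_vanish_nec_explicit}, then argue that over $\kf'$ with $\p$ of height $\leqslant 1$ the transcendentals can satisfy at most one independent linear relation, hence $w$ must be a reflection and $\p$ must be the corresponding hyperplane ideal — is the same as the paper's, but two key steps are either wrong as stated or left implicit.

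First, your claim that the relation cutting out $\p$ ``must be linear because $\Ring$ is a power series ring and $\p$ has height $\leqslant 1$'' is false: a height-$1$ prime of $\C[[\hat{\h}^*]]$ is principal, but its generator can be any irreducible power series (e.g.\ $(\hbar^2-\iota(\varpi_1)^3)$). Linearity of the relevant relation is not a property of $\p$; it comes from the shape of the Ext-condition. The correct statement, which the paper uses, is that the \emph{linear} forms contained in $\p$ span a subspace of $\hat{\h}$ of dimension at most $1$ (if $\hat{\h}\cap\p$ had dimension $\geqslant 2$, it would generate an ideal of height $\geqslant 2$ inside $\p$). Condition \eqref{eq:Ext_non_vanish_nec_explicit} imposes a system of $\dim\h$ linear relations on $\hbar,\iota(\varpi_i)$ (one for each coordinate of the $\h^*$-valued identity), and the height-$\leqslant 1$ hypothesis forces all of those relations to be mutually proportional. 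From this proportionality — not from some assumed linearity of the generator of $\p$ — one deduces that $w(\lambda+\rho)-(\mu+\rho)$ and the $w\alpha_i^\vee-\alpha_i^\vee$ are pairwise proportional, hence $\operatorname{rk}(w-\operatorname{id})\leqslant 1$, hence $w=s_\alpha$, and then one reads off $\p=(\alpha^\vee-k\hbar)$ from the single surviving proportionality class. You gesture at this with ``a longer Weyl element would impose more than one relation,'' but the precise mechanism is this proportionality argument, which you should spell out; your appeal to ``the description of reflection hyperplanes'' is circular, since identifying $\p$ with a reflection hyperplane is what needs to be proved.

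Second, the decomposition of \eqref{eq:Ext_non_vanish_nec_explicit} into a linear part and a constant part is not just informal ``matching of coefficients'': it rests on the concrete observation that $\Ring/\p=\C\oplus\mathfrak{m}$ as a $\C$-vector space. The images of $\hbar$ and $\iota(\varpi_i)$ land in $\mathfrak{m}$, while $\frac{1}{2}\Lambda_0^\vee$ is a subset of $\h^*$ regarded via the constant summand $\C$. Hence \eqref{eq:Ext_non_vanish_nec_explicit} is \emph{equivalent} to the conjunction of \eqref{eq:Ext_non_vanish_nec_explicit1} (equality to zero of the $\mathfrak{m}$-part) and \eqref{eq:Ext_non_vanish_nec_explicit2} (lattice containment of the $\C$-part). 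Your discussion of how to ``absorb the $a/d$ term into the lattice condition'' is actually trying to say exactly this; state it as the $\C\oplus\mathfrak{m}$ splitting and the rest of the bookkeeping (using that $d$ is coprime to the index $[\frac{1}{2}\Lambda_0^\vee:\Lambda_0]$ to convert \eqref{eq:Ext_non_vanish_nec_explicit2} into $w(\lambda+\rho)-(\mu+\rho)\in d\Lambda_0$, and then $k\in\Z$) becomes routine.
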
 
\begin{proof}
Note that $\hbar,\iota(\varpi')$ lie in the maximal ideal, $\mathfrak{m}$, of $\Ring/\mathfrak{p}$.
So the left hand side of (\ref{eq:Ext_non_vanish_nec_explicit}) lies in $\mathfrak{m}\otimes_\C\h^*$,
while the right hand side is in $\h^*$. Since $\Ring/\mathfrak{p}=\C\oplus \mathfrak{m}$ we see that
(\ref{eq:Ext_non_vanish_nec_explicit}) is equivalent to the following two conditions
\begin{align}\label{eq:Ext_non_vanish_nec_explicit1}
& d^{-1}\hbar(w(\lambda+\rho)-(\mu+\rho))+\sum_{i\in I}\iota(\varpi_i)(w\alpha^\vee_i-\alpha^\vee_i)=0,\\\label{eq:Ext_non_vanish_nec_explicit2}
&\frac{a}{d}(w(\lambda+\rho)-(\mu+\rho))\in \frac{1}{2}\Lambda_0^\vee. 
\end{align}  

We first analyze (\ref{eq:Ext_non_vanish_nec_explicit2}). By our choice of $d$, it is coprime to the order of $(\frac{1}{2}\Lambda_0^\vee)/\Lambda$ (which is a power of $2$ if $\g$ has no summands of type $G_2$ and the product of $3$ and a power of $2$ otherwise).
So (\ref{eq:Ext_non_vanish_nec_explicit2}) is equivalent to $w(\lambda+\rho)-(\mu+\rho)\in d\Lambda_0$. 

Now we proceed to (\ref{eq:Ext_non_vanish_nec_explicit1}).  
First of all, notice that for $\p=\{0\}$, (\ref{eq:Ext_non_vanish_nec_explicit1}) implies that $w=1$ and $\lambda=\mu$:
this is because $\hbar$ and the elements $\iota(\varpi_i)$ are linearly independent over $\C$. So we only need to 
consider the case when $\mathfrak{p}$ has height $1$.

Assume that $w=1$. Since $\lambda\neq\mu$,
we get $\hbar=0$.  We conclude that option (a) holds.

Now consider the case when $w\neq 1$. Observe that if collections of complex numbers $(a^j,a^j_i),j=1,2$ are such that 
$a^j\hbar+\sum_{i\in I}a^j_i \iota(\varpi_i)=0$, then the collections $(a^1,a^1_i)$ and $(a^2,a^2_i)$
are proportional -- because $\p$ has height $1$. It then follows from (\ref{eq:Ext_non_vanish_nec_explicit1}) that
the vectors $w(\lambda+\rho)-(\mu+\rho),w\alpha_i^\vee-\alpha_i^\vee$ are proportional to each other.
Since $\alpha_i^\vee, i\in I$, span $\h^*$, we conclude that $\operatorname{rk}(w-\operatorname{id})=1$,
i.e., $w$ is a reflection, say $s_\alpha$.  So we see that there is  $k\in \C$ such that 
\begin{align*}
\lambda-\mu=((\lambda+\rho,\alpha^\vee)+dk)\alpha, k\hbar-\sum_{i\in I}(\alpha_i^\vee,\alpha^\vee)\iota(\varpi_i)=0.
\end{align*}
Note that $\sum_{i\in I}(\alpha_i^\vee,\alpha^\vee)\iota(\varpi_i)=\iota(\alpha^\vee)$, and so
$\p=(\alpha^\vee-k\hbar)$. Further,  $(\lambda+\rho,\alpha^\vee)$ is an integer and $\lambda-\mu$
is in the root lattice, so $dk$ must be an integer. So, if $w\neq 1$, then (\ref{eq:Ext_non_vanish_nec_explicit1})
holds if and only if $w=s_\alpha, \p=(\alpha^\vee-k\hbar)$ and $\mu=\lambda-((\lambda+\rho,\alpha^\vee)+k)\alpha$,
where $k\in d^{-1}\Z$.
Condition (\ref{eq:Ext_non_vanish_nec_explicit2}) now translates to $k\in \Z$.
\end{proof}

\begin{Cor}\label{Cor:generic_semisimple}
Let $\F:=\operatorname{Frac}(\Ring)$. Then $\OCat_\F$ is semisimple.  
\end{Cor}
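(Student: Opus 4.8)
The plan is to deduce semisimplicity of $\OCat_\F$ from the vanishing of $\Ext^1$'s between standard objects after base change to $\F=\Frac(\Ring)$, exactly parallel to how semisimplicity of $\OCat^-_\F$ was used in Lemma \ref{Lem:ss_base_change}. Since $\OCat_\F$ is a highest weight category with an interval finite poset (base change of $\OCat_\Ring$, see the Remark following Lemma \ref{Lem:hw_O_quantum} and Remark \ref{Rem:interval_finite_base_change}), it suffices to show that all standard objects $\Delta_\F(\lambda)$ are simple and that there are no nontrivial extensions between them; equivalently, that $\Ext^i_{\OCat_\F}(\Delta_\F(\lambda),\Delta_\F(\mu))=0$ for all $i\geqslant 1$ and all $\lambda,\mu\in\Lambda_0$. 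By the highest weight formalism (Remark \ref{Rem:coarsest_hw_order} and axiom (v)), it is in fact enough to control $\Ext^1$: if all $\Ext^1$'s between standards vanish, then every standard object is both projective and injective in each finite highest weight subquotient, forcing the subquotients---and hence $\OCat_\F$---to be split semisimple with the $\Delta_\F(\lambda)$ as the simple objects.

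First I would reduce the computation of $\Ext^i_{\OCat_\F}(\Delta_\F(\lambda),\Delta_\F(\mu))$ to a localization statement over $\Ring$. Since the Verma modules $\Delta_\Ring(\lambda)$ are free over $U^-_v\otimes_{\C[v^{\pm 1}]}\Ring$ and the construction of the projective objects $P_{\T_0,\Ring}(\lambda)$ in the proof of Lemma \ref{Lem:hw_O_quantum} produces resolutions by objects that remain flat and behave well under base change, the $\Ring$-modules $\Ext^i_{\OCat_\Ring}(\Delta_\Ring(\lambda),\Delta_\Ring(\mu))$ are finitely generated, and their base change to $\F$ computes $\Ext^i_{\OCat_\F}(\Delta_\F(\lambda),\Delta_\F(\mu))$. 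So the claim reduces to: these $\Ext$-modules are torsion over $\Ring$, i.e.\ vanish after inverting all of $\Ring\setminus\{0\}$. For $\Ext^1$ this is precisely the content of Proposition \ref{Prop:quantum_stand_extension} (and Corollary \ref{Cor:Ext_non_vanish_necessary}): $\Ext^1_{\OCat_\Ring}(\Delta_\Ring(\lambda),\Delta_\Ring(\mu))$ is supported on the union of the height-one primes $(\alpha^\vee-k\hbar)$, hence torsion, hence dies over $\F$. For higher $\Ext^i$ one argues by dimension shifting: using the exact sequences $0\to K_\Ring\to P_{\T_0,\Ring}(\lambda)\to\Delta_\Ring(\lambda)\to 0$ where $K_\Ring$ is itself standardly filtered, one reduces $\Ext^i$ for $i\geqslant 2$ between standards to $\Ext^1$ between standardly filtered objects, which is again controlled by Proposition \ref{Prop:quantum_stand_extension} after filtering; alternatively, one observes that over $\F$ the Verma modules become free over $U^-_v\otimes\F$ with $U^0_v$ acting by characters $\chi_\lambda$ that are pairwise distinct (by Corollary \ref{Cor:Ext_non_vanish_necessary}, $\chi_{\lambda+\rho}$ and $\chi_{\mu+\rho}$ lie in the same $W$-orbit on $T_{ev}$ only when $\lambda=\mu$, since $\hbar$ is generic over $\F$), so distinct standard objects lie in distinct blocks and a single standard object generates a block equivalent to $\F$-mod.

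The main obstacle is making the last point airtight: I need to know that over $\F$ the central character separation is total, i.e.\ that no two distinct $\lambda,\mu\in\Lambda_0$ give the same character of the Harish-Chandra center after base change to $\F$. This is where the genericity of $\hbar$ (equivalently, of $q=\epsilon e^{2\pi\sqrt{-1}\hbar/d}$ over $\C$) is essential and where I would invoke the analysis already carried out in the proof of Lemma \ref{Lem:nonvanishing_necessary_main}: for $\p=\{0\}$ equation (\ref{eq:Ext_non_vanish_nec_explicit1}) forces $w=1$ and $\lambda=\mu$, because $\hbar$ and the $\iota(\varpi_i)$ are algebraically independent over $\C$ in $\F$. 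Thus $\chi_{\lambda+\rho}\neq\chi_{\mu+\rho}$ over $\F$ whenever $\lambda\neq\mu$, so $\Delta_\F(\lambda)$ and $\Delta_\F(\mu)$ lie in different infinitesimal blocks; combined with $\End_{\OCat_\F}(\Delta_\F(\lambda))=\F$ (which follows from axiom (ii) of the highest weight structure on $\OCat_\F$) and the fact that a one-object highest weight category is semisimple, this gives that $\OCat_\F$ is semisimple. This finishes the proof.
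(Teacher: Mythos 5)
Your final argument — that over $\F$ the central character separation is total, so distinct Verma modules lie in distinct blocks and each block is equivalent to $\F\operatorname{-mod}$ — is exactly the paper's proof, which simply observes that $\OCat_\F$ is a highest weight category with no $\Hom$'s and $\Ext^1$'s between distinct standard objects and cites the case $\p=\{0\}$ of the analysis in Lemma \ref{Lem:nonvanishing_necessary_main}. Two caveats are worth flagging. First, your initial route via Proposition \ref{Prop:quantum_stand_extension} is circular: the proof of that proposition (through Corollaries \ref{Cor:Ext_Hom_relation} and \ref{Cor:Wh_stand_image} in Section \ref{SSS_inf_blocks_bimod}) relies on Corollary \ref{Cor:generic_semisimple}, so you cannot invoke it here; only the alternative argument you give at the end is admissible. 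Second, the dimension-shifting discussion of higher $\Ext^i$ is unnecessary: once $\Hom$ and $\Ext^1$ between distinct standards vanish, Corollary \ref{Cor:coideal_finite_projectives}(3) makes every $\Delta_\F(\lambda)$ projective, $\End(\Delta_\F(\lambda))=\F$ makes it indecomposable, and axiom (iv) forces every object to be a direct sum of such, so semisimplicity falls out of the highest weight formalism without further $\Ext$ computations.
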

\begin{proof}
This is because $\OCat_\F$ is a highest weight category with no $\Hom$'s and  $\Ext^1$'s between the standard objects. 
\end{proof}

Corollary \ref{Cor:generic_semisimple} implies the following claim that will be used later. 

\begin{Cor}\label{Cor:Ext_Hom_relation}
Let $\p$ be a height $1$ prime ideal in $\Ring$, and $\kf'$ be the residue field of $\Ring/\p$.
Then the following two conditions are equivalent:
\begin{itemize}
\item $\Ext^1_{\OCat_{\Ring}}(\Delta_{\Ring}(\lambda), \Delta_{\Ring}(\mu))_\p\neq 0$,
\item $\Hom_{\OCat_{\kf'}}(\Delta_{\kf'}(\lambda), \Delta_{\kf'}(\mu))\neq 0$.
\end{itemize} 
\end{Cor}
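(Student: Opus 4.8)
\textbf{Proof plan for Corollary \ref{Cor:Ext_Hom_relation}.}

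The plan is to prove both implications directly using the long exact sequence associated to the short exact sequence $0\to N_\Ring \xrightarrow{t} N_\Ring \to N_{\kf'}\to 0$ for an appropriate local ring, combined with the semisimplicity of $\OCat_\F$ from Corollary \ref{Cor:generic_semisimple} and the fact that the standard objects are $\Ring$-flat. First I would observe that, since the question is about the localization at $\p$, I may replace $\Ring$ by the discrete valuation ring $\Ring_\p$ (as $\p$ has height $1$ and $\Ring$ is regular, hence normal) with uniformizer $t$ and residue field $\kf'$; base change of $\OCat_\Ring$ to $\Ring_\p$ is again highest weight with standard objects $\Delta_{\Ring_\p}(\lambda)$ by the remark after Lemma \ref{Lem:hw_O_quantum}, and $\Delta_{\Ring_\p}(\lambda)$ is $\Ring_\p$-flat by axiom (i). Write $M_\Ring:=\Delta_{\Ring_\p}(\lambda)$, $N_\Ring:=\Delta_{\Ring_\p}(\mu)$.

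Applying $R\Hom_{\OCat_{\Ring_\p}}(M_\Ring,\bullet)$ to $0\to N_\Ring\xrightarrow{t} N_\Ring\to N_{\kf'}\to 0$ gives, in the relevant low degrees,
\begin{equation}\label{eq:les_corr}
0\to \Hom(M_\Ring,N_\Ring)\xrightarrow{t}\Hom(M_\Ring,N_\Ring)\to \Hom(M_{\kf'},N_{\kf'})\to \Ext^1(M_\Ring,N_\Ring)\xrightarrow{t}\Ext^1(M_\Ring,N_\Ring)\to\cdots
\end{equation}
using that $\kf'\otimes^L_{\Ring_\p}N_\Ring \cong N_{\kf'}$ (flatness) and the identification $\Hom_{\OCat_{\Ring_\p}}(M_\Ring,N_{\kf'})\cong \Hom_{\OCat_{\kf'}}(M_{\kf'},N_{\kf'})$ coming from the fact that $\kf'\otimes_{\Ring_\p}\bullet$ is left adjoint to the inclusion of the subcategory killed by $\p$. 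Now if $\Ext^1_{\OCat_\Ring}(\Delta_\Ring(\lambda),\Delta_\Ring(\mu))_\p\neq 0$, then since this is a finitely generated torsion module over the DVR $\Ring_\p$ (torsion because $\OCat_\F$ is semisimple by Corollary \ref{Cor:generic_semisimple}), multiplication by $t$ on it is not injective, so from \eqref{eq:les_corr} the connecting map $\Hom(M_{\kf'},N_{\kf'})\to \Ext^1(M_\Ring,N_\Ring)$ has nonzero image, hence $\Hom_{\OCat_{\kf'}}(\Delta_{\kf'}(\lambda),\Delta_{\kf'}(\mu))\neq 0$. Conversely, if $\Ext^1(M_\Ring,N_\Ring)_\p = 0$, then \eqref{eq:les_corr} forces $\Hom(M_\Ring,N_\Ring)\xrightarrow{t}\Hom(M_\Ring,N_\Ring)$ to be surjective; but $\Hom(M_\Ring,N_\Ring)$ is a finitely generated $\Ring_\p$-module which is moreover torsion (again by semisimplicity of $\OCat_\F$, since $M_\F,N_\F$ are non-isomorphic simples or the Hom is zero — more precisely $\Hom(M_\Ring,N_\Ring)$ injects into $\Hom(M_\F,N_\F)=0$ as $\Delta_\Ring(\lambda)$ is $\Ring$-flat, unless $\lambda=\mu$ in which case $\Ext^1$ between distinct standards is what we care about and the statement is about $\lambda\neq\mu$). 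By Nakayama, a surjective endomorphism given by multiplication by an element of the maximal ideal on a finitely generated module forces that module to be zero, so $\Hom(M_\Ring,N_\Ring)=0$, whence $\Hom(M_{\kf'},N_{\kf'})\hookrightarrow \Ext^1(M_\Ring,N_\Ring)_\p=0$.

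The main obstacle I anticipate is the bookkeeping around the two degenerate possibilities: the case $\lambda=\mu$ (where $\Hom(M_\Ring,N_\Ring)=\Ring_\p\neq 0$ and the argument above must be adjusted, but in that case $\mu=\lambda-((\lambda+\rho,\alpha^\vee)+dk)\alpha$ with a nonzero multiple of $\alpha$ cannot hold, so neither side of the corollary is really at issue), and making sure the identification $\Hom_{\OCat_{\Ring_\p}}(M_\Ring,N_{\kf'})\cong\Hom_{\OCat_{\kf'}}(M_{\kf'},N_{\kf'})$ is justified cleanly — this is the same adjunction used in Step 2 of Section \ref{SSS_Prop_new_t_hw_proof}. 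Everything else is a formal consequence of \eqref{eq:les_corr}, flatness of standards, finite generation of Hom- and Ext-modules over $\Ring$ (immediate from $\OCat_\Ring$ being a highest weight category with $\Ring$-finite Hom's), and the generic semisimplicity of Corollary \ref{Cor:generic_semisimple}.
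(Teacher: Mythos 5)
Your proof is correct, and since the paper gives no explicit argument for this corollary (it simply asserts that it follows from Corollary~\ref{Cor:generic_semisimple}), the long-exact-sequence-over-the-DVR argument you supply is exactly the natural way to fill in the omitted details. Localizing at $\p$ to reduce to a DVR, applying $R\Hom(\Delta_{\Ring_\p}(\lambda),\bullet)$ to $0\to\Delta_{\Ring_\p}(\mu)\xrightarrow{t}\Delta_{\Ring_\p}(\mu)\to\Delta_{\kf'}(\mu)\to 0$, identifying the mixed Hom via the adjunction $(\kf'\otimes_{\Ring_\p}\bullet)\dashv\iota$, and using that Ext groups are finitely generated $\Ring_\p$-modules and $\Ring_\p$-torsion (via Corollary~\ref{Cor:generic_semisimple}) is precisely the intended route. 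The observation that $\Hom_{\OCat_{\Ring_\p}}(\Delta_{\Ring_\p}(\lambda),\Delta_{\Ring_\p}(\mu))$ embeds into $\Hom_{\OCat_\F}(\Delta_\F(\lambda),\Delta_\F(\mu))=0$ for $\lambda\neq\mu$ (torsion-freeness via $\Ring_\p$-flatness of the standard) handles the converse direction cleanly; the Nakayama detour is unnecessary given that observation.

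One remark on the edge case you flag: you are right that the statement is literally false for $\lambda=\mu$ (both $\Hom_{\OCat_{\kf'}}(\Delta_{\kf'}(\lambda),\Delta_{\kf'}(\lambda))=\kf'$ and $\Ext^1_{\OCat_\Ring}(\Delta_\Ring(\lambda),\Delta_\Ring(\lambda))=0$), so the corollary must be read with the tacit assumption $\lambda\neq\mu$. However, your justification for dismissing this case is slightly off: you invoke condition (2) of Proposition~\ref{Prop:quantum_stand_extension}, but that proposition is what this corollary is meant to help prove, so the appeal is circular; moreover the coefficient $(\lambda+\rho,\alpha^\vee)+dk$ in condition (2) \emph{can} vanish, giving $\mu=\lambda$, so the proposition alone does not preclude this case. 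The correct remark is simply that in every application (e.g.\ Step 5 of the proof of Theorem~\ref{Thm:main_equivalence_deformed}, where $x<y$ forces distinct labels) the pair $(\lambda,\mu)$ is distinct, and the corollary should be understood under this standing hypothesis. This is a minor point and does not affect the validity of your main argument.
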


%
 
\subsubsection{Reduction to categories over $\C$}
To prove the equivalence of (1) and (2) in Proposition \ref{Prop:quantum_stand_extension} 
it is convenient to reduce the question to categories over $\C$. 

Assume that (a) or (b) of  Lemma \ref{Lem:nonvanishing_necessary_main} hold. Set $T:=\operatorname{Spec}(\operatorname{Span}_\C(K_\nu|\nu\in \Lambda))$. We are going to define a codimension $1$ affine subtorus $Y\subset T\times \C^\times$ (a translate of a subtorus). 

Suppose (a) holds. Then $Y:=T\times \{\epsilon\}$. 

Suppose (b) holds. We can view $\alpha^\vee$ as a homomorphism $T\rightarrow \C^\times$. Set $Y:=\{(t,z)| \alpha^\vee(t)=z^k\}$
(so, in this case we get a genuine subtorus).

Set $Y':=Y\cap \{(\underline{\zeta},\underline{q})| \underline{q}^{2d_i}\neq 1,\forall i\}$. 
Note that for $(\underline{\zeta},\underline{q})\in T\times \C^\times$ such that $\underline{q}^{2d_i}\neq 1$ 
we can define the category $\OCat^{\underline{\zeta}}_{\underline{q}}$. 

The main result of this part is the following lemma. 

\begin{Lem}\label{Lem:homom_nonvanish}
Let $Y^0\subset Y'$ be a Zariski dense subset. 
The following claims are equivalent:
\begin{enumerate}
\item $\Hom_{\OCat_{\kf'}}(\Delta_{\kf'}(\lambda), \Delta_{\kf'}(\mu))\neq 0$,
\item $\Hom_{\OCat^{\underline{\zeta}}_{\underline{q}}}(\Delta^{\underline{\zeta}}_{\underline{q}}(\lambda), 
\Delta^\zeta_{\underline{q}}(\mu))\neq 0$ for all but finitely many elements $(\underline{\zeta},\underline{q})$
of $Y^0$.
\end{enumerate}
\end{Lem}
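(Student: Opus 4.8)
The plan is to prove Lemma \ref{Lem:homom_nonvanish} by a standard ``spreading out / specialization'' argument, relating the Hom spaces over the local ring $\Ring/\p$ to Hom spaces over $\C$-points lying on $Y'$. First I would set up the right deformation ring. Both alternatives (a) and (b) of Lemma \ref{Lem:nonvanishing_necessary_main} describe $\p$ as a principal prime of $\Ring$ cut out by a specific linear function ($\hbar$ in case (a), $\alpha^\vee-k\hbar$ in case (b)), and in both cases the affine subtorus $Y\subset T\times\C^\times$ is precisely the ``integral'' analytic incarnation of the quotient $\Ring/\p$ under the exponential parametrization $(\lambda,\hbar)\mapsto(\underline{\zeta},\underline{q})$ from \eqref{eq:chilambda_def}. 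Concretely, $\operatorname{Spec}(\Ring/\p)$ is the formal neighborhood of $0$ in $\h^*\times\C$ intersected with the hyperplane defining $\p$, and under the exponential map this maps to the formal neighborhood of $(\text{the point determined by }\zeta=1,q=\epsilon)$ inside $Y$. So $\kf'=\operatorname{Frac}(\Ring/\p)$ is the function field of (the completion of) $Y$ at that point.

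Next I would form a global version of the category $\OCat$ over the coordinate ring $\C[Y']$: using exactly the construction of Lemma \ref{Lem:hw_O_quantum} and the following Remark (which asserts $\OCat^\zeta_R$ is highest weight over any Noetherian $\Ring$-algebra $R$ and is obtained by base change), one gets a highest weight category $\OCat_{\C[Y']}$ with standard objects $\Delta_{\C[Y']}(\lambda)$ that are free over $\C[Y']$ of the appropriate rank. The key point is that $\Hom_{\OCat_{\C[Y']}}(\Delta_{\C[Y']}(\lambda),\Delta_{\C[Y']}(\mu))$ is a finitely generated $\C[Y']$-module (this is built into the highest weight axioms, and concretely it is a $\C[Y']$-submodule of the free module $\Delta_{\C[Y']}(\mu)_\lambda$, so it is actually \emph{torsion-free}), and for any point $y\in Y'$ with residue field $\C$ there is a base-change map to $\Hom_{\OCat^{\underline\zeta}_{\underline q}}(\Delta^{\underline\zeta}_{\underline q}(\lambda),\Delta^\zeta_{\underline q}(\mu))$. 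Since standards are free, this base change map is an isomorphism onto a subspace in a uniform way; more precisely $\Hom_{\OCat_{\C[Y']}}(\Delta(\lambda),\Delta(\mu))$ is the space of $\C[Y']$-points of a closed subscheme of the affine space $\mathbb{A}(\Delta_{\C[Y']}(\mu)_\lambda)$ cut out by the linear equations ``kill all the $U^-$-generators,'' and specializing at $y$ gives exactly the fiber.

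Now the equivalence of (1) and (2) follows by the usual generic-flatness / upper-semicontinuity reasoning. Let $H:=\Hom_{\OCat_{\C[Y']}}(\Delta(\lambda),\Delta(\mu))$, a finitely generated torsion-free $\C[Y']$-module. On the dense open locus where $H$ is locally free its rank is constant; call it $r$. Base-changing $H$ to the function field of $Y'$ (equivalently, by density and the identification in the first paragraph, to $\kf'$) gives a space of dimension $r$, which establishes (1)$\iff r>0$. On the other hand, for $y$ in the dense locus of local freeness the fiber $H\otimes_{\C[Y']}\C(y)$ has dimension $r$, and since the base-change map $H\otimes\C(y)\hookrightarrow \Hom_{\OCat^{\underline\zeta}_{\underline q}}(\ldots)$ is injective (again because standards are $\C[Y']$-free, so Hom commutes with flat base change, hence is even an isomorphism here) we get that (2) holds for all $y$ in a dense open subset of $Y'$ — in particular for all but finitely many elements of the dense set $Y^0$, once we also throw away the proper closed ``non-free'' locus, whose complement in $Y^0$ is cofinite because $Y^0\subset Y'$ is dense and $Y'$ is irreducible of dimension $\geqslant 1$... wait — here I must be slightly careful: $Y^0$ dense in an irreducible curve/variety $Y'$ minus a proper closed subset is cofinite only when $\dim Y'=1$; in general ``all but finitely many of $Y^0$'' should be read together with Lemma \ref{Lem:nonvanishing_necessary_main}, which forces the relevant central-character locus to be small, so I would invoke that to reduce to the situation where the bad locus meets $Y^0$ in finitely many points. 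Conversely, if (2) holds for infinitely many $y\in Y^0$ then $H$ cannot be the zero module (a nonzero torsion-free module over a domain has nonvanishing generic fiber and also nonvanishing fiber at a dense open set, while the zero module has all fibers zero), so $r>0$, giving (1).

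The main obstacle, and the step I would spend the most care on, is the comparison between the formal/local picture over $\Ring/\p$ (where the argument in \S\ref{SSS_centr_char} and Corollary \ref{Cor:Ext_Hom_relation} naturally live) and the genuinely algebraic global family over $\C[Y']$: one must check that the exponential parametrization \eqref{eq:chilambda_def} identifies $\operatorname{Frac}(\Ring/\p)$ with (a completion of) the function field of $Y$ compatibly with the $U^0_v$-actions $\chi_\lambda$ — this is why the precise description of $Y$ in cases (a) and (b) matters, e.g.\ the relation $\sum_{i}(\alpha_i^\vee,\alpha^\vee)\iota(\varpi_i)=\iota(\alpha^\vee)$ used in the proof of Lemma \ref{Lem:nonvanishing_necessary_main} is exactly what makes the hyperplane $\alpha^\vee-k\hbar$ correspond to the subtorus $\{\alpha^\vee(t)=z^k\}$. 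Granting that identification (a short computation), everything else is the routine torsion-free-module-over-a-domain specialization argument sketched above.
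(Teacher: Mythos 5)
Your approach is essentially the one the paper takes: spread the Verma modules out over the coordinate ring $\C[Y']$, realize the Hom module as the kernel $\ker\Phi_R$ of a linear map between finite-rank free $\C[Y']$-modules (a torsion-free finitely generated module over a domain), and deduce the equivalence of (1) and (2) by specialization. The paper phrases this a little more carefully, and your write-up has two technical slips that are worth fixing.

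First, the claim that ``Hom commutes with flat base change, hence is even an isomorphism here'' is not a valid justification: the residue field $\C(y)$ of a closed point $y\in Y'$ is not flat over $\C[Y']$, so there is in general only a natural map $H\otimes_{\C[Y']}\C(y)\to\ker(\Phi_R\otimes\C(y))$ and it need be neither injective nor surjective. The correct way to get what you want is to use semicontinuity of the rank of $\Phi_R$: if $\ker\Phi_R\ne 0$ then the generic rank $s$ of $\Phi_R$ is strictly less than $\operatorname{rk}F^1_R$, and since rank can only drop under specialization one has $\dim\ker(\Phi_R\otimes\C(y))\ge \operatorname{rk}F^1_R-s>0$ for \emph{every} $y\in Y'$ — no exceptional set at all. (Alternatively, restrict to the dense open where $\operatorname{coker}\Phi_R$ is locally free; there the fiber of $H$ computes the kernel of the fiber of $\Phi_R$.)

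Second, your ``Conversely'' step conflates $H\otimes\C(y)$ with $\ker(\Phi_R\otimes\C(y))$: even if $H=0$ (i.e.\ $\Phi_R$ injective), the fiberwise kernel $\ker(\Phi_R\otimes\C(y))$ can be nonzero wherever $\operatorname{coker}\Phi_R$ has torsion, and when $\dim Y'>1$ that locus may be infinite, so ``nonzero for infinitely many $y\in Y^0$'' does not immediately give $H\ne 0$. What rescues the argument is that (2) asserts nonvanishing for \emph{all but finitely many} $y\in Y^0$, a Zariski-dense collection; the locus $\{\operatorname{rk}\Phi_R<\operatorname{rk}F^1_R\}$ is closed, so if it contains a dense set it is all of $Y'$, in particular it holds at the generic point, i.e.\ $\ker\Phi_R$ is non-torsion, and then (1) follows since $\kf'\supset\operatorname{Frac}(\C[Y'])$. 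The paper compresses this entire discussion into ``(1) is equivalent to $\ker\Phi_R$ not being torsion, and the equivalence of (1) and (2) is standard''; your detour about $\dim Y'=1$ versus higher dimension is not needed once the argument is set up via rank semicontinuity.
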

\begin{proof}
Let $R$ be a ring equipped with a homomorphism  $\eta: U^0_v=\C[T\times \C^\times]\rightarrow R$. Then we can consider 
the induced $U_v^{mix}\otimes_{\C[v^{\pm 1}]}R$-module $\Delta^\eta_R:=U_v^{mix}\otimes_{U_{v}^{mix,\geqslant 0}}R$,
it is naturally $\Lambda_0$-graded. For a given homomorphism $\eta$ and $\lambda\in \Lambda_0$ we can consider
a new homomorphism to $R$, it sends $v$ to $\eta(v)$ and  $K_\nu$ to $\eta(v)^{(\lambda,\nu)}\eta(K_\nu)$. 
We write $\Delta^\eta_R(\lambda)$ for $\Delta^{\eta_\lambda}_R$. 

Take $R=\C[Y']$ with a natural homomorphism $\eta:U^0_v\rightarrow \C[Y']$ and form modules $\Delta^\eta_R(\lambda),\Delta^\eta_R(\mu)$.
The space of $\Lambda$-graded $U_v^{mix}\otimes_{\C[v^{\pm 1}]}R$-linear maps 
$\Hom(\Delta^\eta_R(\lambda),\Delta^\eta_R(\mu))$ is identified with the $R$-module of solutions
of finitely many $R$-linear equations (given by $E_i^{(\ell)}$'s) 
in a finite rank $R$-module (the weight module in $U^-_v\otimes_{\C[v^{\pm 1}]}R$ of weight $\mu-\lambda$),
i.e, with $\ker\Phi_R$ for an $R$-linear map $\Phi_R$ between free finite rank $R$-modules $F^1_R\rightarrow F^2_R$.

Note that $\kf'$ is a faithfully flat $R$-module, so 
$\kf'\otimes_R M_R:=\Hom_{\OCat_{\kf'}}(\Delta_{\kf'}(\lambda), \Delta_{\kf'}(\mu))$. On the other hand,
$\Delta^{\underline{\zeta}}_{\underline{q}}(\lambda)=\C\otimes_R \Delta^\eta_R(\lambda)$, where the homomorphism 
$R=\C[Y']\rightarrow \C$ is given by evaluation at $(\underline{\zeta},\underline{q})\in Y'$. The space 
$\Hom_{\OCat^{\underline{\zeta}}_{\underline{q}}}(\Delta^{\underline{\zeta}}_{\underline{q}}(\lambda), 
\Delta^{\underline{\zeta}}_{\underline{q}}(\mu))$ is the kernel of $\C\otimes_R\Phi_R: \C\otimes_R F^1_R\rightarrow 
\C\otimes_R F^2_R$. 

So (1) is equivalent to the claim that $\ker\Phi_R$ is not torsion over $R$, and the equivalence of 
(1) and (2) is standard. 
\end{proof}

\subsubsection{Real root hyperplanes}\label{SSS:root_real}
Our goal here is to show that (2) of Lemma \ref{Lem:homom_nonvanish} holds in case (b) for a suitable Zariski dense 
subset $Y^0\subset Y'$.

Consider the locus $Y^0\subset Y'$
consisting of all $(\underline{\zeta},\underline{q})$ such that there is $\kappa\in \Lambda_0$ with 
$\underline{\zeta}(K_\nu)=\underline{q}^{(\kappa,\nu)}$ for all $\nu\in \Lambda$. The condition $(\underline{\zeta},\underline{q})\in Y$
is equivalent to $(\kappa,\alpha^\vee)=k$. The subset $Y^0$ is Zariski dense in $Y$ because $\underline{q}$
does not need to be  a root of unity. Set $\lambda':=\lambda+\kappa, \mu':=\mu+\kappa$. 
Observe that $s_\alpha\cdot \lambda'=\mu'$. Also observe that $\Delta^{\underline{\zeta}}_{\underline{q}}(\lambda)=\Delta_{\underline{q}}(\lambda'), 
\Delta^{\underline{\zeta}}_{\underline{q}}(\mu)=\Delta_{\underline{q}}(\mu')$. 
To establish (2) of Lemma \ref{Lem:homom_nonvanish} it is therefore enough to show that 
there is a nonzero homomorphism $\Delta_{\underline{q}}(\lambda')\rightarrow \Delta_{\underline{q}}(\mu')$
for a Weil generic element $\underline{q}$.
The existence of a homomorphism between Verma modules with this label in the usual category $\OCat$
is classical. The existence a homomorphism in $\OCat_{\underline{q}}$  then follows 
from \cite[Theorem 4.7]{EK}. 

So we see that indeed (2) (and hence (1)) of Lemma \ref{Lem:homom_nonvanish} holds in 
the setting of (b) of Lemma \ref{Lem:nonvanishing_necessary_main}. 

\subsubsection{Affine root hyperplane}
Here we show that, under condition (a) of Lemma \ref{Lem:nonvanishing_necessary_main}, 
(2) of Lemma \ref{Lem:homom_nonvanish} is not satisfied for $Y'=Y^0$.
Together with Section \ref{SSS:root_real} this will complete the proof of equivalence 
between (1) and (2) in Proposition \ref{Prop:quantum_stand_extension}. 

Let $\underline{\zeta}$ be the generic point of $\operatorname{Spec}(\operatorname{Span}(K_\nu| \nu\in \Lambda))$
and $\underline{\zeta}'$ and $\chi$ denote the images of $\underline{\zeta}$ in  
$\operatorname{Spec}(\Zcal^{\epsilon,0})$ and $\operatorname{Spec}(\Zcal^{HC}_\epsilon)$.
  
Consider the point $p=((1,\underline{\zeta}',1),\chi)\in \operatorname{Spec}(Z(U^{DK}_\epsilon))$.
We note that the fiber $M_p$ of $M\in \OCat^{\underline{\zeta}}_\epsilon$ is $\Lambda_0$-graded as well. 
For the same reason as in Step 5 of the proof of Proposition \ref{Prop:Wh_faithful}, 
the functor $M\mapsto M_p$ is faithful on standardly filtered objects, compare to 
Remark \ref{Rem:Wh_nonintegral}.
But $\Delta^{\underline{\zeta}}_\epsilon(\lambda),
\Delta^{\underline{\zeta}}_\epsilon(\mu)$ have different gradings, so  the fibers 
$\Delta^{\underline{\zeta}}_\epsilon(\lambda)_p,
\Delta^{\underline{\zeta}}_\epsilon(\mu)_p$ are isomorphic modules over the matrix algebra $\mathsf{u}_p$ that have different gradings as well. There can be no nonzero 
graded homomorphisms between such modules finishing the proof.

\subsubsection{Infinitesimal blocks and functors to bimodules}\label{SSS_inf_blocks_bimod}
We now proceed to computing the $\Ring_\p$-modules  $\Ext^1_{\OCat_{\Ring}}(\Delta_{\Ring}(\lambda), \Delta_{\Ring}(\mu))_\p$,
where $\p,\lambda,\mu$ satisfy (2) of Proposition \ref{Prop:quantum_stand_extension}.

We start by introducing the infinitesimal blocks of the category $\OCat_\Ring$. Recall that to $\lambda\in \Lambda_0$ and
a local $\Ring$-algebra $\Ring'$ with residue field $\kf'$ we assign $\hat{p}_\lambda\in \h^*\otimes_{\C}\kf'$ by 
(\ref{eq:hat_p_point}). We apply this construction to $\Ring':=\Ring$ (and hence $\kf'=\C$) getting
\begin{equation}\label{eq:hat_p_point_special}
\hat{p}_\lambda:=\frac{a}{d}(\lambda+\rho). 
\end{equation} 
Define the equivalence relation $\sim$ on $\Lambda_0$ if the following equivalent (by Section \ref{SSS_centr_char}) conditions hold:
\begin{itemize}
\item For any $z\in \Zcal^{HC}_\epsilon$, the scalars of the action of $z$ on $\Delta_\epsilon(\lambda), \Delta_\epsilon(\mu)$
coincide.
\item $\hat{p}_\lambda,\hat{p}_\mu$ lie in the same orbit for the action of $W\ltimes (\frac{1}{2}\Lambda_0^\vee)$ on $\h^*$.  
\end{itemize}

Note that the latter condition is equivalent to the following (compare to the proof of Lemma \ref{Lem:nonvanishing_necessary_main}) 
\begin{itemize}
\item $\lambda,\mu$ lie in the same orbit for the action of $W^a$ on $\Lambda_0$ (see Remark \ref{Rem:Wa_lattice_action}).
\end{itemize}

Let $\Xi$ denote an orbit of $W^a$  in $\Lambda_0$. 
Let $\OCat^{\Xi}_\Ring$ denote the Serre span 
of $\Delta_\Ring(\lambda)$ with $\lambda\in \Xi$ so that we have 
$\OCat_\Ring=\bigoplus \OCat_\Ring^{\Xi}$, where the sum is taken over all equivalence classes in 
$\Lambda_0$. The summands $\OCat_\Ring^{\Xi}$ are called {\it infinitesimal blocks}
(in fact, they are also blocks in the usual sense).  

We note that the action of $W^a$ on $\Lambda_0$ has fundamental domain 
$$\Lambda_0^{fund}:=\{\lambda\in \Lambda_0| (\lambda+\rho,\alpha_i^\vee)\leqslant 0, (\lambda+\rho,\alpha_0^\vee)\geqslant -d\},$$
here $\alpha_0^\vee$ is the maximal coroot. Let $\lambda^\circ$ denote the unique point in $\Lambda_0^{fund}\cap \Xi$. 
Let $W_\circ$ be the stabilizer of $\lambda^\circ$ in $W^a$.

Recall the homomorphism $\gamma_{\lambda+\rho}:\C[T_{ev}\times \C^\times]\rightarrow \Ring,\lambda\in \Lambda_0,$  defined 
by (\ref{eq:chilambda_def}). 

\begin{Lem}\label{Lem:center_action}
The following claims hold:
\begin{enumerate}
\item 
Let $x\in W^a$.
Then 
$\gamma_{x\cdot \lambda^\circ +\rho}=x\gamma_{\lambda^\circ+\rho}$. In particular, 
$\operatorname{im}\gamma_{\lambda^\circ+\rho}\in \Ring^{W_\circ}$.
\item The homomorphism $\gamma_{\lambda^\circ+\rho}:\Zcal_v\rightarrow \Ring^{W_\circ}$ (uniquely) extends to
an isomorphism $\Zcal_v^{\wedge_{z^\circ}}\xrightarrow{\sim} \Ring^{W_\circ}$.
\end{enumerate}
\end{Lem}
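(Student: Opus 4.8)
\textbf{Proof plan for Lemma \ref{Lem:center_action}.}

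The plan is to reduce both statements to the known description of the Harish-Chandra center from Section \ref{SSS_HC_center} and the explicit form of $\gamma_{\lambda+\rho}$, treating (1) and (2) in turn. For (1), I would first recall from Example \ref{Ex:Wh_Verma} and Lemma \ref{Lem:HC_center_quantum} that the central character by which $\Zcal_v\cong \gamma_\rho((U^{0,ev}_\Sring)^W)$ acts on $\Delta_\Ring(\lambda)$ is $\chi_{\lambda+\rho}$, and that $\gamma_{\lambda+\rho}$ can be read off formula (\ref{eq:chilambda_def}): on a basis element $\sum_{w\in W}K_{2w\nu}$ of $\Zcal_v$ it produces a function on $\operatorname{Spec}\Ring$ whose value depends on $\lambda$ only through the class of $\hat p_\lambda$ modulo $\Lambda_0^\vee$, equivariantly for the $W$-action on $\h^*$ coming from the action of $W^a$ on $\Ring$ (described in Section \ref{SSS_Hecke_setting}). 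Concretely, for $x=w t_\mu\in W^a$ one has $\hat p_{x\cdot\lambda^\circ}=x\hat p_{\lambda^\circ}$ in $\h^*\otimes\kf'$ after matching the $W^a$-action on $\Lambda_0$ of Remark \ref{Rem:Wa_lattice_action} with the $W\ltimes\tfrac12\Lambda_0^\vee$-action used in Section \ref{SSS_centr_char}; since $\Zcal_v$ is $W$-invariant and the translation part acts through $\tfrac12\Lambda_0^\vee$ (which acts trivially on $\gamma_?$ because $K_{2\nu}$ only sees $\nu\bmod\Lambda$), we get $\gamma_{x\cdot\lambda^\circ+\rho}=x\gamma_{\lambda^\circ+\rho}$ as homomorphisms $\Zcal_v\to\Ring$. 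In particular, taking $x\in W_\circ$ fixing $\lambda^\circ$, the image of $\gamma_{\lambda^\circ+\rho}$ lands in $\Ring^{W_\circ}$.

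For (2), the point is that $\gamma_{\lambda^\circ+\rho}:\Zcal_v\to\Ring^{W_\circ}$ completes, after completing the source at the maximal ideal $z^\circ$ corresponding to the central character $\chi_{\lambda^\circ+\rho}$, to an isomorphism onto $\Ring^{W_\circ}$. I would argue this by passing to associated graded / tangent spaces: $\Ring=\C[\hat\h^*]^{\wedge_0}$ is the completion of a polynomial ring, $\Ring^{W_\circ}$ is the completion of $\C[\hat\h^*]^{W_\circ}$ at $0$, and under the identification of $\Zcal_v$ with (a $\gamma_\rho$-twist of) $(U^{0,ev}_\Sring)^W\cong \C[T_{ev}\times\C^\times]^W$, the exponential map $\hat\h^*\to T_{ev}\times\C^\times$ (implicit in (\ref{eq:chilambda_def})) is a local analytic isomorphism near $\hat p_{\lambda^\circ}$. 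Its differential at $\hat p_{\lambda^\circ}$ intertwines the $W_\circ$-actions (this uses that $W_\circ$ is exactly the stabilizer of $\lambda^\circ$, equivalently of $\hat p_{\lambda^\circ}$, so the linearized actions match), hence descends to an isomorphism of completed invariant rings $\C[T_{ev}\times\C^\times]^{W,\wedge_{z^\circ}}\xrightarrow{\sim}\C[\hat\h^*]^{W_\circ,\wedge_0}=\Ring^{W_\circ}$. Precomposing with the $\gamma_\rho$-twist (an automorphism of $U^{0,ev}_\Sring$, hence harmless) yields the claimed extension of $\gamma_{\lambda^\circ+\rho}$.

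The main obstacle I anticipate is the bookkeeping in part (2): one must check that the $W$-orbit of $\hat p_{\lambda^\circ}$ relevant to the Harish-Chandra center's spectrum, intersected with a formal neighborhood of $\hat p_{\lambda^\circ}$, contributes only the subgroup $W_\circ$ — i.e. that no other element of $W$ (nor of the lattice $\tfrac12\Lambda_0^\vee$) maps $\hat p_{\lambda^\circ}$ into the same formal neighborhood except those already in $W_\circ$. This is where the hypothesis that $\lambda^\circ$ lies in the fundamental alcove $\Lambda_0^{fund}$ is used, together with the coprimality of $d$ to the order of $(\tfrac12\Lambda_0^\vee)/\Lambda$ (as in the proof of Lemma \ref{Lem:nonvanishing_necessary_main}). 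Once that local rigidity is established, the isomorphism of completions is a routine consequence of the étale/analytic-local isomorphism property of $\exp$ and the fact that taking $W_\circ$-invariants commutes with completion at a $W_\circ$-fixed point.
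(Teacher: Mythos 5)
Your overall plan matches the paper's proof: establish the equivariance in (1) by comparing the central character $\chi_{\lambda+\rho}\circ\gamma_\rho$ with the $W^a$-action on $\Ring$, then deduce (2) from the local structure of the quotient map $\hat{\h}^*\to (T_{ev}\times\C^\times)/W$ near $\hat p_{\lambda^\circ}$, using that the $W$-stabilizer of $p_{\lambda^\circ}\in T_{ev}$ is exactly $W_\circ$. For (2) your sketch is in line with what the paper does (the paper closes the argument by observing that $\Ring^{W_\circ}$ is a finite birational extension of the normal ring $\Zcal_v^{\wedge_{z^\circ}}$, hence equal to it — this is the precise form of what you call ``routine'').

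However, your treatment of the translation part in (1) is not correct as stated. For $x=t_\mu$, you have $\hat p_{t_\mu\cdot\lambda^\circ}=\hat p_{\lambda^\circ}+(a+\hbar)\mu$, and you appeal to the fact that shifting by $\tfrac12\Lambda_0^\vee$ is invisible to $K_{2\nu}$. That disposes of the $a\mu$ summand, but it does \emph{not} dispose of the $\hbar\mu$ summand, which is not in $\tfrac12\Lambda_0^\vee$ (it is not even a classical point). The $\hbar\mu$ term is precisely what encodes the nontrivial $t_\mu$-action on $\Ring$ (recall $t_\mu\xi=\xi+\langle\mu,\xi\rangle\hbar$ for $\xi\in\h$, so $t_\mu\cdot\iota(2w\nu)=\iota(2w\nu)+(\mu,2w\nu)\hbar$), and the equality $\gamma_{t_\mu\cdot\lambda^\circ+\rho}=t_\mu\gamma_{\lambda^\circ+\rho}$ is a genuine cancellation between this $\hbar$-shift on the $\Ring$-side and the $d\mu$-shift in $\lambda$ on the other. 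By asserting the translation part ``acts trivially on $\gamma_?$,'' you are eliding the one nontrivial piece of the check. The paper's direct computation — treating $x=u\in W$ and $x=t_\mu$ separately and expanding $\gamma_{\cdot}(\sum_{w\in W}K_{2w\nu})$ — makes this cancellation explicit, and I'd recommend doing the same.
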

\begin{proof}
(1): we need to show that for all $\nu\in 2\Lambda$ we have 
$$\gamma_{x\cdot \lambda^\circ+\rho}(\sum_{w\in W}K_{w\nu})=(x\gamma_{\lambda^\circ+\rho})(\sum_{w\in W}K_{w\nu}).$$
Set $\lambda=\lambda^\circ+\rho$ to unload the notation. It's sufficient to prove the claim for $x=u\in W$
and $x=t_\mu, \mu\in \Lambda_0,$ separately. 

Let $x=u$. Then 
\begin{align*}
&\gamma_{x\cdot \lambda^\circ+\rho}(\sum_{w\in W}K_{w\nu})=\sum_{w\in W}\exp(2\pi\sqrt{-1}[(\frac{a}{d}+\hbar)(u\lambda,w\nu)+ \iota(w\nu)])\\
&=\sum_{w\in W}\exp(2\pi\sqrt{-1}[\frac{a+\hbar}{d}(u\lambda,uw\nu)+ \iota(uw\nu)])\\
&=\sum_{w\in W}\exp(2\pi\sqrt{-1}[\frac{a+\hbar}{d}(\lambda,w\nu)+ u\iota(w\nu)])=x\gamma_{\lambda^\circ+\rho}(\sum_{w\in W}K_{w\nu}).
\end{align*} 

Let $x=t_\mu$. Then
\begin{align*}
&\gamma_{x\cdot \lambda^\circ+\rho}(\sum_{w\in W}K_{w\nu})=\sum_{w\in W}\exp(2\pi\sqrt{-1}[\frac{a+\hbar}{d}(\lambda+d\mu,w\nu)+ \iota(w\nu)])\\
&=\sum_{w\in W}\exp(2\pi\sqrt{-1}[\frac{a+\hbar}{d}(\lambda,w\nu)+\hbar(\mu,w\nu)+ \iota(w\nu)])=\\
&
=x\gamma_{\lambda^\circ+\rho}(\sum_{w\in W}K_{w\nu}).
\end{align*}
This finishes the proof of (1). 

(2): Note that $\gamma_{\lambda^\circ+\rho}: \C[T_{ev}\times \C^\times]\rightarrow \Ring$ (uniquely) extends to an 
isomorphism between the completion of the source at $(\lambda^\circ,\epsilon)$, where $\lambda^\circ$ is viewed as the homomorphism $K_{2\nu}
\mapsto \exp\left(2\pi\sqrt{-1}(\lambda^\circ+\rho,2\nu)\right)$, and $\Ring$. We remark that projection $W^a\twoheadrightarrow W$
identifies $W_\circ$ with the stabilizer of $p_{\lambda^\circ}\in T_{ev}$ under the action of $W$.
It follows that $\Ring$ is a finite birational extension of $\Zcal_v^{\wedge_{z^\circ}}$. The latter is normal, finishing the proof. 
\end{proof}

We identify $\Zcal_v^{\wedge_{z^\circ}}\cong \Ring^{W_\circ}$ using the isomorphism from (2). So we can view 
$\Wh$ as a functor $\OCat^\Xi_\Ring\rightarrow \Ring^{W_\circ}$-$\Ring\operatorname{-bimod}$.

\begin{Cor}\label{Cor:Wh_stand_image}
The following claims are true:
\begin{enumerate}
\item The functor $\Wh$ is fully faithful on standardly filtered objects. 
\item
The $\Ring^{W_\circ}$-$\Ring$-bimodule $\Wh(\Delta_\Ring(x^{-1}\cdot \lambda^\circ))$
is identified with the graph bimodule $\Ring_x$.
\end{enumerate} 
\end{Cor}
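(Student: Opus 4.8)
The plan is to establish (2) first --- it is essentially a bookkeeping computation --- and then to deduce (1) from it by the reflexivity-and-localization argument already carried out for the Soergel functor $\Vfun$ in Corollary \ref{Cor:Vfun_ff_Ost}.

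For (2): I will start from Example \ref{Ex:Wh_Verma}, which identifies $\Wh(\Delta_\Ring(\lambda))$ with the free rank-one $\Ring$-module on which an element $z\in\Zcal_v$ acts by the scalar $\gamma_{\lambda+\rho}(z)\in\Ring$ (written $\chi_{\lambda+\rho}(z)$ there). Specializing to $\lambda=x^{-1}\cdot\lambda^\circ$ and applying part (1) of Lemma \ref{Lem:center_action} with $x^{-1}$ in place of $x$, we have $\gamma_{x^{-1}\cdot\lambda^\circ+\rho}=x^{-1}\circ\gamma_{\lambda^\circ+\rho}$. Since the left $\Ring^{W_\circ}$-action on $\Wh(\Delta_\Ring(x^{-1}\cdot\lambda^\circ))$ is, by construction, transported from $\Zcal_v^{\wedge_{z^\circ}}$ through the isomorphism $\gamma_{\lambda^\circ+\rho}$ of Lemma \ref{Lem:center_action}(2), it follows that an element $a=\gamma_{\lambda^\circ+\rho}(z)$ acts on the left by multiplication by $\gamma_{x^{-1}\cdot\lambda^\circ+\rho}(z)=x^{-1}(a)$. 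That is precisely the defining property of the graph bimodule $\Ring_x$ from Section \ref{SSS_Hecke_setting}. The only subtlety here is keeping the conventions straight: one must check that the $\Zcal_v$-action read off from Example \ref{Ex:Wh_Verma} uses $\gamma_{\lambda+\rho}$ rather than $\gamma_\lambda$, and that the resulting twist matches $\Ring_x$ and not $\Ring_{x^{-1}}$.

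For (1): I will first note that $\Wh$ is exact on standardly filtered objects --- right exactness is immediate from Definition \ref{defi:Wh_coinv}, and the vanishing of $L_i\Wh$ on Verma modules, hence on standardly filtered objects, is Remark \ref{Rem:Wh_acyclic} --- so, combined with (2), $\Wh$ sends standardly filtered objects of $\OCat^\Xi_\Ring$ to $\Ring^{W_\circ}$-$\Ring$-bimodules that are free over $\Ring$. Given standardly filtered $M,N$, which lie in some finite highest weight block $\OCat^\Xi_{\T_0,\Ring}$, both $\Hom_{\OCat^\Xi_\Ring}(M,N)$ and the corresponding Hom of bimodules are then reflexive $\Ring$-modules, so it suffices to check that the comparison map becomes an isomorphism after localization at each height-one prime $\p\subset\Ring$. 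At the residue field $\kf(\p)$ the functor $\Wh$ is faithful on standardly filtered objects: this follows from Proposition \ref{Prop:Wh_faithful} (faithfulness at the closed point) propagated through the implications of Remark \ref{Rem:RS4}. At $\F=\operatorname{Frac}(\Ring)$ the functor $\Wh$ is fully faithful: by (2) the objects $\Wh(\Delta_\F(\lambda))\cong\F_x$ are pairwise non-isomorphic with endomorphism ring $\F$, and $\OCat^\Xi_\F$ is semisimple by Corollary \ref{Cor:generic_semisimple}. The conclusion is then assembled from these two inputs by the standard uniformizer argument, exactly as in the proof of Lemma \ref{Lem:RS_0_faith}.

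I expect the main obstacle to lie entirely in the conventions for (2) --- pinning down precisely which twist of $\Ring$ appears and on which side, and matching the normalization of $\gamma_{\lambda^\circ+\rho}$ in Lemma \ref{Lem:center_action} with that of the graph bimodule $\Ring_x$ --- since once (2) is in hand, part (1) is a routine transcription of the argument already made for $\Vfun$.
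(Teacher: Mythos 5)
Your proof is correct and follows essentially the same route as the paper: part (2) is read off from Example \ref{Ex:Wh_Verma} together with Lemma \ref{Lem:center_action}(1), and part (1) then follows by the reflexive-module/height-one-localization argument used for $\Vfun$ in Corollary \ref{Cor:Vfun_ff_Ost}, with faithfulness at the closed point (Proposition \ref{Prop:Wh_faithful}, propagated by Remark \ref{Rem:RS4}) and full faithfulness over $\F$ (Corollary \ref{Cor:generic_semisimple} plus the graph-bimodule identification) as the two inputs. The paper reverses the stated order of (1) and (2), but its proof of (1) also invokes Lemma \ref{Lem:center_action}(1) for the generic-fiber step, so the logical content is the same.
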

\begin{proof}
Corollary \ref{Cor:generic_semisimple} combined with (1) of Lemma \ref{Lem:center_action} shows that 
$\Wh$ is fully faithful after specializing to $\F=\operatorname{Frac}(\Ring)$.
Proposition \ref{Prop:Wh_faithful} shows that $\Wh$ is faithful on standardly filtered objects specialized to $\C$.  
Repeating the argument of the proof of Corollary \ref{Cor:Vfun_ff_Ost}
we get (1).

(2) follows from (1) of Lemma \ref{Lem:center_action}. 
\end{proof}

\subsubsection{Computation of the localized $\Ext^1$}
Here we finish the proof of Proposition \ref{Prop:quantum_stand_extension}. 

Let $\lambda=x^{-1}\cdot \lambda^\circ$, then $\mu=(xs_{\tilde{\alpha}})^{-1}\cdot \lambda^\circ$, where $s_{\tilde{\alpha}}=t_{-k\alpha}s_{\alpha}$. Thanks to Corollary \ref{Cor:Wh_stand_image},
$$\Ext^1_{\OCat_\Ring}(\Delta_\Ring(\lambda),\Delta_\Ring(\mu))\hookrightarrow 
\Ext^1_{\Ring^{W_\circ}\otimes \Ring}(\Ring_x,\Ring_{xs_{\tilde{\alpha}}}).$$ 
Thanks to Section \ref{SSS:root_real}, the source is nonzero after localization at $\p$.
By Remark \ref{Rem:Ext1_stand_Ost_sing},
$\Ext^1_{\Ring^{W_\circ}\otimes \Ring}(\Ring_x,\Ring_{xs_{\tilde{\alpha}}})_\p\cong \kf'$,
where we write $\kf'$ for the residue field of $\Ring_\p$. This finishes the proof.

\section{Main equivalence theorem}
\subsection{Statement and proof}
The goal of this section is to prove the following result, which is the main result of this paper.
Let $\lambda^\circ,W_\circ,\Xi$ have the same meaning as in Section \ref{SSS_inf_blocks_bimod}. 
Then $W_\circ$ is a standard parabolic subgroup, let $J\subset I^a$ denote the corresponding finite set.

\begin{Thm}\label{Thm:main_equivalence_deformed}
We have an $\Ring$-linear equivalence of abelian categories $\,_J\OCat^{st}_\Ring\xrightarrow{\sim} \OCat^\Xi_\Ring$
sending $\Delta^{st}_\Ring(W_Jx)$ to $\Delta_\Ring(x^{-1}\cdot \lambda)$ for all $x\in W^a$.
\end{Thm}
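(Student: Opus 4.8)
The plan is to deduce Theorem \ref{Thm:main_equivalence_deformed} from the Equivalence Theorem \ref{Thm:hw_equiv} applied to the two highest weight categories $\Cat^1_\Ring := \,_J\OCat^{st}_\Ring$ and $\Cat^2_\Ring := \OCat^\Xi_\Ring$, with common target $\underline{\Cat}_\Ring := \Ring^{W_J}\operatorname{-}\Ring\operatorname{-bimod}$ (note $\Ring^{W_J} = \Ring^{W_\circ}$ since $J$ is the set of simple reflections of $W_\circ$). Strictly speaking Theorem \ref{Thm:hw_equiv} is stated for highest weight categories with finite posets, so first I would pass to finite-poset truncations: for each coideal finite poset ideal $\T_0 \subset W_J\backslash W^a$ (under $\leqslant^{st}$) the bijection $W_Jx \mapsto x^{-1}\cdot\lambda^\circ$ of Remark \ref{Rem:Wa_lattice_action} matches $\T_0$ with a coideal finite ideal of $\Lambda_0$ (under dominance); I would check that both functors $\Wh$ and $\Vfun\circ\pi_J$ restrict compatibly to the highest weight subcategories $\,_J\OCat^{st}_{\T_0,\Ring}$ and $\OCat^{\Xi}_{\T_0,\Ring}$, prove the equivalence on each truncation, and then glue using the recovery machinery of Sections \ref{SSS_interval_finite_recovery}, \ref{SSS_hw_sub} (the equivalences are compatible with the algebra epimorphisms, so they assemble into an equivalence of interval finite highest weight categories).

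The two RS functors are: $\pi^1_\Ring := \Vfun\circ\pi_J : \,_J\OCat^{st}_\Ring \to \Ring^{W_J}\operatorname{-}\Ring\operatorname{-bimod}$, the Soergel functor on the singular Hecke category restricted to the heart of the stabilized t-structure (its properties are Lemma \ref{Lem:Vfun_singular}), and $\pi^2_\Ring := \Wh : \OCat^\Xi_\Ring \to \Ring^{W_\circ}\operatorname{-}\Ring\operatorname{-bimod}$, the Whittaker coinvariants functor (its properties are gathered in Corollary \ref{Cor:Wh_stand_image}). The verification of the RS axioms for each: (RS1) $\pi^i_\Ring$ is exact on standardly filtered objects — for $\pi^1$ this is exactness of $\pi_J$ (part (i) of Proposition \ref{Prop:stabilized_t_structure_singular}) together with Corollary \ref{Cor:Vfun_ff_Ost}'s underlying computation; for $\pi^2$ this is Remark \ref{Rem:Wh_acyclic} combined with the long exact sequence; (RS2) $\pi^i_\Ring(\Delta(\tau))$ is $\Ring$-projective, indeed a rank one graph bimodule $\Ring_x$ by Lemma \ref{Lem:Vfun_singular}(1) resp. Corollary \ref{Cor:Wh_stand_image}(2); (RS3) the generic fiber is split semisimple — for $\pi^1$ by Lemma \ref{Lem:Vfun_singular}(3) resp. Lemma \ref{Lem:ss_base_change}, for $\pi^2$ by Corollary \ref{Cor:generic_semisimple}, and in both cases the generic functor is an equivalence onto the semisimple category of $\bigoplus_x \F_x$'s; (RS4) the height $\leqslant 2$ faithfulness reduces by Remark \ref{Rem:RS4} to faithfulness of the closed-point specialization on standardly filtered objects, which is Lemma \ref{Lem:Vfun_singular}(2) resp. Proposition \ref{Prop:Wh_faithful}. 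Hypothesis (a) of Theorem \ref{Thm:hw_equiv} — that $\pi^1_\Ring(\Delta^{st}_\Ring(W_Jx)) \cong \pi^2_\Ring(\Delta_\Ring(x^{-1}\cdot\lambda^\circ))$ — holds because both sides are identified with the graph bimodule $\Ring_x$.

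The remaining input is hypothesis (b): for every height one prime $\p \subset \Ring$, the essential images of $\,_J\OCat^{st,\Delta}_{\Ring_\p}$ and $\OCat^{\Xi,\Delta}_{\Ring_\p}$ in $\underline{\Cat}_{\Ring_\p}$ coincide. This is where the core of the argument lies, and I expect it to be the main obstacle. The strategy is to use Remark \ref{Rem:stand_filt_recover} to reduce to checking that the two categories of standardly filtered objects have the same image, and then — since standardly filtered objects are built up by iterated extensions of standards, all of which map to the $\Ring_x$'s — to reduce further to matching the images of $\Ext^1$ between standard objects after localizing at $\p$. On one side, Proposition \ref{Prop:Ext1_stand_Ost} (and its singular version Lemma \ref{Lem:Ext1_stand_Ost_sing}) computes $\Ext^1_{\,_J\OCat^{st}_\Ring}(\Delta^{st}_\Ring(W_Jx),\Delta^{st}_\Ring(W_Jy))_\p$: it is $\kf(\p)$ precisely when $W_Jy = W_Jxs_\alpha$ with $\p = (\alpha)$ and $W_Jx <^{st} W_Jy$, and the embedding into $\Ext^1_{\Ring^{W_J}\otimes\Ring}(\Ring_x,\Ring_y)_\p$ is onto by Remark \ref{Rem:Ext1_stand_Ost_sing}. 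On the other side, Proposition \ref{Prop:quantum_stand_extension} gives the parallel computation for $\OCat^\Xi_\Ring$: translating its condition (2) via Remark \ref{Rem:Wa_lattice_action} into the $W^a$-action on $\Lambda_0$, the condition $\mu = \lambda - ((\lambda+\rho,\alpha^\vee)+dk)\alpha$ with $\p = (\alpha^\vee - k\hbar)$ says exactly that, writing $\lambda = x^{-1}\cdot\lambda^\circ$, one has $\mu = (xs_{\tilde\alpha})^{-1}\cdot\lambda^\circ$ for the affine reflection $s_{\tilde\alpha} = t_{-k\alpha}s_\alpha$, i.e. the labels differ by a right multiplication by an affine reflection whose hyperplane is $\alpha^\vee - k\hbar$ — and the last step of Section \ref{SSS_inf_blocks_bimod} already identifies this localized $\Ext^1$ with the same $\kf(\p) \subset \Ext^1_{\Ring^{W_\circ}\otimes\Ring}(\Ring_x, \Ring_{xs_{\tilde\alpha}})_\p$. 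Both affine-reflection hyperplanes are exactly the reflection hyperplanes $\alpha^\vee - k\hbar$ of $W^a$ acting on $\hat\h^*$ from Section \ref{SSS_Hecke_setting}, so the two combinatorial conditions coincide, and both localized $\Ext^1$'s are the same full-rank-one submodule of the ambient bimodule $\Ext^1$. An inductive argument on filtration length (the second bullet in the proof sketch of Theorem \ref{Thm:hw_equiv}, using Corollary \ref{Cor:Ext_description}) then upgrades the coincidence of $\Ext^1$-images to the coincidence of the images of $\Cat^{i,\Delta}_{\Ring_\p}$, giving (b). With (a) and (b) in hand, Theorem \ref{Thm:hw_equiv} produces the desired equivalence, and its last sentence guarantees it sends $\Delta^{st}_\Ring(W_Jx)$ to $\Delta_\Ring(x^{-1}\cdot\lambda^\circ)$. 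Finally, specializing to the closed point of $\operatorname{Spec}(\Ring)$ and combining with (S2), Theorem \ref{Thm:stabilized_t_structure}(3), and the highest weight base change of Remark \ref{Rem:interval_finite_base_change} recovers Theorem \ref{Thm:main_basic}; I would note in passing that the matching of posets is automatic from Lemma \ref{Lem:order_implication} and the discussion of infinitesimal blocks.
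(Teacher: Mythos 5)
Your proposal follows the same route as the paper: reduce to standardly filtered objects, introduce the two RS functors $\Vfun\circ\pi_J$ and $\Wh$ landing in a common bimodule-type target, verify (RS1)--(RS4) via the lemmas you cite, match both sides to graph bimodules $\Ring_x$ for hypothesis~(a), and verify hypothesis~(b) via the localized $\Ext^1$ computations (Proposition~\ref{Prop:Ext1_stand_Ost}, Lemma~\ref{Lem:Ext1_stand_Ost_sing}, Remark~\ref{Rem:Ext1_stand_Ost_sing} on one side, Proposition~\ref{Prop:quantum_stand_extension} together with the end of Section~\ref{SSS_inf_blocks_bimod} on the other). However, there are two technical points at which your plan, taken literally, stalls. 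First, you propose to truncate to highest weight subcategories attached to coideal-finite poset ideals $\T_0\subset W_J\backslash W^a$; but those posets are still infinite (coideal-finite $\neq$ finite), and Theorem~\ref{Thm:hw_equiv} is formulated only for \emph{finite} posets. The paper instead passes to highest weight subquotients associated to \emph{finite poset intervals} $\underline{\T}$ (Step~3 of the proof); since $(\OCat^i_{\Ring,\underline{\T}})^\Delta\hookrightarrow(\OCat^i_\Ring)^\Delta$ and every standardly filtered object sits in some such $\underline{\T}$, matching images interval by interval already gives~(\ref{eq:full_images_coincide}) with no further gluing.

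Second, you take $\underline{\Cat}_\Ring=\Ring^{W_J}\operatorname{-}\Ring\operatorname{-bimod}$, but Definition~\ref{defi:RS_functor} requires $\underline{\Cat}_\Ring$ to be the module category of a \emph{finite} $\Ring$-algebra, which $\Ring^{W_J}\otimes\Ring$ is not. The paper's fix (Step~4) is to replace $\Ring^{W_J}\otimes\Ring$ by the quotient $\underline{A}_\Ring:=(\Ring^{W_J}\otimes\Ring)/I_\Ring$, where $I_\Ring$ is the common annihilator of $\Vfun(P^1_\Ring)$ and $\Wh(P^2_\Ring)$ for projective generators $P^i_\Ring$ of the interval truncations; this $\underline{A}_\Ring$ is finitely generated over $\Ring$ because the images of the projectives are, and then $\pi^i_\Ring$ are genuine RS functors to $\underline{A}_\Ring\operatorname{-mod}$. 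A smaller point: you cite Lemma~\ref{Lem:order_implication} for compatibility of posets, but that lemma is the principal-block statement with $p>h$; what the proof actually needs is the singular version (Step~2 of the paper), namely that $W_Jx\leqslant^{st}W_Jy$ implies $x^{-1}\cdot\lambda^\circ\leqslant y^{-1}\cdot\lambda^\circ$, which is proved afresh using that $\lambda^\circ$ lies in the fundamental domain and hence the bound $\langle\lambda^\circ+\rho,w\alpha^\vee\rangle\leqslant d$ replaces the $p>h$ inequality. With these three adjustments your outline becomes essentially the paper's proof.
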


Note that this theorem implies Theorem \ref{Thm:main_basic} by specializing at the closed point of $\operatorname{Spec}(\Ring)$.

\begin{proof}
The proof is in several steps.

{\it Step 1}. 
Thanks to Remark \ref{Rem:interval_finite_stand_to_all}, what we need to establish is an equivalence 
$(\,_J\OCat^{st}_\Ring)^\Delta\xrightarrow{\sim} (\OCat^\Xi_\Ring)^\Delta$ sending $\Delta^{st}_\Ring(W_Jx)$ to $\Delta_\Ring(x^{-1}\cdot \lambda)$.
Note that we have full embeddings
\begin{align*}
&
\Vfun: (\,_J\OCat^{st}_\Ring)^\Delta\rightarrow \Ring^{J}\operatorname{-}\Ring\operatorname{-bimod}, \Vfun(\Delta^{st}_\Ring(W_Jx))\cong \Ring_x,\\
&\Wh:(\OCat^\Xi_\Ring)^\Delta\rightarrow \Ring^{J}\operatorname{-}\Ring\operatorname{-bimod}, \Wh(\Delta_\Ring(x^{-1}\cdot \lambda^\circ))\cong \Ring_x.
\end{align*}
See Lemma \ref{Lem:Vfun_singular} for $\Vfun$, and Corollary \ref{Cor:Wh_stand_image} for $\Wh$.  It follows that 
we only need to show that 
\begin{equation}\label{eq:full_images_coincide}
\Vfun\left((\,_J\OCat^{st}_\Ring)^\Delta\right)=\Wh\left((\OCat^\Xi_\Ring)^\Delta\right).  
\end{equation} 

{\it Step 2}. We claim that $W_Jx\leqslant^{st}W_Jy$ implies $x^{-1}\cdot \lambda^\circ\leqslant y^{-1}\cdot \lambda^\circ$. 
The check is similar to that of Lemma \ref{Lem:order_implication}. It is enough to assume that $y=xs_\beta$ for a real root 
$\beta$ and $x<^{st}y$. Let $x=wt_\lambda$ for $w\in W, \lambda\in \Lambda_0$. Then $y$ takes the form $ws_\alpha t_{\lambda-m\alpha}$
for some positive finite type root $\alpha$ and some $m\in \Z$. The condition $x<^{st}y$ translates to the condition that either 
$m>0$ or $m=0$ but $w\alpha>0$, compare to the proof of Lemma \ref{Lem:order_implication}. Note that $y^{-1}\cdot \lambda^\circ-x^{-1}\cdot \lambda^\circ=k\alpha$, where 
\begin{equation}\label{eq:alpha_coefficient}
k:=dm+\langle\rho,\alpha^\vee\rangle-\langle\lambda^\circ+\rho, w\alpha^\vee\rangle.
\end{equation}
We have $\langle\rho,\alpha^\vee\rangle>0$.
If $m=0$, then $w\alpha^\vee>0$, so $\langle\lambda^\circ+\rho, w\alpha^\vee\rangle\leqslant 0$ and $k>0$.
Suppose $m>0$. Then  $\langle\lambda^\circ+\rho, w\alpha^\vee\rangle$ achieves the maximal value when 
$-w\alpha^\vee$ is the maximal coroot, hence  $\langle\lambda^\circ+\rho, w\alpha^\vee\rangle\leqslant d$.
So in this case $k>0$ as well. This completes the proof of the claim in the beginning of the step. 

{\it Step 3}. We consider both $\OCat^1_\Ring:=\,_J\OCat^{st}_\Ring, \OCat^2_\Ring:=\OCat^\Xi_\Ring$ as highest weight categories over $\Ring$
with poset $\T:=W_J\backslash W^a$ and the order defined by $W_Jx\leqslant W_Jy$ if $x^{-1}\cdot \lambda^\circ\leqslant y^{-1}\cdot \lambda^\circ$. 
Pick a finite poset interval $\underline{\T}\subset \T$ and let $\OCat^i_{\Ring,\underline{\T}}$ denote the corresponding highest weight 
subquotients. Note that $(\OCat^i_{\Ring,\underline{\T}})^\Delta\hookrightarrow (\OCat^i_{\Ring})^\Delta$ and so  (\ref{eq:full_images_coincide})
will follow once we show that 
\begin{equation}\label{eq:full_images_coincide1}
\Vfun((\OCat^1_{\Ring,\underline{\T}})^\Delta)=\Wh((\OCat^2_{\Ring,\underline{\T}})^\Delta)
\end{equation}
for all finite poset intervals $\underline{\T}$. 

{\it Step 4}. Consider projective generators $P^i_\Ring\in \OCat^i_{\Ring,\underline{\T}}$. They are standardly filtered and so 
can be viewed as objects of $\OCat^i_\Ring$. Let $I_\Ring$ be the intersection of the kernels 
of the actions of $\Ring^{J}\otimes \Ring$ on $\Vfun(P^1_\Ring),\Wh(P^2_\Ring)$, and set $\underline{A}_\Ring:=\Ring^{J}\otimes \Ring/I_\Ring,
\underline{\Cat}_\Ring:=\underline{A}_\Ring\operatorname{-mod}$. Observe that since $\Vfun(P^1_\Ring),\Wh(P^2_\Ring)$
are finitely generated over $\Ring$, the algebra $\underline{A}_\Ring$ is a finitely generated $\Ring$-module.
We have right exact functors $\pi^i_\Ring: \OCat^i_\Ring\rightarrow \underline{\Cat}_\Ring$ for $i=1,2$ that extend
$\Vfun:\OCat^{1,\Delta}_\Ring\rightarrow \underline{\Cat}_\Ring,\Wh:\OCat^{2,\Delta}_\Ring\rightarrow \underline{\Cat}_\Ring$. 

We claim that these functors are RS functors in the sense of Definition \ref{defi:RS_functor}. The properties (RS1),(RS2),(RS4) 
follow directly from the analogous properties of $\Vfun, \Wh$ established above. Namely, (RS1) and (RS2) for
$\Vfun$ were established in (1) of Lemma \ref{Lem:Vfun_singular}, while (RS4) follows from (2) of the same lemma combined with 
Remark \ref{Rem:RS4}. For $\Wh$, (RS1) follows from Remark \ref{Rem:Wh_acyclic}, (RS2) follows from Example 
\ref{Ex:Wh_Verma}, while (RS4) follows from Proposition \ref{Prop:Wh_faithful} combined with Remark \ref{Rem:RS4}.

Let us explain why (RS3) holds. 
Let $\F=\operatorname{Frac}(\Ring)$. The categories $\OCat^i_{\F,\underline{\T}}$ are split semisimple (see (3) of 
Lemma \ref{Lem:Vfun_singular} for $i=1$ and Corollary \ref{Cor:generic_semisimple} for $i=2$). The objects 
$P^i_\F$ are direct sums of the $\Ring^{J}\otimes \F$-bimodules $\F_x$ for $x\in \underline{\T}$. The union of graphs 
of the elements $x\in \underline{\T}$ is a closed subscheme in $\operatorname{Spec}(\Ring^{J})\times \operatorname{Spec}(\F)$
isomorphic to the disjoint union of $|\underline{\T}|$ many copies of $\operatorname{Spec}(\F)$. The algebra  
$\Ring^{J}\otimes \F$ acts on the objects $\pi^i_{\F}(P^i_\F)$ via the projection to the algebra of functions on 
the union of graphs. Since every standard $\Delta^i_\Ring(x)$ appears in the standard filtration of $P^i_\Ring$,
we see that $\underline{A}_\F\cong \F^{\oplus \underline{\T}}$ and that the functors $\pi^i_\F$ are equivalences. 

{\it Step 5}. To prove (\ref{eq:full_images_coincide1}) we will use Theorem \ref{Thm:hw_equiv}. Condition (a) 
follows from Step 1, so we only need to establish (b): that 
\begin{equation}\label{eq:full_images_coincide2}
\pi^1_{\Ring_\p}((\OCat^1_{\Ring_\p,\underline{\T}})^\Delta)=\pi^2_\Ring((\OCat^2_{\Ring_\p,\underline{\T}})^\Delta)
\end{equation}
for all height $1$ prime ideals $\p\subset \Ring$. Let us list consequences of Lemma
\ref{Lem:Ext1_stand_Ost_sing} (for the category $\OCat^1_\Ring$) and Proposition \ref{Prop:quantum_stand_extension}
(for the category $\OCat^2_\Ring$) for a pair of elements $W_Jx,W_Jy\in \T$:
\begin{itemize}
\item[(i)] If $\p$ is of the form $(\beta^\vee:=\alpha^\vee-k\hbar)$ for a Dynkin root $\alpha$ and $k\in \Z$, and $W_J x,W_Jy\in \underline{\mathcal{T}}$ 
are such that $y= xs_\beta, x<y$ (for some choice of $x,y$ in the respective cosets), then $\operatorname{Ext}^1_{\OCat^i_{\Ring_\p,\T_0}}(\Delta^i_{\Ring_\p}(W_Jx),\Delta^i_{\Ring_\p}(W_Jy))=\kf$,
\item[(ii)] otherwise, $\operatorname{Ext}^1_{\OCat^i_{\Ring_\p,\T_0}}(\Delta^i_{\Ring_\p}(W_Jx),\Delta^i_{\Ring_\p}(W_Jy))=0$.
\end{itemize}
We also know, Remark \ref{Rem:Ext1_stand_Ost_sing}, that 
\begin{itemize} 
\item 
$\operatorname{Ext}^1_{\Ring^J\otimes \Ring}(\Ring_x,\Ring_y)_\p=\kf$ if $W_Jy=W_Jxs_\beta$ and $\p=(\beta^\vee)$, and zero otherwise.
\end{itemize} 
We conclude that $\OCat^i_{\Ring_\p,\underline{\T}}$ splits into the direct sum of highest weight categories, where each summand 
has one or two standard objects. And since all nonzero $\Ext^1$'s are isomorphic to $\kf$, we use Remark \ref{Rem:proj_construction}
to conclude that the images of the projectives 
in $\OCat^1_{\Ring_\p,\underline{\T}}$ under $\pi^1_{\Ring_\p}$ coincide with the images of the projectives in  
$\OCat^2_{\Ring_\p,\underline{\T}}$ under $\pi^2_{\Ring_\p}$. This implies (\ref{eq:full_images_coincide2}) and finishes the proof. 
\end{proof}

\subsection{Remarks}
\begin{Rem}\label{Rem:non_integral}
Let us explain modifications needed to handle the non-integral case. Our category $\OCat^2_\Ring$ is still $\OCat^{\zeta,\Xi}_\Ring$. 
For $\OCat^1_\Ring$ we take a suitable direct sum of categories of the form $\,_J\OCat^{st}_\Ring$ for a suitable pseudo-Levi of 
$G$ and suitable subsets $J$. We expect the details to be addressed elsewhere.   
\end{Rem}

\begin{Rem}\label{Rem:projective_image}
One can still talk about projectives for the categories $\OCat^{st}_\Ring,\OCat_\Ring$, but they are pro-objects. 
It still makes sense to talk about their images under the functors $\Vfun,\Wh$. We believe that these images should be 
some stabilized version of (singular) Soergel bimodules for $W^a$. More precisely, we expect that the indecomposable Soergel
bimodules $B_{xt_{-\lambda}}$ should stabilize (in a certain precise sense) as $\lambda$ goes to $-\infty$. The inverse limit 
of these bimodules should be the image of the indecomposable projectives labelled by $x$ in the regular blocks of 
$\OCat^{st}_\Ring,\OCat_\Ring$. Note that similar phenomena in a closely related context of sheaves on moment graphs
were observed and proved in \cite{Lanini1}, see also \cite{Lanini2}.  
\end{Rem}


\begin{thebibliography}{99}
\bibitem[ABG]{ABG} S. Arkhipov, R. Bezrukavnikov, V. Ginzburg, 
{\it Quantum groups, the loop Grassmannian, and the Springer resolution}. 
J. Amer. Math. Soc. 17 (2004), no. 3, 595–678.
\bibitem[B]{B_Hecke} R. Bezrukavnikov,  {\em On two geometric realizations of an affine Hecke algebra,}
    Publ. IHES 123(1) (2016), 1-67.
\bibitem[BBM]{BBM} R. Bezrukavnikov, A. Beilinson, I. Mirkovic, 
{\it Tilting exercises}. Mosc. Math. J. 4 (2004), no. 3, 547–557, 782.
\bibitem[BBASV]{BBASV} R. Bezrukavnikov, P. Boixeda Alvarez, P. Shan, E. Vasserot,
{\it A geometric realization of the center of the small quantum group}. arXiv:2205.05951. 
\bibitem[BLin]{BLin} R. Bezrukavnikov, Q. Lin, {\it Highest weight modules at the critical level and noncommutative Springer resolution}. Algebraic groups and quantum groups, 15–27,
Contemp. Math., 565, Amer. Math. Soc., Providence, RI, 2012.
\bibitem[BL]{BL} R. Bezrukavnikov, I. Losev, {\it Dimensions of modular irreducible representations of semisimple Lie algebras}. arXiv:2005.10030. Accepted to J. Amer. Math. Soc.
\bibitem[BM]{BM} R. Bezrukavnikov, I. Mirkovic, {\it Representations of semisimple Lie algebras in prime characteristic and noncommutative Springer resolution}.  Ann. Math. 178 (2013), n.3, 835-919.
\bibitem[BMR]{BMR} R. Bezrukavnikov, I. Mirkovic, D. Rumynin. {\it Localization of modules for a semisimple Lie algebra
in prime characteristic} (with an appendix by R. Bezrukavnikov and S. Riche), Ann. of Math. (2)
167 (2008), no. 3, 945-991.
    \bibitem[BR]{BR_braid} R. Bezrukavnikov, S. Riche, {\it Affine braid group actions on derived categories of Springer resolutions}.
Ann. Sci. Ec. Norm. Super. (4) 45 (2012), no. 4, 535–599 (2013).
\bibitem[BY]{BY} R. Bezrukavnikov, Z. Yun, {\it On Koszul duality for Kac-Moody groups}.
Represent. Theory 17 (2013), 1–98.
\bibitem[BS]{BS} J. Brundan, C. Stroppel, {\it Semi-infinite highest weight categories}. arXiv:1808.08022.
\bibitem[CF]{CF} L. Chen, Y. Fu, {\it An extension of the Kazhdan-Lusztig equivalence}. arXiv:2111.14606.
\bibitem[DL]{DL} G. Dhillon, S. Lysenko, {\it Semi-infinite parabolic IC sheaf}. In preparation.
\bibitem[DCK]{DCK} C. De Concini, V. Kac, {\it Representations of quantum groups at roots of 1}. In: Progress
in Math., 92. Basel: Birkhauser 1990, pp. 471-506.
\bibitem[DCKP1]{DCKP} C. De Concini, V. Kac, C. Procesi, {\it Quantum coadjoint action}. J. Amer. Math. Soc.5(1992), no.1, 151–189.
\bibitem[DCKP2]{DCKP_degen} C. De Concini, V. Kac, C. Procesi, {\it Some remarkable degenerations of quantum groups}. 
Comm. Math. Phys. 157 (1993), no.2, 405-427.
\bibitem[EL]{EL} B. Elias, I. Losev, {\it Modular representation theory in type A via Soergel bimodules}.
arXiv:1701.00560.
\bibitem[EK]{EK} P. Etingof, D. Kazhdan, {\it Quantization of Lie bialgebras. VI. Quantization of generalized Kac-Moody algebras}.
Transform. Groups 13(2008), no.3-4, 527–539.
\bibitem[FG]{FG} E. Frenkel, D. Gaitsgory, {\it D-modules on the affine flag variety and representations of affine Kac-Moody algebras}. Represent. Theory 13 (2009), 470–608.
\bibitem[G]{Gaitsgory} D. Gaitsgory, {\it A conjectural extension of the Kazhdan-Lusztig equivalence}. arXiv:1810.09054.
\bibitem[J]{Jantzen} J.C. Jantzen, {\it Lectures on quantum groups}. Grad. Stud. Math., 6
American Mathematical Society, Providence, RI, 1996, viii+266 pp.
\bibitem[KL]{KL} D. Kazhdan, G. Lusztig,
{\it Tensor structures arising from affine Lie algebras}.
I,II, J. Amer. Math. Soc. 6 (1993), no. 4, 905-947, 949-1011,
III, IV, J. Amer. Math. Soc. 7 (1994), no. 2, 335-381, 383-453.
\bibitem[La1]{Lanini1} M. Lanini, {\it On the stable moment graph of an affine Kac--Moody algebra}.
arXiv:1210.3218.
\bibitem[La2]{Lanini2} M. Lanini, {\it Semi-infinite combinatorics in representation theory}. EMS Ser. Congr. Rep.
European Mathematical Society (EMS), Z\"{u}rich, 2017. 
\bibitem[Lo1]{VV_proof} I. Losev, {\it Proof of Varagnolo-Vasserot conjecture on cyclotomic categories $\mathcal{O}$}.
\bibitem[Lo2]{SRA_der} I. Losev, {\it Derived equivalences for Symplectic reflection algebras}.
Int. Math. Res. Not. IMRN 2021, no. 1, 444–474.
\bibitem[Lo3]{quant_res} I. Losev, {\it Localization theorems for quantized symplectic resolutions}.
arXiv:2103.11193.
\bibitem[Lo4]{ModHCO} I. Losev, {\it On modular Soergel bimodules, Harish-Chandra bimodules, and category O}. Preprint.
Selecta Math. 22(2016), 631-668.
\bibitem[Lu1]{Lusztig_quantum_deformation} G. Lusztig, {\it Quantum deformations of certain simple modules over enveloping algebras}. Adv. in Math. 70(1988), no.2, 237-249.
\bibitem[Lu2]{Lustzig_modular_quantum} G. Lusztig, {\it Modular representations and quantum groups}. 
Classical groups and related topics (Beijing, 1987), 59–77. Contemp. Math., 82,
American Mathematical Society, Providence, RI, 1989.    
\bibitem[MR]{MR} J.C. McConnell, J.C. Robson, {\it Noncommutative Noetherian rings}. 
Grad. Stud. Math., 30. American Mathematical Society, Providence, RI, 2001, xx+636 pp.
\bibitem[R1]{R_braid} R. Rouquier, {\it Categorification of the braid groups}. arXiv:0409593.
\bibitem[R2]{rouqqsch} R. Rouquier, {\it q-Schur algebras and complex reflection groups}. Mosc. Math. J. 8 (2008), no. 1, 119–158, 184.
\bibitem[Se]{Sevostyanov} A. Sevostyanov,
  {\it Regular nilpotent elements and quantum groups},
Comm. Math. Phys. {\bf 204} (1999), no. 1, 1-16.
\bibitem[Si1]{Situ1} Q. Situ, {\it On the category O of a hybrid quantum group}. arXiv:2211.03130.
\bibitem[Si2]{Situ2} Q. Situ, {\it Center of the category O for a hybrid quantum group}. arXiv:2211.03139.
\bibitem[Si3]{Situ3} Q. Situ, {\it Category O for hybrid quantum groups and non-commutative Springer resolutions}.
arXiv:2308.07028. 
\bibitem[So]{Soergel} W. Soergel, {\it The combinatorics of Harish-Chandra bimodules}.
J. Reine Angew. Math. 429 (1992), 49–74.
\bibitem[T1]{Tanisaki_derived1} T. Tanisaki,  {\it Differential operators on quantized flag manifolds at roots of unity, II}.
Nagoya Math. J. 214 (2014), 1–52.
\bibitem[T2]{Tanisaki_derived2} T. Tanisaki, {\it  Differential operators on quantized flag manifolds at roots of unity III}.
Adv. Math. 392 (2021), Paper No. 107990, 51 pp.
\bibitem[T3]{Tanisaki_abelian} T. Tanisaki, {\it Quantized flag manifolds and non-restricted modules over quantum groups at roots of unity}.
arXiv:2109.03319.
\bibitem[Y]{Yang} R. Yang, {\it Twisted Whittaker category on affine flags and category of representations of mixed quantum group}.
arXiv:2104.09848.
\end{thebibliography}
\end{document}